\newtheorem{lemma}{Lemma}[section]
\newtheorem{corollary}[lemma]{Corollary}
\newtheorem{theorem}[lemma]{Theorem}
\newtheorem{prop}[lemma]{Proposition}
\theoremstyle{definition}
\newtheorem{defn}[lemma]{Definition}
\newtheorem{claim}{Claim}
\theoremstyle{remark}
\newtheorem*{rmk}{Remark}
\newtheorem*{example}{Example}
\newtheorem*{nte}{Note}
\global\long\def\eps{\varepsilon}
\global\long\def\N{\mathbb{N}}
\global\long\def\P{\mathbb{P}}
\global\long\def\cH{\mathcal{H}}
\global\long\def\cI{\mathcal{I}}
\global\long\def\E{\mathbb{E}}
\global\long\def\re{\begin{rmk}}
\global\long\def\mark{\end{rmk}}
\global\long\def\ex{\begin{example}}
\global\long\def\ple{\end{example}}
\global\long\def\no{\begin{nte}}
\global\long\def\ted{\end{nte}}
\global\long\def\en{\begin{compactenum}}
\global\long\def\um{\end{compactenum}}
\global\long\def\li{\begin{compactitem}}
\global\long\def\st{\end{compactitem}}
\global\long\def\de{\begin{defn}}
\global\long\def\fn{\end{defn}}
\global\long\def\cor{\begin{corollary}}
\global\long\def\ary{\end{corollary}}
\global\long\def\lem{\begin{lemma}}
\global\long\def\ma{\end{lemma}}
\global\long\def\arr{\begin{array}}
\global\long\def\ay{\end{array}}
\global\long\def\pr{\begin{proof}}
\global\long\def\oof{\end{proof}}
\global\long\def\gapp{\vspace{1cm}\indent}
\definecolor{darkgreen}{rgb}{0.1,0.7,0.1}
\newcommand\subsetsim{\mathrel{%
  \ooalign{\raise0.2ex\hbox{$\subset$}\cr\hidewidth\raise-0.8ex\hbox{\scalebox{0.9}{$\sim$}}\hidewidth\cr}}}
  \newcommand{\labelinthm}[1]{%
     \label{temp#1}
     \protected@write \@auxout {}{\string \newlabel{#1}{{\emph{\ref{temp#1}}}{\thepage}{\emph{\ref{temp#1}}}{temp#1}{}} }%
  }
  \newcounter{propcounter}
  \newcommand{\llpoly}{\stackrel{\scriptscriptstyle{\text{\sc poly}}}{\ll}}
\newcommand{\eref}[1]{\emph{\ref{#1}}}
\newcommand\claimproofend{\renewcommand{\qedsymbol}{$\boxdot$}
\end{proof}
\renewcommand{\qedsymbol}{$\square$}}
\newcommand\bal{\mathrm{pt}}
\newcommand\balvx{{\mathrm{bal},2}}
\newcommand\balcol{{\mathrm{bal},1}}
\newcommand\epsforabsthatwasepszero{\eps}
\newcommand\mathcalJJJ{\mathcal{J}_0}
\newcommand\col{\mathrm{col}}
\newcommand\partit{\mathrm{pt}}
\newcommand\cov{\mathrm{cov}}
\newcommand\abs{\mathrm{abs}}
\newcommand\forb{\mathrm{forb}}
\newcommand\mult{\mathrm{mult}}
\newcommand\fa{\mathrm{fa}}
\newcommand\tr{\mathrm{tr}}
\newcommand\cM{\mathcal{M}}
\newcommand\nsimAB{\not\sim_{A/B}}
\newcommand\simAB{\sim_{A/B}}
\newcommand\odd{\mathrm{odd}}
\newcommand\even{\mathrm{even}}
\newcommand\image{\mathrm{im}}
\newcommand\wvhp{$1-n^{-\omega(1)}$}
\title{Almost every Latin square has a decomposition into transversals}
\author{Candida Bowtell\thanks{School
of Mathematics, University of Birmingham, Edgbaston, Birmingham, B15 2TT, UK. Research supported by Leverhulme Trust Early Career Fellowship ECF--2023--393. Email: {\tt c.bowtell@bham.ac.uk}}\and Richard Montgomery\thanks{Mathematics Institute, University of Warwick, Coventry, CV4 7AL, UK. Research supported by the European Research Council (ERC) under the European Union Horizon 2020 research and innovation programme (grant agreement No.\ 947978). Email: {\tt richard.montgomery@warwick.ac.uk} }}
\newif\ifabbrev
\newif\ifabstractout
\newif\ifseconeout
\newif\ifbitsoftwoout
\newif\ifbitsofthreeout
\newif\ifsecfourout
\newif\ifsecfiveout
\newif\ifsecsixout
\newif\ifsecsevenout
\begin{document}

\maketitle

\abstract{\ifabstractout\else
In 1782, Euler conjectured that no Latin square of order $n\equiv 2\; \textrm{mod}\; 4$ has a decomposition into transversals. While confirmed for $n=6$ by Tarry in 1900, Bose, Parker, and Shrikhande constructed counterexamples in 1960 for each $n\equiv 2\; \textrm{mod}\; 4$ with $n\geq 10$. We show that, in fact, counterexamples are extremely common, by showing that if a Latin square of order $n$ is chosen uniformly at random then with high probability it has a decomposition into transversals.\fi}

\newpage

\setcounter{tocdepth}{2}
\tableofcontents

\newpage

\section{Introduction}\label{sec:intro}
\ifseconeout\else
A \emph{Latin square of order $n$} is an $n$ by $n$ grid filled with $n$ symbols so that each row and column contains each symbol exactly once. A transversal in a Latin square of order $n$ is a collection of $n$ cells which share no row, column, or symbol. Latin squares have a long history preceding their modern study; for more on this, we recommend the historical survey by Andersen~\cite{andersen2007history}, while the broader study of transversals in Latin squares is covered in surveys by Wanless~\cite{wanless2011transversals} and Montgomery~\cite{montgomerysurvey}.

In 1782, Euler~\cite{OGeuler} considered: for which $n$ is there a Latin square of order $n$ which can be decomposed into $n$ disjoint transversals? The case $n=4$ was the topic of an old recreational mathematics problem~\cite{ozanam1723recreations}, while Euler was initially particularly interested in the case $n=6$, considering his famous `36 officers problem'. In this problem, there are 36 officers of 6 different ranks from 6 different regiments, with an officer of each rank in each regiment. Can they stand in a 6 by 6 grid so that each row and each column contains officers of different ranks and different regiments? If there were a solution, then, neglecting the ranks, giving each officer the symbol of their regiment will form a Latin square of order $6$. For each rank, the set of officers of that rank marks out a transversal, so that this arrangement would give a decomposition of the Latin square of order 6 into 6 disjoint transversals\footnote{Neglecting the regiments and affixing each officer with the symbol of their rank also gives a Latin square (see Figure~\ref{fig:LSexample}), which is \emph{orthogonal} to the Latin square given by the regiments. That is, all possible $n^2$ pairs of symbols appear in matching row/column pairs of the two Latin squares.  Finding two orthogonal Latin squares of order $n$ is equivalent to the formulation of finding a Latin square of order $n$ which decomposes into transversals. In this paper, we will use the latter form.}.

Euler believed there was no solution to his 36 officer's problem, though this was not confirmed until work by Tarry~\cite{tarry1900probleme} in 1900. More generally, after demonstrating that there are Latin squares of order $n$ which can be decomposed into $n$ disjoint transversals when $n\not\equiv 2\, \textrm{mod}\, 4$, Euler conjectured that there are no examples when $n\equiv 2\,\textrm{mod}\,4$. This is true for $n=2$ and $n=6$, but, in 1959, Bose and Shrikhande~\cite{BS} showed that Euler's conjecture is false by constructing counterexamples for $n=22$ and $n=50$, before, shortly after, showing with Parker~\cite{BPS} that the conjecture is false for every $n\equiv\, 2\,\textrm{mod}\, 4$ with $n\geq 10$.

The development of the probabilistic method has shown the power of considering random objects as potential counterexamples. It is interesting then, to ask how common counterexamples to Euler's conjecture are, and, in particular, whether a random Latin square of order $n\equiv 2\, \textrm{mod}\, 4$ is typically a counterexample? For each $n\in \N$, let $\mathcal{L}(n)$ be the set of Latin squares of order $n$ which use symbols in $[n]=\{1,\ldots,n\}$, and let $L_n$ be drawn uniformly at random from $\mathcal{L}(n)$.
In 1990, van Rees~\cite{vanrees1990subsquares} conjectured that a random Latin square $L_n$ should not have a decomposition into transversals with high probability (whp), however, Wanless and Webb~\cite{wanless2006existence} observed in 2006 that numerical calculations suggest that $L_n$ should have such a decomposition with high probability.

It has long been known that, when $n$ is even, a Latin square of order $n$ may not have even a single transversal (as, for example, seen by the canonical example of the addition table for $\mathbb{Z}_{2m}$, for any $m\in \mathbb{Z}$). However, any Latin square of order $n$ does contain a large partial transversal, that is, a large collection of cells which share no row, column, or symbol. The natural extremal problem on the size of the largest partial transversal that always exists is the topic of the well-known Ryser-Brualdi-Stein conjecture~\cite{Brualdi,Ryser,Stein}, with origins from 1967, which suggests that every Latin square of order $n$ should have a transversal when $n$ is odd, and a partial transversal with $n-1$ cells when $n$ is even. Following a long-standing bound of Shor~\cite{shor} (whose proof was later corrected by Hatami and Shor~\cite{hatamishor}), significant progress towards the Ryser-Brualdi-Stein conjecture has been made in recent years by studying Latin squares from the perspective of edge-coloured graphs (as we do here, and as is described in Section~\ref{subsec:proofsketch}). In particular, after significant progress by Keevash, Pokrovskiy, Sudakov and Yepremyan~\cite{KPSY}, Montgomery~\cite{montgomery2023proof} showed that, for sufficiently large $n$, every Latin square of order $n$ has a partial transversal with $n-1$ cells. This comes very close to a single transversal, while in this paper we wish to determine whether, with high probability, we can find $n$ disjoint transversals in a random Latin square. In every Latin square of order $n$ this is not possible. Indeed, clearly for every even order $n$ we have examples of Latin squares with not even a single transversal, and Wanless and Webb~\cite{wanless2006existence} confirmed the existence of Latin squares which do not have a decomposition into transversals for every order $n>3$. However, some approximate version of this is true. In particular, Montgomery, Pokrovskiy and Sudakov~\cite{montgomery2018decompositions} showed that every Latin square of order $n$ contains $(1-o(1))n$ disjoint partial transversals with $(1-o(1))n$ cells.

This is all to say that every large Latin square has some approximation of the properties we want to find in a random Latin square whp. However, finding these exact properties in a random Latin square whp is surprisingly difficult. For example, it is very challenging to show even that a typical random Latin square contains at least one transversal, and this was proved only in 2020, by Kwan~\cite{kwan2020almost}.
A significant part of the challenge is finding a way to study a random Latin square. Roughly, this can reasonably be pinned to the rigidity of Latin squares; that is, that it is hard to make small modifications to a Latin square to reach another Latin square.

 In \cite{kwan2020almost}, Kwan's main focus was the closely related problem of finding a perfect matching in a uniformly random Steiner triple system of order $n\equiv 3 \,\textrm{mod}\, 6$, using methods that could be adapted for transversals in Latin squares  (see below as well as~\cite{kwan2020almost} for more details on this, and its connection to transversals in Latin squares). Ferber and Kwan~\cite{ferbkwan} subsequently showed that a random Steiner triple system of order $n\equiv 3 \,\textrm{mod}\, 6$ contains disjoint perfect matchings covering all but $o(n^2)$ of its edges. Though they did not do so, similar adaptations to their methods appear capable of showing that a random Latin square of order $n$ has, with high probability, $(1-o(1))n$ disjoint transversals. In this paper, we will show that, in fact, with high probability a random Latin square contains $n$ disjoint transversals. In particular, then, the proportion of Latin squares of order $n\equiv 2\,\mod\, 4$ which provide a counterexample to Euler's conjecture tends to 1 as $n$ tends to infinity.

\fi
 \begin{theorem}\label{thm:mainLSversion}
 A random Latin square of order $n$ has a decomposition into transversals with probability $1-o(1)$.
 \end{theorem}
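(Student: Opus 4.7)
The natural framework is to view a Latin square $L$ of order $n$ as a proper $n$-edge-colouring of $K_{n,n}$ (rows and columns forming the two sides, the symbol in cell $(r,c)$ being the colour of edge $rc$), or equivalently as a perfect matching in the tripartite $3$-uniform hypergraph on parts Rows, Columns, Symbols. In this language a transversal is a rainbow perfect matching of $K_{n,n}$ (equivalently, a perfect matching of the associated hypergraph), and a decomposition into $n$ transversals is precisely a rainbow $1$-factorization of $K_{n,n}$ using the colouring given by $L$. My plan is to combine (a) a probabilistic analysis of the random Latin square $L_n$ yielding strong pseudorandomness properties whp, with (b) an absorption-plus-nibble scheme to actually produce the $n$ disjoint transversals.

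For step (a), the goal is to show that whp $L_n$ behaves, on the scale of small substructures, like a typical edge of the hypergraph $K^{(3)}_{n,n,n}$ conditioned on being a perfect matching. Direct analysis is hard because Latin squares are rigid under local edits, so I would work through a switching/coupling argument in the spirit of Kwan's single-transversal result, or couple with a random proper edge colouring of $K_{n,n}$ generated by a triangle-removal process. Concretely, I would aim to show: (i) the number of intercalates, and more generally of each type of small "swappable gadget", is concentrated; (ii) any fixed set of $o(n^2)$ cells is well-spread across rows, columns and symbols; and (iii) various auxiliary bipartite graphs of the form "colour $s$ on rows $R$" are pseudorandom. These estimates should be proved once and then invoked freely in step (b).

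For step (b), the high-level strategy is: (i) reserve a small random absorbing substructure $A \subseteq L_n$ of size $o(n^2)$, built from many vertex/colour-disjoint flexible gadgets (intercalates or more elaborate switchings), with the property that for \emph{any} balanced leftover $U$ of cells of size $o(n^2)$, the union $A\cup U$ can be decomposed into $O(1)$ rainbow perfect matchings of $L_n$; (ii) apply an iterative semi-random matching / nibble procedure to the cells outside $A$ to extract $n - o(n)$ pairwise disjoint rainbow perfect matchings, leaving only a small balanced remainder $U$; (iii) finish by decomposing $A\cup U$ using the absorber. The nibble step (ii) is by now fairly standard under the pseudorandomness from (a), and mirrors the approximate-decomposition results already known for arbitrary Latin squares.

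The principal obstacle is the construction of the absorber in step (i). In an arbitrary graph problem one can \emph{add} designed structure to engineer absorption, but here the absorber must be harvested from the existing cells of $L_n$, and no local modification of a Latin square is itself a Latin square unless it is built from compatible switchings (intercalates, $2\times k$ row swaps, etc.). I would attempt to construct $A$ as a union of many disjoint "distributive" gadgets, each capable of shifting the identity of a bounded number of cells between two candidate transversals, and arranged so that arbitrary small balanced deficiencies can be patched by activating an appropriate subset of them. Proving that such a global absorber exists, and that it actually absorbs any admissible leftover, is the heart of the proof; once it is in place, Theorem~\ref{thm:mainLSversion} follows by taking a union bound over the failure events of steps (a), (ii) and (iii).
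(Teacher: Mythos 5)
Your high-level framing — view $L_n$ as a random optimal edge-colouring of $K_{n,n}$, establish pseudorandomness whp, then run an absorption-plus-nibble argument — matches the paper. But there is a genuine structural gap in step~(b) that prevents your scheme from closing.

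You propose to run the semi-random method on the cells outside the reserved set $A$ to \emph{extract $n-o(n)$ pairwise disjoint rainbow perfect matchings}, and then decompose $A\cup U$ into the remaining $O(1)$. The problem is that the nibble does not produce perfect matchings: it produces almost-perfect rainbow matchings, each missing $o(n)$ (but unbounded-in-$n$) vertices. There is no known way to upgrade $n-o(n)$ almost-perfect rainbow matchings obtained this way to genuinely perfect ones while keeping them pairwise disjoint, and the leftover after the nibble is not a set $U$ that you can simply append to $A$ and decompose into $O(1)$ matchings — it is a set of $\Theta(n)$ ``holes'' scattered across essentially all $n$ matchings. The paper handles exactly this by constructing \emph{all $n$} near-matchings $\hat M_1,\ldots,\hat M_n$ simultaneously, each deliberately carrying a small controlled set of imperfections (vertices covered $0$ or $2$ times, confined to prescribed reservoir sets $R_i,T_i$), then extending each to an $n$-edge rainbow subgraph, and finally correcting all $n$ of them in one shot by activating a pre-built, sparse system of switchers. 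The absorber thus acts across the whole collection of matchings rather than producing a handful of last ones; the switcher collection $\mathcal I_\tau$ is designed in Part~\ref{partA} \emph{before} the random Latin square is revealed (a distributive-absorption template), and only then realised inside $L_n$ in Part~\ref{partB}. Your proposal does gesture at distributive gadgets, but it does not confront the fact that the corrections must be made in coupled pairs across matchings and must be decomposable through a sparse schematic; that is the crux of the argument and where the bulk of the work in the paper lives.

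A secondary point: you suggest proving the pseudorandomness estimates in step~(a) by switching or by coupling with a triangle-removal process à la Kwan. The paper deliberately avoids this and instead develops the deletion method of R\"odl and Ruci\'nski (following Kwan, Sah, Sawhney) to obtain both upper \emph{and}, more novelly, lower bounds on the counts of the specific length-$62$ ``link'' paths it needs. The reasons (explained in Section~\ref{sec:discussiondeletionmethod}) are that the relevant structures must be counted with enough precision to feed the semi-random method's degree and codegree hypotheses, and that switching would require tracking the substructure counts robustly through an iteration of Latin-square switches; the deletion method sidesteps that. Either route might in principle be made to work, but a direct switching argument for the counts the proof actually needs is far from routine, and this should not be dismissed as a known step.
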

\ifseconeout\else

Since the result of Kwan~\cite{kwan2020almost}, two alternative methods have been developed to show that a random Latin square of order $n$ has a transversal with high probability, each moreover strengthening this result in different ways. Firstly, Eberhard, Manners, and Mrazovi\'c~\cite{eberhard2023transversals} gave a remarkably tight estimate on the number of transversals in a typical random Latin square of order $n$, using tools from analytic number theory. Then, Gould and Kelly~\cite{gould2023hamilton} developed techniques from their previous work with K\"uhn and Osthus~\cite{gould2022almost} to show that a random Latin square  is likely to contain a particular type of transversal known as a `Hamilton transversal', using more combinatorial methods than~\cite{eberhard2023transversals}, but which are distinctly different to those in the original approach of Kwan~\cite{kwan2020almost}. To prove Theorem~\ref{thm:mainLSversion}, we also take a combinatorial approach. Before discussing this further, we will discuss the connection of our work to resolvable designs.

\begin{figure}[t]
\begin{center}
\includegraphics{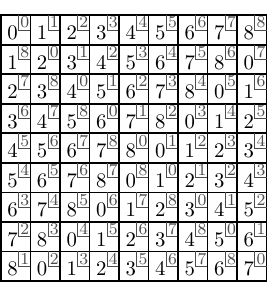}\hspace{2cm}\includegraphics{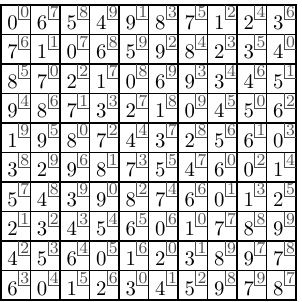}
\end{center}
\caption{Two Latin squares decomposed into transversals. On the left, the addition group of integers $\mod 9$ is given, which is then decomposed into transversals indicated by integers in the top right, starting with the transversal along the leading diagonal (marked by 0) which is then moved to the right by $1 \mod 9$ 8 times to create 8 new transversals. On the right,  Bose, Parker, and Shrikhande's example of a Latin square of order $10$ with a transversal decomposition~\cite{BS}.}\label{fig:LSexample}
\end{figure}

\medskip

\noindent\textbf{Resolvable designs.} Our main result also forms part of the area of hypergraph decompositions, with particularly strong links to resolvable designs. An \emph{$(n,q,r,\lambda)$-design} is a collection $S$ of $q$-element subsets of an $n$-element set $X$ such that every $r$-element subset of $X$ is contained in exactly $\lambda$ sets in $S$. The design, moreover, is called \emph{resolvable}, if $S$ can be partitioned into perfect matchings, that is, collections of vertex-disjoint sets in $S$ which cover every element of $X$. Resolvable designs have a long history, dating back to Kirkman's schoolgirl problem~\cite{kirkman1850note} from 1850 (see~\cite{wilson2003early} for a detailed history). For any given parameters, the existence of a resolvable $(n,q,r,\lambda)$-design requires some simple necessary divisibility conditions (see~\cite{keevash2018existence}). Subject to these, when $r=2$ and $n$ is large, resolvable $(n,q,r,\lambda)$-designs were shown to exist by Ray-Chaudhuri and Wilson~\cite{ray1971solution,ray1973existence} in the 1970's,
while for $r>2$ and $n$ sufficiently large, resolvable $(n,q,r,\lambda)$-designs were shown to exist by Keevash~\cite{keevash2018existence} in 2018, following his revolutionary proof of the existence of designs in~\cite{keevash2014existence}.

A Steiner triple system of order $n$ is an $(n,3,2,1)$-design; they were observed to exist if and only if $n\equiv 1,3 \,\mod \, 6$ by Steiner in 1853 (see~\cite{wilson2003early}). Equivalently, a 3-uniform hypergraph $\mathcal{H}$ is a Steiner triple system if it has $n$ vertices, each pair of which is contained in exactly one edge. A Latin square $L$ of order $n$ is equivalent to a 3-partite 3-uniform hypergraph $\mathcal{H}_L$ with $n$ vertices in each class $A$, $B$ and $C$, representing the rows, columns and symbols respectively, where we add an edge $abc$ exactly when the symbol $c$ appears in $L$ in the cell $(a,b)$.
Then, the conditions for a Latin square imply that every pair of vertices of $\mathcal{H}_L$ from different classes appear in exactly one edge of $\mathcal{H}_L$. Furthermore, a transversal in $L$ corresponds exactly to a perfect matching in $\mathcal{H}_L$, so that $L$ has a decomposition into transversals if and only if $\mathcal{H}_L$ is resolvable. Thus, we have shown that the hypergraph $\mathcal{H}_L$ corresponding to a random Latin square $L$ is with high probability resolvable.

As mentioned above, in 2020 Kwan~\cite{kwan2020almost} showed that if a Steiner triple system of order $n\equiv 3\,\mod \;6$ is chosen uniformly at random then it has a perfect matching with high probability. Subsequently, Ferber and Kwan~\cite{ferbkwan} showed that almost every such Steiner triple system has $(1/2-o(1))n$ disjoint perfect matchings, where these matchings must then use all but $o(n^2)$ of the triples, so that the triple system is thus almost resolvable.
Ferber and Kwan~\cite{ferbkwan} conjectured that, if $n\equiv\, 3\mod 6$, then almost every Steiner triple system of order $n$ is resolvable. That is, that the equivalent result to Theorem~\ref{thm:mainLSversion} should hold for Steiner triple systems.
It would seem that new ideas are needed, however, to show this. In particular, we will use results (discussed in Section~\ref{sec:latinlit}) which do not have a known analogue in the non-partite setting as they follow by counting perfect matchings in bipartite graphs. The techniques used in~\cite{ferbkwan,kwan2020almost} are quite different from those we use here. In \cite{kwan2020almost} a random Latin square is studied by approximating it using a modified random triangle removal process, while \cite{ferbkwan} additionally uses a generalisation of the sparse regularity
lemma to hypergraphs in conjunction with a generalisation to linear hypergraphs of the resolved K\L R conjecture.

\medskip

\noindent\textbf{Our methods.} To prove Theorem~\ref{thm:mainLSversion}, we will construct an intricate absorption structure. We first construct a template independent of the Latin square, and then adapt the template to a randomly chosen Latin square using the semi-random method applied in auxiliary hypergraphs, finding the properties we require to hold whp in a random Latin square using the deletion method and (implicitly) the switching method.
Often, we will require strong recent developments of these techniques, along with further novelties. These techniques are described in detail where appropriate throughout the paper, beginning with an overview of the proof, and the rest of the paper, in Section~\ref{sec:proofsketch}. Let us highlight here, though, some particularly key points about our proof.

The first is that we develop an `absorption schematic' (in Part~\ref{partA} of our proof) which gives a sparse set of possible local corrections that together can make any (reasonable) globally-balanced set of corrections (see Section~\ref{sec:keylemmaone} and Section~\ref{sec:absorb} for full details). This is a template for building an absorber which is independent of our work in random Latin squares, and thus may be useful elsewhere.

Secondly, the switching method can and has been used directly to find small substructures in random Latin squares (see, for example,~\cite{gould2023hamilton}), but instead we will use the deletion method. This was used by Kwan, Sah, and Sawhney~\cite{kwan2022large} to bound above the likely number of certain substructures, and, as well as developing this, we will show how to use the deletion method to bound below the likely number of some particular substructures we will use. This, and its advantages over using the switching method directly, is discussed in Section~\ref{sec:discussiondeletionmethod}.

Finally, let us note here that a major source of the complexity in finding our required absorption structure in a random Latin square via the semi-random method is that it is found in three applications of the semi-random method to an auxiliary hypergraph, the last of which depends on a previous application. That is, we will find part of the absorption structure and require it to satisfy some carefully chosen properties so that we can then apply the semi-random method again to find certain paths connecting up this initial structure. While this requires us to use a forbidding list of properties, and to apply a recent implementation of the semi-random method using weight functions to record desirable properties (see Section~\ref{sec:nibble}), all of these properties will confirm some simple heuristic.
\fi


\section{Proof overview and preliminaries}\label{sec:proofsketch}
\ifbitsoftwoout\else
In this section, we will sketch the overall form of our proof of Theorem~\ref{thm:mainLSversion}, before briefly outlining the rest of the paper and then covering some preliminaries. Most notably, these preliminaries include recalling an implementation of the semi-random method (in Section~\ref{sec:nibble}) and the results we will use to prove many of the properties in random Latin squares (in Section~\ref{sec:latinlit}).
\fi

\subsection{Proof sketch}\label{subsec:proofsketch}\ifbitsoftwoout\else
As is now common (and following, for example,~\cite{KPSY}), we will approach Theorem~\ref{thm:mainLSversion} by studying an equivalent formulation in properly coloured graphs. Let $K_{n,n}$ be the complete bipartite graph with vertex classes $A$ and $B$, where $|A|=|B|=n$. A proper colouring of $K_{n,n}$ is a colouring of the edges so that no two edges which share a vertex have the same colour. An optimal colouring is a proper colouring which uses the minimum number of colours among all proper colourings, which, for $K_{n,n}$, is $n$. We will always assume $K_{n,n}$ is properly coloured using colours from $C:=[n]=\{1,\ldots,n\}$.

Given a Latin square $L$ of order $n$ whose rows are indexed by $A$ and columns by $B$, which furthermore uses the symbol set $[n]$, we can define an equivalent optimal colouring of $K_{n,n}$ as follows. For each $a\in A$ and $b\in B$, let the colour of $ab$, denoted by $c(ab)$, be the symbol in the cell of $L$ whose row corresponds to $a$ and whose column corresponds to $b$. That a Latin square has $n$ symbols with no symbol appearing twice in any row or any column immediately implies that this colouring uses $n$ colours and is proper, and thus we have an optimal colouring of $K_{n,n}$. Similarly, a Latin square of order $n$ can be constructed from any optimal colouring of $K_{n,n}$, and thus the optimal colourings of $K_{n,n}$ correspond exactly to the Latin squares of order $n$. Furthermore, it is easy to see that a transversal in a Latin square corresponds exactly under this equivalence to a perfect matching in the corresponding optimally coloured $K_{n,n}$ which has a different colour on each of its edges. We refer to such a matching as a {\it rainbow perfect matching}. Further connections and related problems on rainbow subgraphs can be found in the recent survey by Pokrovskiy~\cite{Alexeysurvey}.

We will show the following equivalent version of Theorem~\ref{thm:mainLSversion}.

\fi
\begin{theorem}\label{thm:main}
Let $G$ be an optimally coloured copy of $K_{n,n}$ chosen uniformly at random from all such colourings. Then, with probability $1-o(1)$, $G$ has a decomposition into rainbow perfect matchings.
\end{theorem}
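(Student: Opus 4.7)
The plan is an absorption argument adapted to the rigid setting of a uniformly random optimal edge-colouring of $K_{n,n}$. I would construct inside $G$ a small ``absorbing structure'' $\mathcal{A}$ that carries a designated family of partial rainbow matchings using its edges together with some reserved colours, and that has the following property: for \emph{any} admissible leftover $H$ on the unused edges and colours of $G\setminus\mathcal{A}$, one can locally reroute part of $\mathcal{A}$ to incorporate $H$ and extend to a full decomposition of $G$ into rainbow perfect matchings. Given such an $\mathcal{A}$, the remaining task is to partition $G\setminus\mathcal{A}$ (using the colours not reserved by $\mathcal{A}$) into rainbow perfect matchings up to a tiny admissible leftover; this should be achievable via a R\"odl-nibble-style semi-random argument in an auxiliary hypergraph whose vertices encode the still-unused edges and colours of the reduced instance, and whose hyperedges encode valid rainbow matchings.

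The design of $\mathcal{A}$ should be purely combinatorial and independent of the random colouring. The key is a sparse \emph{schematic}: a bounded-degree structure made of many small ``correction gadgets'', together with a proof that every globally balanced correction vector can be written as a sum of the local corrections supplied by the gadgets. Global balance is the right notion because an admissible leftover $H$ (cells still to be covered by as-yet-unassigned colours) must have the same deficit on each row, column and colour class. Realising the schematic inside $K_{n,n}$ then reduces to finding many vertex-, edge- and colour-disjoint copies of a bounded family of small gadget types, each with prescribed row, column and colour roles. This is the object the paper calls the ``absorption schematic'', and building it is the one-off combinatorial heart of the argument.

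To embed the schematic into a typical optimally coloured $K_{n,n}$, I would set up an auxiliary hypergraph $\mathcal{J}$ whose vertices represent the slots that need to be filled and whose edges represent valid gadget placements, and run the semi-random method on $\mathcal{J}$ to obtain an almost-perfect packing. This requires $\mathcal{J}$ to be approximately regular with small codegrees, and additionally requires control of weight functions recording the ``good'' properties the packing must inherit (so that a subsequent nibble can still be run on the reduced instance). All of these conditions translate into counting statements about small substructures of the random colouring; for these I would use the deletion method of Kwan--Sah--Sawhney, exploiting the bipartite matching-count tools available in the Latin-square setting, to obtain both upper and lower bounds on substructure counts. One crucial subtlety is that the absorber cannot be assembled in a single nibble: after finding an initial part of $\mathcal{A}$, a second nibble must be run on a hypergraph defined by the outcome of the first, in order to connect the gadgets into a single coherent absorbing structure by short rainbow paths while respecting a list of forbidden configurations.

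The main obstacle I expect is reconciling flexibility with scarcity. The absorber must be flexible enough to correct any admissible leftover, but it must simultaneously be sparse enough to embed into a random $K_{n,n}$ via nibble, and its local gadgets must have their substructure counts concentrated tightly enough for the deletion method to give useful bounds. Getting the schematic's gadgets to satisfy all three requirements at once, and then propagating the ``good'' weight conditions through two or three dependent applications of the semi-random method without the error terms blowing up, is where the real work lies. Once $\mathcal{A}$ is in place, the final nibble decomposes $G\setminus\mathcal{A}$ up to a balanced leftover and $\mathcal{A}$ absorbs it, yielding with probability $1-o(1)$ a rainbow perfect matching decomposition of $G$ and hence Theorem~\ref{thm:main}.
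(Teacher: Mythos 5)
Your proposal matches the paper's architecture essentially point for point: a combinatorial absorption schematic built to express any balanced correction as a sum of sparse local gadgets (the paper's Part~\ref{partA}, Lemma~\ref{keylemma:absorption}), realised in the random colouring via dependent applications of the semi-random method to auxiliary hypergraphs with weight-function control and deletion-method substructure counts (Part~\ref{partB}, using Theorem~\ref{thm:nibble} and Theorem~\ref{thm:Llinks}), followed by a covering/balancing/partitioning step up to an admissible leftover that the absorber then corrects (Part~\ref{partC}). You have correctly identified the two genuine novelties the paper needs — the lower-bound use of the deletion method and the chained nibble applications — so this is the same proof strategy, albeit stated at a level that leaves the hard quantitative bookkeeping (the tribes/families hierarchy that keeps the schematic sparse, and the precise link-counting estimates) to be filled in.
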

\ifbitsoftwoout\else

We write $\mathcal{G}^{\col}_n$ for the collection of optimally properly coloured copies of $K_{n,n}$ coloured with colour set $C=[n]$ and write $G \sim G^{\col}_n$ when $G$ is selected uniformly at random from $\mathcal{G}^{\col}_n$. Our aim, then, is to show that $G\sim G^{\col}_n$ with high probability has a decomposition into $n$-edge (perfect) rainbow matchings, $M_1, \ldots, M_n$. We refer to these rainbow matchings as our {\it target} matchings. Using the semi-random method (as, for example, implemented in Latin squares by Montgomery, Pokrovskiy and Sudakov~\cite{montgomery2018decompositions}) it can be shown that any $G\in \mathcal{G}^{\col}_n$ contains $n$ disjoint rainbow matchings of size $(1-o(1))n$. With care, this could be used along with a random partitioning of the remaining edges (somewhat like we do in Section~\ref{sec:C4}) to show that, with high probability, $G\sim G^{\col}_n$ can be decomposed into $n$ rainbow subgraphs ${M}_1,\ldots,{M}_n$, one for each of our $n$ target matchings, which each have $n$ edges and are close to perfect matchings, in that they have maximum degree at most 2 and $(1-o(1))\cdot 2n$ of the vertices have degree~$1$.
Our aim is to take such a relaxed decomposition, and correct it into $n$ perfect rainbow matchings. To do so, we will use methods falling under the overall general technique of `absorption', as codified by R\"odl, Ruci\'nski, and Szemer\'edi~\cite{rodl2006dirac} in 2006. The fundamental idea here is that we should prepare for the corrections we will need to make at the end by initially choosing parts of our random subgraphs to allow later corrections to be made. In particular, this preparation and care at the start ensures that we are able to make a large number of different possible corrections, which subsequently leads to more flexibility in completing to a suitable relaxed decomposition, which we then know can be corrected into perfect rainbow matchings due to the care taken at the start.

To illustrate this, let us give a simple example (see also Figure~\ref{fig:simpleswitcher}).
Suppose a vertex $x$ has degree 2 in $M_1$ and degree 0 in $M_2$ while a vertex $y$ has degree 2 in $M_2$ and degree 0 in $M_1$. If there are matchings $F_1\subset M_1$ and $F_2\subset M_2$ such that $F_1$ and $F_2$ have the same colours and have the same vertex set except that $V(F_1)$ contains $x$ but not $y$ and $V(F_2)$ contains $y$ but not $x$, then we can correct the degrees of $x$ and $y$ in $M_1$ and $M_2$
by switching $F_1$ and $F_2$ between these near-matchings. That is, letting $M_1'=(M_1\setminus F_1)\cup F_2$ and $M_2'=(M_2\setminus F_2)\cup F_1$, we have two subgraphs which still have $n$ edges and are still rainbow, use all the edges in $M_1\cup M_2$ (so are still edge-disjoint from $M_3,\ldots,M_n$), and in which the degrees of all the vertices in $M_1'$ and $M_2'$ are the same as in $M_1$ and $M_2$
except now $x$ and $y$ both have degree 1 in these subgraphs. We call $(F_1,F_2)$ an $\{(1,x),(2,y)\}$-switcher, as it can alter the degree of $x$ and $y$ in $M_1$ and $M_2$ while keeping the edge colours and other vertex degrees the same.

We could ensure that $M_1$ and $M_2$ contain together such a switcher by finding in $G$ an $x,y$-path whose odd and even edges form subgraphs which are rainbow and use the same colour set, and assigning the even edges to $M_1$ and the odd edges to $M_2$ (see Figure~\ref{fig:simpleswitcher} for a shorter example of the switcher we eventually use). As discussed later, some such paths will likely exist when we choose $G\sim G^\col_n$, though they will be rare enough that we will have to specifically construct our matchings to contain such a switcher. Given edge-disjoint rainbow near-matchings $M_1, \ldots, M_n$, $i, j \in [n]$ and either $x,y \in A$ or $x,y \in B$, we define an {\it $\{(i,x),(j,y)\}$-switcher} to be an even length $x,y$-path $P$ consisting of edges $e_1e_2\ldots e_{2s}$ for some $s \in \mathbb{N}$ such that $M_{\odd}:=\{e_{2k-1}: k \in [s]\} \subseteq M_i$ and $M_{\even}:=\{e_{2k}: k \in [s]\} \subseteq M_j$, with $M_{\odd}$ and $M_{\even}$ being rainbow matchings with the same colour set. As described above, such an $x,y$-path $P$ would enable us to switch edges in $M_i$ and $M_j$ such that the updated subgraphs are still rainbow in the same colour set and now in $M_i$ vertex $x$ has degree one less and $y$ has degree one more, and in $M_j$ vertex $y$ has degree one less and vertex $x$ has degree one more. 

\fi


\begin{center}
\begin{figure}
\begin{center}
\begin{tikzpicture}
\def\vheight{0.6}
\def\vwidth{10}
\def\spacer{0.35}
\def\upp{0.2}
\def\thmid{-3.15}
\def\thmst{1.3}
\def\lineinw{-1.8}
\def\dropp{0.8}

\foreach \x in {0,1}
\foreach \y in {0,1}
{
\coordinate (A\x\y) at ($(\x*\vwidth,\y*\vheight)$);
\coordinate (B\x\y) at ($(\x*\vwidth,\y*\vheight)-(0,\dropp)$);
\coordinate (C\x\y) at ($(\x*\vwidth,\y*\vheight)-(0,0.5*\dropp)+(0,1*\y-1*0.5)$);
}

\coordinate (Asmm) at ($0.5*0.71*(A01)+0.5*0.29*(A10)+0.5*0.71*(A00)+0.5*0.29*(A11)$);
\coordinate (Bsmm) at ($(Asmm)-(0,\dropp)$);
\foreach \labb/\coll/\filll/\www/\hhh in {Asmm/blue!30/white/0.35*0.58/0.5,Bsmm/blue!30/white/0.35*0.58/0.5}
{
\def\newwidth{\www*\vwidth}
\def\newheight{\hhh*\vheight}
\begin{scope}[shift={(\labb)}]
\draw [\coll,fill=\filll,thick,rounded corners] ({-\newwidth},0) -- ({-\newwidth},{\newheight}) -- ({\newwidth},{\newheight}) -- ({\newwidth},{-\newheight}) -- ({-\newwidth},{-\newheight}) -- ({-\newwidth},0);
\end{scope}
}

\def\vxsp{0.7}


\coordinate (A1) at ($(Asmm)-(\vxsp,0)$);
\coordinate (A2) at ($(Asmm)$);
\coordinate (A3) at ($(Asmm)+(\vxsp,0)$);

\coordinate (B1) at ($(Bsmm)-0.5*(\vxsp,0)$);
\coordinate (B2) at ($(Bsmm)+0.5*(\vxsp,0)$);
\coordinate (Y) at ($(Bsmm)+1.5*(\vxsp,0)$);
\coordinate (X) at ($(Bsmm)-1.5*(\vxsp,0)$);

\draw [thick,blue] (X) -- (A1);
\draw [thick,blue] (B2) -- (A2);
\draw [thick,red] (B1) -- (A1);
\draw [thick,red] (A3) -- (B2);
\draw [thick,darkgreen] (A2) -- (B1);
\draw [thick,darkgreen] (A3) -- (Y);

\draw ($(X)-(0.2,0)$) node {$x$};
\draw ($(Y)+(0.2,0)$) node {$y$};

\foreach \cccooo in {A1,A2,A3,B1,B2,X,Y}
{
\draw [fill] (\cccooo) circle[radius=0.05];
}
\end{tikzpicture}\;\;\;
\begin{tikzpicture}
\def\vheight{0.6}
\def\vwidth{10}
\def\dropp{0.8}
\coordinate (Asmm) at ($0.5*0.71*(A01)+0.5*0.29*(A10)+0.5*0.71*(A00)+0.5*0.29*(A11)$);
\coordinate (Bsmm) at ($(Asmm)-(0,\dropp)$);
\foreach \labb/\coll/\filll/\www/\hhh in {Asmm/blue!00/white/0.15*0.58/0.5,Bsmm/blue!00/white/0.15*0.58/0.5}
{
\def\newwidth{\www*\vwidth}
\def\newheight{\hhh*\vheight}
\begin{scope}[shift={(\labb)}]
\draw [\coll,fill=\filll,thick,rounded corners] ({-\newwidth},0) -- ({-\newwidth},{\newheight}) -- ({\newwidth},{\newheight}) -- ({\newwidth},{-\newheight}) -- ({-\newwidth},{-\newheight}) -- ({-\newwidth},0);
\end{scope}
}

\def\vxsp{0.7}


\coordinate (A1) at ($(Asmm)-(\vxsp,0)$);
\coordinate (A2) at ($(Asmm)$);
\coordinate (A3) at ($(Asmm)+(\vxsp,0)$);

\coordinate (B1) at ($(Bsmm)-0.5*(\vxsp,0)$);
\coordinate (B2) at ($(Bsmm)+0.5*(\vxsp,0)$);
\coordinate (Y) at ($(Bsmm)+1.5*(\vxsp,0)$);
\coordinate (X) at ($(Bsmm)-1.5*(\vxsp,0)$);

{\draw [thick,blue] (X) -- (A1);}
{\draw [thick,red] (A3) -- (B2);}
{\draw [thick,darkgreen] (A2) -- (B1);}

\draw ($(X)-(0.2,0)$) node {$x$};
\draw ($(Y)+(0.2,0)$) node {$y$};

\draw ($0.0*(Y)+1*(A3)+(0.75,0)$) node {$\in M_i$};

\draw ($0.5*(X)+0.5*(A1)-(0.75,0)$) node {$=$};


\foreach \cccooo in {A1,A2,A3,B1,B2,X,Y}
{
\draw [fill] (\cccooo) circle[radius=0.05];
}
\end{tikzpicture}
\begin{tikzpicture}
\def\vheight{0.6}
\def\vwidth{10}
\def\dropp{0.8}
\coordinate (Asmm) at ($0.5*0.71*(A01)+0.5*0.29*(A10)+0.5*0.71*(A00)+0.5*0.29*(A11)$);
\coordinate (Bsmm) at ($(Asmm)-(0,\dropp)$);
\foreach \labb/\coll/\filll/\www/\hhh in {Asmm/blue!00/white/0.15*0.58/0.5,Bsmm/blue!00/white/0.15*0.58/0.5}
{
\def\newwidth{\www*\vwidth}
\def\newheight{\hhh*\vheight}
\begin{scope}[shift={(\labb)}]
\draw [\coll,fill=\filll,thick,rounded corners] ({-\newwidth},0) -- ({-\newwidth},{\newheight}) -- ({\newwidth},{\newheight}) -- ({\newwidth},{-\newheight}) -- ({-\newwidth},{-\newheight}) -- ({-\newwidth},0);
\end{scope}
}

\def\vxsp{0.7}


\coordinate (A1) at ($(Asmm)-(\vxsp,0)$);
\coordinate (A2) at ($(Asmm)$);
\coordinate (A3) at ($(Asmm)+(\vxsp,0)$);

\coordinate (B1) at ($(Bsmm)-0.5*(\vxsp,0)$);
\coordinate (B2) at ($(Bsmm)+0.5*(\vxsp,0)$);
\coordinate (Y) at ($(Bsmm)+1.5*(\vxsp,0)$);
\coordinate (X) at ($(Bsmm)-1.5*(\vxsp,0)$);

{\draw [thick,blue] (B2) -- (A2);}
{\draw [thick,red] (B1) -- (A1);}
{\draw [thick,darkgreen] (A3) -- (Y);}

\draw ($(X)-(0.2,0)$) node {$x$};
\draw ($(Y)+(0.2,0)$) node {$y$};

\draw ($0.0*(Y)+1*(A3)+(0.75,0)$) node {$\in M_j$};


\draw ($0.5*(X)+0.5*(A1)-(0.75,0)$) node {$+$};

\foreach \cccooo in {A1,A2,A3,B1,B2,X,Y}
{
\draw [fill] (\cccooo) circle[radius=0.05];
}
\end{tikzpicture}
\end{center}
\caption{A simple $\{(i,x),(j,y)\}$-switcher.}\label{fig:simpleswitcher}
\end{figure}
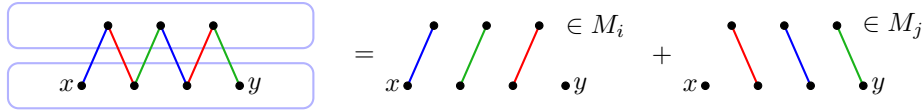
\end{center}


\ifbitsoftwoout\else
These switchers are a simple mechanism to make a small correction to two near-matchings. We wish to keep these switchers as simple as possible as it will be difficult to find many of them in a random optimal colouring of $K_{n,n}$. We will, then, need to build and use these switchers very carefully, due to the following two main considerations.
\begin{itemize}
\item \textbf{We can only create few switchers.} There are around $n^4$ choices for $i,j,u,v$ for which we could find an $\{(i,u),(j,v)\}$-switcher, but the switchers we use need to be chosen edge-disjointly, so we can use at most $\Theta(n^2)$ of them.
\item \textbf{We have to make local alterations in pairs.} Each near-matching $\hat{M}_i$, $i\in [n]$, will have a set of vertices that are not covered, $R_i'$, and a set of vertices that are covered twice, $T_i$. Using switchers, we cannot swap out $v\in T_i$ from $\hat{M}_i$ (i.e., altering its degree from 2 to 1) and swap in $u\in R_i'$ without swapping $u$ out of another near-matching $\hat{M}_j$ and swapping $v$ into the same near-matching, $\hat{M}_j$, so we must find some $j \in [n]$ such that $u \notin R_j'$ and $v \notin T_j$, and that this swap is desirable.
\end{itemize}

To reduce the number of switchers we will need, we will divide our target matchings into `families', and ensure that the corrections we need to make at the end can be done only by switching vertices between target matchings in the same families. Furthermore, we will ensure that the vertices uncovered or covered twice by each near-matching $\hat{M}_i$ belong to sets $R_i$ and $T_i$, respectively (where $R_i$ will function as a `vertex reservoir' in absorption terminology). We will arrange the families into `tribes', where families in the same tribe will help each other to make their corrections by participating in swaps that do not change the degrees or edge colours for that matching. The tribes will make their corrections essentially independently of each other, reducing the number of potential switchers we need to create.

Having created our absorption schematic, we then find the switching paths we need with high probability in the randomly chosen graph $G\sim G^\col_n$.
This will result in rainbow near-matchings $\hat{M}_1',\ldots,\hat{M}_n'$ that we then wish to extend to get $n$-edge perfect rainbow near-matchings $\hat{M}_1, \ldots, \hat{M}_n$. For each $i\in [n]$, in extending to get $\hat{M}_i$ we wish to use exactly all the colours not used on $\hat{M}_i'$, while also covering a set $L_i$ of all the vertices without at least one edge in $\hat{M}_i'$ which are not in $R_i$. The flexibility to do this will come from being able to find edges to cover those vertices in $L_i$ that have their other end point in the reservoir set $R_i$, where $|R_i|$ is much larger than $|L_i|$. Furthermore, we wish to do this so that we can make the final corrections only within each family, so we will need to do these extensions so that the corrections needed for the matchings within each family are suitably balanced.

To summarise, then, we will divide the proof into three mains parts, as follows.

\fi
\begin{enumerate}[label = \textbf{\Alph{enumi}}]
\item {Creating an absorption schematic.} \label{partA}
\item {Realising the absorption schematic by finding disjoint rainbow near-matchings.}\label{partB}
\item {Extending the near-matchings to perfect rainbow near-matchings, so that furthermore the required corrections are balanced within families and any unused vertices lie in the reservoir sets.}  \label{partC}
\end{enumerate}
\ifbitsoftwoout\else

Each of these parts is relatively complex and thus we will sketch our methods in detail before beginning each part. However, for now, we make some brief remarks on some of the ideas we use. For Part~\ref{partA}, we will use ideas from template-based absorption approaches (whose roots lie in the introduction of distributive absorption in~\cite{montgomery2018spanning}) to reduce the number of different switchers we need to create. To decompose the final changes that need to be made to obtain appropriate pairs of switches, we will use methods inspired by work of Barber, K\"uhn, Lo and Osthus~\cite{barber2016edge}.

For Part~\ref{partB}, to find the switching paths we will use the semi-random method as created by R\"odl~\cite{rodlandhisnibble}, using a strong recent implementation by Ehard, Glock and Joos~\cite{EGJnibble}. For this, we will need to find tight bounds on, for example, for each pair of distinct vertices $x,y$ in the same class (i.e., in $A$ or $B$) the likely number of $x,y$-paths of length 62 in $G$ whose odd and even edges use the same 31 (distinct) colours. For the upper-bound on their number that holds with high probability we will use the deletion method of R\"odl and Ruci\'nski~\cite{rodl1995threshold}, following and developing recent work of Kwan, Sah and Sawhney~\cite{kwan2022large}. In contrast to the direct use of the switching method used by, for example, related work of Gould, Kelly, K\"uhn and Osthus~\cite{gould2022almost}, we show how to use the deletion method for the lower bound as well, where more discussion on this approach, and why we take it, can be found in Section~\ref{sec:discussiondeletionmethod}.

For Part~\ref{partC}, we again use the semi-random method along with greedy methods to cover vertices not in the vertex reservoir for each target matching. To balance the corrections needed within each family, we use structures similar to those in Figure~\ref{fig:simpleswitcher}, but we will not need to set them aside beforehand for this purpose. Finally, then, we carefully randomly partition the remaining edges between the matchings.

\medskip \textbf{Paper outline.} In the rest of this section we will describe some of our key notation before covering some other preliminaries. In Section~\ref{sec:setup}, we will give a detailed set-up and divide the proof of Theorem~\ref{thm:main} formally into three key lemmas, representing Parts \ref{partA}, \ref{partB} and \ref{partC} respectively. In Sections~\ref{sec:absorb},~\ref{sec:real} and \ref{sec:balanceandcover} we will carry out Parts~\ref{partA},~\ref{partB} and~\ref{partC} of our proof respectively, whilst in Section~\ref{sec:rand} we obtain bounds on the numbers of switchers with certain fixed colours and vertices, which are crucial for aspects of the proof in both Parts~\ref{partB} and~\ref{partC}.

\fi


\subsection{Notation}\ifbitsoftwoout\else
Much of the notation used throughout this paper is introduced when it first appears, but here we will comment on some notation used throughout the paper. For each $n\in \N$, $K_{n,n}$ is the complete bipartite graph with $n$ vertices in each class, where we use $A$ and $B$ throughout as its two vertex classes. We use $C=[n]=\{1,\ldots,n\}$ throughout as our set of colours, and also use $[n]_0=\{0,1,\ldots,n\}$.
For vertices $x,y\in V(K_{n,n})$, we use $x\simAB y$ to indicate that $x$ and $y$ are in the same vertex class $A$ or $B$, and $x\not\simAB y$ to indicate that $x$ and $y$ are in different vertex classes.
For a set of colours $D \subseteq [n]$, $\mathcal{G}^\col_D$ is the collection of all properly coloured bipartite (simple) graphs with vertex classes $A$ and $B$ which have exactly $n$ edges of each colour in $D$.  Then, $\mathcal{G}^\col_{[n]}$ corresponds to the collection of all Latin squares of order $n$ using the symbols in $[n]$ under the standard equivalence recalled at the start of this section. We write $G \sim G^{\col}_D$ when $G$ is a graph chosen uniformly at random from $\mathcal{G}^\col_D$. When $G\in \mathcal{G}^\col_{[n]}$ and $D\subset [n]$, the graph $G|_D$ is the subgraph of $G$ of edges with colour in $D$.

We will occasionally use multisets, where the elements may occur with repetition. Here, we write $X =_\mult Y$ to require not only that the elements of $X$ and $Y$ are the same, but that the multiplicity of each element is the same. For each edge $e$ in a coloured graph $G$, we write $c_G(e)$ to denote the colour of the edge $e$, often dropping the subscript when it is clear from context. Furthermore, unless stated otherwise, we write $H \subset G$, to mean that $H$ is a subgraph of $G$ which inherits the colouring from $G$. That is, for each $e \in E(H)$ we have that $c_H(e)=c_G(e)$.
An $x,y$-path $P$ of length $\ell$ is a path with $\ell$ edges which has $x$ and $y$ as its endvertices, and we set $\ell(P)=\ell$. We often have an implicit direction on such a path $P$, and when referring to its $k$th edge we count from $x$.

The notation $\llpoly$ is used throughout the paper to compare variables. Where $\alpha \llpoly \beta$, this means that there is some constant $C>0$ which can be chosen so that any required inequalities in the rest of the proof hold if $\alpha\leq \beta^C/C$. For longer hierarchies (as found primarily at \eqref{eq:hierarchy}), these implicit constants are to be chosen from right to left. A detailed overview of this notation can be found in \cite{montgomery2018decompositions}. Less commonly, we also use the more standard notation $\alpha\ll \beta$ where this means that there is some non-negative decreasing function $f$ such that what follows will hold for all $\alpha\leq f(\beta)$. Thus, $\alpha\llpoly \beta$ means that such a function $f$ can be taken to be polynomial in $\beta$. We also use `big-O' notation, as standard. For any $a,b,c\in\mathbb{R}$, we say $a=b\pm c$ if $b-c\leq a\leq b+c$. For any hypergraph $\mathcal{H}$, we use $\Delta^c(\mathcal{H})$ to denote the maximum codegree of $\mathcal{H}$.
\fi


\subsection{Matchings in hypergraphs via the semi-random method}\label{sec:nibble}\ifbitsoftwoout\else
Almost-regular hypergraphs with small codegrees have an almost-perfect matching. This statement summarises a chain of results using the `R\"odl nibble' (also known as the `semi-random method'), that was initiated by R\"odl~\cite{rodlandhisnibble} in 1985, and has since seen a range of qualitative improvements in the variables implicit in `almost'-regular, `small' codegrees and `almost'-perfect.
The `polynomial' bounds we will use result from a sequence of improvements due to Frankl and R\"odl~\cite{frankl1985near}, Pippenger (see~\cite{EGJnibble}), Alon, Kim, and Spencer~\cite{alon1997nearly}, and Kostochka and R\"odl~\cite{kostochka1998partial}.

Beyond good bounds, we will use that the almost-perfect matching found can in fact have a variety of pseudorandom conditions. For example, in a hypergraph $\mathcal{H}$, given a medium-sized vertex set $V$, we may wish the almost-perfect matching to cover almost all of $V$. Alon and Yuster~\cite{alon2005hypergraph} built on work of Pippenger and Spencer~\cite{pippenger1989asymptotic} to give the first result of this kind, showing that the almost-perfect matching could be found to be pseudorandom with respect to many pre-specified vertex sets. We will use the following result of Ehard, Glock, and Joos~\cite{EGJnibble}, which gives good bounds on the various parameters involved, while producing an almost-perfect matching that is pseudorandom with respect to many pre-specified weight functions.
\fi
\begin{theorem}\cite[Theorem~1.2]{EGJnibble}\label{thm:nibble}\label{thm:nibbleorig}
Suppose $\delta\in (0,1)$ and $r\in \N$ with $r\geq 2$, and let $\eps=\delta/50r^2$. Then, there exists $\Delta_0$ such that, for all $\Delta\geq \Delta_0$, the following holds.

Let $\cH$ be an $r$-uniform hypergraph with $\Delta(\cH)\leq \Delta$ and $\Delta^c(\mathcal{H})\leq \Delta^{1-\delta}$ as well as $e(\mathcal{H})\leq \exp(\Delta^{\eps^2})$. Suppose that $\mathcal{W}$
 is a set of at most $\exp(\Delta^{\eps^2})$ weight functions on $E(\mathcal{H})$. Then, there exists a matching $\mathcal{M}$ in $\mathcal{H}$ such that $\omega(\mathcal{M})=(1\pm \Delta^{-\eps})\omega(E(\mathcal{H}))/\Delta$ for all $\omega\in \mathcal{W}$ with $\omega(E(\mathcal{H}))\geq \max_{e\in E(\mathcal{H})}\omega(e)\Delta^{1+\delta}$.
\end{theorem}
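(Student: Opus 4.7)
My plan is to prove this via the semi-random method (R\"odl nibble): run $T = \Theta(\log \Delta)$ rounds of a random nibble process, tracking all weight functions in $\mathcal{W}$ simultaneously. Fix a small constant $\gamma = \gamma(r,\delta) > 0$ and set $\mathcal{H}_0 := \mathcal{H}$. In round $t$, independently select each edge of $\mathcal{H}_t$ with probability $p_t := \gamma/\Delta_t$, where $\Delta_t$ is a deterministic target for the current maximum degree. Let $E'_t$ be the selected edges that are vertex-disjoint from every other selected edge; add $E'_t$ to the partial matching $\mathcal{M}$, and form $\mathcal{H}_{t+1}$ by deleting from $\mathcal{H}_t$ every vertex covered by $\bigcup E'_t$ together with every incident edge.

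Under the codegree hypothesis $\Delta^c(\mathcal{H}) \leq \Delta^{1-\delta}$, standard nibble heuristics show that each edge $e \in E(\mathcal{H}_t)$ enters $E'_t$ with probability $\approx p_t \cdot e^{-r\gamma}$, each vertex of $\mathcal{H}_t$ is covered with probability $\approx \gamma e^{-r\gamma}$, and each edge survives into $\mathcal{H}_{t+1}$ with probability $\approx (1 - \gamma e^{-r\gamma})^r$. This suggests deterministic trajectories $\Delta_{t+1} \approx q \Delta_t$ and $\omega(E(\mathcal{H}_{t+1})) \approx q^r \omega(E(\mathcal{H}_t))$ for every $\omega \in \mathcal{W}$, where $q = q(\gamma,r) \in (0,1)$. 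Summing the per-round matching weight produces $\omega(\mathcal{M}) = (1 + O(\Delta^{-\eps}))\omega(E(\mathcal{H}))/\Delta$, provided the nibble runs until $\Delta_T$ drops below roughly $\Delta^{1-\delta/2}$ and we follow with a greedy cleanup on the residual (negligible thanks to the threshold $\omega(E(\mathcal{H})) \geq \max_e \omega(e) \cdot \Delta^{1+\delta}$).

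The crux of the argument is concentration. For each round $t$ and each tracked statistic, namely the maximum degree of $\mathcal{H}_t$ and the weight $\omega(E(\mathcal{H}_t))$ for each $\omega \in \mathcal{W}$, I would apply a Talagrand-type concentration inequality to the function of the independent indicators $\mathbf{1}[e \text{ selected in round } t]$. The weight hypothesis ensures that each edge has influence at most $\omega(E(\mathcal{H}_t))/\Delta^{1+\delta}$ on the statistic $\omega(E(\mathcal{H}_{t+1}))$, which, combined with the codegree bound, yields concentration within a multiplicative $1 \pm \Delta^{-\Omega(\delta)}$ factor with failure probability at most $\exp(-\Delta^{\eps'})$ for some explicit $\eps' > \eps^2$. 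A union bound over the $T \leq \log \Delta$ rounds and the $|\mathcal{W}| \leq \exp(\Delta^{\eps^2})$ weight functions shows the process stays on trajectory with positive probability, producing the required matching.

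The main obstacle is calibrating the concentration strength against the number of weight functions. Because $|\mathcal{W}|$ may be as large as $\exp(\Delta^{\eps^2})$, I need per-round deviation probabilities genuinely smaller than $\exp(-\Delta^{\eps^2})$, which is exactly what forces the choice $\eps = \delta/50r^2$ in the conclusion. The codegree hypothesis is indispensable here, both to decouple the survival events of neighboring edges (so that the deterministic trajectory is accurate) and to bound the influence of any single edge indicator on $\omega(E(\mathcal{H}_{t+1}))$; without it, the bounded-differences estimate would be far too weak to sustain $T$ iterations against $\exp(\Delta^{\eps^2})$ competing weight functions. Once this concentration step is set up, the rest of the proof is a routine inductive verification that the random trajectory hugs the deterministic one throughout the nibble.
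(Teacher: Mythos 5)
This theorem is not proved in the paper at all: it is quoted verbatim from Ehard, Glock, and Joos \cite{EGJnibble} (their Theorem~1.2) and used as a black box. So there is no ``paper's own proof'' to compare against; the comparison question is vacuous for this statement.

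That said, your sketch is a fair bird's-eye view of how one \emph{would} prove such a result (it matches the Pippenger--Spencer / Alon--Kim--Spencer / EGJ lineage), but it has a genuine gap at the concentration step, which is where all the work actually lives. You claim that the influence of a single selection indicator $\mathbf{1}[e \text{ selected}]$ on $\omega(E(\mathcal{H}_{t+1}))$ is ``at most $\omega(E(\mathcal{H}_t))/\Delta^{1+\delta}$,'' but that is the bound on $\max_f \omega(f)$ granted by the hypothesis, not on the influence. Selecting $e$ removes \emph{every} edge meeting $V(e)$, so the change in $\omega(E(\mathcal{H}_{t+1}))$ caused by flipping one indicator is bounded by the total weight of all edges through the $r$ vertices of $e$, which can be of order $r\Delta_t \max_f \omega(f)$, a factor $\Delta_t$ larger than what you wrote. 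A naive bounded-differences bound with that Lipschitz constant is far too weak to beat a union bound over $\exp(\Delta^{\eps^2})$ weight functions across $\Theta(\log\Delta)$ rounds. Closing this requires either a careful second-moment/variance analysis exploiting the codegree hypothesis to show most pairs of removed edges decouple, or a specifically tuned Talagrand/Freedman-type argument in which the ``typical'' influence (not the worst case) governs the tail; you gesture at the latter but do not supply the calculation that produces a failure probability of the shape $\exp(-\Delta^{c\delta})$ per statistic per round. You also do not address how the process is stopped, in particular why running to degree $\approx \Delta^{1-\delta/2}$ (rather than, say, $\Delta^{1/2}$) interacts with the threshold $\omega(E(\mathcal{H})) \geq \max_e\omega(e)\,\Delta^{1+\delta}$ to make the residual negligible for \emph{every} $\omega \in \mathcal{W}$ simultaneously. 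These are exactly the parts that make the EGJ proof nontrivial, and as written your sketch does not supply them.
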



\subsection{Results on random Latin squares from switching methods}\label{sec:latinlit}
\ifbitsoftwoout\else
A Latin rectangle of order $n$ with $k$ rows is a $k\times n$ array filled with $n$ symbols so that each symbol appears in each row or column exactly once. Thus, a Latin rectangle of order $n$ with $n$ rows is a Latin square of order $n$, and picking $k$ rows of any Latin square forms a Latin rectangle.
Working in edge-coloured graphs, under the correspondence given at the start of Section~\ref{subsec:proofsketch} to move between Theorem~\ref{thm:mainLSversion} and Theorem~\ref{thm:main}, such a Latin rectangle corresponds to a complete bipartite graph which is properly coloured with $n$ colours before $n-k$ vertices are deleted from one vertex class.
More naturally, we will consider instead here the following equivalence to $k$-regular bipartite graphs with $n$ vertices in each vertex class which are properly coloured with $k$ colours.

Let $D\subset [n]$ be a set of size $k$. Let $A$ and $B$ be our two vertex classes of $n$ vertices. Let $G$ be a $k$-regular bipartite graph with vertex classes $A$ and $B$ which is properly coloured with the colours in $D$.
Let $L(G)$ be the $k\times n$ grid with rows indexed by $D$ and columns indexed by $A$, where we put the symbol $b\in B$ in the cell indexed by $(c,a)$, with $c\in D$ and $a\in A$, exactly if $ab$ is an edge in $G$ with colour $c$. Note that $L$ is a bijection from $\mathcal{G}^\col_D$ to the set of Latin rectangles with rows indexed by $D$, columns indexed by $A$, and the set of symbols given by $B$.

We will use the following result, which is \cite[Theorem~3.3]{kwan2022large} (itself a direct result of McKay and Wanless~\cite[Proposition~4]{mckay1999most}) rephrased equivalently in random optimal colourings (see also~\cite[Proposition~4.4]{gould2023hamilton}).

\begin{theorem}\label{thm:modelswapKSS} Let $D\subset [n]$ and $H,H'\in \mathcal{G}^\col_{D}$. Let $G\sim G_{[n]}^\col$. Then,
\[
\frac{\P(G|_D=H)}{\P(G|_D=H')}=e^{O(n\log^2n)}.
\]
\end{theorem}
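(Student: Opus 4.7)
The plan is to recast the ratio as a quotient of completion counts and then bound each count using classical permanent estimates. Writing $N(H)$ for the number of members of $\mathcal{G}^\col_{[n]}$ whose restriction to the colour set $D$ equals $H$, one has
\[
\frac{\P(G|_D = H)}{\P(G|_D = H')} = \frac{N(H)}{N(H')},
\]
so it suffices to bound this ratio by $e^{O(n\log^2 n)}$. Fix $H \in \mathcal{G}^\col_D$; the edges of $K_{n,n}$ not used by $H$ form an $m$-regular bipartite graph $G_H$, where $m := n - |D|$. A Latin square extending $H$ corresponds precisely to a proper edge-colouring of $G_H$ using the $m$ colours of $[n] \setminus D$, each appearing on a perfect matching. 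Hence $N(H) = D(G_H)$, where $D(G)$ denotes the number of ordered $m$-tuples of perfect matchings that decompose $G$.

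Next, I would invoke two classical bounds on the permanent of the bipartite adjacency matrix $M$ of a $d$-regular bipartite graph with $n$ vertices per side. The Van der Waerden conjecture (Egorychev--Falikman), applied to $M/d$, gives $|\mathrm{PM}(G)| = \mathrm{per}(M) \ge L_d := n!\, d^n / n^n$. Br\'egman's inequality gives $\mathrm{per}(M) \le U_d := (d!)^{n/d}$. A direct Stirling computation then yields $\log(U_d/L_d) = O\bigl((n/d)\log d + \log n\bigr)$; crucially, both $L_d$ and $U_d$ are independent of the particular $d$-regular graph.

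Now I would peel matchings off one at a time. If $G$ is $d$-regular and $\mu$ is a perfect matching of $G$, then $G - \mu$ is $(d-1)$-regular, so the bounds above apply at every intermediate stage. Starting from the identity $D(G) = \sum_{\mu \in \mathrm{PM}(G)} D(G - \mu)$ and proceeding by induction on $d$, one obtains the \emph{uniform} estimate
\[
\prod_{d=1}^m L_d \ \le\ D(G)\ \le\ \prod_{d=1}^m U_d
\]
for every $m$-regular bipartite graph $G$ with $n$ vertices per side. In particular $D(G_H)$ and $D(G_{H'})$ lie in the same interval, regardless of the structure of $H$ and $H'$.

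Combining these, one concludes
\[
\log\frac{D(G_H)}{D(G_{H'})} \le \sum_{d=1}^m \log(U_d/L_d) = \sum_{d=1}^m O\!\left(\frac{n\log d}{d} + \log n\right) = O(n \log^2 n),
\]
using $\sum_{d=1}^n (\log d)/d = O(\log^2 n)$, which gives the required bound (and its reciprocal by symmetry in $H, H'$). The main obstacle is purely bookkeeping: one must verify that the Br\'egman and Egorychev--Falikman bounds really do apply at every residual stage (they do, since residuals remain regular bipartite), and that the per-stage errors telescope to $n\log^2 n$ rather than something larger --- the small-$d$ stages each contribute $O(n)$ and the large-$d$ stages collectively contribute $O(n\log^2 n)$ via the harmonic-type sum. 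All three ingredients --- Br\'egman, Egorychev--Falikman, and Stirling --- are classical, so no new tool needs to be developed.
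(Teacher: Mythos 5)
Your proof is correct, and it reconstructs the argument in the references the paper cites for this result (McKay--Wanless Proposition~4 and Kwan--Sah--Sawhney Theorem~3.3) -- the paper itself quotes the theorem without proof. Reducing to the ratio of completion counts $N(H)/N(H')$, identifying $N(H)$ with the number of ordered perfect-matching decompositions of the $m$-regular complement, and then sandwiching that count between $\prod_d L_d$ and $\prod_d U_d$ via Egorychev--Falikman (below) and Br\'egman (above), stage by stage, is exactly the classical route. One small imprecision: the estimate $\log(U_d/L_d) = O\bigl((n/d)\log d + \log n\bigr)$ fails literally at $d=1$ (where $(n/d)\log d = 0$ but $\log(U_1/L_1) = n\log n - \log n! = \Theta(n)$); you acknowledge the correct $O(n)$ contribution for small $d$ in your final paragraph, and since a single $\Theta(n)$ term is absorbed in $O(n\log^2 n)$ the conclusion is unaffected, but it would be cleaner to write the per-stage bound as $O\bigl(n/d + (n/d)\log d + \log n\bigr)$, which is valid for all $d\ge 1$.
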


Given $a\in A$, $b\in B$, $c\in [n]$ and $G\sim G^\col_{[n]}$, the probability that $ab$ has colour $c$ in $G$ is $1/n$ by symmetry. In other words, if $H$ is the bipartite graph with vertex classes $A$ and $B$ which has only one edge, an edge between $a$ and $b$ with colour $c$, then $\P(H\subset G)=1/n$. As long as $H$ has few edges and is properly coloured with colours in $[n]$, we might hope to show that $\P(H\subset G)$ is close to $(1/n)^{e(H)}$. The rigidity of optimal colourings of $K_{n,n}$ (i.e., the corresponding rigidity of Latin squares), however, makes it difficult to determine this probability.
However, we can have a good bound on the corresponding probability for $\P(H\subset G|_D)$, where $D\subset [n]$ contains all of the colours of the edges of $H$ (and not too many edges have the same colour). The following bound is a direct consequence of Theorem 3.4 in~\cite{kwan2022large} translated into random optimal colouring. The original was, in turn, a direct consequence of a result by Godsil and McKay~\cite[Theorem~4.7]{godsil1990asymptotic}) which was proved using the switching method.

\begin{theorem}\label{thm:fixedsubgraph} Let $\delta\leq 1/10$ and $D=[\delta n]$.
Let $H$ be a properly coloured bipartite graph with vertex classes $A$ and $B$ which uses colours from $D$ in which each colour appears at most $\delta n$ times.

Let $G\sim {G}^\col_D$. Then,
\[
\P(H\subset G)=\left(\frac{1+O(\delta)}{n}\right)^{e(H)}.
\]
\end{theorem}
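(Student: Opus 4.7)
The plan is to reduce to the corresponding statement for random Latin rectangles via the bijection $L$ described in Section~\ref{sec:latinlit}, and then invoke the switching-based estimate that underlies \cite[Theorem 3.4]{kwan2022large}. Under $L$, an edge $ab\in E(H)$ with $c(ab)=c\in D$ corresponds to requiring that cell $(c,a)$ of the $\delta n\times n$ rectangle $L(G)$ holds symbol $b$. Since $H$ is properly coloured in colours from $D$, the set of these $e(H)$ cells forms a valid partial Latin rectangle $P$; since no colour appears more than $\delta n$ times in $H$, no row of $P$ is more than $\delta$-filled, and trivially no column or symbol is either. The claim is thus equivalent to
\[
\P(P\subset L(G)) = \left(\frac{1+O(\delta)}{n}\right)^{e(H)}
\]
for a uniformly random $\delta n\times n$ Latin rectangle $L(G)$, which is precisely the content of \cite[Theorem 3.4]{kwan2022large} (itself derived from \cite[Theorem 4.7]{godsil1990asymptotic}).

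To sketch the underlying estimate, I would enumerate the cells of $P$ as $p_1,\ldots,p_{e(H)}$ and argue by induction that
\[
\P\bigl(p_k\in L(G)\,\big|\,p_1,\ldots,p_{k-1}\in L(G)\bigr) = \frac{1+O(\delta)}{n}.
\]
Fix the first $k-1$ prescribed cells, and partition the remaining Latin rectangles by which symbol they carry in the cell underlying $p_k$. Write $N_+$ for the number with the prescribed symbol and $N_-$ for the number with some other symbol. Build a bipartite auxiliary graph between these two families whose edges are $2\times 2$ intercalate switches that toggle whether $p_k$ holds its target symbol. Double counting edges from both sides, each rectangle in $N_+$ admits roughly $n-1$ such switches (one per alternative symbol) and each rectangle in $N_-$ admits roughly one, so $N_+/N_-=(1+O(\delta))/(n-1)$, giving the conditional probability $(1+O(\delta))/n$. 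Iterating over $k=1,\ldots,e(H)$ produces the claimed bound.

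The main obstacle is tight accounting of blocked switches: each previously-fixed cell of $P$ and each cell of the conditioning partial rectangle can forbid a constant number of switching partners. One must check that in aggregate these forbid at most an $O(\delta)$-fraction of the $\Theta(n)$ candidate switches in either direction. This is where the hypotheses $\delta\leq 1/10$ and at most $\delta n$ edges per colour are used: they ensure that the number of forbidden switches is $O(\delta n)$, hence absorbed into the multiplicative $1+O(\delta)$ error. Since these conditions are exactly those in the statement of \cite[Theorem 3.4]{kwan2022large}, after performing the translation via $L$ the proof reduces to invoking that theorem.
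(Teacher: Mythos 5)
Your proposal matches the paper's approach exactly: the paper also proves this by translating to random Latin rectangles via the bijection $L$ and citing \cite[Theorem~3.4]{kwan2022large} (which in turn rests on Godsil--McKay's switching argument). The switching sketch you append is not in the paper but correctly summarizes the underlying mechanism of the cited result.
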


We will always apply Theorem~\ref{thm:modelswapKSS} and Theorem~\ref{thm:fixedsubgraph} together, and therefore it will be convenient to do this through the following corollary.
\fi
\begin{corollary}\label{cor:latinsquareprobabilities}
Let $\delta\leq 1/10$ and $G\sim {G}^\col_{[n]}$. Let $H$ be a properly coloured bipartite graph with vertex classes $A$ and $B$ which has at most $\delta n$ colours, each of which is in $[n]$ and is used at most $\delta n$ times in the colouring.
Then,
\[
\P(H\subset G)=e^{O(\delta\cdot e(H)+n\log^2n)}\cdot n^{-e(H)}.
\]
\end{corollary}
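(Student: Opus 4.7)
The plan is to combine Theorems~\ref{thm:modelswapKSS} and~\ref{thm:fixedsubgraph} by passing from the global model $G\sim G^\col_{[n]}$ to the restricted model $G'\sim G^\col_D$ for an appropriate colour set $D$. Let $D_0\subseteq [n]$ be the set of colours used by $H$, so $|D_0|\leq \delta n$, and choose any $D\supseteq D_0$ with $|D|=\lfloor \delta n\rfloor$ (padding $D_0$ with arbitrary unused colours if necessary). Since every edge of $H$ is coloured in $D$, we have $\{H\subset G\}=\{H\subset G|_D\}$, so it suffices to estimate $\P(H\subset G|_D)$.

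First, I would use Theorem~\ref{thm:modelswapKSS} to show that the law of $G|_D$, when $G\sim G^\col_{[n]}$, is within multiplicative error $e^{O(n\log^2 n)}$ of the uniform law on $\mathcal{G}^\col_D$. To see this, fix any reference $H_0\in\mathcal{G}^\col_D$; Theorem~\ref{thm:modelswapKSS} gives $\P(G|_D=H'')=e^{O(n\log^2 n)}\P(G|_D=H_0)$ for every $H''\in\mathcal{G}^\col_D$, and summing over $H''\in\mathcal{G}^\col_D$ yields $\P(G|_D=H_0)=e^{O(n\log^2 n)}/|\mathcal{G}^\col_D|$, so that $\P(G|_D=H'')=e^{O(n\log^2 n)}\,\P(G'=H'')$, where $G'\sim G^\col_D$. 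Summing this identity over those $H''\in\mathcal{G}^\col_D$ which contain $H$ as a subgraph gives
\[
\P(H\subset G)=\P(H\subset G|_D)=e^{O(n\log^2 n)}\cdot \P(H\subset G').
\]

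Second, I would apply Theorem~\ref{thm:fixedsubgraph} to the restricted random graph $G'$. The theorem is stated for $D=[\delta n]$, but the distribution of $G^\col_D$ is invariant under permutations of the colour set, so after relabelling the colours in $D$ we may assume $D=[\lfloor \delta n\rfloor]$. The hypotheses of Theorem~\ref{thm:fixedsubgraph} are satisfied (each colour is used at most $\delta n$ times in $H$), and it yields
\[
\P(H\subset G')=\Bigl(\frac{1+O(\delta)}{n}\Bigr)^{e(H)}=e^{O(\delta\cdot e(H))}\cdot n^{-e(H)},
\]
where in the second equality I used $\log(1+O(\delta))=O(\delta)$, valid since $\delta\leq 1/10$. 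Combining the two displayed estimates gives $\P(H\subset G)=e^{O(\delta\cdot e(H)+n\log^2 n)}\cdot n^{-e(H)}$, as required.

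The argument is essentially bookkeeping once one sees the right order of operations: first restrict to the colours actually used (plus padding), then swap models via Theorem~\ref{thm:modelswapKSS}, then apply the switching-method estimate of Theorem~\ref{thm:fixedsubgraph}. There is no real obstacle; the only mild subtlety is ensuring that $D$ has the exact size $\lfloor \delta n\rfloor$ required by Theorem~\ref{thm:fixedsubgraph} (handled by padding) and absorbing the $(1+O(\delta))^{e(H)}$ factor into the claimed exponential form.
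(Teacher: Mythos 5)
Your proof is correct and follows essentially the same route as the paper's: both restrict to a colour set $D$ of size $\approx\delta n$ containing $C(H)$, compare the law of $G|_D$ to the uniform law on $\mathcal{G}^\col_D$ via Theorem~\ref{thm:modelswapKSS}, and apply Theorem~\ref{thm:fixedsubgraph} to the restricted model. You are simply a bit more explicit about the two small bookkeeping points (padding $D$ to the exact size required and relabelling colours so that $D=[\lfloor\delta n\rfloor]$), which the paper handles implicitly.
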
\ifbitsoftwoout\else
\begin{proof} Let $D\subset [n]$ be a set of $\delta n$ colours containing $C(H)$. Let $G'\sim G^\col_D$, so that, by
 Theorem~\ref{thm:fixedsubgraph},
\begin{align}
\P(H\subset G')=\left(\frac{1+O(\delta)}{n}\right)^{e(H)}= e^{O(\delta\cdot e(H))}n^{-e(H)}.\label{eqn:HSGprimenew}
\end{align}
Then, by Theorem~\ref{thm:modelswapKSS}, we have
\begin{align*}
\P(H\subset G)&=\sum_{\hat{G}\in \mathcal{G}^{\col}_D:H\subset \hat{G}}\P(G|_D=\hat{G})
=\sum_{\hat{G}\in \mathcal{G}^{\col}_D:H\subset \hat{G}}\frac{e^{O(n\log^2n)}}{|\mathcal{G}^{\col}_D|}
=e^{O(n\log^2n)}\cdot \P(H\subset G')\\
&\overset{\eqref{eqn:HSGprimenew}}{=}e^{O(\delta\cdot e(H)+n\log^2n)}\cdot n^{-e(H)},
\end{align*}
as required.
\end{proof}
\fi


\subsection{Concentration inequalities}
\ifbitsoftwoout\else We will use the following standard version of Chernoff's bound (see, for example,~\cite{alon2016probabilistic}).\fi

\begin{lemma}\label{chernoff}
Let $n$ be an integer and $0\le \delta,p \le 1$. If $X$ is a binomially or hypergeometrically distributed random variable with mean $\mu=\mathbb{E} [X] = np,$ then
$$\P(X>(1+\delta) \mu) \le e^{-\delta^2\mu/2}\quad\quad\quad \text{ and }\quad\quad\quad \P(X<(1-\delta) \mu) \le e^{-\delta^2\mu/3}.$$
\end{lemma}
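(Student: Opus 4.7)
The plan is to carry out the standard Chernoff--Cram\'er moment generating function argument, first for the binomial case and then to reduce the hypergeometric case to it. Write $X = \sum_{i=1}^n Y_i$, where in the binomial case the $Y_i$ are i.i.d.\ Bernoulli$(p)$ variables. The identity $\mathbb{E}[e^{tY_i}] = 1 + p(e^t - 1) \le \exp(p(e^t - 1))$, valid for every real $t$, combines with independence to give $\mathbb{E}[e^{tX}] \le \exp(\mu(e^t - 1))$.

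I will next apply Markov's inequality to $e^{tX}$. For the upper tail, taking $t > 0$, I obtain
\[
\P(X > (1+\delta)\mu) \le \exp\bigl(\mu(e^t - 1) - t(1+\delta)\mu\bigr),
\]
and the optimizer $t = \log(1+\delta)$ yields the upper-tail Chernoff exponent $h_+(\delta) := (1+\delta)\log(1+\delta) - \delta$, so that $\P(X > (1+\delta)\mu) \le \exp(-\mu h_+(\delta))$. For the lower tail, the analogous calculation with $t < 0$, optimized at $t = \log(1-\delta)$, produces the lower-tail exponent $h_-(\delta) := \delta + (1-\delta)\log(1-\delta)$ and the bound $\P(X < (1-\delta)\mu) \le \exp(-\mu h_-(\delta))$. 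The inequalities claimed in the lemma then follow from elementary one-variable Taylor estimates for $h_+$ and $h_-$ on $[0,1]$: each function vanishes to second order at $\delta = 0$, and the quoted quadratic lower bounds are verified by showing that the difference between the Chernoff exponent and its quadratic target has non-negative derivative on the unit interval (a routine monotonicity check).

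For the hypergeometric case, I will invoke Hoeffding's classical comparison: the moment generating function of a sum of draws without replacement from a two-valued population is dominated pointwise by that of the corresponding binomial with the same mean. Since the Markov step relies only on an upper bound on $\mathbb{E}[e^{tX}]$, every line of the binomial argument carries over verbatim. There is no genuine obstacle in the proof: the lemma is a textbook recollection (essentially quoted from Alon--Spencer), and each ingredient---the MGF bound for Bernoulli variables, the Chernoff optimization at $t = \log(1\pm\delta)$, the Taylor estimates for $h_{\pm}$ on $[0,1]$, and the Hoeffding MGF comparison---is standard and routine. The only bookkeeping step worth care is matching the Taylor estimates against the specific constants $\tfrac{1}{2}$ and $\tfrac{1}{3}$ recorded in the statement.
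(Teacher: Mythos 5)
The paper does not prove this lemma at all --- it is quoted as a standard fact with a citation to Alon--Spencer --- so your MGF/Markov/optimisation outline, together with Hoeffding's moment-generating-function comparison for sampling without replacement, is exactly the right skeleton and is the textbook route. The hypergeometric reduction and the lower-tail computation are fine: with $h_-(\delta)=\delta+(1-\delta)\log(1-\delta)$ one has $h_-(\delta)=\tfrac{\delta^2}{2}+\tfrac{\delta^3}{6}+\cdots\ge \tfrac{\delta^2}{2}\ge\tfrac{\delta^2}{3}$ on $[0,1]$, so $\P(X<(1-\delta)\mu)\le e^{-\delta^2\mu/3}$ follows (with room to spare).

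The gap is in the upper tail, precisely at the step you dismiss as ``a routine monotonicity check.'' For $h_+(\delta)=(1+\delta)\log(1+\delta)-\delta$ the Taylor expansion is $h_+(\delta)=\tfrac{\delta^2}{2}-\tfrac{\delta^3}{6}+\cdots$, and indeed $h_+(\delta)<\tfrac{\delta^2}{2}$ for every $\delta\in(0,1]$ (e.g.\ $h_+(1)=2\log 2-1\approx 0.386<0.5$), so the difference $h_+(\delta)-\tfrac{\delta^2}{2}$ is \emph{negative}, not monotone non-negative, and the Chernoff exponent cannot deliver $e^{-\delta^2\mu/2}$ for the upper tail. This is not a fixable detail of your argument: since the Chernoff exponent is tight up to polynomial factors, $\P(X>2\mu)$ is of order $e^{-0.386\mu}/\mathrm{poly}(\mu)$ for large $\mu$, which exceeds $e^{-\mu/2}$, so the upper-tail inequality as printed in the lemma is actually false in general. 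The standard (correct) statement has the two constants transposed --- upper tail $e^{-\delta^2\mu/3}$ (via $h_+(\delta)\ge \delta^2/(2+2\delta/3)\ge\delta^2/3$ on $[0,1]$), lower tail $e^{-\delta^2\mu/2}$ --- and that is what your argument proves. You should either prove that corrected form and note the transposition, or at least not claim that the check for the constant $\tfrac12$ on the upper tail goes through, because it does not.
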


\ifbitsoftwoout\else We will also use McDiarmid's inequality (see~\cite[Lemma~1.2]{mcdiarmid1989method}), in the following form.\fi

\begin{lemma}\label{lem:mcdiarmidchangingc} Let $X_1,\ldots,X_m$ be independent random variables taking values in $\mathcal{X}_1,\ldots ,\mathcal{X}_m$ respectively, and let $c_i>0$ for each $i\in [m]$. Let $f:\mathcal{X}^m\to \mathbb{R}$ be a function of $X_1,\ldots,X_m$ such that, for all $i\in [m]$, $x_i'\in \mathcal{X}_i$, and $x_j\in \mathcal{X}_j$ for each $j\in [m]$,
we have
\[
|f(x_1,\ldots,x_{i-1},x_i,x_{i+1},\ldots,x_m)-f(x_1,\ldots,x_{i-1},x_i',x_{i+1},\ldots,x_m)|\leq c_i.
\]
Then, for all $t>0$,
\[
\P(|f(X_1,\ldots,X_m)-\mathbb{E}(f(X_1,\ldots,X_m))|\geq t)\leq 2e^{-\frac{2t^2}{\sum_{i=1}^mc_i^2}}.
\]
\end{lemma}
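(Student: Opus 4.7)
The plan is to prove this by the standard Doob martingale argument. Set $Z_i = \E[f(X_1,\ldots,X_m) \mid X_1,\ldots,X_i]$ for $i=0,1,\ldots,m$, so that $Z_0 = \E[f(X_1,\ldots,X_m)]$ is deterministic and $Z_m = f(X_1,\ldots,X_m)$. By the tower property, $(Z_i)$ is a martingale with respect to the filtration generated by $X_1,\ldots,X_m$, and the deviation $f - \E[f]$ telescopes as $\sum_{i=1}^m (Z_i - Z_{i-1})$, which is what I want to control.

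The main technical step is to show that, conditional on $X_1,\ldots,X_{i-1}$, the increment $Z_i - Z_{i-1}$ lies in an interval of length at most $c_i$. For this I would use independence to rewrite the conditional expectations: define
\[
g_i(x_1,\ldots,x_i) = \E[f(x_1,\ldots,x_i,X_{i+1},\ldots,X_m)],
\]
where only $X_{i+1},\ldots,X_m$ are averaged over. Independence of the $X_j$'s gives $Z_i = g_i(X_1,\ldots,X_i)$ and $Z_{i-1} = \E_{X_i'}\bigl[g_i(X_1,\ldots,X_{i-1},X_i')\bigr]$. The bounded differences hypothesis passes through the expectation over $X_{i+1},\ldots,X_m$ to yield
\[
|g_i(x_1,\ldots,x_{i-1},x) - g_i(x_1,\ldots,x_{i-1},x')| \le c_i
\]
for all $x,x' \in \mathcal{X}_i$. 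Hence, setting $a_i = \inf_x g_i(X_1,\ldots,X_{i-1},x)$, both $Z_i$ and $Z_{i-1}$ lie in $[a_i, a_i + c_i]$ conditional on $X_1,\ldots,X_{i-1}$, so $Z_i - Z_{i-1}$ is a mean-zero random variable (conditionally) supported in an interval of length $c_i$.

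With this in hand, the result follows from Hoeffding's lemma: any mean-zero random variable $Y$ supported in an interval of length $L$ satisfies $\E[e^{\lambda Y}] \le \exp(\lambda^2 L^2/8)$. Applying this conditionally to each increment and iterating gives $\E[e^{\lambda(Z_m - Z_0)}] \le \exp\bigl(\lambda^2 \sum_{i=1}^m c_i^2 / 8\bigr)$. A standard Chernoff-style optimisation in $\lambda$ then yields
\[
\P(Z_m - Z_0 \ge t) \le \exp\!\left(-\tfrac{2t^2}{\sum_i c_i^2}\right),
\]
and applying the same reasoning to $-f$ gives the matching lower-tail bound, combining to the factor of $2$ in the statement.

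The only subtlety is the bounded-increment step, where one must use independence of $X_i,\ldots,X_m$ in order to move the conditional expectation inside $f$ and reduce to the single-coordinate bounded differences condition; without independence, the martingale differences need not be bounded. Everything else is a routine invocation of Hoeffding's lemma and the Azuma--Hoeffding machinery, so I expect no further obstacles.
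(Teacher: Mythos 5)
Your proof is correct, and it is the standard Doob-martingale/Azuma--Hoeffding argument for the bounded-differences inequality. The paper itself does not prove this lemma --- it is stated as a quoted result and cited to McDiarmid's survey (\cite[Lemma~1.2]{mcdiarmid1989method}), where essentially the same proof appears. The one point worth emphasizing, which you handle correctly, is that after passing to $g_i$ by integrating out $X_{i+1},\ldots,X_m$ (this is where independence is used), the conditional increment $Z_i - Z_{i-1}$ lies in an interval of length $c_i$, not merely $2c_i$; this is exactly what makes the constant in the exponent $2$ rather than $1/2$, so Hoeffding's lemma must be applied with that sharper range.
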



\section{Set-up and division into key lemmas}\label{sec:setup}
In this section, we will choose the variables we will use in Section~\ref{sec:variables}, using them in part to partition the vertex/colour/edge sets in Section~\ref{sec:choosevxsets} in preparation for constructing the matchings. We then state three key lemmas corresponding to the three parts of our proof in Sections~\ref{sec:keylemmaone} to \ref{sec:keylemmathree}. The first key lemma is included here to give concrete details of the absorption schematic, but we do not use it directly in this section, applying it to prove the second key lemma later. In Section~\ref{sec:proofofmainthmfromkeylemmas} we use the second and third key lemmas in combination to prove Theorem~\ref{thm:main}.


\subsection{Variables}\label{sec:variables}
Recall that, to prove Theorem~\ref{thm:main}, our aim is to show that a uniformly random choice of an optimally coloured copy of $K_{n,n}$ decomposes into $n$ disjoint rainbow perfect matchings, with high probability.
For each target matching $i\in [n]$, we will have vertex sets $R_i,S_i,T_i,U_i,V_i,W_i,X_i,Y_i,Z_i$, as depicted in Figure~\ref{fig:setpartition}. We now choose variables, where, for example, for each $i\in [n]$, $R_i$ will be chosen (in Section~\ref{sec:choosevxsets}) to be a random vertex set with around $2p_Rn$ vertices. Take the following variables:
\begin{align}\label{eq:hierarchy}
\frac{1}{n} \llpoly p_{\mathrm{tr}},p_{\mathrm{fa}}&\llpoly\eps\llpoly \gamma \llpoly \beta \llpoly p_{\mathrm{cov}}\llpoly p_{\balcol}  \ldots \nonumber\\
&\hspace{2cm}\ldots\llpoly p_{\balvx}\llpoly p_{{\bal}},\alpha \llpoly p_T\llpoly p_U\llpoly p_V \llpoly p_W,\frac{1}{\log n},
\end{align}
where, after setting up some more variables, we will ensure $p_W$ and $p_{{\bal}}$ satisfy two equations (see \eqref{eqn:forpartition}).
Take the following variables (which are stated first for future reference, and then after briefly explained).
\[
p_R=(1+\alpha)p_T\;\;\;\;\;\;\;\;\; p_S=p_U+p_V+p_W\;\;\;\;\;\;\;\;\; p_{\mathcal{I}}=24(p_S-p_R)\;\;\;\;\;\;\;\;\; p_{\mathcal{J}}=72(p_S-p_R),
\]
\[
p_X=(1+\beta)p_T\;\;\;\;\;\;\;\;\;\;\;\;\; p_Y=(1+\beta)121(p_S-p_R)\;\;\;\;\;\;\;\;\;\;\;\;\; p_Z=(1+\beta)\cdot 61\cdot 72(p_S-p_R)
\]
\[
p_1=\frac{2(1+\beta)p_R}{(1+\alpha)(1-p_\bal)}\;\;\;\;\;\;\;\;\;\;\;\;\; p_2=\frac{50(1+\beta)(p_S-p_R)}{1-p_\bal}\;\;\;\;\;\;\;\;\;\;\;\;\;
p_3=\frac{62\cdot 72\cdot (1+\beta)(p_S-p_R)}{1-p_\bal}
\]
\[
p_{\abs}=1-p_{\bal}\;\;\;\;\;\;\;\;\;\;\;\;\;\beta_0=\frac{1}{1+\beta}\;\;\;\;\;\;\;\;\;\;\;\;\;p_{S- R}=p_S-p_R
\]
Furthermore, take the values of $p_W$ and $p_\bal$ appropriately so that
\begin{equation}\label{eqn:forpartition}
p_S+p_X+p_Y+p_Z=1 \;\;\;\;\;\;\text{ and }\;\;\;\;\;\;p_1+p_2+p_3=1,
\end{equation}
where for the first equation this is possible as $p_W$ is at the top of the hierarchy at \eqref{eq:hierarchy} and for the second equation we can set $p_\bal$ so that this holds and then, as we do now, check that $p_\bal$ fits into this hierarchy.
From $p_S+p_X+p_Y+p_Z=1$, we have
\[
p_S+(1+\beta)(1+\alpha)^{-1}p_R+(121+61\cdot 72)\cdot (1+\beta)(p_S-p_R)=1,
\]
so that, from $p_1+p_2+p_3=1$, we have
\begin{align}
(1-p_\bal)&=2(1+\beta)(1+\alpha)^{-1}p_R+(1+\beta)50(p_S-p_R)+(1+\beta)\cdot 62\cdot 72 (p_S-p_R)\nonumber\\
&=1-p_S+(1+\beta)(1+\alpha)^{-1}p_R+(1+\beta)(p_S-p_R)\nonumber\\
&=1-p_S+(1+\beta)p_T+(1+\beta)(p_S-p_R)\nonumber\\
&=1+\beta(p_T+p_S-p_R)+p_T-p_R\nonumber\\
&=1+\beta(p_T+p_S-p_R)-p_T\cdot \alpha,\label{eq:newnewnew}
\end{align}
and hence
\begin{align}\label{eqn:ppt}
p_\bal&=(1\pm \sqrt{\beta})\alpha p_T.
\end{align}
Thus, as $p_R \approx p_T$ and $\beta \llpoly \alpha \llpoly p_T$, it follows that $\beta \llpoly p_\bal, \alpha \llpoly p_T$, as required.

To give some explanation behind these variables (which will perhaps only really make sense when they are used), we note the following. Later, we will construct a set $\mathcal{I}$ representing the switchers we wish to create (as discussed in Section~\ref{subsec:proofsketch}),
and we will have $|\mathcal{I}|\approx p_{\mathcal{I}}n^2$. The set $\mathcal{J}$ with size around $p_{\mathcal{J}}n^2$ will be found in Part~\ref{partB2} and represent simpler switchers, where each element of $\mathcal{I}$ will give rise to 3 elements of $\mathcal{J}$, and
therefore we have chosen $p_{\mathcal{J}}=3p_{\mathcal{I}}$.
As discussed in Section~\ref{sec:proofsketch}, for each $i\in [n]$ the set $R_i$ will be a little larger than $T_i$, and this is why we have chosen $p_R=(1+\alpha)p_T$. For each $i\in[n]$, we will match $T_i$ into $X_i$, and so $X_i$ should be a little larger than $T_i$, and thus we have chosen $p_X=(1+\beta)p_T$.

For each $i\in [n]$ and $u\in S_i\setminus R_i$ (where $|S_i\setminus R_i|\approx 2(p_S-p_R)n$), we will wish to create 24 switchers involving switching $u$ out from the $i$th near-matching, and thus as our instructions will give two such pairs $(i,u)$ to switch between, we have chosen $p_{\mathcal{I}}=24(p_S-p_R)$. For each $i\in [n]$ and $u\in S_i\setminus R_i$, for the $i$th matching we will initially  assign 1 vertex from $Y_i$ common to all the 24 pairs in $\mathcal{I}$ involving $(i,u)$ and 5 other distinct vertices from $Y_i$ to each of these pairs. (For each pair, from the 6 assigned vertices we will take 3 pairs into $\mathcal{J}$, explaining why $p_{\mathcal{J}}=3p_{\mathcal{I}}$). As, for each $i\in [n]$, we should use slightly fewer than $2p_Yn$ vertices (the rough size of $Y_i$), we have set $p_Y=(1+\beta)\cdot (1+24\cdot 5)(p_S-p_R)$. For each pair in $\mathcal{J}$, we will construct a path as in Figure~\ref{fig:simpleswitcher} but with length 62, using internal vertices in $Z_i$ when the $i$th near-matching is involved. For each $i\in [n]$, this will be around $2p_{\mathcal{J}}n$ pairs and we will have $|Z_i|\approx 2p_Zn$, and thus we have chosen $p_Z$ to be a little larger than $61p_{\mathcal{J}}$.

The variables $p_1$, $p_2$ and $p_3$ will be used to partition the colours into sets $D_1$, $D_2$ and $D_3$, where we reserve edges with probability $p_\bal$ for Part~\ref{partC}, and use the remaining edges of each colour set for Parts~\ref{partB1} to \ref{partB3}, respectively.
For Part~\ref{partB1}, we find for each $i\in [n]$ a matching from $T_i$ into $X_i$, and thus use around $2p_Tn^2$ edges in total; thus $(1-p_\bal)p_1$ is a little larger than $2p_T=2p_R\cdot (1+\alpha)^{-1}$.
For Part~\ref{partB2}, for similar reasons to those in our discussion with $Y_i$, for each $i\in [n]$ and $u\in S_i\setminus R_i$ we find one edge and then, additionally, two edges for each pair in $\mathcal{I}$ involving $(i,u)$, for around
 $2(p_{S}-p_{R})n^2+2p_{\mathcal{I}}n^2$ edges in total. Therefore, we have chosen $p_2$ so that $(1-p_\bal)$ is a little larger than  $2(p_{S}-p_R+p_{\mathcal{I}})=50(p_{S}-p_R)$. Finally, for Part~\ref{partB3}, we will find 62 edges for the switching path for each pair in $\mathcal{J}$, for around $62p_{\mathcal{J}}n^2$ edges in total, and thus we have chosen $p_3$ so that $(1-p_\bal)$ is a little larger than $p_3=62p_{\mathcal{J}}=62\cdot 72(p_S-p_R)$.


\subsection{Tribes, families, and partitions of the vertices, colours and edges}\label{sec:choosevxsets}
We now partition our target matchings into families, where the families are grouped into tribes, and partition our vertices, colours, and edges into different sets which are used for different purposes throughout the paper.

\smallskip

\textbf{Tribes and families.} Let $\mathcal{T}$ be a set with size $\lceil p_{\tr}^{-1}\rceil$, which we use to index our tribes, and, for each $\tau \in \mathcal{T}$, let $\mathcal{F}_\tau$ be a set with size $\lceil p_{\fa}^{-1}\rceil$, which we use to index the families of the tribe $\tau$. Partition $[n]$ as equally as possible into $I_{\tau}$, $\tau \in \mathcal{T}$, and then, for each $\tau \in \mathcal{T}$, partition each $I_{\tau}$ as equally as possible into
$I_\phi$, $\phi\in \mathcal{F}_\tau$. 
Let $\mathcal{F}=\bigcup_{\tau \in \mathcal{T}}\mathcal{F}_\tau$ be the set of all of the families.
Note that $|\mathcal{F}|=\lceil p_{\fa}^{-1}\rceil \lceil p_{\tr}^{-1}\rceil=(1\pm \eps^2)p_{\fa}^{-1}p_{\tr}^{-1}$, for each $\tau\in \mathcal{T}$,
$|I_{\tau}|=(1\pm \eps^2)p_{\tr}n$, and, for each $\phi\in \mathcal{F}$, $|I_\phi|=(1\pm \eps^2)p_{\tr}p_{\fa}n$. For this, we have used that $1/n\llpoly p_\tr,p_\fa\llpoly \eps$.

\smallskip

\noindent \textbf{Vertex partitions (see Figure~\ref{fig:setpartition}).} Recall that $A$ and $B$ are disjoint vertex sets with size $n$, which we always use as the vertex classes of our complete bipartite graph. Using \eqref{eqn:forpartition}, independently, for each $\tau\in \mathcal{T}$,
partition $A\cup B$ into $S_\tau$, $X_\tau$, $Y_\tau$, and $Z_\tau$ so that the location of each vertex $v$ is independent and such that
\[
\P(v\in S_\tau)=p_S, \;\;\;\;\; \P(v\in X_\tau)=p_X, \;\;\;\;\; \P(v\in Y_\tau)=p_Y\;\;\text{ and }\;\; \P(v\in Z_\tau)=p_Z.
\]
For each $\tau\in \mathcal{T}$, using that $p_S=p_U+p_V+p_W$, independently, for each $\phi\in \mathcal{F}_\tau$, partition $S_\tau$ into vertex sets $U_\phi$, $V_\phi$ and $W_\phi$ by, for each $v\in S_\tau$, choosing the location of $v$ independently at random so that
\[
\P(v\in U_\phi)=p_U/p_S,\;\;\;\;\; \P(v\in V_\phi)=p_V/p_S,\;\; \text{ and }\;\; \P(v\in W_\phi)=p_W/p_S.
\]
For each $\tau\in \mathcal{T}$, $\phi\in \mathcal{F}_\tau$ and $i\in I_\phi$, take disjoint sets $R_i,T_i\subset U_\phi$ such that, 
for each $v\in U_\phi$, the location of $v$ is chosen independently at random so that
\[
\P(v\in R_i)=p_R/p_U\;\;\;\text{ and }\;\;\;\P(v\in T_i)=p_T/p_U.
\]
For each $\tau\in \mathcal{T}$ and $\phi\in \mathcal{F}_\tau$, let $S_\phi=S_\tau$, $X_\phi=X_\tau$, $Y_\phi=Y_\tau$, and $Z_\phi=Z_\tau$. For each $\tau\in \mathcal{T}$, $\phi\in \mathcal{F}_\tau$ and $i\in I_\phi$, let $S_i=S_\tau$, $X_i=X_\tau$, $Y_i=Y_\tau$, $Z_i=Z_\tau$, $U_i=U_\phi$, $V_i=V_\phi$ and $W_i=W_\phi$.

For each $i\in [n]$, create $X_i=X_{i,0}\cup X_{i,1}$ by, for each $v\in X_i$, independently at random letting $v\in X_{i,0}$ with probability $\beta_0$, and, similarly, create $Y_i=Y_{i,0}\cup Y_{i,1}$ and $Z_i=Z_{i,0}\cup Z_{i,1}$.

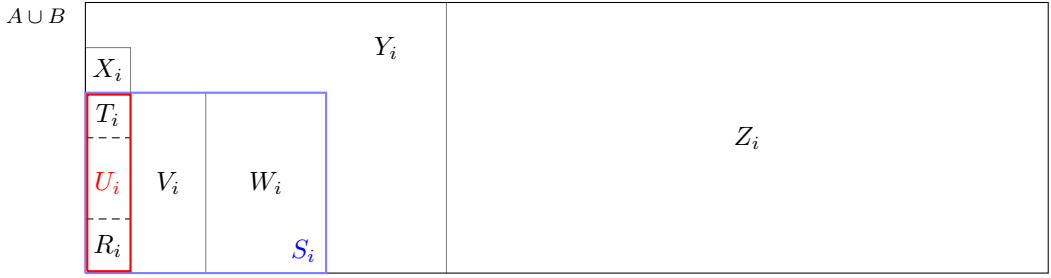
\begin{figure}
\begin{center}
\begin{tikzpicture}

\def\wi{8}
\def\hgt{6}

\draw ($(0,0.57*\hgt)-0.14285*0.4*(\wi,0)-(0.2,0)$) node {\footnotesize $A\cup B$};
\draw [black!50] ($(0.6*\wi,0)$) to ++($(0,0.6*\hgt)$);
\draw [black!50] ($(0,0.4*\hgt)$) to ++($(0.4*\wi,0)$);
\draw [black!50] ($(0.2*\wi,0)$) to ++($(0,0.4*\hgt)$);
\draw [black!50] ($(0.075*\wi,0)$) to ++($(0,0.4*\hgt)$);
\draw [black,densely dashed] ($(0,0.12*\hgt)$) to ++($(0.075*\wi,0)$);
\draw [black,densely dashed] ($(0,0.3*\hgt)$) to ++($(0.075*\wi,0)$);
\draw [black!50] ($(0,0.4*\hgt)$) to ++($(0.075*\wi,0)$) to ++($(0,0.1*\hgt)$) to ++($(-0.075*\wi,0)$) to ++($(0,-0.1*\hgt)$);

\draw [black] ($(0,0)$) to ++($(1.6*\wi,0)$) to ++($(0,0.6*\hgt)$) to ++($(-1.6*\wi,0)$) to ++($(0,-0.6*\hgt)$);
\draw [blue!50,thick] (0,0) to ++ ($(0.4*\wi,0)$) to ++($(0,0.4*\hgt)$) to ++($(-0.4*\wi,0)$) to ++($(0,-0.4*\hgt)$);
\draw [red,thick] (0.025,0.025) to ++($(0.075*\wi-0.025,0)$) to ++($(0,0.4*\hgt-0.05)$) to ++($(-0.075*\wi+0.025,0)$) to ++($(0,-0.4*\hgt+0.05)$);

\draw ($(1.1*\wi,0.3*\hgt)$) node { $Z_i$};
\draw ($(0.5*\wi,0.5*\hgt)$) node { $Y_i$};
\draw ($(0.0375*\wi,0.35*\hgt)$) node { $T_i$};
\draw ($(0.0375*\wi,0.06*\hgt)$) node { $R_i$};
\draw [red]($(0.0375*\wi,0.2*\hgt)$) node { $U_i$};
\draw ($(0.5*0.075*\wi+0.5*0.2*\wi,0.2*\hgt)$) node { $V_i$};
\draw ($(0.3*\wi,0.2*\hgt)$) node { $W_i$};
\draw ($(0,0.4*\hgt)+(0,0.05*\hgt)+(0.0375*\wi,0)$) node { $X_i$};
\draw [blue] ($(0.4*\wi,0)+(-0.3,0.3)$) node { $S_i$};
\end{tikzpicture}
\end{center}\vspace{-0.5cm}
\caption{For each $i\in [n]$, we have a partition of $A\cup B$ into $S_i\cup X_i\cup Y_i\cup Z_i$ (which is the same for individuals $i$ in the same tribe), a partition $S_i=U_i\cup V_i\cup W_i$ (which is the same for individuals $i$ in the same family) and disjoint sets $R_i,T_i\subset U_i$ (which are distinct for each individual $i \in [n]$).}\label{fig:setpartition}
\end{figure}

\smallskip

\noindent \textbf{Colour partitions.}
Let $C=[n]$, so that $C$ is the set of colours we will use. Partition $C=D_1\cup D_2\cup D_3$ by, for each $c\in C$, choosing the location of $c$ independently at random so that
\[
\P(c\in D_1)=p_1,\;\;\;\;\; \P(c\in D_2)=p_2,\;\; \text{ and }\;\; \P(c\in D_3)=p_3.
\]
For each $i\in [n]$, let $C_i\subset C$ be formed by including each colour independently at random with probability $p_\bal$.
For each $j\in [3]$ and $i\in [n]$, let $D_{j,i}\subset D_j$ be formed by including each colour independently at random with probability $1-\beta_0=\beta/(1+\beta)$.

\smallskip

\noindent \textbf{Edge partition.} Let $G\sim G^\col_{[n]}$. Partition $E(G)=E^\bal\cup E^\abs$, by choosing the location of each $e\in E(G)$ independently at random so that $\P(e\in E^\bal)=p_\bal$ and $\P(e\in E^\abs)=p_\abs$ (using that $p_\abs=1-p_\bal$).
Partition $E^\abs=E^\abs_0\cup E^\abs_1$, by choosing the location of $e\in E^\abs$ independently at random so that if $e\in E^\abs$ then $\P(e\in E^\abs_0)=\beta_0$ and $\P(e\in E^\abs_1)=1-\beta_0$. Furthermore, partition $E^\abs_1$ as $E^\abs_{1,A}\cup E^\abs_{1,B}\cup E^\abs_{1,M}$ by choosing the location of each edge independently and uniformly at random.


\subsection{Part~\ref{partA}: Absorption schematic}\label{sec:keylemmaone}
To state our main result for Part~\ref{partA}, we use the following two definitions.
\begin{defn}
Given a collection $\mathcal{C}\subset \{\{(i,u),(j,v)\}:i,j\in [n],i\neq j,u,v\in A\cup B,u\neq v\}$, $i\in [n]$ and $u\in A\cup B$, we say that $(i,u)$ is $(\leq 1)$-balanced in $\mathcal{C}$ if either

\hspace{-0.25cm}$\bullet$ there is exactly one  $(j,v)$ with $\{(i,u),(j,v)\} \in \mathcal{C}$ {and} exactly one  $(v, j)$ with $\{(i,v), (j,u)\} \in \mathcal{C}$, or

\hspace{-0.25cm}$\bullet$ there is no  $(j,v)$ such that $\{(i,u),(j,v)\} \in \mathcal{C}$ {and} no $(v, j)$ such that $\{(i,v), (j,u)\} \in \mathcal{C}$.
\end{defn}

\begin{defn} We say $u\sim_{A/B}v$ if either $u,v\in A$ or $u,v\in B$.
\end{defn}

We now state our key lemma which encapsulates Part~\ref{partA}, which provides an `absorption schematic' which, as mentioned in Section~\ref{subsec:proofsketch}, we will use to tell us which switchers we should find in Part~\ref{partB}. The key lemma is proved in Section~\ref{sec:absorb}.

\begin{lemma}\label{keylemma:absorption}
With high probability, the sets $R_i,S_i,T_i$, $i\in [n]$, set-up as detailed in Sections~\ref{sec:variables} and~\ref{sec:choosevxsets} satisfy the following.

For each $\tau\in \mathcal{T}$, there exists a collection
\begin{equation}\label{eqn:Itaugoodpairs}
\mathcal{I}_\tau\subset \{\{(i,u),(j,v)\}: i,j\in I_\tau,i\neq j, u \in  S_i \setminus (R_i \cup T_j), v \in S_j \setminus (T_i \cup R_j),u\neq v,u{\sim}_{A/B}v\}
\end{equation}
such that the following hold.
\stepcounter{propcounter}
\begin{enumerate}[label = {\emph{\textbf{\Alph{propcounter}\arabic{enumi}}}}]
\item \labelinthm{prop:abs:regularityout} For each $i\in I_\tau$ and $u\in S_i\setminus R_i$, there are exactly $24$ pairs $(j,v)$ such that $\{(i,u),(j,v)\}\in \cI_\tau$.
\item \labelinthm{prop:abs:nocodegree} For each distinct $i,j\in I_\tau$ and $u\in S_i\setminus R_i$, there is at most one $v\in S_j\setminus R_j$ with $\{(i,u),(j,v)\}\in \cI_\tau$.
\item \labelinthm{prop:abs:boundedin}\labelinthm{prop:abs:lowcodegree0} For each $i\in I_\tau$ and $u\in S_i\setminus T_i$, there are at most $n^{1/3}$ pairs $(j,v)$ such that $\{(i,v),(j,u)\}\in \cI_\tau$.
\item \labelinthm{prop:abs:lowcodegree1} For each distinct $i,j\in I_\tau$, there are at most $n^{1/3}$ pairs $(u,v)$ with $\{(i,u),(j,v)\}\in \cI_\tau$.
\item \labelinthm{prop:abs:lowcodegree2} For each distinct $j,j'\in I_\tau$, there are at most $n^{1/3}$ tuples $(i,u,v,v')$ for which we have that $\{(i,u),(j,v)\},\{(i,u),(j',v')\}\in \cI_\tau$.
\item \labelinthm{prop:abs:lowcodegree3} For each $j\in I_\tau$ and $u\in S_j\setminus T_j$ there are at most $n^{1/3}$ pairs $(i,v)$ with $\{(i,u),(j,v)\}\in \cI_\tau$.

\item \labelinthm{prop:abs:corrections} For any collection of sets $R_i'\subset R_i$, $i \in I_\tau$ such that,
for each $i\in [n]$, $|R_i'|=|T_i|$, and, for each $\phi\in \mathcal{F}_\tau$, $\bigcup_{i \in I_\phi} R_i' =_{\mult} \bigcup_{i \in I_\phi} T_i$, there exists $\mathcal{C} \subseteq \mathcal{I}_\tau$ satisfying the following.
\begin{enumerate}[label = {\emph{\textbf{\Alph{propcounter}\arabic{enumi}.\arabic{enumii}}}}]
\item \labelinthm{prop:A:correct1} For every $i \in I_\tau$ and $u \in T_i$, there is exactly one  $(j,v)$ such that $\{(i,u),(j,v)\} \in \mathcal{C}$.
\item \labelinthm{prop:A:correct2} For every $i \in I_\tau$ and $u \in R_i'$, there is exactly one  $(v,j)$ such that $\{(i,v),(j,u)\} \in \mathcal{C}$.
\item \labelinthm{prop:A:correct3} For every $i\in I_\tau$ and $u\in R_i\setminus R_i'$, there is no $(v,j)$ such that $\{(i,v),(j,u)\}\in \mathcal{C}$.
\item \labelinthm{prop:A:correct4} For every $i \in I_\tau$ and $u \in S_i\setminus (R_i \cup T_i)$, $(i,u)$ is $(\leq 1)$-balanced in $\mathcal{C}$.
\end{enumerate}
\end{enumerate}
\end{lemma}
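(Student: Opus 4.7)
Since $\mathcal{I}_\tau$ depends on the random data only through the partition inside tribe $\tau$, and different tribes are partitioned independently, it suffices to build each $\mathcal{I}_\tau$ with failure probability $o(|\mathcal{T}|^{-1})$ and union bound over $\tau$. Fix such a $\tau$. The plan is to construct $\mathcal{I}_\tau$ in two stages: first a random template that delivers the regularity and codegree conditions \ref{prop:abs:regularityout}--\ref{prop:abs:lowcodegree3} via concentration, and then a structural argument showing that this template has the universal absorption property \ref{prop:abs:corrections}.

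For the template, within each family $\phi \in \mathcal{F}_\tau$ and each ordered pair of distinct $i,j \in I_\phi$, I will independently include each valid vertex pair $(u,v)$ (satisfying $u \in S_i \setminus (R_i \cup T_j)$, $v \in S_j \setminus (T_i \cup R_j)$, $u \simAB v$, $u\ne v$) as a candidate switcher $\{(i,u),(j,v)\}$ with a small probability $q$ chosen so that the expected number of candidates through each $(i,u)$ slightly exceeds $24$. Since each quantity controlled by \ref{prop:abs:nocodegree}--\ref{prop:abs:lowcodegree3} has polynomial expectation well below $n^{1/3}$, Chernoff and McDiarmid concentration give those bounds whp. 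The randomness of the $R_i, T_i$ inside $U_\phi$ also concentrates the sizes of the allowed regions, and a K\"onig/Hall-type regularization step in an auxiliary bipartite graph on the $(i,u)$ trims the candidates to exactly $24$ per $(i,u)$ to achieve \ref{prop:abs:regularityout} without increasing any codegree.

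The decomposition for \ref{prop:abs:corrections} proceeds family by family. Given $\{R_i'\}_{i\in I_\tau}$, within each $\phi \in \mathcal{F}_\tau$ the multiset balance $\bigcup_{i \in I_\phi}R_i' =_\mult \bigcup_{i \in I_\phi}T_i$ allows pairing each occurrence $u\in T_i$ with an occurrence $u\in R_j'$, producing a multiset of moves $(u,i,j)$ (necessarily $i\ne j$ since $R_i\cap T_i=\emptyset$, which also automatically gives the required constraints $u\notin R_i\cup T_j$). Viewing these moves as arcs of a balanced directed multigraph on $I_\phi$, I decompose it into directed cycles. A $2$-cycle $\{(u,i,j),(v,j,i)\}$ is realized by the single switcher $\{(i,u),(j,v)\}$. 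A longer cycle $(u_1,i_1{\to}i_2),\ldots,(u_k,i_k{\to}i_1)$ is realized via an uninvolved helper $w\in S_\phi$ (with $w\notin R_{i_\ell}\cup T_{i_\ell}$ for every $\ell$), by chaining the $k$ switchers $\{(i_\ell,u_\ell),(i_{\ell+1},w)\}$ (indices mod $k$): the contributions to $w$ at each $i_\ell$ cancel, so condition \ref{prop:A:correct4} is maintained, and the net effect is precisely the desired cycle of moves.

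The main obstacle is that \ref{prop:abs:corrections} must hold universally --- over \emph{every} balanced correction --- even though the $24$-regularity \ref{prop:abs:regularityout} makes the template sparse. Following the template-based / distributive absorption philosophy of Montgomery and the correction-decomposition method of Barber--K\"uhn--Lo--Osthus, my plan is to engineer $\mathcal{I}_\tau$ with a layered structure (for instance, designating some of the $24$ switchers at each $(i,u)$ as a backbone to be used with freely chosen helpers) so that the combinatorial flexibility in both the initial move-pairing and the helper $w$ (whose pool of valid candidates has linear size whp) allows the required switchers to be found in $\mathcal{I}_\tau$ for every correction. This reduces the exponential space of corrections to polynomially many Hall-type local requirements, which the codegree bounds \ref{prop:abs:nocodegree}--\ref{prop:abs:lowcodegree3} will verify simultaneously whp. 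The hardest part will be this coordination between the template construction and the correction decomposition, in particular ensuring enough flexibility at every scale while keeping the template sparse enough for \ref{prop:abs:regularityout}.
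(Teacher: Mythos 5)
Your skeleton — cycle decomposition of the balanced correction, concentration for the codegree bounds, and distributive absorption for universality — is a reasonable reading of the problem, and your cycle decomposition is in the spirit of the paper's first step (Lemma~\ref{lem:partA1}, though the paper works with a digraph on $U_\phi$ whose arcs are labelled by matching indices rather than a digraph on $I_\phi$ whose arcs are labelled by vertices). But there is a genuine gap: you have not built the universal absorber, and the purely random first-stage template you propose cannot provide it. With the $24$-regularity of \ref{prop:abs:regularityout}, $\mathcal{I}_\tau$ contains roughly $24 p_\tr p_S n^2$ pairs out of roughly $p_\tr^2 p_S^2 n^4$ candidates, so the probability that a \emph{specific} switcher $\{(i_\ell,u_\ell),(i_{\ell+1},w)\}$ your helper construction demands lies in $\mathcal{I}_\tau$ is $\Theta\bigl(1/(p_\tr p_S n^2)\bigr)$; since \ref{prop:abs:corrections} must hold for \emph{every} balanced $\{R_i'\}$, of which there are exponentially many, no union bound rescues a random template. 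Your last paragraph concedes that ``ensuring enough flexibility at every scale while keeping the template sparse'' is the hardest part, but this is not a detail to be filled in later — it is the entire content of the lemma beyond the degree counting.

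The paper does not use a random template at all. It constructs, for each family $\phi$, an explicit binary-tree routing graph $K_\phi$ on $U_\phi\cup V_\phi$ (Lemmas~\ref{lem:auxiliarygraph} and \ref{lem:partA2}) so that any balanced change between vertices of $U_\phi$ can be decomposed along a sparse, fixed set of vertex pairs (the edges of $K_\phi$); and then, for each such pair $uv$, a second routing graph $L_{\phi,uv}$ whose intermediate vertices are matching indices drawn from \emph{other} families in the same tribe (Lemma~\ref{lem:partA3}) so that only a sparse set of index pairs is ever used. The pairs in $\mathcal{I}_\tau$ are exactly those arising from these two routing layers, plus a small random addition to reach out-degree exactly $24$ (Section~\ref{sec:finalpartA}, via a Tutte-theorem matching in an auxiliary graph). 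Your proposal restricts the cycle decomposition and the helper choice to a single family, so it also misses the cross-family routing of Lemma~\ref{lem:partA3}, which is the structural reason families are grouped into tribes at all. Your concentration plan for \ref{prop:abs:nocodegree}--\ref{prop:abs:lowcodegree3} and the randomized trimming for \ref{prop:abs:regularityout} are compatible with the paper's final regularization step, but they must sit on top of the structured routing construction, not replace it.
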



\subsection{Part~\ref{partB}: Realisation of the absorption structure}\label{sec:keylemmatwo}
We now state our key lemma which gives the result of Part~\ref{partB} (using the schematic found in Part~\ref{partA}). It is proved in Section~\ref{sec:real}, using the work in Section~\ref{sec:rand}.

\begin{lemma}\label{keylemma:realisation}
Take the set-up detailed in Sections~\ref{sec:variables} and~\ref{sec:choosevxsets}, where, in particular, we have $G\sim G^\col_{[n]}$ and that the edges of $G$ appear in $E^{\mathrm{abs}}\subset E(G)$ independently at random with probability $p_{\mathrm{abs}}$, while, for each $i\in [n]$, $C_i\subset C$ is a random set of colours where each colour is included independently at random with probability $p_{{\bal}}=1-p_\abs$.

Then, with high probability, there are edge-disjoint subgraphs $\hat{M}_1,\ldots,\hat{M}_n$ in $G[E^{\mathrm{abs}}]$ such that the following hold.
\stepcounter{propcounter}
\begin{enumerate}[label = {\emph{\textbf{\Alph{propcounter}\arabic{enumi}}}}]
\item \labelinthm{prop:real:regularity}
\begin{enumerate}[label = {\emph{\textbf{\alph{enumii})}}}]
\item \labelinthm{prop:B1b}For each $v \in V(G)$ and $\phi\in \mathcal{F}$, there are at most $4\beta p_\tr p_\fa n$ $i \in I_\phi$ such that $v \notin V(\hat{M}_i) \cup R_i$.
\item \labelinthm{prop:B1c}For each $c \in C(G)$ and $\phi\in \mathcal{F}$, there are at most $2\beta p_\tr p_\fa n$ $i \in I_\phi$ such that $c \notin C(\hat{M}_i) \cup C_i$.
\item For each $v \in V(G)$, the degree of $v$ in $G[E^\abs] \setminus (\bigcup_{i \in [n]} \hat{M}_i)$ is at most $2\beta n$.\labelinthm{prop:B1d}
\item For each $i\in [n]$, there are at most $4\beta n$ vertices in $V(G)\setminus R_i$ that have degree $0$ in $\hat{M}_i$.\labelinthm{prop:B1e}
\end{enumerate}
\item \labelinthm{prop:real:vertices} For each $i\in [n]$, every vertex in $R_i$ has degree 0 in $\hat{M}_i$, every vertex in $T_i$ has degree 2 in $\hat{M}_i$, and every other vertex has degree 0 or 1 in $\hat{M}_i$.
\item \labelinthm{prop:real:colours} For each $i\in [n]$, $\hat{M}_i$ is a rainbow subgraph with colours in $C\setminus C_i$.
\item \labelinthm{prop:real:absorption} If there exist edge-disjoint matchings $\tilde{M}_1,\ldots,\tilde{M}_n$ in $G-\hat{M}_1-\ldots-\hat{M}_n$ with the following properties, then $G$ has a decomposition into perfect rainbow matchings.
\begin{enumerate}[label = {\emph{\textbf{\roman{enumii})}}}]
\item For each $i\in [n]$, $\tilde{M}_i$ is vertex disjoint from $\hat{M}_i$ and contains every vertex outside of $R_i$ that has degree 0 in $\hat{M}_i$.\labelinthm{prop:Bi}
\item For each $i\in [n]$, $\hat{M}_i\cup \tilde{M}_i$ is an $n$-edge rainbow subgraph.\labelinthm{prop:Bii}
\item Letting $R_i'=V(G)\setminus V(\hat{M}_i\cup \tilde{M_i})$ for each $i\in [n]$, we have, for each $\tau\in \mathcal{T}$ and $\phi\in \mathcal{F}_\tau$, that $|R_i'|=|T_i|$ and $\bigcup_{i\in I_\phi}R_i'=_{\mult} \bigcup_{i\in I_\phi}T_i$.\labelinthm{prop:Biii}
\end{enumerate}
\end{enumerate}
\end{lemma}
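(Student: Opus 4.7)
The plan is to construct the $\hat{M}_i$ in three stages \ref{partB}1–\ref{partB}3, one for each colour class $D_1, D_2, D_3$ and each matching the sketch in Section~\ref{subsec:proofsketch}, and then show that the resulting near-matchings realise the schematic produced by Lemma~\ref{keylemma:absorption}. First I would apply Lemma~\ref{keylemma:absorption} to obtain, for each $\tau \in \mathcal{T}$, a collection $\mathcal{I}_\tau$ with all the pseudorandom correction properties. For each $\{(i,u),(j,v)\} \in \mathcal{I}_\tau$, my aim is to build a rainbow $u,v$-path of length $62$ whose odd edges lie in $\hat{M}_i$ and whose even edges lie in $\hat{M}_j$, with the odd and even edges using a common colour set; this is exactly an $\{(i,u),(j,v)\}$-switcher. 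The path is manufactured in three pieces: a single edge from each of $u$ and $v$ into $X_i$ (respectively $X_j$) in stage \ref{partB}1 (which also gives $T_i$ vertices their second incident edge), short `start' segments into $Y_i, Y_j$ in stage \ref{partB}2, and a long connecting segment through $Z_i \cup Z_j$ in stage \ref{partB}3.

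Each of stages \ref{partB}1, \ref{partB}2, \ref{partB}3 will be carried out by one application of Theorem~\ref{thm:nibble} to a carefully chosen auxiliary hypergraph whose vertices encode the roles being filled (rows $i$, colours, vertices of $G$, and relevant positions inside $\mathcal{I}_\tau$) and whose edges encode admissible tuples of matching/colour assignments. The expected number of edges at each vertex, together with codegree bounds, will be computed using Corollary~\ref{cor:latinsquareprobabilities}: the key point is that only polylogarithmic error terms arise, which is why one can meet the $\Delta^c \leq \Delta^{1-\delta}$ hypothesis of Theorem~\ref{thm:nibble} and then tolerate the $\Delta^{-\eps}$ slack in the conclusion. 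A list of weight functions is used to track each property we need on the resulting matching — for example, the number of $i \in I_\phi$ for which a given vertex is uncovered, or the number of colours we leave unused in each $C_i$ — and these weights propagate the bounds in \ref{prop:real:regularity}. Detailed edge and codegree counts of paths of length $62$ with prescribed colour structure, to be established in Section~\ref{sec:rand} via the deletion method, supply the hypergraph regularity needed for stage \ref{partB}3. Throughout, the colour partition into $D_1, D_2, D_3$ and the tribe/family partition of $[n]$ ensure the three stages are essentially edge-disjoint and that each $i$ receives colours only from $C \setminus C_i$, yielding \ref{prop:real:colours}; the split $X_i \cup Y_i \cup Z_i$ of the target endpoints produces \ref{prop:real:vertices} automatically.

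Properties \ref{prop:real:regularity}(a)–(d) follow directly from the approximation error guaranteed by Theorem~\ref{thm:nibble} together with Chernoff (Lemma~\ref{chernoff}) applied to the random colour/vertex partitions; the cushion between $p_X, p_Y, p_Z$ and the demand on each stage, and between $p_R$ and $p_T$, is exactly what allows these slacks to be absorbed. The main obstacle is \ref{prop:real:absorption}. Given the $\tilde{M}_i$, let $R_i' = V(G) \setminus V(\hat{M}_i \cup \tilde{M}_i)$; by hypothesis these sets lie in $R_i$, satisfy $|R_i'| = |T_i|$, and $\bigcup_{i \in I_\phi} R_i' =_\mult \bigcup_{i \in I_\phi} T_i$ for every family $\phi$. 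So Lemma~\ref{keylemma:absorption}\ref{prop:abs:corrections} produces a subcollection $\mathcal{C} \subseteq \bigcup_\tau \mathcal{I}_\tau$ telling us exactly which switchers to activate: those deleting a $T_i$-vertex from $\hat{M}_i$ and adding an $R_i'$-vertex, together with $(\leq 1)$-balanced partner switches at all other $S_i$-vertices. Activating the corresponding constructed paths swaps the odd and even edges between $\hat{M}_i$ and $\hat{M}_j$; by the matching colour-set property of each switcher, this preserves the rainbow condition in each updated subgraph, while by \ref{prop:abs:regularityout}–\ref{prop:abs:lowcodegree3} the activated switchers are sufficiently edge-disjoint that they can be applied simultaneously. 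After the switches, each $\hat{M}_i \cup \tilde{M}_i$ becomes a perfect rainbow matching of $G$ and the matchings partition $E(G)$, giving the required decomposition. The delicate part of this verification — ensuring that different switchers do not clash on edges of $G$ — is handled by building into the stage-\ref{partB}2 and stage-\ref{partB}3 semi-random choices an explicit weight function that bounds the number of switchers using any fixed edge, which combined with the schematic's low-codegree conditions lets us do all switches in parallel.
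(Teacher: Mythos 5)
Your overall frame — three stages indexed by $D_1,D_2,D_3$, each executed via Theorem~\ref{thm:nibble} on an auxiliary hypergraph with carefully chosen weight functions, with the $L$-link counts from Section~\ref{sec:rand} feeding the degree/codegree hypotheses in stage~\ref{partB}3, and \ref{prop:real:absorption} deduced from \ref{prop:abs:corrections} — matches the paper's strategy. But there is a genuine gap in the central structural idea, precisely the obstacle the paper flags at the start of Section~\ref{sec:Bsketch}.

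You propose to build one $\{(i,u),(j,v)\}$-switcher path per element of $\mathcal{I}_\tau$, starting with ``a single edge from each of $u$ and $v$.'' However, by \ref{prop:abs:regularityout} each pair $(i,u)$ with $u\in S_i\setminus R_i$ belongs to exactly $24$ pairs $\{(i,u),(j,v)\}\in\mathcal{I}_\tau$. If you construct $24$ separate paths $P_{u,j}$, $j\in J_{i,u}$, all with their initial odd edge placed in $\hat{M}_i$, then either those initial edges at $u$ are distinct (giving $u$ degree up to $24$ in $\hat{M}_i$, which is incompatible with \ref{prop:real:vertices}), or they coincide --- but then the single shared edge $ux$ of colour $c$ must simultaneously satisfy, for each of the $24$ pairs, the colour-set-matching condition defining a switcher, and you have not explained how a single colour $c$ can reappear in the correct parity for all $24$ paths without creating colour collisions or degree violations in the $\hat{M}_j$'s. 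This is exactly why the paper does \emph{not} realise one monolithic $u,v$-path per pair: instead it introduces, for each $(i,u)$, a monochromatic matching $M_{i,u}$ of colour $c_{i,u}$ (one edge $e_{u,i,j}$ per $j\in J_{i,u}$) together with a single edge $uv_{i,u}$ of the same colour, distributes $uv_{i,u}$ to $\hat{M}_i$ and $e_{u,i,j}$ to $\hat{M}_j$, and then records the induced instructions in a new set $\mathcal{J}$; the genuine $L$-link search (stage~\ref{partB}3) is run on $\mathcal{J}$, not on $\mathcal{I}_\tau$. A further symptom of the same gap: the switcher realised for each $\{(i,u),(j,v)\}\in\mathcal{I}_\tau$ is the concatenation $ux_1\,P\,x_2x_3\,Q\,y_3y_2\,R\,y_1v$ of \emph{three} $L$-links plus four bridging edges, hence has length $190$, not $62$; a single $62$-edge path cannot carry the fork structure needed to share the $u$-incidence across $24$ pairs.

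Two smaller but real misreadings follow from the same confusion. First, the sets $X_i$ are used in the paper only for the $T_i$-to-$X_i$ matchings in stage~\ref{partB}1; the edges incident to $u\in S_i\setminus R_i$ coming from switchers go into $Y_i$ and $Z_i$. Your plan to send an edge from $u$ into $X_i$ at stage~\ref{partB}1 would both overload $X_i$ and leave you unable to build the $Y_i$ fork. Second, because stage~\ref{partB}3 must find $L$-links between endpoints determined by stage~\ref{partB}2 (the vertices $v_{i,u}$, $x_{i,u,j}$, $y_{i,u,j}$), the near-regularity of the stage-\ref{partB}3 auxiliary hypergraph is \emph{not} automatic from the random partition: it requires propagating stage-\ref{partB}2 regularity through a dedicated family of weight functions (Section~\ref{sec:complicatedweight}). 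You mention weight functions, but do not flag that the later stage's viability depends on properties the earlier stage must be engineered to deliver; without this the auxiliary hypergraph in stage~\ref{partB}3 could fail the near-regularity hypothesis of Theorem~\ref{thm:nibble}.
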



\subsection{Part~\ref{partC}: Covering, balancing, and the partition of the final edges}\label{sec:keylemmathree}
We now state the key lemma for Part~\ref{partC}, which we will use to partition the final edges, and which is proved in Section~\ref{sec:balanceandcover}.

\begin{lemma}\label{keylemma:completion}
Take the set-up detailed in Sections~\ref{sec:variables} and~\ref{sec:choosevxsets}, where, in particular, we have $G\sim G^\col_{[n]}$ and that the edges of $G$ appear in $E^{{\bal}}\subset E(G)$ independently at random with probability $p_{{\bal}}$, while, for each $i\in [n]$, $C_i\subset C$ is a random set of colours where each colour is included independently at random with probability $p_{{\bal}}$.
Then, with high probability, we have the following.

Suppose we have an edge set $\hat{E}\subset E(G)$, and sets $\hat{V}_i\subset V(G)$ and $\hat{C}_i\subset C(G)$, $i\in [n]$, which satisfy the following properties.
\stepcounter{propcounter}
\begin{enumerate}[label = {\emph{\textbf{\Alph{propcounter}\arabic{enumi}}}}]
\item \labelinthm{key3:need0} $E^{{\bal}}\subset \hat{E}$.
\item \labelinthm{key3:extra} For each $i\in [n]$, $|\hat{V}_i|=2|\hat{C}_i|+|T_i|$.
\item \labelinthm{key3:need1} For each $i\in [n]$, $R_i\subset \hat{V}_i$ and $|\hat{V}_i\setminus {R}_i|\leq 4\beta n$.
\item \labelinthm{key3:need2} For each $i\in [n]$, $C_i\subset \hat{C}_i$ and $|\hat{C}_i\setminus C_i|\leq 2\beta n$.
\item For each $v\in V(G)$ and $\phi\in \mathcal{F}$, $|\{i\in I_\phi:v\in \hat{V}_i\setminus R_i\}|\leq 4\beta p_\tr p_\fa n$.\labelinthm{prop:novertexmissingtoomuch}
\item For each $c\in C$ and $\phi\in \mathcal{F}$, $|\{i\in I_\phi:c\in \hat{C}_i\setminus C_i\}|\leq 2\beta p_\tr p_\fa n$.\labelinthm{prop:nocolourmissingtoomuch}
\item For each $v\in V(G)$, $|\{e\in \hat{E}\setminus E^\bal:v\in V(e)\}|\leq 2\beta n$.\labelinthm{prop:novertexintoomanyedges}
\item \labelinthm{key3:need3} For each $v\in V(G)$, $|\{e\in \hat{E}:v\in V(e)\}|=|\{i\in [n]:v\in \hat{V}_i\}|-|\{i\in [n]:v\in T_i\}|$.
\item \labelinthm{key3:need4} For each $c\in C$, $|\{e\in \hat{E}:c(e)=c\}|=|\{i\in [n]:c\in \hat{C}_i\}|$.
\end{enumerate}

Then, $\hat{E}$ can be partitioned into matchings $\tilde{M}_1,\ldots,\tilde{M}_n$ such that the following hold.
\begin{enumerate}[label = {\emph{\textbf{\Alph{propcounter}\arabic{enumi}}}}]\addtocounter{enumi}{9}
\item For each $i\in [n]$, $\tilde{M}_i$ is a rainbow matching with colour set $\hat{C}_i$. \labelinthm{key3:outcome1}
\item For each $i\in [n]$, $\hat{V}_i\setminus R_i\subset V(\tilde{M}_i)\subset \hat{V}_i$.\labelinthm{key3:outcome2}
\item For each $\phi\in \mathcal{F}$, $\bigcup_{i \in I_\phi} R_i\setminus V(\tilde{M}_i) =_{\mult} \bigcup_{i \in I_\phi} T_i$.\labelinthm{key3:outcome3}
\end{enumerate}
\end{lemma}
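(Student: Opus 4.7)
The plan is to partition $\hat E$ in three stages: absorbing the small exceptional data, partitioning the random bulk via the nibble, and fixing a small residual discrepancy by local augmentations. By Lemma~\ref{chernoff} applied to the independent samplings of $R_i,T_i,C_i,E^\bal$ and the colour/vertex partitions of Section~\ref{sec:choosevxsets}, whp all relevant typical counts concentrate tightly about their means; since $\beta\llpoly p_\bal\llpoly p_T$, each exceptional quantity (for instance $|\hat V_i\setminus R_i|$, $|\hat C_i\setminus C_i|$, $|\hat E\setminus E^\bal|$) is at most $O(\beta n)$ while each bulk quantity ($|C_i|$, $|R_i|$, colour-degree in $E^\bal$) is $\Theta(p_\bal n)$, leaving polynomial slack for the arguments below.

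For Stage 1, first choose an assignment $e\mapsto i(e)$ for every edge $e\in\hat E\setminus E^\bal$ so that the induced vertex- and colour-multiplicities match the exceptional portions of the degree and colour conditions imposed on $\hat E$; this is a small bipartite $f$-factor problem with $O(\beta n)$-bounded targets on each side, and so can be solved deterministically. Then for each $i$ extend this to an edge-disjoint rainbow partial matching $M_i^\star\subseteq\hat E$ with $C(M_i^\star)=\hat C_i\setminus C_i$ and $\hat V_i\setminus R_i\subseteq V(M_i^\star)\subseteq\hat V_i$, using edges of $E^\bal$ for the extensions. A greedy selection succeeds whp since at each step the number of legal extending edges in $E^\bal$ is $\Theta(p_\bal n)$ by Corollary~\ref{cor:latinsquareprobabilities}, while only $O(\beta n)$ edges are forbidden by prior choices.

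For Stage 2, set $\hat E'=\hat E\setminus\bigcup_i M_i^\star\subseteq E^\bal$ and $C_i'=\hat C_i\setminus C(M_i^\star)\subseteq C_i$, and partition $\hat E'$ into rainbow matchings $\tilde M_i''$ (with $\tilde M_i=M_i^\star\cup\tilde M_i''$) having $C(\tilde M_i'')=C_i'$ and $V(\tilde M_i'')\subseteq R_i$. Apply Theorem~\ref{thm:nibble} to the $4$-uniform auxiliary hypergraph $\mathcal H$ on vertex set $\hat E'\cup\{(i,c):c\in C_i'\}\cup\{(i,v):v\in R_i\}$, whose hyperedges $\{e,(i,c(e)),(i,a_e),(i,b_e)\}$ range over all valid assignments of $e=a_eb_e\in\hat E'$ to a matching index $i$ with $c(e)\in C_i'$ and $a_e,b_e\in R_i$. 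To enforce the family-balance condition, include weight functions $\omega_{\phi,v}$ counting, for each $\phi\in\mathcal F$ and $v\in U_\phi$, incidences of hyperedges with $i\in I_\phi$ and $v\in\{a_e,b_e\}$; pseudorandomness of the output matching with respect to every $\omega_{\phi,v}$ forces $v$ to be covered by the correct number of matchings in family $\phi$ up to relative error $\Delta^{-\eps}$. The maximum degree of $\mathcal H$ is $\Theta(p_\bal n)$ and its codegree is $n^{o(1)}$ by Corollary~\ref{cor:latinsquareprobabilities}, so Theorem~\ref{thm:nibble} produces an almost-matching leaving only $O(n^{2-\eps})$ uncovered hyperedges and $O(n^{1-\eps})$ uncovered colour or vertex slots per matching.

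For Stage 3, the residual discrepancy consists of $O(n^{2-\eps})$ unplaced edges, $O(n^{1-\eps})$ unused colour or vertex slots per matching, and a slight family-balance deviation. Fix each discrepancy by an augmenting rotation that reroutes a single edge between two matchings $\tilde M_i''$ and $\tilde M_j''$ in the same family along a short alternating path in $G[E^\bal]$; the abundance of such paths is guaranteed whp by Corollary~\ref{cor:latinsquareprobabilities} and the switcher-counting tools of Section~\ref{sec:rand}. Each rotation fixes one discrepancy while preserving all previously satisfied constraints, since swapping between two same-family matchings affects only those two matchings at the endpoints of the swap. The hard part will be the codegree analysis of $\mathcal H$ in Stage 2 and organising Stage 3 so that the $O(n^{2-\eps})$ rotations are executed edge-disjointly without cascading new violations; the hierarchy $\beta\llpoly p_\bal\llpoly p_T$ provides the necessary slack throughout.
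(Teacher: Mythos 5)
Your three-stage plan founders at the junction between Stages~2 and~3, and the failure is structural rather than cosmetic. Theorem~\ref{thm:nibble} can only produce an \emph{almost}-perfect matching of your auxiliary 4-uniform hypergraph, so after Stage~2 you are left with $\Theta(n^{2-\eps})$ unplaced edges of $\hat E'$ and $\Theta(n^{1-\eps})$ unfilled colour- and vertex-slots per matching. But the target object is an \emph{exact} partition of $\hat E$ satisfying exact colour constraints (\ref{key3:outcome1} demands $C(\tilde M_i)=\hat C_i$, not approximately) and an exact multiset identity (\ref{key3:outcome3}). Repairing $\Theta(n^{2-\eps})$ edges by ``augmenting rotations'' is not a cleanup step: placing one unplaced colour-$c$ edge into some $\tilde M_i$ forces you to evict the colour-$c$ edge already there, then to re-home that edge, and so on; the alternating structures you need must thread through many matchings simultaneously while preserving vertex-disjointness, rainbowness, the family multiset condition, and edge-disjointness from all previous rotations. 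That is not a minor residual --- it is essentially the absorption problem that the whole of Parts~\ref{partA} and~\ref{partB} were constructed to solve, and you cannot re-use those absorbers here because $\hat E$ is precisely the edge set \emph{left over} after they were spent. Additionally, your weight functions $\omega_{\phi,v}$ only enforce (\ref{key3:outcome3}) up to multiplicative error $\Delta^{-\eps}$, not exactly, so the family multiset identity is violated by $\Theta(n^{1-\eps})$ per vertex even before counting the unplaced edges.

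The paper's proof sidesteps this by never invoking a nibble for the final partition. It first fixes the \emph{aggregate} edge set $\hat E^*_\phi$ assigned to each family $\phi$ so that its colour- and vertex-degree sequences are \emph{exactly} what the family's matchings collectively require (this is where the link-switchers of Section~\ref{sec:rand} appear, in Part~\ref{partC3b}, and there only to rebalance degrees between families --- a deficiency that is genuinely small, $O(p_{\balcol}p_\tr p_\fa n)$ per vertex, because the initial random split is already almost correct). Only then, within each family and separately for each colour $c$, does it assign the $|I_c|$ colour-$c$ edges to the $|I_c|$ matchings needing colour $c$; this is an exact bipartite perfect-matching problem of size $\Theta(p_\bal p_\tr p_\fa n)$ and is solved via Hall's theorem on a twice-sparsified auxiliary graph (the second sparsification $L_c''$ guarantees the chosen edges for matching $i$ are pairwise disjoint). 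Nothing needs repairing afterwards. A secondary point: you repeatedly cite Corollary~\ref{cor:latinsquareprobabilities} for degree and codegree bounds, but that corollary bounds $\P(H\subset G)$ for fixed small $H$; the concentration statements you want are Chernoff/McDiarmid consequences of the random splittings in Section~\ref{sec:choosevxsets}, not of Corollary~\ref{cor:latinsquareprobabilities}, and the distinction matters because $E^\bal$ is shared among all $n$ matchings and your Stage~1 greedy step needs edge-disjointness, not just abundance.
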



\subsection{Proof of Theorem~\ref{thm:main} subject to Lemmas~\ref{keylemma:realisation} and~\ref{keylemma:completion}}\label{sec:proofofmainthmfromkeylemmas}
To finish this section, we deduce Theorem~\ref{thm:main} from Lemmas~\ref{keylemma:realisation} and~\ref{keylemma:completion}.
\begin{proof}[Proof of Theorem~\ref{thm:main}]
Take the set-up detailed in Sections~\ref{sec:variables} and~\ref{sec:choosevxsets} with $G\sim G^\col_{[n]}$.
By Lemma~\ref{keylemma:completion}, we have, with high probability, that if $\hat{E}\subset E(G)$, and, for each $i\in [n]$, $\hat{V}_i\subset V(G)$ and $\hat{C}_i\subset C(G)$, are such that \ref{key3:need0}--\ref{key3:need4} hold, then $\hat{E}$ can be partitioned into matchings $\tilde{M}_1,\ldots,\tilde{M}_n$ such that \ref{key3:outcome1}--\ref{key3:outcome3} hold.
Furthermore, by Lemma~\ref{keylemma:realisation}, with high probability there are disjoint subgraphs
$\hat{M}_1,\ldots,\hat{M}_n$ in $G[E^{\mathrm{abs}}]$ such that \ref{prop:real:regularity}--\ref{prop:real:absorption} hold.

For each $i\in [n]$, let $\hat{C}_i=C\setminus C(\hat{M}_i)$ and $\hat{V}_i=V(G)\setminus V(\hat{M}_i)$. Let $\hat{E}$ be the set of edges of $E(G)$ which are not in $\hat{M}_1,\ldots,\hat{M}_n$, so that, as these subgraphs are all in $E^{\abs}$, we have that $E^\bal\subset \hat{E}$ and hence \ref{key3:need0} holds. For each $i\in [n]$, by \ref{prop:real:vertices}, we have
\[
|\hat{V}_i|=2n-|V(\hat{M}_i)|=2n-2|E(\hat{M}_i)|+|T_i|=2n-2|C(\hat{M}_i)|+|T_i|=2|\hat{C}_i|+|T_i|,
\]
and therefore \ref{key3:extra} holds. For each $i\in [n]$, note that, by \ref{prop:real:colours}, we have that $C_i\subset \hat{C}_i$, and, by \ref{prop:real:vertices}, $R_i\subset \hat{V}_i$. Then, combining this with \ref{prop:real:regularity} \ref{prop:B1e}, we have that both \ref{key3:need1} and \ref{key3:need2} hold.

For each $v\in V(G)$ and $\phi\in \mathcal{F}$, by \ref{prop:real:regularity} \ref{prop:B1b},
\begin{align*}
|\{i\in I_\phi:v\in \hat{V}_i\setminus R_i\}|&=|I_\phi|-|\{i\in I_\phi:v\in R_i\}|-|\{i\in I_\phi:v\in V(\hat{M}_i)|\leq 4\beta p_\tr p_\fa n,
\end{align*}
and therefore \ref{prop:novertexmissingtoomuch} holds.
For each $c\in C$, by \ref{prop:real:regularity} \ref{prop:B1c},
\[
|\{i\in I_\phi:c\in \hat{C}_i\setminus C_i\}|=|I_\phi|-|\{i\in I_\phi:c\in C(\hat{M}_i)\}|-|\{i\in I_\phi:c\in C_i\}|\leq 2\beta p_\tr p_\fa n,
\]
and therefore \ref{prop:nocolourmissingtoomuch} holds. Note that \ref{prop:novertexintoomanyedges} follows from \ref{prop:real:regularity} \ref{prop:B1d}.

For each $v\in V(G)$,
\begin{align*}
|\{e\in \hat{E}:v\in V(e)\}|&=n-|\{i\in [n]:v\in V(\hat{M}_i)\}|-|\{i\in [n]:v\in T_i\}|\\
&=|\{i\in [n]:v\in \hat{V}_i\}|-|\{i\in [n]:v\in T_i\}|,
\end{align*}
and thus \ref{key3:need3} holds. Furthermore, for each $c\in C$, we have
\[
|\{e\in \hat{E}:c(e)=c\}|=n-|\{i\in [n]:c\in C(\hat{M_i})\}|=|\{i\in [n]:c\in \hat{C}_i\}|,
\]
and thus \ref{key3:need4} holds.

Therefore, \ref{key3:need0}--\ref{key3:need4} all hold. Then, by Lemma~\ref{keylemma:completion}, $\hat{E}$ can be partitioned into matchings $\tilde{M}_1,\ldots,\tilde{M}_n$ such that \ref{key3:outcome1} -- \ref{key3:outcome3} hold.
From \ref{key3:outcome2}, for each $i\in [n]$, as $\hat{V}_i=V(G)\setminus V(\hat{M}_i)$, we have that $\hat{M}_i$ and $\tilde{M}_i$ are vertex-disjoint, and $\tilde{M}_i$ contains every vertex outside of $R_i$ with degree 0 in $\hat{M}_i$, and therefore \ref{prop:Bi} in \ref{prop:real:absorption} holds. For each $i\in [n]$, by
 \ref{key3:outcome1}, we have $C(\tilde{M}_i)=\hat{C}_i=C\setminus C(\hat{M}_i)$, and thus, as both $\tilde{M}_i$ and $\hat{M}_i$ are rainbow, \ref{prop:Bii} in \ref{prop:real:absorption} holds.
Now, for each $i\in [n]$, let
$R_i'=V(G)\setminus V(\hat{M}_i\cup \tilde{M_i})=R_i\setminus V(\tilde{M}_i)$, so that, by \ref{key3:outcome3},  \ref{prop:Biii} in \ref{prop:real:absorption} holds. Therefore, as
 $\tilde{M}_1,\ldots,\tilde{M}_n$ satisfy \ref{prop:Bi}--\ref{prop:Biii} of \ref{prop:real:absorption}, by \ref{prop:real:absorption}, $G$ has a decomposition into perfect rainbow matchings, as required.
\end{proof}


\section{Part~\ref{partA}: Absorption schematic}\label{sec:absorb}
In this section, we will prove Lemma~\ref{keylemma:absorption}. In Section~\ref{subsec:sketchabs}, we sketch how we construct our absorption structure, and divide its construction into three parts, which we call Parts~\ref{partA1}--\ref{partA3}. In Section~\ref{subsec:basic}, we give the basic properties we will need for the vertex sets involved in Lemma~\ref{keylemma:absorption}. In Section~\ref{sec:partA1}, we carry out Part~\ref{partA1}. In Section~\ref{sec:propauxgraph}, we construct an auxiliary graph that we will use as a template in our construction. We then carry out Parts~\ref{partA2} and~\ref{partA3} in Sections~\ref{sec:partA2} and~\ref{sec:partA3} respectively, before using this to complete the proof of Lemma~\ref{keylemma:absorption} in Section~\ref{sec:finalpartA}.

\ifsecfourout
\else


\subsection{Sketch of the absorption schematic}\label{subsec:sketchabs}
In this section, and only in this section, we will use colours to index the target matchings to make for easier visualisation. Note that the lemma we wish to prove, Lemma~\ref{keylemma:absorption}, does not involve the colours of the edges of $G\sim G^\col_{[n]}$, or indeed any graph $G$, but only the vertex subsets we have chosen randomly in $A\cup B$. In this section, then, we will consider the target matching $M_i$ to have colour $i$.

We will make the corrections within each tribe independently of the other tribes, so for this sketch let us fix $\tau\in \mathcal{T}$.
The key absorption property we want to develop is \ref{prop:abs:corrections}. The property considers any collection of sets $R_i'\subset R_i$, $i \in I_\tau$, such that, for each $i\in [n]$, $|R_i'|=|T_i|$, and, for each $\phi\in \mathcal{F}_\tau$, $\bigcup_{i \in I_\phi} R_i' =_{\mult} \bigcup_{i \in I_\phi} T_i$.
At the very end of our constructions, this represents that, for each $i\in [n]$, in the $i$th near-matching, every vertex will have degree 1 except for the vertices in $R_i'$ which will have degree 0 and the vertices in $T_i$ which will have degree 2. As discussed in Section~\ref{subsec:proofsketch}, we wish to find a set $\mathcal{C}$ of pairs $\{(i,u),(j,v)\}$ with $i,j\in I_\tau$ and $u,v\in S_\tau$  such that, if for each pair $\{(i,u),(j,v)\}$ we decrease the degree of $u$ and increase the degree of $v$ by $1$ in the $i$th near-matching while making the reverse change in the $j$th near-matching, then we will correct all the near-matchings indexed by $I_\tau$ into actual matchings. These corrections need to be made in pairs so that they can be done without affecting any of the other restrictions.

We will use auxiliary coloured multi-digraphs to represent the changes that this makes (see Figures~\ref{fig:reducecoloursurplus}--\ref{fig:sparsecoloursacrossvertexpair}). For example, in a digraph with vertex set $S_\tau$, we use an edge from $u$ to $v$ with colour $i$ to represent in the $i$th near-matching the decrease of the degree of $u$  by 1 and the increase of the degree of $v$ by 1. Thus, the change wrought by an $\{(i,u),(j,v)\}$-switcher can be represented by a pair of directed edges: an edge $\vec{uv}$ with colour $i$ and an edge $\vec{vu}$ with colour $j$.

In Part~\ref{partA}, our first task is to take any arbitrary collection of sets $R_i'\subset R_i$, $i \in I_\tau$, as described, and find, for each $\phi\in \mathcal{F}_\tau$, a collection $\mathcal{C}_\phi$ of pairs $\{(i,u),(j,v)\}$ with $i,j\in I_\phi$
and $u,v\in U_\phi$ which will, overall, for each $i\in I_\phi$, in the $i$th near-matching increase the degree of each vertex in $R_i'$ by 1 and decrease the degree of each vertex in $T_i$ by 1 without changing any of the other vertex degrees.
This we will do as Part~\ref{partA1}, in Section~\ref{sec:partA1}.
Though used in a very different way, to do this we are inspired by elements of a strategy of Barber, K\"uhn, Lo, and Osthus~\cite{barber2016edge} in their work decomposing complete graphs into copies of a small fixed graph. We can represent the change we wish to make by adding to the vertex set $S_\tau$ an arbitrary directed perfect matching with colour $i$ from $T_i$ into $R_i'$ for each $i\in I_\tau$.
This will have a decomposition of its edges into directed cycles (from the condition $\bigcup_{i \in I_\phi} R_i' =_{\mult} \bigcup_{i \in I_\phi} T_i$ for each $\phi\in \mathcal{F}_\tau$). Inspired by some ideas from \cite{barber2016edge}, we will make changes to the directed matchings chosen, remove some edges, and add additional directed edges to create a decomposition instead into directed 2-cycles, so that the collection of directed edges still makes the same corrections overall. For example, if we have an edge $\vec{uv}$ coloured $i$, and replace it with edges $\vec{uw}$ and $\vec{wv}$ both coloured $i$ (for some other vertex $w$), then the change described at $w$ by these two directed arrows would not result in the change of the degree of $w$ in the $i$th near-matching. We can also add monochromatic directed cycles, which we do for certain 2- and 3-cycles, where the cycles will not change the degree of any of its vertices in the $i$th near-matching.
The operations we used are depicted in Figures~\ref{fig:reducecoloursurplus} and~\ref{fig:replacetriangles}.
After these operations, we need that at every vertex there is at most 1 in-edge and at most 1 out-edge of each colour, as we will only swap a vertex out of or into a near-matching at most once. To aid with this, when replacing a directed colour-$i$ edge from an original matching $T_i$ to $R'_i$ by a directed path with colour $i$, the interior vertices of the path (which will have length 3) will be chosen in $U_i\setminus (T_i\cup R'_i)$.

Ideally, for each pair $\{(i,u),(j,v)\}$ in $\mathcal{C}_\phi$, we would have an $\{(i,u),(j,v)\}$-switcher (see Section~\ref{subsec:proofsketch}) within the $i$th and $j$th matching. However, we have not imposed any condition on which pairs can appear in $\mathcal{C}_\phi$, so there are $\Theta(|I_\phi|^2|U_\phi^2|)=\Theta(p_\tr^2 p_\fa^2p_U^2n^4)$ possible such pairs.
For one family alone, this is many more than the number of switchers we could find edge-disjointly in the $n^2$ edges of $G\sim G^\col_{[n]}$.
Therefore, in Parts~\ref{partA2} and~\ref{partA3}, we develop a much sparser set of pairs $\mathcal{I}_\tau$, such that for any such $\mathcal{C}_\phi$, $\phi\in \mathcal{F}$, we can find a set $\mathcal{C}'\subset \mathcal{I}_\tau$ which makes the same changes overall as $\bigcup_{\phi\in \mathcal{F}_\tau}\mathcal{C}_\phi$.

We do this using ideas from template-based absorption, originating from distributive absorption (as introduced in~\cite{montgomery2018spanning}). A much simplified idea (expressed slightly vaguely) here is the following. Suppose we have a set of $n$ vertices $V$ and can construct some `switcher' between any pair of vertices in $V$.
We could create $\binom{n}{2}$ switchers, and this would allow us to swap any pair of vertices from $V$. However, if we have any connected graph $H$ with vertex set $V$ and create a `switcher' for each $e\in E(H)$, then, for any pair of vertices $x$ and $y$, we could take a path from $x$ to $y$ in $H$ and swap vertices along this path in order to swap $x$ and $y$. Thus, we can swap any pair of vertices using at most $n-1$ switchers. In practice, we often take $H$ to be a sparse, well-expanding graph (for example a sparse random regular graph) so that these paths of swaps are not too long, but so that the graph $H$ still has $O(n)$ edges, and thus require only $O(n)$ switchers. Very roughly, in Parts~\ref{partA2} and~\ref{partA3} we will use two rounds of auxiliary graphs to reduce the number of switchers required for each tribe from $\Theta(p_\tr p_\fa^2p_U^2n^4)$ to $O(p_\tr p_Un^2\log^2n)$. Thus,
in total over all the tribes, the number of switchers we will require is $O(p_Sn^2)$. The choice of the variable $p_U$ will then allow us to fit all the required switchers into the $n^2$ edges of $G$. We will find the auxiliary graphs we use (roughly speaking) in the template role of $H$ in Section~\ref{sec:propauxgraph}.

In Section~\ref{sec:partA2} we develop the auxiliary graph found in Section~\ref{sec:propauxgraph} so that it can be used to take the set $\mathcal{C}_\phi$ and use instead a similar set $\mathcal{C}'_\phi$ which makes the same overall changes but each $\{(i,u),(j,v)\}\in \mathcal{C}$ is only between certain pairs of vertices $(u,v)$. The goal here is to replace each $\{(i,u),(j,v)\}$ by, for some $r\in \N$, a sequence of pairs
\begin{equation}\label{eqn:addverticestopath}
\{(i,u),(j,v_1)\},\{(i,v_1),(j,v_2)\},\{(i,v_2),(j,v_3)\}\ldots, \{(i,v_{r-1}),(j,v_r)\}, \{(i,v_{r}),(j,v)\},
\end{equation}
so that, over all the pairs in $\mathcal{C}'_\phi$, we only use few pairs of vertices that can appear as $(v_i,v_{i+1})$, $(u,v_1)$ or $(v_r,v)$.
This corresponds to replacing the edges $\vec{uv}$ with colour $i$ and $\vec{vu}$ with colour $j$ by a directed $u,v$-path with colour $i$ and a directed $v,u$-path with colour $j$ with the same vertex set (see Figure~\ref{fig:sparsevxpairs}). For each $\phi\in \mathcal{F}_\tau$, the vertices $v_1,\ldots,v_r$ at \eqref{eqn:addverticestopath} will come from $V_\phi$.

In Section~\ref{sec:partA3} we then show that, essentially, we can take the sets $\mathcal{C}'_\phi$, $\phi\in\mathcal{F}_\tau$, and use instead similar sets $\mathcal{C}''_\phi$, $\phi\in \mathcal{F}_\tau$, which make the same overall changes but each $\{(i,u),(j,v)\}\in \bigcup_{\phi\in\mathcal{F}_\tau}\mathcal{C}''_\phi$
is now only between certain pairs of vertices $(u,v)$ and only uses certain pairs of colours $(i,j)$.
Similarly to before, the goal here is to replace each $\{(i,u),(j,v)\}$ by, for some $r\in \N$, a sequence of pairs
\begin{equation}\label{eqn:reducecolpairs}
\{(i,u),(i_1,v)\},\{(i_1,u),(i_2,v)\},\{(i_2,u),(i_3,u)\}\ldots, \{(i_{r-1},u),(i_r,v)\}, \{(i_r,u),(j,v)\},
\end{equation}
so that, over all the pairs in $\mathcal{C}''_\phi$, we only use few pairs of colours that can appear as $(i_j,i_{j+1})$, $(i,i_1)$ or $(i_r,j)$.
This corresponds to considering the edges $\vec{uv}$ with colour $i$ and $\vec{vu}$ with colour $j$, adding a directed edge in both directions between $u$ and $v$ with colour $i_j$ for each $j\in [r]$, and then pairing up these edges as indicated by \eqref{eqn:reducecolpairs} (see also Figure~\ref{fig:sparsecoloursacrossvertexpair}). Where $\phi\in \mathcal{F}_\tau$ is such that $i,j\in I_\phi$, the colours $i_1,\ldots,i_r$ at \eqref{eqn:reducecolpairs} will each come from some $I_{\phi'}$ with $\phi'\in \mathcal{F}_\tau\setminus \{\phi\}$ such that $u,v\in W_{\phi'}$. This is the part of the proof where the families in the same tribe assist each other in making the corrections. We do this as we still want the property in $\mathcal{C}''_\phi$, $\phi\in \mathcal{F}_\tau$, that, working in coloured arrows, we never want to have more than 1 in-edge or more than 1 out-edge of any colour at any vertex.

This will allow us to find the sparse collection of pairs $\mathcal{I}_\tau$ from which we can find pairs to make any of our required corrections. Finally, then, for Part~\ref{partA}, in Part~\ref{partA4}, we add some more pairs to $\mathcal{I}_\tau$ so that when in Part~\ref{partB} we find these switchers this can be done using the semi-random method. For this we need that, for each $i\in I_\tau$ and each $u\in S_i\setminus R_i$, the same number of paths will be found starting at $u$ for the $i$th near-matching, which corresponds to the same number of pairs of $\mathcal{I}_\tau$ containing $(i,u)$ (i.e., that \ref{prop:abs:regularityout} holds). While adding these pairs we ensure that the conditions \ref{prop:abs:boundedin}--\ref{prop:abs:lowcodegree3} continue to hold, where they will hold for the initial sparse collection of pairs by our careful constructions. These conditions are used to ensure low codegrees in certain auxiliary hypergraphs in which we use the semi-random method. 
Part~\ref{partA4} is carried out in Section~\ref{sec:finalpartA}, which completes the proof of Lemma~\ref{keylemma:absorption}.

Where relevant, we include further sketches in this section, but for now we finish with the following summary of the subparts of Part~\ref{partA}.

\begin{enumerate}[label = \textbf{\Alph{enumi}}]
\item Create the absorption schematic, which can be represented by a sparse collection $\mathcal{I}_\tau$ of edge-coloured directed 2-cycles. Then, for any sets $R_i'\subset R_i$, $i\in I_\tau$, with $|R_i'|=|T_i|$ for each $i\in [n]$ and $\bigcup_{i \in I_\phi} R_i' =_{\mult} \bigcup_{i \in I_\phi} T_i$,
\begin{enumerate}[label = \textbf{\Alph{enumi}.\arabic{enumii}}]
\item Find a collection of edge-coloured directed 2-cycles which can make the required changes.\label{partA1}
\item Find such a collection where the 2-cycles only use certain pairs of vertices.\label{partA2}
\item Find such a collection where the 2-cycles only use certain pairs of colours/vertices (i.e., only 2-cycles corresponding to pairs in $\mathcal{I}_\tau$).\label{partA3}
\item Add further pairs to $\mathcal{I}_\tau$ to regularise the schematic.\label{partA4}
\end{enumerate}
\end{enumerate}


\subsection{Basic properties of the vertex partition}\label{subsec:basic}

We will use the following properties of the vertex partitions.

\begin{lemma}\label{lem:setsizesetc}
With high probability, the following all hold.
\stepcounter{propcounter}
\begin{enumerate}[label = {\emph{\textbf{\Alph{propcounter}\arabic{enumi}}}}]
\item \labelinthm{prop:vxpartitionfirst} For each $i\in [n]$ and $X\in \{A,B\}$, $|R_i\cap X|=(1\pm \epsforabsthatwasepszero)p_Rn$, $|S_i\cap X|=(1\pm \epsforabsthatwasepszero)p_Sn$, $|T_i\cap X|=(1\pm \epsforabsthatwasepszero)p_Tn$, $|U_i\cap X|=(1\pm \epsforabsthatwasepszero)p_Un$, $|V_i\cap X|=(1\pm \epsforabsthatwasepszero)p_Vn$, $|W_i\cap X|=(1\pm \epsforabsthatwasepszero)p_Wn$, $|X_i\cap X|=(1\pm \epsforabsthatwasepszero)p_Xn$, $|Y_i\cap X|=(1\pm \epsforabsthatwasepszero)p_Yn$, and $|Z_i\cap X|=(1\pm \epsforabsthatwasepszero)p_Zn$.
\item \labelinthm{prop:verticesinTi} For each $\tau\in \mathcal{T}$, $\phi\in \mathcal{F}_\tau$ and $v\in U_\phi$, $|\{i\in I_\phi:v\in T_i\}|=(1\pm \epsforabsthatwasepszero)p_Tp_U^{-1}p_\tr p_\fa n$.
\item \labelinthm{prop:verticesinUiminusRiTi} For each $\tau\in \mathcal{T}$, $\phi\in \mathcal{F}_\tau$ and $v\in U_\phi$,
\[
|\{i\in I_\phi:v\in U_i\setminus (R_i\cup T_i)\}|\geq (1-\epsforabsthatwasepszero)(1-(p_R+p_T)/p_U)p_\tr p_\fa n\geq (1-\sqrt{p_T})|I_\phi|.
\]
\item \labelinthm{prop:pairsinFtauUVphi} For each $\tau\in \mathcal{T}$ and distinct $u,v\in S_{\tau}$, $|\{\phi \in \mathcal{F}_{\tau}:u,v\in U_\phi \cup V_\phi\}|=(1\pm \epsforabsthatwasepszero)(p_U+p_V)^2p_S^{-2}p_{\fa}^{-1}$.
\item \labelinthm{prop:pairsinStauWphi} For each $\tau\in \mathcal{T}$ and distinct $u,v\in S_{\tau}$, $|\{i\in I_{\tau}:u,v\in W_i\}|=(1\pm \epsforabsthatwasepszero)p_W^2p_S^{-2}p_{\tr}n$.
\item \labelinthm{prop:sizeofsubsetVphiWphi'} For each $\tau\in \mathcal{T}$ and distinct $\phi, \phi' \in \mathcal{F}_{\tau}$, $|(U_\phi \cup V_\phi) \cap W_{\phi'}|=2(1 \pm \epsforabsthatwasepszero)(p_U+p_V)p_Wp_S^{-1}n$.
\item \labelinthm{prop:pairsuvinVphiWphi'} For each $\tau\in \mathcal{T}$ and distinct $\phi, \phi' \in \mathcal{F}_{\tau}$, $|\{\{u,v\} \subset S_{\tau}: u \sim_{A/B} v,~ u,v \in (U_{\phi} \cup V_{\phi}) \cap W_{\phi'}\}|=(1 \pm \epsforabsthatwasepszero)(p_U+p_V)^2p_W^2p_S^2n^2/2$.
\item \labelinthm{prop:numberofUVphicontainingu} For each $u \in S_{\tau}$, $|\{\phi \in \mathcal{F}_{\tau}: u \in (U_{\phi} \cup V_{\phi})\}|=(1 \pm \epsforabsthatwasepszero)(p_U+p_V)p_{\fa}^{-1}$.
\item \labelinthm{prop:pairsphiphi'withuinVphiWphi'} For each $u \in S_{\tau}$, $|\{(\phi, \phi'):\phi,\phi' \in \mathcal{F}_{\tau}, u \in (U_{\phi} \cup V_{\phi}) \cap W_{\phi'}\}|=(1 \pm \epsforabsthatwasepszero)(p_U+p_V)p_Wp_{\fa}^{-2}/2$.\labelinthm{prop:vxpartitionlast}
\end{enumerate}
\end{lemma}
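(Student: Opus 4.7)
The plan is, for each of the properties \ref{prop:vxpartitionfirst}--\ref{prop:vxpartitionlast}, to identify the quantity being estimated as a sum of independent Bernoulli random variables (or a hypergeometric variable), compute its expectation directly from the probabilities specified in Section~\ref{sec:choosevxsets}, apply Chernoff's inequality (Lemma~\ref{chernoff}) with deviation $\eps$, and take a union bound over the polynomially many choices of indices. The whole argument is thus routine concentration; the only thing to check is that each expectation is at least polynomial in $n$ so that the Chernoff tail $\exp(-\Omega(\eps^2\mu))$ dominates the number of events.

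First I would handle the sample-space properties \ref{prop:vxpartitionfirst}. For fixed $i$ and $X\in\{A,B\}$, the size $|R_i\cap X|$ is a sum over the $n$ vertices of $X$ of independent indicator variables, each equal to $1$ with probability $p_Sp_U p_S^{-1}\cdot p_R/p_U=p_R$ (using that $v\in R_i$ requires first $v\in S_\tau$ with probability $p_S$, then $v\in U_\phi$ with conditional probability $p_U/p_S$, and then $v\in R_i$ with conditional probability $p_R/p_U$, and the three choices are independent). Chernoff gives the stated $(1\pm\eps)$ bound with failure probability $e^{-\Omega(\eps^2 p_Rn)}$. The analogous statements for $S_i$, $T_i$, $U_i$, $V_i$, $W_i$, $X_i$, $Y_i$, $Z_i$ are identical; union bounding over $O(n)$ events suffices as each expectation is linear in $n$ and $\eps$ is constant in the hierarchy \eqref{eq:hierarchy}.

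Next I would handle the ``sum over $i\in I_\phi$'' statements \ref{prop:verticesinTi} and \ref{prop:verticesinUiminusRiTi}: for $v\in U_\phi$ fixed, the events $\{v\in T_i\}$ across $i\in I_\phi$ are independent with probability $p_T/p_U$, since the partition of $U_\phi$ into $R_i\cup T_i\cup(U_i\setminus(R_i\cup T_i))$ is performed independently for each $i\in I_\phi$. The expectation is $(p_T/p_U)|I_\phi|=(1\pm\eps^2)p_Tp_U^{-1}p_\tr p_\fa n$, which is polynomial in $n$ since $p_\tr,p_\fa\ggpoly 1/n$; Chernoff combined with a union bound over $O(n^2)$ pairs $(\phi,v)$ does the job. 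The same argument with probability $1-(p_R+p_T)/p_U$ yields \ref{prop:verticesinUiminusRiTi}, where the additional slack $1-\sqrt{p_T}$ holds because $(p_R+p_T)/p_U\ll \sqrt{p_T}/2$.

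The remaining properties \ref{prop:pairsinFtauUVphi}--\ref{prop:vxpartitionlast} follow by the same template after a moment's thought about which random choices are independent. For \ref{prop:pairsinFtauUVphi} and \ref{prop:numberofUVphicontainingu}, for fixed distinct $u,v\in S_\tau$ the events $\{u,v\in U_\phi\cup V_\phi\}$ are independent across $\phi\in\mathcal{F}_\tau$, and have probability $((p_U+p_V)/p_S)^2$ since $u$ and $v$ are partitioned within $S_\tau$ independently; the expectation $(1\pm\eps^2)(p_U+p_V)^2p_S^{-2}p_\fa^{-1}$ is polynomial in $n$ because $p_\fa^{-1}\ggpoly 1$. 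For \ref{prop:pairsinStauWphi} the events $\{u,v\in W_i\}$ across $i\in I_\tau$ are independent with probability $(p_W/p_S)^2$, giving expectation polynomial in $n$. For \ref{prop:sizeofsubsetVphiWphi'}, \ref{prop:pairsuvinVphiWphi'} and \ref{prop:pairsphiphi'withuinVphiWphi'}, when $\phi\neq\phi'$ the $\mathcal{F}_\tau$-placements are independent across families so the relevant event has probability $((p_U+p_V)/p_S)\cdot(p_W/p_S)$ (or its square, in the pair version), and the counts are again sums of independent indicators with expectations that are polynomial in $n$. The only real ``obstacle'' to check throughout is that each expectation is $\gg \log n$, which is guaranteed by the hierarchy $1/n\llpoly p_\tr,p_\fa\llpoly\eps$ in \eqref{eq:hierarchy}, so that Chernoff yields failure probability $e^{-n^{\Omega(1)}}$ and a union bound over the at-most-$O(n^3)$ events suffices.
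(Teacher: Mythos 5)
Your approach — compute the expectation of each count, apply Chernoff (Lemma~\ref{chernoff}), and union bound — is exactly the paper's approach, and the paper likewise only spells out a couple of sample cases (\eref{prop:vxpartitionfirst} for $R_i$ and \eref{prop:pairsinFtauUVphi}), waving the rest through the same template. Your expectation calculations for the cases you worked out agree with the paper's, and your observation that the required failure probabilities are $e^{-n^{\Omega(1)}}$ because $p_{\tr}, p_{\fa}\ggpoly 1/n$ is the right reason the union bound closes.

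One step as written would fail: for \eref{prop:pairsinStauWphi} you claim that the events $\{u,v\in W_i\}$ are independent across $i\in I_\tau$. They are not; $W_i = W_\phi$ for all $i$ in the same family $\phi$, so for $i,i'\in I_\phi$ the two events are literally identical, not independent. What is true is that the events are independent across families. The correct decomposition is $|\{i\in I_\tau: u,v\in W_i\}| = \sum_{\phi\in\mathcal{F}_\tau}|I_\phi|\cdot\mathbf{1}[u,v\in W_\phi]$; you apply Chernoff to $|\{\phi\in\mathcal{F}_\tau:u,v\in W_\phi\}| \sim \mathrm{Bin}(|\mathcal{F}_\tau|,(p_W/p_S)^2)$ (which has mean $\approx (p_W/p_S)^2 p_{\fa}^{-1}$, still polynomial in $n$), then multiply by $|I_\phi|=(1\pm\eps^2)p_\tr p_\fa n$. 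The same caution applies more generally: whenever the quantity being counted is indexed by $i\in I_\phi$ or $i\in I_\tau$ but the underlying random object is a family-level set like $W_\phi$ (or a tribe-level set like $S_\tau$), the atoms of independence are the families (or tribes), not the individuals, so you should pass to that level before invoking Chernoff. This does not change any conclusion, only the identification of the independent random variables to which the concentration bound is applied.
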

\begin{proof} Each of these properties holds with high probability by an application of Lemma~\ref{chernoff} and a union bound. To avoid undue repetition, we will only prove a sample of these explicitly.

\smallskip

\noindent \ref{prop:vxpartitionfirst} for $R_i$, $i\in [n]$: Let $\tau\in \mathcal{T}$, $\phi\in \mathcal{F}_\tau$ and $i\in I_\phi$. For each $v\in A\cup B$, by the partitioning in Section~\ref{sec:choosevxsets},
\[
\P(v\in R_i)=\P(v\in R_i|v\in U_i)\cdot \P(v\in U_\phi|v\in S_\tau)\cdot \P(v\in S_\tau)=(p_R/p_U)\cdot(p_U/p_S)\cdot p_S=p_R.
\]
For each $X\in \{A,B\}$, as $|X|=n$ and $1/n\llpoly \epsforabsthatwasepszero,p_R$, by Lemma~\ref{chernoff}, with probability $1-\exp(-\omega(\log n))$ we have $|R_i\cap X|=(1\pm \epsforabsthatwasepszero)p_Rn$. Thus, by a union bound, with high probability, $|R_i\cap X|=(1\pm \epsforabsthatwasepszero)2p_Rn$ for each $i\in [n]$.

\smallskip

\noindent \ref{prop:pairsinFtauUVphi}: Let $\tau\in \mathcal{T}$ and let $u,v\in S_{\tau}$ be distinct. For each $\phi\in \mathcal{F}_\tau$, $\P(u,v\in U_\phi \cup V_\phi)=(p_U+p_V)^2/p_S^2$. As $|\mathcal{F}_\tau|=(1\pm \epsforabsthatwasepszero^2)p_\fa^{-1}$, and $p_\tr\llpoly \epsforabsthatwasepszero,p_U,p_V\llpoly p_W\llpoly 1/\log n$, and $p_S\geq p_W$, we have that,
by Lemma~\ref{chernoff}, with probability $1-\exp(-\omega(\log n))$ we have $|\{\phi \in \mathcal{F}_{\tau}:u,v\in U_\phi \cup V_\phi\}|=(1\pm \epsforabsthatwasepszero)(p_U+p_V)^2p_{\fa}^{-1}$. Thus,  by a union bound, with high probability, \ref{prop:pairsinFtauUVphi} holds.
\end{proof}


\subsection{Part~\ref{partA1}: Initial 2-cycle decomposition}\label{sec:partA1}

In Part~\ref{partA1}, we prove Lemma~\ref{lem:partA1}. After stating the lemma, we discuss it from the perspective of the auxiliary coloured directed graph discussed in Section~\ref{subsec:sketchabs}.

\begin{lemma}\label{lem:partA1} Let $R_i,T_i,U_i$, $i\in [n]$, satisfy \eref{prop:vxpartitionfirst}--\eref{prop:vxpartitionlast}.
Let $\tau\in \mathcal{T}$ and $\phi\in \mathcal{F}_\tau$. For each $i\in I_\phi$, let $R_i'\subset R_i$ satisfy $|R_i'|=|T_i|$, and suppose that $\bigcup_{i \in I_\phi} R_i' =_{\mult} \bigcup_{i \in I_\phi} T_i$.

Then, there exists a set
\begin{equation}\label{eq:Csitsokay}
\mathcal{C}_\phi\subset \{\{(i,u),(j,v)\}:i,j\in I_\phi,i\neq j,u\in U_\phi\setminus (R_i\cup T_j)\text{ and }v\in U_\phi\setminus (T_i\cup R_j),u\neq v\}.
\end{equation}
such that the following hold.

\stepcounter{propcounter}
\begin{enumerate}[label = {\emph{\textbf{\Alph{propcounter}\arabic{enumi}}}}]
\item \labelinthm{conc:A1-1} For every $i \in I_\phi$ and $u \in T_i$, there is exactly one  $(v,j)$ such that $\{(i,u),(j,v)\} \in \mathcal{C}_\phi$.
\item \labelinthm{conc:A1-2} For every $i \in I_\phi$ and $u \in R_i'$ there is exactly one  $(v,j)$ such that $\{(i,v),(j,u)\} \in \mathcal{C}_\phi$.
\item \labelinthm{conc:A1-2b} For every $i\in I_\phi$ and $u\in R_i\setminus R_i'$ there is no $(v,j)$ such that $\{(i,v),(j,u)\}\in \mathcal{C}_\phi$.
\item \labelinthm{conc:A1-3} For every $i \in I_\phi$ and $u \in U_i\setminus (R_i \cup T_i)$,  $(i,u)$ is $(\leq 1)$-balanced in $\mathcal{C}_\phi$.
\end{enumerate}
\end{lemma}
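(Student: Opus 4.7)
The plan is to encode the required corrections as an auxiliary coloured directed multigraph $D_0$ on vertex set $U_\phi$, and then to rewrite $D_0$ into a collection of directed $2$-cycles of distinct colours; each such $2$-cycle, consisting of edges $u\to v$ of colour $i$ and $v\to u$ of colour $j$ with $i\neq j$, will contribute the pair $\{(i,u),(j,v)\}$ to $\mathcal{C}_\phi$. To define $D_0$, for each $i\in I_\phi$ I fix an arbitrary bijection $\pi_i\colon T_i\to R_i'$ (which exists as $|T_i|=|R_i'|$) and put a directed edge $u\to\pi_i(u)$ of colour $i$ for every $u\in T_i$. The hypothesis $\bigcup_{i\in I_\phi}T_i=_{\mult}\bigcup_{i\in I_\phi}R_i'$ ensures that in $D_0$ the total in-degree equals the total out-degree at every vertex, so the underlying (uncoloured) directed multigraph admits a decomposition into edge-disjoint directed cycles $C_1,\ldots,C_r$. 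Moreover, since $T_i\cap R_i'=\emptyset$, no colour can appear on two consecutive edges of any $C_\ell$; in particular every length-$2$ (resp.\ length-$3$) cycle of $D_0$ uses two (resp.\ three) pairwise distinct colours.

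Next I would rewrite each cycle $C_\ell$ into a union of valid $2$-cycles using the two atomic operations depicted in Figures~\ref{fig:reducecoloursurplus} and~\ref{fig:replacetriangles}: (a) subdividing an edge $x\to y$ of colour $i$ by a length-three directed path $x\to w_1\to w_2\to w_3\to y$ all of colour $i$, with the $w_k$ fresh vertices in $U_i\setminus(R_i\cup T_i)$; and (b) inserting a fresh monochromatic directed $2$- or $3$-cycle of some colour $c$ on fresh vertices in $U_c\setminus(R_c\cup T_c)$. Both operations contribute $+1$ to both the in- and the out-degree in the relevant colour at each new vertex, and so they preserve the required in/out surplus at every vertex of $D_0$ while leaving every newly introduced vertex $(\leq 1)$-balanced. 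A length-$2$ cycle of $D_0$ is already a valid $2$-cycle and is recorded directly. A length-$3$ cycle $v_1\to v_2\to v_3\to v_1$ with colours $c_1,c_2,c_3$ is handled by adding a reverse monochromatic $3$-cycle $v_1\to v_3\to v_2\to v_1$ of some fresh colour $c'\in I_\phi\setminus\{c_1,c_2,c_3\}$ with $v_1,v_2,v_3\in U_{c'}\setminus(R_{c'}\cup T_{c'})$, and then pairing the six edges into the three $2$-cycles $\{(c_1,v_1),(c',v_2)\}$, $\{(c_2,v_2),(c',v_3)\}$, $\{(c_3,v_3),(c',v_1)\}$. Cycles of length $\geq 4$ are reduced inductively to the length-$2$ and length-$3$ cases by peeling off one short sub-cycle at a time via a single subdivision of type (a) together with a suitable bookkeeping cycle of type (b).

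The hard part will be the greedy selection of the fresh auxiliary vertices required by operations (a) and (b). Every such $w$ must lie in $U_i\setminus(R_i\cup T_i)$ for the relevant colour $i$, must not already carry an in- or out-edge of colour $i$ in the intermediate digraph (otherwise the $(\leq 1)$-balanced condition \eref{conc:A1-3} would fail at $(i,w)$), and the pairs $\{(i,u),(j,v)\}$ finally recorded must respect the containments $u\in U_\phi\setminus(R_i\cup T_j)$ and $v\in U_\phi\setminus(T_i\cup R_j)$ of~\eqref{eq:Csitsokay}. However, property~\eref{prop:verticesinUiminusRiTi} ensures that for every $u\in U_\phi$ at least a $(1-\sqrt{p_T})$-fraction of indices $i\in I_\phi$ satisfy $u\in U_i\setminus(R_i\cup T_i)$, and the total number of edges of $D_0$ is at most $\sum_{i\in I_\phi}|T_i|=O(p_T p_{\tr}p_{\fa}n^2)$, so the pool of admissible auxiliary vertices at every step vastly exceeds the number of fresh picks ever required, and a greedy selection succeeds. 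Correctness of $\mathcal{C}_\phi$ is then immediate: the in/out surplus of $D_0$ at each $(i,u)$ encodes precisely the three conditions \eref{conc:A1-1}--\eref{conc:A1-2b}, all rewriting operations preserve this surplus at original vertices and leave every fresh vertex $(\leq 1)$-balanced, which gives \eref{conc:A1-3}.
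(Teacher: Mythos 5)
Your high-level plan — encode the corrections as a coloured directed multigraph, take a cycle decomposition, and rewrite the cycles into coloured directed $2$-cycles using auxiliary monochromatic edges — is exactly the paper's approach. Your starting point (arbitrary bijections $\pi_i\colon T_i\to R_i'$) and the observation that the consecutive colours on each cycle must differ (since $T_i\cap R_i'=\emptyset$) are both correct. Your gadget for a length-$3$ cycle, namely adding a reverse monochromatic triangle of a fresh colour $c'$ on the \emph{same} three vertices $v_1,v_2,v_3$, is a genuine and apparently valid simplification of the paper's gadget, which instead \emph{removes} the three triangle edges, re-routes each through a length-$3$ path via fresh vertices $x'_j,y'_j,z'_j$, and closes the picture with a monochromatic reverse triangle on those fresh vertices (Figure~\ref{fig:replacetriangles}). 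Both choices preserve the colour-in/out surplus at $v_1,v_2,v_3$ and leave the auxiliary colour $(\leq 1)$-balanced, though your version puts every cycle vertex into two of the resulting $2$-cycles rather than one, so the bound on how many fresh colours each vertex can need is a little tighter.

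The genuine gap is in your treatment of cycles of length at least $4$. You claim they can be ``reduced inductively to the length-$2$ and length-$3$ cases by peeling off one short sub-cycle at a time via a single subdivision of type (a) together with a suitable bookkeeping cycle of type (b).'' Your operation (a) — replacing an edge $x\to y$ of colour $i$ by a longer monochromatic directed path $x\to w_1\to w_2\to w_3\to y$ — makes the cycle \emph{longer}, not shorter, and it cannot split a single long cycle into two shorter ones; nor can operation (b) on fresh vertices, since a $2$- or $3$-cycle disjoint from the long cycle does nothing to it. The paper's mechanism, which you need, is different: each cycle of length $\ell\ge 4$ is triangulated by a set $E_j$ of $\ell-3$ \emph{chords} between existing cycle vertices, each chord becoming a monochromatic directed $2$-cycle with a freshly chosen colour $i_e$; the cycle-plus-chords decomposes into rainbow directed triangles, to which the triangle gadget is then applied. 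This is not a ``fresh-vertex'' insertion, and it cannot be obtained by composing your operations (a) and (b) as you describe. Moreover two details about this chord step are essential and missing from your proposal: the triangulation must be chosen with bounded degree (the paper requires the underlying graph of $C_j+E_j$ to have maximum degree at most $4$), so that each cycle vertex lies in $O(1)$ triangles and needs only $O(1)$ distinct fresh colours; and the chord colours must be chosen so that no vertex meets two chords of the same colour (condition \ref{cond:short2}), which is what guarantees that $(i,u)$ is $(\leq 1)$-balanced for $u$ appearing in multiple chords. Your closing greedy argument (``the pool of admissible auxiliary vertices vastly exceeds the number of picks'') is of the right flavour, but it speaks of auxiliary \emph{vertices} where the real resource being rationed in the chord step is auxiliary \emph{colours}, and it does not state the non-collision conditions (analogues of \ref{cond:short1}--\ref{cond:short3} and \ref{cond:shorti}--\ref{cond:shortiii}) that make $(\leq 1)$-balancedness actually go through.

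You also drop the paper's intermediate step of re-choosing the $F_i$'s so that the resulting cycles are rainbow (Step 2 of the paper's proof). As it happens, the chord-colour constraints appear to make the triangles rainbow even if the underlying long cycles are not, so this may not be a genuine gap; but since you don't address it, and the paper takes the trouble to establish it, you should at least explain why your argument is insensitive to repeated colours on long cycles.
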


Consider the set $\mathcal{C}_\phi$ in Lemma~\ref{lem:partA1} and, for each $\{(i,u),(j,v)\}\in \mathcal{C}_\phi$, add $\vec{uv}$ with colour $i$ and $\vec{vu}$ with colour $j$ to create the auxiliary coloured multi-digraph $D''$. Note that this has a natural decomposition into directed 2-cycles. Then, for each $i\in I_\phi$ and $u\in U_i$, using $d_{D''}^{+,i}(u)$ and $d_{D''}^{-,i}(u)$ as the out- and in-degree of $u$ in the colour-$i$ edges in $D''$ respectively, the following table shows the degrees around $u$ according to \ref{conc:A1-1}--\ref{conc:A1-3}.
\begin{equation}\label{eq:cyclebalance}
\begin{array}{l|c}
&\big(d_{D''}^{+,i}(u),d_{D''}^{-,i}(u)\big)\\
\textcolor{white}{.}\vspace{-0.3cm}&\\%
\hline
u\in T_i & (1,0)\\
u\in R_i' & (0,1)\\
u\in R_i\setminus R_i' & (0,0)\\
u\in U_i\setminus (R_i \cup T_i) & (0,0)\text{ or }(1,1)
\end{array}
\end{equation}
Thus, the directed arrows of colour $i$ correspond to the required corrections of the degrees of $u$ in the $i$th near-matching if $u\in R_i'\cup T_i$, while not affecting the degree of $u$ if $u\in U_i\setminus (R_i' \cup T_i)$.

To find $D''$ in the proof of Lemma~\ref{lem:partA1}, we start by finding a similar coloured multi-digraph which has a directed cycle decomposition, for which, for each $i\in I_\phi$, the vertices in $U_i\setminus (R_i \cup T_i)$ are in no edges with colour $i$ (and the corresponding version of \eqref{eq:cyclebalance} holds). Via some maximalisation in its construction, we show that in fact this will be a rainbow cycle decomposition. We then add directed edges to $D$ to get $D'$ which satisfies the corresponding version of \eqref{eq:cyclebalance}, but has a decomposition into rainbow triangles/2-cycles. Then, we take each rainbow triangle in $D'$ and replace it with some 2-cycles, creating $D''$ while ensuring \eqref{eq:cyclebalance} is still satisfied. These operations are depicted in Figures~\ref{fig:reducecoloursurplus} and~\ref{fig:replacetriangles}.

\begin{proof}[Proof of Lemma~\ref{lem:partA1}]
For each $i\in I_\phi$, using that $|T_i|=|R_i'|$, let $F_i$ be an arbitrary perfect matching between $T_i$ and $R_i'$ with edges directed from $T_i$ to $R_i'$ which each have colour $i$.
Let $D$ be the directed edge-coloured multigraph with vertex set $U_\phi$ and edge set $\bigcup_{i\in I_\phi}F_i$.

\smallskip

\noindent\textbf{Initial cycle decomposition.} Note that, for each $v\in U_\phi$, as $\bigcup_{i \in I_\phi} R_i' =_{\mult} \bigcup_{i \in I_\phi} T_i$, we have
\[
d^+_D(v)=|\{i\in I_\phi:v\in T_i\}|=|\{i\in I_\phi:v\in R_i'\}|=d^-_D(v).
\]
Therefore, as is well-known, $D$ has a decomposition of its edges into edge-disjoint directed cycles. Take such a decomposition, $C_1,\ldots,C_r$ say.

\smallskip

\noindent\textbf{Rainbow cycle decomposition.} We claim that there is a choice of the matchings $F_i$, $i\in I_\phi$, for which there is a cycle decomposition of $D$ in which every cycle is rainbow, that is, no colour appears more than once on any one cycle in the decomposition. Subject to the constraints so far, then, choose $F_i$, $i\in I_\phi$, $r\in \N$, and $C_1,\ldots,C_r$, to maximise $r$.
Suppose, for contradiction, that there is some cycle $C_j$, for some $j\in [r]$, which is not rainbow. Then, let $\vec{u_1u_2}$, $\vec{u_3u_4}$ be two edges of $C_j$ which have the same colour, $i$ say. We thus have that $u_1,u_3\in T_i$ and $u_2,u_4\in R_i'$. Note that replacing   $\vec{u_1u_2}$, $\vec{u_3u_4}$ in $F_i$ by  $\vec{u_1u_4}$, $\vec{u_3u_2}$ and replacing $C_j$ by the
two cycles in $C_j-\vec{u_1u_2}-\vec{u_3u_4}+\vec{u_1u_4}+\vec{u_2u_3}$ will give a choice of the directed matchings with $r+1$ directed cycles (see Figure~\ref{fig:reducecoloursurplus}a)), a contradiction.

\begin{center}
\begin{figure}[b]
\begin{center}
\begin{tikzpicture}
\def\vrad{0.6}
\def\vwidth{10}
\def\spacer{0.35}
\def\upp{0.2}
\def\thmid{-3.15}
\def\thmst{1.3}
\def\lineinw{-1.8}
\def\dropp{0.8}

\def\vxsp{0.7}

\def\midvec(#1,#2){
($0.9*(#1)+0.1*(#2)$)--($0.1*(#1)+0.9*(#2)$)
}


\foreach \n in {1,2,3}
{
\coordinate (A\n) at ($(120*\n+180:\vrad)$);
\draw [fill] (A\n) circle[radius=0.05];
}
\foreach \n in {1,2,3}
{
\coordinate (B\n) at ($2.5*(\vrad,0)+(120*\n:\vrad)$);
\draw [fill] (B\n) circle[radius=0.05];
}
\draw ($(A2)+(-1.2,0.2)$) node {a)};

\draw ($(A2)+(0,0.2)$) node {$u_1$};
\draw ($(B1)+(0,0.2)$) node {$u_2$};
\draw ($(B2)+(0,-0.25)$) node {$u_3$};
\draw ($(A1)+(0,-0.25)$) node {$u_4$};
\draw ($0.25*(A1)+0.25*(A2)+0.25*(B1)+0.25*(B2)$) node {$C_j$};

\draw [thick,blue,->] \midvec(A1,A3);
\draw [thick,red,->] \midvec(A3,A2);
\draw [thick,blue,->] \midvec(B1,B3);
\draw [thick,darkgreen,->] \midvec(B3,B2);
\draw [thick,orange,->] \midvec(A2,B1);
\draw [thick,orange,->] \midvec(B2,A1);
\end{tikzpicture}
\begin{tikzpicture}
\draw [white](0,-0.955) -- (0,0);
\draw (0,0) node {$\implies$};
\end{tikzpicture}
\begin{tikzpicture}
\def\vrad{0.6}
\def\vwidth{10}
\def\spacer{0.35}
\def\upp{0.2}
\def\thmid{-3.15}
\def\thmst{1.3}
\def\lineinw{-1.8}
\def\dropp{0.8}

\def\vxsp{0.7}

\def\midvec(#1,#2){
($0.9*(#1)+0.1*(#2)$)--($0.1*(#1)+0.9*(#2)$)
}


\foreach \n in {1,2,3}
{
\coordinate (A\n) at ($(120*\n+180:\vrad)$);
\draw [fill] (A\n) circle[radius=0.05];
}
\foreach \n in {1,2,3}
{
\coordinate (B\n) at ($2.5*(\vrad,0)+(120*\n:\vrad)$);
\draw [fill] (B\n) circle[radius=0.05];
}

\draw ($(A2)+(0,0.2)$) node {$u_1$};
\draw ($(B1)+(0,0.2)$) node {$u_2$};
\draw ($(B2)+(0,-0.25)$) node {$u_3$};
\draw ($(A1)+(0,-0.25)$) node {$u_4$};

\draw [thick,blue,->] \midvec(A1,A3);
\draw [thick,red,->] \midvec(A3,A2);
\draw [thick,blue,->] \midvec(B1,B3);
\draw [thick,darkgreen,->] \midvec(B3,B2);
\draw [thick,orange,->] \midvec(A2,A1);
\draw [thick,orange,->] \midvec(B2,B1);
\end{tikzpicture}\;\;\;\;\;\;\begin{tikzpicture}
\draw [dashed](0,-0.55) -- (0,1.55);
\end{tikzpicture}\;\;\;\;\;
\begin{tikzpicture}
\def\vrad{0.6}
\def\vwidth{10}
\def\spacer{0.35}
\def\upp{0.2}
\def\thmid{-3.15}
\def\thmst{1.3}
\def\lineinw{-1.8}
\def\dropp{0.8}

\def\vxsp{0.7}

\def\midvec(#1,#2){
($0.9*(#1)+0.1*(#2)$)--($0.1*(#1)+0.9*(#2)$)
}


\foreach \n in {1,2,3}
{
\coordinate (A\n) at ($(120*\n+180:\vrad)$);
\draw [fill] (A\n) circle[radius=0.05];
}
\foreach \n in {1,2,3}
{
\coordinate (B\n) at ($2.5*(\vrad,0)+(120*\n:\vrad)$);
\draw [fill] (B\n) circle[radius=0.05];
}

\draw ($(A2)+(-1.2,0.2)$) node {b)};

\draw ($(A2)+(0,0.2)$) node {$v_1$};
\draw ($(B1)+(0,0.2)$) node {$v_2$};
\draw ($(B2)+(0,-0.25)$) node {$v_3$};
\draw ($(A1)+(0,-0.25)$) node {$v_4$};

\draw [thick,blue,->] \midvec(A1,A3);
\draw [thick,red,->] \midvec(A3,A2);
\draw [thick,violet,->] \midvec(B1,B3);
\draw [thick,darkgreen,->] \midvec(B3,B2);
\draw [thick,brown,->] \midvec(A2,B1);
\draw [thick,orange,->] \midvec(B2,A1);

\draw [dotted] (A1) -- (A2) -- (B2) -- (B1);

\end{tikzpicture}
\begin{tikzpicture}
\draw [white](0,-0.955) -- (0,0);
\draw (0,0) node {$\implies$};
\end{tikzpicture}
\begin{tikzpicture}
\def\vrad{0.6}
\def\vwidth{10}
\def\spacer{0.35}
\def\upp{0.2}
\def\thmid{-3.15}
\def\thmst{1.3}
\def\lineinw{-1.8}
\def\dropp{0.8}

\def\vxsp{0.7}

\def\midvec(#1,#2){
($0.9*(#1)+0.1*(#2)$)--($0.1*(#1)+0.9*(#2)$)
}


\foreach \n in {1,2,3}
{
\coordinate (A\n) at ($(120*\n+180:\vrad)$);
\draw [fill] (A\n) circle[radius=0.05];
}
\foreach \n in {1,2,3}
{
\coordinate (B\n) at ($2.5*(\vrad,0)+(120*\n:\vrad)$);
\draw [fill] (B\n) circle[radius=0.05];
}

\draw ($(A2)+(0,0.2)$) node {$v_1$};
\draw ($(B1)+(0,0.2)$) node {$v_2$};
\draw ($(B2)+(0,-0.25)$) node {$v_3$};
\draw ($(A1)+(0,-0.25)$) node {$v_4$};

\draw [thick,blue,->] \midvec(A1,A3);
\draw [thick,red,->] \midvec(A3,A2);
\draw [thick,violet,->] \midvec(B1,B3);
\draw [thick,darkgreen,->] \midvec(B3,B2);
\draw [thick,brown,->] \midvec(A2,B1);
\draw [thick,orange,->] \midvec(B2,A1);

\draw [thick,violet,->] ($0.9*(A2)+0.1*(A1)+(-0.05,0)$)--($0.1*(A2)+0.9*(A1)+(-0.05,0)$);
\draw [thick,violet,->] ($0.9*(A1)+0.1*(A2)+(0.05,0)$)--($0.1*(A1)+0.9*(A2)+(0.05,0)$);

\draw [thick,red,->] ($0.9*(B2)+0.1*(B1)+(0.05,0)$)--($0.1*(B2)+0.9*(B1)+(0.05,0)$);
\draw [thick,red,->] ($0.9*(B1)+0.1*(B2)+(-0.05,0)$)--($0.1*(B1)+0.9*(B2)+(-0.05,0)$);

\draw [thick,blue,->] ($0.8*(B2)+0.2*(A2)+(0.05,0.05)$)--($0.1*(B2)+0.9*(A2)+(0.05,0.05)$);
\draw [thick,blue,->] ($0.8*(A2)+0.2*(B2)+(-0.05,-0.05)$)--($0.1*(A2)+0.9*(B2)+(-0.05,-0.05)$);
\end{tikzpicture}
\end{center}
\caption{a) Each cycle $C_j$ we consider must be rainbow, for otherwise we would replace, for example, the orange edges $\vec{u_1u_2}$ and $\vec{u_3u_4}$ with orange edges $\vec{u_1u_4}$ and $\vec{u_3u_2}$.\vspace{0.1cm}
\newline
\textcolor{white}.\;\;\;b) For each rainbow cycle $C_j$, we take a set $E_j=\{v_4v_1,v_1v_3,v_3v_2\}$ of edges whose addition allows an (undirected) decomposition into triangles, and put a directed 2-cycle with colour $i_e$ on each $e\in E_j$.}\label{fig:reducecoloursurplus}
\end{figure}
\end{center}

\smallskip

\noindent\textbf{Triangle/2-cycle decomposition.} Now, let $I\subset [r]$ index the cycles $C_j$, $j\in [r]$, which have length at least 4. For each $j\in I$, where $\ell_j$ is the length of the cycle $C_j$, take a set $E_j$ of $\ell_j-3$ (undirected) pairs of vertices from $V(C_j)$ such that the undirected graph underlying $C_j+E_j$ is a union of triangles in which each vertex has degree at most 4 (see Figure~\ref{fig:reducecoloursurplus}). For each $j\in I$ and $e=xy\in E_j$, as we will show is possible, greedily pick $i_e\in I_{\phi}$ under the following rules.
\begin{enumerate}[label = \arabic{enumi})]
\item For each $j\in I$ and $e=xy\in E_j$, $x,y\in U_{\phi}\setminus (R_{i_e}\cup T_{i_e})$.\label{cond:short1}
\item For each $i\in I_\phi$, and $x\in U_\phi$, there is at most one pair $(j,e)$ with $j\in I$, $e\in E_j$, $x\in V(e)$ and $i_e=i$.\label{cond:short2}
\item For each $i\in I_\phi$, there are at most $32p_Tn$ vertices $x\in U_\phi$ for which there is some pair $(j,e)$ with $j\in I$, $e\in E_{j}$, $x\in V(e)$ and $i_e=i$.\label{cond:short3}
\end{enumerate}

Note that the number of times any $x\in U_\phi$ appears in an edge $e=xy$ for some $e \in E_j$ and $j \in I$ is
\begin{equation}\label{eqn:xappearance}
\leq 2|\{j\in [r]:x\in V(C_j)\}|\leq 2|\{i\in I_\phi:x\in T_i\}|\overset{\ref{prop:verticesinTi}}{\leq} 3p_Tp_U^{-1}p_\tr p_\fa n.
\end{equation}
Thus, when we greedily choose some $i_e \in I_{\phi}$, we have that by \ref{prop:verticesinUiminusRiTi} there are at least $p_\tr p_\fa n/2$ options for $i_e\in I_\phi$ so that \ref{cond:short1} is satisfied. Of these, the number for which there are more than $32p_Tn$ vertices $x\in U_\phi$ for which there is some pair $(j,e)$ with $j\in I$, $e\in I_{j'}$, $x\in V(e)$ and $i_e=i$ is, using \eqref{eqn:xappearance}, at most
\[
\frac{|U_\phi|\cdot 3p_Tp_U^{-1}p_\tr p_\fa n}{32p_Tn}\overset{\ref{prop:vxpartitionfirst}}\leq \frac{8p_Tp_\tr p_\fa n^2}{32p_Tn}= \frac{p_\tr p_\fa n}{4}.
\]
 Hence, since $p_T\llpoly p_U$, we can choose $i_e$ so that \ref{cond:short2} and \ref{cond:short3} hold.

Take $D$ and, for each $j\in I$ and $xy\in E_{j}$, add both the edge $\vec{xy}$ and $\vec{yx}$ to $D$ with colour $i_{xy}$, and call the resulting directed multigraph $D'$ (see Figure~\ref{fig:reducecoloursurplus}b)). Note that, from this construction, $D'$ has an edge decomposition into rainbow directed cycles of length 2 and 3. Take such a decomposition, and let $r'$ be the number of cycles of length 3, labelling them as $C'_1,\ldots,C_{r'}'$.

\smallskip

\noindent\textbf{2-cycle decomposition.}
For each $j\in [r']$, label the vertices of $C'_j$ as $x_j,y_j,z_j$ and its colours as $a_j,b_j,c_j$, so that $\vec{x_jy_j}$, $\vec{y_jz_j}$, $\vec{z_jx_j}$ have colour $a_j$, $b_j$ and $c_j$, respectively. For each $i\in I_\phi$, let $U_i^-\subset U_\phi$ be the subset of vertices $x\in U_\phi$ for which there is some pair $(j,e)$ with $j\in I$, $e\in I_{j'}$, $x\in V(e)$ and $i_e=i$, and note that, by \ref{cond:short3}, $|U_i^-|\leq 32p_Tn$.
For each $j\in [r']$, choose distinct vertices $x_j',y_j',z_j'$ and a colour $i_j$ under the following rules.
\begin{enumerate}[label = \roman{enumi})]
\item For each $j\in [r']$ and each $i\in \{a_j,b_j,c_j\}$, $x_j',y_j',z_j'\in U_\phi\setminus (R_i\cup T_i\cup U_i^-)$.\label{cond:shorti}
\item For each $j\in [r']$, $i_j$ is such that $x_j',y_j',z_j'\in U_\phi\setminus (R_{i_j}\cup T_{i_j}\cup U_{i_j}^-)$.\label{cond:shortii}
\item For each $i\in I_\phi$, and $v\in U_\phi\setminus (R_i\cup T'_i)$, there is at most one $j\in [r']$ with  $i\in \{a_j,b_j,c_j,i_j\}$ and $v\in \{x_j',y_j',z_j'\}$.\label{cond:shortiii}
\end{enumerate}

Similarly to the previous step, this can be done greedily, where this uses $|U_i^-|\leq 32p_Tn$ for each $i\in I_\phi$ to select $x_j',y_j',z_j'$. Then, \eqref{eqn:xappearance} and \ref{prop:verticesinUiminusRiTi} can be used to select $i_j$.

As depicted in Figure~\ref{fig:replacetriangles}, take $D'$ and, for each $j\in [r']$, remove the edges in $C_j'$ and add the edges $\vec{x_jx_j'}$, $\vec{x_j'y_j'}$, $\vec{y_j'y_j}$ with colour $a_j$, the edges $\vec{y_jy_j'}$, $\vec{y_j'z_j'}$, $\vec{z_j'z_j}$ with colour $b_j$, the edges $\vec{z_jz_j'}$, $\vec{z_j'x_j'}$, $\vec{x_j'x_j}$ with colour $c_j$, and the edges $\vec{y'_jx_j'}$, $\vec{x_j'z_j'}$, $\vec{z_j'y_j'}$ with colour $i_j$, and call the resulting directed multigraph $D''$. By construction, $D''$ has a decomposition into directed rainbow cycles with length 2. Take such a decomposition, and, letting $r''$ be the number of cycles, let these cycles be $C''_1,\ldots,C''_{r''}$.
Let $\mathcal{C}_\phi$ be the set of pairs $\{(i,u),(j,v)\}$ for each cycle $C''_{i'}$, $i'\in [r'']$, with vertex set $\{u,v\}$ and an edge from $u$ to $v$ with colour $i$ and an edge from $v$ to $u$ with colour $j$. Note that, in our construction, for each $i\in I_\phi$ and $u\in T_i$, we never added an edge with colour $i$ directed into $u$, and, for each $i\in I_\phi$ and $v\in R_i$, we never added an edge with colour $i$ directed out of $v$, so that \eqref{eq:Csitsokay} holds.
We now show that $\mathcal{C}_\phi$ satisfies \ref{conc:A1-1}--\ref{conc:A1-3}.

\begin{center}
\begin{figure}[b]
\begin{center}
\begin{tikzpicture}
\def\vrad{0.6}

\def\vxsp{0.7}

\def\midvec(#1,#2){
($0.9*(#1)+0.1*(#2)$)--($0.1*(#1)+0.9*(#2)$)
}


\foreach \n in {1,2,3}
{
\coordinate (A\n) at ($(120*\n+180+90:\vrad)$);
}
\foreach \n in {1,2,3}
{
\coordinate (B\n) at ($2.5*(0,\vrad)+(120*\n+90:\vrad)$);
}

\coordinate (C3) at ($0.5*(A2)+0.5*(B1)$);
\coordinate (C4) at ($(C3)+(2.2,0)$);
\coordinate (A4) at ($(A3)+(1.2,0)$);

\draw [fill] (A3) circle[radius=0.05];
\draw [fill] (B3) circle[radius=0.05];
\draw [fill] (C3) circle[radius=0.05];

\draw ($(C3)-(0.3,0)$) node {$y_j$};
\draw ($(B3)-(0.3,0)$) node {$z_j$};
\draw ($(A3)-(0.3,0)$) node {$x_j$};

\draw ($0.5*(A3)+0.5*(C3)-(0.25,0.1)$) node {\textcolor{red}{$a_j$}};
\draw ($0.5*(B3)+0.5*(C3)-(0.25,-0.1)$) node {\textcolor{blue}{$b_j$}};
\draw ($0.5*(A3)+0.5*(B3)+(0.25,0)$) node {\textcolor{orange}{$c_j$}};


\draw [thick,orange,->] \midvec(B3,A3);
\draw [thick,red,->] \midvec(A3,C3);
\draw [thick,blue,->] \midvec(C3,B3);
\end{tikzpicture}\;\;
\begin{tikzpicture}
\draw [white](0,-1.6005) -- (0,0);
\draw (0,0) node {$\implies$};
\end{tikzpicture}\;\;
\begin{tikzpicture}
\def\vrad{0.6}
\def\vwidth{10}
\def\spacer{0.35}
\def\upp{0.2}
\def\thmid{-3.15}
\def\thmst{1.3}
\def\lineinw{-1.8}
\def\dropp{0.8}

\def\vxsp{0.7}

\def\midvec(#1,#2){
($0.9*(#1)+0.1*(#2)$)--($0.1*(#1)+0.9*(#2)$)
}


\foreach \n in {1,2,3}
{
\coordinate (A\n) at ($(120*\n+180+90:\vrad)$);
}
\foreach \n in {1,2,3}
{
\coordinate (B\n) at ($2.5*(0,\vrad)+(120*\n+90:\vrad)$);
}

\coordinate (C3) at ($0.5*(A2)+0.5*(B1)$);
\coordinate (C4) at ($(C3)+(2.2,0)$);
\coordinate (A4) at ($(A3)+(1.2,0)$);
\coordinate (B4) at ($(B3)+(1.2,0)$);

\draw ($(C3)-(0.3,0)$) node {$y_j$};
\draw ($(B3)-(0.3,0)$) node {$z_j$};
\draw ($(A3)-(0.3,0)$) node {$x_j$};

\draw ($(C4)+(0.3,0)$) node {$y_j'$};
\draw ($(B4)+(0.3,0)$) node {$z_j'$};
\draw ($(A4)+(0.3,0)$) node {$x_j'$};

\draw ($0.5*(C4)+0.5*(C3)+(0,0.25)$) node {\textcolor{red}{$a_j$}};
\draw ($0.5*(C4)+0.5*(C3)+(0,-0.35)$) node {\textcolor{blue}{$b_j$}};
\draw ($0.5*(B3)+0.5*(B4)+(0,-0.3)$) node {\textcolor{orange}{$c_j$}};

\draw [fill] (A3) circle[radius=0.05];
\draw [fill] (B3) circle[radius=0.05];
\draw [fill] (C3) circle[radius=0.05];
\draw [fill] (A4) circle[radius=0.05];
\draw [fill] (B4) circle[radius=0.05];
\draw [fill] (C4) circle[radius=0.05];

\draw [thick,dotted,orange,->] \midvec(B3,A3);
\draw [thick,dotted,red,->] \midvec(A3,C3);
\draw [thick,dotted,blue,->] \midvec(C3,B3);

\draw [thick,red,->] ($0.9*(A3)+0.1*(A4)+(0,-0.075)$)--($0.1*(A3)+0.9*(A4)+(0,-0.075)$);
\draw [thick,orange,->] ($0.9*(A4)+0.1*(A3)+(0,0.075)$)--($0.1*(A4)+0.9*(A3)+(0,0.075)$);
\draw [thick,orange,->] ($0.9*(B3)+0.1*(B4)+(0,-0.075)$)--($0.1*(B3)+0.9*(B4)+(0,-0.075)$);
\draw [thick,blue,->] ($0.9*(B4)+0.1*(B3)+(0,0.075)$)--($0.1*(B4)+0.9*(B3)+(0,0.075)$);
\draw [thick,blue,->] ($0.9*(C3)+0.1*(C4)+(0,-0.075)$)--($0.1*(C3)+0.9*(C4)+(0,-0.075)$);
\draw [thick,red,->] ($0.9*(C4)+0.1*(C3)+(0,0.075)$)--($0.1*(C4)+0.9*(C3)+(0,0.075)$);

\draw [thick,orange,->] ($0.9*(B4)+0.1*(A4)+(-0.125,0)$)--($0.1*(B4)+0.9*(A4)+(-0.125,0)$);
\draw [thick,darkgreen,<-] ($0.9*(B4)+0.1*(A4)+(0,-0.0)$)--($0.1*(B4)+0.9*(A4)+(0,-0.0)$);

\draw [thick,darkgreen,<-] ($0.9*(C4)+0.1*(B4)+(0,0.0)$)--($0.1*(C4)+0.9*(B4)+(0,0.0)$);
\draw [thick,blue,->] ($0.9*(C4)+0.1*(B4)+(0.125,0)$)--($0.1*(C4)+0.9*(B4)+(0.125,0)$);

\draw [thick,darkgreen,<-] ($0.9*(A4)+0.1*(C4)+(0,0.0)$)--($0.1*(A4)+0.9*(C4)+(0,0.0)$);
\draw [thick,red,->] ($0.9*(A4)+0.1*(C4)+(0.125,0)$)--($0.1*(A4)+0.9*(C4)+(0.125,0)$);

\draw [darkgreen] ($0.5*(A4)+0.5*(C4)+(-0.075,0.3)$) node {$i_j$};

\end{tikzpicture}
\end{center}\caption{Each directed triangle $x_jy_jz_j$ is replaced by a collection of 2-cycles, where each vertex has balanced in- and out-degree in each colour except for $x_j,y_j,z_j$ which maintain the same in- and out-degree in each colour.}\label{fig:replacetriangles}
\end{figure}
\end{center}

\smallskip

\noindent\ref{conc:A1-1}: Note that, for any pair $(i,u)$ such that $i \in I_{\phi}$ and $u \in T_i$, we have that $u$ was contained in an edge directed out of $u$ in colour $i$ in $D$. In order to construct $D''$ from $D$, either we did not add any other edges directed out of $u$ in colour $i$ and kept the original edge from $D$ in $D''$, or, in building $D''$ we replaced the out-edge from $u$ in colour $i$ by a path of length three containing exactly one out-edge from $u$ in colour $i$. Since $D''$ defines $\mathcal{C}_{\phi}$ and there is exactly one pair $\{(u,i),(v,j)\} \in \mathcal{C}_{\phi}$ for each out-edge in $D''$ from $u$ in colour $i$, we have that \ref{conc:A1-1} holds.

 \smallskip

 \noindent\ref{conc:A1-2}: Similarly, for $i \in I_{\phi}$ and $u \in R_i'$, we have that the number of $(v,j)$ such that $\{(v,i),(u,j)\} \in \mathcal{C}_{\phi}$ is the number of in-edges of colour $i$ at $u$ in $D''$. As our construction has exactly one in-edge to $u$ with colour $i$ in each of the digraphs $D$, $D'$, and then $D''$, we have that, similarly \ref{conc:A1-2} holds.

\smallskip

\noindent\ref{conc:A1-2b}: To see that \ref{conc:A1-2b} holds, note that we need to ensure that $D''$ contains no in-edge to $u \in R_i \setminus R_i'$ in colour $i$. Note that by construction we have that $D$ contains no such edge. Furthermore, any edges that were added in $D'$ and in $D''$ were chosen precisely so that a new edge to a vertex $u$ would be in a colour $i'$ such that $u \notin R_{i'}$. That is, at no point did we add an in-edge to $u$ in colour $i$, since $u \in R_i$, so $D''$ is as required.

\smallskip

\noindent\ref{conc:A1-3}: For this, it suffices to show that, for each $i \in I_{\phi}$ and $u \in U_i\setminus (R_i \cup T_i)$, either $u$ appears in no edge of colour $i$, or $u$ appears in exactly one in-edge and exactly one out-edge of colour $i$. We have that, in $D$, $u$ appears in no edge of colour $i$. When we build $D'$, either $u$ remains in no edge of colour $i$, or the colour $i$ is assigned to some edge $uv \in \bigcup_{j \in I} E_j$ and by rule \ref{cond:short2} this happens no more than once.
In the first case, it then follows similarly by rule \ref{cond:shortiii} that either in building $D''$ no edge of colour $i$ containing $u$ is added, or there is exactly one $j \in [r']$ such that $u \in \{x'_j, y'_j, z'_j\}$ and $i \in \{a_j, b_j, c_j, i_j\}$ --- either way, to build $D''$ from $D'$ we add exactly one in-edge of colour $i$ to $u$ and exactly one out-edge of colour $i$ to $u$.
In the second case, we have that $i$ is assigned to exactly one edge $uv \in \bigcup_{j \in I} E_j$.
By construction, then, we have that in $D'$ there is both an in-edge in colour $i$ and an out-edge in colour $i$ containing $u$, and $u \in U_i^-$. Thus when building $D''$ from $D'$, by rule \ref{cond:shortii}, $u$ is not chosen as a vertex $x'_j, y'_j, z'_j$ paired with colour $i$. In particular, this means that when shifting from $D'$ to $D''$,  either the previous in- and out-edges in colour $i$ which contain $u$ are left as before, or if one is removed, it is replaced by a path which contains exactly one edge in the same direction to or from $u$ in colour $i$ as the one that was removed. Thus, $u$ remains in exactly one in-edge in colour $i$ and exactly one out-edge in colour $i$, completing the proof of \ref{conc:A1-3} and hence the lemma.
\end{proof}


\subsection{An auxiliary sparse well-connected graph}\label{sec:propauxgraph}

The next lemma shows the existence of an auxiliary graph which is used in Part~\ref{partA3}, specifically, in the proof of Lemma~\ref{lem:partA3} in Section~\ref{sec:partA3}. We prove it, however, before embarking on Part~\ref{partA2} as it is a useful preliminary to a similar construction which we use in the proof of Lemma~\ref{lem:partA2}, our lemma for carrying out Part~\ref{partA2}.
 The main idea of the following lemma is to build a sparse graph $K$ which is the union of trees with roots in a set $U$, such that we may pair the vertices of $U$ up in any way and always find a collection of vertex-disjoint paths in $K$ which connect these pairs. As we will use binary trees, here we use $\log = \log_2$. The properties of the graph we construct could, as in other template-based approaches, be found using appropriate random graphs (if not with quite such a low maximum degree). We use this explicit construction in preparation for the similar version in Section~\ref{sec:partA2}, where we want a more delicate property of the auxiliary graph, as explained there.

\begin{lemma}\label{lem:auxiliarygraph}
Let $1/n\llpoly p\llpoly \log^{-1}n$. Then, there is a graph $K$ with vertex set $[n]$ and $\Delta(K)\leq 4$ containing an independent set $U\subset V(K)$ with $|U|= pn$ and the following property.

Given any $r\in \N$ and any set of vertex-disjoint pairs $x_1y_1,\ldots,x_ry_r\in U^{(2)}$, there is a set of vertex-disjoint paths $P_i$, $i\in [r]$, in $K$ with internal vertices in $V(K)\setminus U$ such that, for each $i\in [r]$, $P_i$ is an $x_i,y_i$-path.
\end{lemma}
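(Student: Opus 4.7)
The plan is to take $K$ to be a bounded-degree routing network having $U$ as its combined input/output terminal layer. Set $m := |U| = pn$ and $k := \lceil \log m \rceil$, and (assuming for simplicity that $m$ is a power of $2$) take a Bene\v{s}-style network with $m$ terminals and $2k-1$ switch stages, modelling each $2 \times 2$ switch as a small constant-size gadget of maximum degree $4$. The total number of vertices used is $O(m \log m) = O(pn \log n)$, which is at most $n/2$ since $p \llpoly \log^{-1} n$, so the structure fits inside $[n]$ with the remaining vertices placed outside $U$ as isolated vertices. Each terminal $u \in U$ is incident only to edges leading to the first and the last switch stage (so $U$ is independent, with terminal degree at most $2$), while each switch vertex has degree at most $4$. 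Thus $\Delta(K) \leq 4$ and $|U| = pn$, as required.

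The linkage property will follow from the classical Bene\v{s} routing theorem, which states that the network is \emph{rearrangeably non-blocking}: any permutation of the terminals can be realised by pairwise vertex-disjoint paths through the switches. Given disjoint pairs $x_1 y_1, \ldots, x_r y_r \in U^{(2)}$, I will extend this matching to a permutation $\sigma$ of $U$ by setting $\sigma(x_i) = y_i$ and $\sigma(y_i) = x_i$ for $i \in [r]$, and $\sigma(u) = u$ otherwise. Bene\v{s}'s theorem then produces vertex-disjoint paths $\{Q_u : u \in U\}$ in $K$, where $Q_u$ joins $u$ to $\sigma(u)$ through the switches. Taking $P_i := Q_{x_i}$ yields vertex-disjoint $x_i, y_i$-paths whose internal vertices all lie in $V(K) \setminus U$, as required.

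The hard part will be setting up the switch gadget correctly: a naive model in which each $2 \times 2$ switch is a single degree-$4$ vertex fails here because two routing paths can share such a switch, violating vertex-disjointness. I would instead split each switch into a $K_{2,2}$ gadget on four vertices of degree $3$, which keeps the routing paths vertex-disjoint at the cost of a constant blow-up in the vertex count (still easily within the $n/2$ budget). Identifying the input and output copies of each terminal $u \in U$ then only increases its degree from $1$ to $2$, well below the $\Delta(K) \leq 4$ bound; and because the $Q_{x_i}$ have pairwise distinct endpoints $\{x_i, y_i\}$ in $U$, the identification does not force any $P_i$ to revisit a terminal. An essentially equivalent self-contained alternative would be a recursive binary-tree construction: given a linkage graph on $m/2$ terminals, build one on $m$ terminals by taking two copies and a constant-depth ``merge'' gadget that routes each matched pair either inside one copy or across the two, with the linkage property verified by induction on $k = \log m$.
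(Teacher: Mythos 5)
Your proof is correct but takes a genuinely different route from the paper. You rely on the classical Bene\v{s} rearrangeability theorem, converting its switch-consistency guarantee into vertex-disjointness by replacing each $2\times 2$ crossbar with a $K_{2,2}$ gadget; after that fix the argument is sound (and even achieves $\Delta(K)\leq 3$). The paper instead constructs an explicit layered graph: $\ell$ levels each of size $2^\ell$ with $2^\ell\leq n/(10\log n)$, the branching governed by the arithmetic rule $s=2(j-1)+r\bmod 2^\ell$, so that each $u\in U$ sits at the root of a binary tree $F_u$ that fans out to cover the whole top level. An $x,y$-path is routed by descending $F_x$ to the top level and re-ascending $F_y$; vertex-disjointness of many such paths is established by a greedy maximal-set argument together with a counting bound (the spread-out property) showing that any two trees meet in only a small fraction of each level, so one can always thread a further path through. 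For this lemma in isolation your route is shorter if Bene\v{s} is taken as a black box, but the paper deliberately uses the explicit construction because it is reused, with an additional and more delicate pair-by-pair disjointness constraint, in Lemma~\ref{lem:partA2} of the next subsection; the greedy argument adapts transparently to that setting whereas a reduction to a black-box non-blocking theorem would not.
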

\begin{proof} Let $U\subset [n]$ have size $pn$. Let $\ell$ be such that $2^{\ell}\leq n/10\log n < 2^{\ell+1}$, and note that, as $\ell\leq \log n$, we have $(\ell+1)\cdot 2^{\ell}\leq n$. Using this, take disjoint sets $V^{(0)},V^{(1)},\ldots,V^{(\ell)}$
in $[n]$ with size $2^{\ell}$ such that $U\subset V_0$. Let $m=|U|=pn$. For each $i\in [\ell]_0$,
label the vertices in $V^{(i)}$ as $v_{i,1},\ldots,v_{i,2^{\ell}}$, so that, in particular, $U=\{v_{0,1},\ldots,v_{0,m}\}$.

Let $K$ be the empty graph with vertex set $[n]$ and, for each $i\in [\ell-1]_0$, $j\in [2^\ell]$ and $r\in [2]$, add an edge to $K$ from $v_{i,j}$ to $v_{i+1,s}$ where $s$ is such that $s=2(j-1)+r\;\mathrm{mod}\;2^\ell$, noting that we have added 2 edges to $v_{i,j}$ into $V^{(i+1)}$,
so that $d_{K}(v_{i,j},V^{(i+1)})=2$. We will show that $K$ has the properties we require.
For this, for each $j\in [2^\ell]$, $v=v_{0,j}$ and $i\in [\ell]_0$, let
\begin{equation}\label{eqn:Lvi_simple}
L(v,i)=\{v_{i,j'}:\exists j''\in [2^\ell]\text{ s.t.\ }2^i(j-1)+1\leq j''\leq 2^i(j-1)+2^{i}\text{ and }j'=j''\;\textrm{mod}\;2^\ell\},
\end{equation}
noting in particular that $L(v,0)=\{v_{0,j}\}=\{v\}$, $L(v,\ell)=V^{(\ell)}$ and, for each $i\in [\ell]_0$, $|L(v,i)|=2^i$. For each $j\in [2^\ell]$ and $v=v_{0,j}$,
let $F_v$ be the graph with vertex set $V(F_v)=\bigcup_{i\in [\ell]_0}L(v,i)$ and edge set $E(K[V(F_v)])$.

We now show that $\Delta(K)\leq 4$ and, for each $v\in V^{(0)}$, that $F_v$ is a binary tree rooted at $v$, in the following two claims.

\begin{claim}\label{clm:maxdegKphi-easier}
$\Delta(K)\leq 4$.
\end{claim}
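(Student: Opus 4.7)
The plan is to compute $\deg_K(v_{i,j})$ directly by splitting it into two pieces: edges from $v_{i,j}$ into $V^{(i+1)}$ (``up-edges'') and edges into $v_{i,j}$ from $V^{(i-1)}$ (``down-edges''). Any vertex of $[n] \setminus \bigcup_{i \in [\ell]_0} V^{(i)}$ is isolated in $K$ by construction, so it suffices to bound the degrees of vertices lying in some $V^{(i)}$.

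First, I would observe that by the construction of $K$, each vertex $v_{i,j}$ with $i \in [\ell-1]_0$ has exactly two up-edges, namely the edges to $v_{i+1,s_1}$ and $v_{i+1,s_2}$ where $s_r \equiv 2(j-1)+r \pmod{2^\ell}$ for $r \in \{1,2\}$ (and vertices in $V^{(\ell)}$ have no up-edges at all). So the number of up-edges at any vertex is at most $2$.

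Next I would count down-edges. Fix $i \in [\ell]$ and $s \in [2^\ell]$; the number of down-edges at $v_{i,s}$ is the number of pairs $(j,r) \in [2^\ell] \times [2]$ with $2(j-1)+r \equiv s \pmod{2^\ell}$. As $(j,r)$ ranges over all $2 \cdot 2^\ell$ possibilities, the expression $2(j-1)+r$ takes each value in $\{1,2,\ldots,2^{\ell+1}\}$ exactly once, so each residue modulo $2^\ell$ is attained exactly twice. Hence every vertex in $V^{(i)}$ (for $i \geq 1$) has exactly two down-edges, and vertices in $V^{(0)}$ have no down-edges.

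Adding the two counts, a vertex $v_{i,j}$ has degree $\leq 2+2=4$ when $1 \leq i \leq \ell-1$, and degree exactly $2$ when $i \in \{0,\ell\}$, giving $\Delta(K) \leq 4$. There is essentially no obstacle here; the only thing to be careful about is the indexing convention (taking ``$s \bmod 2^\ell$'' to lie in $[2^\ell]$ rather than $\{0,\ldots,2^\ell-1\}$), which does not affect the $2$-to-$1$ counting argument.
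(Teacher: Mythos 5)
Your proposal is correct and follows essentially the same route as the paper: bound the number of edges from each vertex into the level above (exactly 2 by construction) and into the level below (exactly 2 by a counting argument on the map $(j,r)\mapsto 2(j-1)+r \bmod 2^\ell$), giving $\Delta(K)\le 4$. The only cosmetic difference is that you count down-edges via the 2-to-1 covering of residues, whereas the paper fixes a target vertex and counts the preimages directly; these are the same argument.
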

\begin{proof}[Proof of Claim~\ref{clm:maxdegKphi-easier}]
Recall that, for each $i\in [\ell-1]_0$ and $j\in [2^\ell]$, $d_{K}(v_{i,j},V^{(i+1)})=2$.
Now, let $i\in \{2,\ldots,\ell\}$ and $j\in [2^\ell]$. Note that, for each $j'\in [2^\ell]$ we added an edge from $v_{i-1,j'}$ to $v_{i,j}$ only when there was some $r\in [2]$ such that $2(j'-1)+r= j\;\mathrm{mod}\; 2^\ell$, so that there are exactly 2 such $j'\in [2^\ell]$, where in both cases we have $r\in [2]$ such that $r=j\;\mathrm{mod}\; 2$. Therefore, $d_{K}(v_{i,j},V^{(i-1)})=2$.
Thus, as we only added edges to $K$ between $V^{(i)}$ and $V^{(i+1)}$ for each $i\in [\ell-1]_0$, we have that $\Delta(K)\leq 4$, as required.
\claimproofend

\begin{claim}\label{clm:graphsaretrees-easier}
For each $j\in [2^\ell]$ and $v=v_{0,j}$, $F_v$ is a binary tree rooted at $v$ such that, for each $i\in [\ell]_0$, the vertices in the $i$th level of $F_v$, $V(F_v)\cap V^{(i)}$, are those in $L(v,i)$ (as defined at \eqref{eqn:Lvi_simple}).
\end{claim}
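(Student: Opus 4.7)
The plan is to verify three structural facts about $F_v$ and conclude that $F_v$ is a binary tree rooted at $v$: (i) $V(F_v)\cap V^{(i)}=L(v,i)$, which is immediate from the definition of $F_v$ together with $L(v,i)\subseteq V^{(i)}$ and the disjointness of the $V^{(i)}$; (ii) every vertex in $L(v,i)$ has both of its two $K$-neighbours in $V^{(i+1)}$ inside $L(v,i+1)$; and (iii) every vertex in $L(v,i+1)$ has exactly one $K$-neighbour in $V^{(i)}$ inside $L(v,i)$. Since edges of $K$ only go between consecutive levels and $|L(v,i)|=2^i$, these three facts together force $F_v$ to be a tree with $\ell+1$ levels in which each non-leaf has exactly $2$ children and whose root is $v=v_{0,j}$, as required.

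The first step is to verify (ii) by a direct computation from the definitions. As $k$ ranges over the indices of $L(v,i)$, which are the representatives in $[2^\ell]$ of the consecutive integers $2^i(j-1)+1,\ldots,2^i(j-1)+2^i$, the indices $2(k-1)+r$ with $r\in[2]$ of the two $K$-children of $v_{i,k}$ range over the consecutive integers $2^{i+1}(j-1)+1,\ldots,2^{i+1}(j-1)+2^{i+1}$ (possibly shifted by multiples of $2^\ell$), which is exactly the defining range of $L(v,i+1)$. In particular, this also gives that each vertex of $L(v,i+1)$ has at least one $K$-parent in $L(v,i)$, which is half of~(iii).

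The main point, and the central obstacle, is the ``at most one parent'' half of~(iii). Solving $2(j_1-1)+r_1\equiv 2(j_2-1)+r_2\equiv s\pmod{2^\ell}$ with $r_1,r_2\in\{1,2\}$, parity forces $r_1=r_2$, and then $j_1\equiv j_2\pmod{2^{\ell-1}}$; so any two $K$-parents of $v_{i+1,s}$ have indices differing by exactly $2^{\ell-1}$ modulo $2^\ell$. On the other hand, the indices of $L(v,i)$ form $2^i$ cyclically consecutive residues in $[2^\ell]$, and with $2^i\leq 2^{\ell-1}$ any two such residues differ cyclically by less than $2^{\ell-1}$ when $i<\ell-1$, while in the tight case $i=\ell-1$ the cyclically consecutive block $L(v,\ell-1)$ contains exactly one element from every antipodal pair $\{j,j+2^{\ell-1}\}$. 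Hence at most one parent lies in $L(v,i)$, completing~(iii) and the claim.
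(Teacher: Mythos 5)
Your proof is correct, but it takes a more explicit route than the paper. You verify both directions of the tree structure by hand: (ii) that the two $K$-children of each vertex of $L(v,i)$ land in $L(v,i+1)$, and the ``at most one parent'' half of (iii) via the modular observation that the two $K$-parents of any vertex in $V^{(i+1)}$ have indices differing by exactly $2^{\ell-1}\pmod{2^\ell}$, which is incompatible with both lying in a block of $2^i\le 2^{\ell-1}$ cyclically consecutive residues. The paper instead proves only the ``at least one parent'' direction — for each $w\in L(v,i)$ there exists $w'\in L(v,i-1)$ with $w'w\in E(K)$ — and then closes the argument by counting: since $|L(v,i-1)|=2^{i-1}$ and each such vertex sends exactly two edges into $V^{(i)}$, there are only $2^i$ edges available, so the $2^i$ vertices of $L(v,i)$ each receive exactly one, and no edges escape $L(v,i)$. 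The counting shortcut is slightly slicker, but your explicit uniqueness check is a valid substitute; one minor simplification is that your special handling of $i=\ell-1$ is unnecessary — a block of $2^i\le 2^{\ell-1}$ cyclically consecutive residues already has maximum pairwise cyclic distance $2^i-1<2^{\ell-1}$, so it can never contain an antipodal pair, with no tight case to separate out.
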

\begin{proof}[Proof of Claim~\ref{clm:graphsaretrees-easier}]
Let $j\in [2^\ell]$ and set $v=v_{0,j}$. Recall that, for each $i\in [\ell]_0$, $|L(v,i)|=2^i$, and, for each $i\in [\ell-1]_0$ and $j'\in [2^\ell]$, $d_{K}(v_{i,j'},V^{(i+1)})=2$.
Therefore, to show the claim, it is sufficient to show that, for each $i\in [\ell]$ and $w\in L(v,i)$, there is some $w'\in L(v,i-1)$ with $w'w\in E(K)$.

Let, then, $i\in [\ell]$ and let $j'\in [2^\ell]$ be such that $v_{i,j'}\in L(v,i)$. Using the definition of $L(v,i)$, let $r\in [2^{i-1}]$ and $s\in [2]$ be such that $2^{i}(j-1)+2(r-1)+s=j'\;\mathrm{mod}\;2^\ell$.
Then, let $j''\in [2^{\ell}]$ be such that $2^{i-1}(j-1)+r=j''\;\textrm{mod}\;2^\ell$, so that $v_{i-1,j''}\in L(v,i-1)$, and note that we added an edge from $v_{i-1,j''}$ to $v_{i,j'}$ in $K$ as $j'=2(j''-1)+s\;\mathrm{mod} \;2^\ell$ and $s\in [2]$. Thus, for $w=v_{i,j'}\in L(v,i)$, there is some $w'=v_{i-1,j''}\in L(v,i-1)$ with $w'w\in E(K)$, as required.
\claimproofend

We now show that the trees $F_v$, $v\in U=\{v_{0,1},\ldots,v_{0,m}\}$, are well spread out, particularly at their lower levels, as follows.

\begin{claim}\label{clm:treesarewellspread-easier}
For each $u\in U$ and $i\in [\ell]_0$, there are at most $\lfloor 2^{i}/(100\log n)\rfloor$ vertices $u'\in U$ with $V^{(i)}\cap (V(F_{u})\cap V(F_{u'}))\neq \emptyset$.
\end{claim}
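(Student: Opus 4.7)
The plan is to reduce the claim to a counting question about residues modulo $2^{\ell-i}$, via an alignment argument on the sets $L(v,i)$. First, by Claim~\ref{clm:graphsaretrees-easier}, for each root $v = v_{0,j} \in U$ we have $V^{(i)} \cap V(F_v) = L(v,i)$, so the event $V^{(i)} \cap V(F_u) \cap V(F_{u'}) \neq \emptyset$ is exactly $L(u,i) \cap L(u',i) \neq \emptyset$. Hence it suffices to count roots $u' = v_{0,j'} \in U$ whose block $L(u',i)$ meets $L(u,i)$.

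The crucial observation is an alignment property: for any $u = v_{0,j}$ and $u' = v_{0,j'}$ with $j,j' \in [m]$, either $L(u,i) = L(u',i)$, or $L(u,i) \cap L(u',i) = \emptyset$, with equality occurring exactly when $j \equiv j' \pmod{2^{\ell-i}}$. Indeed, by \eqref{eqn:Lvi_simple}, $L(v,i)$ consists of $2^i$ consecutive integer indices (taken mod $2^\ell$) starting at $2^i(j-1)+1$. Since $2^i \mid 2^\ell$ for $i \leq \ell$, the starting positions $2^i(j-1) \bmod 2^\ell$ land in the $2^{\ell-i}$ values $\{0, 2^i, 2\cdot 2^i, \dots, (2^{\ell-i}-1)\cdot 2^i\}$, so both $L(u,i)$ and $L(u',i)$ are aligned blocks of length $2^i$ inside $\{1,\ldots,2^\ell\}$. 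Two aligned blocks of equal length are either equal or disjoint, and they coincide precisely when $2^i(j-1) \equiv 2^i(j'-1) \pmod{2^\ell}$, i.e.\ when $j \equiv j' \pmod{2^{\ell-i}}$.

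It then remains a counting step. The number of $j' \in [m]$ in a single residue class modulo $2^{\ell-i}$ is at most $\lceil m/2^{\ell-i}\rceil$. Using $m = pn$ and the bounds $n/(20\log n) < 2^\ell \leq n/(10\log n)$, this is at most $\lceil 20\, p \cdot 2^i \log n\rceil$. Since $p \llpoly \log^{-1} n$, we have $20\,p \log n \leq 1/(10^4 \log^2 n)$ for $n$ large, so this quantity is at most $\lfloor 2^i/(100\log n)\rfloor$, yielding the claim. The only substantive step is the alignment argument; once the two blocks are seen to be either equal or disjoint as blocks aligned to multiples of $2^i$ modulo $2^\ell$, everything else is elementary counting. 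A minor bookkeeping point is the small-$i$ regime (where the floor may be zero), which is handled by the same residue count since for $i=0$ the condition forces $j' = j$ and so there is only the trivial overlap.
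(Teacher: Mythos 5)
Your proof is correct and takes essentially the same approach as the paper: both reduce the problem to counting $j'\in[m]$ with $j'\equiv j\pmod{2^{\ell-i}}$, exploiting that $2^i\mid 2^\ell$. Your ``aligned blocks are equal or disjoint'' framing is a slightly cleaner restatement of what the paper derives by introducing $x=j-j'$ (and it avoids the paper's separate case $i\geq\ell-10$); the one point to be careful about, as you acknowledge, is that the trivial solution $u'=u$ must be subtracted from the residue count before comparing to the floor, since the direct inequality $\lceil m/2^{\ell-i}\rceil\leq\lfloor 2^i/(100\log n)\rfloor$ fails when $2^i<100\log n$.
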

\begin{proof}[Proof of Claim~\ref{clm:treesarewellspread-easier}]
Let $u\in U$ and $i\in [\ell]_0$. First, note that if $i\geq \ell-10$, then $2^i/(100\log n)\geq 2^{\ell-10}/100(\log n)\geq m$, as $p \ll \log^{-1}n$, so that there are at most $\lfloor 2^{i}/100\log n\rfloor$ vertices $u'\in U$. Assume, then, that $i\leq \ell-10$.

Let $u'\in U$ with $V^{(i)}\cap (V(F_{u})\cap V(F_{u'})\neq \emptyset$.
Let $r,r'\in [m]$ be such that $u=v_{0,r}$ and $u'=v_{0,r'}$. Then, from Claim~\ref{clm:graphsaretrees-easier}, and the definition of $L(u,i)$ and $L(u',i)$ at \eqref{eqn:Lvi_simple} we have that there is some $j$ such that
\[
2^i(r-1)+1\leq j\leq 2^i(r'-1)+2^{i}
\]
and some $j'$ such that $j=j'\;\mathrm{mod}\;2^\ell$ and
\[
2^i(r'-1)+1\leq j'\leq 2^i(r'-1)+2^{i},
\]
so that, setting $x=j-j'$, we have $x=0\;\mathrm{mod}\;2^{\ell}$ and
\begin{equation}\label{eqn:sxineq_easier}
2^i(r-r')-2^i\leq x \leq 2^i(r-r')+2^i.
\end{equation}
As $r,r'\in [m]$, we have $-(m+1)\cdot 2^i\leq x\leq (m+1)\cdot 2^i$.

As $i\leq \ell-10$, for each $r$ and $x$, there is at most one value of $r'$ for which \eqref{eqn:sxineq_easier} holds. Therefore, the number of choices of $u'\in U$ so that $V^{(i)}\cap (V(F_{u})\cap V(F_{u'}))\neq \emptyset$ is at most the number of choices of $x$ for which $x=0\;\mathrm{mod}\;2^{\ell}$ and $-(m+1)\cdot 2^i\leq x\leq (m+1)\cdot 2^i$. There are at most $\lceil(2m+3)2^i/2^{\ell}\rceil$ such values of $x$. As $m=pn$ and $2^{\ell+1}\geq n/(10\log n)$ and $p\llpoly \log^{-1}n$, there are thus at most $\lceil 2^i/(100\log n)\rceil$ such values of $x$. Noting that $x=0$ is always a solution, which gives that $r=r'$ and so $u=u'$, we have that the claim holds.
\claimproofend

Let then $r\in \N$ and let $x_1y_1,\ldots,x_ry_r$ be vertex-disjoint pairs in $U^{(2)}$. Let $I\subset [r]$ be a maximal subset for which there are vertex-disjoint paths $P_i$, $i\in I$, with internal vertices in $V(K)\setminus U$ such that, for each $i\in I$, $P_i$ is an $x_i,y_i$-path in $F_{x_i}\cap F_{y_i}$ with at most 2 vertices in each set $V^{(i')}$, for each $i'\in [\ell]_0$.

Suppose, for contradiction that $I\neq [r]$, and let $i\in [r]\setminus I$.
Let $V^\forb$ be the set of internal vertices in the paths $P_{i'}$, $i'\in I$.

\begin{claim}\label{clm:resilientlyexpand-easier}
For each $i'\in [\ell]_0$ and $u\in \{x_i,y_i\}$, $|V(F_{u})\cap V^{(i')}\cap V^{\forb}|\leq 2^{i'}/10\log n$.
\end{claim}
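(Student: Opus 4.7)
The plan is to deduce this claim directly from Claim~\ref{clm:treesarewellspread-easier} together with the defining property that each already-chosen path $P_j$ uses at most two vertices from each level $V^{(i'')}$.

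First I would observe that every vertex $w \in V(F_u) \cap V^{(i')} \cap V^{\forb}$ is, by definition of $V^{\forb}$, an internal vertex of some path $P_j$ with $j \in I$. Since $P_j$ was required to lie in $F_{x_j} \cap F_{y_j}$, the vertex $w$ belongs to $V(F_{x_j})$ (and also $V(F_{y_j})$). Combined with $w \in V(F_u) \cap V^{(i')}$, this shows that both $x_j$ and $y_j$ are vertices $u' \in U$ satisfying $V^{(i')} \cap V(F_u) \cap V(F_{u'}) \neq \emptyset$. Let $U_u \subseteq U$ denote the set of such $u'$; by Claim~\ref{clm:treesarewellspread-easier} we have $|U_u| \leq \lfloor 2^{i'}/(100\log n)\rfloor$.

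Next, since the pairs $x_1y_1,\ldots,x_ry_r$ are vertex-disjoint, each $u' \in U_u$ appears as $x_j$ or $y_j$ for at most one index $j \in I$. Hence the number of indices $j \in I$ for which $P_j$ contributes any vertex to $V(F_u) \cap V^{(i')} \cap V^{\forb}$ is at most $|U_u| \leq \lfloor 2^{i'}/(100\log n)\rfloor$. By the defining property of the collection $\{P_{i'}\}_{i'\in I}$, each such $P_j$ meets $V^{(i')}$ in at most two vertices, and so contributes at most two vertices to the intersection under consideration.

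Combining these two bounds yields
\[
|V(F_u) \cap V^{(i')} \cap V^{\forb}| \;\leq\; 2\cdot\left\lfloor\frac{2^{i'}}{100\log n}\right\rfloor \;\leq\; \frac{2^{i'}}{50\log n} \;\leq\; \frac{2^{i'}}{10\log n},
\]
which gives the claim. I do not expect any real obstacle here: the work has already been done in Claim~\ref{clm:treesarewellspread-easier}, and this step is essentially a bookkeeping argument combining that spreading estimate with the maximum ``level-load'' of a single path and the vertex-disjointness of the input pairs. (The purpose of this claim will presumably be, in the next step of the proof, to run a greedy/BFS-style extension argument inside the binary tree $F_u$ to find the missing $x_i,y_i$-path avoiding $V^{\forb}$, which is why we need the forbidden set to be a tiny fraction of each level.)
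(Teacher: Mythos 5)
Your proof is correct and is essentially identical to the paper's: both apply Claim~\ref{clm:treesarewellspread-easier} to bound the number of indices $j\in I$ whose tree $F_{x_j}$ (equivalently $F_{y_j}$) can intersect $V(F_u)$ at level $i'$, then multiply by the level-load bound of $2$ vertices per path. The only cosmetic difference is that you note both $x_j$ and $y_j$ land in $U_u$, whereas the paper uses just one of them; this changes nothing in the bound.
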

\begin{proof}[Proof of Claim~\ref{clm:resilientlyexpand-easier}] Let $i'\in [\ell]_0$.
By Claim~\ref{clm:treesarewellspread-easier}, there are at most $\lfloor 2^{i'}/100\log n\rfloor$ vertices $u'\in U\setminus \{x_i\}$ with $V^{(i')}\cap (V(F_{u})\cap V(F_{x_i}))\neq \emptyset$.
As the pairs $x_{i'}y_{i'}$, $i'\in I$, are disjoint, and $i\notin I$, there are thus at most $\lfloor 2^{i'}/(100\log n)\rfloor$ values of $i'\in I$ for which $V(P_{i'})$ intersects with $V(F_{x_i})\cap V^{(i')}$.
As each of these paths intersect with $V^{(i')}$ in at most 2 vertices, we therefore have that $|V(F_{x_i})\cap V^{(i')}\cap V^{\forb}|\leq 2^{i'}/(10 \log n)$. As, similarly, $|V(F_{y_i})\cap V^{(i')}\cap V^{\forb}|\leq 2^{i'}/(10 \log n)$, the claim follows.
\claimproofend

Then, let $F'_{x_i}$ be the connected component of $F_{x_i}-V^\forb$ which contains $x_i$. Note that, for each $i'\in [\ell]_0$, removing a vertex from $L(x_i,i')$ removes at most $2^{\ell-i'+1}$ vertices from the connected component of $F_{x_i}$ which contains $x_i$. Therefore, by Claim~\ref{clm:resilientlyexpand-easier}, the number of vertices in $F_{x_i}$ which are not in $F'_{x_i}$ is at most
\[
\sum_{i'=0}^\ell |V(F_{x_i})\cap V^{(i')}\cap V^{\forb}|\cdot 2^{\ell-i'+1} \leq \sum_{i'=0}^\ell 2^{\ell+1}/10\log n\leq 2^\ell/4,
\]
so that, in particular, $|V(F'_{x_i})\cap V^{\ell}|\geq 2^{\ell}-2^{\ell}/4>|V^{\ell}|/2$.

Similarly, letting $F'_{y_i}$ be the connected component of $F_{y_i}-V^\forb$, we have $|V(F'_{y_i})\cap V^{\ell}|>|V^{\ell}|/2$. Therefore, $F'_{x_i}$ and $F'_{y_i}$ intersect on $V^{\ell}$,
 Let $P_{i}$ be a shortest $x_i,y_i$-path in $F_{x_i}'\cup F_{y_i}'$. Note that by this minimality $P_i$ contains at most 2 vertices from each set $V^{(i')}$, $i'\in [\ell+1]$, and, by construction $P_{i}$ has no vertices in $V^\forb$ and therefore no vertices in $V(P_{i'})$ for each $i'\in I$. Thus, the paths $P_{i'}$, $i'\in I\cup \{i\}\subset [r]$, contradict the choice of $I$. Therefore, we must have $I=[r]$, and thus have the required paths $P_{i'}$, $i'\in [r]$.
\end{proof}


\subsection{Part~\ref{partA2}: 2-cycles using few vertex pairs}\label{sec:partA2}

For each vertex $v\in U_\phi$ we will attach an edge from $v$ to the roots of 5 binary
trees consisting of vertices from $V_\phi$, which have been chosen so that, across all $v\in U_\phi$ their vertices at each level are very well spread (see Claim~\ref{clm:treesarewellspread}), each pair of trees intersect completely in the last layer, and the union of all these trees has low maximum degree (as follows from Claim~\ref{clm:maxdegKphi}). This construction is similar to that given in Lemma~\ref{lem:auxiliarygraph}, but we use some more delicate properties of it. This is because, instead of finding a collection of vertex-disjoint paths $P_i$, $i\in [r]$, as before, we find a larger collection of paths, $\mathcal{P}$ say, many more than could be vertex-disjoint, but so that many specified pairs of paths are vertex-disjoint except for possibly on their endvertices (as in \ref{conc:A2:2} below). This requires us to take more care in the choice of paths.

\begin{lemma}\label{lem:partA2} Let $U_i,V_i$, $i\in [n]$, satisfy \eref{prop:vxpartitionfirst}--\eref{prop:vxpartitionlast}.
Let $\tau\in \mathcal{T}$ and $\phi\in \mathcal{F}_\tau$. Then, there is a graph $K_\phi$ with vertex set $U_\phi\cup V_\phi$ and $\Delta(K_\phi)\leq 5$ and the following property.

Suppose $\mathcal{C}\subset \{(i,u),(j,v):i,j\in I_\phi,u,v\in U_\phi,i\neq j,u\neq v,u\sim_{A,B}v\}$ satisfies the following properties.

\stepcounter{propcounter}
\begin{enumerate}[label = {\emph{\textbf{\Alph{propcounter}\arabic{enumi}}}}]
\item For each $i\in I_\phi$ and $u\in U_\phi$, there is at most one pair $(j,v)$ with $\{(i,u),(j,v)\}\in \mathcal{C}$.\labelinthm{prop:A2:1}
\item For each $i\in I_\phi$ and $v\in U_\phi$, there is at most one pair $(u,j)$ with $\{(i,u),(j,v)\}\in \mathcal{C}$.\labelinthm{prop:A2:2}
\end{enumerate}

Then, there are paths $P_e$, $e\in \mathcal{C}$, in $K_\phi$ with the following properties.

\begin{enumerate}[label = {\emph{\textbf{\Alph{propcounter}\arabic{enumi}}}}]\addtocounter{enumi}{2}
\item For each $e=\{(i,u),(j,v)\}\in \mathcal{C}$, $P_e$ is a $u,v$-path with internal vertices in $V_\phi\cap A$ if $u,v\in A$ and internal vertices in $V_\phi\cap B$ if $u,v\in B$.\labelinthm{conc:A2:1}
\item For each $e=\{(i,u),(j,v)\},e'=\{(i',u'),(j',v')\}\in \mathcal{C}$ with $e\neq e'$, if $\{i,j\}\cap \{i',j'\}\neq \emptyset$, then $V(P_e)$ and $V(P_{e'})$ intersect only on $\{u,v\}\cap \{u',v'\}$.\labelinthm{conc:A2:2}
\end{enumerate}
\end{lemma}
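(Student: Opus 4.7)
The plan is to adapt the cyclic-shift binary-tree construction from the proof of Lemma~\ref{lem:auxiliarygraph} so that each $u \in U_\phi$ is connected to the roots of five distinct binary trees $F_u^1, \ldots, F_u^5$ embedded in $V_\phi$, treating the $A$- and $B$-sides separately throughout. Concretely, for each side $X \in \{A,B\}$ we partition $V_\phi \cap X$ into nested layers $V^{(1)}, \ldots, V^{(\ell)}$ with $\ell = \Theta(\log n)$ and $|V^{(r)}| \approx 2^r \cdot c$ for a small constant $c$, and for each $u \in U_\phi \cap X$ and $t \in [5]$ we attach $u$ to a root $r_u^t \in V^{(1)}$ that seeds a binary tree $F_u^t$ whose level-$r$ vertex set $L^t(u,r) \subseteq V^{(r)}$ is an arithmetic-progression-indexed subset of size $2^r$ under the $t$-th cyclic shift. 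By distributing roots evenly and picking the five shifts coprime to $2^\ell$ and far apart, we obtain the following analogues of Claims~\ref{clm:maxdegKphi-easier}--\ref{clm:treesarewellspread-easier}: $\Delta(K_\phi) \leq 5$; every pair $F_u^t, F_{u'}^{t'}$ satisfies $L^t(u,\ell) = L^{t'}(u',\ell) = V^{(\ell)}$, so that any two trees fully intersect on the bottom layer; and for each $(u,t)$ and $r \in [\ell]_0$ at most $2^r/(100\log n)$ other pairs $(u',t')$ overlap $F_u^t$ on $V^{(r)}$.

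To find the paths for a given qualifying $\mathcal{C}$, we process the pairs in an arbitrary order. When constructing $P_e$ for $e = \{(i,u),(j,v)\} \in \mathcal{C}$, we let $V_e^\forb$ be the union of the internal vertices of all previously built paths $P_{e'}$ with $e' = \{(i',u'),(j',v')\}$ satisfying $\{i,j\} \cap \{i',j'\} \neq \emptyset$. By \ref{prop:A2:1} and \ref{prop:A2:2}, each colour appears in at most $|U_\phi|$ pairs of $\mathcal{C}$, so only $O(|U_\phi|)$ prior paths contribute to $V_e^\forb$; combined with the spreadness property and the fact that each previously built path uses at most two vertices of each $V^{(r)}$, this gives $|V_e^\forb \cap V^{(r)} \cap V(F_u^t)| \leq 2^r/(10\log n)$ for every $r$ and for at least one choice of $t \in [5]$, with the analogous bound also achievable for $v$. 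The resilience argument from Claim~\ref{clm:resilientlyexpand-easier} then shows that the connected component of $F_u^t - V_e^\forb$ containing $u$ still meets more than half of $V^{(\ell)}$, and likewise for some $t'$ at $v$. Since $F_u^t$ and $F_v^{t'}$ share all of $V^{(\ell)}$, their residual components intersect, and we take $P_e$ to be a shortest $u,v$-path in $(F_u^t \cup F_v^{t'}) - V_e^\forb$, which automatically uses at most two vertices per $V^{(r)}$, lies internally in $V_\phi$, and by construction meets no previously chosen same-colour path except possibly at a shared endpoint in $U_\phi$.

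The main obstacle will be calibrating the five-fold cyclic-shift construction so that $\Delta(K_\phi) \leq 5$ is maintained alongside the sharp spreadness bound. The number five here is essential: each pair in $\mathcal{C}$ involves two colours, each contributing up to a full matching's worth of previously-built paths, and the forbidden vertices within any one shift $F_u^t$ can concentrate unfavourably; having five distinct shifts guarantees that at least one residual tree on each side of the pair retains a majority of the bottom layer $V^{(\ell)}$, which is exactly what the shortest-path extraction needs. Once this template is in place the iterative path-finding is a direct adaptation of the argument for Lemma~\ref{lem:auxiliarygraph}, and \ref{conc:A2:1}--\ref{conc:A2:2} follow immediately from the construction.
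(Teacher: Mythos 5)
Your high-level strategy is the right one and matches the paper's: reuse the layered binary-tree template from Lemma~\ref{lem:auxiliarygraph}, hang five trees off each vertex of $U_\phi$, and build the paths $P_e$ iteratively (or by a maximality argument) by taking shortest residual paths after forbidding the vertices of earlier colour-conflicting paths. The number five is indeed dictated by the observation that each $e \in \mathcal{C}$ has, by \ref{prop:A2:1} and \ref{prop:A2:2}, at most four $e' \in \mathcal{C}$ sharing both a colour and a vertex with it, and paths for such pairs must leave the shared endpoint via different roots.

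Where you go wrong is in the construction of $K_\phi$ itself, and you acknowledge this uncertainty. Two concrete problems. First, the layer sizes $|V^{(r)}| \approx 2^r c$ are inconsistent with the rest of your description: you want every $F_u^t$ to satisfy $L^t(u,\ell) = V^{(\ell)}$ (so that any two trees fully meet on the bottom layer), but a binary tree seeded at level $1$ has $2^{\ell-1}$ leaves while $|V^{(\ell)}| = 2^\ell c$, and at the low levels there is simply not enough room to host $5|U_\phi|$ distinct trees. Second, and more seriously, the ``five cyclic shifts coprime to $2^\ell$ and far apart'' idea, where a \emph{different} shift determines a different level-to-level wiring for each $t$, would blow up the maximum degree: a fixed level-$r$ vertex picks up two level-$(r+1)$ successors per shift, so $\Delta(K_\phi)$ becomes roughly $20$, not $5$. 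The paper avoids this entirely by not using any shifts: it builds a single layered graph $K_\phi^-$ on $V_\phi^{(0)} \cup \cdots \cup V_\phi^{(\ell)}$, all layers of equal size $2^\ell$, with the same doubling wiring as in Lemma~\ref{lem:auxiliarygraph} (so $\Delta(K_\phi^-) \le 4$), and then attaches each $u = u_i \in U_\phi$ by five pendant edges to the five \emph{consecutive} level-$0$ vertices $v_{0,5(i-1)+1}, \ldots, v_{0,5(i-1)+5}$. The trees $F_{w(u,1)}, \ldots, F_{w(u,5)}$ are sub-trees of $K_\phi^-$ that share all their interior edges, so the degree bound $\Delta(K_\phi) \le 5$ is automatic, and the well-spreadness (Claim~\ref{clm:treesarewellspread}) comes from the roots being sparse in $V_\phi^{(0)}$ together with the dyadic arithmetic, exactly as in Claim~\ref{clm:treesarewellspread-easier}. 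No coprimality is needed or helpful.

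There is a second, smaller, gap in how you justify ``for at least one $t \in [5]$ the forbidden set is small at every level.'' The well-spreadness bound at level $r \ge 1$ holds uniformly over all five trees; the $t$-dependent obstruction is entirely at the low levels, where the target bound $2^r/(10\log n) < 1$ forces the intersection to be empty. These low levels of $F_{w(u,t)}$ are clean precisely when no previously built colour-conflicting path that passes through $u$ was routed via the root $w(u,t)$ --- and there are at most four such paths, blocking at most four indices, leaving one free. The paper handles this cleanly upfront by $5$-colouring the ``conflict graph'' on $\mathcal{C}$ (property \ref{prop:jegood}), assigning a single $r_e \in [5]$ to each $e$ so that pairs sharing both a colour and a vertex get distinct indices, and then running a maximality argument. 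Your on-the-fly per-endpoint choice would also work (greedy colouring of a max-degree-$4$ conflict graph succeeds with five colours), but you should state the constraint and the $\le 4$ count explicitly rather than appeal to concentration of forbidden vertices, which is not the reason the pigeonhole is needed. You also do not explain why the newly constructed $P_e$ again uses at most two vertices per level, which the paper gets for free by taking a \emph{shortest} $u,v$-path in the residual forest, a property that is then needed when bounding $V^{\mathrm{forb}}$ for subsequent pairs.
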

\begin{proof}
For each $e=\{(i,u),(j,v)\}\in \mathcal{C}$, we have $u,v\in A$ or $u,v\in B$, and wish to find $P_e$ such that, as in \ref{conc:A2:1}, all of the vertices of $P_e$ are in $A$ in the first case, and in $B$ in the second case. Therefore, $K_\phi$ will be the disjoint union of two graphs $K_\phi^A$ and $K_\phi^B$ with vertex set $(U_\phi\cup V_\phi)\cap A$ and $(U_\phi\cup V_\phi)\cap B$, respectively.
To reduce notation, we will give the construction for $K_\phi^A$, where the construction for $K_\phi^B$ follows identically but with $B$ in place of $A$.

Let $\ell$ be such that $2^{\ell}\leq p_Vn/\log n < 2^{\ell+1}$, and note that, as $\ell\leq (\log n)-1$, $(\ell+1)\cdot 2^{\ell}\leq p_Vn/2$. Using this, and \ref{prop:vxpartitionfirst}, take disjoint sets $V_\phi^{(0)},V_\phi^{(1)},\ldots,V_\phi^{(\ell)}$ in $V_\phi\cap A$ with size $2^{\ell}$ and, for each $i\in [\ell]_0$, enumerate $V_\phi^{(i)}$ as $\{v_{i,1},\ldots,v_{i,2^{\ell}}\}$. Let $K_{\phi}^-$ be the empty graph with vertex set $V_\phi^{(0)}\cup V_\phi^{(1)}\cup\ldots\cup V_\phi^{(\ell)}$ and, for each $i\in [\ell-1]_0$, $j\in [2^\ell]$ and $r\in [2]$, add an edge from $v_{i,j}$ to $v_{i+1,s}$ where $s$ is such that $s=2(j-1)+r\;\mathrm{mod}\;2^\ell$, so that $d_{K_\phi^-}(v_{i,j},V_\phi^{(i+1)})=2$.
For each $j\in [2^\ell]$, $v=v_{0,j}$ and $i\in [\ell]_0$, let
\begin{equation}\label{eqn:Lvi}
L(v,i)=\{v_{i,j'}:\exists j''\in [2^\ell]\text{ s.t.\ }2^i(j-1)+1\leq j''\leq 2^i(j-1)+2^{i}\text{ and }j'=j''\;\textrm{mod}\;2^\ell\},
\end{equation}
noting in particular that $L(v,0)=\{v_{0,j}\}=\{v\}$, $L(v,\ell)=V_\phi^{(\ell)}$ and, for each $i\in [\ell]_0$, $|L(v,i)|=2^i$. For each $j\in [2^\ell]$ and $v=v_{0,j}$,
let $F_v$ be the graph with vertex set $V(F_v)=\bigcup_{i\in [\ell]_0}L(v,i)$ and edge set $E(K_{\phi}^-[V(F_v)])$.

We will use that $\Delta(K_\phi^-)\leq 4$ and, for each $v\in V_\phi^{(0)}$, $F_v$ is a binary tree rooted at $v$, in the following two claims. As these two claims are proved virtually identically to Claims~\ref{clm:maxdegKphi-easier}~and~\ref{clm:graphsaretrees-easier} respectively, we omit their proof.

\begin{claim}\label{clm:maxdegKphi}
$\Delta(K_\phi^-)\leq 4$.\hfill$\boxdot$
\end{claim}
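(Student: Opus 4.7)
\textbf{Proof plan for Claim~\ref{clm:maxdegKphi}.} My plan is to argue directly from the edge-construction rule, exactly mirroring the proof of Claim~\ref{clm:maxdegKphi-easier}. The graph $K_\phi^-$ is built up between consecutive levels $V_\phi^{(i)}$ and $V_\phi^{(i+1)}$ only, so the degree of any vertex $v_{i,j}$ splits as $d_{K_\phi^-}(v_{i,j},V_\phi^{(i-1)}) + d_{K_\phi^-}(v_{i,j},V_\phi^{(i+1)})$ (with one of the terms absent when $i=0$ or $i=\ell$). The ``upward'' part is immediate from the construction: for $i\in[\ell-1]_0$ and $j\in[2^\ell]$, the rule adds exactly one edge from $v_{i,j}$ to $v_{i+1,s}$ for each $r\in[2]$, where $s\equiv 2(j-1)+r\pmod{2^\ell}$, and these give $2$ distinct neighbours since the two values of $r$ have different parities. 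Thus $d_{K_\phi^-}(v_{i,j},V_\phi^{(i+1)})=2$.

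For the ``downward'' part, I plan to fix $i\in\{1,\ldots,\ell\}$ and $j\in[2^\ell]$ and count $j'\in[2^\ell]$ for which an edge $v_{i-1,j'}v_{i,j}$ was added. By definition, such a $j'$ must satisfy $2(j'-1)+r\equiv j\pmod{2^\ell}$ for some $r\in[2]$. Since $2(j'-1)$ is even and $2^\ell$ is even, $j-r$ must be even, which forces the parity $r\equiv j\pmod{2}$ and hence determines $r$ uniquely from $j$. With $r$ fixed, the congruence $2(j'-1)\equiv j-r\pmod{2^\ell}$ reduces (on dividing by $2$) to a single congruence modulo $2^{\ell-1}$, which has exactly two solutions $j'$ in $[2^\ell]$. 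Hence $d_{K_\phi^-}(v_{i,j},V_\phi^{(i-1)})=2$.

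Combining these two counts gives $d_{K_\phi^-}(v_{i,j})\le 4$ for every vertex, establishing $\Delta(K_\phi^-)\le 4$. There is no real obstacle here — the argument is purely an unpacking of the explicit cyclic construction and is essentially identical to the proof already given for Claim~\ref{clm:maxdegKphi-easier}, which is exactly why the authors omit it.
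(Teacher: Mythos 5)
Your proof is correct and follows essentially the same route as the paper's omitted argument, which the authors note is virtually identical to their proof of Claim~\ref{clm:maxdegKphi-easier}: two edges upward from $v_{i,j}$ to $V_\phi^{(i+1)}$ by construction, and two edges downward from $V_\phi^{(i-1)}$ by the parity argument on $r$ and the resulting congruence modulo $2^{\ell-1}$. The only cosmetic difference is that you spell out the division by $2$ step explicitly, which the paper leaves implicit.
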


\begin{claim}\label{clm:graphsaretrees}
For each $j\in [2^\ell]$ and $v=v_{0,j}$, $F_v$ is a binary tree rooted at $v$ such that, for each $i\in [\ell]_0$, the vertices in the $i$th level of $F_v$, $V(F_v)\cap V_\phi^{(i)}$, are those in $L(v,i)$ (as defined at \eqref{eqn:Lvi}).\hfill $\boxdot$
\end{claim}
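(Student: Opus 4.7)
The plan is to follow the argument of Claim~\ref{clm:graphsaretrees-easier} essentially verbatim, since the construction of $K_\phi^-$ on the levels $V_\phi^{(0)}, \ldots, V_\phi^{(\ell)}$ inside $V_\phi \cap A$ mirrors the construction of $K$ on $V^{(0)}, \ldots, V^{(\ell)}$ from Lemma~\ref{lem:auxiliarygraph}. Fix $j \in [2^\ell]$ and set $v = v_{0,j}$. The definition \eqref{eqn:Lvi} immediately yields $|L(v,i)| = 2^i$ for each $i \in [\ell]_0$, and by construction $d_{K_\phi^-}(v_{i,j'}, V_\phi^{(i+1)}) = 2$ for each $i \in [\ell-1]_0$ and $j' \in [2^\ell]$. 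A straightforward edge count then shows the claim reduces to verifying that for every $i \in [\ell]$ and every $w \in L(v,i)$ there is some $w' \in L(v,i-1)$ with $w'w \in E(K_\phi^-)$: indeed, the total number of edges from $L(v,i-1)$ into $V_\phi^{(i)}$ is at most $2 \cdot 2^{i-1} = 2^i = |L(v,i)|$, so once we know every $w \in L(v,i)$ receives at least one such edge, each receives exactly one, which inductively forces $F_v$ to be a binary tree with $i$th level exactly $L(v,i)$.

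To verify this upward-connectivity, fix $i \in [\ell]$ and $w = v_{i,j'} \in L(v,i)$. By \eqref{eqn:Lvi} there exist $r \in [2^{i-1}]$ and $s \in [2]$ with $2^i(j-1) + 2(r-1) + s \equiv j' \pmod{2^\ell}$. Let $j'' \in [2^\ell]$ be chosen so that $2^{i-1}(j-1) + r \equiv j'' \pmod{2^\ell}$, which places $v_{i-1,j''}$ in $L(v,i-1)$. Then $j' \equiv 2(j''-1) + s \pmod{2^\ell}$ with $s \in [2]$, so during the construction of $K_\phi^-$ we explicitly added the edge $v_{i-1,j''}v_{i,j'}$, providing the required parent.

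The main obstacle here is purely bookkeeping with the modular arithmetic on the indices, and since it has already been handled identically in Claim~\ref{clm:graphsaretrees-easier}, there is no additional difficulty. The only cosmetic change is that the ambient vertex set lies in $V_\phi \cap A$ rather than $[n]$, which plays no role in the argument.
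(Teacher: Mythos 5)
Your proof is correct and takes the same approach the paper does: the paper explicitly omits this proof, noting that it is ``virtually identical'' to that of Claim~\ref{clm:graphsaretrees-easier}, and you reproduce exactly that argument (the same reduction via the edge count $2\cdot 2^{i-1}=2^i=|L(v,i)|$ and the same modular-arithmetic verification of a parent in $L(v,i-1)$), correctly observing that only the ambient set changes.
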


Now, let $m_0=|U_\phi\cap A|$, so that, by \ref{prop:vxpartitionfirst}, we have $m_0=(1\pm \epsforabsthatwasepszero)p_Un$. Enumerate $U_\phi$ as $\{u_{1},\ldots,u_{m_0}\}$. For each $i\in [m_0]$ and $j\in [5]$, add an edge to $K^-_\phi$ from $u_{i}$ to $w(u_{i},j):=v_{0,5(i-1)+j}$, and call the resulting graph $K^A_\phi$, where we have used that $5m_0\leq (1+\epsforabsthatwasepszero)p_Un$ is much smaller than $|V_\phi^{(1)}|\geq p_Vn/2$ as $p_U\llpoly p_V$.
To each vertex in $U_\phi$ we have attached 5 of the binary trees $F_v$, $v\in V_\phi^{(0)}$. As all the neighbours of $U_\phi\cap A$ are distinct and within the first $5m_0\leq p\cdot p_Vn$ for some $p\ll \log^{-1}n$, the trees we have attached are well spread out at each level, in a similar way to Claim~\ref{clm:treesarewellspread-easier}. As the proof is virtually identical, we omit it.

\begin{claim}\label{clm:treesarewellspread}
For each $u\in U_\phi$, $j\in [5]$ and $i\in [\ell+1]$, there are at most $\lfloor 2^{i}/(100\log n)\rfloor$ pairs $(u',j')\neq (u,j)$ with $u'\in U_\phi$, $j'\in [5]$ and $V_\phi^{(i)}\cap (V(F_{w(u,j)})\cap V(F_{w(u',j')}))\neq \emptyset$.\hfill$\boxdot$
\end{claim}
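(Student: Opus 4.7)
The plan is to follow the proof of Claim~\ref{clm:treesarewellspread-easier} essentially line-for-line, with the roles of $U$ and $m$ now played by the set of tree-roots $\{w(u,j):u\in U_\phi,\,j\in[5]\}\subset V_\phi^{(0)}$ and its size $5m_0$ respectively. This works because, by construction, $w(u,j)=v_{0,5(i_u-1)+j}$ where $i_u$ is the index of $u$ in the enumeration of $U_\phi\cap A$, so the roots form an initial segment $\{v_{0,1},\ldots,v_{0,5m_0}\}$ of $V_\phi^{(0)}$, exactly analogous to how $U$ sits inside $V^{(0)}$ in the previous claim.

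First I would dispose of large $i$: when $i\geq \ell-10$, say, the bound $\lfloor 2^i/(100\log n)\rfloor$ already exceeds $5m_0$, the total number of pairs $(u',j')$. This follows from $2^{\ell+1}\geq p_Vn/\log n$ together with $5m_0\leq 10 p_U n$ and the hierarchy $p_U\llpoly p_V$, which guarantees $p_U/p_V \leq 1/\log^C n$ for a large constant $C$.

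For $i\leq \ell-10$, fix $(u,j)$, write $w(u,j)=v_{0,r}$, and take any $(u',j')\neq (u,j)$ satisfying the hypothesis; write $w(u',j')=v_{0,r'}$. By Claim~\ref{clm:graphsaretrees}, any common vertex in $V_\phi^{(i)}\cap V(F_{w(u,j)})\cap V(F_{w(u',j')})$ lies in $L(w(u,j),i)\cap L(w(u',j'),i)$, so by \eqref{eqn:Lvi} there exist integers $s$ with $2^i(r-1)+1\leq s\leq 2^i(r-1)+2^i$ and $s'$ with $2^i(r'-1)+1\leq s'\leq 2^i(r'-1)+2^i$ and $s\equiv s'\pmod{2^\ell}$. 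Setting $x=s-s'$ gives $x\equiv 0\pmod{2^\ell}$ and
\[
2^i(r-r')-2^i\leq x\leq 2^i(r-r')+2^i,
\]
whence $|x|\leq (5m_0+1)2^i$.

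The count then reduces to bounding the number of such $x$. Since $i\leq \ell-10$ and $x$ is a multiple of $2^\ell$, fixing $r$ forces $r'$ to lie within a constant-length integer interval around $r-x/2^i$, so at most a constant number of $r'$ correspond to each valid $x$. The number of multiples of $2^\ell$ in $[-(5m_0+1)2^i,(5m_0+1)2^i]$ is at most $\lceil (10m_0+3)\,2^i/2^\ell\rceil$. Substituting $5m_0\leq 10 p_U n$ and $2^{\ell+1}\geq p_V n/\log n$, this is $O(p_U\log n/p_V)\cdot 2^i$, which is at most $\lfloor 2^i/(100\log n)\rfloor$ because $p_U\llpoly p_V$ provides polynomial slack (so $p_U/p_V$ can be made smaller than $1/(10^4\log^2 n)$), completing the proof. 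The only obstacle is making sure the hierarchy $p_U\llpoly p_V\llpoly \log^{-1}n$ absorbs both the factor $5$ from the number of attached trees and the constant loss in the $r'$-count given $x$, which it comfortably does.
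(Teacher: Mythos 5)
Your overall strategy matches the paper's: the paper treats this as ``virtually identical'' to Claim~\ref{clm:treesarewellspread-easier}, with the tree roots $\{w(u,j):u\in U_\phi,\,j\in[5]\}$ --- an initial segment $v_{0,1},\ldots,v_{0,5m_0}$ of $V_\phi^{(0)}$ --- playing the role of $U$ and $5m_0$ that of $m$. The split into $i\geq\ell-10$ and $i\leq\ell-10$, and the reduction to counting multiples $x$ of $2^\ell$ in $[-(5m_0+1)2^i,(5m_0+1)2^i]$, are both exactly right.

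However, there is a real gap in the two closing steps, and it is one that the polynomial slack in $p_U\llpoly p_V$ cannot absorb. First, ``at most a constant number of $r'$ correspond to each valid $x$'' must be tightened to ``at most one'': tracking $j''\in[2^i(s-1)+1,2^is]$ and $j'''\in[2^i(s'-1)+1,2^is']$ gives $|x-2^i(s-s')|\leq 2^i-1<2^i$, and since $2^\ell\mid x$ and $i<\ell$ we in particular have $2^i\mid x$, which forces $s-s'=x/2^i$ exactly; in particular $x=0$ forces $s'=s$, i.e.\ $(u',j')=(u,j)$. Second, your claimed inequality $\lceil(10m_0+3)2^i/2^\ell\rceil\leq\lfloor 2^i/(100\log n)\rfloor$ is simply false whenever the floor is $0$, since the ceiling is always at least $1$ (the value $x=0$ is always present). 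Both issues are resolved simultaneously by the argument Claim~\ref{clm:treesarewellspread-easier} uses: the number of bad pairs is at most the number of valid $x$ \emph{minus one} (subtracting the $x=0$ term), and $\lceil 2^i/(100\log n)\rceil-1\leq\lfloor 2^i/(100\log n)\rfloor$ holds for every $i$, including when the floor is zero. Without the ``exactly one $r'$ per $x$'' sharpening, the small-$i$ regime --- where the claim asserts the trees are pairwise disjoint at level $i$ --- is not established, and no multiplicative constant-factor slack can rescue an additive off-by-one loss against a bound of zero.
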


Similarly, form the graph $K^B_\phi$, and let $K_\phi$ be the graph with the vertex set $U_\phi\cup V_\phi$ and edge set $E(K^A_\phi)\cup E(K^B_\phi)$. We will show that $K_\phi$ has the desired property.

First, note that, from Claim~\ref{clm:maxdegKphi}, and noting that we added vertex-disjoint 5-edge stars from each vertex in $U_\phi$ to vertices in $V_{\phi}$, we have that $d_{K_\phi}(v)\leq 5$ for each $v\in (U_\phi\cup V_\phi)\cap A$. By a similarly proved version of Claim~\ref{clm:maxdegKphi}, we have that this also holds for every $v\in (U_\phi\cup V_\phi)\cap B$, so that $\Delta(K_\phi)\leq 5$, as required.

We now show that the main property of $K_\phi$ holds. Let $\mathcal{C}\subset \{(i,u),(j,v):i,j\in I_\phi,i\neq j,u,v\in U_\phi,u\neq v,u\simAB v\}$ satisfying \ref{prop:A2:1} and \ref{prop:A2:2}. We will show that there are paths $P_e$, $e\in \mathcal{C}$, in $K_\phi$ such that \ref{conc:A2:1} and \ref{conc:A2:2} hold.
First, choose $r_{e}\in [5]$ for each $e=\{(i,u),(j,v)\}\in \mathcal{C}$ such that
\begin{enumerate}[label = {{\textbf{\Alph{propcounter}\arabic{enumi}}}}]\addtocounter{enumi}{4}
\item For each $e=\{(i,u),(j,v)\},e'=\{(i',u'),(j',v')\}\in \mathcal{C}$ with $e\neq e'$, if $\{i,j\}\cap \{i',j'\}\neq \emptyset$ and $\{u,v\}\cap \{u',v'\}\neq \emptyset$, then $r_{e}\neq r_{e'}$.\label{prop:jegood}
\end{enumerate}

To see that this is possible, create an auxiliary graph $L$ with vertex set $\mathcal{C}$ and for each $e=\{(i,u),(j,v)\},e'=\{(i',u'),(j',v')\}\in \mathcal{C}$ with $e\neq e'$ put an edge between $e$ and $e'$ in $L$ if $\{i,j\}\cap \{i',j'\}\neq \emptyset$ and $\{u,v\}\cap \{u',v'\}\neq \emptyset$.
Then, for each $e=\{(i,u),(j,v)\}\in \mathcal{C}$, by \ref{prop:A2:1} and \ref{prop:A2:2} there are at most 4 choices for $e'=\{(i',u'),(j',v')\}\in \mathcal{C}$ with $e\neq e'$, $\{i,j\}\cap \{i',j'\}\neq \emptyset$ and $\{u,v\}\cap \{u',v'\}\neq \emptyset$. Indeed, for such an $e'$, firstly by relabelling if necessary we can assume that $j'\neq i$ and $i'\neq j$. Then, if $i=i'$ note that we have $u'\neq u$ by \ref{prop:A2:1} as $e\neq e'$, and therefore $v'\in \{u,v\}$, which, by \ref{prop:A2:2} gives us two options for $v',j',u'$. Similarly, if $j=j'$, then there are two options for $u',i',v'$, for at most 4 options in total.
Thus, $L$ has maximum degree 4, and so can be properly coloured with 5 colours, using the colour set $[5]$. Take such a colouring, and, for each $e\in \mathcal{C}$, let $r_e$ be the colour of $e$, noting that, then, \ref{prop:jegood} holds.

Now, let $\mathcal{C}'\subset \mathcal{C}$ be a maximal set for which there are paths $P_e$, $e\in \mathcal{C}'$, in $K_\phi$ such that  \ref{conc:A2:1} and \ref{conc:A2:2} hold with $\mathcal{C}$ replaced by $\mathcal{C}'$ and, for each $e=\{(i,u),(j,v)\}\in\mathcal{C}'$, $P_e\subset F_{w(u,r_e)}\cup F_{w(v,r_e)}+uw(u,r_e)+vw(v,r_e)$ and $P_e$ contains at most 2 vertices from each set $V_\phi^{(i')}$, $i'\in [\ell]_0$. Pick such a set of paths $P_e$, $e\in \mathcal{C}'$.

Noting that we will be done if $\mathcal{C}'=\mathcal{C}$, assume, for a contradiction, that $\mathcal{C}\neq\mathcal{C}'$ and pick some $e=\{(i,u),(j,v)\}\in \mathcal{C}\setminus \mathcal{C}'$. Assume that $u,v\in A$, so that we may use the notation above, where the case for $u,v\in B$ follows similarly. We will find a path $P_e$, as depicted in Figure~\ref{fig:sparsevxpairs}, which will have the properties so that we could add $e$ to $\mathcal{C}'$ to contradict its maximality.

We first define a similar set $V^{\forb}$ of vertices as we avoided in the proof of Lemma~\ref{lem:auxiliarygraph}, but only collect together vertices from paths that we need to avoid if we are to find a path $P_e$ which we can add to the collection $P_{e'}$, $e'\in \mathcal{C}'$, with \ref{conc:A2:2} still holding for $\mathcal{C}'$ in place of $\mathcal{C}$. Therefore, let $V^{\forb}$ be the set of vertices which appear in some path $P_{e'}$ such that $e'=\{(i',u'),(j',v')\}\in \mathcal{C}'$ with $i'=i$ or $j'=j$. 
We will now show a similar claim to Claim~\ref{clm:resilientlyexpand-easier}. Its proof is similar to the proof of Claim~\ref{clm:resilientlyexpand-easier}.

\begin{claim}\label{clm:resilientlyexpand}
For each $i'\in [\ell]_0$, $|V(F_{w(u,r_e)})\cap (V_\phi^{(i')}\cap V^{\forb})|\leq 2^{i'}/(10 \log n)$ and $|V(F_{w(v,r_e)})\cap (V_\phi^{(i')}\cap V^{\forb})|\leq 2^{i'}/(10\log n)$.
\end{claim}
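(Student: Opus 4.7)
The plan is to mimic the proof of Claim~\ref{clm:resilientlyexpand-easier}, using Claim~\ref{clm:treesarewellspread} in place of Claim~\ref{clm:treesarewellspread-easier}. The two bounds are symmetric, so I will only argue the one for $V(F_{w(u, r_e)})$. The overall accounting is to sum, over contributing $e' \in \mathcal{C}'$ (those with $e' \neq e$, $\{i,j\} \cap \{i',j'\} \neq \emptyset$, and $V(P_{e'})$ meeting $V(F_{w(u, r_e)}) \cap V_\phi^{(i')}$), the at most $2$ vertices that each $P_{e'}$ places at level $i'$ (using the property that each $P_{e'}$ was chosen to have at most two vertices in each $V_\phi^{(i'')}$).

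First I would rule out that a contributing $e'$ can satisfy $w(u', r_{e'}) = w(u, r_e)$ or $w(v', r_{e'}) = w(u, r_e)$: if, say, $u' = u$ and $r_{e'} = r_e$, then $e$ and $e'$ share the vertex $u$ and a colour while $e \neq e'$, so \ref{prop:jegood} forces $r_{e'} \neq r_e$, a contradiction. Hence for every contributing $e'$ at least one of the pairs $(u', r_{e'})$, $(v', r_{e'})$ lies in the set $\mathcal{U}$ of pairs $(u'', j'') \neq (u, r_e)$ with $V_\phi^{(i')} \cap V(F_{w(u, r_e)}) \cap V(F_{w(u'', j'')}) \neq \emptyset$, and Claim~\ref{clm:treesarewellspread} bounds $|\mathcal{U}| \leq \lfloor 2^{i'}/(100\log n)\rfloor$.

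The main obstacle compared to the easier analogue is that a single useful root in $\mathcal{U}$ could in principle be reused by several contributing $e'$, since each $P_{e'}$ lives in a pair of trees rather than one. I will handle this by a pigeonhole argument exploiting \ref{prop:jegood}: suppose three contributing pairs $e_1, e_2, e_3 \in \mathcal{C}'$ all had some fixed useful root $(u'', j'')$ in use. Then each of them contains $u''$ as one of its vertices, and each has its $r$-value equal to $j''$; moreover, since each shares a colour with $e$, each contains some element $(c, \cdot)$ with $c \in \{i, j\}$. By pigeonhole, two of them, say $e_a$ and $e_b$, contain elements of the same colour $c \in \{i,j\}$, hence share a colour; since they also share the vertex $u''$ and are distinct, \ref{prop:jegood} forces $r_{e_a} \neq r_{e_b}$, contradicting $r_{e_a} = r_{e_b} = j''$. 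Hence at most two contributing $e'$ share any single useful root.

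Combining, the number of contributing $e'$ is at most $2|\mathcal{U}| \leq 2\lfloor 2^{i'}/(100\log n)\rfloor$, and multiplying by the bound of $2$ vertices per $P_{e'}$ at level $i'$ yields a total of at most $4\lfloor 2^{i'}/(100\log n)\rfloor \leq 2^{i'}/(10\log n)$, as required.
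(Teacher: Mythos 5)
Your proof is correct, but it takes a genuinely different route from the paper's. The paper bounds the number of contributing $e'\in\mathcal{C}'$ using a root $(v',r)\in\mathcal{U}$ by invoking \ref{prop:A2:1} and \ref{prop:A2:2} (the properties that make $\mathcal{C}$ behave locally like a matching on colour--vertex pairs): for fixed colour $i$ and fixed vertex $v'$, these determine at most one element of $\mathcal{C}$ in each of two cases (according to whether $v'$ is paired with colour $i$ or with the other colour of $e'$), and the bound follows by mapping triples injectively to distinct $v'$. You instead invoke \ref{prop:jegood} (the greedy $2$-distance colouring of the $r_e$ values) together with a pigeonhole on the two shared colours $\{i,j\}$ to show no root can be in use by three contributing $e'$. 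Your approach has two small advantages: it treats the cases $i'\in\{i,j\}$ uniformly (the paper's argument as written only explicitly handles $e'$ containing the colour $i$, relying on the slack in the constant to absorb the omitted colour-$j$ case), and it makes explicit the step that $(u',r_{e'})\neq(u,r_e)$ and $(v',r_{e'})\neq(u,r_e)$ (needed to place the roots actually in $\mathcal{U}$ rather than in $\mathcal{U}\cup\{(u,r_e)\}$), which the paper leaves implicit; both yours and the paper's need this for the bound to hold when $i'$ is small. The paper's route buys nothing over yours here; indeed, yours is the more natural companion to the choice of the $r_e$'s via \ref{prop:jegood}, which was designed for exactly this kind of disambiguation.
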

\begin{proof}[Proof of Claim~\ref{clm:resilientlyexpand}]
By Claim~\ref{clm:treesarewellspread}, there are at most $\lfloor 2^{i'}/100\log n\rfloor$ pairs $(v',r)\neq (u,r_e)$ with $v'\in U_\phi$, $r\in [5]$ and $V_\phi^{(i')}\cap (V(F_{w(u,r_e)})\cap V(F_{w(v',r)}))\neq \emptyset$.
Therefore, by \ref{prop:A2:1}, there are at most  $\lfloor 2^{i'}/100\log n\rfloor$ triples $(u',v',j')$ for which $e'=\{(i,u'),(j',v')\}\in \mathcal{C}'$ and $V_\phi^{(i')}\cap (V(F_{w(u,r_e)})\cap V(F_{w(v',r_{e'})}))\neq \emptyset$.
Similarly, by \ref{prop:A2:2}, there are at most  $\lfloor 2^{i'}/100\log n\rfloor$ triples $(u',v',j')$ for which $e'=\{(i,v'),(j',u')\}\in \mathcal{C}$ and $V_\phi^{(i')}\cap (V(F_{w(u,r_e)})\cap V(F_{w(v',r_{e'})}))\neq \emptyset$.
Thus, as for each $e'=\{(i',u'),(j',v')\}\in\mathcal{C}'$, $P_{e'}\subset F_{w(u',r_{e'})}\cup F_{w(v',r_{e'})}$ and $P_{e'}$ contains at most 2 vertices from $V_\phi^{(i')}$, $|V(F_{w(u,r_e)})\cap (V_\phi^{(i')}\cap V^{\forb})|\leq 2^{i'}/(10\log n)$.
Similarly, we have that $|V(F_{w(v,r_e)})\cap (V_\phi^{(i')}\cap V^{\forb})|\leq 2^{i'}/(10\log n)$.
\claimproofend

Then, let $F'_{w(u,r_e)}$ be the connected component of $F_{w(u,r_e)}-V^\forb$ which contains $w(u,r_e)$. Note that, for each $i'\in [\ell]_0$, removing a vertex from $L(w(u,r_e),i')$ removes at most $2^{\ell-i'+1}$ vertices from the connected component of $F_{w(u,r_e)}$ which contains $w (u,r_e)$. Therefore, by Claim~\ref{clm:resilientlyexpand}, the number of vertices in $F_{w(u,r_e)}$ which are not in $F'_{w(u,r_e)}$ is at most
\[
\sum_{i'=0}^\ell |V(F_{w(u,r_e)})\cap (V_\phi^{(i')}\cap V^{\forb})|\cdot 2^{\ell-i'+1} \leq \sum_{i'=0}^\ell 2^{\ell+1}/10\log n\leq 2^\ell/4,
\]
so that, in particular, $|V(F'_{w(u,r_e)})\cap V_\phi^{\ell}|\geq 2^{\ell}-2^{\ell}/4>|V_\phi^{\ell}|/2$.

Similarly, letting $F'_{w(v,r_e)}$ be the connected component of $F_{w(v,r_e)}-V^\forb$  which contains $w(v,r_e)$, we have $|V(F'_{w(v,r_e)})\cap V_\phi^{\ell}|>|V_\phi^{\ell}|/2$. Therefore, $F'_{w(u,r_e)}$ and $F'_{w(v,r_e)}$ intersect on $V_\phi^{\ell}$.
 Let $P_{e}$ be a shortest $u,v$-path in $F_{w(u,r_e)}'\cup F_{w(v,r_e)}'+uw(u,r_e)+vw(v,r_e)$. Note that by this minimality $P_e$ contains at most 2 vertices from each set $V_\phi^{(i')}$, $i'\in [\ell]_0$ and, by construction, for each $e'=\{(i',u'),(j',v')\}\in \mathcal{C}'$, $P_e$ has no vertices in $V^\forb$ and therefore no vertices in $V(P_{e'})\setminus \{u',v'\}$ if $\{i',j'\}\cap \{i,j\}\neq \emptyset$. Thus, the paths $P_{e'}$, $e'\in \mathcal{C}'\cup \{e\}\subset \mathcal{C}$ contradicts the choice of $\mathcal{C}'$. Therefore, we must have $\mathcal{C}'=\mathcal{C}$. Thus, we can choose the required paths $P_{e'}$, $e'\in \mathcal{C}$.
\end{proof}


\begin{center}
\begin{figure}
\begin{center}
\begin{tikzpicture}
\def\vrad{0.6}
\def\vwidth{10}
\def\setsp{1}
\def\setw{0.35}
\def\seth{1.25}
\def\midvec(#1,#2){
($0.9*(#1)+0.1*(#2)$)--($0.1*(#1)+0.9*(#2)$)
}
\def\midveccol(#1,#2,#3,#4){
\draw [#3,thick,<-]($0.9*(#1)+0.1*(#2)$)-- ($0.5*(#1)+0.5*(#2)$);
\draw [#4,thick,->]($0.5*(#1)+0.5*(#2)$)-- ($0.1*(#1)+0.9*(#2)$);
}


\foreach \n in {2}
{
\coordinate (v) at ($(120*\n+180:\vrad)$);
\draw [fill] (v) circle[radius=0.05];
}
\foreach \n in {1}
{
\coordinate (u) at ($(120*\n+180:\vrad)$);
\draw [fill] (u) circle[radius=0.05];
}

\foreach \n in {0,1,2,3,4}
{
\coordinate (set\n) at ($0.5*(v)+0.5*(u)+\n*(1,0)$);
\draw [rounded corners,black!50] ($(set\n)-(\setw,0)$) to ++(0,\seth) to ++(2*\setw,0) to ++(0,-2*\seth) to ++(-2*\setw,0) to ++(0,\seth);
}

\coordinate (u1) at ($(set1)-(0,0.5*\seth)$);
\coordinate (u2) at ($(set2)-(0,0.5*\seth)$);
\coordinate (u3) at ($(set3)-(0,0.35*\seth)$);
\coordinate (u4) at ($(set4)-(0,0*\seth)$);
\coordinate (v1) at ($(set1)+(0,0.5*\seth)$);
\coordinate (v2) at ($(set2)+(0,0.5*\seth)$);
\coordinate (v3) at ($(set3)+(0,0.35*\seth)$);
\coordinate (v4) at ($(set4)+(0,0*\seth)$);

\draw ($(u)+(0,-0.2)$) node {$u$};
\draw ($(v)+(0,0.2)$) node {$v$};

\draw ($(u1)+(0,-0.2)$) node {\footnotesize $w_{u,r_e}$};
\draw ($(v1)+(0,0.2)$) node {\footnotesize $w_{v,r_e}$};

\draw ($(set1)+(0,-\seth)+(0,-0.35)$) node {$V_\phi^{(0)}$};
\draw ($(set2)+(0,-\seth)+(0,-0.35)$) node {$V_\phi^{(1)}$};
\draw ($(set3)+(0,-\seth)+(0,-0.35)$) node {$\dots$};
\draw ($(set4)+(0,-\seth)+(0,-0.35)$) node {$V_\phi^{(\ell)}$};
\draw ($(set0)+(0,-\seth)+(0,-0.35)$) node {$U_\phi$};

\draw [orange] (u1) -- ($(u2)+(0,0.3333*\seth)$) -- ($(u3)+(0,0.6*\seth)$) -- ($(u4)+(0,\seth)$);
\draw [orange] (u1) -- ($(u2)-(0,0.3333*\seth)$) -- ($(u3)-(0,0.6*\seth)$) -- ($(u4)-(0,\seth)$);
\draw [darkgreen] (v1) -- ($(v2)+(0,0.3333*\seth)$) -- ($(v3)+(0,0.6*\seth)$) -- ($(v4)+(0,\seth)$);
\draw [darkgreen] (v1) -- ($(v2)-(0,0.3333*\seth)$) -- ($(v3)-(0,0.6*\seth)$) -- ($(v4)-(0,\seth)$);

\draw [thick,blue,->] ($0.9*(v)+0.1*(u)+(0.1,0)$)--($0.1*(v)+0.9*(u)+(0.1,0)$);
\draw [thick,red,->] ($0.9*(u)+0.1*(v)+(-0.1,0)$)--($0.1*(u)+0.9*(v)+(-0.1,0)$);

\draw [red] ($0.5*(u)+0.5*(v)-(0.25,0)$) node {$i$};
\draw [blue] ($0.5*(u)+0.5*(v)+(0.25,0)$) node {$j$};

\draw ($0.5*(u)+0.5*(v)-(2,0)$) node {$e=\{(i,u),(j,v)\}$};

\draw [black!60] ($(u4)+(0,0.5*\seth)$) node {$P_e$};

\draw [darkgreen] ($(u4)+(1,0.5*\seth)$) node {$F_{w_{v,r_e}}$};
\draw [orange] ($(u4)+(1,-0.5*\seth)$) node {$F_{w_{u,r_e}}$};

\draw [black!60,->] ($(u4)+(0,0.5*\seth)-(0.2,0.1)$) to ++(-0.25,-0.25);

\midveccol(u,u1,blue,red)
\midveccol(u1,u2,blue,red)
\midveccol(u2,u3,blue,red)
\midveccol(u3,u4,blue,red)
\midveccol(v4,v3,blue,red)
\midveccol(v3,v2,blue,red)
\midveccol(v2,v1,blue,red)
\midveccol(v1,v,blue,red)
\foreach \n in {1,2,3,4}
{
\draw [fill] (u\n) circle[radius=0.05];
\draw [fill] (v\n) circle[radius=0.05];
}
\end{tikzpicture}
\end{center}
\caption{A $u,v$-path $P_e$ as found in the sparse auxiliary graph $K_\phi$ using the two binary trees $F_{w_{u,r_e}}$ and $F_{w_{v,r_e}}$. In the proof of Lemma~\ref{lem:partA2}, this path will additionally avoid some set of vertices $V^{\forb}$.}\label{fig:sparsevxpairs}
\end{figure}
\end{center}


\subsection{Part~\ref{partA3}: 2-cycles using few vertex pairs and few colour pairs}\label{sec:partA3}

Our next lemma, Lemma~\ref{lem:partA3}, is the most difficult part of this section, but on proving it we will be very close to proving the main result for Part~\ref{partA}, Lemma~\ref{keylemma:absorption}. Indeed, Lemma~\ref{lem:partA3} is very similar to Lemma~\ref{keylemma:absorption}, producing for each $\tau \in \mathcal{T}$, a set $\mathcal{I}_\tau$ satisfying similar conditions as those in Lemma~\ref{keylemma:absorption}, essentially only lacking a regularity condition (i.e., we will have \ref{prop:A3:regularityout} instead of \ref{prop:abs:regularityout}). From our previous work in this section, we are well prepared to take sets $R_i'\subset R_i$, $i\in [n]$, with $|R_i'|=|T_i|$, for which, for each $\phi\in \mathcal{F}$, $\bigcup_{i \in I_\phi} R_i' =_{\mult} \bigcup_{i \in I_\phi} T_i$, and decompose the corresponding corrections we will require into a collection $\mathcal{C}$ of pairs of the form $\{(i,u),(j,v)\}$ such that $i$ and $j$ here always belong to the same family, and for such a family we only use a sparse collection of vertex pairs $u,v$ (defined by some auxiliary graph $K_\phi$).
Here, we will now replace such a pair with a collection of pairs $\{(i',u),(j',v)\}$ as at \eqref{eqn:reducecolpairs} which make the same effective change but for which $(i',j')$ come from a sparse set of pairs (using an appropriate auxiliary graph to restrict which pairs we allow). This is depicted in Figure~\ref{fig:sparsecoloursacrossvertexpair}. When replacing the pair $\{(i,u),(j,v)\}$ with  a collection of pairs $\{(i',u),(j',v)\}$ as at \eqref{eqn:reducecolpairs}, for any $i',j'\notin\{i,j\}$, we will take $i',j'$ to be not in the same family as $i,j$ but instead only in the same tribe. This is the part of the proof where individuals in different families in the same tribe help each other to develop the absorption properties.

To prove Lemma~\ref{lem:partA3}, we will use Lemma~\ref{lem:partA2} and Lemma~\ref{lem:auxiliarygraph} to build graphs $K_{\phi}$ for each $\phi \in \tau$, and $L_{\phi, uv}$ for each $\phi \in \tau$ and $uv \in E(K_{\phi})$, respectively. Lemma~\ref{lem:auxiliarygraph} allows us to conclude the existence of these useful graphs $L_{\phi, uv}$, each with its own connection property. In order to have our `codegree conditions' in Lemma~\ref{keylemma:absorption} (i.e., \ref{prop:abs:lowcodegree1}--\ref{prop:abs:lowcodegree3}), we want these graphs $L_{\phi, uv}$  not to share any edge too often. To get this property, we will take the graph as given by Lemma~\ref{lem:auxiliarygraph} and place it on the desired vertex set for $L_{\phi,uv}$ in some random manner.

\begin{center}
\begin{figure}
\begin{center}
\begin{tikzpicture}
\def\setsp{1}
\coordinate (u) at (-1.75*\setsp,0);
\coordinate (v) at (1.75*\setsp,0);

\draw [fill] (u) circle[radius=0.05];
\draw [fill] (v) circle[radius=0.05];

\def\spacein{0.2}
\draw [thick,red,->] ($(u)+(\spacein,0)+(0,0.15)$) to [out=15,in=165] ($(v)+(-\spacein,0)+(0,0.15)$);
\draw [thick,blue,<-] ($(u)+(\spacein,0)+(0,-0.15)$) to [out=-15,in=195] ($(v)+(-\spacein,0)+(0,-0.15)$);

\draw ($(u)-(0.2,0)$) node {$u$};
\draw ($(v)+(0.2,0)$) node {$v$};

\draw [white] (0,1.4) node {$i$};
\draw [white] (0,-1.45) node {$j$};

\draw [red] (0,0.72) node {$i$};
\draw [blue] (0,-0.77) node {$j$};
\end{tikzpicture}\;\;\;
\begin{tikzpicture}
\draw [white](0,-1.655) -- (0,0);
\draw (0,0) node {$\implies$};
\end{tikzpicture}
\;\;\;
\begin{tikzpicture}
\def\setsp{1}
\coordinate (u) at (-2.75*\setsp,0);
\coordinate (v) at (2.75*\setsp,0);

\draw [fill] (u) circle[radius=0.05];
\draw [fill] (v) circle[radius=0.05];

\def\spacein{0.2}
\draw [thick,red,->] ($(u)+(\spacein,0)+(0,0.35)$) to [out=35,in=145] ($(v)+(-\spacein,0)+(0,0.35)$);
\draw [thick,darkgreen,<-] ($(u)+(\spacein,0)+(0,0.25)$) to [out=25,in=155] ($(v)+(-\spacein,0)+(0,0.25)$);
\draw [thick,darkgreen,->] ($(u)+(\spacein,0)+(0,0.15)$) to [out=15,in=165] ($(v)+(-\spacein,0)+(0,0.15)$);
\draw [thick,violet,<-] ($(u)+(\spacein,0)+(0,0.05)$) to [out=5,in=175] ($(v)+(-\spacein,0)+(0,0.05)$);
\draw [thick,violet,->] ($(u)+(\spacein,0)+(0,-0.05)$) to [out=-5,in=185] ($(v)+(-\spacein,0)+(0,-0.05)$);
\draw [thick,orange,<-] ($(u)+(\spacein,0)+(0,-0.15)$) to [out=-15,in=195] ($(v)+(-\spacein,0)+(0,-0.15)$);
\draw [thick,orange,->] ($(u)+(\spacein,0)+(0,-0.25)$) to [out=-25,in=205] ($(v)+(-\spacein,0)+(0,-0.25)$);
\draw [thick,blue,<-] ($(u)+(\spacein,0)+(0,-0.35)$) to [out=-35,in=215] ($(v)+(-\spacein,0)+(0,-0.35)$);

\draw ($(u)-(0.2,0)$) node {$u$};
\draw ($(v)+(0.2,0)$) node {$v$};

\draw [red] (0,1.4) node {$i$};
\draw [blue] (0,-1.45) node {$j$};

\draw (0,1.025) node {\footnotesize $i_1$};
\draw (0,0.325) node {\footnotesize $i_2$};
\draw (0,-0.385) node {\footnotesize $i_3$};
\draw (0,-1.05) node {\footnotesize $i_4$};
\end{tikzpicture}
\end{center}
\caption{Replacing arrows representing a pair $\{(i,u),(j,v)\}$ with a sequence of pairs which have the same effect, but take the form $\{(i',u),(j',v)\}$ for only certain pairs $(i',j')$, as at \eqref{eqn:reducecolpairs}.}\label{fig:sparsecoloursacrossvertexpair}
\end{figure}
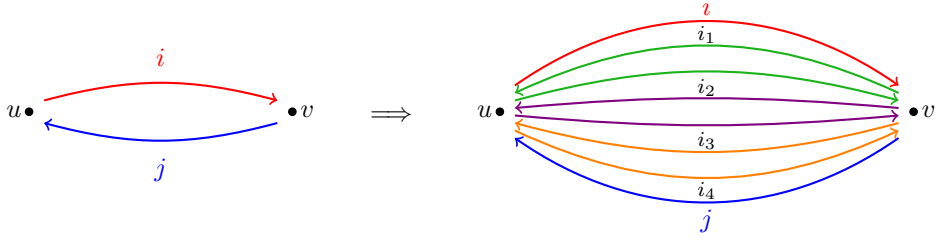
\end{center}


\begin{lemma}\label{lem:partA3} Let $R_i,S_i,T_i,U_i,V_i,W_i$, $i\in [n]$, satisfy \eref{prop:vxpartitionfirst}--\eref{prop:vxpartitionlast}.
Then, for each $\tau\in \mathcal{T}$, there exists a collection
\begin{equation}\label{eqn:Itau}
\mathcal{I}_\tau\subset \{\{(i,u),(j,v)\}: i,j\in I_\tau,i\neq j, u \in  S_i \setminus (R_i \cup T_j), v \in S_j \setminus (T_i \cup R_j),u\neq v,u{\sim}_{A/B}v\}
\end{equation}
such that the following hold.
\stepcounter{propcounter}
\begin{enumerate}[label = {\emph{\textbf{\Alph{propcounter}\arabic{enumi}}}}]
\item \labelinthm{prop:A3:regularityout} For each $i\in I_\tau$ and $u\in S_i\setminus R_i$, there are at most $20$ pairs $(j,v)$ such that $\{(i,u),(j,v)\}\in \cI_\tau$.
\item \labelinthm{prop:A3:nocodegree} For each $i,j\in I_\tau$ and $u\in S_i\setminus R_i$, there is at most one $v\in S_j\setminus R_j$ with $\{(i,u),(j,v)\}\in \cI_\tau$.
\item \labelinthm{prop:A3:boundedin}\labelinthm{prop:A3:lowcodegree0} For each $i\in I_\tau$ and $u\in S_i\setminus T_i$, there are at most $24$ pairs $(j,v)$ such that $\{(i,v),(j,u)\}\in \cI_\tau$.
\item \labelinthm{prop:A3:lowcodegree1} For each distinct $i,j\in I_\tau$ there are at most $n^{1/3}/2$ pairs $(u,v)$ with $\{(i,u),(j,v)\}\in \cI_\tau$.
\item \labelinthm{prop:A3:lowcodegree2} For each distinct $j,j'\in I_\tau$, there are at most $n^{1/3}/2$ tuples $(i,u,v,v')$ for which we have that $\{(i,u),(j,v)\},\{(i,u),(j',v')\}\in \cI_\tau$.
\item \labelinthm{prop:A3:lowcodegree3} For each $j\in I_\tau$ and $u\in S_j\setminus T_j$ there are at most $n^{1/3}/2$ pairs $(i,v)$ with $\{(i,u),(j,v)\}\in \cI_\tau$.
\item \labelinthm{prop:A3:corrections} For any collection of sets $R_i'\subset R_i$, $i \in I_\tau$, such that,
for each $i\in I_\tau$, $|R_i'|=|T_i|$ and, for each $\phi\in \mathcal{F}_\tau$, $\bigcup_{i \in I_\phi} R_i' =_{\mult} \bigcup_{i \in I_\phi} T_i$, there exists $\mathcal{C} \subset \mathcal{I}_\tau$ satisfying the following.
\begin{enumerate}[label = {\emph{\textbf{\Alph{propcounter}\arabic{enumi}.\arabic{enumii}}}}]
\item For every $i \in I_\tau$ and $u \in T_i$, there is exactly one  $(v,j)$ such that $\{(i,u),(j,v)\} \in \mathcal{C}$.\labelinthm{prop:A3:cor1}
\item For every $i \in I_\tau$ and $u \in R_i'$ there is exactly one  $(v,j)$ such that $\{(i,v),(j,u)\} \in \mathcal{C}$.\labelinthm{prop:A3:cor2}
\item For every $i\in I_\tau$ and $u\in R_i\setminus R_i'$, there is no $(v,j)$ such that $\{(i,v),(j,u)\}\in \mathcal{C}$.\labelinthm{prop:A3:cor3}
\item For every $i \in I_\tau$ and $u \in S_i\setminus (R_i \cup T_i)$, $(i,u)$ is $(\leq 1)$-balanced in $\mathcal{C}$.\labelinthm{prop:A3:cor4}
\end{enumerate}
\end{enumerate}
\end{lemma}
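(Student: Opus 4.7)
The strategy is to build a second layer of auxiliary graphs on top of the vertex-pair structure from Part~\ref{partA2}, with the new layer living on sets of colour indices. Fix $\tau \in \mathcal{T}$. For each $\phi \in \mathcal{F}_\tau$, apply Lemma~\ref{lem:partA2} to obtain the sparse graph $K_\phi$ on $U_\phi \cup V_\phi$ with $\Delta(K_\phi) \leq 5$. Then, for each edge $uv \in E(K_\phi)$, apply Lemma~\ref{lem:auxiliarygraph} to obtain an auxiliary graph $L_{\phi, uv}$ on the vertex set
\[
N_{\phi, uv} := I_\phi \cup \bigcup_{\phi' \in \mathcal{F}_\tau \setminus \{\phi\}:\, u, v \in W_{\phi'}} I_{\phi'},
\]
with $I_\phi$ playing the role of the independent set $U$ of the lemma. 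The size estimates \eref{prop:vxpartitionfirst} and \eref{prop:pairsinStauWphi}, combined with the hierarchy \eqref{eq:hierarchy}, make the hypothesis of Lemma~\ref{lem:auxiliarygraph} hold, so $L_{\phi, uv}$ has $\Delta(L_{\phi, uv}) \leq 4$ and the desired path-connection property. Crucially, the template supplied by Lemma~\ref{lem:auxiliarygraph} is placed onto $N_{\phi, uv}$ via a uniformly random bijection (fixing the role of $I_\phi$), with all bijections chosen independently across the pairs $(\phi, uv)$, for reasons explained below.

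Define $\mathcal{I}_\tau$ by including, for every edge $ab \in E(L_{\phi, uv})$ over all $\phi$ and $uv$, both pairs $\{(a, u), (b, v)\}$ and $\{(a, v), (b, u)\}$, discarding any pair violating the side-conditions in \eqref{eqn:Itau}. For the absorption property \ref{prop:A3:corrections}, given any valid $R_i'$, first apply Lemma~\ref{lem:partA1} inside each family to obtain $\mathcal{C}_\phi$; then Lemma~\ref{lem:partA2} converts each pair $\{(i, u), (j, v)\} \in \mathcal{C}_\phi$ into a $u, v$-path in $K_\phi$, which, interpreted as a sequence of pairs $\{(i, x), (j, y)\}$ alternating along consecutive edges $xy$ of the path, yields a refined collection $\mathcal{C}'_\phi$ with the same aggregate degree effect. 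Finally, for each edge $uv \in E(K_\phi)$, the pairs in $\mathcal{C}'_\phi$ supported on $uv$ induce a partial matching on $I_\phi$, which Lemma~\ref{lem:auxiliarygraph} realises by vertex-disjoint paths inside $L_{\phi, uv}$; unfolding each such path into a sequence of 2-cycles as at \eqref{eqn:reducecolpairs} produces pairs belonging to $\mathcal{I}_\tau$, whose union $\mathcal{C}$ satisfies \ref{prop:A3:cor1}--\ref{prop:A3:cor4}. The bounded-degree conditions \ref{prop:A3:regularityout}--\ref{prop:A3:boundedin} and the single-codegree condition \ref{prop:A3:nocodegree} then follow from $\Delta(K_\phi) \leq 5$, $\Delta(L_{\phi, uv}) \leq 4$, and the fact that any given $(i, u)$ participates in only boundedly many of the relevant $L_{\phi, uv}$.

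\textbf{Main obstacle.} The heart of the proof is the triple of codegree bounds \ref{prop:A3:lowcodegree1}--\ref{prop:A3:lowcodegree3}, which demand that distinct auxiliary graphs $L_{\phi, uv}$ do not share colour pairs (or partial colour-vertex pairs) more than $n^{1/3}/2$ times; without randomisation of the placements, no such bound is a priori guaranteed, since a worst-case deterministic construction could easily align many $L_{\phi, uv}$'s on the same colour pair. This is precisely the role of the independent uniformly random bijections above: for any fixed colour pair $(i, j)$, the probability that $ij$ is an edge of a given $L_{\phi, uv}$ is $O(1/|N_{\phi, uv}|)$, and summing this over the at most $O(p_\fa^{-1} p_V n)$ relevant pairs $(\phi, uv)$ yields an expected count that by the hierarchy \eqref{eq:hierarchy} is polynomially smaller than $n^{1/3}$. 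A Chernoff-type concentration argument followed by a union bound over the polynomial-in-$n$ many codegree-violation events of types \ref{prop:A3:lowcodegree1}--\ref{prop:A3:lowcodegree3} then produces a deterministic choice of bijections satisfying all the required properties simultaneously.
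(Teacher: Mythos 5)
Your overall architecture matches the paper's: apply Lemma~\ref{lem:partA2} to get the sparse vertex-pair graphs $K_\phi$, then for each edge $uv \in E(K_\phi)$ build an auxiliary graph on colour indices via Lemma~\ref{lem:auxiliarygraph}, and use a random placement to get the codegree bounds \ref{prop:A3:lowcodegree1}--\ref{prop:A3:lowcodegree3}. But your choice of the ``$V$''-side vertex set for $L_{\phi,uv}$ has a gap that breaks \ref{prop:A3:regularityout}, \ref{prop:A3:nocodegree}, and \ref{prop:A3:corrections}.

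You put $L_{\phi,uv}$ on the \emph{entire} candidate set $N_{\phi,uv} = I_\phi \cup \{i \in I_\tau \setminus I_\phi : u,v \in W_i\}$ and then assert that ``any given $(i,u)$ participates in only boundedly many of the relevant $L_{\phi,uv}$.'' That assertion is false under your construction and is in fact the key thing that needs to be engineered. Consider $u \in W_i$ and let $\phi_i$ be the family of $i$. For every $\phi \neq \phi_i$ and every $v$ with $uv \in E(K_\phi)$ and $v \in W_i$ (which, since $p_W \approx p_S$, is most of the $K_\phi$-neighbours of $u$), you have $i \in N_{\phi,uv}$, so $(i,u)$ can participate in up to $\sim 5|\mathcal{F}_\tau| = \Theta(p_\fa^{-1})$ of the auxiliary graphs $L_{\phi,uv}$, each contributing up to $4$ pairs containing $(i,u)$ — far exceeding the bound of $20$. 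The random bijections do not fix this: they reduce the \emph{expected} number of such $(\phi,uv)$ in which $i$ lands on a non-isolated vertex, but you would then need a separate concentration argument to get a uniform worst-case bound of the right size, which is a different proof from the one you sketch, and still would not fix the two problems below. The paper instead replaces $N_{\phi,uv}$ by carefully chosen \emph{mutually disjoint} subsets $J_{\phi,uv} \subset \{i : u,v \in W_i\} \setminus I_\phi$: it first samples $J^+_{\phi,uv}$ randomly and then defines $J_{\phi,uv}$ to be those $i \in J^+_{\phi,uv}$ not appearing in any $J^+_{\phi',u'v'}$ with $\{u,v\} \cap \{u',v'\} \neq \emptyset$, so any given $i$ lies in at most one $J_{\phi,uv}$. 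That is what makes the degree of $(i,u)$ for $u \in W_i$ bounded by $4$, hence the total by $20$.

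This disjointness of the $J$-sets is also what drives \ref{prop:A3:nocodegree}: with the same $ij$ arising as an edge of two different $L_{\phi,uv}$ and $L_{\phi',uv'}$ with $\{u,v\}\cap\{u,v'\}\neq\emptyset$, the paper derives a contradiction from the fact that $i$ and $j$ cannot lie in both $J$-sets. In your version there is nothing to prevent $ij \in E(L_{\phi,uv}) \cap E(L_{\phi',uv'})$ for $v \ne v'$, which directly violates \ref{prop:A3:nocodegree}. And it is what makes the absorption step \ref{prop:A3:corrections} go through: when unfolding a path in $L_{\phi,uv}$ into a chain of 2-cycles on the colour pair $\{u,v\}$ (as in \eqref{eqn:reducecolpairs}), the interior colours $i_{e,r}$ lie in $J_{\phi,uv}$, and the fact that $J_{\phi,uv}$ is disjoint from every $J_{\phi',u'v'}$ with $\{u,v\}\cap\{u',v'\} \neq \emptyset$ (and from $I_{\phi'}$, since $u \in W_{i_{e,r}} \setminus (U_{i_{e,r}} \cup V_{i_{e,r}})$) guarantees that, for each colour $i$, the added 2-cycles are vertex-disjoint from each other and from the pre-existing colour-$i$ arrows. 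In your construction an interior colour $i$ could simultaneously service two overlapping vertex pairs $\{u,v\}$ and $\{u,v'\}$, destroying the vertex-disjointness that the coloured-arrow bookkeeping relies on. You need to insert the $J^+$/$J$ disjointification step (and re-verify that $|J_{\phi,uv}|$ is still large enough for Lemma~\ref{lem:auxiliarygraph}, which the paper does via property \ref{prop:Jphij:large}) before the rest of your plan works.
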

\begin{proof}
Let $\tau\in \mathcal{T}$. Using Lemma~\ref{lem:partA2}, for each $\phi\in \mathcal{F}_\tau$, let $K_\phi$ be a graph on $U_\phi\cup V_\phi$ with $\Delta(K_\phi)\leq 5$ which satisfies the property in Lemma~\ref{lem:partA2}.
 That is,  $\Delta(K_\phi)\leq 5$  and if
\[
\mathcal{C}\subset \{\{(i,u),(j,v)\}:i,j\in I_\phi,u,v\in U_\phi,i\neq j,u\neq v,u\sim_{A,B}v\}
\]
satisfies \ref{prop:A2:1} and \ref{prop:A2:2}, then there are paths $P_e$, $e\in \mathcal{C}$, in $K_\phi$ for which \ref{conc:A2:1} and \ref{conc:A2:2} hold.

We now wish, for each $\phi\in \mathcal{F}_\tau$ and $uv\in E(K_\phi)$ to define an auxiliary graph $L_{\phi,uv}$ using Lemma~\ref{lem:auxiliarygraph}. In this, $I_\phi$ will function as the set $U_{\phi}$ in that lemma, while we pick some subset $J_{\phi,uv}$ of $\{i\in I_\tau\setminus I_\phi:u,v\in W_i\}$ to use as $V_{\phi}$.
Where $\phi,\phi'\in \mathcal{F}_\tau$, $uv\in E(K_\phi)$ and  $u'v'\in E(K_{\phi'})$ satisfy $\phi\neq\phi'$ and $\{u,v\}\cap \{u',v'\}\neq \emptyset$ we will want $J_{\phi,uv}\cap J_{\phi',u'v'}=\emptyset$ for proving that the property \ref{prop:A3:corrections} holds, which motivates the following selection of the sets $J_{\phi,uv}$,  $\phi\in \mathcal{F}_\tau$ and $uv\in E(K_\phi)$.

Let $q=p_V^{-2}p_S^{2}p_{\fa}/10$. For each $\phi\in \mathcal{F}_\tau$ and $uv\in E(K_\phi)$, independently at random choose a set $J^+_{\phi,uv}\subset \{i\in I_\tau\setminus I_\phi:u,v\in W_i\}$ by including each $i\in I_\tau\setminus I_\phi$ with $u,v\in W_i$ independently at random with probability $q$.
For each $\phi\in \mathcal{F}_\tau$ and $uv\in E(K_\phi)$, let $J_{\phi,uv}$ be the set of $i\in J^+_{\phi,uv}$ which do not appear in any $J^+_{\phi',u'v'}$ with $\phi'\in \mathcal{F}_\tau$, $u'v'\in E(K_{\phi'})$, $\{u,v\}\cap \{u',v'\}\neq \emptyset$ and $(\phi,uv)\neq (\phi',u'v')$.

For each $\phi\in \mathcal{F}_\tau$ and $uv\in E(K_\phi)$, by \ref{prop:pairsinStauWphi} we have $|\{i\in I_\tau\setminus I_\phi:u,v\in W_i\}| \geq p_W^2p_S^{-2}p_{\tr}n/2$. Furthermore, for each
$i\in I_\tau\setminus I_\phi$ with $u,v\in W_i$,
\begin{align*}
|\{(\phi',u'v'):\phi'\in \mathcal{F}_\tau, u'v'\in E(K_{\phi'}), \{u,v\}\cap \{u',v'\}\neq \emptyset\}|
&\leq 5|\{\phi'\in \mathcal{F}_\tau:\{u,v\}\cap (U_{\phi'}\cup V_{\phi'})\neq \emptyset\}|\\
&\overset{\ref{prop:pairsinFtauUVphi}}{\leq}10p_V^2p_S^{-2}p_{\tr}^{-1}= 1/q,
\end{align*}
where we have used that $p_U\llpoly p_V$. Therefore,
\begin{align*}
\mathbb{E}|J_{\phi, uv}|&\geq q \cdot (1-q)^{1/p}\cdot |\{i\in I_\tau\setminus I_\phi:u,v\in W_i\}|\geq \frac{q}{e^2}\cdot \frac{p_W^2p_S^{-2}p_{\tr}n}{2}=\frac{p_W^2p_{\fa}p_{\tr}n}{2e^2p_V^2}\\
&\geq \frac{4}{p_V}p_{\fa}p_{\tr}n\geq \frac{2|I_\phi|}{p_V},
\end{align*}
where we have used that $p_V\llpoly p_W$.
 By
Chernoff's bound and a union bound, we can therefore assume the following property holds with high probability.
\stepcounter{propcounter}
\begin{enumerate}[label = {{\textbf{\Alph{propcounter}\arabic{enumi}}}}]\addtocounter{enumi}{7}
\item For each $\phi\in \mathcal{F}_\tau$ and $uv\in E(K_\phi)$, $|J_{\phi,uv}|\geq |I_\phi|/p_V$.
\label{prop:Jphij:large}
\end{enumerate}

For each $\phi\in \mathcal{F}_\tau$ and $uv\in E(K_\phi)$, using that $1/n\llpoly p_V\llpoly \log^{-1}n$, and \ref{prop:Jphij:large} and Lemma~\ref{lem:auxiliarygraph}, let $L'_{\phi,uv}$ be a graph with vertex set $I_\phi\cup J_{\phi,uv}$ and the properties in the lemma with $U_{\phi}=I_\phi$ and $V_{\phi}=J_{\phi,uv}$.
Let $\sigma_{\phi,uv}$ be a uniformly random permutation of $I_\phi\cup J_{\phi,uv}$ subject to $\sigma_{\phi,uv}(I_{\phi})=I_\phi$ and $\sigma_{\phi,uv}(J_{\phi, uv})=J_{\phi,uv}$.
Let $L_{\phi,uv}$ be the graph with vertex set $I_\phi\cup J_{\phi,uv}$ and edge set $\{\sigma_{\phi,uv}(x)\sigma_{\phi,uv}(y):xy\in E(L'_{\phi,uv})\}$. Observe that the properties of $L'_{\phi,uv}$ carry through to $L_{\phi,uv}$, which is to say that $\Delta(L_{\phi,uv})\leq 4$ and the following hold.

\begin{enumerate}[label = {{\textbf{\Alph{propcounter}\arabic{enumi}}}}]\addtocounter{enumi}{8}
\item There are no edges in $L_{\phi,uv}$ with both vertices in $I_\phi$.\label{prop:Lphi:new}
\item Given any $r\in \N$ and any vertex-disjoint pairs $a_1b_1,\ldots,a_rb_r\in I_\phi^{(2)}$, there are vertex-disjoint paths $P_i$, $i\in [r]$, in $L_{\phi,uv}$ with internal vertices in $J_{\phi,uv}$ such that, for each $i\in [r]$, $P_i$ is an $x_i,y_i$-path.\label{prop:Lphi:connectivity}
\end{enumerate}

We can now choose our set of pairs $\mathcal{I}_\tau$. Let
\begin{equation}\label{eqn:Itaudefn}
\mathcal{I}_\tau=\bigcup_{\phi\in \mathcal{F}_\tau}\bigcup_{uv\in E(K_\phi)}\{\{(i,u),(j,v)\}:ij\in E(L_{\phi,uv}),u\in S_i\setminus (R_i\cup T_j),v\in S_j\setminus (T_i\cup R_j)\}.
\end{equation}
Note that, as, for each $\phi\in \mathcal{F}_\tau$ and $uv\in E(K_\phi)$, $V(L_{\phi,uv})=I_\phi\cup J_{\phi,uv}$, we have that \eqref{eqn:Itau} holds.

We will now show that \ref{prop:A3:regularityout}--\ref{prop:A3:boundedin}, \ref{prop:A3:lowcodegree3}, and \ref{prop:A3:corrections} hold, and \ref{prop:A3:lowcodegree1} and \ref{prop:A3:lowcodegree2} hold with high probability, and therefore we can take $\mathcal{I}_\tau$ with the claimed properties. 

\smallskip

\noindent\ref{prop:A3:regularityout}: Let $i\in I_\tau$ and $u\in S_i\setminus R_i$. If $u\in U_i\cup V_i$, then, for each $\phi\in \mathcal{F}_\tau$ and any $v$ such that $uv\in E(K_\phi)$, $i\notin J_{\phi,uv}$ as $u\notin W_i$. Therefore, the only graphs $L_{\phi,uv}$ with $i\in V(L_{\phi,uv})$ are those with $i\in I_\phi$ and $uv\in E(K_\phi)$. As the sets $I_\phi$, $\phi\in \mathcal{F}_\tau$, are disjoint, and, for each $K_\phi$ there are at most 5 vertices $v$ such that $uv\in E(K_\phi)$, there are at most 5 graphs $L_{\phi,uv}$, for some $\phi$ and $v$, with $i\in V(L_{\phi,uv})$. As any graph $L_{\phi,uv}$ has maximum degree at most 4, we thus have that there are at most 20 pairs $(j,v)$ with $\{(u,i),(j,v)\}\in \mathcal{I}_\tau$.

 Suppose, then, that $u\in W_i$. Then, for each $\phi\in \mathcal{F}_\tau$ and any $v$ such that $uv\in E(K_\phi)$, $i\notin I_\phi$ as $u\notin U_i\cup V_i$.
 Therefore, the only graphs $L_{\phi,uv}$ with $i\in V(L_{\phi,uv})$ are those with $i\in J_{\phi,uv}$ and $uv\in E(K_\phi)$. For each $i\in I_\tau$, there is at most one pair $(\phi,v)$ with $i\in J_{\phi,uv}$ by the choice of the $J_{\phi,uv}$. As, here, $L_{\phi,uv}$ has maximum degree at most 4, we thus have that there are at most 4 pairs $(j,v)$ with $\{(u,i),(j,v)\}\in \mathcal{I}_\tau$. Therefore,  \ref{prop:A3:regularityout} holds in both cases  $u\notin W_i$ and $u\in W_i$.

 \smallskip

  \noindent\ref{prop:A3:nocodegree}: Suppose for contradiction that there is some $i,j\in I_\tau$, $u\in S_i\setminus T_i$ and distinct $v,v'\in S_j\setminus T_j$ with $\{(i,u),(j,v)\},\{(i,u),(j,v')\}\in \mathcal{I}_\tau$. Then, from \eqref{eqn:Itaudefn}, there is some $\phi\in \mathcal{F}$ with $uv\in E(K_\phi)$ and $ij\in E(L_{\phi,uv})$ as well as some $\phi'\in \mathcal{F}$ with $uv\in E(K_{\phi'})$ and $ij\in E(L_{\phi',uv})$. Now, as $(\phi,uv)\neq (\phi',u'v')$ and $\{u,v\}\cap \{u,v'\}\neq \emptyset$, we have that each of $i$ and $j$ cannot appear in both $J_{\phi,uv}\subset J_{\phi,uv}^+$ and $J_{\phi',uv'}\subset J_{\phi',uv'}^+$.
Then, as $V(L_{\phi,uv})=I_\phi\cup J_{\phi,uv}$ and $V(L_{\phi',uv'})=I_{\phi'}\cup J_{\phi',uv'}$, by \ref{prop:Lphi:new}, we have that (swapping the labels of $\phi',v'$ with $\phi,v$ if necessary), that $i\in I_\phi$, $j\in J_{\phi,uv}$, $j\in I_{\phi'}$ and $i\in J_{\phi',uv'}$. As $i\in J_{\phi',uv'}\subset J_{\phi',uv'}^+$, we have that $u\in W_i$. On the other hand, $uv\in E(K_\phi)$ and $i\in I_\phi$ implies that $u\in V(K_\phi)=U_\phi\cup V_\phi=U_i\cup V_i$, a contradiction. Thus, \ref{prop:A3:nocodegree} holds.

 \smallskip

 \noindent\ref{prop:A3:boundedin}: Let $i\in I_\tau$ and $u\in S_i\setminus T_i$. Let $\phi(i)$ be the unique $\phi(i)\in \mathcal{F}_\tau$ with $i\in I_{\phi(i)}$.
  If $(j,v)$ is such that $\{(i,v),(j,u)\}\in \mathcal{I}_\tau$, then there is some $\phi(j,v)\in \mathcal{F}_\tau$ such that $ij\in L_{\phi(j,v),uv}$.

  First, we count the choices for such $(j,v)$ for which $\phi(v,j)\neq \phi(i)$. In this case we have that $i\in J_{\phi(j,v),uv}$. However, there is at most one pair $(\phi(j,v),v)$ with $i\in J_{\phi(j,v),uv}$, and, having chosen this, at most 4 different vertices $j\in V(L_{\phi(j,v),uv})$ with $ij\in E(L_{\phi,uv})$. Therefore, there are at most 4 choices for $(j,v)$ with $\phi(j,v)\neq \phi(i)$ and $ij\in E(L_{\phi(j,v),uv})$.

  Second, we count the choices for $(j,v)$ for which $\phi(j,v)=\phi(i)$ and $ij\in E(L_{\phi(j,v),uv})$.  As $\Delta(K_{\phi(i)})\leq 5$, there are at most 5 choices for $v$ for which $uv\in E(K_{\phi(i)})$. As $\Delta(L_{\phi,uv})\leq 4$, there are then at most 4 choices for $j$ such that $ij\in E(L_{\phi,uv})$.

  In total, then, there at most 24 choices for $(j,v)$ such that $ij\in E(L_{\phi,uv})$ for some $\phi\in \mathcal{F}_\tau$. Thus, there are at most 24 choices for $(j,v)$ such that $\{(i,v),(j,u)\}\in \mathcal{I}_\tau$, and therefore \ref{prop:A3:boundedin} holds.

  \smallskip

 \noindent\ref{prop:A3:lowcodegree1}: Let $i,j\in I_\tau$.  Note that the pair $(u,v)$ satisfies $\{(i,u), (j,v)\} \in \mathcal{I}_{\tau}$ only if there exists $\phi \in \mathcal{F}_{\tau}$ with $uv\in E(K_{\phi})$ such that $ij \in E(L_{\phi, uv})$.
For each $\phi\in \mathcal{F}_\tau$ and $uv\in E(K_\phi)$, by \ref{prop:Lphi:new}, we have that
\begin{equation}\label{eq:inLprobinL}
\P(ij\in E(L_{\phi,uv}))\leq \frac{4}{|J_{\phi,uv}|-1}\leq \frac{5p_V}{|I_\phi|}\leq \frac{6p_V}{p_\tr p_\fa n},
\end{equation}
where we are using that $\P(ij\in  E(L_{\phi,uv}))=0$ if either $i$ or $j\notin V(L_{\phi,uv})$.
The events $\{ij\in E(L_{\phi,uv})\}$ are independent over $\phi\in \mathcal{F}_\tau$ and $uv\in E(K_\phi)$, and there are at most $|\mathcal{F}_\tau|\cdot  5n\leq 10p_{\fa}^{-1}n$ such events. Thus, the expected number of triples $(\phi,u,v)$ with $uv\in E(K_{\phi})$ and $ij \in E(L_{\phi, uv})$ is at most
\[
\frac{6p_V}{p_\tr p_\fa n}\cdot 10p_{\fa}^{-1}n\leq p_\tr^{-1}p_{\fa}^{-2}\leq n^{1/3}/4,
\]
 where we have used that $1/n\llpoly p_\tr,p_\fa$. Thus, by Lemma~\ref{chernoff}, with probability $1-\exp(-\omega(\log n))$, the number of pairs $(u,v)$ with $\{(i,u), (j,v)\} \in \mathcal{I}_{\tau}$ is at most $n^{1/3}/2$. Taking a union bound then completes the proof of \ref{prop:A3:lowcodegree1}.

 \smallskip

 \noindent\ref{prop:A3:lowcodegree2}: For each $\phi,\phi'\in \mathcal{F}_\tau$, using that $\Delta(K_{\phi}\cup K_{\phi'})\leq 10$, greedily colour the edges of $K_{\phi}\cup K_{\phi'}$ as $c_{\phi,\phi'}:E(K_{\phi}\cup K_{\phi'})\to [250]$ so that any two edges of $K_{\phi}\cup K_{\phi'}$ with the same colour are a distance at least 2 apart in $K_{\phi}\cup K_{\phi'}$ (as opposed to the more normal proper colouring where this distance is at least 1).
  Let $\mathcal{R}$ be the set of $(\phi,\phi',c,d)$ with $\phi,\phi'\in \mathcal{F}_\tau$ and $c,d\in [250]$.

Now, let $j,j' \in I_{\tau}$ be distinct. For each $(i,u,v,v')$ such that $\{(i,u),(j,v)\}, \{(i,u), (j',v')\} \in \mathcal{I}_{\tau}$ there is some $(\phi,\phi',c,d)$ such that $uv\in E(K_\phi)$, $uv'\in E(K_{\phi'})$, $ij \in E(L_{\phi, uv})$ and $ij' \in E(L_{\phi', uv'})$, and the edges $uv$ and $uv'$ have colour $c$ and $d$ respectively in the colouring $c_{\phi,\phi'}$. Note that if $(\phi,\phi',c,d)$ and $u$ are known, then $v$ and $v'$ are known, let them be $v_{\phi,\phi',c,d,u}$ and $v'_{\phi,\phi',c,d,u}$ respectively.
Then, for each $(\phi,\phi',c,d)\in \mathcal{R}$, let $\mathcal{E}_{(\phi,\phi',c,d)}$ be the set of $(i,u)$ for which $u$ has a colour-$c$ and a colour-$d$ neighbour in $K_{\phi}\cup K_{\phi'}$ under the colouring $c_{\phi,\phi'}$, $i,j\in V(K_{\phi,uv})$ and $i,j\in V(K_{\phi',uv'})$. Thus, we have, easily, that $|\mathcal{E}_{(\phi,\phi',c,d)}|\leq 2n^2$.

Now, for any fixed $(\phi,\phi',c,d)\in \mathcal{R}$, the events $\{ij \in E(L_{\phi, uv_{\phi,\phi',c,d,u}})\text{ and }ij' \in E(L_{\phi', uv'_{\phi,\phi',c,d,u}})\}$ are independent across all $(i,u)\in \mathcal{E}_{(\phi,\phi',c,d)}$. Each of these events occurs with probability at most $n^2\cdot (6p_V/p_\tr p_\fa n)^2\leq n^{1/4}$. Therefore, for each $(\phi,\phi',c,d)\in \mathcal{R}$, with probability $1-\exp(-\omega(\log n))$, we have that the number of $(i,u)\in \mathcal{E}_{(\phi,\phi',c,d)}$ with $ij \in E(L_{\phi, uv_{\phi,\phi',c,d,u}})$ and $ij' \in E(L_{\phi', uv'_{\phi,\phi',c,d,u}})$ is at most $2n^{1/4}$.
As we have $|\mathcal{R}|\leq (2p_\fa^{-1})^2\cdot 250^2$, and $1/n\llpoly p_\fa^{-1}$, using a union bound, with probability $1-\exp(-\omega(\log n))$, we have that \ref{prop:A3:lowcodegree2} holds for any distinct fixed $j,j' \in I_{\tau}$. Thus, by another union bound, \ref{prop:A3:lowcodegree2} holds with high probability.

 \smallskip

 \noindent\ref{prop:A3:lowcodegree3}: Let $j\in I_\tau$ and $u\in S_j\setminus T_j$. Then, the pair $(i,v)$ satisfies $\{(i,u), (j,v)\} \in \mathcal{I}_{\tau}$ only if there exists $\phi \in \mathcal{F}_{\tau}$ with $uv\in E(K_{\phi})$ such that $ij \in E(L_{\phi, uv})$. However, there are at most $|\mathcal{F}_{\tau}|\leq 2p_\fa^{-1}$ choices for $\phi\in \mathcal{F}_\tau$, and, after this, at most 5 choices for $v$ with $uv\in E(K_{\phi})$ and then at most 4 choices for $i$ with $ij \in E(L_{\phi, uv})$.
 Thus, in total, there are at most $2p_\fa^{-1}\cdot 4\cdot 5\leq n^{1/3}/2$ choices for $(i,v)$ with $\{(i,u), (j,v)\} \in \mathcal{I}_{\tau}$.

 \smallskip

 \noindent\ref{prop:A3:corrections}: Let $R_i'\subset R_i$, $i \in I_\tau$, be any collection of sets  such that,
 for each $i\in [n]$, $|R_i'|=|T_i|$, and, for each $\phi\in \mathcal{F}_\tau$, $\bigcup_{i \in I_\phi} R_i' =_{\mult} \bigcup_{i \in I_\phi} T_i$.
As  \ref{prop:vxpartitionfirst}--\ref{prop:vxpartitionlast} hold, by Lemma~\ref{lem:partA1}, for each $\phi \in \mathcal{F}_{\tau}$, there is a set
 \begin{equation}\label{eq:whereisCphi}
 \mathcal{C}_\phi\subset \{\{(i,u),(j,v)\}:i,j\in I_\phi,i\neq j,u\in U_\phi\setminus (R_i\cup T_j)\text{ and }v\in U_\phi\setminus (T_i\cup R_j),u\neq v\}.
 \end{equation}
 such that the following hold.

 \stepcounter{propcounter}
 \begin{enumerate}[label = {{\textbf{\Alph{propcounter}\arabic{enumi}}}}]
 \item \label{propfromA1-1} For every $i \in I_\phi$ and $u \in T_i$, there is exactly one  $(j,v)$ such that $\{(i,u),(j,v)\} \in \mathcal{C}_\phi$.
 \item \label{propfromA1-2} For every $i \in I_\phi$ and $u \in R_i'$ there is exactly one  $(j,v)$ such that $\{(i,v),(j,u)\} \in \mathcal{C}_\phi$.
 \item \label{propfromA1-2b} For every $i\in I_\phi$ and $u\in R_i\setminus R_i'$ there is no $(j,v)$ such that $\{(i,v),(j,u)\}\in \mathcal{C}_\phi$.
 \item \label{propfromA1-3} For every $i \in I_\tau$ and $u \in U_i\setminus (R_i \cup T_i)$,  $(i,u)$ is $(\leq 1)$-balanced in $\mathcal{C}_\phi$.
 \end{enumerate}

Let $D$ be the coloured multi-digraph with vertex set $S_\tau$ where, for each $\{(i,u),(j,v)\}\in \bigcup_{\phi\in \mathcal{F}_\tau}\mathcal{C}_\phi$ we add $\vec{uv}$ with colour $i$ and $\vec{vu}$ with colour $j$. Note that, for each $i\in I_\tau$, \ref{propfromA1-1}--\ref{propfromA1-3} and \eqref{eq:whereisCphi} imply that the edges with colour $i$ form exactly a vertex-disjoint collection of some directed cycles, with no 2-cycles, and $|T_i|$ directed paths from $T_i$ to $R_i'$, all of which are in $D[U_i\setminus (R_i\setminus R_i')]$.

Now, for each $\phi\in \mathcal{F}_\tau$, from \ref{propfromA1-1}--\ref{propfromA1-3}, we have that \ref{prop:A2:1} and \ref{prop:A2:2} hold with $\mathcal{C}$ replaced by $\mathcal{C}_\phi$. Therefore, there are paths $P_e$, $e\in \mathcal{C}_\phi$, in $K_\phi$ for which the following hold.
 \begin{enumerate}[label = {{\textbf{\Alph{propcounter}\arabic{enumi}}}}]\addtocounter{enumi}{4}
 \item For each $e=\{(i,u),(j,v)\}\in \mathcal{C}_\phi$, $P_e$ is a $u,v$-path with internal vertices in $V_\phi\cap A$ if $u,v\in A$ and internal vertices in $V_\phi\cap B$ if $u,v\in B$.\label{propfromA2:1}
 \item For each $e=\{(i,u),(j,v)\},e'=\{(i',u'),(j',v')\}\in \mathcal{C}'$ with $e\neq e'$, if $\{i,j\}\cap \{i',j'\}\neq \emptyset$, then $V(P_e)$ and $V(P_{e'})$ intersect only on $\{u,v\}\cap \{u',v'\}$.\label{propfromA2:2}
 \end{enumerate}

For each $e=\{(i,u),(j,v)\}\in \mathcal{C}_\phi$, arbitrarily direct $e$ from $(i,u)$ to $(j,v)$, let $\ell_e$ be the length of $P_e$ and label its vertices as $u_{e,0}=u,u_{e,1},\ldots,u_{e,{\ell_e-1}},u_{e,\ell_e}=v$.
For each $\phi\in \mathcal{F}_\tau$, let
\[
\mathcal{C}'_\phi=\bigcup_{e=\{(i,u),(j,v)\}\in \mathcal{C}_{\phi}}\bigcup_{r\in [\ell_e]}\{(i,u_{e,r-1}),(j,u_{e,r})\}.
\]

Let $D'$ be the coloured multi-digraph with vertex set $S_\tau$ where, for each $\{(i,u),(j,v)\}\in \bigcup_{\phi\in \mathcal{F}_\tau}\mathcal{C}'_\phi$ we add $\vec{uv}$ with colour $i$ and $\vec{vu}$ with colour $j$. Note that to create $D'$ from $D$, we would take each $\vec{uv}\in E(D)$, with colour $i$ say, and replace it with a directed $u,v$-path of edges with colour $i$ whose underlying path is $P_{\{(i,u),(j,v)\}}$ for some $j$, where this $j$ is unique by \ref{propfromA1-1}.
For each $i\in I_\tau$, by \ref{propfromA2:2}, the interior vertices of the paths with colour $i$ are all vertex-disjoint and lie in $V_\phi$. Therefore, from the similar property for $D$, for each $i\in I_\tau$, the edges with colour $i$ in $D'$ form exactly a vertex-disjoint collection of some directed cycles, with no 2-cycles, and  $|T_i|$ vertex-disjoint directed paths from $T_i$ to $R_i'$ in $D'[(U_i\cup V_i)\setminus (R_i\setminus R_i')]$.

Then, for each $\phi\in \mathcal{F}_\tau$ and $uv\in E(K_\phi)$, let $\mathcal{I}_{\phi,uv}$ be the set of pairs $\{i,j\}$ such that $\{(i,u),(j,v)\}\in \mathcal{C}_{\phi}'$. For each $i\in I_\tau$, as $D'$ has no directed 2-cycles of colour $i$ and every vertex has out-degree in $D'$ at most 1 in the colour $i$ edges, for each $u,v\in S_\tau$ there is at most one edge with vertex set $\{u,v\}$ and colour $i$ in $D'$.
Therefore, the pairs in $\mathcal{I}_{\phi,uv}$ are disjoint for each $\phi\in \mathcal{F}_\tau$ and $uv\in E(K_\phi)$. Thus, by \ref{prop:Lphi:connectivity}, we can find paths $Q_e$, $e\in \mathcal{I}_{\phi,uv}$, which are vertex-disjoint, such that, for each $\phi\in \mathcal{F}_\tau$ and $e=(i,j)\in \mathcal{I}_{\phi,uv}$, $Q_e$ is an $i,j$-path in $L_{\phi,uv}$ with interior vertices in $J_{\phi,uv}$. For such a path $Q_e$, let $s_e$ be the length of $Q_e$ and label its vertices as $i_{e,0}=i$, $i_{e,1}$, $\ldots$, $i_{e,s_e}=j$.

For each $\phi\in \mathcal{F}_\tau$, let
\[
\mathcal{C}''_\phi=\bigcup_{e=\{(i,u),(j,v)\}\in \mathcal{C}'_{\phi}}\bigcup_{r\in [s_e]}\{(i_{e,r-1},u),(i_{e,r},v)\},
\]
Let $\mathcal{C}=\bigcup_{\phi\in \mathcal{F}_\tau}\mathcal{C}'_\phi$, noting that it follows from \eqref{eqn:Itaudefn} that $\mathcal{C}\subset \mathcal{I}_\tau$.
Let $D''$ be the coloured multi-digraph with vertex set $S_\tau$ where, for each $\{(i,u),(j,v)\}\in \mathcal{C}$ we add $\vec{uv}$ with colour $i$ and $\vec{vu}$ with colour $j$. Note that to create $D''$ from $D'$, we would take each pair $\vec{uv},\vec{vu}\in E(D')$ with colour $i$ and $j$ respectively, such that $e=\{(i,u),(j,v)\}\in \bigcup_{\phi\in \mathcal{F}_\tau}\mathcal{C}'_\phi$, and add the edges $\vec{uv},\vec{vu}$ with colour $i_{e,r}$ for each $r\in [s_e-1]$. Here, $i_{e,r}$ is an interior vertex of the path $Q_e$, and therefore lies in $J_{\phi,uv}$, so that $u,v\in W_{i_{e,r}}$. Using this, and the definition of the sets $J_{\phi,uv}$, we have, for each $i\in [n]$, that the 2-cycles with colour $i$ that we add to get from $D'$ to $D''$ are vertex-disjoint from each other and from the edges in $D'$ with colour $i$. Thus, from the similar property for $D'$ and the construction of $D''$ from $D'$, for each $i\in I_\tau$, the edges with colour $i$ in $D'$ form exactly a vertex-disjoint collection of some directed cycles, with no 2-cycles, and  $|T_i|$ vertex-disjoint directed paths from $T_i$ to $R_i'$ in $D[(U_i\cup V_i\cup W_i)\setminus (R_i\setminus R_i')]$. From the direct definition of $D''$, we therefore have that \ref{prop:A3:cor1}--\ref{prop:A3:cor4} hold for $\mathcal{C}$, as required.
\end{proof}


\subsection{Part~\ref{partA4}: regularisation of the collection of pairs}\label{sec:finalpartA}

For each $\tau\in \mathcal{T}$, we now take a collection $\mathcal{I}_\tau$ for which \ref{prop:A3:regularityout}--\ref{prop:A3:corrections} hold, as provided by Lemma~\ref{keylemma:absorption}, and add more pairs to it so that, for all $i\in I_\tau$ and $u\in S_i\setminus R_i$, $(i,u)$ is in the same number of pairs in $\mathcal{I}_\tau$. I.e., we will have that \ref{prop:abs:regularityout} holds instead of \ref{prop:A3:regularityout}. This is the key regularity property we use when later showing the near-regularity of a certain auxiliary hypergraph in Section~\ref{sec:real}.

While achieving this regularity, we need that the pairs added will not worsen overmuch the other conditions on $\mathcal{I}_\tau$. It is useful then to compare the properties  \ref{prop:abs:regularityout}--\ref{prop:abs:corrections} and \ref{prop:A3:regularityout}--\ref{prop:A3:corrections} used in Lemma~\ref{keylemma:absorption} and Lemma~\ref{lem:partA3}, respectively. As noted, our aim is to add pairs to $\mathcal{I}_\tau$ so that \ref{prop:abs:regularityout} holds instead of \ref{prop:A3:regularityout}. The conditions \ref{prop:abs:corrections} and \ref{prop:A3:corrections} are the same, and continue to hold on the addition of new pairs to $\mathcal{I}_\tau$.
 The conditions \ref{prop:abs:lowcodegree0}--\ref{prop:abs:lowcodegree3} are a relaxation of \ref{prop:A3:lowcodegree0}--\ref{prop:A3:lowcodegree3} respectively, where an extra factor of at least 2 is permitted in the bounds they each claim. This allows us the room to add more pairs to $I_\tau$ without breaking the relaxed conditions, and we will do this by only adding pairs from some random collection.

On the other hand, \ref{prop:abs:nocodegree} and \ref{prop:A3:nocodegree} are the same condition. For convenience we repeat this, as follows.

\smallskip

\begin{minipage}{0.9\textwidth}
\begin{itemize}
\item[\ref{prop:abs:nocodegree}/\ref{prop:A3:nocodegree}] For each $i,j\in I_\tau$ and $u\in S_i\setminus R_i$, there is at most one $v\in S_j\setminus R_j$ with $\{(i,u),(j,v)\}\in \mathcal{I}_\tau$.
\end{itemize}
\end{minipage}

\smallskip

\noindent Therefore, this condition gives triples $(i,j,u)$ which cannot appear together in any pair $\{(i,u),(j,v)\}$ we add to $\mathcal{I}_\tau$, and for each other such triple $(i,j,u)$ we can add at most 1 pair $\{(i,u),(j,v)\}$. This condition is not too onerous: as we certainly always need $|{\mathcal{I}}_\tau|\leq n^2$, this is a small proportion of the triples $(i,j,u)$ with $i,j\in I_\tau$ and $u\in S_i\setminus R_i$. Thus, within the random collection of pairs which we consider for addition to $\mathcal{I}_\tau$, we will only need to remove a few more pairs in order to guarantee that \ref{prop:abs:nocodegree} holds whichever set of pairs from this random collection we add.

\begin{proof}[Proof of Lemma~\ref{keylemma:absorption}]
Using Lemma~\ref{lem:partA3}, let
\[
\mathcal{I}_\tau\subset \{\{(i,u),(j,v)\}: i,j\in I_\tau,i\neq j, u \in  S_i \setminus (R_i \cup T_j), v \in S_j \setminus (T_i \cup R_j),u\neq v,u{\sim}_{A/B}v\}
\]
satisfy \ref{prop:A3:regularityout}--\ref{prop:A3:corrections}.
For each $i\in I_\tau$ and $u\in S_i\setminus R_i$, let $\lambda_{i,u}^-$ be the number of pairs $(j,v)$ such that $\{(i,u),(j,v)\}\in \mathcal{I}_\tau$, so that $\lambda_{i,u}^-\leq 20$ by \ref{prop:A3:regularityout}, and let $\lambda_{i,u}=24-\lambda_{i,u}^-$, so that $4\leq \lambda_{i,u}\leq 24$.

Let $\mathcal{E}^A=\{(i,u,r):i\in I_\tau,u\in A\cap (S_i\setminus R_i)\text{ and }r\in [\lambda_{i,u}]\}$. Note that, as $24|\{(i,u):i\in I_\tau,A\cap (S_i\setminus R_i)\}|$ is even, and any $\{(i,u),(j,v)\}\in \mathcal{I}_\tau$ with $u\in A$ has $v\in A$, we have that $|\mathcal{E}^A|$ is even.
 Form the auxiliary graph $L^A$ with vertex set $\mathcal{E}^A$ and, for each $(i,u,r),(j,v,s)\in \mathcal{E}^A$, an edge between $(i,u,r)$ and $(j,v,s)$ if $u\notin T_j$, $v\notin T_i$, $u\neq v$, $i\neq j$,
 and there is no $w$ such that $\{(i,u),(j,w)\}\in \mathcal{I}_\tau$ or $\{(i,w),(j,v)\}\in \mathcal{I}_\tau$.
 We now show that $L^A$ has high minimum degree.

 \begin{claim} $\delta(L^A)\geq (1-\sqrt{p_T})|L^A|$ and $|L^A|\geq p_Sp_{\tr}n^2$. \label{clm:Lmindeg}
 \end{claim}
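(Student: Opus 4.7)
The plan is to establish the two inequalities separately, both by direct combinatorial counting using the estimates from Lemma~\ref{lem:setsizesetc} and the structural properties from Lemma~\ref{lem:partA3}.

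For the lower bound $|L^A|\geq p_Sp_\tr n^2$, I would use that $\lambda_{i,u}\geq 4$ for every pair $(i,u)$ counted in $\mathcal{E}^A$, together with $|I_\tau|\geq (1-\eps^2)p_\tr n$ and $|A\cap(S_i\setminus R_i)|\geq (1-\eps)(p_S-p_R)n$ from \ref{prop:vxpartitionfirst}. Multiplying these gives $|\mathcal{E}^A|\geq 4(1-O(\eps))p_\tr(p_S-p_R)n^2$, and since $p_R=(1+\alpha)p_T\llpoly p_W\leq p_S$ we have $p_S-p_R\geq p_S/2$, so $|L^A|\geq p_Sp_\tr n^2$ follows.

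For the minimum degree bound, I would fix a vertex $(i,u,r)\in\mathcal{E}^A$ and enumerate the ways in which a triple $(j,v,s)\in\mathcal{E}^A\setminus\{(i,u,r)\}$ can fail to be adjacent to $(i,u,r)$ in $L^A$: \textup{(i)} $j=i$; \textup{(ii)} $u=v$; \textup{(iii)} $u\in T_j$; \textup{(iv)} $v\in T_i$; \textup{(v)} there exists $w$ with $\{(i,u),(j,w)\}\in\mathcal{I}_\tau$; \textup{(vi)} there exists $w$ with $\{(i,w),(j,v)\}\in\mathcal{I}_\tau$. Each condition restricts one of $j$, $v$, or $(j,v)$ to a small set, with $s$ contributing an extra factor of at most $\lambda_{j,v}\leq 24$: case \textup{(iii)} uses \ref{prop:verticesinTi} to bound the number of $j\in I_\tau$ with $u\in T_j$ by $2p_Tp_U^{-1}p_\tr p_\fa n$; case \textup{(iv)} uses $|T_i\cap A|\leq 2p_Tn$ from \ref{prop:vxpartitionfirst}; cases \textup{(v)} and \textup{(vi)} each bound the number of relevant $(j,v)$ by $O((p_S-p_R)n)$ using the out-degree bound \ref{prop:A3:regularityout}, in case \textup{(vi)} summed over $w\in A\cap(S_i\setminus R_i)$; and cases \textup{(i)}, \textup{(ii)} contribute $O((p_S-p_R)n)$ and $O(p_\tr n)$ respectively.

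It then remains to divide each of the six contributions by $|L^A|\geq 4p_\tr(p_S-p_R)n^2$ and check that every ratio is much smaller than $\sqrt{p_T}$ using the hierarchy~\eqref{eq:hierarchy}. Case \textup{(iv)} gives ratio $O(p_T/(p_S-p_R))=O(p_T/p_W)$, which is where the polynomial separation is genuinely needed: since $p_T\llpoly p_W$, the implicit constants can be chosen so that $p_T/p_W\leq\sqrt{p_T}/10$. Case \textup{(iii)} contributes $O(p_Tp_\fa/p_U)$, which is even smaller since $p_\fa$ sits far below $p_T$ in the hierarchy. The remaining four contributions are of order $1/n$ times a polynomial in the fixed parameters, hence negligible. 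Summing all six ratios gives at most $\sqrt{p_T}|L^A|$ non-neighbours, whence $\delta(L^A)\geq (1-\sqrt{p_T})|L^A|$. The main bookkeeping obstacle is ensuring that the six cases really cover every non-adjacency and that \ref{prop:A3:regularityout} is applied in the correct direction (sometimes as the out-degree bound at $(i,u)$ itself, sometimes summed over the out-degrees at $(i,w)$ as $w$ varies); once this is set up, the arithmetic is routine given the polynomial separation between $p_T$ and the larger parameters $p_U,p_V,p_W$.
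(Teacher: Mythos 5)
Your proof matches the paper's: same six-way case split for non-adjacencies, same estimates from Lemma~\ref{lem:setsizesetc}, same use of \ref{prop:A3:regularityout} and \ref{prop:A3:boundedin}, and the lower bound on $|L^A|$ is computed in the same way. One small slip: in case (iii) you bound the number of $j\in I_\tau$ with $u\in T_j$ by $2p_Tp_U^{-1}p_\tr p_\fa n$ citing \ref{prop:verticesinTi}, but that property is per-family (over $j\in I_\phi$); since $u$ may lie in $U_\phi$ for many $\phi\in\mathcal{F}_\tau$, you must sum over the $\approx p_\fa^{-1}$ families in the tribe, giving $O(p_Tp_U^{-1}p_\tr n)$ rather than your smaller figure, and a ratio $O(p_T/p_U)$ rather than $O(p_Tp_\fa/p_U)$. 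This is still $\ll\sqrt{p_T}$ since $p_T\llpoly p_U$, so the conclusion is unaffected, but the exponent bookkeeping in that case as written is not correct.
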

\begin{proof}[Proof of Claim~\ref{clm:Lmindeg}]
Let $(i,u,r)\in \mathcal{E}^A$, so that $i\in I_\tau$, $u\in A\cap (S_i\setminus R_i)$ and $r\in [\lambda_{i,u}]$. Let $(j,v,s)\in \mathcal{E}^A$. We have that $(j,v,s)\notin N_{L^A}((i,u,r))$ only if at least one of the following hold: \textbf{i)} $u\in T_j$,
\textbf{ii)} $v\in T_i$,
\textbf{iii)} $u=v$,
\textbf{iv)} $i=j$, \textbf{v)} there is some $w$ such that $\{(i,u),(j,w)\}\in \mathcal{I}_\tau$, or \textbf{vi)} there is some $w$ such that $\{(i,w),(j,v)\}\in \mathcal{I}_\tau$.
We will count the number of $(j,v,s)\in \mathcal{E}^A\setminus \{(i,u,r)\}$ satisfying each of \textbf{i)} -- \textbf{vi)} in turn.

Using \ref{prop:vxpartitionfirst}--\ref{prop:vxpartitionlast}, and that $|I_{\tau}|=(1\pm \epsforabsthatwasepszero)p_{\tr}n$ and $|\mathcal{F}_\tau|=(1\pm \epsforabsthatwasepszero )p^{-1}_{\fa}$ for any $\tau \in \mathcal{T}$, we have the following. \textbf{i)} By \ref{prop:verticesinTi}, there are at most $|\mathcal{F}_\tau|\cdot 2(1+\epsforabsthatwasepszero)p_Tp_U^{-1}p_\fa p_{\tr}n\leq 8p^Tp_U^{-1}p_\tr n$ choices for $j\in I_\tau$ such that $u\in T_j$, and thus at most $8p_Tp_U^{-1}p_\tr n\cdot 2p_Sn\cdot 24\leq \sqrt{p_T}p_\tr n^2/10$ choices with $(j,v,s)\in \mathcal{E}^A$ and $u\in T_j$, where we have used that $p_T\llpoly p_U$.
\textbf{ii)} By \ref{prop:vxpartitionfirst}, there are at most $2p_Tn$ choices of $v\in A\cap T_i$, so that there are thus at most $2p_{\tr}n\cdot 2p_Tn\cdot 24\leq \sqrt{p_T}p_Sp_\tr n^2/10$ choices of $(j,v,s)\in \mathcal{E}^A$ with $v\in T_i$.

\textbf{iii)} There are at most $2p_{\tr}n\cdot 1\cdot 24\leq \sqrt{p_T}p_Sp_\tr n^2/10$ choices of $(j,v,s)\in \mathcal{E}^A$ with $v\in T_i$ with $v=u$.
\textbf{iv)} By \ref{prop:vxpartitionfirst}, there are at most $2p_Sn\cdot 24\leq \sqrt{p_T}p_Sp_\tr n^2/10$ choices of $(j,v,s)\in \mathcal{E}^A$ with $j=i$.
\textbf{v)} By \ref{prop:A3:regularityout}, there are at most $20$ choices of $j$ for which there is some $w$ with $\{(i,u),(j,w)\}\in \mathcal{I}_\tau$, and therefore at most $20\cdot 24p_Sn\leq \sqrt{p_T}p_Sp_\tr n^2/10$ choices of $(j,v,s)\in \mathcal{E}^A$ for which there is some $w$ with $\{(i,u),(j,w)\}\in \mathcal{I}_\tau$.
\textbf{vi)} Similarly, but using \ref{prop:A3:boundedin}, there are at most $24\cdot 24p_Sn\leq \sqrt{p_T}p_Sp_\tr n^2/10$ choices of $(j,v,s)\in \mathcal{E}^A$ for which there is some $w$ with $\{(i,w),(j,v)\}$.
Combining all of this, the number of non-neighbours of $(i,u,r)$ in $L^A$ is at most $\sqrt{p_T}p_Sp_{\tr}n^2$.

Now, as $\lambda_{i,u}\geq 4$ for each $i\in I_{\tau}$ and $u\in A\cap (S_i\setminus R_i)$, we have that
\begin{equation*}
|L^A|=|\mathcal{E}^A|\geq 4|I_\tau|\cdot |A\cap (S_i\setminus R_i)|\geq 4(1-\epsforabsthatwasepszero)p_{\tr}n\cdot (1-\epsforabsthatwasepszero)p_Sn\geq p_Sp_{\tr}n^2,
\end{equation*}
so that, in combination with $|V(L^A)\setminus N_{L^A}(x)|\leq \sqrt{p_T}p_Sp_{\tr}n^2$ for each $x\in V(L^A)$,  we have that the claim holds.
\claimproofend

Let $p=\log^2n/|L^A|$.
Form $\hat{L}_A\subset L_A$ with $V(\hat{L}_A)=V(L_A)$ by including each edge of ${L}_A$ independently at random with probability $p$. We will show the following claim.

\begin{claim} With high probability, we have the following properties. \label{clm:Lhatgood}
\stepcounter{propcounter}
\begin{enumerate}[label = {{\textbf{\Alph{propcounter}\arabic{enumi}}}}]
\item \label{prop:regularise:mindeg} For each $U\subset V(\hat{L}^A)$ with $|U|\leq |\hat{L}^A|/10^3$, $|N_{\hat{L}^A}(U)|\geq 20|U|$.
\item For each disjoint $U,V\subset V(\hat{L}^A)$ with $|U|,|V|\geq |\hat{L}^A|/10^4$, $e_{\hat{L}^A}(U,V)\geq 20|U|$. \label{prop:regularise:joined}
\item For each $x\in \mathcal{E}^A$, there are at most 12 tuples $(j,v,s,v',s')$ for which $(j,v,s),(j,v',s)\in N_{\hat{L}^A}(x)$.\label{prop:regularise:fornocod1}
\item For each $x\in \mathcal{E}^A$, there are at most 4 tuples $(j,v,s,u,r)$ with $(i,u,r)\neq x$ for which $(j,v,s)\in N_{\hat{L}^A}(x)$ and $(i,u,r)(j,v,s)\in E(\hat{L})^A$.\label{prop:regularise:fornocod2}
\item \label{prop:regularise:lowcod0} For each $i\in I_\tau$ and $u\in S_i\setminus T_i$, there are at most $n^{1/3}/4$ pairs $(j,v)$ such that $\{(i,v),(j,u)\}\in E(\hat{L}_A)$.
\item \label{prop:regularise:lowcod1} For each distinct $i,j\in I_\tau$ there are at most $n^{1/3}/4$ pairs $(u,v)$ with $\{(i,u),(j,v)\}\in E(\hat{L}_A)$.
\item \label{prop:regularise:lowcod2} For each distinct $j,j'\in I_\tau$, there are at most $n^{1/3}/4$ tuples $(i,u,v,v')$ for which we have that $\{(i,u),(j,v)\},\{(i,u),(j',v')\}\in E(\hat{L}_A)$.
\item \label{prop:regularise:lowcod3} For each $j\in I_\tau$ and $u\in S_j\setminus T_j$ there are at most $n^{1/3}/4$ pairs $(i,v)$ with $\{(i,u),(j,v)\}\in E(\hat{L}_A)$.
\end{enumerate}
\end{claim}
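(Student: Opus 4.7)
The plan is to establish each of the eight properties with high probability and then union bound. Throughout, we have $p|L^A|=\log^2 n$, so by Claim~\ref{clm:Lmindeg} every vertex in $\hat{L}^A$ has expected degree $(1\pm\sqrt{p_T})\log^2 n$, and by Chernoff every vertex has degree $(1\pm o(1))\log^2 n$ with failure probability $\exp(-\Omega(\log^2 n))$, which easily union-bounds over vertices.

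For the expansion properties \eref{prop:regularise:mindeg} and \eref{prop:regularise:joined}, standard random subgraph arguments apply. For \eref{prop:regularise:mindeg}, if some $U$ of size $k\leq |L^A|/10^3$ has $|N_{\hat{L}^A}(U)|<20k$, then there is a disjoint $W$ of size $20k$ such that no $\hat{L}^A$-edges go from $U$ to $V(L^A)\setminus(U\cup W)$. Using $\delta(L^A)\geq(1-\sqrt{p_T})|L^A|$, the number of $L^A$-edges in this set is at least $k|L^A|/2$, so the probability of their total exclusion from $\hat{L}^A$ is at most $\exp(-pk|L^A|/2)=\exp(-k\log^2 n/2)$; multiplying by the $(e|L^A|/k)^{21k}=\exp(O(k\log n))$ choices of $(U,W)$ and summing over $k$ gives $o(1)$. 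For \eref{prop:regularise:joined}, Chernoff applied to $e_{\hat{L}^A}(U,V)$ (with mean at least $p|U||V|/2\geq (\log^2 n)|L^A|/(2\cdot10^8)$) gives concentration with error $\exp(-\Omega(p|U||V|))$, which beats the $4^{|L^A|}$ union bound over pairs $(U,V)$.

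For the local configuration conditions \eref{prop:regularise:fornocod1} and \eref{prop:regularise:fornocod2}, for each fixed $x\in\mathcal{E}^A$ the expected count is $O(\log^4 n/(p_\tr n))=o(1)$, obtained by summing $p^2$ over the $O(p_\tr n\cdot p_S n)$ pairs of $L^A$-edges forming the relevant two-edge configuration at $x$. Since the quantities being counted are sums of products of pairs of independent Bernoulli variables indexed by edges of $L^A$, bounding the expected number of $13$-tuples of such configurations (respectively $5$-tuples for \eref{prop:regularise:fornocod2}) yields tail probabilities $o(1/|\mathcal{E}^A|)$, and a union bound over $x\in\mathcal{E}^A$ closes the argument.

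For the global codegree conditions \eref{prop:regularise:lowcod0}--\eref{prop:regularise:lowcod3}, each bounds the number of $\hat{L}^A$-edges (viewed as pairs $\{(i,u),(j,v)\}$) with one parameter fixed. For each parameter choice, there are $O(n^2)$ potentially contributing edges of $L^A$, so the expected number in $\hat{L}^A$ is $O(pn^2)=O(\log^2 n/p_\tr)\ll n^{1/3}/4$. Chernoff gives failure probability $\exp(-\Omega(n^{1/3}))$, and a union bound over the $O(n^2)$ parameter choices completes the proof. The main obstacle is the tail analysis for \eref{prop:regularise:fornocod1} and \eref{prop:regularise:fornocod2}, where the counts are sums over dependent pairs of edges rather than independent indicators; a standard Chernoff bound does not apply directly, and one must instead bound higher moments (or equivalently, the expected number of $k$-tuples of configurations) and apply Markov's inequality at an appropriately large $k$.
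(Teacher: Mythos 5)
Your arguments for \eref{prop:regularise:mindeg}, \eref{prop:regularise:joined}, \eref{prop:regularise:lowcod0}, \eref{prop:regularise:lowcod1}, and \eref{prop:regularise:lowcod3} are all sound. For \eref{prop:regularise:mindeg} you take a genuinely different route from the paper: you union-bound over pairs $(U,W)$ witnessing a failure of expansion, while the paper fixes $U$, shows (via a double-counting argument and the minimum degree of $L^A$) that at least $|L^A|/2$ vertices each have $\geq |U|/2$ $L^A$-neighbours in $U$, and applies Chernoff to the resulting sum of independent indicators $\mathbf{1}_{x\in N_{\hat L^A}(U)}$. Both work; your version needs the full min-degree bound on $L^A$ to make the edge count $\geq k|L^A|/2$ hold uniformly, but you have it. Your acknowledgement that \eref{prop:regularise:fornocod1} and \eref{prop:regularise:fornocod2} require a moment argument (since the counts are sums of products of indicators) is appropriate; you do not carry it out, but the approach is reasonable. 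The paper instead handles these with an explicit case decomposition of the bad event (``$\geq 3$ values of $j$ each contributing $\geq 2$ neighbours, or some $j$ contributing $\geq 5$''), bounding each case by a small polynomial in $n$; this avoids dealing with overlaps in a moment calculation.

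The genuine gap is your treatment of \eref{prop:regularise:lowcod2}. You group it with \eref{prop:regularise:lowcod0}, \eref{prop:regularise:lowcod1}, \eref{prop:regularise:lowcod3} as ``the number of $\hat L^A$-edges with one parameter fixed'' and invoke Chernoff, but the quantity bounded in \eref{prop:regularise:lowcod2} is the number of tuples $(i,u,v,v')$ for which \emph{both} edges $\{(i,u),(j,v)\}$ and $\{(i,u),(j',v')\}$ lie in $\hat L^A$ — a count of ordered pairs of $\hat L^A$-edges that share the endpoint $(i,u)$, not a count of single edges. This count is a sum over tuples of a product of two edge-indicators, and the summands are not independent (tuples sharing an edge are positively correlated); Chernoff does not apply to it. This is structurally the same obstacle you yourself flag for \eref{prop:regularise:fornocod1} and \eref{prop:regularise:fornocod2}, so \eref{prop:regularise:lowcod2} belongs with those, not with the single-edge properties. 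The paper resolves it in two stages: first, for each fixed $(i,j,j')$ it shows (by a case analysis like that in \eref{prop:regularise:fornocod1}) that the number of contributing $(u,v,v')$ is $O(1)$ with high probability; second, it applies Chernoff to the number of $i$ for which at least one such tuple exists (which \emph{is} a sum of independent indicators over $i$), getting at most $n^{1/4}$; multiplying gives $O(n^{1/4})\leq n^{1/3}/4$. Some such decoupling (or a moment/Kim--Vu-style inequality for the quadratic count) is needed — the one-line Chernoff claim you make is invalid as written.
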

\begin{proof}[Proof of Claim~\ref{clm:Lhatgood}]
\ref{prop:regularise:mindeg}: Let $U\subset V(\hat{L}^A)$ with $|U|\leq |\hat{L}^A|/10^3$. By Claim~\ref{clm:Lmindeg} and a simple double-counting argument, there are at least $|{L}^A|/2$ vertices $x\in V({L})$ with at least $|U|/2$ neighbours in $U$ in $L^A$.
For each such $x$, $\P(x\in N_{\hat{L}^A}(U))\geq 1-(1-p)^{|U|}\geq p|U|/2= (2/|L^A|)\cdot |U|\log^2n/4$. Therefore, by Lemma~\ref{chernoff}, with probability $1-\exp(-\omega(|U|\log n))$ we have that $|N_{\hat{L}^A}(U)|\geq 20|U|$. Thus, \ref{prop:regularise:mindeg} holds with high probability by a union bound.

\smallskip

\noindent\ref{prop:regularise:joined}: Let $N=|{L}^A|$. Let $U,V\subset V({L}^A)$ be disjoint with $|U|,|V|\geq N/10^4$. Then, by Claim~\ref{clm:Lmindeg}, $e_{L^A}(U,V)\geq |U||V|-\max\{|U|,|V|\}\cdot \sqrt{p_T}N\geq N^2/10^{9}$.
Then, by Lemma~\ref{chernoff}, with probability $1-\exp(-\omega(N))$, we have that  $e_{\hat{L}^A}(U,V)\geq 20N$. Thus, by a union bound, \ref{prop:regularise:joined} holds with high probability.

\smallskip

\noindent\ref{prop:regularise:fornocod1}: Let $x=(i,u,r)\in \mathcal{E}^A$. For each $j\in I_{\tau}\setminus \{i\}$, there are at most $24n$ pairs $(v,s)$ with $(j,v,s)\in \mathcal{E}^A$. Thus, the probability that there are at least 5 pairs $(v,s)$ with
$(j,v,s)\in \mathcal{E}^A$ and $x(j,v,s)\in E(\hat{L}^A)$ is at most $(24n)^5p^5\leq n^{-4}$, and the probability there are at least 2 such pairs is at most $(24n)^2p^2\leq n^{-1.9}$.
Then, the probability that there are at least $3$ values of $j\in I_{\tau}\setminus \{i\}$ for which there are at least 3 pairs $(v,s)$ with $(j,v,s)\in \mathcal{E}$ and $x(j,v,s)\in E(\hat{L}^A)$ is at most $n^3\cdot (n^{-1.9})^3=n^{-2.7}$.
Furthermore, the probability that there is some $j\in I_{\tau}\setminus \{i\}$ for which there are at least 5 pairs $(v,s)$ with $(j,v,s)\in \mathcal{E}$ and $x(j,v,s)\in E(\hat{L}^A)$ is at most $n\cdot n^{-4}=n^{-3}$. Combining these, we have that with probability at least $1-2n^{-2.7}$ the number of tuples $(j,v,s,v',s')$ for which $(j,v,s),(j,v',s')\in N_{\hat{L}^A}(x)$ is at most $2\cdot \binom{4}{2}=12$. Thus, by a union bound over $x=(i,u,r)\in \mathcal{E}^A$, we have that \ref{prop:regularise:fornocod1} holds with high probability.

\smallskip

\noindent\ref{prop:regularise:fornocod2}: Let $x=(i,u,r)\in \mathcal{E}^A$.
By Lemma~\ref{chernoff}, with probability $1-\exp(-\omega(\log n))$ the set, $Y$ say, of $y\in \mathcal{E}^A$ with $xy\in E(L^A)$ satisfies $|Y|\leq 2\log^2n$. For each $y\in Y$, the probability there is some $(u',r')\neq (u,r)$ with $(i,u',r')y\in E(L^A)$ is at most $24n\cdot p\leq n^{-0.9}$, and the probability there are at least $3$ such $(u',r')$ is at most $(24n)^3\cdot p^3\leq n^{-2.7}$. Thus, with probability at least $1-(2\log^2n)\cdot n^{-2.7}-(2\log^2n)^3\cdot n^{-3\cdot 0.9}\leq n^{-2.6}$, there is no $y\in Y$ for which there are at least $3$ pairs $(u',r')\neq (u,r)$ with $(i,u',r')y\in E(L^A)$ and there are at most 2 choices for $y\in Y$ for which there is some pair $(u',r')\neq (u,r)$ with $(i,u',r')y\in E(L^A)$. Note that when this holds then \ref{prop:regularise:fornocod2} holds for $x$. Thus, \ref{prop:regularise:fornocod2} holds with high probability by a union bound.

\smallskip

\noindent \ref{prop:regularise:lowcod0}: Let $i\in I_\tau$ and $u\in S_i\setminus T_i$. There are at most $n^2$ pairs $(j,v)$ such that $\{(i,v),(j,u)\}\in E(\hat{L}_A)$, and, for each such $(j,v)$, the probability that $\{(i,v),(j,u)\}\in E(\hat{L}_A)$ is $p\leq n^{-1.9}$. Thus, by Lemma~\ref{chernoff}, with probability $1-\omega(-\log n)$, there are at most $n^{1/3}/4$ pairs $(j,v)$ such that $\{(i,v),(j,u)\}\in E(\hat{L}_A)$. Thus, \ref{prop:regularise:lowcod0} holds with high probability by a union bound.

\smallskip

\noindent \ref{prop:regularise:lowcod1},\ref{prop:regularise:lowcod3}: These hold with high probability virtually identically to \ref{prop:regularise:lowcod0}.

\smallskip

\noindent \ref{prop:regularise:lowcod2}. Let $j,j'\in I_\tau$ be distinct. For each $(i,u,r)\in \mathcal{E}^A$, with $i\notin\{j,j'\}$, the probability there are at least 6 pairs $(v,s)$ with $(j,v,s)\in \mathcal{E}$ and $x(j,v,s) \in E(\hat{L}^A)$ or $(j',v,s)\in \mathcal{E}$ and $x(j',v,s)\in E(\hat{L}^A)$ is at most $(24n)^6(2p)^6\leq n^{-5}$.
Furthermore, the probability there are at least 2 such pairs is at most $(24n)^2(2p)^2\leq n^{-1.9}$.
Then, for each $i\in I_\tau\setminus \{j,j'\}$, with probability $1-n^{-4}-(24n)^4\cdot (n^{-1.9})^4\geq 1-2n^{3.5}$ there are at most $3$ values of $(u,r)$ with $(i,u,r)\in \mathcal{E}^A$
for which there are at least 2 such pairs, and there is no value of $u$ with $(i,u)\in \mathcal{E}^A$ for which there are more than 5 such pairs. When this happens, there are at most $3\cdot \binom{5}{2}=30$ choices for $(u,r,j,j',v,v',s,s')$ for which
 $(j,v,s),(j',v',s')\in \mathcal{E}$ and $x(j,v,s),x(j',v',s') \in E(\hat{L}^A)$.
Thus, with high probability this holds for all distinct $j,j'\in I_\tau$ and $i\in I_\tau\setminus \{j,j'\}$ by a union bound. That is, for each $j,j'\in I_\tau$ and $i\in I_\tau\setminus \{j,j'\}$ there are at most 30 choices for $(u,v,v')$ for which we have that $\{(i,u),(j,v)\},\{(i,u),(j',v')\}\in E(\hat{L}_A)$.

Let again $j,j'\in I_\tau$ be distinct. For each $i\in I_\tau\setminus \{j,j'\}$, the probability that there is some $(i,u,v,v')$ for which we have that $\{(i,u),(j,v)\},\{(i,u),(j',v')\}\in E(\hat{L}_A)$ is at most $(24n)^3p^2\leq n^{-0.8}$. Therefore, by Lemma~\ref{chernoff}, with probability $1-\exp(-\omega(\log n))$ we have that the number of $i\in I_\tau\setminus \{j,j'\}$ for which there is $(u,v,v')$ for which we have that $\{(i,u),(j,v)\},\{(i,u),(j',v')\}\in E(\hat{L}_A)$ is at most $n^{1/4}$.
Thus, by a union bound, this holds with high probability for every distinct $j,j'\in I_\tau$

In combination, the two properties we have shown hold with high probability imply that, for every distinct $j,j'\in I_\tau$, the number of tuples $(i,u,v,v')$ for which we have that $\{(i,u),(j,v)\},\{(i,u),(j',v')\}\in E(\hat{L}_A)$  is at most $n^{1/4}\cdot 30\leq n^{1/3}/4$, as required.
Thus, \ref{prop:regularise:lowcod2} holds with high probability.
\claimproofend

Thus, we can assume that \ref{prop:regularise:mindeg}--\ref{prop:regularise:lowcod3} hold. Now, let $\tilde{L}^A$ be the graph $\hat{L}^A$ where we remove any edge $(i,u,r)(j,v,s)$ if there exists some $(v',s')$ with $(i,u,r)(j,v',s')\in E(\hat{L}^A)$.
Note that, by \ref{prop:regularise:fornocod1} and \ref{prop:regularise:fornocod2} this removes at most 16 edges around any one vertex. Therefore, \ref{prop:regularise:mindeg} and \ref{prop:regularise:joined} easily imply the following.

\begin{enumerate}[label = {{\textbf{\Alph{propcounter}\arabic{enumi}'}}}]
\item \label{prop:regularise:mindeg2} For each $U\subset V(\hat{L}^A)$ with $|U|\leq |\hat{L}^A|/10^3$, $|N_{\hat{L}^A}(U)|\geq 3|U|$.
\item For each disjoint $U,V\subset V(\hat{L}^A)$ with $|U|,|V|\geq |\hat{L}^A|/10^4$, $e_{\hat{L}^A}(U,V)>0$. \label{prop:regularise:joined2}
\end{enumerate}

We now show that
\ref{prop:regularise:mindeg2} and \ref{prop:regularise:joined2} imply $\tilde{L}^A$ contains a perfect matching via Tutte's theorem. Let $U\subset V(\tilde{L}^A)$. If $U\neq \emptyset$, then as $|\tilde{L}^A|$ is even and
\ref{prop:regularise:mindeg2} and \ref{prop:regularise:joined2} easily imply that $\tilde{L}^A$ is connected, $\tilde{L}^A-U$ has no components with an odd number of vertices, i.e., it has no odd components.
If $|U|>0$ and $\tilde{L}^A-U$ has at least $|U|+1$ components then $V(\tilde{L}^A-U)$ can be partitioned into $V_1$, $V_2$ with $|V_1|,|V_2|\geq |U|/2$ so that there are no edges between $V_1$ and $V_2$ in $\tilde{L}^A-U$. By \ref{prop:regularise:joined2}, we then have $|U|/2\leq |\tilde{L}^A|/10^4$. Taking an arbitrary set $V_1'\subset V_1$ with $|V_1'|=\lceil |U|/2\rceil$, we then have by \ref{prop:regularise:mindeg2} that
\[
|U|\geq |N_{\tilde{L}^A}(V_1')|\geq 3|V_1'|>|U|,
\]
a contradiction. Thus, for every $U\subset V(\tilde{L}^A)$, $\tilde{L}^A-U$ has at most $|U|$ odd components. Therefore, by Tutte's theorem, $\tilde{L}^A$ has a perfect matching, $M^A$, say.

Similarly, form the auxiliary graph $L^B$ with vertex set $\mathcal{E}^B$ and, for each $(i,u,r),(j,v,s)\in \mathcal{E}^B$, an edge between $(i,u,r)$ and $(j,v,s)$ if $u\notin T_j$, $v\notin T_i$, $u\neq v$, $i\neq j$,
and there is no $w$ such that $\{(i,u),(j,w)\}\in \mathcal{I}_\tau$ or $\{(i,w),(j,v)\}\in \mathcal{I}_\tau$ and no $(w,r,s)$ such that $(i,u,r)(j,w,s)\in M^A$. Similarly, form $\tilde{L}^B$ and $M^B$, where the extra condition on edges in $L^B$ is small enough that Claim~\ref{clm:Lmindeg} can easily be seen to still hold with $B$ in place of $A$.

Then, let
\begin{equation}\label{eqn:finalI}
\mathcal{I}'_\tau=\mathcal{I}_\tau\cup \left(\bigcup_{(i,u,r)(j,v,s)\in M^A\cup M^B}\{(i,u),(j,v)\}\right).
\end{equation}
By the construction of $M^A\cup M^B$, all the pairs added to $\mathcal{I}_\tau$ in \eqref{eqn:finalI} are distinct and not in $\mathcal{I}_\tau$.

By the choice of the $\lambda_{i,u}$, then, we have that \ref{prop:abs:regularityout} holds with $\mathcal{I}'_\tau$ in place of $\mathcal{I}_\tau$.
\ref{prop:abs:nocodegree} follows with $\mathcal{I}'_\tau$ in place of $\mathcal{I}_\tau$ from \ref{prop:A3:nocodegree}, the definition of $L^A$ and $L^B$, and the definition of $\tilde{L}^A$ and $\tilde{L}^B$.
\ref{prop:abs:boundedin}--\ref{prop:abs:lowcodegree3} hold with $\mathcal{I}'_\tau$ in place of $\mathcal{I}_\tau$ by combining \ref{prop:A3:boundedin}--\ref{prop:A3:lowcodegree3} and \ref{prop:regularise:lowcod0}--\ref{prop:regularise:lowcod3}, as well as the corresponding versions of \ref{prop:regularise:lowcod0}--\ref{prop:regularise:lowcod3} for $\tilde{L}^B$.
Furthermore, \ref{prop:abs:corrections} follows directly from \ref{prop:A3:corrections} with $\mathcal{I}'_\tau$ in place of $\mathcal{I}_\tau$.
Finally, we have that \eqref{eqn:Itaugoodpairs} holds with $\mathcal{I}'_\tau$ in place of $\mathcal{I}_\tau$. Therefore, $\mathcal{I}'_\tau$ satisfies the conditions in Lemma~\ref{keylemma:absorption} in place of $\mathcal{I}_\tau$, completing the proof of the lemma.
\end{proof}
\fi


\section{Random Latin squares and links}\label{sec:rand}
In this section, we give tight bounds on the number of paths with certain patterns of colours, which hold with very high probability in $G\sim G^{\col}_{[n]}$. To describe this, we will use the following notation.

\begin{defn}[Patterns and links]
Say $L=(H,f)$ is a \emph{pattern} if $H$ is a graph with a specified start vertex $u_L$ and a specified end vertex $v_L\neq u_L$  and $f$ is a function from $E(H)$ to $\N$.

Given distinct vertices $u,v$ in a coloured graph $G$, and a pattern $L=(H,f)$, a \emph{$(u,v,L)$-link} is a graph $H' \subseteq G$ for which there is an isomorphism $\psi:H\to H'$ such that, for each $i\in \image(f)$, $\psi(f^{-1}(i))$ are edges in $G$ of the same colour, where this colour is distinct over $i\in \image(f)$.
\end{defn}

The main result of this section, and the only one used elsewhere, is the following theorem which counts certain paths with a given structure. In particular, we are interested in paths of length 62 with fixed endvertices $u$ and $v$, which, starting from $u$, use 31 distinct colours and then repeat each of these colours in the same order exactly once before ending at $v$. Note that the number of possible ordered choices for the $31$ colours on such a path is $(1-O(n^{-1}))n^{31}$, and for each such choice, by symmetry, the probability that starting from $u$ and choosing the edges in the required colours and order produces a path that arrives at $v$ is essentially $1/n$. Thus, the expected number of links we wish to count is very close to the parameter $\Phi_0=n^{30}$ in Theorem~\ref{thm:Llinks}.
The first property in Theorem~\ref{thm:Llinks} that holds with high probability, \ref{prop:links:totalnumber}, is that the number of links we are counting is close to this expectation. The remaining properties count the number of links given additional constraints (fixing certain vertices, edges and colours, sometimes at particular points in the path) -- each case there is a natural heuristic that, for example, if the link is additionally required to have a vertex $x$ in the $k$th position then the expected number of such links is reduced by a factor of $n^{-1}$. In Section~\ref{sec:real}, we will consider an auxiliary hypergraph whose regularity depends on the number of links containing any particular vertex, edge, or colour in a particular position.
We require this auxiliary hypergraph to be approximately regular with sufficiently small codegrees. Thus the tight bounds of \ref{prop:links:throughvertex}--\ref{prop:links:throughedge} are required for this approximate regularity, whilst only upper bounds are needed for the properties covered by \ref{prop:links:cod:twovertices}--\ref{prop:links:throughedgeandvertex} as we need only show that these are not too large in order to bound the codegrees of the auxiliary hypergraph (and also to bound certain dependencies in Section~\ref{sec:balanceandcover}).

\begin{theorem}\label{thm:Llinks}
Let $1/n\llpoly \eps$ and $G\sim G^\col_{[n]}$. 
Let $L$ be the following pattern, a path of length 62 with 31 different colours in the first 31 edges which then repeat in the same order:

\begin{center}
\begin{tikzpicture}
\def\setsp{0.4}
\def\upup{0.4}
\def\extra{0.2}

\foreach \n in {1,3,5,7,9}
{
\coordinate (v\n) at ($(\n*\setsp,0)+(0,0)$);
}
\foreach \n in {11,13,15,17,19,21,23,25}
{
\coordinate (v\n) at ($(\extra,0)+(\n*\setsp,0)+(0,0)$);
}
\foreach \n in {27,29,31}
{
\coordinate (v\n) at ($(\extra,0)+(\extra,0)+(\n*\setsp,0)+(0,0)$);
}

\foreach \n in {2,4,6,8,10}
{
\coordinate (v\n) at ($(\n*\setsp,0)+(0,\upup)$);
}
\foreach \n in {12,14,16,18,20,22,24}
{
\coordinate (v\n) at ($(\extra,0)+(\n*\setsp,0)+(0,\upup)$);
}
\foreach \n in {26,28,30}
{
\coordinate (v\n) at ($(\extra,0)+(\extra,0)+(\n*\setsp,0)+(0,\upup)$);
}

\draw [red,thick] (v1) -- (v2);
\draw [darkgreen,thick] (v2) -- (v3);
\draw [blue,thick] (v3) -- (v4);
\draw [red,thick] (v16) -- (v17);
\draw [darkgreen,thick] (v17) -- (v18);
\draw [blue,thick] (v18) -- (v19);
\draw [brown,thick] (v4) -- (v5);
\draw [brown,thick] (v19) -- (v20);
\draw [red!50,thick] (v5) -- (v6);
\draw [red!50,thick] (v20) -- (v21);
\draw [violet,thick] (v6) -- (v7);
\draw [violet,thick] (v21) -- (v22);
\draw [teal,thick] (v7) -- (v8);
\draw [teal,thick] (v22) -- (v23);
\draw [blue!50,thick] (v8) -- (v9);
\draw [blue!50,thick] (v23) -- (v24);
\draw [magenta,thick] (v9) -- (v10);
\draw [magenta,thick] (v24) -- (v25);
\draw [thick,dotted] (v10) to ++(0.2,-0.2);
\draw [thick,dotted] (v25) to ++(0.2,0.2);
\draw [thick,dotted] (v11) to ++(-0.2,0.2);
\draw [thick,dotted] (v26) to ++(-0.2,-0.2);

\draw [purple,thick] (v11) -- (v12);
\draw [purple,thick] (v26) -- (v27);
\draw [green,thick] (v12) -- (v13);
\draw [green,thick] (v27) -- (v28);
\draw [orange,thick] (v13) -- (v14);
\draw [orange,thick] (v28) -- (v29);
\draw [olive,thick] (v14) -- (v15);
\draw [olive,thick] (v29) -- (v30);
\draw [cyan,thick] (v15) -- (v16);
\draw [cyan,thick] (v30) -- (v31);

\draw [red] ($0.5*(v1)+0.5*(v2)+(-0.05,0.15)$) node {\footnotesize $1$};
\draw [red] ($0.5*(v16)+0.5*(v17)+(-0.05,-0.15)$) node {\footnotesize $1$};
\draw [darkgreen] ($0.5*(v2)+0.5*(v3)+(-0.05,-0.15)$) node {\footnotesize $2$};
\draw [darkgreen] ($0.5*(v17)+0.5*(v18)+(-0.05,0.15)$) node {\footnotesize $2$};
\draw [blue] ($0.5*(v3)+0.5*(v4)+(-0.05,0.15)$) node {\footnotesize $3$};
\draw [blue] ($0.5*(v18)+0.5*(v19)+(-0.05,-0.15)$) node {\footnotesize $3$};
\draw [brown] ($0.5*(v4)+0.5*(v5)+(-0.05,-0.15)$) node {\footnotesize $4$};
\draw [brown] ($0.5*(v19)+0.5*(v20)+(-0.05,0.15)$) node {\footnotesize $4$};
\draw [red!50] ($0.5*(v5)+0.5*(v6)+(-0.05,0.15)$) node {\footnotesize $5$};
\draw [red!50] ($0.5*(v20)+0.5*(v21)+(-0.05,-0.15)$) node {\footnotesize $5$};
\draw [violet] ($0.5*(v6)+0.5*(v7)+(-0.05,-0.15)$) node {\footnotesize $6$};
\draw [violet] ($0.5*(v21)+0.5*(v22)+(-0.05,0.15)$) node {\footnotesize $6$};
\draw [teal] ($0.5*(v7)+0.5*(v8)+(-0.05,0.15)$) node {\footnotesize $7$};
\draw [teal] ($0.5*(v22)+0.5*(v23)+(-0.05,-0.15)$) node {\footnotesize $7$};
\draw [blue!50] ($0.5*(v8)+0.5*(v9)+(-0.05,-0.15)$) node {\footnotesize $8$};
\draw [blue!50] ($0.5*(v23)+0.5*(v24)+(-0.05,0.15)$) node {\footnotesize $8$};
\draw [magenta] ($0.5*(v9)+0.5*(v10)+(-0.05,0.15)$) node {\footnotesize $9$};
\draw [magenta] ($0.5*(v24)+0.5*(v25)+(-0.05,-0.15)$) node {\footnotesize $9$};

\draw [purple] ($0.5*(v11)+0.5*(v12)+(-0.05-0.075,0.15)$) node {\footnotesize $27$};
\draw [purple] ($0.5*(v26)+0.5*(v27)+(-0.05-0.075,-0.15)$) node {\footnotesize $27$};
\draw [green] ($0.5*(v12)+0.5*(v13)+(0.05-0.075-0.075,-0.15)$) node {\footnotesize $28$};
\draw [green] ($0.5*(v27)+0.5*(v28)+(-0.05-0.075,0.15)$) node {\footnotesize $28$};
\draw [orange] ($0.5*(v13)+0.5*(v14)+(-0.05-0.075,0.15)$) node {\footnotesize $29$};
\draw [orange] ($0.5*(v28)+0.5*(v29)+(-0.05-0.075,-0.15)$) node {\footnotesize $29$};
\draw [olive] ($0.5*(v14)+0.5*(v15)+(-0.05-0.075,-0.15)$) node {\footnotesize $30$};
\draw [olive] ($0.5*(v29)+0.5*(v30)+(0.05-0.075-0.075,0.15)$) node {\footnotesize $30$};
\draw [cyan] ($0.5*(v15)+0.5*(v16)+(-0.05-0.075,0.15)$) node {\footnotesize $31$};
\draw [cyan] ($0.5*(v30)+0.5*(v31)+(-0.05-0.075,-0.15)$) node {\footnotesize $31$};


\foreach \n in {1,...,31}
{
\draw [fill] (v\n) circle[radius=0.04];
}

\draw ($(v1)-(0,0.3)$) node {$u_L$};
\draw ($(v31)-(0,0.3)$) node {$v_L$};
\end{tikzpicture}
\end{center}

\vspace{-0.2cm}

Then, with probability $1-n^{-\omega(1)}$, the following hold with $\Phi_0=n^{30}$.
\stepcounter{propcounter}
\begin{enumerate}[label = {\emph{\textbf{\Alph{propcounter}\arabic{enumi}}}}]
\item \labelinthm{prop:links:totalnumber} For each distinct $u,v\in V(G)$ with $u\sim_{A/B} v$, the number of $(u,v,L)$-links in $G$ is $(1\pm \eps)\Phi_0$.
\item \labelinthm{prop:links:throughvertex} For each $k$ with $2\leq k\leq 62$ and each distinct $u,v\in V(G)$ with $u\sim_{A/B} v$, and each $x\in V(G)\setminus \{u,v\}$ with $x\not\sim_{A/B}u,v$ if $k$ is even and $x\sim_{A/B}u,v$ if $k$ is odd,
the number of $(u,v,L)$-links in $G$ in which $x$ is the $k$th vertex is $(1\pm \eps)\cdot \Phi_0\cdot  n^{-1}$.
\item \labelinthm{prop:links:throughcolour} For each $k\in [62]$, and each distinct $u,v\in V(G)$ with $u\sim_{A/B} v$ and each $c\in C$, the number of $(u,v,L)$-links in $G$ in which the $k$th edge has colour $c$ is $(1\pm \eps)\cdot \Phi_0\cdot  n^{-1}$.
\item \labelinthm{prop:links:throughedge}
For each $k$ with $2\leq k\leq 61$ and each distinct $u,v\in V(G)$ with $u\sim_{A/B} v$, and each $xy\in E(G)$ with $\{x,y\}\cap \{u,v\}=\emptyset$, the number of $(u,v,L)$-links in $G$ which have $xy$ as the $k$th edge is
$(1\pm \eps)\cdot \Phi_0 \cdot n^{-2}$.
\item \labelinthm{prop:links:cod:twovertices} For each distinct $u,v,x,y\in V(G)$, the number of $(u,v,L)$-links in $G$ containing $x$ and $y$ is at most $10^4\cdot \Phi_0\cdot n^{-2}$.
\item \labelinthm{prop:links:cod:1vertex1colour} For each distinct $u,v,x\in V(G)$ and each $c\in C$, the number of $(u,v,L)$-links in $G$ using $x$ and $c$ in which there is not a colour-$c$ edge $ux$ or $xv$ is at most $10^4\cdot \Phi_0\cdot n^{-2}$.
\item \labelinthm{prop:links:cod:twocolours} For each distinct $u,v\in V(G)$ and each distinct $c,d\in C$, the number of $(u,v,L)$-links in $G$ using $c$ and $d$ is at most $10^4\cdot \Phi_0\cdot n^{-2}$.

\item \labelinthm{prop:links:2edgesofdifferentcoloursanddisjoint} For each distinct $u,v\in V(G)$ and each distinct $e,e'\in E(G-\{u,v\})$ with different colours and which share no vertices, the number of $(u,v,L)$-links in $G$ containing $e$ and $e'$ is at most $10^4\cdot \Phi_0\cdot n^{-4}$.
\item \labelinthm{prop:links:throughedgeandvertex} For each distinct $u,v,w\in V(G)$ and each $e\in E(G-\{u,v,w\})$, the number of $(u,v,L)$-links in $G$ containing $w$ and $e$ is at most $10^8\cdot \Phi_0\cdot n^{-3}$.
\end{enumerate}
\end{theorem}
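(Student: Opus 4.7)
The plan is to compute tight expected counts and then establish concentration around them with failure probability $n^{-\omega(1)}$, by working first in the auxiliary model $\mathcal{G}^\col_D$ for a set $D \subset [n]$ of size $\delta n$ with $\delta$ a small constant (where Theorem~\ref{thm:fixedsubgraph} gives sharp per-subgraph probabilities), and then transferring back to $G \sim G^\col_{[n]}$ via Theorem~\ref{thm:modelswapKSS}. Applying Corollary~\ref{cor:latinsquareprobabilities} directly to a single 62-edge link would be too lossy, since its $e^{O(n\log^2 n)}$ error per subgraph would swamp the count; but since the deletion method will yield failure probability $\exp(-n^{1+c})$ in $\mathcal{G}^\col_D$ for some constant $c>0$, the $e^{O(n\log^2 n)}$ cost of the model swap is absorbed.

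For the expected count in \ref{prop:links:totalnumber}, fix distinct $u,v$ with $u \simAB v$ and work in $G' \sim G^\col_D$. A $(u,v,L)$-link is determined by an ordered choice of $61$ distinct interior vertices alternating between $A$ and $B$ in the correct parity (there are $(1+O(n^{-1}))n^{61}$ such sequences) together with $31$ distinct colours from $D$ in order (of which there are $(1+O(n^{-1}))(\delta n)^{31}$). For each such configuration the resulting properly-coloured subgraph $H'$ has $62$ edges with each colour used at most twice, so Theorem~\ref{thm:fixedsubgraph} gives $\P(H' \subset G') = (1+O(\delta))^{62} n^{-62}$. Summing yields $\E[X_D] = (1+O(\delta))\, \delta^{31} \Phi_0$. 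The expectations for the conditional counts in \ref{prop:links:throughvertex}--\ref{prop:links:throughedge} and the codegree counts in \ref{prop:links:cod:twovertices}--\ref{prop:links:throughedgeandvertex} follow identically, scaled down by $n^{-1}$ per pinned vertex or colour at a fixed position and by $n^{-2}$ per pinned edge.

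For concentration, I would extend the R\"odl--Ruci\'nski deletion method along the lines of Kwan--Sah--Sawhney~\cite{kwan2022large}. I would compute $\E[X_D^2]$ by decomposing pairs of links according to their coloured intersection $H_1 \cap H_2$: applying Theorem~\ref{thm:fixedsubgraph} to $H_1 \cup H_2$ (which has at most $124$ edges, each colour used at most $4$ times) gives $\P(H_1, H_2 \subset G') = (1+O(\delta))^{124} n^{-|E(H_1\cup H_2)|}$. Pairs sharing no edge contribute $(1+o(1))\E[X_D]^2$, while pairs with any shared edge contribute $o(\E[X_D]^2)$ after a case analysis over the intersection patterns. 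Defining $X_D'$ by deleting from $X_D$ every link that shares an edge with another link, $\E[X_D - X_D'] = o(\E[X_D])$ and $X_D'$ depends in a Lipschitz manner, with small increments, on the natural independent exposure of the colouring; applying Lemma~\ref{lem:mcdiarmidchangingc} (together with a self-bounding argument to upgrade from polynomial to exponential concentration) then gives $X_D' = (1 \pm \eps/2)\E[X_D']$ with failure probability $\exp(-n^{1+c})$. Union-bounding over the $\mathrm{poly}(n)$ parameter choices and transferring via Theorem~\ref{thm:modelswapKSS} establishes \ref{prop:links:totalnumber}--\ref{prop:links:throughedge}. The codegree conditions \ref{prop:links:cod:twovertices}--\ref{prop:links:throughedgeandvertex}, which require only upper bounds with a factor-$10^4$ of slack, follow from the expectation estimates by Markov and a union bound.

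The main obstacle is the matching lower bound in \ref{prop:links:totalnumber}--\ref{prop:links:throughedge} with the very strong $1-n^{-\omega(1)}$ guarantee. The R\"odl--Ruci\'nski deletion method is naturally tailored to upper bounds: to obtain a lower bound one must show that deleting defective links costs asymptotically nothing, and moreover that this continues to hold on the appropriate exponentially fine scale. This requires controlling the number of defective links across every overlap type $H_1 \cap H_2$ and using Theorem~\ref{thm:fixedsubgraph} in a two-sided manner for configurations of widely varying sizes. This two-sided use of the deletion method, which is the novel contribution flagged in the introduction, will be developed in Section~\ref{sec:discussiondeletionmethod}.
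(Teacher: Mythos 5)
There is a fundamental gap in the concentration step. You write that $X_D'$ ``depends in a Lipschitz manner, with small increments, on the natural independent exposure of the colouring,'' and then apply Lemma~\ref{lem:mcdiarmidchangingc}. But $G' \sim \mathcal{G}^\col_D$ is a uniformly random Latin rectangle, and a Latin rectangle admits no natural decomposition into independent random variables. This is precisely the difficulty that the entire switching/deletion machinery exists to circumvent: you cannot reveal the edges or cells of a random Latin square one at a time with independent choices, and McDiarmid's inequality as stated in Lemma~\ref{lem:mcdiarmidchangingc} requires an underlying product measure. In the paper, McDiarmid is used only for the auxiliary coin flips (the choice of $D$, of the random edge set $E$, etc.) conditionally on a fixed outcome of $G$ or $G|_D$, never for the random colouring itself. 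Your proposed concentration step therefore does not go through as written, and the ``self-bounding argument to upgrade from polynomial to exponential concentration'' is not something available in this setting.

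Relatedly, your account of the Kwan--Sah--Sawhney deletion method is not what that method does. It does not compute $\E[X_D^2]$ and delete defective copies; rather, it bounds the number of length-$\kappa$ sequences $S = (F_1,\ldots,F_\kappa)$ of pairwise edge-disjoint copies with $H_S \subset G$ using Corollary~\ref{cor:latinsquareprobabilities} (where the $e^{O(n\log^2 n)}$ error is absorbed by $e^{O(p\kappa)}$ when $\kappa \gg n\log^2 n$), and then observes that if $G$ contained too many well-spread copies one could greedily construct too many such sequences; Markov's inequality then gives the tail bound $n^{-\omega(1)}$. That device replaces concentration inequalities entirely. Moreover, the well-spreadness hypothesis that makes the greedy selection work is itself nontrivial, and the paper establishes it hierarchically: the link $L$ is never attacked directly with $62$ edges. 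Instead, Section~\ref{sec:loose3pathbound} first controls pairs of colour-matched $3$-paths, Section~\ref{sec:tight7pathupperbound} uses that to control pairs of $7$-paths (Proposition~\ref{prop:linksusingearefew} supplies the spread condition for Lemma~\ref{lem:tightupplength7}), Sections~\ref{sec:path15prep}--\ref{sec:path15upperlower} handle pairs of $15$-paths including the lower bound, and Section~\ref{sec:proofmainlinkthm} assembles the $62$-edge link from one $15$-path pair and one $7$-path pair. Working directly at the $62$-edge scale, as you propose, skips the bootstrap that makes the deletion method's greedy step feasible.

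Finally, you defer the lower bound to ``Section~\ref{sec:discussiondeletionmethod}'' without a plan, but this is where the main novelty lies. The paper's lower bound (Lemma~\ref{lemma:mainLlinksfirstlower}) does not try to control defective copies across all overlap types; instead it conditions on $H = G[E]|_{D_1}$, shows via Lemma~\ref{lem:lotsofLzero} and Corollary~\ref{cor:Lzerostuff} that only $o(n^4)$ ordered middle-edge pairs $(e,f)$ are ``deficient,'' and then bounds the number of deficient pairs that appear in $G$ with a common colour in $D_2$ by running the deletion method on two-edge same-colour subgraphs. That indirection -- pinning a sparse conditioning graph $H$ and bounding only the small exceptional set of middle-edge pairs -- is what makes the lower bound accessible; your sketch contains no substitute for it.
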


\ifsecfiveout \label{sec:discussiondeletionmethod}
\else
We discuss our proof of Theorem~\ref{thm:Llinks} in Section~\ref{sec:discussiondeletionmethod}, before outlining the rest of this section.


\subsection{Discussion of methods and section outline}\label{sec:discussiondeletionmethod}

As we have noted in Sections \ref{sec:intro} and \ref{sec:proofsketch}, to prove Theorem~\ref{thm:Llinks} we will use the deletion method of R\"odl and Ruci\'nski~\cite{rodl1995threshold}, developing its use in Latin squares by Kwan, Sah and Sawhney~\cite{kwan2022large}, where they use it to prove likely upper bounds on the counts of different substructures. We will first describe how we use it in this way, before explaining how and why we use it for our key lower bound.

Suppose we have a collection $\mathcal{F}$ of small properly-coloured graphs that might appear in $G\sim G^{\col}_{[n]}$, and we wish to give an upper bound on the number of graphs in $\mathcal{F}$ that appear as subgraphs of $G$. As each $F\subset \mathcal{F}$ is small and properly coloured, and the probability the corresponding edge in $G$ has the same colour as that edge in $F$ is, by symmetry, $1/n$, we expect that $\P(F\subset G)\approx n^{-e(F)}$. However, the challenges of working in the uniformly random Latin square model mean that this probability is severely dominated by the error terms we need to use for the known bounds on $\P(F\subset G)$ if $e(F)$ is small (i.e., using Corollary~\ref{cor:latinsquareprobabilities} is far off even the trivial bound $\P(F\subset G)\leq 1$).
However, if $H$ is a properly coloured subgraph which is the union of many edge-disjoint
subgraphs in $\mathcal{F}$, then we can give an effective bound on $\P(H\subset G)$ (via Corollary~\ref{cor:latinsquareprobabilities}). Moreover, if $G$ contained plenty of graphs in $\mathcal{F}$ that did not overly overlap, then it would contain some such subgraph $H$.

The deletion method uses this reasoning to give an upper bound on the number of graphs in $\mathcal{F}$ that appear in $G$ that holds with high probability. The form of this argument (using similar notation to later in this section) will go as follows, for a set $D\subset [n]$ of $pn$ colours and for $G\sim G^\col_{[n]}$.
\begin{itemize}
\item Suppose $\mathcal{F}$ is a collection of $N$ properly-coloured subgraphs which might appear in $G|_D$, each with $r$ edges, and let $\kappa\in \N$.
\item Consider the collection $\mathcal{S}$ of sequences $S=(F_1,\ldots,F_\kappa)$ of subgraphs drawn from $\mathcal{F}$ which are edge-disjoint and whose union $H_S:=\cup_{i\in [\kappa]}F_i$ is properly coloured.
\item Then, as $|\mathcal{S}|\leq N^\kappa$, the expected number of such sequences $S\in \mathcal{S}$ with $H_S\subset G$ will be, by Corollary~\ref{cor:latinsquareprobabilities}, at most $e^{O(pr\kappa+n\log^2n)}(Nn^{-r})^{\kappa}$.
\item Thus, by Markov's inequality, with probability $1-n^{-\omega(1)}$, the number of $S\in \mathcal{S}$ with $H_S\subset G$ will be at most $n^{\omega(1)}\cdot e^{O(pr\kappa+n\log^2n)}(Nn^{-r})^{\kappa}\leq ((1+\eps/2)Nn^{-r})^\kappa$, where the inequality will hold if, for example, $1/n \llpoly p\llpoly \eps$ and $\kappa=n^{1.01}$.
\item If $G$ contains more than $(1+\eps)N^rn^{-r}$ of the graphs in $\mathcal{F}$ which are well enough distributed that selecting these graphs greedily shows there are at least $((1+\eps/2) N^rn^{-r})^\kappa$ sequences $S\in \mathcal{S}$ with $H_S\subset G$, then from the previous step it must be the case that, with probability $1-n^{-\omega(1)}$, $G$ contains at most $(1+\eps)N^rn^{-r}$ of the graphs in $\mathcal{F}$.
\end{itemize}

The method is applied in Sections~\ref{sec:loose3pathbound}, \ref{sec:tight7pathupperbound} and \ref{sec:path15upperlower}. These applications build in complexity, and rely on the previous applications, and so we repeat each application largely in full rather than attempting to amalgamate them into a general result. Before discussing our application of the deletion method for a lower bound, we make the following further remarks.

\begin{itemize}
\item In order to give a likely upper-bound on the number of graphs in $\mathcal{F}$ appearing in $G$, we need to show that, with high probability, there are not many heavily overlapping graphs in $\mathcal{F}$ appearing in $G$. For some of our applications of the deletion method, this will only hold with very high probability in $G\sim G_{[n]}^\col$, something we show using a simpler application of the deletion method. Where we do this, $\mathcal{B}$ will be the event that there are not many heavily overlapping graphs in $\mathcal{F}$ appearing in $G$ (see, for example, Claim~\ref{clm:AH1} and the argument just after this).
\item The sketch above uses $p\llpoly \eps$, while, for example, for Theorem~\ref{thm:Llinks}, we want to count the number of $(u,v,L)$-links in $G\sim G^\col_{[n]}$ using any colour. To make the deletion method approaches work, then, we have to count the number of substructures we are interested in that use colours in some set ($D$, $D_1$, or $D_2$) that is not too large, and sometimes using only edges in some random set of edges.
Thus, we often prove results with extra restrictions like these (e.g.\ Lemma~\ref{lem:looseupperlength3}) before using simple probabilistic arguments with a Chernoff bound or McDiarmid's inequality to give a result without these restrictions (e.g.\ Corollary~\ref{cor:fewlengththreesame}).
\item In order to apply Corollary~\ref{cor:latinsquareprobabilities} to bound $\P(H_S\subset G)$, we need to have that each colour appears at most $pn$ times in $H_S$, so this is an added condition we will take on sequences $S\in \mathcal{S}$.
\item This sketch so far follows the use of the deletion method in~\cite{kwan2022large}, though we additionally use the deletion method to control the likely spread of graphs in $\mathcal{F}$. A larger difference is that the subgraphs we are using will have some vertices contained in all of them -- for example, for some $u,v$, $\mathcal{F}$ could be a collection of coloured $u,v$-paths. Then, if we can take $\kappa$ edge-disjoint graphs from $\mathcal{F}$ which can appear in $G$, we must have $\kappa\leq n$, so that the error term $e^{O(pr\kappa+n\log^2n)}$ from Corollary~\ref{cor:latinsquareprobabilities} is too large to give the tight bounds we need as $n\log^2n$ will be larger than $\kappa$, preventing the above sketch from working.

However, it is not difficult to see how to fix this. In the example above for $u,v$-paths, we would let $H$ be the graph of edges of $G$ with colour in $D$ which contains $u$ or $v$. Then, considering instead the collection $\mathcal{F}'=\{G_F:=F-u-v:F\in \mathcal{F}\}$, we can consider $\kappa=n^{1.01}$ edge disjoint graphs $G_F\in \mathcal{F}'$ for which $F\subset G$. We can then work by conditioning on the different possible outcomes of $H$, where, as we will see (for example, in Claim~\ref{clm:AH}, and its proof) that this will introduce an additional error term of $e^{e(H)}$ into the application of Corollary~\ref{cor:latinsquareprobabilities}. This is small compared to the  error term $e^{O(n\log^2n)}$ we already have, as $e(H)\leq 2n$.
\end{itemize}

We will now describe how we use the deletion method to give a likely lower bound, as we do in the proof of Lemma~\ref{lemma:mainLlinksfirstlower} in Section~\ref{sec:path15upperlower}. In this, we have $G\sim {G}^\col_{[n]}$  and distinct $x_1,x_2,y_1,y_2\in V(G)$ with $x_1\nsimAB y_1$ and $x_2\nsimAB y_2$. We wish to give a lower bound which holds with high probability on the number of pairs $(P_1,P_2)$ of vertex-disjoint paths in $G|_D$ such that, for each $i\in [2]$, $P_i$ is an $x_i,y_i$-path of length 15 and these paths have the same colours in the same order. To make this easier, we split $D$ into $D_1\cup D_2$ and count the pairs of such paths whose edges are all in $G|_{D_1}$ except the middle edges of each path, which has colour in $D_2$.
In Section~\ref{sec:path15prep}, we show that $G|_{D_1}$ will very likely be such that for all but $o(n^4)$ distinct (uncoloured) edges $e,f\notin E(G|_{D_1})$, if $e$ and $f$ have the same colour in $G$ then there will at most only a little fewer than can be expected paths $(P_1,P_2)$ of the sort we are counting which have $e$ and $f$ as their respective middle edges. In other words, we will have $\mathcal{E}$ which is a relatively small set of pairs of uncoloured edges $e,f\notin E(G|_{D_1})$ which would not fewer than average of the structures we want if they appear in $G$ together with the same colour. Bounding above the number of these which appear in $G$ with the same colour (using the deletion method) we then have a lower bound for the structures in $G$ we wish to count.

We leave further details to the proof of Lemma~\ref{lemma:mainLlinksfirstlower}, but as this implementation is relatively complex, we think it worth noting here why using switching methods instead would be even more complicated. In this setting, using the deletion method and using the switching method for a likely lower bound on the number of certain small coloured subgraphs here is actually closely related. In particular they would both use the counting of small coloured subgraphs which can be found robustly (for example, in the work of Gould, Kelly, K\"uhn, and Osthus~\cite{gould2022almost}, the subgraphs called `spin systems' and `twist systems' in its Definition~7.4). With apologies to any readers not familiar with the switching method, we will not overburden this explanation with the details of a method we are not using, but this comparison roughly goes as follows. For the switching method, the aim would be to argue that (where $\Phi$ is the expected number of subgraphs we are counting), when $X\leq (1-\eps)\Phi$ the number of $G\in \mathcal{G}_{[n]}^\col$ with $X$ such subgraphs  is outnumbered by a $(1+\Omega_\eps(1))$ factor by those $G\in \mathcal{G}_{[n]}^\col$ with $X+1$ such subgraphs. This would then be iterated to show that a proportion of at most $(1-\Omega_\eps(1))^{\eps \Phi}$ of the graphs in $G\in \mathcal{G}_{[n]}^\col$ can have $X$ such subgraphs if $X\leq (1-2\eps)\Phi$, which translates into the very small probabilities we would need.
The point here is that this would involve counting small coloured subgraphs not only in a typical $G\in \mathcal{G}_{[n]}^\col$ but doing so in a way that remains possible as $G$ is iteratively altered by switching operations.
For our new approach to such lower bounds via the deletion method (using the notation above) we consider  a sequence of $\kappa$ small coloured subgraphs as well, but these are drawn from the same coloured graph, and this makes for a less technical approach as our parent graph is not changing.

\medskip

\noindent \textbf{Section outline.}
In Section~\ref{sec:loose3pathbound}, we give a loose bound on pairs of paths of length 3 with fixed endvertices and the same colours which holds with very high probability in $G\sim G^\col_{[n]}$. The first such result is Lemma~\ref{lem:looseupperlength3} (proved with our first application of the deletion method), which bounds the number of such paths with colours in some set $D\subset [n]$, before we use this to deduce for Corollary~\ref{cor:fewlengththreesame} a bound which holds with high probability for paths of any colours.
Then, in Section~\ref{sec:tight7pathupperbound}, we use this to ensure certain longer paths are likely to be well enough distributed that we can bound above their number using the deletion method. This will give us a strong bound on the pairs of paths of length 7 with the same colours and fixed endvertices.
In Section~\ref{sec:path15prep}, we prepare for the lower bound we show in Section~\ref{sec:path15upperlower}, using relatively straight-forward combinatorial and probabilistic arguments to give good a good upper bound on the size of the collection $\mathcal{E}$ described above. This allows us to use the deletion method to give our main likely lower bound in Section~\ref{sec:path15upperlower}, before we put all of the work in this section together to prove Theorem~\ref{thm:Llinks} in Section~\ref{sec:proofmainlinkthm}.


\subsection{Loose upper bounds for length 3 paths with the same colours}\label{sec:loose3pathbound}

As described in Section~\ref{sec:discussiondeletionmethod}, we will first prove a loose bound on the number of paths of length 3 with fixed endvertices and the same colours, using only colours in some set $D\subset [n]$, which holds with very high probability in $G\sim G^\col_{[n]}$, as follows.

\begin{lemma}\label{lem:looseupperlength3} Let $\eta=0.01$ and $1/n\llpoly p\ll\eta$. Let $D\subset [n]$ have size $pn$. Let $G\sim G_{[n]}^\col$. Let $x_1,x_2\in A$ be distinct and let $y_1,y_2\in B$ be distinct. Then, with probability $1-n^{-\omega(1)}$, there are at most $n^{1+\eta}$ pairs $(P_1,P_2)$ of vertex-disjoint paths in $G|_D$ of length three such that $P_1$ is an $x_1,y_1$-path, $P_2$ is an $x_2,y_2$-path and they have the same colours in the same order.
\end{lemma}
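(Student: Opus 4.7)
The plan is to apply the deletion method as described in Section~\ref{sec:discussiondeletionmethod}. Let $\mathcal{F}$ be the collection of abstract labelled pair-structures: each element specifies intermediate vertices for $P_1$ (one in $B$, one in $A$) and for $P_2$ (one in $B$, one in $A$), all disjoint from one another and from $\{x_1,x_2,y_1,y_2\}$, together with a colour triple $(c_1,c_2,c_3) \in D^3$ giving the common colour sequence of both paths. Thus $|\mathcal{F}| \leq n^4 \cdot (pn)^3 = p^3 n^7$, every $F \in \mathcal{F}$ has $e(F)=6$ edges in three colours (each used twice), and the quantity to be bounded is $X := |\{F \in \mathcal{F} : F \subseteq G\}|$.

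Set $\kappa := \lceil n \log^2 n \rceil$ and let $\mathcal{S}$ be the set of ordered $\kappa$-tuples $(F_1,\ldots,F_\kappa)$ of pairwise edge-disjoint elements of $\mathcal{F}$ whose union $H_S$ uses each colour at most $pn$ times (automatic from $\kappa \leq pn/2$). For $S \in \mathcal{S}$ we have $e(H_S) = 6\kappa$ and $H_S$ has at most $3\kappa \leq pn$ colours, so Corollary~\ref{cor:latinsquareprobabilities} with $\delta = p$ gives $\Pr(H_S \subseteq G) \leq e^{O(p\kappa + n\log^2 n)} n^{-6\kappa}$. Summing over $|\mathcal{S}| \leq |\mathcal{F}|^\kappa$ and applying Markov's inequality --- the choice $\kappa \gg n\log n$ making the $e^{O(n\log^2 n)}$ term subleading --- yields, with probability $1 - n^{-\omega(1)}$,
\begin{equation*}
|\{S \in \mathcal{S} : H_S \subseteq G\}| \leq (2 p^3 n)^\kappa. \tag{$\star$}
\end{equation*}

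To derive the lemma from $(\star)$, assume for contradiction that $X \geq n^{1+\eta}$ and greedily extract pairwise edge-disjoint $\kappa$-tuples in the conflict graph on the $X$ structures (two structures adjacent when they share an edge). The trivial bound that each edge of $G|_D$ lies in at most $6n$ structures of $\mathcal{F}$ contained in $G$ --- fixing the position of $e$ and one additional intermediate vertex determines both paths --- is too weak, producing only $X/O(n) \ll \kappa$ edge-disjoint structures. To fix this, first establish a ``spread'' event $\mathcal{B}$: with probability $1 - n^{-\omega(1)}$, no edge of $G|_D$ lies in more than $n^{\eta/3}$ structures of $\mathcal{F}$ contained in $G$. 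Conditional on $\mathcal{B}$, greedy selection gives $Y \geq X/(6n^{\eta/3}+1) \geq n^{1+2\eta/3}/7$ pairwise edge-disjoint structures, and hence at least $(Y/2)^\kappa$ ordered edge-disjoint $\kappa$-tuples; for the chosen $\kappa$ this comfortably exceeds $(2 p^3 n)^\kappa$, contradicting $(\star)$.

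The hard part is establishing $\mathcal{B}$ with the required super-polynomially small failure probability. The expected number of structures of $\mathcal{F}$ in $G$ through any fixed edge is only $O(p^2)$, but Corollary~\ref{cor:latinsquareprobabilities} applied to any fixed small configuration carries the multiplicative overhead $e^{O(n \log^2 n)}$, which makes a direct Markov or low-moment argument useless. The plan is an auxiliary (possibly iterated) application of the deletion method: for a fixed edge $e \in E(G|_D)$, count $k$-tuples of structures through $e$ that are pairwise edge-disjoint away from $e$; their union has $1 + 5k$ edges, so Corollary~\ref{cor:latinsquareprobabilities} becomes useful once $k \geq \Omega(n \log n)$, and a greedy-extraction-to-contradiction argument mirroring the one above (together with a union bound over the $O(n^2)$ edges, and a preliminary sub-spread bound controlling how many $e$-containing structures can share another edge) then yields $\mathcal{B}$.
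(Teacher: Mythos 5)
Your high-level framework (deletion method + Markov + greedy edge-disjoint extraction) matches the paper in spirit, and you correctly identified that the crux is controlling how many structures pass through a fixed edge. However, there is a genuine gap, and it is precisely in the part you flagged as the hard part: establishing the spread event $\mathcal{B}$.

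The paper avoids $\mathcal{B}$ entirely by first conditioning on $H$, the graph of all colour-$D$ edges of $G$ incident to $\{x_1,x_2,y_1,y_2\}$. Once $\hat{H}$ is fixed, each path-pair is determined by the single middle edge of $P_1$ (together with its colour), so the relevant family $\mathcal{F}_{\hat H}$ consists of \emph{$2$-edge} graphs, and each edge of $G$ lies in at most $2$ of them (it is determined by which of $P_1,P_2$ it is the middle edge of). That trivial per-edge bound is what makes the greedy extraction yield $\kappa = n^{1+\eta/2}$ edge-disjoint structures out of the assumed $n^{1+\eta}$. No spread lemma is needed; the cost of conditioning is absorbed into the same $e^{O(n\log^2 n)}$ error that is already present, because $e(\hat H)\le 4n$.

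Your proposal keeps the structures as full $6$-edge graphs and therefore needs the spread bound $\mathcal{B}$ (at most $n^{\eta/3}$ structures through any edge). But the auxiliary deletion argument you sketch for $\mathcal{B}$ cannot be made to work, for two compounding reasons. First, the deletion method with Corollary~\ref{cor:latinsquareprobabilities} always incurs an $e^{O(n\log^2 n)}$ factor, so to show anything holds with probability $1-n^{-\omega(1)}$ you need the number $k$ of edge-disjoint witnesses to satisfy roughly $k\log n \gg n\log^2 n$, i.e.\ $k\gg n\log n$; but the number of sub-structures through a fixed edge is at most about $pn$ (the choices of $v_2$ in $G|_D$), and $pn \ll n\log n$ since $p\ll 1$. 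Second, and more fatally: every structure that uses $e=x_1 v_1$ as its first edge forces the first edge of $P_2$ to be $x_2 w_1$ where $w_1 = N_{c(e)}(x_2)$ is the \emph{same} vertex for all of them (it depends only on $c(e)$). So any two structures through $e$ already share the edge $x_2w_1\neq e$, and the family of structures ``pairwise edge-disjoint away from $e$'' has size at most $1$; the greedy extraction stalls before it begins, and your ``sub-spread'' bound does not rescue it — this overlap is intrinsic to the geometry of the configuration, not an accident of bad luck that one can rule out probabilistically. The conditioning-on-$H$ trick is exactly what dissolves this obstruction: once the edges at the anchor vertices are in $\hat H$, the shared edges $x_1v_1$, $v_2y_1$, $x_2w_1$, $w_2y_2$ are part of the fixed background and no longer count toward the edge-disjointness requirement.
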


\begin{center}
\begin{tikzpicture}
\def\setsp{0.8}

\foreach \n/\ifBup in {1/0,2/1,3/0,4/1}
{
\coordinate (v\n) at ($(\n*\setsp,0)+\ifBup*(0,0.5)$);
}
\foreach \n/\ifBup in {5/0,6/1,7/0,8/1}
{
\coordinate (v\n) at ($(\n*\setsp,0)+\ifBup*(0,0.5)$);
}

\draw [red,thick] (v1) -- (v2);
\draw [darkgreen,thick] (v2) -- (v3);
\draw [blue,thick] (v3) -- (v4);
\draw [red,thick] (v5) -- (v6);
\draw [darkgreen,thick] (v6) -- (v7);
\draw [blue,thick] (v7) -- (v8);

\draw [red] ($0.5*(v1)+0.5*(v2)+(0,0.15)$) node {\footnotesize $1$};
\draw [red] ($0.5*(v5)+0.5*(v6)+(0,0.15)$) node {\footnotesize $1$};
\draw [darkgreen] ($0.5*(v2)+0.5*(v3)+(0,0.15)$) node {\footnotesize $2$};
\draw [darkgreen] ($0.5*(v6)+0.5*(v7)+(0,0.15)$) node {\footnotesize $2$};
\draw [blue] ($0.5*(v3)+0.5*(v4)+(0,0.15)$) node {\footnotesize $3$};
\draw [blue] ($0.5*(v7)+0.5*(v8)+(0,0.15)$) node {\footnotesize $3$};

\foreach \n in {1,...,8}
{
\draw [fill] (v\n) circle[radius=0.05];
}

\draw ($(v1)+(0,-0.3)$) node {$x_1$};
\draw ($(v5)+(0,-0.3)$) node {$x_2$};
\draw ($(v4)+(0,0.3)$) node {$y_1$};
\draw ($(v8)+(0,0.3)$) node {$y_2$};
\end{tikzpicture}
\end{center}
\begin{proof} Note that we can assume that $x_1\not\simAB y_1$ and $x_2\not\simAB y_2$, for otherwise there are trivially no such paths. Let $H$ be the graph of edges in $G$ next to $\{x_1,x_2,y_1,y_2\}$ with colour in $D$, and let $\mathcal{H}$ be the set of possibilities for $H$.
Let $\mathcal{A}$ be the property that there are more than $n^{1+\eta}$ pairs $(P_1,P_2)$ of vertex-disjoint paths in $G|_D$ of length three such that $P_1$ is an $x_1,y_1$-path, $P_2$ is an $x_2,y_2$-path and they have the same colours in the same order.
We will show the following claim.

\begin{claim}\label{clm:AH1} For each $\hat{H}\in \mathcal{H}$, $\P(\mathcal{A}|H=\hat{H})=n^{-\omega(1)}$.
\end{claim}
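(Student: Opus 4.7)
The plan is to prove Claim~\ref{clm:AH1} by a direct application of the deletion method outlined in Section~\ref{sec:discussiondeletionmethod}. Fix $\hat H \in \mathcal H$ with $\P(H=\hat H)>0$; since $\hat H$ already contains every $D$-coloured edge incident to $\{x_1,x_2,y_1,y_2\}$, the event $H=\hat H$ coincides with the event $\hat H\subset G$. Given $\hat H$, any pair of paths $(P_1,P_2)$ contributing to $\mathcal A$ is uniquely encoded by a triple $(c_1,c_2,c_3)\in D^3$ with $c_1\neq c_2$ and $c_2\neq c_3$: the colour $c_1$ selects, via the unique $c_1$-edge at $x_i$ in $\hat H$, the neighbours $a_1,a_2$; the colour $c_3$ selects $b_1,b_2$ through $y_1,y_2$; and $c_2$ is the common colour of the middle edges $a_1b_1$ and $a_2b_2$. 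The triple is realised in $G$ precisely when both middle edges lie in $G$ with colour $c_2$ and $a_1,a_2,b_1,b_2$ are distinct; in particular there are at most $(pn)^3$ candidate triples.

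Set $\kappa := n^{1+\eta/2}$ and let $\mathcal S$ be the family of ordered $\kappa$-sequences of candidate triples such that the $2\kappa$ prescribed middle edges, taken with their prescribed colours, form a properly coloured graph $H^{\mathrm{mid}}_S$; then $|\mathcal S|\leq (pn)^{3\kappa}$. For each $S\in\mathcal S$ the graph $H_S:=\hat H\cup H^{\mathrm{mid}}_S$ is properly coloured with $e(H_S)=e(\hat H)+2\kappa\leq 4pn+2\kappa$ edges, and Corollary~\ref{cor:latinsquareprobabilities}, applied as an upper bound on $\P(H_S\subset G)$ and as a matching lower bound on $\P(\hat H\subset G)$, gives
\[
\P(H_S\subset G\mid H=\hat H)=\frac{\P(H_S\subset G)}{\P(\hat H\subset G)}\leq e^{O(p(e(H_S)+e(\hat H))+n\log^2 n)}\cdot n^{-2\kappa}=e^{O(n\log^2 n+p\kappa)}\cdot n^{-2\kappa},
\]
where we use $pe(\hat H)\leq 4p^2 n=O(n\log^2 n)$. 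On the other hand, if $\mathcal A$ holds then there are $N\geq n^{1+\eta}$ realised pairs in $G$; since any fixed edge of $G|_D$ serves as the middle edge of at most two realised pairs (once as the $P_1$-middle and once as the $P_2$-middle), a greedy extraction at step $i$ has at least $N-4(i-1)$ remaining choices, producing (using $\kappa\leq N/8$) at least $(N/2)^\kappa\geq (n^{1+\eta}/2)^\kappa$ elements of $\mathcal S_G:=\{S\in\mathcal S:H_S\subset G\}$. Combining with the previous bound via Markov's inequality,
\[
\P(\mathcal A\mid H=\hat H)\leq \frac{(pn)^{3\kappa}\cdot e^{O(n\log^2 n+p\kappa)}\cdot n^{-2\kappa}}{(n^{1+\eta}/2)^\kappa}=(2p^3)^\kappa\cdot n^{-\eta\kappa}\cdot e^{O(n\log^2 n+p\kappa)},
\]
which is $n^{-\omega(1)}$ since $\eta\kappa\log n=\omega(n\log^2 n+p\kappa+\kappa\log(1/p))$ under the hierarchy $1/n\llpoly p\ll\eta$ and the choice $\kappa=n^{1+\eta/2}$.

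The main obstacle is keeping the error terms under control when moving between the unconditional probability $\P(H_S\subset G)$ and the conditional $\P(H_S\subset G\mid H=\hat H)$: Corollary~\ref{cor:latinsquareprobabilities} must be used simultaneously as an upper bound in the numerator and a matching lower bound in the denominator, and the multiplicative slack $e^{O(p(e(H_S)+e(\hat H))+n\log^2 n)}$ that accumulates across the ratio must be swallowed by the gain $n^{-2\kappa}$ coming from the $2\kappa$ fresh middle edges. A secondary technical point is the greedy construction of the lower bound on $|\mathcal S_G|$, which rests on the observation that every edge of $G|_D$ is a middle edge of only $O(1)$ realised candidate pairs, ensuring that in the ordered greedy selection each previously chosen pair blocks at most four future choices.
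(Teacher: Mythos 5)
Your overall strategy is the same as the paper's (a deletion-method argument with $\kappa=n^{1+\eta/2}$, bounding the conditional probability of a large edge-disjoint family via Corollary~\ref{cor:latinsquareprobabilities} and Markov), and the colour-triple encoding you use is an equivalent alternative to the paper's encoding by the $P_1$-middle edge. However, there is a genuine gap in the application of Corollary~\ref{cor:latinsquareprobabilities}. That corollary requires the graph fed into it to have each colour used at most $\delta n$ times; with $\delta=p$ and $|D|=pn$, you need each colour of $H_S=\hat H\cup H^{\mathrm{mid}}_S$ to appear at most $pn$ times. But your $\mathcal S$ only asks that $H^{\mathrm{mid}}_S$ be properly coloured, which permits a single colour $c_2$ to appear on up to $n$ of the $2\kappa$ middle edges (they need only form a matching), far exceeding $pn$. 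Without a colour-multiplicity cap built into the definition of $\mathcal S$, the bound $\P(H_S\subset G)=e^{O(\cdot)}n^{-e(H_S)}$ is simply not licensed, and the inequality on $\P(H_S\subset G\mid H=\hat H)$ does not follow.

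The paper repairs exactly this by restricting $\mathcal S_{\hat H}$ to sequences in which every colour appears at most $pn/2$ times on $\bigcup_i F_i$ (so together with the at-most-$4$ occurrences in $\hat H$ the total is $\leq pn$). This in turn forces a correction in your greedy lower bound: when choosing $F_i$, you must exclude not only the $O(i)$ already-used edges but also the at most $2(i-1)\cdot 2/(pn/2)$ colours that have already reached their quota, each of which knocks out at most $2n$ further candidates. Your count $N-4(i-1)$ omits this second term; with it included one still gets at least $n^{1+\eta/2}$ choices per step under $1/n\llpoly p$, and the rest of your arithmetic then goes through. So the fix is local, but as written the verification of the hypotheses of Corollary~\ref{cor:latinsquareprobabilities} is missing and the proof is incomplete.
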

Given this claim, we will have
\begin{align*}
\P(\mathcal{A})&=\sum_{\hat{H}\in \cH}\P(\mathcal{A}\land (H=\hat{H}))=\sum_{\hat{H}\in \cH}\P(\mathcal{A}|H=\hat{H})\cdot \P(H=\hat{H})
\leq \max_{\hat{H}\in \cH}\P(\mathcal{A}|H=\hat{H})
= n^{-\omega(1)}.
\end{align*}

Therefore, it is sufficient to prove Claim~\ref{clm:AH1}.

\smallskip

\noindent \emph{Proof of Claim~\ref{clm:AH1}.} Let $\hat{H}\in \cH$. Let $\kappa=n^{1+\eta/2}$.
Let $\mathcal{F}_{\hat{H}}$ be the set of properly coloured graphs $F$ with vertices in $(A\cup B)\setminus \{x_1,x_2,y_1,y_2\}$ which each comprise two coloured edges, $e$ and $f$ say, with colour in $D$, such that $\hat{H}+e+f$ is a properly coloured graph which contains a pair $(P_1,P_2)$ of vertex-disjoint paths in $G|_D$ of length three such that $P_1$ is an $x_1,y_1$-path, $P_2$ is an $x_2,y_2$-path and they have the same colours in the same order.
Such a subgraph $F$ is determined by the middle edge in $P_1$ (including the edge's colour), and therefore $|\mathcal{F}_{\hat{H}}|\leq n^3$.

Let $\mathcal{S}_{\hat{H}}$ be the set of sequences $(F_1,\ldots,F_\kappa)$ of length $\kappa$ of edge-disjoint subgraphs from $\mathcal{F}_{\hat{H}}$ for which each colour appears on $\bigcup_{i\in [\kappa]}F_i$ at most $pn/2$ times, and note that, then,
\begin{equation}\label{eqn:sizeS1new1}
|\mathcal{S}_{\hat{H}}|\leq |\mathcal{F}_{\hat{H}}|^\kappa\leq n^{3\kappa}.
\end{equation}
For each $S=(F_1,\ldots,F_{\kappa})\in \mathcal{S}_{\hat{H}}$, let $H_S=\cup_{i\in[\kappa]}F_i$, so that $e(H_S)=2\kappa$, each colour appears on $H_S$ at most $pn/2$ times and every edge of $H_S$ has colour in $D$.
Let $Z_{\hat{H}}$ be the number of
$S\in \mathcal{S}_{\hat{H}}$ with $H_S\subset G$.

Now, we have that each subgraph $F\in \mathcal{F}_{\hat{H}}$ is determined by either of its edges along with which of the paths $P_1$ and $P_2$ it is in. Thus, any edge can appear in at most two graphs in $\mathcal{F}_{\hat{H}}$ appearing as subgraphs of $G$, and (as any colour in $D$ appears on $n$ edges of $G$) any colour can appear on at most $2n$ graphs in $\mathcal{F}_{\hat{H}}$ appearing as subgraphs of $G$.
Assuming $\mathcal{A}$ holds, then we have $Z_{\hat{H}}\geq (n^{1+\eta/2})^{\kappa}$, as follows. Indeed, if $\mathcal{A}$ holds, then we can pick a sequence $(F_1,\ldots,F_\kappa)$ of edge-disjoint subgraphs from $\mathcal{F}_{\hat{H}}$ by picking each $F_i$, $1\leq i\leq\kappa$, in turn, where at the selection of each $F_i$, $i\in [\kappa]$, there will be $(i-1)\cdot 2$ edges we wish to avoid and at most $2(i-1)\cdot 2/pn$ colours, so the number of possibilities for $F_i$ is at least
\[
n^{1+\eta}-2\cdot 2\kappa -2n\cdot \frac{2\kappa}{pn/2}\geq n^{1+\eta/2},
\]
as $1/n\llpoly p$ and $\eta=0.01$, and therefore $Z_{\hat{H}}\geq (n^{1+\eta/2})^{\kappa}$.

For each $S\in \mathcal{S}_{\hat{H}}$, note that $H_S\cup \hat{H}$ has at most $pn/2+4\leq pn$ edges of each colour, and every edge of $H_S\cup \hat{H}$ has colour in $D$. Therefore, for each $S\in \mathcal{S}_{\hat{H}}$ for which $H_S\cup \hat{H}$ is properly coloured, using Corollary~\ref{cor:latinsquareprobabilities} (applied to both $H_S$ and $H_S\cup \hat{H}$) and that $e(H_S)=2\kappa\geq n\log^2n\geq 4n\geq e(\hat{H})$, we have
\begin{align}
\P(H_S\subset G|H=\hat{H})&=\frac{\P((H_S\cup \hat{H})\subset G)}{\P(\hat{H}\subset G)}
=e^{O(p\cdot e(H_S)+n\log^2n)}\cdot \frac{n^{-e(H_S\cup \hat{H})}}{n^{-e(\hat{H})}}=e^{O(p\kappa)}n^{-2\kappa},\label{eqn:longversion}
\end{align}
where we have used that $p\kappa=pn^{1+\eta/2}=\Omega(n\log^2n)$ as $1/n\llpoly p$.
Therefore, as this holds for each $S\in \mathcal{S}_{\hat{H}}$ such that $H_S\cup \hat{H}$ is properly coloured,
\[
\E(Z_{\hat{H}}|H=\hat{H})\leq |\mathcal{S}_{\hat{H}}|\cdot e^{O(p\kappa)}n^{-2\kappa}\overset{\eqref{eqn:sizeS1new1}}{\leq}
n^{3\kappa}\cdot e^{O(p\kappa)}n^{-2\kappa}\leq (n^{1+O(p)})^{\kappa}=(n^{(1+\eta/2)})^\kappa\cdot n^{-\omega(1)},
\]
so that, by Markov's inequality, we have
\[
\P(\mathcal{A}|H=\hat{H}) \leq \P(Z_{\hat{H}}\geq (n^{1+\eta/2})^{\kappa})\leq \frac{\E(Z_{\hat{H}}|H=\hat{H})}{(n^{1+\eta/2})^{\kappa}}
=n^{-\omega(1)}.
\]
This completes the proof of the claim, and hence the lemma.\hspace{5.2cm}$\boxdot$
\end{proof}


Using Lemma~\ref{lem:looseupperlength3}, we now deduce a similar bound which is very likely to hold in $G\sim G_{[n]}^\col$ but where the paths can use any colours in $G$.

\begin{corollary}\label{cor:fewlengththreesame}
Let $\eta=0.02$ and $n\in \N$. Let $G\sim G_{[n]}^\col$. Let $x_1,x_2\in A$ be distinct and let $y_1,y_2\in B$ be distinct. Then, with probability $1-n^{-\omega(1)}$, there are at most $n^{1+\eta}$ pairs $(P_1,P_2)$ of vertex-disjoint paths in $G$ of length three such that $P_1$ is an $x_1,y_1$-path, $P_2$ is an $x_2,y_2$-path and they have the same colours in the same order.
\end{corollary}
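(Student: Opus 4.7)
The plan is to deduce the corollary from Lemma~\ref{lem:looseupperlength3} by partitioning the colour palette into constantly many parts and union-bounding the lemma over all triples of parts. Fix a small constant $p$ with $3p \ll 0.01$ (for instance $p = 1/1000$), so that the hypothesis $1/n \llpoly p' \ll 0.01$ of Lemma~\ref{lem:looseupperlength3} is satisfied with $p' := 3p$, and partition $[n]$ into $k := 1/p$ groups $D_1, \ldots, D_k$ of equal size $pn$ (any rounding issues affect only lower-order terms and can be absorbed).

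For each triple $\{i, j, l\}$ of distinct indices from $[k]$, set $E_{ijl} := D_i \cup D_j \cup D_l$, a colour set of size $3pn$. Applying Lemma~\ref{lem:looseupperlength3} with $p'$ in place of $p$ and $D = E_{ijl}$ gives, with probability $1 - n^{-\omega(1)}$, that there are at most $n^{1.01}$ pairs of vertex-disjoint length-three paths joining $x_1, y_1$ and $x_2, y_2$ with matching colour sequences whose colours all lie in $E_{ijl}$. Since there are only $\binom{k}{3} = O(1)$ such triples, a union bound gives that the good event holds simultaneously for every triple with probability $1 - n^{-\omega(1)}$.

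It then suffices to observe that any pair $(P_1, P_2)$ counted by the corollary uses exactly three distinct colours: $P_1$ is a length-three path in the properly coloured graph $G$ and hence rainbow, while $P_2$ uses the same three colours. These three colours occupy at most three of the parts $D_i$, and therefore lie inside some $E_{ijl}$, after extending to a triple of distinct indices if fewer than three parts are actually used (there is plenty of room, as $k$ is large). Thus every such pair is counted by at least one of the quantities $X_{E_{ijl}}$, so the total count is at most $\binom{k}{3} \cdot n^{1.01} = O(n^{1.01}) \leq n^{1.02}$ for all sufficiently large $n$, as required.

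I do not expect any substantial obstacle: the argument is a union bound plus an elementary counting inequality on top of the previously established lemma. The only minor care needed is confirming that each pair is captured by some triple union, which is automatic since only constantly many parts are used in total, so any subset of at most three parts embeds into a triple of distinct indices.
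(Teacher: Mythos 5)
Your proof is correct, and it takes a genuinely different (and arguably cleaner) route than the paper. The paper avoids a deterministic partition: it introduces $n$ independent uniformly random colour sets $\hat{D}_1,\ldots,\hat{D}_n$ of size $pn$, applies a Chernoff bound to show that every set of three colours is contained in at least $p^3n/2$ of the $\hat{D}_i$, applies Lemma~\ref{lem:looseupperlength3} to each $\hat{D}_i$, and then uses a double-counting inequality (total count $\cdot\, p^3n/2 \le n\cdot n^{1.01}$) to conclude. Your approach instead fixes a single deterministic partition of $[n]$ into $k=1/p$ colour classes, applies Lemma~\ref{lem:looseupperlength3} to the $O(1)$ triple unions $D_i\cup D_j\cup D_l$ (with $p'=3p$ still below the $\ll 0.01$ threshold), and union bounds over those $\binom{k}{3}=O(1)$ events; since each counted pair uses at most three distinct colours, it is captured by some triple union. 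Your argument dispenses with the auxiliary random colour sets and the Chernoff/double-counting step entirely, and the union bound is over $O(1)$ rather than $n$ events, so it is somewhat more elementary. One small slip: you assert that $P_1$ is ``hence rainbow'' because $G$ is properly coloured, but a length-three path $x_1 v_1 v_2 y_1$ can have $c(x_1v_1)=c(v_2y_1)$ since those edges are not adjacent, so the pair may in fact use only two distinct colours. This does not affect your argument, as you already handle the case of fewer than three parts by extending to a full triple of indices, and ``at most three'' is all that is needed.
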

\begin{proof} Let $p\ll \eta$, let $x_1,x_2\in A$ be distinct, let $y_1,y_2\in B$ be distinct and let $G\sim G_{[n]}^\col$. For each $i\in [n]$, let $\hat{D}_i$ be a uniformly random subset of $C$ with size $pn$. By Chernoff's bound, with high probability we have that, for each set $\hat{C}\subset C$ of $3$ colours, there are at least $p^3n/2$ values of $i\in [n]$ for which $\hat{C}\subset \hat{D}_i$. Furthermore, by Lemma~\ref{lem:looseupperlength3}, with probability $1-n^{-\omega(1)}$, for each $i\in [n]$, we have that there are at most $n^{1.01}$ pairs $(P_1,P_2)$ of vertex-disjoint paths in $G|_{\hat{D}_i}$ of length three such that $P_1$ is an $x_1,y_1$-path, $P_2$ is an $x_2,y_2$-path and they have the same colours in the same order.
Therefore, the number of pairs of vertex-disjoint paths $(P_1,P_2)$ in $G$ of length three such that $P_1$ is an $x_1,y_1$-path, $P_2$ is an $x_2,y_2$-path and they have the same colours in the same order is at most
\[
\frac{n\cdot n^{1.01}}{p^3n/2}\leq n^{1+\eta},
\]
as required, where the last inequality holds for sufficiently large $n$, which holds with probability $1-n^{-\omega(1)}$.
\end{proof}


\subsection{Tight upper bounds for length 7 paths with the same colours}\label{sec:tight7pathupperbound}
We now prove a tighter upper bound for pairs of paths of length 7 instead of pairs of paths of length 3.
We will use Corollary~\ref{cor:fewlengththreesame} to show that the subgraphs we now seek in $G\sim G^\col_{[n]}$ are likely to be well distributed in $G$, through the following result.

\begin{prop}\label{prop:linksusingearefew} Let $G\sim G_{[n]}^\col$. Let $x_1,x_2\in A$ and $y_1,y_2\in B$ all be distinct. Then, with probability $1-n^{-\omega(1)}$, for each $e\in E(G-\{x_1,x_2,y_1,y_2\})$, there are at most $n^{3.03}$ pairs  $(P_1,P_2)$ of vertex-disjoint paths in $G$ of length seven such that $P_1$ is an $x_1,y_1$-path, $P_2$ is an $x_2,y_2$-path, they have the same colours in the same order, and $e\in E(P_1\cup P_2)$.
\end{prop}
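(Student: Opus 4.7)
The plan is to decompose each length-$7$ pair $(P_1,P_2)$ through $e$ at its unique \emph{middle split}, writing $P_i = P_i^L \cup e^{*}_i \cup P_i^R$ with $P_i^L,P_i^R$ length-$3$ paths and $e^{*}_i$ the $4$th edge, so the count reduces to a constant-size case analysis according to where $e$ lies: either $e\in\{e^{*}_1,e^{*}_2\}$ (a \emph{middle-edge case}) or $e$ in one of $P_1^L, P_1^R, P_2^L, P_2^R$ (a \emph{half-edge case}). The length-$3$ sub-pairs will then be controlled by Corollary~\ref{cor:fewlengththreesame}; since that corollary is stated per fixed endpoint quadruple, I would first apply a union bound over all $O(n^4)$ quadruples $(x_1',x_2',y_1',y_2')$ of distinct vertices with $x_1',x_2'\in A$, $y_1',y_2'\in B$. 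As each per-quadruple failure is $n^{-\omega(1)}$, the resulting event --- that the bound of Corollary~\ref{cor:fewlengththreesame} holds simultaneously for every such quadruple with some fixed $\eta\le 0.015$ --- occurs with probability $1-n^{-\omega(1)}$ (a slightly sharpened $\eta$ is available by choosing $p$ smaller in the proof of Corollary~\ref{cor:fewlengththreesame}, since the loss from the union bound there is $n^{O(p)}$). Everything below would then be deterministic on this event.

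Fix $e\in E(G-\{x_1,x_2,y_1,y_2\})$. In a middle-edge case, say $e=e^{*}_1$, the other middle edge $e^{*}_2$ must have the same colour as $e$, so there are at most $n$ choices; once $e^{*}_2$ is fixed, the endpoints of the length-$3$ pair $(P_1^L,P_2^L)$ and of $(P_1^R,P_2^R)$ are determined, and each of these pair-counts is bounded by $n^{1+\eta}$, giving at most $n\cdot n^{2(1+\eta)}=n^{3+2\eta}\le n^{3.03}$ contributions. In a half-edge case, say $e\in P_1^L$ at position $k\in\{2,3\}$, the path $P_1^L=x_1v_1v_2v_3$ has exactly one free internal vertex left after $e$ and $x_1$ are fixed, so at most $n$ choices determine $P_1^L$; its colour sequence then forces $P_2^L$ uniquely (if it extends at all from $x_2$). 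Choosing the middle-edge colour (at most $n$ options) determines both $e^{*}_1$ and, via the forced colour match, $e^{*}_2$; Corollary~\ref{cor:fewlengththreesame} applied to the second-half pair $(P_1^R,P_2^R)$ between the now-fixed endpoints gives at most $n^{1+\eta}$, yielding at most $n\cdot n\cdot n^{1+\eta}=n^{3+\eta}$ in total. The remaining half-edge cases $e\in P_1^R, P_2^L, P_2^R$ are symmetric.

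Summing the constant number of cases yields at most $n^{3.03}$ length-$7$ pairs through $e$ for every such $e$. The main obstacle is the middle-edge case: it is the only one in which two independent invocations of Corollary~\ref{cor:fewlengththreesame} compound, so the exponent $\eta$ must satisfy $3+2\eta\le 3.03$, i.e.\ $\eta\le 0.015$; this is exactly what forces the re-optimisation of $p$ in the proof of Corollary~\ref{cor:fewlengththreesame}. In every half-edge case, by contrast, one of the length-$3$ sub-pairs is bounded by the direct parameterisation of the half containing $e$ rather than by Corollary, saving one factor of $n^{\eta}$ and so giving no trouble.
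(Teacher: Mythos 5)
Your proof is essentially correct and uses the same core idea as the paper (control the count via Corollary~\ref{cor:fewlengththreesame} for pairs of length-$3$ paths), but the decomposition you choose is less efficient in one case, and it is worth comparing.

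The paper does not split into middle-edge and half-edge cases. Instead it observes that if $e$ is the $i$th edge of $P_1$ (with $2\le i\le 4$, forced since $x_1\notin V(e)$), then by choosing $i$ (at most $3$ choices) and the colours of the \emph{other two} edges among $e_2,e_3,e_4$ (at most $n^2$ choices) one determines the \emph{entire} first four edges of $P_1$: from $e$ one gets two vertices, walking the chosen colours back to $x_1$ fixes $e_1$, and walking forward fixes $e_4$ and hence the 5th vertex; by the colour-sequence match this also determines the first four edges of $P_2$. Then Corollary~\ref{cor:fewlengththreesame} is applied once, to the last three edges, yielding $3n^2\cdot n^{1+\eta}=3n^{3+\eta}$ per ``half'' and $12n^{3+\eta}\le n^{3.03}$ in total, with $\eta=0.02$ as stated. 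By contrast your middle-edge case ($e$ the fourth edge) throws away the information that fixing two more colours would pin down both length-$3$ halves, and instead invokes the corollary twice, accumulating $n^{3+2\eta}$. That forces $\eta<0.015$, which you correctly flag. Your fix is valid --- the corollary really does hold for any $\eta>0.01$ at the cost of taking $n$ larger --- but your stated reason for it is wrong: the loss in the proof of Corollary~\ref{cor:fewlengththreesame} is not ``$n^{O(p)}$ from a union bound''; it is the constant factor $2/p^3$, and the slack that allows $\eta$ down to (just above) $0.01$ comes from Lemma~\ref{lem:looseupperlength3} already giving $n^{1.01}$ so that $2/p^3\le n^{\eta-0.01}$ for $n$ sufficiently large. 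Choosing $p$ smaller only makes this constant worse. So: the argument goes through, but the paper's uniform treatment of all three positions $i\in\{2,3,4\}$ is cleaner, avoids the double invocation entirely, and works directly with the stated $\eta=0.02$ without re-deriving the auxiliary results.
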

\begin{proof} Let $\eta=0.02$. From Corollary~\ref{cor:fewlengththreesame}, with probability $1-n^{-\omega(1)}$, for any distinct $x_1',x_2',y_1',y_2'\in V(G)$ there are at most $n^{1+\eta}$ pairs $(P_1',P_2')$ of vertex-disjoint paths in $G$ of length three such that $P_1'$ is an $x'_1,y'_1$-path, $P_2$ is an $x'_2,y'_2$-path and they have the same colours in the same order.
Assuming this property, we will now show the required property holds.

For this, let $x_1,x_2\in A$ be distinct and let $y_1,y_2\in B$ be distinct and let $e\in E(G-\{x_1,x_2,y_1,y_2\})$.
Let $(P_1,P_2)$ be a pair of vertex-disjoint paths in $G$ of length seven such that $P_1$ is an $x_1,y_1$-path, $P_2$ is an $x_2,y_2$-path, they have the same colours in the same order, and $e\in E(P_1\cup P_2)$.
Suppose  that $e$ is one of the first 4 edges of $P_1$, say the $i$th edge and note that, as $x_1\notin V(e)$, $2\leq i\leq 4$. Note further that the first 4 edges of $P_1$ are determined by choosing which of them is $e$ and additionally choosing the colour of the other edges among the 2nd, 3rd and 4th edge. As the colours on $P_1$ and $P_2$ are the same and in the same order, this then determines the first 4 edges of $P_2$. By the property from Corollary~\ref{cor:fewlengththreesame}, the pair $(P_1,P_2)$ is then determined up to at most $n^{1+\eta}$ possibilities. Thus, there are overall at most $3n^{3+\eta}$ possibilities for $(P_1,P_2)$ for which $e$ is one of the first 4 edges of $P_1$.

By the same argument, there are at most $3n^{3+\eta}$ possibilities for $(P_1,P_2)$ when $e$ is one of the last 4 edges of $P_1$, or one of the first 4 edges of $P_2$, or one of the last 4 edges of $P_2$. Therefore, in total, there are at most $12n^{3+\eta}\leq n^{3.03}$  pairs  $(P_1,P_2)$ of vertex-disjoint paths in $G$ of length seven such that $P_1$ is an $x_1,y_1$-path, $P_2$ is an $x_2,y_2$-path, they have the same colours in the same order, and $e\in E(P_1\cup P_2)$.
\end{proof}


Using Proposition~\ref{prop:linksusingearefew}, we now prove our tight upper bound likely to hold for pairs of length 7 paths with the same colour pattern and colours within some subset $D\subset [n]$, as follows.

\begin{lemma}\label{lem:tightupplength7} Let $1/n\llpoly p \llpoly \eps$. Let $D\subset [n]$ have size $pn$. Let $G\sim G_{[n]}^\col$. Let $x_1,x_2\in A$ be distinct and let $y_1,y_2\in B$ be distinct.
Then, with probability $1-n^{-\omega(1)}$, there are at most $(1+\eps)p^7n^5$ pairs $(P_1,P_2)$ of vertex-disjoint paths in $G|_D$ of length seven such that $P_1$ is an $x_1,y_1$-path, $P_2$ is an $x_2,y_2$-path and they have the same colours in the same order.
\end{lemma}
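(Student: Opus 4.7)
The plan is to mirror the deletion-method argument used to prove Lemma~\ref{lem:looseupperlength3}, pushing the slack from $n^{\eta}$ down to $1+\eps$ by means of a sharper count of the relevant combinatorial family. The tightness is afforded by the hierarchy $1/n\llpoly p\llpoly\eps$ together with Proposition~\ref{prop:linksusingearefew}, which controls the edge-spread of valid pairs. As in Lemma~\ref{lem:looseupperlength3}, let $H$ be the subgraph of $G|_D$ of edges incident to $\{x_1,x_2,y_1,y_2\}$, let $\mathcal H$ be the set of possibilities for $H$, and condition on $H=\hat H\in\mathcal H$; it suffices to show that the event $\mathcal A$ of having more than $N:=(1+\eps)p^7n^5$ valid pairs has conditional probability $n^{-\omega(1)}$. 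Let $\mathcal F_{\hat H}$ be the set of properly coloured $10$-edge graphs $F$ on $V(G)\setminus\{x_1,x_2,y_1,y_2\}$ for which $\hat H+F$ contains a valid pair; these are the ``middle stretches'' of the two paths, and each pair is uniquely recovered from $F$ together with $\hat H$. Choosing the $7$ colours of the common colour pattern from $D$ gives at most $(pn)^7$ options, the first and last colour together with $\hat H$ each determine the adjacent internal vertex uniquely, and the remaining eight internal vertices may be chosen freely ($(1+o(1))n^8$ options), giving $|\mathcal F_{\hat H}|\leq (1+o(1))p^7n^{15}$.

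Set $\kappa=\lceil n^{1+\eta}\rceil$ for a small constant $\eta>0$, let $\mathcal S_{\hat H}$ be the set of ordered $\kappa$-tuples $S=(F_1,\dots,F_\kappa)$ of edge-disjoint elements of $\mathcal F_{\hat H}$ whose union uses each colour at most $pn/2$ times, and let $Z_{\hat H}$ count those $S\in\mathcal S_{\hat H}$ with $H_S:=\bigcup_i F_i\subset G$. Condition additionally on the very-high-probability event from Proposition~\ref{prop:linksusingearefew} that every edge of $G-\{x_1,x_2,y_1,y_2\}$ lies in at most $n^{3.03}$ valid pairs. Then, whenever $\mathcal A$ holds, one extracts edge-disjoint $F_i$'s greedily: at step $i$ the $10(i-1)$ previously used edges forbid at most $10(i-1)\cdot n^{3.03}$ pairs, and the $O(\kappa/(pn))$ colours already at capacity forbid a further $O(\kappa/(pn))\cdot n^{4.03}$ pairs. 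Since $\kappa=n^{1+\eta}$, $p\llpoly\eps$ and $\eta$ is small, both terms are $o(\eps\,p^7n^5)$, so $Z_{\hat H}\geq ((1+\eps/2)p^7n^5)^\kappa$ under $\mathcal A$.

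For the expectation, each $H_S\cup\hat H$ is properly coloured with every colour (lying in $D$) appearing at most $pn$ times, and $e(H_S\cup\hat H)=10\kappa+e(\hat H)$ with $e(\hat H)\leq 4n$. Applying Corollary~\ref{cor:latinsquareprobabilities} to both $H_S\cup\hat H$ and $\hat H$ and using $\kappa\gg n\log^2 n$ gives
\[
\P(H_S\subset G\mid H=\hat H)\leq e^{O(p\kappa+n\log^2 n)}\,n^{-10\kappa}\leq e^{(\eps/5)\kappa}\,n^{-10\kappa},
\]
where the $O(p\kappa)$ term is absorbed into $(\eps/5)\kappa$ via $p\llpoly\eps$ and the $n\log^2 n$ term is absorbed via $\kappa\gg n\log^2 n$. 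Combined with $|\mathcal S_{\hat H}|\leq ((1+o(1))p^7n^{15})^\kappa$, this yields
\[
\E[Z_{\hat H}\mid H=\hat H]\leq \bigl((1+o(1))e^{\eps/5}\,p^7n^5\bigr)^\kappa.
\]
Markov's inequality and the lower bound on $Z_{\hat H}$ under $\mathcal A$ then give $\P(\mathcal A\mid H=\hat H)\leq\bigl((1+o(1))e^{\eps/5}/(1+\eps/2)\bigr)^\kappa=n^{-\omega(1)}$, since the ratio is bounded away from $1$ for $p$ small and $n$ large. The only delicate point is to choose $\eta$ and $p$ (depending on $\eps$) so that the multiplicative $(1+\eps/2)^\kappa$ gain from the greedy lower bound on $Z_{\hat H}$ beats the accumulated constants $(1+o(1))e^{\eps/5}$ coming from the combinatorial count and Corollary~\ref{cor:latinsquareprobabilities}; this is exactly what $1/n\llpoly p\llpoly\eps$ together with $\kappa=n^{1+\eta}\gg n\log^2 n$ achieves.
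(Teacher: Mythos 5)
Your argument follows the paper's structure (same conditioning on $H=\hat H$, same use of Proposition~\ref{prop:linksusingearefew} as the auxiliary event $\mathcal B$, same greedy extraction, same Markov step), but there is a genuine gap in the step ``these are the `middle stretches' of the two paths, and each pair is uniquely recovered from $F$ together with $\hat H$.'' This uniqueness claim is false in general: a single $10$-edge graph $F$ consists of two vertex-disjoint paths $Q,Q'$ of length $5$, and both assignments ($Q$ as the middle of $P_1$ with $Q'$ as the middle of $P_2$, or the reverse) can simultaneously yield valid pairs, provided $\hat H$ contains all eight of the relevant pendant edges with colours satisfying $c(x_1a)=c(x_2a')$, $c(x_1a')=c(x_2a)$ (and similarly at the $y$-side) — a configuration that proper colouring does not forbid. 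Since you condition on an arbitrary $\hat H$, you cannot rule this out, so the map $\mathrm{pair}\mapsto F$ may be $2$-to-$1$.

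This matters precisely because the bound being proved is tight to the leading constant. In your greedy extraction of $F_1,\dots,F_\kappa$, having at least $(1+\eps)p^7n^5$ valid pairs only guarantees at least $\tfrac12(1+\eps)p^7n^5$ distinct $F$'s available at each step, so the lower bound becomes $Z_{\hat H}\geq (\tfrac12(1+\eps)p^7n^5 - o(p^7n^5))^\kappa$. Comparing against $\E[Z_{\hat H}\mid H=\hat H]\leq (p^7n^5 e^{O(p)})^\kappa$, Markov then gives a ratio $\approx (2e^{O(p)}/(1+\eps))^\kappa$, which does not tend to zero for $\eps<1$. The fix is exactly what the paper does: augment each $F$ with a label (e.g.\ the colour of the first edge of $P_1$), so that $(F,c)$ determines the pair uniquely; the count $|\mathcal F_{\hat H}|\leq p^7n^{15}$ is unchanged, the greedy extraction recovers its full count, and the rest of your argument goes through as written.
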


\begin{center}
\begin{tikzpicture}
\def\setsp{0.8}

\foreach \n/\ifBup in {1/0,2/1,3/0,4/1,5/0,6/1,7/0,8/1}
{
\coordinate (v\n) at ($(\n*\setsp,0)+\ifBup*(0,0.5)$);
}
\foreach \n/\ifBup in {9/0,10/1,11/0,12/1,13/0,14/1,15/0,16/1}
{
\coordinate (v\n) at ($(\n*\setsp,0)+\ifBup*(0,0.5)$);
}

\draw [red,thick] (v1) -- (v2);
\draw [darkgreen,thick] (v2) -- (v3);
\draw [blue,thick] (v3) -- (v4);
\draw [purple,thick] (v4) -- (v5);
\draw [orange,thick] (v5) -- (v6);
\draw [teal,thick] (v6) -- (v7);
\draw [brown,thick] (v7) -- (v8);

\draw [red,thick] (v9) -- (v10);
\draw [darkgreen,thick] (v10) -- (v11);
\draw [blue,thick] (v11) -- (v12);
\draw [purple,thick] (v12) -- (v13);
\draw [orange,thick] (v13) -- (v14);
\draw [teal,thick] (v14) -- (v15);
\draw [brown,thick] (v15) -- (v16);

\draw [red] ($0.5*(v1)+0.5*(v2)+(0,0.2)$) node {\footnotesize $1$};
\draw [red] ($0.5*(v9)+0.5*(v10)+(0,0.2)$) node {\footnotesize $1$};
\draw [darkgreen] ($0.5*(v2)+0.5*(v3)+(0,0.2)$) node {\footnotesize $2$};
\draw [darkgreen] ($0.5*(v10)+0.5*(v11)+(0,0.2)$) node {\footnotesize $2$};
\draw [blue] ($0.5*(v3)+0.5*(v4)+(0,0.2)$) node {\footnotesize $3$};
\draw [blue] ($0.5*(v11)+0.5*(v12)+(0,0.2)$) node {\footnotesize $3$};
\draw [purple] ($0.5*(v4)+0.5*(v5)+(0,0.2)$) node {\footnotesize $4$};
\draw [purple] ($0.5*(v12)+0.5*(v13)+(0,0.2)$) node {\footnotesize $4$};
\draw [orange] ($0.5*(v5)+0.5*(v6)+(0,0.2)$) node {\footnotesize $5$};
\draw [orange] ($0.5*(v13)+0.5*(v14)+(0,0.2)$) node {\footnotesize $5$};
\draw [teal] ($0.5*(v6)+0.5*(v7)+(0,0.2)$) node {\footnotesize $6$};
\draw [teal] ($0.5*(v14)+0.5*(v15)+(0,0.2)$) node {\footnotesize $6$};
\draw [brown] ($0.5*(v7)+0.5*(v8)+(0,0.2)$) node {\footnotesize $7$};
\draw [brown] ($0.5*(v15)+0.5*(v16)+(0,0.2)$) node {\footnotesize $7$};

\foreach \n in {1,...,16}
{
\draw [fill] (v\n) circle[radius=0.05];
}

\draw ($(v1)+(0,-0.3)$) node {$x_1$};
\draw ($(v9)+(0,-0.3)$) node {$x_2$};
\draw ($(v8)+(0,0.3)$) node {$y_1$};
\draw ($(v16)+(0,0.3)$) node {$y_2$};
\end{tikzpicture}
\end{center}

\begin{proof}
Let $H$ be the graph of edges in $G$ next to $\{x_1,x_2,y_1,y_2\}$ with colour in $D$, and let $\mathcal{H}$ be the set of possibilities for $H$.
Let $\mathcal{A}$ be the event that there are more than $(1+\eps)p^7n^5$ pairs $(P_1,P_2)$ of vertex-disjoint paths in $G|_D$ of length seven such that $P_1$ is an $x_1,y_1$-path, $P_2$ is an $x_2,y_2$-path and they have the same colours in the same order.
Let $\mathcal{B}$ be the event that, for each $e\in E(G-\{x_1,x_2,y_1,y_2\})$, there are at most $n^{3.03}$ pairs  $(P_1,P_2)$ of vertex-disjoint paths in $G$ of length seven such that $P_1$ is an $x_1,y_1$-path, $P_2$ is an $x_2,y_2$-path, they have the same colours in the same order, and $e\in E(P_1\cup P_2)$.
We will show the following claim.

\begin{claim}\label{clm:AH} For each $\hat{H}\in \mathcal{H}$, $\P(\mathcal{A}\land\mathcal{B}|H=\hat{H})=n^{-\omega(1)}$.
\end{claim}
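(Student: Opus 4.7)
The plan is a deletion-method argument in the spirit of Claim~\ref{clm:AH1}, adapted for length-$7$ paths. Fix $\hat{H} \in \mathcal{H}$ and set $\kappa = n^{1.01}$. Let $\mathcal{F}_{\hat{H}}$ denote the set of properly coloured $10$-edge graphs $F$ on $V(G) \setminus \{x_1,x_2,y_1,y_2\}$ with colours in $D$ such that $\hat{H} \cup F$ contains a valid pair $(P_1,P_2)$ of vertex-disjoint length-$7$ paths of the desired form; thus the $4$ boundary edges of the pair lie in $\hat{H}$ and the $10$ interior edges make up $F$. Let $\mathcal{S}_{\hat{H}}$ consist of length-$\kappa$ sequences $S=(F_1,\ldots,F_\kappa)$ of edge-disjoint elements of $\mathcal{F}_{\hat{H}}$ whose union $H_S$ has every colour appearing at most $pn/2$ times, and let $Z_{\hat{H}}$ count those $S \in \mathcal{S}_{\hat{H}}$ with $H_S \subseteq G$.

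The first step is to bound $|\mathcal{F}_{\hat{H}}| \leq p^7 n^{15}$: the $7$ colours contribute at most $(pn)^7$ ordered choices, the first and last colours pin down $v_2,v_7,v'_2,v'_7$ through $\hat{H}$, and each of the four intermediate vertices of $P_1$ and of $P_2$ contributes a factor of at most $n$. The second step is to apply Corollary~\ref{cor:latinsquareprobabilities} to both $H_S \cup \hat{H}$ and $\hat{H}$ (both of which satisfy the hypotheses with $\delta = 1/10$), which for $S \in \mathcal{S}_{\hat H}$ with $H_S\cup\hat H$ properly coloured gives
\[
\P(H_S \subseteq G \mid H = \hat{H}) = \frac{\P(H_S \cup \hat{H} \subseteq G)}{\P(\hat{H} \subseteq G)} = e^{O(p \cdot 10\kappa + n \log^2 n)} \, n^{-10\kappa} = e^{O(p\kappa)} n^{-10\kappa},
\]
where the last equality uses $p\kappa \gg n\log^2 n$. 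Combining, $\E[Z_{\hat{H}} \mid H = \hat{H}] \leq (p^7 n^{15})^\kappa \cdot e^{O(p\kappa)} n^{-10\kappa} = e^{O(p\kappa)} (p^7 n^5)^\kappa$.

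The third step is to use $\mathcal{A} \wedge \mathcal{B}$ to force $Z_{\hat H}$ to be large via a greedy construction of a sequence in $\mathcal{S}_{\hat H}$. At step $i$ of the greedy one must avoid pairs containing any of the $\leq 10(i-1)$ previously used interior edges and pairs using any colour whose multiplicity in the partial union has reached $pn/4$. By $\mathcal{B}$, each edge lies in at most $n^{3.03}$ pairs (blocking $O(\kappa n^{3.03}) = O(n^{4.04})$ pairs), while a pigeonhole shows at most $O(\kappa/(pn)) = O(n^{0.01}/p)$ colours can be overused, each appearing in at most $n \cdot n^{3.03}/2$ pairs (blocking $O(n^{4.04}/p)$ further pairs). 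Since $p^7 n^5 \gg n^{4.04}/p$, if $\mathcal{A}$ holds then at every step more than $(1+\eps/2) p^7 n^5$ admissible pairs remain, yielding $Z_{\hat H} \geq ((1+\eps/2) p^7 n^5)^\kappa$.

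Markov's inequality then finishes the job:
\[
\P(\mathcal{A} \wedge \mathcal{B} \mid H = \hat{H}) \leq \frac{\E[Z_{\hat H} \mid H = \hat H]}{((1+\eps/2) p^7 n^5)^\kappa} \leq \frac{e^{O(p\kappa)}}{(1+\eps/2)^\kappa} = e^{-\Omega(\kappa(\eps - O(p)))} = n^{-\omega(1)},
\]
using $p \llpoly \eps$ from the hierarchy. The main obstacle is getting the counting step right: unlike the length-$3$ case of Claim~\ref{clm:AH1}, where each element of $\mathcal{F}_{\hat H}$ was pinned down by its colour sequence alone, the four intermediate vertices of each length-$7$ path are now genuinely free, inflating $|\mathcal{F}_{\hat H}|$ by a factor $n^8$. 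This is precisely compensated by the extra $n^{-8\kappa}$ coming from having $10\kappa$ rather than $2\kappa$ edges in $H_S$, which is why the bound still matches the natural heuristic count $(p^7 n^5)^\kappa$ and why a small additive slack of order $\eps$ is enough to absorb the $e^{O(p\kappa)}$ error.
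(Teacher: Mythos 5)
Your proposal follows the paper's deletion-method argument almost exactly: the same $\kappa = n^{1.01}$, the same bound $|\mathcal{F}_{\hat H}| \leq p^7 n^{15}$, the same two applications of Corollary~\ref{cor:latinsquareprobabilities}, the same greedy construction under $\mathcal{A}\wedge\mathcal{B}$, and the same Markov finish.

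The one genuine gap is in your definition of $\mathcal{F}_{\hat H}$. You take it to be the set of \emph{unlabelled} properly coloured $10$-edge graphs $F$ on $V(G)\setminus\{x_1,x_2,y_1,y_2\}$. But the map from pairs $(P_1,P_2)$ counted by $\mathcal{A}$ to elements of $\mathcal{F}_{\hat H}$ need not be injective: given the two vertex-disjoint length-$5$ paths making up $F$, there can be two distinct ways of attaching their endpoints to $x_1,y_1$ and $x_2,y_2$ through $\hat H$ with matching colour sequences, so one $F\subset G$ can account for two pairs. The paper flags exactly this in a footnote and resolves it by additionally labelling each $F$ with the first colour of $P_1$; the label makes $F\leftrightarrow(P_1,P_2)$ a bijection while still giving $|\mathcal{F}_{\hat H}|\le p^7n^{15}$, since the label is determined by the $15$-tuple you already enumerate. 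Without the label, the greedy step only guarantees at least $\tfrac{1}{2}(1+\eps)p^7n^5$ distinct $F$'s appearing in $G$, so in the worst case your lower bound becomes $Z_{\hat H}\ge\big(\tfrac{1}{2}(1+\eps/2)p^7n^5\big)^\kappa$. That factor of $2^\kappa$ is fatal: the Markov step then reads
\[
\frac{\E[Z_{\hat H}\mid H=\hat H]}{\big(\tfrac{1}{2}(1+\eps/2)p^7n^5\big)^\kappa}\le\frac{e^{O(p\kappa)}(p^7n^5)^\kappa\cdot 2^\kappa}{\big((1+\eps/2)p^7n^5\big)^\kappa}=\left(\frac{2e^{O(p)}}{1+\eps/2}\right)^\kappa,
\]
which diverges since $\eps$ is small and $2>1+\eps/2$, rather than decaying to $n^{-\omega(1)}$. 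Reinstating the colour label recovers the bijection, so your intended lower bound $Z_{\hat H}\ge((1+\eps/2)p^7n^5)^\kappa$ is justified and the rest of the argument closes.

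A smaller expositional point: you say Corollary~\ref{cor:latinsquareprobabilities} applies "with $\delta=1/10$", but the error term you then (correctly) write down, $e^{O(p\cdot 10\kappa+n\log^2 n)}$, corresponds to $\delta=p$. Taking $\delta=1/10$ would give $e^{O(\kappa)}$, which swamps the $(1+\eps/2)^\kappa$ gain. The point is that all colours of $H_S\cup\hat H$ lie in $D$, a set of size $pn$, and each appears at most $pn$ times, so the relevant $\delta$ is $p$.
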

Given this claim, as, by Proposition~\ref{prop:linksusingearefew}, $\P(\mathcal{B})=1-n^{-\omega(1)}$, we will have
\begin{align}
\P(\mathcal{A})&\leq \P(\bar{\mathcal{B}})+\P(\mathcal{A}\land \mathcal{B})= n^{-\omega(1)}+ \sum_{\hat{H}\in \cH}\P((\mathcal{A}\land \mathcal{B})\land (H=\hat{H}))\nonumber\\
&= n^{-\omega(1)}+\sum_{\hat{H}\in \cH}\P(\mathcal{A}\land \mathcal{B}|H=\hat{H})\cdot \P(H=\hat{H})
\leq n^{-\omega(1)}+\max_{\hat{H}\in \cH}\P(\mathcal{A}\land \mathcal{B}|H=\hat{H})\nonumber \\
&= n^{-\omega(1)}.\label{eqn:forlaterreference}
\end{align}

Therefore, it is left only to prove Claim~\ref{clm:AH}.

\smallskip

\noindent\emph{Proof of Claim~\ref{clm:AH}.} Let $\hat{H}\in \cH$, $\eta=0.01$ and $\kappa=n^{1+\eta}$. Let $\mathcal{F}_{\hat{H}}$ be the set of subgraphs $F$ which consist of two vertex-disjoint paths of length 5 with vertices in $(A\cup B)\setminus \{x_1,x_2,y_1,y_2\}$, where $F$ is additionally labelled\footnote{This is done as otherwise each $F$ can have 2 matching paths $(P_1,P_2)$. This did not matter analogously in the proof of Claim~\ref{clm:AH1} as we were proving a looser bound.} with a colour $c$, such that there is a pair $(P_1,P_2)$ of vertex-disjoint paths in $\hat{H}\cup F$ of length seven such that $P_1$ is an $x_1,y_1$-path, $P_2$ is an $x_2,y_2$-path and they have the same colours in the same order, all of which are in $D$, and such that $c$ is the first colour on $P_1$.
Such a pair $(P_1,P_2)$ is determined by choice of the first and last colour of $P_1$ (with at most $(pn)^2$ choices, and which determines $x_1,x_2,y_1$ and $y_2$), the 4 internal vertices of $P_1$ which are not neighbours of $x_1$ or $y_1$ in $P_1$, the 4 internal vertices of $P_2$ which are not neighbours of $x_2$ or $y_2$ in $P_2$, and the 5 colours in order appearing on the edges of $P_1$ which are not in $\hat{H}$. Note (in part for the implication of $\mathcal{A}$ holding later) that each such pair $(P_1,P_2)$ gives rise to exactly one subgraph in $\mathcal{F}_{\hat{H}}$ (labelled with the first colour of $P_1$) and each such subgraph in $\mathcal{F}_{\hat{H}}$ has exactly one such pair $(P_1,P_2)$.
In particular, then,
 $|\mathcal{F}_{\hat{H}}|\leq (pn)^2\cdot n^4\cdot n^4\cdot (pn)^5=p^7n^{15}$. Note that each subgraph in $\mathcal{F}_{\hat{H}}$ has 10 edges.

Let $\mathcal{S}_{\hat{H}}$ be the set of sequences $(F_1,\ldots,F_{\kappa})$ of length $\kappa$ of edge-disjoint subgraphs from $\mathcal{F}_{\hat{H}}$ for which each colour appears on  $\bigcup_{i\in [\kappa]}F_i$ at most $pn/2$ times, and note that, then,
\begin{equation}\label{eqn:sizeS1here}
|\mathcal{S}_{\hat{H}}|\leq p^{7\kappa}n^{15\kappa}.
\end{equation}
For each $S=(F_1,\ldots,F_{\kappa})\in \mathcal{S}_{\hat{H}}$, let $H_S=\cup_{i\in[\kappa]}F_i$, so that $e(H_S)=10\kappa$ and each colour appears on $H_S$ at most $pn/2$ times.
Let $Z_{\hat{H}}$ be the number of
$S\in \mathcal{S}_{\hat{H}}$ with $H_S\subset G$. As follows, if $\mathcal{A}$ and $\mathcal{B}$ hold, then $Z_{\hat{H}}\geq ((1+\eps/2)p^7n^5)^\kappa$.  Indeed, if $\mathcal{A}$ and $\mathcal{B}$ hold, then we can pick a sequence $(F_1,\ldots,F_\kappa)$ of edge-disjoint subgraphs from $\mathcal{F}_{\hat{H}}$ by picking each $F_i$, $1\leq i\leq\kappa$, in turn, where at the selection of each $F_i$, $i\in [\kappa]$,
as $\mathcal{A}$ and $\mathcal{B}$ hold and there will be $10(i-1)$ edges we wish to avoid and at most $10(i-1)\cdot 2/pn$ colours, the number of possibilities for the choice of $F_i$ will be at least
\[
(1+\eps)p^7n^5-10\kappa\cdot n^{3.03}-\frac{10\kappa}{pn/2}\cdot n\cdot n^{3.03}\geq (1+\eps)p^7n^5-\frac{30\cdot n^{1+\eta}\cdot n^{3.03}}{p} \geq (1+\eps/2)p^7n^5,
\]
where we have used that $1/n\llpoly p,\eps$.

For each $S\in \mathcal{H}$, $H_S\cup \hat{H}$ has at most $pn/2+4\leq pn$ edges of each colour and every edge on $H_S\cup \hat{H}$ has colour in $D$.
Therefore, for each $S\in \mathcal{S}$ such that $H_S\cup \hat{H}$ is properly coloured, similarly to \eqref{eqn:longversion}, and using Corollary~\ref{cor:latinsquareprobabilities} twice and that $p\kappa\geq n\log^2n$, we have
\[
\P(H_S\subset G|H=\hat{H})= e^{O(p\kappa+n\log^2n)}n^{-10\kappa}=e^{O(p\kappa)}n^{-10\kappa}.
\]
Thus, as this holds for every $S\in \mathcal{S}_{\hat{H}}$ such that $H_S\cup \hat{H}$ is properly coloured, and  $p\llpoly \eps$,
\begin{align*}
\E(Z_{\hat{H}}|H=\hat{H})&\leq |\mathcal{S}_{\mathcal{H}}|\cdot e^{O(p\kappa)}n^{-10\kappa}
\overset{\eqref{eqn:sizeS1here}}{\leq} 
 p^{7\kappa}n^{5\kappa}\cdot e^{O(p\kappa)}=(1+\eps/2)^{\kappa/2}p^{7\kappa}n^{5\kappa}
\\
&=((1+\eps/2)p^7n^{5})^{\kappa}\cdot {n^{-\omega(1)}}.
\end{align*}
Then, by Markov's inequality, we have
\[
\P(\mathcal{A}\land\mathcal{B}|H=\hat{H})\leq \P(Z_{\hat{H}}\geq ((1+\eps/2)p^7n^{5})^{\kappa})\leq \frac{\E(Z_{\hat{H}}|H=\hat{H})}{((1+\eps/2)p^7n^{5})^{\kappa}}
=n^{-\omega(1)}.
\]
This completes the proof of the claim, and hence the lemma.\hspace{5.2cm}$\boxdot$
\end{proof}

In the same way as Corollary~\ref{cor:fewlengththreesame} follows from Lemma~\ref{lem:looseupperlength3}, the following corollary follows from Lemma~\ref{lem:tightupplength7}, where again the paths counted can now have any colours, not just those in $D$.

\begin{corollary}\label{cor:fewlengthsevensame}
Let $1/n\llpoly \eps$. Let $G\sim G_{[n]}^\col$. Let $x_1,x_2,y_1,y_2\in V(G)$ be distinct with $x_1\nsimAB y_1$ and $x_2\nsimAB y_2$.
Then, with probability $1-n^{-\omega(1)}$, there are at most $(1+\eps)n^5$ pairs $(P_1,P_2)$ of vertex-disjoint paths in $G$ of length seven such that $P_1$ is an $x_1,y_1$-path, $P_2$ is an $x_2,y_2$-path and they have the same colours in the same order.\hfill\qed
\end{corollary}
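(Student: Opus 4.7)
The plan is to deduce Corollary~\ref{cor:fewlengthsevensame} from Lemma~\ref{lem:tightupplength7} by mimicking the derivation of Corollary~\ref{cor:fewlengththreesame} from Lemma~\ref{lem:looseupperlength3}, while tracking multiplicative constants carefully so as to preserve the sharp $(1+\eps)$ factor rather than accepting the polynomial-exponent loss that was tolerable for the length-$3$ case.

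Fix auxiliary parameters $p,\eps'$ with $1/n \llpoly p \llpoly \eps' \ll \eps$, and set $N':=n$. I would sample subsets $\hat D_1,\dots,\hat D_{N'} \subseteq C$ of size $pn$ independently and uniformly at random, and independently of $G$. For any fixed $D\subseteq C$ of size $pn$, Lemma~\ref{lem:tightupplength7} gives that the analogous count $N_D$ (using only colours in $D$) is at most $(1+\eps')p^7 n^5$ with probability $1-n^{-\omega(1)}$; by the independence of $G$ and $\hat D_i$, the same bound applies to each random $\hat D_i$, and a union bound over $i\in[N']$ shows that with probability $1-n^{-\omega(1)}$ we have $N_{\hat D_i} \le (1+\eps')p^7 n^5$ simultaneously for every $i$. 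The two possible vertex-class configurations for $x_1,x_2,y_1,y_2$ (either $x_1,x_2\in A$ and $y_1,y_2\in B$, or with $x_2$ and $y_2$ swapped between classes) both reduce by relabelling to the hypothesis of Lemma~\ref{lem:tightupplength7}.

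Separately, for any fixed set $\hat C$ of $7$ distinct colours, $\P[\hat C \subseteq \hat D_i] = \binom{n-7}{pn-7}/\binom{n}{pn} = (1\pm o(1))p^7$ since $1/n \llpoly p$. Because $p^7 N' = p^7 n = \omega(\log n)$, Chernoff's bound together with a union bound over the at most $n^7$ such $\hat C$ shows that with probability $1-n^{-\omega(1)}$ every $\hat C$ lies in $(1\pm\eps')p^7 N'$ of the sets $\hat D_i$.

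Assuming both of these $1-n^{-\omega(1)}$ events, let $N$ denote the quantity to bound. Each pair $(P_1,P_2)$ counted by $N$ uses a fixed $7$-set of distinct colours and hence contributes to $N_{\hat D_i}$ for at least $(1-\eps')p^7 N'$ values of $i$. Double-counting,
\[ N\cdot(1-\eps')p^7 N' \;\le\; \sum_{i=1}^{N'} N_{\hat D_i} \;\le\; N'\cdot(1+\eps')p^7 n^5, \]
which gives $N \le \frac{1+\eps'}{1-\eps'}\, n^5 \le (1+\eps)n^5$, as required, once $\eps'$ is chosen small enough in terms of $\eps$. There is essentially no conceptual obstacle here; the only point that requires attention is the multiplicative slack $\eps'$, which is absorbed by choosing $p$ sufficiently small in $\eps$ and invoking Lemma~\ref{lem:tightupplength7} with $\eps'$ in place of its parameter $\eps$.
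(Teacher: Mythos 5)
Your proposal is correct and takes essentially the same approach as the paper: the paper proves Corollary~\ref{cor:fewlengthsevensame} by exactly the random-colour-subset double-counting argument used for Corollary~\ref{cor:fewlengththreesame}, and you have written it out with the more careful constant-tracking (choosing $\eps'\ll\eps$, and using the tight $(1+\eps')p^7n^5$ bound from Lemma~\ref{lem:tightupplength7}) needed to keep the sharp $(1+\eps)$ factor.

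One small point worth flagging, though it equally affects the paper's own one-line derivation: when $x_2\in B$ and $y_2\in A$ (the ``anti-aligned'' configuration), swapping the labels $x_2\leftrightarrow y_2$ does not literally put you in the hypothesis of Lemma~\ref{lem:tightupplength7}, because the orientation of $P_2$ reverses and the colour condition becomes ``same colours in \emph{reverse} order''. What is true is that the reverse-order variant of Lemma~\ref{lem:tightupplength7} (and of Proposition~\ref{prop:linksusingearefew} and Lemma~\ref{lem:looseupperlength3}, which it relies on) holds by an identical proof; one should either state this explicitly or phrase the reduction as ``by the same argument \emph{mutatis mutandis}'' rather than ``by relabelling''. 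This is a presentational point rather than a gap in the underlying argument.
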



\subsection{Preparation for the lower bound for length 15 paths}\label{sec:path15prep}
We now prepare to prove, for distinct $x_1,x_2,y_1,y_2\in V(G)$ with $x_1\nsimAB y_1$ and $x_2\nsimAB y_2$, a likely lower bound
on the number of pairs of vertex-disjoint paths with length 15 between $(x_1,y_1)$ and $(x_2,y_2)$ respectively which use the same colours in the same order. To do this, we consider the number of such pairs where the respective middle edges may instead have any colours (as pictured in Lemma~\ref{lem:lotsofLzero}), and show firstly that many such pairs of paths will exist (see Lemma~\ref{lem:lotsofLzero}). Not imposing a colour condition on the middle two edges means that this can be done using relatively simple combinatorial and probabilistic arguments. Then, using our previously shown likely upper bounds, we show that it is very likely that no pair of edges $e,f$ appear as these middle edges more than we should expect (see Corollary~\ref{cor:nottoomanyLzero}), before turning this into a result on the number of pairs $e,f$ that can appear as these middle edges distinctly less than we should expect (see Corollary~\ref{cor:Lzerostuff}).
We start with the following lemma, which gives a lower bound on these paths with colours in $D$ and edges in $E$. The set $E$ is used because eventually we will use this to control the number of edges of each colour and at each vertex in a subgraph to which we apply Corollary~\ref{cor:latinsquareprobabilities} (see $H$ in the proof of Lemma~\ref{lemma:mainLlinksfirstlower}).


\begin{lemma}\label{lem:lotsofLzero} Let $1/n\llpoly p,\eps\leq 1$. Let $D\subset [n]$ have size $pn$ and let $G\sim G^\col_{[n]}$. Let $E\subset E(G)$ be formed by including each edge independently at random with probability $p$. Let $x_1,x_2,y_1,y_2\in V(G)$ be distinct with $x_1\nsimAB y_1$ and $x_2\nsimAB y_2$.

Then, with probability $1-n^{-\omega(1)}$, there are at least
$(1- \eps)p^{42}(1-p)^2n^{14}$ pairs $(P_1,P_2)$ of vertex-disjoint rainbow paths of length 15 in $G$, each of whose middle edges have colour not in $D$ and all other edges in $E$ with colour in $D$, and such that $P_1$ is an $x_1,y_1$-path, $P_2$ is an $x_2,y_2$-path, and, apart from possibly their middle edges, the paths $P_1$ and $P_2$ have the same colours in the same order, as pictured below.

\begin{center}
\begin{tikzpicture}
\def\setsp{0.4}
\def\upup{0.4}
\def\extra{0.8}

\foreach \n in {1,3,5,7,9,11,13,15}
{
\coordinate (v\n) at ($(\n*\setsp,0)+(0,0)$);
}
\foreach \n in {17,19,21,23,25,27,29,31}
{
\coordinate (v\n) at ($(\extra,0)+(\n*\setsp,0)+(0,0)$);
}
\foreach \n in {2,4,6,8,10,12,14,16}
{
\coordinate (v\n) at ($(\n*\setsp,0)+(0,\upup)$);
}
\foreach \n in {16}
{
\coordinate (v16plus) at ($(\extra,0)+(\n*\setsp,0)+(0,\upup)$);
}
\foreach \n in {18,20,22,24,26,28,30}
{
\coordinate (v\n) at ($(\extra,0)+(\n*\setsp,0)+(0,\upup)$);
}

\draw [red,thick] (v1) -- (v2);
\draw [darkgreen,thick] (v2) -- (v3);
\draw [blue,thick] (v3) -- (v4);
\draw [red,thick] (v16plus) -- (v17);
\draw [darkgreen,thick] (v17) -- (v18);
\draw [blue,thick] (v18) -- (v19);
\draw [brown,thick] (v4) -- (v5);
\draw [brown,thick] (v19) -- (v20);
\draw [red!50,thick] (v5) -- (v6);
\draw [red!50,thick] (v20) -- (v21);
\draw [violet,thick] (v6) -- (v7);
\draw [violet,thick] (v21) -- (v22);
\draw [teal,thick] (v7) -- (v8);
\draw [teal,thick] (v22) -- (v23);
\draw [dotted,thick] (v8) -- (v9);
\draw [dotted,thick] (v23) -- (v24);
\draw [magenta,thick] (v9) -- (v10);
\draw [magenta,thick] (v24) -- (v25);
\draw [thick] (v10) -- (v11);
\draw [thick] (v25) -- (v26);
\draw [purple,thick] (v11) -- (v12);
\draw [purple,thick] (v26) -- (v27);
\draw [green,thick] (v12) -- (v13);
\draw [green,thick] (v27) -- (v28);
\draw [orange,thick] (v13) -- (v14);
\draw [orange,thick] (v28) -- (v29);
\draw [olive,thick] (v14) -- (v15);
\draw [olive,thick] (v29) -- (v30);
\draw [cyan,thick] (v15) -- (v16);
\draw [cyan,thick] (v30) -- (v31);

\draw [red] ($0.5*(v1)+0.5*(v2)+(-0.05,0.15)$) node {\footnotesize $1$};
\draw [red] ($0.5*(v16plus)+0.5*(v17)+(-0.05,-0.15)$) node {\footnotesize $1$};
\draw [darkgreen] ($0.5*(v2)+0.5*(v3)+(-0.05,-0.15)$) node {\footnotesize $2$};
\draw [darkgreen] ($0.5*(v17)+0.5*(v18)+(-0.05,0.15)$) node {\footnotesize $2$};
\draw [blue] ($0.5*(v3)+0.5*(v4)+(-0.05,0.15)$) node {\footnotesize $3$};
\draw [blue] ($0.5*(v18)+0.5*(v19)+(-0.05,-0.15)$) node {\footnotesize $3$};
\draw [brown] ($0.5*(v4)+0.5*(v5)+(-0.05,-0.15)$) node {\footnotesize $4$};
\draw [brown] ($0.5*(v19)+0.5*(v20)+(-0.05,0.15)$) node {\footnotesize $4$};
\draw [red!50] ($0.5*(v5)+0.5*(v6)+(-0.05,0.15)$) node {\footnotesize $5$};
\draw [red!50] ($0.5*(v20)+0.5*(v21)+(-0.05,-0.15)$) node {\footnotesize $5$};
\draw [violet] ($0.5*(v6)+0.5*(v7)+(-0.05,-0.15)$) node {\footnotesize $6$};
\draw [violet] ($0.5*(v21)+0.5*(v22)+(-0.05,0.15)$) node {\footnotesize $6$};
\draw [teal] ($0.5*(v7)+0.5*(v8)+(-0.05,0.15)$) node {\footnotesize $7$};
\draw [teal] ($0.5*(v22)+0.5*(v23)+(-0.05,-0.15)$) node {\footnotesize $7$};
\draw [] ($0.5*(v8)+0.5*(v9)+(-0.05,-0.15)$) node {\footnotesize $e$};
\draw [] ($0.5*(v23)+0.5*(v24)+(-0.05,0.15)$) node {\footnotesize $f$};
\draw [magenta] ($0.5*(v9)+0.5*(v10)+(-0.05,0.15)$) node {\footnotesize $9$};
\draw [magenta] ($0.5*(v24)+0.5*(v25)+(-0.05,-0.15)$) node {\footnotesize $9$};

\draw [black] ($0.5*(v10)+0.5*(v11)+(-0.05-0.075,-0.15)$) node {\footnotesize $10$};
\draw [black] ($0.5*(v25)+0.5*(v26)+(0.05-0.075-0.075,0.15)$) node {\footnotesize $10$};
\draw [purple] ($0.5*(v11)+0.5*(v12)+(-0.05-0.075,0.15)$) node {\footnotesize $11$};
\draw [purple] ($0.5*(v26)+0.5*(v27)+(-0.05-0.075,-0.15)$) node {\footnotesize $11$};
\draw [green] ($0.5*(v12)+0.5*(v13)+(0.05-0.075-0.075,-0.15)$) node {\footnotesize $12$};
\draw [green] ($0.5*(v27)+0.5*(v28)+(-0.05-0.075,0.15)$) node {\footnotesize $12$};
\draw [orange] ($0.5*(v13)+0.5*(v14)+(-0.05-0.075,0.15)$) node {\footnotesize $13$};
\draw [orange] ($0.5*(v28)+0.5*(v29)+(-0.05-0.075,-0.15)$) node {\footnotesize $13$};
\draw [olive] ($0.5*(v14)+0.5*(v15)+(-0.05-0.075,-0.15)$) node {\footnotesize $14$};
\draw [olive] ($0.5*(v29)+0.5*(v30)+(0.05-0.075-0.075,0.15)$) node {\footnotesize $14$};
\draw [cyan] ($0.5*(v15)+0.5*(v16)+(-0.05-0.075,0.15)$) node {\footnotesize $15$};
\draw [cyan] ($0.5*(v30)+0.5*(v31)+(-0.05-0.075,-0.15)$) node {\footnotesize $15$};


\foreach \n in {1,...,31}
{
\draw [fill] (v\n) circle[radius=0.04];
}

\draw [fill] (v16plus) circle[radius=0.04];

\draw ($(v1)-(0,0.3)$) node {$x_1$};
\draw ($(v16)+(0,0.2)$) node {$y_1$};
\draw ($(v16plus)+(0,0.2)$) node {$x_2$};
\draw ($(v31)-(0,0.3)$) node {$y_2$};
\end{tikzpicture}
\end{center}

\end{lemma}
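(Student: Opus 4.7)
The plan is to compute $\mathbb{E}[N]$ by careful enumeration using the symmetry of the random Latin square model, and then establish concentration by decoupling the randomness in $G$ from that in $E$. For each ordered $14$-tuple $\vec c = (c_0, \ldots, c_6, c_8, \ldots, c_{14})$ of distinct colours in $D$ and each fixed $G$, the candidate $P_1$ is the unique walk obtained by following colours $c_0,\ldots,c_6$ from $x_1$, crossing the unique edge $w_1 w_1'$ to $w_1'$ (where $w_1'$ is the endpoint of the backward walk from $y_1$ through $c_{14},\ldots,c_8$), and continuing; analogously for $P_2$. The pair is valid iff the walks are simple, vertex-disjoint, the middle-edge colours $c_G(w_1w_1')$ and $c_G(w_2w_2')$ both lie outside $D$, and the $28$ non-middle edges all lie in $E$. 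Using symmetry of $G^{\col}_{[n]}$ under permutations of the vertex classes and of the colour set, the first three conditions hold simultaneously with probability $(1\pm o(1))(1-p)^2$ for each $\vec c$; summing over the $(1\pm o(1))(pn)^{14}$ such tuples and multiplying by the independent $p^{28}$ probability that all $28$ non-middle edges lie in $E$ yields $\mathbb{E}[N] = (1\pm o(1))\,p^{42}(1-p)^2 n^{14}$.

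For concentration of $N$ over $E$ with $G$ fixed, I would apply Lemma~\ref{lem:mcdiarmidchangingc} to the independent Bernoulli variables $\{\mathbb{1}[e\in E]\}_{e\in E(G)}$: toggling membership of a single edge $e$ changes $N$ by at most the number of valid length-$15$ path pairs containing $e$ as a non-middle edge, which via Proposition~\ref{prop:linksusingearefew} applied to the length-$7$ sub-path containing $e$ is at most $n^{3.03}$. Hence $\sum_e c_e^2 \le n^2 \cdot n^{6.06} = O(n^{8.06})$, whilst the target deviation is $\Theta(n^{14})$, so McDiarmid gives concentration conditional on $G$ with probability $1 - \exp(-n^{\Omega(1)})$, comfortably $n^{-\omega(1)}$.

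The main obstacle will be to show that $N_G$ (the count ignoring the $E$-condition) concentrates around $\mathbb{E}_G[N_G]$ with probability $1 - n^{-\omega(1)}$ over $G \sim G^{\col}_{[n]}$, since the random Latin square admits no natural independent-coordinate decomposition. My plan is to use a high-moment argument: write $N_G = \sum_{\vec c} X_{\vec c}(G)$ and bound $\mathbb{E}_G\bigl[(N_G - \mathbb{E}_G N_G)^{2k}\bigr]$ for $k = \lceil \log n \rceil$ by expanding into sums over $2k$-tuples of $14$-tuples and stratifying according to their overlap structure. For each such $2k$-tuple whose associated edge-union $H$ has at least $n\log^3 n$ edges, Corollary~\ref{cor:latinsquareprobabilities} yields a good estimate on $\mathbb{P}(H\subset G)$ because the $e^{O(n\log^2 n)}$ error term is absorbed by $e(H)$; while the combinatorial enumeration of overlapping tuples (where $H$ is smaller) is controlled using the tight upper bounds on pairs of length-$7$ paths with shared colours from Lemma~\ref{lem:tightupplength7} and Corollary~\ref{cor:fewlengthsevensame}. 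Markov's inequality on the $2k$-th moment then gives the required tail bound. If this moment-method route runs into technical issues with the small-overlap regime, an alternative is to adapt the deletion-method strategy of Section~\ref{sec:path15upperlower} directly, where the absence of the matching-middle-colour constraint should considerably simplify the enumeration of bad sequences.
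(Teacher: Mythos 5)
Your three-part plan correctly identifies the main difficulty (concentration over $G$), but your proposed route for handling it in step 3 does not work, and the paper avoids the difficulty entirely by a symmetry argument that your plan misses.

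The moment-method route cannot work here. For $k = \lceil\log n\rceil$, a $2k$-tuple of $14$-tuples determines a coloured subgraph $H$ with at most $28\cdot 2k = O(\log n)$ edges. The estimate supplied by Corollary~\ref{cor:latinsquareprobabilities} carries an irremovable multiplicative error of $e^{O(n\log^2 n)}$, and there is no regime in your stratification where $e(H)$ comes anywhere near $n\log^2 n$, so every term in the moment expansion is dominated by this error. This is precisely why the paper's deletion-method arguments (Sections~\ref{sec:loose3pathbound}--\ref{sec:path15upperlower}) all build sequences of $\kappa \approx n^{1.01}$ edge-disjoint copies, so that the union has $\gg n\log^2 n$ edges; a polylogarithmic moment cannot reach that regime. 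Your fallback of adapting the deletion method of Section~\ref{sec:path15upperlower} to prove the lemma directly is also problematic, since the lower-bound machinery there (Corollary~\ref{cor:Lzerostuff} and Lemma~\ref{lemma:mainLlinksfirstlower}) \emph{uses} the present lemma as its input, so one would need an independent argument of comparable technicality.

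The paper's proof sidesteps concentration over $G$ altogether. It works with an \emph{arbitrary fixed} $G' \in \mathcal{G}^{\col}_{[n]}$ and proves deterministically (Claim~\ref{clm:Pbig}) that the number of pairs of paths, with no restriction on the non-middle colours beyond the ``matching colour pattern'' constraint, is $n^{14} - O(n^{13})$, together with deterministic Lipschitz bounds (Claims~\ref{clm:Psmallcod} and~\ref{clm:Psmallcod2}) that each edge lies in $O(n^{12})$ or $O(n^{13})$ such pairs. It then makes \emph{the colour set} random: it draws independent random sets $\hat D_0, \hat D_1$ and the edge set $E$, and applies McDiarmid's inequality to the independent choices $\{\mathbb{1}[c \in \hat D_j]\}_c$ and $\{\mathbb{1}[e\in E]\}_e$ jointly. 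The concluding observation is that for any fixed $G'$ the resulting $\hat D$ has the distribution of a uniformly random $pn$-subset of $C$, and by colour-symmetry of $\mathcal{G}^{\col}_{[n]}$ the probability of the event when $D$ is fixed and $G$ is random equals its probability when $G'$ is fixed and $D$ is random. This reparametrisation is the ingredient your proposal lacks, and it reduces the whole argument to McDiarmid over genuinely independent coordinates. Your step 1 (expectation via colour symmetry) is essentially correct, and your step 2 (McDiarmid over $E$) is part of the paper's McDiarmid application, but you would additionally need the colour-indicator coordinates to be random, and you would need Lipschitz bounds against toggling a colour in or out of $\hat D_0,\hat D_1$, which the paper deduces from Claims~\ref{clm:Psmallcod} and~\ref{clm:Psmallcod2} by summing over the at most $n$ edges of each colour (plus the at most $4$ edges of each colour touching $\{x_1,x_2,y_1,y_2\}$).
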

\begin{proof}  Note that we can assume that $\eps=o(1)$.
Let $G'\in G^\col_{[n]}$. We start by proving three claims about $G'$, before using this to derive likely properties of $G\sim G^\col_{[n]}$. Let $\mathcal{P}$ be the set of pairs $(P_1,P_2)$ of vertex-disjoint rainbow paths of length 15 in $G'$, each of whose middle edge has colour not in $D$ and all its other edges have colour in $D$, and such that $P_1$ is an $x_1,y_1$-path, $P_2$ is an $x_2,y_2$-path, and, apart from possibly their middle edges, the paths $P_1$ and $P_2$ have the same colours in the same order.

\begin{claim}\label{clm:Pbig} $|\mathcal{P}|\geq (1-\eps/2)n^{14}$.
\end{claim}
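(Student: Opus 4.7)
The natural approach is to parameterise $\mathcal{P}$ by the ordered $14$-tuple $\bar c = (c_1,\dots,c_7,c_9,\dots,c_{15})$ of distinct colours from $D$ used on the matched, non-middle positions, and to argue that almost every such tuple extends uniquely to a valid pair $(P_1,P_2)\in\mathcal{P}$. Since $G'$ is a proper edge-colouring of $K_{n,n}$, at each vertex there is exactly one incident edge of each colour. Hence starting from $x_1$ and successively taking edges of colours $c_1,c_2,\dots,c_7$ produces a uniquely determined walk $W_1^+$ of length $7$ ending at some vertex $u_1$; analogously one obtains a walk $W_1^-$ from $y_1$ along $c_{15},\dots,c_9$ ending at $u_1'$, and walks $W_2^\pm$ from $x_2,y_2$ ending at $u_2,u_2'$. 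Setting $P_i = W_i^+\cup\{u_iu_i'\}\cup W_i^-$ gives a pair in $\mathcal{P}$ precisely when (i) each $W_i^+\cup W_i^-$ is a path, (ii) $P_1$ and $P_2$ are vertex-disjoint and avoid $\{x_1,y_1,x_2,y_2\}$ at interior vertices, and (iii) both edges $u_1u_1'$ and $u_2u_2'$ have colour outside of $D$. As the number of ordered tuples of $14$ distinct colours in $D$ is $(1-O(1/n))(pn)^{14}$ and distinct tuples give distinct pairs in $\mathcal{P}$, it suffices to control the number of ``bad'' $\bar c$ that violate (i)--(iii).

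For conditions (i) and (ii), the plan is to fix a pair of positions whose vertices might coincide (across the four walks, including coincidences with $\{x_1,y_1,x_2,y_2\}$), and to bound the number of $\bar c$ realising that particular coincidence. The key point is rigidity: in a proper edge colouring, prescribing the starting vertex together with an initial sequence of colours uniquely determines the walk vertex-by-vertex, so demanding that two specified positions land on the same vertex translates into an algebraic constraint that, once all but $O(1)$ colours are fixed, admits only $O(1)$ choices for the remaining ones. This gives at most $O((pn)^{13})$ bad tuples per pair of positions, and since there are only $O(1)$ pairs of positions, a union bound yields at most $o((pn)^{14})$ tuples bad for reasons (i) or (ii).

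The main obstacle is condition (iii), which must account for the $(1-p)^2$ factor in the heuristic count. I plan to argue this by showing that, for most $\bar c$, the colour of the closing edge $u_1u_1'$ behaves almost uniformly on $[n]$, so lies in $D$ with probability close to $p$. Concretely, for each $c^*\in D$ I would count tuples $\bar c$ for which the unique edge of colour $c^*$ at $u_1$ is exactly the edge $u_1u_1'$; this again imposes a rigid relation between $\bar c$ and $c^*$ that pins down one of the colours given the others, contributing $O((pn)^{13})$ bad tuples per $c^*$, so at most $O(p(pn)^{14})$ in total for $P_1$, and similarly for $P_2$. The subtler step is to control the joint event, extracting a factor of $(1-p)^2$ rather than merely $1-2p+O(p^2)$; this I would do by separately bounding the tuples where \emph{both} middle edges have colour in $D$ via a two-colour analogue of the same rigidity argument, so that the events are essentially independent up to a negligible error coming from the disjointness of the starting vertices $x_1,y_1$ and $x_2,y_2$. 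Combining these estimates with the trivial upper bound $|\mathcal{P}|\le (pn)^{14}$ then delivers the desired lower bound.
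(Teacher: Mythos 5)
Your approach cannot prove the claim as stated, and the problem is fundamental rather than technical. You parameterise $\mathcal{P}$ by ordered $14$-tuples of distinct colours \emph{from $D$} and note yourself the ``trivial upper bound $|\mathcal{P}|\leq (pn)^{14}$''. But the claim you are asked to prove is $|\mathcal{P}|\geq (1-\eps/2)n^{14}$, which for $p<1$ exceeds $(pn)^{14}$; so the method you propose terminates in a contradiction and cannot possibly succeed. The resolution is that, despite what the prose definition of $\mathcal{P}$ literally says, the set $\mathcal{P}$ being counted in Claim~\ref{clm:Pbig} must be taken over \emph{all} colours in $[n]$, with no restriction to $D$ and no restriction on the middle-edge colours. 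You can see this from the way the claim is used: after Claim~\ref{clm:Pbig}, the proof introduces random sets $\hat{D}_0,\hat{D}_1$ and random edges $E$, defines $X$ as the number of pairs in $\mathcal{P}$ whose non-middle edges have colour in $\hat D_0$ and lie in $E$ and whose middle edges have colour in $\hat D_1$, and computes $\E X \geq (1-\eps^2)^{16}p^{42}(1-p)^2\cdot(1-\eps/2)n^{14}$. That expectation is only consistent if $|\mathcal{P}|\approx n^{14}$ and the factors $p^{14}$, $p^{28}$, $(1-p)^2$ come from the \emph{subsequent} restriction — not from $\mathcal{P}$ itself. Consequently the entire second half of your proposal, devoted to extracting a $(1-p)^2$ factor from condition (iii), is solving a problem that should not arise at this stage: the $(1-p)^2$ is recovered by the later McDiarmid step, not within Claim~\ref{clm:Pbig}.

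Once the $D$-restriction is dropped, your underlying strategy — parameterise by colour sequences and bound the exceptional tuples using the rigidity of proper colourings — is sound and morally identical to the paper's argument. The paper sets up two families $\mathcal{D}_x$ and $\mathcal{D}_y$ of length-$7$ colour sequences from $\{x_1,x_2\}$ and $\{y_1,y_2\}$ respectively, uses a greedy count ($O(1)$ colours must be avoided at each of the $7$ steps) to show $|\mathcal{D}_x|, |\mathcal{D}_y| \geq n^7 - O(n^6)$, and then discounts $O(n^{13})$ pairs $(\mathbf{c},\mathbf{d})$ that cross. Your version fixes all $14$ colours at once and does a union bound over coincidences; that works too, though one should be slightly careful that the ``rigidity'' step when two walks share the same prefix of colours and $k=\ell$ gives \emph{zero} bad continuations rather than $O(1)$ (two distinct vertices cannot both send a colour-$c$ edge to the same vertex, so no colour produces a collision at equal depth). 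One further small omission: after dropping the $D$-constraint you still need to exclude tuples for which one of the forced middle edges repeats one of the $14$ fixed colours, since $P_1$ and $P_2$ must individually be rainbow; this is again only $O(n^{13})$ bad tuples, handled by the same rigidity argument, but it belongs in the union bound.
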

\begin{proof}[Proof of Claim~\ref{clm:Pbig}]
Let $\mathcal{D}_x$ be the set of sequences $\mathbf{c}=(c_1,c_2,\ldots,c_7)$ of distinct colours such that there are vertex-disjoint paths $P_{\mathbf{c},1}$ and $P_{\mathbf{c},2}$ of length 7 in $G'$ which, respectively, are from $x_1$ and $x_2$, and have colours in the order in $\mathbf{c}$. Similarly, define $\mathcal{D}_y$, and paths $Q_{\mathbf{c},1}$ and $Q_{\mathbf{c},2}$, for each $\mathbf{c}\in \mathcal{D}_y$, starting from $y$.

Note that, if $\mathbf{c}=(c_1,c_2,\ldots,c_7)$ is chosen in order, so that, at each stage $i\in [7]$, the two paths of length $i$ leading from $x_1$ and $x_2$ with colours $c_1,\ldots,c_i$ in order are vertex-disjoint, then when choosing $c_j$ with $j\in [7]$, there are $O(1)$ colours that need to be avoided because adding an edge of that colour to one of the paths will lead to a vertex on either path or have a colour already on the paths.
 Thus, $|\mathcal{D}_x|\geq (n-O(1))^7\geq n^7-O(n^6)$, and, similarly, $|\mathcal{D}_y|\geq n^7-O(n^6)$.

For each $\mathbf{c}\in \mathcal{D}_x$, there are 16 vertices and 7 colours together in the paths $P_{\mathbf{c},1}$ and $P_{\mathbf{c},2}$, and thus $O(n^6)$ paths of length $7$ from $y_1$ or $y_2$ which contain a vertex in $P_{\mathbf{c},1}$ and $P_{\mathbf{c},2}$ or have an edge to $V(P_{\mathbf{c},1}\cup P_{\mathbf{c},2})$ with a colour in $\mathbf{c}$.
As this holds similarly for each $\mathbf{d}\in \mathcal{D}_y$, there are $n^{14}-O(n^{13})$ choices of $(\mathbf{c},\mathbf{d})$ with
$\mathbf{c}\in \mathcal{D}_x$, $\mathbf{d}\in \mathcal{D}_y$ such that $V(P_{\mathbf{c},1}\cup P_{\mathbf{c},2})$ and $V(Q_{\mathbf{d},1}\cup Q_{\mathbf{d},2})$ are disjoint and have no edge between them with colour in $\mathbf{c}$ or $\mathbf{d}$. Thus,
$|\mathcal{P}|= n^{14}-O(n^{13})\geq (1-\eps/2)n^{14}$.
\claimproofend
\begin{claim}\label{clm:Psmallcod} For each $e\in E(G'-\{x_1,x_2,y_1,y_2\})$ there are $O(n^{12})$ pairs of paths in $\mathcal{P}$ where one of the paths contains ${e}$.
\end{claim}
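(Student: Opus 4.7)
The plan is to bound the count deterministically in $G'$, using that $G'$ is a properly edge-coloured copy of $K_{n,n}$. The key observation is that a pair $(P_1,P_2)\in\mathcal{P}$ is determined by $P_1$ alone: once $P_1$ is fixed, the $14$ non-middle colours of $P_1$ together with the endpoints $x_2,y_2$ determine the two halves of $P_2$ (by walking from $x_2$ and $y_2$ with the prescribed colour sequences), and then the middle edge of $P_2$ is the edge of $G'$ joining their endpoints. So it suffices to bound the number of rainbow $x_1,y_1$-paths of length $15$ in $G'$ passing through $e$, and then do the symmetric count for $P_2$.

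The main tool will be the following elementary counting lemma which I would prove by induction on $\ell$: in any proper $n$-edge-colouring of $K_{n,n}$, the number of rainbow walks of length $\ell\ge 1$ between two specified vertices is at most $n^{\ell-1}$. The inductive step conditions on the colour of the last edge of the walk, which, together with the fixed final vertex, determines the penultimate vertex uniquely (because the colouring is proper). This yields at most $n$ choices for the last edge, each of which reduces to bounding a rainbow walk of length $\ell-1$ between two specified vertices, to which the inductive hypothesis applies.

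Since $e\in E(G'-\{x_1,x_2,y_1,y_2\})$ contains none of the four fixed endpoints, the edge-position of $e$ on $P_1$ must lie in $\{2,\dots,14\}$: it cannot be $1$ or $15$, which would force $x_1$ or $y_1$ to be an endpoint of $e$. Fixing such a position $k$ and an orientation $uv$ of $e$ (so that $u$ lies at vertex-position $k-1$ and $v$ at vertex-position $k$), any rainbow $x_1,y_1$-path of length $15$ through $e$ decomposes as a rainbow $x_1,u$-walk of length $k-1$, followed by $e$, followed by a rainbow $v,y_1$-walk of length $15-k$. The counting lemma bounds the number of choices of the two side-walks by $n^{k-2}$ and $n^{14-k}$ respectively, giving at most $n^{12}$ configurations per position and orientation, and the same bound holds for $e$ on $P_2$. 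Summing over the $O(1)$ positions, orientations, and choices of $P_1$ versus $P_2$ yields $O(n^{12})$. The main point requiring care is the counting lemma, where one must use the proper-colouring hypothesis sharply so that fixing an endpoint together with an edge-colour pins down the neighbour along that colour.
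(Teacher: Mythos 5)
Your proof is correct and follows the same approach as the paper: both arguments bound the number of length-$15$ $x_1,y_1$-paths (and, symmetrically, $x_2,y_2$-paths) through $e$, and use the observation that each such path determines at most one pair in $\mathcal{P}$ because the colour sequence together with a starting vertex pins down the other path's two halves and hence its middle edge. The one thing worth flagging is that your counting lemma is more elaborate than necessary: the bound $n^{\ell-1}$ on walks of length $\ell$ between fixed endpoints in $K_{n,n}$ holds without any colouring hypothesis at all, since the $\ell-1$ internal vertices each admit at most $n$ choices; the paper's proof obtains $O(n^{12})$ directly by noting that a length-$15$ path with two endpoints and both vertices of $e$ fixed has exactly $12$ free vertices.
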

\begin{proof} Let $e\in E(G'-\{x_1,x_2,y_1,y_2\})$. Any $x_1,y_1$-path of length 15 in $G$ containing $e$ then contains 12 vertices not in $V(e)\cup \{x_1,x_2\}$, so there are $O(n^{12})$ different $x_1,y_1$-paths of length 15 in $G$ containing $e$. Similarly, there are there are $O(n^{12})$ different $x_2,y_2$-paths of length 15 in $G$ containing $e$. Noting that each path $P_1$ appears in at most one pair $(P_1,P_2)\in \mathcal{P}$ or $(P_2,P_1)\in \mathcal{P}$, the claim follows.
\claimproofend
\begin{claim}\label{clm:Psmallcod2} For each edge $e\in E(G')$ containing $x_1$, $x_2$, $y_1$ or $y_2$, there are $O(n^{13})$ pairs of paths in $\mathcal{P}$ where one of the paths contains ${e}$.
\end{claim}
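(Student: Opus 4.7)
The plan is to adapt the counting argument of Claim~\ref{clm:Psmallcod}, noting that the only change is that one endpoint of $e$ is now one of $x_1, x_2, y_1, y_2$, so the path containing $e$ has one fewer internal vertex free to be chosen. First I would observe that, since the paths in any pair $(P_1, P_2) \in \mathcal{P}$ are vertex-disjoint with $x_1, y_1 \in V(P_1)$ and $x_2, y_2 \in V(P_2)$, the edge $e$ must lie in $P_1$ if it contains $x_1$ or $y_1$, and in $P_2$ if it contains $x_2$ or $y_2$. It therefore suffices to handle each of these four (symmetric) cases separately.

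Consider, say, $e = x_1 w$ with $e \in E(P_1)$. Since $x_1$ is an endpoint, $e$ must be the first edge of $P_1$, so $w$ is forced to be the second vertex. The $x_1, y_1$-path $P_1$ has $16$ vertices in total, of which $x_1$, $y_1$, and $w$ are already specified, leaving $13$ internal vertices free. Hence there are $O(n^{13})$ possibilities for $P_1$. For each such $P_1$, the partner $P_2$ in the pair is uniquely determined (when it exists): starting from $x_2$ and reading off the first seven colours of $P_1$ extends (in the properly coloured graph $G'$) to a unique length-$7$ path, starting from $y_2$ and reading the last seven colours backwards gives another unique length-$7$ path, and the middle edge of $P_2$ is then the unique edge of $G'$ joining the two resulting endpoints. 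This yields $O(n^{13})$ pairs in this case.

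The remaining three cases, where $e$ contains $y_1$, $x_2$, or $y_2$, are handled identically with the roles of $\{x_1, y_1\}$ and $\{x_2, y_2\}$ permuted. Summing the four cases gives $O(n^{13})$ pairs in total. The argument is entirely routine once Claim~\ref{clm:Psmallcod}'s template is in place, and there is no real obstacle; the only point worth flagging is the uniqueness of $P_2$ given $P_1$, which relies on $G'$ being properly coloured so that at every vertex, each colour appears on at most one edge.
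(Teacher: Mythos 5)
Your proof is correct and takes the same route as the paper: count $O(n^{13})$ paths of length $15$ through $e$ with the appropriate endpoints (one end of $e$ being forced into the second or penultimate position), then use that for a fixed $P_1$ the partner $P_2$ is determined up to its middle edge by the colour pattern, so each $P_1$ lies in at most one pair of $\mathcal{P}$. One small slip in your opening remark: you say the path has ``one \emph{fewer}'' internal vertex free to choose than in Claim~\ref{clm:Psmallcod}, but it is one \emph{more} (since $e$ now overlaps an endpoint and therefore only fixes one internal vertex rather than two), which is exactly why the exponent rises from $12$ to $13$; your explicit count of $13$ free vertices in the body of the argument already gets this right, so the slip is cosmetic.
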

\begin{proof} Let $e\in E(G')$ contain $x_1$, $x_2$, $y_1$ or $y_2$, and suppose without loss of generality that it contains one of $x_1,x_2$. Any $x_1,y_1$-path of length 15 in $G$ containing $e$ then contains 13 vertices not in $V(e)\cup \{x_1,x_2\}$, so there are $O(n^{13})$ different $x_1,y_1$-paths of length 15 in $G$ containing $e$.  Noting that each path $P_1$ appears in at most one pair $(P_1,P_2)\in \mathcal{P}$, the claim follows.
\claimproofend

We will now create a set of random colours $\hat{D}$ and deduce likely properties about the pairs of paths in $\mathcal{P}$ which use colours only in $\hat{D}$, except for their middle edges which do not have colour in $\hat{D}$.
Now, let $\hat{D}_0,\hat{D}_1\subset C$ be disjoint random sets of colours such that each colour $c\in C$ is independently added to $\hat{D}_0$ with probability $(1-\eps^{2})p$ and to $\hat{D}_1$ with probability $(1-\eps^{2})(1-p)$.
Let $\hat{D}\subset C$ be a random set of colours such that if $|\hat{D}_0|\leq pn$ and $|\hat{D}_1|\leq (1-p)n$, $\hat{D}$ is chosen uniformly at random from all subsets of $C$ with size $pn$ such that $\hat{D}_0\subset \hat{D}\subset C\setminus \hat{D}_1$, and, otherwise, $\hat{D}$ is chosen uniformly at random from all subsets of $C$ with size $pn$. Note that, by Lemma~\ref{chernoff}, with probability $1-n^{-\omega(1)}$, we have $\hat{D}_0\subset \hat{D}\subset C\setminus \hat{D}_1$.

Let $X$ be the number of pairs of paths $(P_1,P_2)\in \mathcal{P}$ whose edges all have colour in $\hat{D}_0$ and which are in $E$, except for their middle edges which have colour in $\hat{D}_1$ and which may be in $E$ or not.
For each $(P_1,P_2)\in \mathcal{P}$, the probability $(P_1,P_2)$ satisfies these conditions is (as the middle edges may or may not have the same colour) at least $(1-\eps^{2})^{16}p^{28}p^{14}(1-p)^2$, and thus,
from Claim~\ref{clm:Pbig}, we have
\[
\E X\geq (1-\eps^{2})^{16}p^{42}(1-p)^2\cdot (1-\eps/2)n^{14}\geq (1-2\eps/3)p^{42}(1-p)^2n^{14}.
\]
Now, by Claim~\ref{clm:Psmallcod}, there is some $\lambda_1=O(1)$ such that, if ${e}\in E(G'-\{x_1,x_2,y_1,y_2\})$, then changing whether or not ${e}$ is in $E$ changes $X$ by at most $\lambda_1n^{12}$. Furthermore, by Claim~\ref{clm:Psmallcod2}, there is some $\lambda_2=O(1)$ such that, if ${e}\in E(G')$ does contain $x_1$, $x_2$, $y_1$ or $y_2$, then changing whether or not ${e}$ is in $E$ changes $X$ by at most $\lambda_2n^{13}$.
Finally, again by Claim~\ref{clm:Psmallcod} and Claim~\ref{clm:Psmallcod2}, and as each colour appears in $G'$ $n$ times, and has at most 4 edges touching $\{x_1,x_2,y_1,y_2\}$, there is some $\lambda_3=O(1)$ such that, for each $c\in [n]$, changing whether $c$ is in $\hat{D}_0$, or $\hat{D}_1$, or neither $\hat{D}_0$ nor $\hat{D}_1$, changes $X$ by at most $\lambda_3n^{13}$.
Thus, by Lemma~\ref{lem:mcdiarmidchangingc} with $t=\eps p^{42}(1-p)^2n^{14}/3$,
\begin{align}
\P(X\leq (1-\eps)p^{42}(1-p)^2n^{14})&\leq 2 \exp\left(-\frac{2t^2}{n^2\cdot (\lambda_1n^{12})^2+4n\cdot (\lambda_2n^{13})^2+n\cdot (\lambda_3n^{13})^2}\right)\nonumber\\
& \leq 2 \exp\left(-\Omega(t^2/n^{27}\right)=2 \exp\left(-\Omega(\eps^2p^{84}(1-p)^4n)\right)=n^{-\omega(1)},\label{eq:mcdexample}
\end{align}
where we have used that $1/n\llpoly \eps,p$. Thus, as $\hat{D}_1\subset \hat{D}\subset C\setminus \hat{D}_2$ with probability $1-n^{-\omega(1)}$, with probability $1-n^{-\omega(1)}$ there are  at least
$(1- \eps)p^{42}(1-p)^2n^{14}$ pairs $(P_1,P_2)$ of vertex-disjoint rainbow paths in $G'$, each of whose middle edge has colour not in $\hat{D}$ and all its other edges are in $E$ and have colour in $\hat{D}$, and such that $P_1$ is an $x_1,y_1$-path, $P_2$ is an $x_2,y_2$-path, and, apart from possibly their middle edges, the paths $P_1$ and $P_2$ have the same colours in the same order, as pictured below.
As the distribution $\hat{D}$ is that of a set of $pn$ colours chosen uniformly at random from $C$ for each fixed $G'\in \mathcal{G}^{\col}_{[n]}$, and $G\sim G^\col_{[n]}$, the result of the lemma follows easily.
\end{proof}


We now deduce from our previous likely upper bounds (specifically Lemma~\ref{lem:looseupperlength3} and~Lemma~\ref{lem:tightupplength7}), that no pair $e,f$ is likely to appear much more often than expected as the middle two edges in the pairs of paths counted in Lemma~\ref{lem:lotsofLzero}.

\begin{corollary}\label{cor:nottoomanyLzero}
Let $1/n\llpoly p\llpoly \eps$. Let $D\subset [n]$ have size $pn$ and let $G\sim G^\col_{[n]}$. Let $E\subset E(G)$ be formed by including each edge independently at random with probability $p$. Let $x_1,x_2,y_1,y_2\in V(G)$ be distinct with $x_1\nsimAB y_1$ and $x_2\nsimAB y_2$.

Then, with probability $1-n^{-\omega(1)}$, for any pair of edges $e,f\in E(G)$, there are at most
$(1+\eps)p^{42}n^{10}$ pairs $(P_1,P_2)$ of vertex-disjoint rainbow paths in $G$ of length 15, and all the edges of the paths apart from the two middle edges are in $E$ and have colour in $D$, and such that $P_1$ is an $x_1,y_1$-path with middle edge $e$, $P_2$ is an $x_2,y_2$-path with middle edge $f$, and, apart from possibly their middle edges, the paths $P_1$ and $P_2$ have the same colours in the same order.
\end{corollary}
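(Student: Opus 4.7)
The plan is to decompose each length-$15$ pair $(P_1,P_2)$ counted in Corollary~\ref{cor:nottoomanyLzero} into its two length-$7$ halves around the prescribed middle edges $e=u_1v_1$ and $f=u_2v_2$. Vertex-disjointness of $(P_1,P_2)$ forces $u_1,v_1,u_2,v_2$ to be four distinct vertices, and bipartite parity determines their classes, so each counted pair decomposes uniquely as a pair of ``left halves'' (vertex-disjoint length-$7$ paths from $x_1$ to $u_1$ and from $x_2$ to $u_2$, sharing a colour sequence in $D$ and with all $14$ edges in $E$) together with an analogous pair of right halves. Consequently, the count in the corollary is bounded above by $L_{e,f}\cdot R_{e,f}$, the product of the numbers of left- and right-half pairs.

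The main intermediate claim is a strengthening of Lemma~\ref{lem:tightupplength7}: with probability $1-n^{-\omega(1)}$, for every distinct $x_1',x_2'\in A$ and $y_1',y_2'\in B$, the number of pairs of vertex-disjoint length-$7$ paths in $G|_D$ between $(x_1',y_1')$ and $(x_2',y_2')$ with matching colour sequences \emph{and} all $14$ edges in $E$ is at most $(1+\eps/3)p^{21}n^5$. The plan is to prove it by replaying the deletion-method argument of Lemma~\ref{lem:tightupplength7} with two modifications. First, I condition jointly on $H$, the subgraph of $G|_D$-edges incident to $\{x_1',x_2',y_1',y_2'\}$, \emph{and} on $E\cap E(H)$; since $|E(H)|\le 4n$ and $E$ is independent of $G$, this extra conditioning costs at most an $e^{O(n)}$ factor, comfortably absorbed into the $e^{O(n\log^2 n)}$ error term in Corollary~\ref{cor:latinsquareprobabilities}. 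Second, I restrict the first- and last-colour labels defining $\mathcal{F}_{\hat H}$ to those colours $c\in D$ for which \emph{both} of the $\hat H$-edges of colour $c$ at the relevant anchor pair already lie in $E$. A Chernoff bound on the independent Bernoullis $\mathbf{1}[e'\in E]$, $e'\in E(H)$, shows that for typical $\hat H$ there are $(1\pm\eps/100)p^3 n$ valid label colours per side, so $|\mathcal{F}_{\hat H}|\le (1+\eps/50)(p^3 n)^2\cdot n^8\cdot (pn)^5=(1+\eps/50)p^{11}n^{15}$. Each $F\in\mathcal{F}_{\hat H}$ has $10$ interior edges outside $\hat H$, and because $E$ is independent of $G$ after conditioning, Corollary~\ref{cor:latinsquareprobabilities} yields $\mathbb{P}(H_S\subset G\cap E\mid H=\hat H)=e^{O(p\kappa)}p^{10\kappa}n^{-10\kappa}$ for any edge-disjoint sequence $S=(F_1,\ldots,F_\kappa)$ with $\kappa=n^{1+\eta}$. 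Multiplying gives $\mathbb{E}[Z_{\hat H}\mid H=\hat H]\le((1+\eps/3)p^{21}n^5)^\kappa\cdot n^{-\omega(1)}$ since $p\llpoly\eps$; a spread event supplied by Proposition~\ref{prop:linksusingearefew} together with Markov's inequality then delivers the claim for each fixed endpoint tuple, and a union bound over the $O(n^4)$ tuples extends it uniformly.

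Applying the strengthened claim with endpoint tuples $(x_1,x_2,u_1,u_2)$ and $(v_1,v_2,y_1,y_2)$ for every pair $(e,f)$ yields uniformly $L_{e,f},R_{e,f}\le(1+\eps/3)p^{21}n^5$, whence
\[
L_{e,f}\cdot R_{e,f}\le(1+\eps/3)^2 p^{42}n^{10}\le(1+\eps)p^{42}n^{10},
\]
and a final union bound over $(e,f)$ preserves the $n^{-\omega(1)}$ failure probability. The principal obstacle is the need to fold the $E$-restriction directly into the admissible label colours: naively applying Lemma~\ref{lem:tightupplength7} and restricting to $E$ \emph{after the fact} loses a factor $p^{28}$ relative to the target, while any direct McDiarmid concentration argument on the random set $E$ fails because each endpoint-incident edge can lie in up to $\Theta(n^5)$ length-$7$ pair structures, making the Lipschitz sum too large for the required deviation. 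Conditioning on $E\cap E(H)$ and baking the $E$-factor into the label count through a conditional Chernoff bound is precisely what recovers the missing $p^{28}$.
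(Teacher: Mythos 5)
Your replay of the deletion method with the $E$-restriction baked into the label count is a valid route, and your final decomposition around the middle edges $e,f$ into left- and right-half pairs matches the paper's argument (cf.\ Figure~\ref{fig:formakingLzero}). However, the obstacle you use to justify the longer route is not real. The paper \emph{does} prove the intermediate $E$-restricted half-path bound by exactly the McDiarmid concentration you dismiss, and it works: the point you miss is that before $E$ is revealed, the paper invokes Lemma~\ref{lem:looseupperlength3} (in addition to Lemma~\ref{lem:tightupplength7}) to show that, with probability $1-n^{-\omega(1)}$ over $G$, each interior edge lies in at most $O(n^{3.01})$ of the length-$7$ pair structures in $G|_D$ and each anchor-incident edge in at most $O(n^{4.01})$ of them --- not $\Theta(n^5)$. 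This gives a Lipschitz sum $\sum c_i^2 = O\bigl(n^2\cdot n^{6.02} + 4n\cdot n^{8.02}\bigr) = O(n^{9.02})$, while the target deviation is $t = \Theta(\eps p^{21}n^5)$, so the McDiarmid exponent is $\Omega\bigl(\eps^2 p^{42}n^{0.98}\bigr) = \omega(\log n)$ since $1/n\llpoly p,\eps$. That the concentration goes through is precisely what makes the paper's proof of Corollary~\ref{cor:nottoomanyLzero} short: apply Lemma~\ref{lem:tightupplength7} and Lemma~\ref{lem:looseupperlength3} to $G|_D$, then McDiarmid over the revelation of $E$, then multiply the two half-counts.

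Your alternative does buy something --- it derives the $E$-restricted bound in a single pass of the deletion method rather than by a two-stage ``first count in $G|_D$, then concentrate over $E$'' argument --- but at the cost of redoing the conditioning structure of Lemma~\ref{lem:tightupplength7} with an extra conditioned quantity. If you pursue it, you need to make explicit the ``typical $E\cap E(\hat H)$'' event (analogous to the event $\mathcal{B}$ in Lemma~\ref{lem:tightupplength7}): the Chernoff claim that $(1\pm\eps/100)p^3 n$ label colours are admissible is a statement about a high-probability event over $E$, not something that holds for every $\hat H$, so it must be folded into the union of bad events before Markov is applied. With that handled, your route is correct, but the paper's route is both correct and simpler, so the stated ``principal obstacle'' in your last paragraph should be withdrawn.
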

\begin{proof}
By Lemma~\ref{lem:tightupplength7}, with probability $1-n^{-\omega(1)}$, we have that, for every distinct $x'_1,x'_2\in A$ and distinct $y'_1,y'_2\in B$, there are at most $(1+\eps/3)p^7n^{5}$ pairs $(P'_1,P'_2)$ of vertex-disjoint paths in $G|_D$ of length 7 such that $P'_1$ is an $x'_1,y'_1$-path, $P'_2$ is an $x'_2,y'_2$-path and they have the same colours in the same order.
By Lemma~\ref{lem:looseupperlength3}, with probability $1-n^{-\omega(1)}$, we have that, for every distinct $x'_1,x'_2\in A$ and distinct $y'_1,y'_2\in B$, there are at most $n^{1.01}$ pairs $(P'_1,P'_2)$ of vertex-disjoint paths in $G|_D$ of length 3 such that $P'_1$ is an $x'_1,y'_1$-path, $P'_2$ is an $x'_2,y'_2$-path and they have the same colours in the same order.
Assuming these properties for $G|_D$, and revealing $E$, we will show that, with probability $1-n^{-\omega(1)}$, for every distinct $a_1,a_2\in A$ and distinct $b_1,b_2\in B$, there are at most $(1+\eps/3)p^{21}n^{5}$ pairs $(P'_1,P'_2)$ of vertex-disjoint paths in $G|_D$ of length seven such that all of their edges are in $E$, $P'_1$ is an $a_1,a_2$-path, $P'_2$ is an $b_1,b_2$-path and they have the same colours in the same order.

For this, suppose that $a_1,a_2\in A$ and $b_1,b_2\in B$ are all distinct, and let $\mathcal{P}$ be the set of pairs $(P'_1,P'_2)$ of vertex-disjoint paths in $G|_D$ of length seven  such that $P'_1$ is an $a_1,a_2$-path, $P'_2$ is an $b_1,b_2$-path and they have the same colours in the same order. Then, by the property from Lemma~\ref{lem:tightupplength7}, we have $|\mathcal{P}|\leq (1+\eps/3)p^7n^5$. Furthermore, given any edge $e\in E(G-\{a_1,a_2,b_1,b_2\})$, we can count the number of pairs of paths $(P'_1,P'_2)\in \mathcal{P}$ which use $e$ by choosing which of $P_1'$ or $P_2'$ contains $e$ and which edge of the path this is (with at most 14 choices).
Then, assuming $e$ is among the first 4 edges of $P_1'$ (where the other cases follow almost identically), we can choose the other vertices for the first 4 interior vertices of $P_1'$ (with at most $n^2$ options) which also determines the first 4 interior vertices of $P_2'$, whereupon we have at most $n^{1.01}$ options to choose the remaining subpaths of $P'_1$ and $P_2'$ using the property from Lemma~\ref{lem:looseupperlength3}. Thus, each edge in $G-\{a_1,a_2,b_1,b_2\}$ is contained in at most $10n^{3.01}$ pairs of paths in $\mathcal{P}$.
Similarly, each edge in $G$ which contains a vertex in $\{a_1,a_2,b_1,b_2\}$ is contained in at most $4n^{4.01}$ pairs of paths.

Then, as the expected number of pairs of paths in $\mathcal{P}$ whose edges are all in $E$ is $p^{14}|\mathcal{P}|\leq (1+\eps/3)p^{21}n^5$, as $1/n\llpoly p,\eps$, using McDiarmid's inequality (Lemma~\ref{lem:mcdiarmidchangingc}) similarly to how we did at \eqref{eq:mcdexample} we have that, with probability $1-n^{-\omega(1)}$,  there are at most $(1+2\eps/3)p^{21}n^{5}$ pairs $(P'_1,P'_2)\in \mathcal{P}$
such that all of their edges are in $E$. Taking a union bound, with probability $1-n^{-\omega(1)}$, we will have that for any $a_1,a_2\in A$ and distinct $b_1,b_2\in B$, there are at most $(1+2\eps/3)p^{21}n^{5}$ pairs $(P'_1,P'_2)$ of vertex-disjoint paths in $G|_D$ with edges in $E$ such that $P'_1$ is an $a_1,a_2$-path, $P'_2$ is a $b_1,b_2$-path and they have the same colours in the same order.

Assuming this, the property we want for any pair of edges $e,f\in E(G)$ follows. Indeed, first note that the property we want is trivial unless these edges share no vertices and have no vertices in $\{x_1,y_1,x_2,y_2\}$. Assuming otherwise, then, we can let $a_1,b_1,a_2,b_2$ be such that $e=a_1b_1$, $b_1\nsimAB x_1$, $f=a_2b_2$ and $a_2\nsimAB x_2$. There are at most $((1+\eps/3)p^{21}n^{5})^2$ choices for paths $P_1,P_2,P_3,P_4$ such that (as depicted in Figure~\ref{fig:formakingLzero}) they all have length 7, $P_1$ is an $x_1,b_1$-path, $P_2$ is an $a_1,y_1$-path, $P_3$ is an $x_2,a_2$-path, $P_4$ is an $b_2,y_2$-path, $P_1$ and $P_3$ have the same colours in the same order and are vertex-disjoint, and $P_2$ and $P_4$ have the same colours in the same order and are vertex-disjoint. Thus, there are at most
$(1+\eps)p^{42}n^{10}$ pairs $(P'_1,P'_2)$ of vertex-disjoint rainbow paths in $G$ of length 15, and all the edges of the paths apart from the two middle edges are in $E$ and have colour in $D$, and such that $P_1'$ is an $x_1,y_1$-path with middle edge $e$, $P_2'$ is an $x_2,y_2$-path with middle edge $f$, and, apart from possibly their middle edges, the paths $P_1'$ and $P'_2$ have the same colours in the same order, as required.
\end{proof}

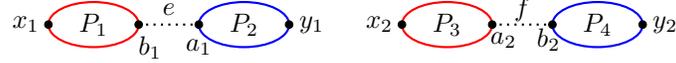
\begin{figure}[h]
\begin{center}
\begin{tikzpicture}
\def\setsp{0.8}
\def\setspp{1.2}
\def\setsppp{1.5}
\def\ellheight{0.6}

\coordinate (v2) at (0,0);

\foreach \n/\m/\gapp in {3/2/\setspp,4/3/\setspp,5/4/\setsp,6/5/\setspp,6b/6/\setsppp,7/6b/\setspp,8/7/\setsp,9/8/\setspp}
{
\coordinate (v\n) at ($(v\m)+(\gapp,0)$);
}

\draw [dotted,thick] (v4) -- (v5);
\draw [dotted,thick] (v7) -- (v8);

\draw [] ($0.5*(v7)+0.5*(v8)+(0,0.2)$) node {$f$};
\draw [] ($0.5*(v4)+0.5*(v5)+(0,0.2)$) node {$e$};

\foreach \n/\m/\colll/\subbb in {3/4/red/1,5/6/blue/2,6b/7/red/3,8/9/blue/4}
{
\draw ($0.5*(v\n)+0.5*(v\m)$) node {$P_{\subbb}$};
\draw [thick,\colll] ($0.5*(v\n)+0.5*(v\m)$) circle [x radius = 0.5*\setspp,y radius=0.5*\ellheight];
}

\draw ($(v3)+(-0.3,0)$) node {$x_1$};
\draw ($(v9)+(0.3,0)$) node {$y_2$};

\draw ($(v4)+(0.15,-0.3)$) node {$b_1$};
\draw ($(v5)+(0,-0.3)$) node {$a_1$};
\draw ($(v6)+(0.3,0)$) node {$y_1$};
\draw ($(v6b)+(-0.3,0)$) node {$x_2$};
\draw ($(v7)+(0.15,-0.2)$) node {$a_2$};
\draw ($(v8)+(-0.05,-0.2)$) node {$b_2$};

\foreach \n in {3,4,...,9}
{
\draw [fill] (v\n) circle[radius=0.05];
}
\draw [fill] (v6b) circle[radius=0.05];
\end{tikzpicture}
\end{center}

\vspace{-0.4cm}

\caption{The more general structure counted in the proof of Corollary~\ref{cor:nottoomanyLzero}, where $P_1$ and $P_3$ are paths with length seven with colours in the same order, as are $P_2$ and $P_4$ (using potentially some of the same colours as in $P_1$ and $P_3$).}\label{fig:formakingLzero}
\end{figure}


We now show that, roughly speaking, as, by Corollary~\ref{cor:nottoomanyLzero}, no pair $e,f$ can contribute as the middle two edges of the pairs of paths counted in Lemma~\ref{lem:lotsofLzero}, almost all of the possible pairs $e,f$ must contribute not much below what can be expected of them, as follows.


\begin{corollary}\label{cor:Lzerostuff}
Let $1/n\llpoly p,\eps\leq 1$. Let $D\subset [n]$ have size $pn$ and let $G\sim G^\col_{[n]}$. Let $E\subset E(G)$ be formed by including each edge independently at random with probability $p$. Let $x_1,x_2,y_1,y_2\in V(G)$ be distinct with $x_1\nsimAB y_1$ and $x_2\nsimAB y_2$.

Then, with probability $1-n^{-\omega(1)}$, for any pair of edges $e,f\in E(G)$, there are at most $\eps n^4$ pairs of distinct edges $e,f\in E(G|_{[n]\setminus D})$ for which there are at most
$(1-\eps)p^{42}n^{10}$ pairs $(P_1,P_2)$ of vertex-disjoint rainbow paths in $G$ with length 15, and all the edges of the paths apart from the two middle edges are in $E$ and have colour in $D$, and such that $P_1$ is an $x_1,y_1$-path with middle edge $e$, $P_2$ is an $x_2,y_2$-path with middle edge $f$, and, apart from possibly their middle edges, the paths $P_1$ and $P_2$ have the same colours in the same order.
\end{corollary}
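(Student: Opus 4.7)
The plan is to combine the global lower bound of Lemma~\ref{lem:lotsofLzero} with the per-pair upper bound of Corollary~\ref{cor:nottoomanyLzero} by a short double-counting argument. I will set $\eps' := \eps^2/10$ and apply these two results with $\eps'$ in place of $\eps$; by a union bound, both conclusions hold simultaneously with probability $1-n^{-\omega(1)}$, so this can be assumed throughout.

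For each ordered pair $(e,f)$ of distinct edges in $E(G|_{[n]\setminus D})$, let $S_{e,f}$ denote the number of pairs $(P_1,P_2)$ of the form specified in the statement that have middle edges $e$ and $f$ respectively, and set $T = \sum_{(e,f)} S_{e,f}$. Lemma~\ref{lem:lotsofLzero} then yields $T \geq (1-\eps') p^{42}(1-p)^2 n^{14}$, while Corollary~\ref{cor:nottoomanyLzero} gives $S_{e,f} \leq (1+\eps') p^{42} n^{10}$ for every such pair. Since each of the $(1-p)n$ colours in $[n]\setminus D$ is used on exactly $n$ edges of $G$, we have $|E(G|_{[n]\setminus D})| = (1-p)n^2$, so the total number $M$ of ordered pairs satisfies $M \leq (1-p)^2 n^4$.

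Calling $(e,f)$ \emph{bad} when $S_{e,f} \leq (1-\eps) p^{42} n^{10}$, and letting $\mathcal{E}$ be the set of bad pairs, I will split $T$ according to whether $(e,f) \in \mathcal{E}$ and use the two inequalities above to obtain
\[
(1-\eps')(1-p)^2 n^4 \leq |\mathcal{E}|(1-\eps) + (M - |\mathcal{E}|)(1+\eps') = M(1+\eps') - |\mathcal{E}|(\eps + \eps'),
\]
after dividing through by $p^{42}n^{10}$. Rearranging and using $M \leq (1-p)^2 n^4$ gives $|\mathcal{E}|(\eps+\eps') \leq M(1+\eps') - (1-\eps')(1-p)^2 n^4 \leq 2\eps' M \leq 2\eps' n^4$, so $|\mathcal{E}| \leq (2\eps'/\eps) n^4 = (\eps/5) n^4 < \eps n^4$, as required.

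The argument is essentially routine bookkeeping once the two previous results are in hand; there is no substantive obstacle. The only care needed is choosing $\eps'$ quadratically smaller than $\eps$ so that the per-pair slack $\eps'$ gets amplified by averaging to the desired slack $\eps$ in the bad-pair count.
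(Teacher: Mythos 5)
Your proposal is correct and follows the same route as the paper: combine the global lower bound from Lemma~\ref{lem:lotsofLzero} with the uniform per-pair upper bound from Corollary~\ref{cor:nottoomanyLzero} (both applied with a slack $\eps'$ quadratic in $\eps$) and use an averaging argument over the set $\mathcal{E}$ of bad pairs; the paper phrases this as a contradiction whereas you rearrange directly, which is a purely cosmetic difference.
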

\begin{proof} This follows directly from Lemma~\ref{lem:lotsofLzero} and Corollary~\ref{cor:nottoomanyLzero}, as follows.
For each $e,f\in E(G)$ let $X_{e,f}$ be the number of pairs $(P_1,P_2)$ of vertex-disjoint rainbow paths in $G$ of length 15 for which all the edges of the paths apart from the two middle edges are in $E$ and have colour in $D$, and such that $P_1$ is an $x_1,y_1$-path with middle edge $e$, $P_2$ is an $x_2,y_2$-path with middle edge $f$, and, apart from possibly their middle edges, the paths $P_1$ and $P_2$ have the same colours in the same order.
Let $\mathcal{E}$ be the set of pairs of distinct edges $e,f\in E(G|_{[n]\setminus D})$ such that $X_{e,f}\leq (1-\eps)p^{42}n^{10}$.
By Corollary~\ref{cor:nottoomanyLzero}, with probability $n^{-\omega(1)}$ we have $X_{e,f}\leq (1+\eps^2/2)p^{42}n^{10}$ for each $e,f\in E(G)$.
If $|\mathcal{E}|\geq \eps n^2$, then
\begin{align*}
\sum_{e,f\in E(G|_D)}X_{e,f}\leq &\eps n^4\cdot (1-\eps)\cdot p^{42}n^{10}+\left((e(G|_{[n]\setminus D})^2-\eps n^4\right)\cdot (1+\eps^2/2)\cdot p^{42}n^{10}
\\
&\leq \eps n^4\cdot (1-\eps)\cdot p^{42}n^{10}+\left((1-p)^2n^4-\eps n^4\right)\cdot (1+\eps^2/2)\cdot p^{42}n^{10}
\\
& \leq (1-\eps^2/2)\cdot p^{42}(1-p)^2n^{14}.
\end{align*}
However, by Lemma~\ref{lem:lotsofLzero} this does not hold with probability $1-n^{-\omega(1)}$. Therefore, with probability $1-n^{-\omega(1)}$, we must have $|\mathcal{E}|\leq \eps n^4$, as required.
\end{proof}


\subsection{Tight bounds for length 15 paths with the same colours}\label{sec:path15upperlower}

We are now ready to prove our required likely lower bound for pairs of paths of length 15. For convenience, we will record this along with our upper bound in the following result.

\begin{lemma}\label{lemma:mainLlinks} Let $1/n\llpoly \eps$. Let $G\sim {G}^\col_{[n]}$ and let $x_1,x_2,y_1,y_2\in V(G)$ be distinct with $x_1\nsimAB y_1$ and $x_2\nsimAB y_2$. Then, with probability $1-n^{-\omega(1)}$, the following holds.
There are $(1\pm \eps)n^{13}$ pairs $(P_1,P_2)$ of vertex-disjoint paths in $G$ of length 15 such that $P_1$ is an $x_1,y_1$-path, $P_2$ is an $x_2,y_2$-path, and they have the same colours in the same order.
\end{lemma}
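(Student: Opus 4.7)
The proof strategy is to reduce Lemma~\ref{lemma:mainLlinks} to a pair of matching $(1\pm\eps/2)$-bounds on the restricted count $N^{*}$ of matched length-15 pairs whose 14 non-middle colours lie in a chosen set $D\subset[n]$ of size $pn$ (with $1/n\llpoly p\llpoly\eps$) and whose middle colour lies outside $D$, and then to recover the unrestricted count $N$ via the identity $\E_{D}[N^{*}]=p^{14}(1-p)N$ (valid when $D$ is a uniformly random $pn$-subset of $[n]$) together with Chernoff-type concentration of $N^{*}$ around its $D$-mean, in the style of the deduction of Corollary~\ref{cor:fewlengththreesame} from Lemma~\ref{lem:looseupperlength3}; this forces $N=(1\pm\eps)n^{13}$ whp.

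For the upper bound $N^{*}\le(1+\eps/2)p^{14}(1-p)n^{13}$, I would first extend Proposition~\ref{prop:linksusingearefew} to length 15 via repeated appeal to Corollary~\ref{cor:fewlengthsevensame} — the number of matched length-15 pairs through any fixed edge is $O(n^{11+O(\eta)})$ whp — by splitting the length-15 path containing the fixed edge into the edge plus two shorter matched fragments and summing over the possible positions of the edge. Then I would apply the deletion method exactly as in the proof of Lemma~\ref{lem:tightupplength7}, enumerating sequences of $\kappa=n^{1+\eta}$ pairwise edge-disjoint matched-pair fragments (a matched pair with the four fixed endpoints removed) in which no colour appears more than $pn/2$ times, bounding the probability that their union lies in $G$ via Corollary~\ref{cor:latinsquareprobabilities}, and applying Markov's inequality.

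The lower bound $N^{*}\ge(1-\eps/2)p^{14}(1-p)n^{13}$ is the main obstacle. I would use the random edge set $E\subset E(G)$ and the quantities $X_{e,f}$ from Corollary~\ref{cor:nottoomanyLzero}, together with the identity $\E_{E}[\sum_{e\ne f,\,c(e)=c(f)\notin D}X_{e,f}]=p^{28}N^{*}$, to transfer the task to a tight two-sided estimate of $\sum X_{e,f}$ over same-coloured ordered pairs outside $D$. By Corollary~\ref{cor:Lzerostuff} the bad set $\mathcal{E}$ of pairs $(e,f)$ with $X_{e,f}$ significantly below average satisfies $|\mathcal{E}|\le(\eps/10)n^{4}$ whp; and since the total number of same-coloured ordered pairs in $E(G|_{[n]\setminus D})^{2}$ is $\approx(1-p)n^{3}$, the remaining task is to show that only $O(\eps n^{3})$ of these same-coloured pairs actually lie in $\mathcal{E}$. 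Symmetry predicts that the fraction of $\mathcal{E}$ consisting of same-coloured pairs should be $\approx 1/((1-p)n)$, giving an expected $\le(\eps/10)n^{3}$, but because $\mathcal{E}$ itself is random (depending on $G|_{D}$ and $E$) this heuristic has to be turned into a whp statement by running the deletion method once more: enumerate sequences of $\kappa$ pairwise edge-disjoint same-coloured ordered pairs taken from $\mathcal{E}$, and bound the probability that the union (a properly coloured subgraph with $2\kappa$ edges and $\kappa$ non-$D$ colours each appearing exactly twice) sits inside $G$ using Corollary~\ref{cor:latinsquareprobabilities}, before concluding by Markov. This final deletion argument is the main novelty and the place where any technical difficulty will lie; once it is handled, combining the upper bound on $X_{e,f}$ from Corollary~\ref{cor:nottoomanyLzero} applied to the bad same-coloured pairs with the lower bound $(1-\eps/10)p^{42}n^{10}$ on the remaining same-coloured pairs yields the required two-sided estimate for $\sum X_{e,f}$, and hence for $N^{*}$.
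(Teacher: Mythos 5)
Your overall architecture (restrict the colours, get two-sided bounds on the restricted count, and recover $N$ via conditional expectation over the random colour set) matches the paper's strategy, and you have correctly identified that Corollaries~\ref{cor:nottoomanyLzero} and~\ref{cor:Lzerostuff} combined with one more deletion-method step are the engine of the lower bound. However, there is a genuine gap in how you have set up the restricted count~$N^*$.

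The problem is that you require the middle colour to lie \emph{outside} $D$, i.e.\ in $[n]\setminus D$, which is a set of $(1-p)n\approx n$ colours. Every place where you invoke the deletion method --- both for the upper bound on $N^*$, and, more critically, for the final step that controls how many of the bad pairs in $\mathcal{E}$ are same-coloured --- you form a union $H_S=F_1\cup\dots\cup F_\kappa$ of $\kappa$ edge-disjoint fragments and then apply Corollary~\ref{cor:latinsquareprobabilities} to $H_S\cup\hat H$. That corollary is only valid for subgraphs using at most $\delta n$ colours with $\delta\le 1/10$. To beat the $e^{O(n\log^2 n)}$ error term of the corollary you must take $\kappa=\omega(n\log^2 n)$ (as you do, with $\kappa=n^{1+\eta}$ for the fragments, and the paper takes $\kappa=p_1n^2$ for the monochromatic pairs). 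But then the fragments' middle colours, which you allow to range over all of $[n]\setminus D$, spread over up to $(1-p)n$ distinct colours in $H_S$, so $H_S\cup\hat H$ can use nearly all $n$ colours and the hypothesis of Corollary~\ref{cor:latinsquareprobabilities} fails. Restricting per middle colour does not rescue this: fixing the middle colour to a single $c$ caps $\kappa$ at $n/2$ edge-disjoint monochromatic pairs, which is far below the $\omega(n\log^2 n)$ needed to absorb the error. The paper's fix is to introduce a \emph{second small} colour set $D_2$, disjoint from $D_1$ and of size $p_2 n$, and to require the middle colour to lie in $D_2$ rather than in $[n]\setminus D_1$. Then each $H_S$ has all its colours inside $D_1\cup D_2$, a set of size $pn$, the corollary applies with $\delta=O(p)$, and one can take $\kappa=p_1n^2\gg n\log^2 n$ while each colour still appears at most $pn$ times. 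That choice of a second small middle-colour set (yielding Lemma~\ref{lemma:mainLlinksfirstlower}) is the key idea missing from your proposal.

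A secondary remark: your upper bound on $N^*$ via a fresh deletion argument on length-15 fragments (even once the colour issue were fixed) is more work than the paper does. The paper gets the upper bound $|\mathcal{P}|\le(1+\eps)n^{13}$ directly from Corollary~\ref{cor:fewlengthsevensame} by splitting a matched length-15 pair into a matched length-8 initial segment (at most $n^8$ choices) and a matched length-7 terminal segment (at most $(1+\eps)n^5$ choices), and then obtains the required upper bound on the restricted count $|\mathcal{P}'|$ by McDiarmid's inequality. This avoids any deletion-method argument for the upper bound.
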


For the lower bound in Lemma~\ref{lemma:mainLlinks}, we first prove a similar lower bound where the edges of the paths have colour within some specified subset $D_1$ or $D_2$, as follows, and some edges are in a random set $E$ of edges.

\begin{lemma}\label{lemma:mainLlinksfirstlower} Let $1/n\llpoly p_1\llpoly p_2\llpoly \eps$. Let $D_1,D_2\subset [n]$ be disjoint sets with size $p_1n$ and $p_2n$ respectively. Let $E\subset E(G)$ be formed by including each edge at random with probability $p_1$. Let $G\sim G^\col_{[n]}$  and let $x_1,x_2,y_1,y_2\in V(G)$ be distinct with $x_1\nsimAB y_1$ and $x_2\nsimAB y_2$.
Then, with probability $1-n^{-\omega(1)}$, there are at least $(1-\eps)p_1^{42}p_2n^{13}$ pairs $(P_1,P_2)$ of vertex-disjoint paths in $G$ of length 15 such that $P_1$ is an $x_1,y_1$-path, $P_2$ is an $x_2,y_2$-path, they have the same colours in the same order, all but their middle edges have colour in $D_1$ and are in $E$, and their middle edges have colour in $D_2$.
\end{lemma}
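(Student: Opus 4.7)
The plan is to combine the upper bound on the ``bad pair'' set from Corollary~\ref{cor:Lzerostuff} with a deletion-method argument that limits how many such bad pairs actually receive matching colours in $D_2$. First I would apply Corollary~\ref{cor:Lzerostuff} with $D = D_1$ and a parameter $\eps_1 \ll \eps$: with probability $1 - n^{-\omega(1)}$ the event $\mathcal{A}$ holds that
\[
\mathcal{E} := \{(e,f) \in E(G|_{[n]\setminus D_1})^2 : X_{e,f} \leq (1-\eps_1) p_1^{42} n^{10}\}
\]
has $|\mathcal{E}| \leq \eps_1 n^4$, where $X_{e,f}$ is the count from that corollary. The crucial observation is that $X_{e,f}$, and hence $\mathcal{E}$, is measurable with respect to $(G|_{D_1},E)$: the pairs of paths counted have their non-middle edges in $E$ with colours in $D_1$, while the middle edges enter only through their incidence with $e,f$ and not through the specific colours these edges receive in $G$.

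Next let $\mathcal{F}$ denote the set of ordered pairs of disjoint edges of $G$ avoiding $\{x_1,x_2,y_1,y_2\}$ that share a common colour in $D_2$. Since each colour class of a Latin square forms a perfect matching, $|\mathcal{F}| = (1 - O(1/n)) p_2 n^3$ deterministically, and the target count satisfies $Y = \sum_{(e,f)\in \mathcal{F}} X_{e,f}$, so on $\mathcal{A}$ one has
\[
Y \geq (|\mathcal{F}| - |\mathcal{F} \cap \mathcal{E}|)(1-\eps_1) p_1^{42} n^{10}.
\]
It therefore suffices to show that $|\mathcal{F} \cap \mathcal{E}| \leq \eps_2 p_2 n^3$ with probability $1 - n^{-\omega(1)}$, for some $\eps_2$ with $\eps_1 \ll \eps_2 \ll \eps$.

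To handle this I would apply the deletion method to $\mathcal{F} \cap \mathcal{E}$. Setting $\kappa = \lceil C n\log^2 n\rceil$ for a large constant $C$, consider $\kappa$-sequences $S = ((e_1,f_1),\ldots,(e_\kappa,f_\kappa))$ of edge-disjoint pairs from $\mathcal{F} \cap \mathcal{E}$ in which no colour of $D_2$ is used more than $p_2 n/2$ times. If $|\mathcal{F} \cap \mathcal{E}| > \eps_2 p_2 n^3$, then since each edge lies in at most $n$ pairs of $\mathcal{F}$ and each colour admits at most $n/2$ edge-disjoint same-colour pairs, a greedy extraction produces at least $(\eps_2 p_2 n^3/4)^\kappa$ such sequences. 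For the expected count in the other direction, restricting to $\mathcal{A}$ freezes $\mathcal{E}$ as a set of size at most $\eps_1 n^4$, so the number of edge-disjoint $\kappa$-sequences drawn from $\mathcal{E}$ is at most $(\eps_1 n^4)^\kappa$, and for each such sequence and each admissible colouring $\chi : [\kappa] \to D_2$ the graph $H_\chi$ that labels both edges of the $i$-th pair by $\chi(i)$ is a properly coloured graph with $2\kappa$ edges, using at most $p_2 n$ colours each at most $p_2 n/2$ times (edges of a common colour must sit inside a single matching of $G$, so contributions of non-matching colourings vanish). Corollary~\ref{cor:latinsquareprobabilities} gives $\Pr[H_\chi \subset G] \leq e^{O(p_2\kappa + n\log^2 n)} n^{-2\kappa}$, and summing over the at most $(p_2 n)^\kappa$ colourings, while accounting for the conditioning on $(G|_{D_1},E)$ through Theorem~\ref{thm:modelswapKSS}, bounds the relevant expected count by $(\eps_1 p_2 n^3)^\kappa e^{O(p_2\kappa + n\log^2 n)}$. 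With $\eps_1/\eps_2$ small and $\kappa = \Omega(n\log^2 n)$, this is a factor $n^{-\omega(1)}$ smaller than the greedy lower bound $(\eps_2 p_2 n^3/4)^\kappa$, so Markov's inequality delivers the required contradiction.

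The main obstacle is executing the deletion method for a target set $\mathcal{E}$ that itself depends on $G$; the resolution is the measurability of $\mathcal{E}$ with respect to $(G|_{D_1},E)$, which allows one to freeze $\mathcal{E}$ by conditioning and then invoke the Latin-square rigidity of Corollary~\ref{cor:latinsquareprobabilities} to control the fresh colouring on $[n]\setminus D_1$ that governs membership in $\mathcal{F}$. A secondary technical point is the colour-capacity bookkeeping needed for Corollary~\ref{cor:latinsquareprobabilities}, but this only imposes $\kappa \leq p_2^2 n^2 / 2$, comfortably accommodating $\kappa = \Theta(n\log^2 n)$. Once $|\mathcal{F} \cap \mathcal{E}|$ is controlled, the required lower bound $Y \geq (1-\eps) p_1^{42} p_2 n^{13}$ follows immediately from the displayed inequality.
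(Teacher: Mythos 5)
Your overall strategy matches the paper's very closely: you condition on $H = G|_{D_1}\cap E$ to freeze the set $\mathcal{E}$, invoke Corollary~\ref{cor:Lzerostuff} to bound $|\mathcal{E}|$, and then run a deletion-method argument to show that few pairs in $\mathcal{E}$ can acquire a common colour from $D_2$. This is exactly the architecture of Claim~\ref{clm:AH2} in the paper, and the measurability observation you highlight is precisely the crucial point.

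However, there is a quantitative gap in your choice of $\kappa = \Theta(n\log^2 n)$. When you condition on $(G|_{D_1},E) = (\hat{G}_1,\hat E)$, the conditional probability that $H_\chi$ appears takes the form
\[
\P\bigl(H_\chi\subset G \mid G|_{D_1}=\hat G_1\bigr) \approx \frac{\P\bigl((H_\chi\cup\hat G_1)\subset G\bigr)}{\P(\hat G_1\subset G)} = \bigl(1+O(p)\bigr)^{2e(\hat G_1)+2\kappa}\, n^{-2\kappa}\,e^{O(n\log^2 n)},
\]
not simply $e^{O(p_2\kappa + n\log^2 n)} n^{-2\kappa}$. The $O(\delta\, e(H))$ error from Corollary~\ref{cor:latinsquareprobabilities} does not cancel between numerator and denominator (the implicit constants differ), so you pick up a factor $e^{O(p\, e(\hat G_1))}$. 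Here $e(\hat G_1)$ is of order $p_1 n^2$ — a full colouring of $D_1$ (and even the sparser graph $H = G|_{D_1}\cap E$ has roughly $p_1^2 n^2$ edges) — and since $p_1 \ggpoly 1/n$, this is $n^{1+\Omega(1)} \gg n\log^2 n$. The gain from your deletion method is a factor of $(4\eps_1/\eps_2)^\kappa = e^{-\Theta(\kappa\log n)}$, and with $\kappa = Cn\log^2 n$ this only gives $e^{-\Theta(n\log^3 n)}$, which cannot absorb $e^{\Theta(p\, p_1 n^2)}$. Theorem~\ref{thm:modelswapKSS} alone only accounts for the $e^{O(n\log^2 n)}$ measure-change factor; it does not remove the conditioning penalty tied to $e(\hat G_1)$. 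The paper avoids this by taking $\kappa = p_1 n^2$, so that $e(\hat H) = O(\kappa)$ and the conditioning error collapses to $e^{O(p\kappa)}$. Increasing your $\kappa$ to $p_1 n^2$ would repair the argument (your colour-capacity ceiling $\kappa\leq p_2^2 n^2/2$ is still comfortably satisfied given the hierarchy $p_1\llpoly p_2$); as written, the bound you claim on the expected count is not justified.
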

\begin{proof} Let $\mu$ satisfy $1/n\llpoly \mu \llpoly p_1$.
Let $H$ be the graph of edges in $E$ with colour in $D_1$, and let $\mathcal{H}$ be the set of possibilities for $H$. Let $\mathcal{E}$ be the set of pairs $(e,f)$ of edges of the (uncoloured) complete bipartite graph which do not appear (coloured) in $H$ such that there are at most $(1-\mu)p_1^{42}n^{10}$ pairs $(P_1,P_2)$ of vertex-disjoint paths in $H+e+f$ of length 15 such that $P_1$ is an $x_1,y_1$-path with middle edge $e$, $P_2$ is an $x_2,y_2$-path with middle edge $f$, and $P_1$ and $P_2$ have, apart from the middle edges ($e$ and $f$), the same colours in the same order.
Let $\mathcal{A}$ be the event that there are at least $\eps p_2n^3/2$ pairs $(e,f)\in \mathcal{E}$ for which $e$ and $f$ have the same colour and this colour is in $D_2$.
Let $\mathcal{B}$ be the event that there are at most $2pn$ edges of each colour in $E$ and $|\mathcal{E}|\leq \mu n^4$.

We will show the following claim.

\begin{claim}\label{clm:AH2} For each $\hat{H}\in \mathcal{H}$, $\P(\mathcal{A}\land\mathcal{B}|H=\hat{H})=n^{-\omega(1)}$.
\end{claim}
Given this claim, as $\P(\mathcal{B})=1-n^{-\omega(1)}$ by Corollary~\ref{cor:Lzerostuff} and Lemma~\ref{chernoff}, following the same reasoning as for \eqref{eqn:forlaterreference}, we have $\P(\mathcal{A})= n^{-\omega(1)}$. Furthermore, if $\mathcal{A}$ does not occur, then
the number of pairs $(P_1,P_2)$ of vertex-disjoint paths in $G$ of length 15 such that $P_1$ is an $x_1,y_1$-path, $P_2$ is an $x_2,y_2$-path, they have the same colours in the same order, all but their middle edges have colour in $D_1$ and are in $E$, and their middle edges have colour in $D_2$, is at least
\[
\left(|D_2|\cdot n(n-1)-\frac{\eps p_2n^3}{2}\right)\cdot (1-\mu)p_1^{42}n^{10}\geq (1-\eps)p_1^{42}p_2n^{13},
\]
as required. Therefore, it is left only to prove Claim~\ref{clm:AH2}.

\smallskip

\noindent\emph{Proof of Claim~\ref{clm:AH2}.} Let $\mathcal{H}'$ be the set of $\hat{H}\in\cH$ for which $\mathcal{B}$ holds whenever $H=\hat{H}$. Note that if $\hat{H}\in \cH\setminus \cH'$, then $\P(\mathcal{A}\land\mathcal{B}|H=\hat{H})\leq \P(\mathcal{B}|H=\hat{H})=0$, so Claim~\ref{clm:AH2} holds trivially in this case. Therefore, let $\hat{H}\in \cH'$.

Let $p=p_1+p_2$, $D=D_1\cup D_2$ and $\kappa=p_1 n^2$. Let $\mathcal{F}_{\hat{H}}$ be the set of subgraphs $F$ such that $F$ has exactly two edges, $e$ and $f$ say, which both have the same colour, which is in $D_2$, and are such that $(e,f)\in \mathcal{E}$ or $(f,e)\in \mathcal{E}$. As $\mathcal{B}$ holds when $H=\hat{H}$, and there are $n$ colours, we have $|\mathcal{F}_{\hat{H}}|\leq 2\mu n^5$.

Let $\mathcal{S}_{\hat{H}}$ be the set of $(F_1,\ldots,F_{\kappa})$ of sequences of length $\kappa$ of edge-disjoint subgraphs from $\mathcal{F}_{\hat{H}}$ for which each colour appears on $\bigcup_{i\in [\kappa]}F_i$ at most $p n$ times, so that, if $\mathcal{B}$ holds, then
\begin{equation}\label{eqn:sizeS1}
|\mathcal{S}_{\hat{H}}|\leq (2\mu)^{\kappa} n^{5\kappa}.
\end{equation}
For each $S=(F_1,\ldots,F_{\kappa})\in \mathcal{S}_{\hat{H}}$, let $H_S=\cup_{i\in[\kappa]}F_i$, so that $e(H_S)=2\kappa$, each colour appears on $H_S$ at most $pn$ times, and all of the colours of $H_S$ are in $D_2$.
Let $Z_{\hat{H}}$ be the number of
$S\in \mathcal{S}_{\hat{H}}$ with $H_S\subset G$. Now, observe that if $\mathcal{A}$ and $\mathcal{B}$ hold and $H=\hat{H}$, then $Z_{\hat{H}}\geq (\eps n^3/4)^\kappa$.  Indeed, first note simply that any edge $e\in E(G|_{[n]\setminus D_1})$ is in at most $n$ graphs in $\mathcal{F}_{\hat{H}}$ which are a subgraph of $G$ as such subgraphs are pairs of edges with the same colour, and thus, moreover, every colour appears on at most $n^2$ subgraphs in $\mathcal{F}_{\hat{H}}$. Therefore,
if $\mathcal{A}$ and $\mathcal{B}$ hold, then we can pick a sequence $(F_1,\ldots,F_\kappa)$ of edge-disjoint subgraphs from $\mathcal{F}_{\hat{H}}$ by picking each $F_i$, $1\leq i\leq\kappa$, in turn, where at the selection of each $F_i$, $i\in [\kappa]$,
as $\mathcal{A}$ and $\mathcal{B}$ hold and there will be $2(i-1)$ edges we wish to avoid and at most $2(i-1)\cdot (pn/2)^{-1}$ colours, the number of possibilities for the choice of $F_i$ will be at least
\[
\frac{\eps n^3}{2}-2\kappa\cdot n - \frac{2\kappa}{pn/2}\cdot n^2\geq  \frac{\eps n^3}{2}-\frac{8\kappa n}{p} \geq \frac{\eps n^3}{4},
\]
as $\kappa=p_1 n^2$ and $p_1\llpoly p,\eps$.

For each $S\in \mathcal{S}_{\hat{H}}$, we have, as $\mathcal{B}$ holds if $H=\hat{H}$, that $H_S\cup \hat{H}$ has at most $pn+pn\leq 2pn$ edges of each colour. Furthermore, the edges of $H_S\cup \hat{H}$ only have colour in $D=D_1\cup D_2$, a set of size $pn$.
Thus,  for every
$S\in \mathcal{S}_{\hat{H}}$ such that $H_S\cup \hat{H}$ is properly coloured,
similarly to \eqref{eqn:longversion}, using Corollary~\ref{cor:latinsquareprobabilities} twice and that $p\kappa\geq n\log^2n$, we have that
\begin{align}
\P(H_S\subset G|H'=\hat{H})&=\frac{\P((H_S\cup \hat{H})\subset G)}{\P(H'=\hat{H})}=\frac{\left(\frac{1+O(p)}{n}\right)^{e(\hat{H})+2\kappa}}{\left(\frac{1+O(p)}{n}\right)^{e(\hat{H})}}= (1+O(p))^{2e(\hat{H})+2\kappa}n^{-2\kappa}\nonumber\\
&= e^{O(p\kappa)}n^{-2\kappa},\label{eqn:HSGprime2}
\end{align}
where we have used that $e(\hat{H})=p_1n^2=\kappa$.
Thus, as this holds for every $S\in \mathcal{S}_{\hat{H}}$ such that $H_S\cup \hat{H}$ is properly coloured,
\begin{align*}
\E(Z_{\hat{H}}|H=\hat{H})\leq |\mathcal{S}_{\mathcal{H}}|\cdot e^{O(p\kappa)}n^{-2\kappa}
\overset{\eqref{eqn:sizeS1}}{\leq} e^{O(p\kappa)}\cdot (2\mu)^{\kappa}n^{3\kappa}
=(\eps n^3/4)^{\kappa}\cdot n^{-\omega(1)},
\end{align*}
where we have used that $1/n\llpoly \mu \llpoly \eps$.
Then, by Markov's inequality, we have
\[
\P(\mathcal{A}\land\mathcal{B}|H=\hat{H})\leq \P(Z_{\hat{H}}\geq (\eps n^3/4)^{\kappa})\leq \frac{\E(Z_{\hat{H}}|H=\hat{H})}{(\eps n^3/4)^{\kappa}}
=n^{-\omega(1)}.
\]
This completes the proof of the claim, and hence the lemma.\hspace{5.2cm}$\boxdot$
\end{proof}

Using Corollary~\ref{cor:fewlengthsevensame} and Lemma~\ref{lemma:mainLlinksfirstlower}, it is now short work to deduce Lemma~\ref{lemma:mainLlinks}.

\begin{proof}[Proof of Lemma~\ref{lemma:mainLlinks}] Let $\mathcal{P}$ be the set of pairs $(P_1,P_2)$ of vertex-disjoint rainbow paths in $G$ of length 15 with the same colours in the same order, such that $P_1$ is an $x_1,y_1$-path and $P_2$ is an $x_2,y_2$-path.
By Corollary~\ref{cor:fewlengthsevensame}, with probability $1-n^{-\omega(1)}$, for any distinct $x'_1,x'_2,y'_1,y'_2\in V(G)$ with $x'_1\nsimAB y'_1$ and $x'_2\nsimAB y'_2$, there are at most $(1+\eps)n^5$ pairs $(P_1,P_2)$ of vertex-disjoint paths in $G$ of length seven such that $P_1$ is an $x'_1,y'_1$-path, $P_2$ is an $x'_2,y'_2$-path and they have the same colours in the same order.
Note that, for any distinct $x_1,x_2,y_1,y_2\in V(G)$ with $x_1\nsimAB y_1$ and $x_2\nsimAB y_2$, there are at most $n^{8}$ pairs $(P_1,P_2)$ paths of length 8 which start at $x_1$ and $x_2$ respectively and have the same colours in the same order. Therefore, applying the property of paths of length 7 to the other ends of these paths with $y_1$ or $y_2$ as appropriate, we get that there are at most $(1+\eps)n^{13}$ pairs in $\mathcal{P}$.

Furthermore, given any edge $e\in E(G)-\{x_1,x_2,y_1,y_2\}$, and any $2\leq k\leq 8$, there are at most $n^6$ pairs $(P_1,P_2)$ paths of length 8 which start at $x_1$ and $x_2$ respectively and have the same colours in the same order and where $e$ is the $k$ edge of the path from $x_1$. Thus, arguing as above, there are $O(n^{11})$ pairs $(P_1,P_2)\in \mathcal{P}$ in which $e$ is the $k$th edge of $P_1$ from $x_1$. Working similarly, we have that there are altogether $O(n^{11})$ pairs $(P_1,P_2)\in \mathcal{P}$ in which $e\in E(P_1\cup P_2)$.
If $e\in E(G)$ contains a vertex in $\{x_1,x_2,y_1,y_2\}$, then arguing similarly to before, we have that there are at most $O(n^{12})$ pairs $(P_1,P_2)\in \mathcal{P}$ with $e\in E(P_1\cup P_2)$.

Let $p_1$ and $p_2$ satisfy $1/n \llpoly p_1\llpoly p_2\llpoly \eps$. Let $\hat{D}_1,\hat{D}_2\subset C$ be disjoint random sets of colours such that each colour $c\in C$ is independently added to $\hat{D}_0$ with probability $(1-\eps^{2})p_1$ and to $\hat{D}_2$ with probability $(1-\eps^{2})p_2$.
Let $D_1,D_2$ be disjoint random sets of colours such that if $|\hat{D}_1|\leq pn$ and $|\hat{D}_1|\leq p_2n$, $D_1$ and $D_2$ are chosen uniformly at random subject to $\hat{D}_1\subset D_1\subset C$, $\hat{D}_2\subset D_2\subset C$, $|D_1|=p_1n$, $|D_2|=p_2n$, and $D_1$ and $D_2$ are disjoint, and, otherwise  $D_1$ and $D_2$ are chosen uniformly at random subject to  $D_1,D_2\subset C$, $|D_1|=p_1n$, $|D_2|=p_2n$, and $D_1$ and $D_2$ are disjoint. Note that, by Lemma~\ref{chernoff}, with probability $1-n^{-\omega(1)}$, we have $\hat{D}_1\subset D_1$ and $\hat{D}_2\subset D_2$. Let $E\subset E(G)$ be a formed by including each edge independently at random with probability $p_1$.

Let $\mathcal{P}'\subset \mathcal{P}$ be the set of $(P_1,P_2)\in \mathcal{P}$ for which the colours of $P_1$ not on the middle edge are all in $D_1$ and in $E$, and whose middle edge has colour in $D_2$. By Lemma~\ref{lemma:mainLlinksfirstlower}, with probability $1-n^{-\omega(1)}$, we have $|\mathcal{P}'|\geq (1-\eps/3)p^{42}_1p_2n^{13}$. On the other hand, if
$|\mathcal{P}|\leq (1-\eps)n^{13}$, then, by an application of Lemma~\ref{lem:mcdiarmidchangingc} similar to \eqref{eq:mcdexample},
$|\mathcal{P}'|\leq (1-2\eps/3)p^{42}_1p_2n^{13}$ with probability $1-n^{-\omega(1)}$. Therefore, we must have that, with probability $1-n^{-\omega(1)}$, $|\mathcal{P}|\geq (1-\eps)n^{13}$, as claimed.
\end{proof}


\subsection{$L$-links: proof of Theorem~\ref{thm:Llinks}}\label{sec:proofmainlinkthm}

Finally in this section, we put our work together to prove Theorem~\ref{thm:Llinks}.

\begin{proof}[Proof of Theorem~\ref{thm:Llinks}] Note that we can assume that $\eps\ll 1$. By Corollary~\ref{cor:fewlengthsevensame} and Lemma~\ref{lemma:mainLlinks}, with probability $1-n^{-\omega(1)}$, we can assume that the following hold.
\stepcounter{propcounter}
\begin{enumerate}[label = {{\textbf{\Alph{propcounter}\arabic{enumi}}}}]
\item \label{prop:7bounds} For every distinct $x_1,x_2,y_1,y_2\in V(G)$ with $x_1\nsimAB y_1$ and $x_2\nsimAB y_2$, there are at most $(1+\eps/8)n^{5}$ pairs $(P_1,P_2)$ of vertex-disjoint paths in $G$ of
length 7 such that $P_1$ is an $x_1,y_1$-path, $P_2$ is an $x_2,y_2$-path, and they have the same colours in the same order.
\item \label{prop:15bounds} For every distinct  $x_1,x_2,y_1,y_2\in V(G)$ with $x_1\nsimAB y_1$ and $x_2\nsimAB y_2$, there are $(1\pm \eps/8)n^{13}$
pairs $(P_1,P_2)$ of vertex-disjoint paths in $G$ of length 15 such that $P_1$ is an $x_1,y_1$-path, $P_2$ is an $x_2,y_2$-path, and they have the same colours in the same order.
\end{enumerate}

Note that \ref{prop:7bounds} and \ref{prop:15bounds} easily give the following.

\begin{enumerate}[label = {{\textbf{\Alph{propcounter}\arabic{enumi}}}}]\addtocounter{enumi}{2}
\item \label{prop:15bounds:robust} For every distinct $x_1,x_2,y_1,y_2\in V(G)$ with $x_1\nsimAB y_1$ and $x_2\nsimAB y_2$, and any set $U\subset V(G)\setminus \{x_1,x_2,y_1,y_2\}$ with $|U|\leq 100$, there are $(1\pm \eps/4)n^{13}$
pairs $(P_1,P_2)$ of vertex-disjoint paths in $G-U$ of length 15 such that $P_1$ is a $x_1,y_1$-path, $P_2$ is a $x_2,y_2$-path, and they have the same colours in the same order.
\end{enumerate}

We can now show that \ref{prop:links:totalnumber}--\ref{prop:links:throughedgeandvertex} hold. We will first show that \ref{prop:links:throughedge} holds with $\eps$ replaced by $\eps/2$. Let then $k$ satisfy $2\leq k\leq 61$ and let $u,v,x,y\in V(G)$ be distinct with $u\sim_{A/B} v$ and $xy\in E(G)$. Note that, by swapping $u$ and $v$ if necessary, we can assume that $k\leq 31$. Let $H$ be a $(u,v,L)$-link in $G$ with $xy$ as its $k$th edge. Note that which of $A$ and $B$ $x$ (and thus $y$) is in, determines which vertex in the link is $x$ and which is $y$, which are the $k$th and $(k+1)th$ vertex in some order. Suppose first that $16\leq k\leq 31$. Note that we have $(1\pm \eps/6)n^{15}$ choices for picking the other edges which are the $k'$th link for $16\leq k'\leq 31$ with $k'\neq k$, which determines the 16th to 31st colour (in order) of the link, and thus, working backwards from $v$, the 47th to 62th edge of the link. Then, by \ref{prop:15bounds:robust}, there are $(1\pm \eps/4)n^{13}$ ways to complete the link, giving $(1\pm \eps/4)n^{13}\cdot (1\pm \eps/6)n^{15}=(1\pm \eps/2)\Phi_0\cdot n^{-2}$ $(u,v,L)$-links with $xy$ as the $k$th edge.

Suppose then that $2\leq k\leq 15$. Note that we have $(1\pm \eps/6)n^{14}$ choices for picking the other edges which are the $k'$th link for $2\leq k'\leq 16$ with $k'\neq k$, which also then determines the first edge of the link and its 1st to 16th colour (in order) of the link. There are then $n-O(1)$ choices for a path of length 16 with these colours in the same order and which does not use any of the known vertices so far and so that the start vertex is in the opposite vertex class $A$ or $B$ to $u$ and $v$, and thus we can use this path to give us the 32nd to 47th edge of the link. Finally, by \ref{prop:15bounds:robust}, there are at most $(1\pm \eps/4)n^{13}$ ways to complete the link, giving $(1\pm \eps/4)n^{13}\cdot (1\pm \eps/6)n^{14}\cdot (n-O(1))=(1\pm \eps/2)\Phi_0\cdot n^{-2}$ $(u,v,L)$-links with $xy$ as the $k$th edge.

This completes the proof of \ref{prop:links:throughedge} with $\eps$ replaced by $\eps/2$.
Now, note that \ref{prop:links:totalnumber}--\ref{prop:links:throughcolour} are easily implied by \ref{prop:links:throughedge} with $\eps$ replaced by $\eps/2$. Indeed, for \ref{prop:links:totalnumber}, suppose $u,v\in V(G)$ are distinct with $u\sim_{A/B}v$. There are $(n-1)^2$ choices for an edge $e$ in $G$ which does not contain $u$ or $v$, and then $(1\pm \eps/2)\Phi_0n^{-2}$ choices for an $(u,v,L)$-link in $G$ where $e$ is the 2nd edge (by \ref{prop:links:throughedge} with $\eps$ replaced by $\eps/2$).
Thus, in total there are $(n-1)^2\cdot (1\pm \eps/2)\Phi_0n^{-2}=(1\pm \eps)\Phi_0$ $(u,v,L)$-links in $G$.

Similarly, for \ref{prop:links:throughvertex}, let $k$ satisfy $2\leq k\leq 62$ and let $u,v\in V(G)$ with $u\sim_{A/B} v$ be distinct. Note that, by swapping $u$ and $v$ if necessary, we can assume that $k\leq 32$. Let $x\in V(G)\setminus \{u,v\}$ with $x\not\sim_{A/B}u,v$ if $k$ is even and $x\sim_{A/B}u,v$ if $k$ is odd.
Choose an edge $e\in E(G)$ containing $x$ but not $u$ or $v$, noting there are either $n$ or $(n-2)$ choices for $e$. Then, applying \ref{prop:links:throughedge} with $\eps$ replaced by $\eps/2$ with $e$ and $k$, we have that the number of  $(u,v,L)$-links in $G$ in which $x$ is the $k$th vertex is $(n\pm 2)\cdot (1\pm \eps/2)\cdot \Phi_0n^{-2}=(1\pm \eps)\cdot \Phi_0\cdot  n^{-1}$, as required.

For~\ref{prop:links:throughcolour}, let $u,v\in V(G)$ be distinct with $u\sim_{A/B} v$, and let $c\in C$ and $k\in [62]$. If $k=1$ or $k=62$, then note that $(u,v,L)$-links in $G$ in which the $k$th edge has colour $c$ are exactly those that contain the $c$-neighbour in $G$ of
 $u$ or of $v$ as the 2nd or 62nd vertex of the link, respectively, and thus the result follows from \ref{prop:links:throughvertex}. If $2\leq k\leq 61$, then there are $(n-2)$ choices for an edge $e$ in $G$ of colour $c$ not containing $u$ or $v$, so that, again by \ref{prop:links:throughedge} with $\eps$ replaced by $\eps/2$ with $e$ and $k$, we have that the number of  $(u,v,L)$-links in $G$ in which the $k$th edge has colour $c$ is $(n-2)\cdot (1\pm \eps/2)\cdot \Phi_0n^{-2}=(1\pm \eps)\cdot \Phi_0\cdot  n^{-1}$, as required.

\smallskip

\noindent\ref{prop:links:cod:twovertices}:
For \ref{prop:links:cod:twovertices}, let $u,v,x,y\in V(G)$ be distinct. Let $H$ be a $(u,v,L)$-link in $G$ containing $x$ and $y$.
Firstly, there are at most $61\cdot 60$ choices for distinct $2\leq k_x,k_y\leq 62$ which determine, respectively, the position of $x$ and $y$ in the link. Then, partition $H$ into paths $P_1,P_2,P_3,Q_1,Q_2,Q_3$ (some possibly with length 0) such that $H=P_1P_2P_3Q_1Q_2Q_3$, $P_2$ and $Q_2$ have length 7, for each $i\in \{1,3\}$ $P_i$ and $Q_i$ have the same length, and neither $x$ or $y$ is an internal vertex of $P_2$ or $Q_2$.

If both $x$ and $y$ are in $P_3\cup Q_1$, then note that $(P_1,P_3,Q_1,Q_3)$ is determined by $V(P_3\cup Q_1)\setminus \{x,y\}$, so there are at most $n^{\ell(P_3)+\ell(Q_1)+1-2}=n^{31-\ell(P_2)-1}=n^{23}$ choices for $(P_1,P_3,Q_1,Q_3)$. If $x$ is in $P_3\cup Q_1$ and $y$ is in $P_1$, then $(P_1,P_3,Q_1,Q_3)$ is determined by the choices of the vertices $V(P_1)\setminus \{u,y\}$ and $V(Q_3)\setminus \{v\}$, so there are at most $n^{\ell(P_3)+\ell(Q_3)+2-3}=n^{23}$ choices for $(P_1,P_3,Q_1,Q_3)$. If both $x$ and $y$ are in $P_1\cup Q_3$, then $(P_1,P_3,Q_1,Q_3)$ is determined by the choices of the vertices in $V(P_1)\cup V(Q_3)\setminus \{u,v,x,y\}$ and the 31th vertex of $H$, for at most $n^{\ell(P_1)+1+\ell(Q_3)+1-4+1}=n^{23}$ choices for $(P_1,P_3,Q_1,Q_3)$.
Therefore,  after $k_x,k_y$ are chosen, there are at most $n^{23}$ choices for $(P_1,P_3,Q_1,Q_3)$. Thus, using
\ref{prop:7bounds}, the total number of $(u,v,L)$-links containing $x$ and $y$ is at most $61\cdot 62 \cdot (1+\eps/8)n^{5}\cdot n^{23}\leq 10^4\Phi_0\cdot n^{-2}$,
completing the proof of \ref{prop:links:cod:twovertices}.

\smallskip

\noindent\ref{prop:links:cod:1vertex1colour}: Let $u,v,x\in V(G)$ be distinct and let $c\in C$. Let $H$ be a $(u,v,L)$-link in $G$ containing $x$ and using the colour $c$, which does not contain $ux$ or $vx$ if this is a colour-$c$ edge in $G$.
First, there are at most $61\cdot 31$ choices for $2\leq k_x\leq 62$ and $k\in [31]$. Having chosen such $k_x$ and $k$, we count the choices for $H$ with $x$ as the $k_x$th vertex and $c$ as the $k$th colour.
Partition $H$ into paths $P_1,P_2,P_3,Q_1,Q_2,Q_3$ (some possibly with length 0) such that $H=P_1P_2P_3Q_1Q_2Q_3$, $P_2$ and $Q_2$ have length 7, for each $i\in \{1,3\}$ $P_i$ and $Q_i$ have the same length, and $x$ is not an internal vertex of $P_2$ or $Q_2$, and the colour $c$ is not used on $P_2$ (and hence either $Q_2$).

If $x$ is in $V(P_1)$, then $(P_1,Q_3)$ can be determined by choosing all the other colours in $(C(P_1)\cup C(Q_3))\setminus \{c\}$ except for the colour just before $x$ in $H$, or the colour before that if that edge has colour $c$ (which exists as if $ux\in E(H)$ then this is not a colour-$c$ edge).
Therefore, there are at most $n^{\ell(P_1)+\ell(Q_3)-2}$ choices for $(P_1,Q_3)$. As $(P_3,Q_1)$ is then determined by the choice of the 32nd vertex (for example) of $H$ there are at most $n$ choices for $(P_3,Q_1)$, so there are at most $n^{\ell(P_1)+\ell(Q_3)-1}=n^{31-7-1}=n^{23}$ choices for $(P_1,P_3,Q_1,Q_3)$ in this case. Similarly, if $x$ is in $V(Q_3)$, then there are at most $n^{23}$ choices for $(P_1,P_3,Q_1,Q_3)$.
If $x$ is in $V(P_3)\cup V(Q_1)$, then $(P_1,P_3,Q_1,Q_3)$ is determined by choosing the colours in $(C(P_1)\cup C(Q_3))\setminus \{c\}$, so there are at most $n^{\ell(P_1)+\ell(Q_3)-1}=n^{23}$ choices for $(P_1,P_3,Q_1,Q_3)$ in this case as well.

Therefore, counting the possibilities for $(k_x,k)$ and putting this together with \ref{prop:7bounds} for the number of choices for $(P_2,Q_2)$, the total number of  $(u,v,L)$-links in $G$ using $x$ and $c$ in which there is not a colour-$c$ edge $ux$ or $xv$ is at most
$61\cdot 31 \cdot (1+\eps/8)n^{5}\cdot n^{23}\leq 10^4\Phi_0\cdot n^{-2}$,
as required.

\smallskip

\noindent\ref{prop:links:cod:twocolours}:
Let $u,v\in V(G)$ be distinct and let $c,d\in C$ be distinct. Let $H$ be a $(u,v,L)$-link in $G$ using $c$ and $d$. Choose $k_c,k_d\in [31]$ so that $c$ is the $k_c$th colour of $H$ and $d$ is the $k_d$th colour (with $31\cdot 30$ choices). Partition $H$ into paths $P_1,P_2,P_3,Q_1,Q_2,Q_3$ (some possibly with length 0) such that $H=P_1P_2P_3Q_1Q_2Q_3$, $P_2$ and $Q_2$ have length 7, for each $i\in \{1,3\}$ $P_i$ and $Q_i$ have the same length, and neither colour $c$ nor $d$ is used on $P_2$ (and hence neither is used on $Q_2$). Then, $(P_1,P_3,Q_1,Q_3)$ is determined by
choosing the colours in $(C(P_1)\cup C(P_3))\setminus \{c,d\}$ and (for example) the 31th vertex of $H$, so, in total, the number of choices for $(P_1,P_3,Q_1,Q_3)$ is at most $31\cdot 30\cdot n^{31-\ell(P_2)-2}\cdot n\leq 10^3n^{23}$. Therefore, using  \ref{prop:7bounds} for the number of choices for $(P_2,Q_2)$, the number of $(u,v,L)$-links in $G$ using $c$ and $d$ is at most
$10^3\cdot n^{23}\cdot (1+\eps/8)n^{5}\leq 10^4\Phi_0\cdot n^{-2}$, as required.

 \smallskip

 \noindent\ref{prop:links:2edgesofdifferentcoloursanddisjoint}: Let $u,v\in V(G)$ be distinct and let $e,e'\in E(G-\{u,v\})$ have different colours and share no vertices. Let $H$ be a $(u,v,L)$-link in $G$ which contains $e$ and $e'$. Choose $2\leq k_e,k_{e'}\leq 61$ such that $e$ is the $k_e$th edge and $e'$ is the $k_{e'}$th edge of $H$, noting there are at most $60\cdot 59$ choices for $(k_e,k_{e'})$.
 Partition $H$ into paths $P_1,P_2,P_3,Q_1,Q_2,Q_3$ (some possibly with length 0) for which we have $H=P_1P_2P_3Q_1Q_2Q_3$, $P_2$ and $Q_2$ have length 7, for each $i\in \{1,3\}$ $P_i$ and $Q_i$ have the same length, and neither $e$ or $e'$ are used on $P_2$ (and hence either $Q_2$). Assume, without loss of generality, that $e\in E(P_1\cup P_3)$. By looking at different cases, we will show that
 (having chosen $(k_e,k_{e'})$) there are always at most $n^{21}$ choices for $(P_1,P_3,Q_1,Q_3)$.

 If $e$ and $e'$ appear together on $P_3\cup Q_1$, then there are at most $n^{\ell(P_3\cup Q_1)+1-4}=n^{31-\ell(P_2)-3}=n^{21}$ choices for the rest of $P_3\cup Q_1$, which then determines $(P_1,P_3,Q_1,Q_3)$. If $e\in E(P_1)$ and $e'\in E(P_3)$, or vice versa, then there are  at most $n^{\ell(P_1)+1-3}$ choices for the rest of $P_1$ and at most $n^{\ell(P_3)+1-2}$ choices for the rest of $P_3$, for at most $n^{\ell(P_1)+\ell(P_3)-3}=n^{21}$ choices in total, which then determines $(P_1,P_3,Q_1,Q_3)$
If $e\in E(P_1)$ and $e'\in E(Q_3)$, then there are at most $n^{\ell(P_1)+1-3}\cdot n^{\ell(Q_3)+1-3}=n^{21}$ choices for $(P_1,Q_3)$, after which there are at most $n$ choices for $(P_1,P_3,Q_1,Q_3)$, for at most $n^{\ell(P_1)+\ell(P_3)-3}$ choices in total.
If $e,e'\in E(P_1)$, then, as $e$ and $e'$ share no vertices,  there are at most $n^{\ell(P_1)+1-5}$ choices for the rest of $P_1$, after which there are at most $n$ choices for $Q_1$, and at most $n^{\ell(P_3)}$ choices for the colours of $P_3$ in order, which then determines $(P_1,P_3,Q_1,Q_3)$, for at most $n^{\ell(P_1)+\ell(P_3)-3}$ choices in total.
If $e\in E(P_1)$ and $e'\in E(Q_1)$, then, as $e$ and $e'$ are different colours, there are at most $n^{\ell(P_1)-3}$ choices for the rest of $P_1$ (using the colour $c(e')$ in the appropriate place), which then determines $Q_1$, after which there are  at most $n^{\ell(P_3)}$ choices for the colours of $P_3$ in order, which then determines $(P_1,P_3,Q_1,Q_3)$, for at most $n^{\ell(P_1)+\ell(P_3)-3}=n^{21}$ choices in total.
If $e,e'\in E(P_3)$, then, there are at most $n^{\ell(P_3)-3}$ choices for the rest of $P_3$, which determines $Q_3$, and after which there are at most $n^{\ell(P_3)}$ choices for the colours of $P_1$ in order, which then determines $(P_1,P_3,Q_1,Q_3)$, for at most $n^{\ell(P_1)+\ell(P_3)-3}=n^{21}$ choices in total. Thus, there are always at most $n^{21}$ choices for $(P_1,P_3,Q_1,Q_3)$.

Therefore, using  \ref{prop:7bounds} for the number of choices for $(P_2,Q_2)$, the number of $(u,v,L)$-links in $G$ containing $e$ and $e'$ is at most $60\cdot 59\cdot n^{21}\cdot (1+\eps/8)n^{5}\leq 10^4\Phi_0\cdot n^{-4}$,  as required.

   \smallskip

   \noindent\ref{prop:links:throughedgeandvertex}: Let $u,v,w\in V(G)$ be distinct and let $e\in E(G-\{u,v,w\})$.  Let $H$ be a $(u,v,L)$-link in $G$ which contains $w$ and $e$.
    Choose $2\leq k_w\leq 62$ and $2\leq k_{e}\leq 61$ such that $w$ is the $k_w$th vertex of $H$ and $e$ is the $k_e$th edge of $H$, noting there are at most $61\cdot 60$ choices for $(k_w,k_{e})$.
Assume that $k_w\leq 32$, where the case where $k_w>32$ follows similarly. Note that for all except $(k_w,k_e)=(2,3)$ and $(k_w,k_e)=(2,33)$, $w$ is contained in an edge of $H-(\{u,v\}\cup V(e))$ with colour not the same as the colour of $e$. As there are at most $n$ choices for such an edge, by \ref{prop:links:2edgesofdifferentcoloursanddisjoint}, there are at most $10^4\Phi_0\cdot n^{-3}$ choices of $H$ if $(k_w,k_e)\neq (2,3),(2,33)$.

Now, partition $H$ into paths $P_1,P_2,Q_1,Q_2$ for which we have $H=P_1P_2Q_1Q_2$, $P_1$ and $Q_1$ have length $24$ and $P_2$ and $Q_2$ have length 7.
If $(k_w,k_e)=(2,3)$, then, as the first 3 interior vertices of $P_1$ are known, there are at most $n^{21}$ choices for the rest of $P_1$, after which there are at most $n$ choices for $Q_1$, and then, using \ref{prop:7bounds} for the number of choices for $(P_2,Q_2)$, in total there are at most $(1+\eps/8)n^5\cdot n^{21}\cdot n\leq 2n^{27}=2\Phi_0\cdot n^{-3}$ choices for $H$.
If $(k_w,k_e)=(2,33)$, then,  as the first 2 interior vertices of $P_1$ are known, there are at most $n^{22}$ choices for the rest of $P_1$, after which  $Q_1$ is known, and then, using \ref{prop:7bounds} for the number of choices for $(P_2,Q_2)$, in total there are at most $(1+\eps/8)n^5\cdot n^{22}\leq 2n^{27}=2\Phi_0\cdot n^{-3}$ choices for $H$.
Over all the different choices of $k_w$ and $k_e$, we have that the number of $(u,v,L)$-links in $G$ containing $w$ and $e$ is at most $10^8\Phi_0\cdot n^{-3}$, as required.
\end{proof}

\fi


\section{Part~\ref{partB}: Realisation of the absorption structure}\label{sec:real}
In this section, we prove Lemma~\ref{keylemma:realisation}. We begin by giving a sketch of the proof in Section~\ref{sec:Bsketch}, after which we outline the rest of this section.


\subsection{Overview of Part~\ref{partB}}\label{sec:Bsketch}
To recap, for each tribe $\tau\in \mathcal{T}$, from Part~\ref{partA} (in particular, from Lemma~\ref{keylemma:absorption}), we will get a collection
\[
\mathcal{I}_\tau\subset \{\{(i,u),(j,v)\}: i,j\in I_\tau,i\neq j, u \in  S_i \setminus (R_i \cup T_j), v \in S_j \setminus (T_i \cup R_j),u\neq v,u{\sim}_{A/B}v\}
\]
which represents a set of instructions. We wish to find, with high probability, near-matchings $\hat{M}_i$, $i\in I_\tau$, in $G\sim G^{\col}_{[n]}$ so that, for each $i\in I_\tau$ the vertices in $T_i$ and $R_i$ have degree $2$ and $0$ in $\hat{M}_i$ respectively, while the vertices in $V(G)\setminus (R_i\cup T_i)$ each have degree $0$ or $1$ in $\hat{M}_i$.
Furthermore, we wish to have the property that, for each $\{(i,u),(j,v)\}\in \mathcal{I}_\tau$, we can reduce the degree of $u$ by 1 and increase the degree of $v$ by 1 in $\hat{M}_i$ and increase the degree of $v$ by 1 and decrease the degree of $u$ by 1 in $\hat{M}_j$, doing so by switching a small number of edges between $\hat{M}_i$ and $\hat{M}_j$ so that a small number of edges are changed and the colours and vertices other than $u$ and $v$ appearing on $\hat{M}_i$ and $\hat{M}_j$ do not change. Crucially, we wish to be able to do this largely independently,
so that for any collection $\mathcal{C}\subset I_\tau$ satisfying \ref{prop:A:correct1}--\ref{prop:A:correct4} these alterations can be made simultaneously for each $\{(i,u),(j,v)\}\in \mathcal{C}$ without interfering with each other.

We split Part \ref{partB} into three sub-parts. Roughly speaking, in Part~\ref{partB1} we find matchings covering $T_i$ for each $i \in [n]$. In Part~\ref{partB2} we find, for each $i \in [n]$ and vertex $u \in S_i\setminus R_i$, a small monochromatic matching consisting of an edge containing $u$ and a small set of other edges $M_{i,u}$. Finally, in Part~\ref{partB3} we find certain even-length paths connecting edges which were found in Part~\ref{partB2}. Of course, each of these substructures is found subject to certain constraints which we describe in more detail below.

The first part, Part~\ref{partB1}, is the simplest and is largely independent of the others. For each near-matching $\hat{M}_i$, we find a rainbow matching that covers $T_i$, by matching each vertex in $T_i$ to a distinct vertex in $X_i$, with each edge of the matching using a distinct colour from $D_i$, so that when we later find another edge to add to $\hat{M}_i$ for each vertex in $T_i$ they will all have degree 2 in $\hat{M}_i$. This we will do using an application of Theorem~\ref{thm:nibble} to an auxiliary hypergraph as described later in this sketch.

For now, we move on to describe Parts~\ref{partB2} and~\ref{partB3}.
For each $\{(i,u),(j,v)\}\in \mathcal{I}_\tau$, our key mechanism to set up the switching property described above are the $L$-links we defined in Section~\ref{sec:rand}. Ideally, for each $\{(i,u),(j,v)\}\in \mathcal{I}_\tau$, we could find a $(u,v,L)$-link, $R$ say, and put the odd edges of $R$ into $\hat{M}_i$ and the even edges of $R$ into $\hat{M}_j$, so that switching these edges between $\hat{M}_i$ and $\hat{M}_j$ would exactly decrease the degree of $u$ in $\hat{M}_i$ by 1 and increase the degree of $v$ in $\hat{M}_i$ by 1 (and vice versa in $\hat{M}_j$) while making no other meaningful changes (i.e., the colours of the matchings and all the other vertex degrees would stay the same, and between them $\hat{M}_i$ and $\hat{M}_j$ would have the same edges). Unfortunately, this is not possible. The reason is that there will be multiple pairs from $\mathcal{I}_\tau$ containing $(i,u)$ so that if we do this for each such pair we will be adding multiple edges in $\hat{M}_i$ next to $u$, where (not counting those added in Part~\ref{partB1}), we want only~1.

Instead, for $i\in [n]$ and $u\in S_i\setminus R_i$, if $J_{i,u}$ is the set of $(j,v)$ for which $\{(i,u),(j,v)\}\in \mathcal{I}_\tau$, in Part~\ref{partB2}, we first find an edge $e_{u,i}$ from $u$ to $Y_i$ along with a monochromatic matching $M_{i,u}$ of the same colour as $e_{u,i}$ which also uses vertices in $Y_i$, with one edge $e_{u,i,j}$ for each $j\in J_{i,u}$ (as depicted on the left in Figure~\ref{fig:B2andB3}). The first edge ($ux_1$ say) we add to $\hat{M}_i$, while for each $j\in J_{i,u}$ we add $e_{u,i,j}$ 
to $\hat{M}_j$. 
Now, fix $j\in J_{i,u}$ and let $v$ be such that $\{(i,u),(j,v)\}\in \mathcal{I}_\tau$ (which will be unique due to \ref{prop:abs:nocodegree}) and suppose $e_{u,i,j}=x_2x_3$ is the edge assigned to $\hat{M}_j$ from $M_{i,u}$. We will have $i\in J_{j,v}$, so we will also have found an edge, $e_{v,j}=vy_1$, next to $v$ for $\hat{M}_j$ as well as an edge, $e_{v,j,i}=y_2y_3$, of the same colour in $M_{j,v}$ which is assigned to $\hat{M}_i$. Furthermore, we will do this 
so that $c(e_{u,i})\neq c(e_{v,j})$ and $\{u,x_1,x_2,x_3,v,y_1,y_2,y_3\}$ are all distinct.

\begin{center}
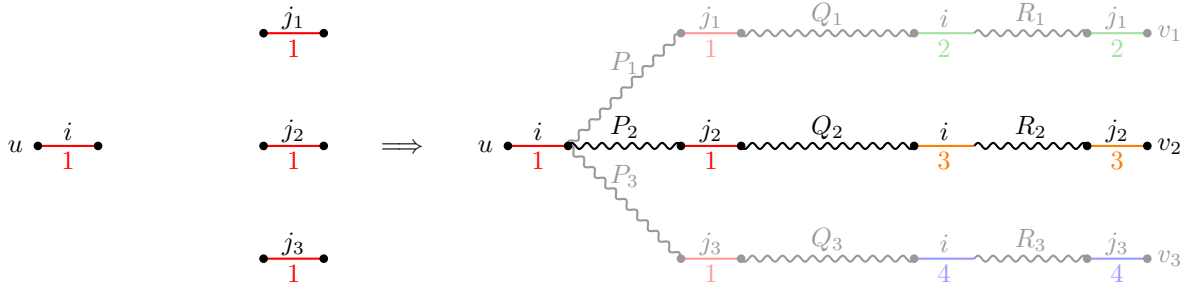
\begin{figure}[t]
\begin{center}\begin{tikzpicture}
\def\setsp{0.8}
\def\sepup{1.5}
\def\sepac{1.5}

\foreach \n in {1,2}
{
\coordinate (u\n) at ($(\n*\setsp,0)$);
}
\foreach \n in {1,2,3}
\foreach \m in {1,2,3}
{
\coordinate (w\n\m) at ($(u2)+(\sepac,0)+\n*(0,\sepup)+\m*(\setsp,0)-(3*\setsp,2*\sepup)$);
}

\foreach \n in {1,2,3}
\foreach \m in {1,2}
{
\coordinate (v\n\m) at ($(w23)+(\sepac,0)+\n*(0,\sepup)+\m*(\setsp,0)-(2*\setsp,2*\sepup)$);
}

\draw [red,thick] (u1) -- (u2);
\draw [red,thick] (v31) -- (v32);
\draw [red,thick] (v21) -- (v22);
\draw [red,thick] (v11) -- (v12);

\draw [red] ($0.5*(u1)+0.5*(u2)+(0,-0.2)$) node { $1$};
\draw [red] ($0.5*(v31)+0.5*(v32)+(0,-0.2)$) node { $1$};
\draw [red] ($0.5*(v11)+0.5*(v12)+(0,-0.2)$) node { $1$};
\draw [red] ($0.5*(v21)+0.5*(v22)+(0,-0.2)$) node { $1$};


\draw ($(u1)-(0.3,0)$) node {$u$};
\draw ($0.5*(v31)+0.5*(v32)+(0,0.2)$) node {$j_1$};
\draw ($0.5*(v21)+0.5*(v22)+(0,0.2)$) node {$j_2$};
\draw ($0.5*(v11)+0.5*(v12)+(0,0.2)$) node {$j_3$};
\draw ($0.5*(u1)+0.5*(u2)+(0,0.2)$) node {$i$};

\foreach \n in {1,2}
{
\draw [fill] (u\n) circle[radius=0.05];
}

\foreach \n in {1,2,3}
\foreach \m in {1,2}
{
\draw [fill] (v\n\m) circle[radius=0.05];
}

\end{tikzpicture}\;\;\;\;\;\begin{tikzpicture}
\draw [white](0,-1.9) -- (0,0);
\draw (0,0) node {$\implies$};
\end{tikzpicture}\;\;\;\;\begin{tikzpicture}
\def\setsp{0.8}
\def\sepup{1.5}
\def\sepac{1.5}

\foreach \n in {1,2}
{
\coordinate (u\n) at ($(\n*\setsp,0)$);
}
\foreach \n in {1,2,3}
\foreach \m in {1,2}
{
\coordinate (w\n\m) at ($(u2)+(\sepac,0)+\n*(0,\sepup)+\m*(\setsp,0)-(\setsp,2*\sepup)$);
}

\foreach \n in {1,2,3}
\foreach \m in {3,4}
{
\coordinate (w\n\m) at ($(u2)+2*(\sepac,0)+\n*(0,\sepup)+\m*(\setsp,0)-(\setsp,2*\sepup)$);
}

\foreach \n in {1,2,3}
\foreach \m in {1,2}
{
\coordinate (v\n\m) at ($(w24)+(\sepac,0)+\n*(0,\sepup)+\m*(\setsp,0)-(\setsp,2*\sepup)$);
}

\draw [red,thick] (u1) -- (u2);
\foreach \n in {1,3}
{
\draw [red!40,thick] (w\n1) -- (w\n2);
}
\foreach \n in {2}
{
\draw [red,thick] (w\n1) -- (w\n2);
}
\draw [red] ($0.5*(u1)+0.5*(u2)+(0,-0.2)$) node { $1$};
\draw [red!40] ($0.5*(w11)+0.5*(w12)+(0,-0.2)$) node { $1$};
\draw [red] ($0.5*(w21)+0.5*(w22)+(0,-0.2)$) node { $1$};
\draw [red!40] ($0.5*(w31)+0.5*(w32)+(0,-0.2)$) node { $1$};

\draw [darkgreen!40,thick] (w33) -- (w34);
\draw [orange,thick] (w23) -- (w24);
\draw [blue!40,thick] (w13) -- (w14);
\draw [darkgreen!40,thick] (v31) -- (v32);
\draw [orange,thick] (v21) -- (v22);
\draw [blue!40,thick] (v11) -- (v12);

\draw [snake=coil,segment aspect=0, segment amplitude=.4mm,segment length=2mm,thick,black!40] (u2) -- (w31);
\draw [snake=coil,segment aspect=0, segment amplitude=.4mm,segment length=2mm,thick] (u2) -- (w21);
\draw [snake=coil,segment aspect=0, segment amplitude=.4mm,segment length=2mm,thick,black!40] (u2) -- (w11);
\draw [snake=coil,segment aspect=0, segment amplitude=.4mm,segment length=2mm,thick,black!40] (w34) -- (v31);
\draw [snake=coil,segment aspect=0, segment amplitude=.4mm,segment length=2mm,thick] (w24) -- (v21);
\draw [snake=coil,segment aspect=0, segment amplitude=.4mm,segment length=2mm,thick,black!40] (w14) -- (v11);
\draw [snake=coil,segment aspect=0, segment amplitude=.4mm,segment length=2mm,thick,black!40] (w33) -- (w32);
\draw [snake=coil,segment aspect=0, segment amplitude=.4mm,segment length=2mm,thick] (w23) -- (w22);
\draw [snake=coil,segment aspect=0, segment amplitude=.4mm,segment length=2mm,thick,black!40] (w13) -- (w12);

\draw [darkgreen!40] ($0.5*(w33)+0.5*(w34)+(0,-0.2)$) node { $2$};
\draw [darkgreen!40] ($0.5*(v31)+0.5*(v32)+(0,-0.2)$) node { $2$};
\draw [blue!40] ($0.5*(w13)+0.5*(w14)+(0,-0.2)$) node { $4$};
\draw [blue!40] ($0.5*(v11)+0.5*(v12)+(0,-0.2)$) node { $4$};
\draw [orange] ($0.5*(w23)+0.5*(w24)+(0,-0.2)$) node { $3$};
\draw [orange] ($0.5*(v21)+0.5*(v22)+(0,-0.2)$) node { $3$};

\draw [black!40] ($0.5*(u2)+0.5*(w31)+(0,0.35)$) node { $P_1$};
\draw [black!40] ($0.5*(w33)+0.5*(w32)+(0,0.25)$) node { $Q_1$};
\draw ($0.5*(u2)+0.5*(w21)+(0,0.25)$) node { $P_2$};
\draw ($0.5*(w23)+0.5*(w22)+(0,0.25)$) node { $Q_2$};
\draw [black!40] ($0.5*(u2)+0.5*(w11)+(0,0.35)$) node { $P_3$};
\draw [black!40] ($0.5*(w13)+0.5*(w12)+(0,0.25)$) node { $Q_3$};
\draw [black!40] ($0.5*(w34)+0.5*(v31)+(0,0.25)$) node { $R_1$};
\draw ($0.5*(w24)+0.5*(v21)+(0,0.25)$) node { $R_2$};
\draw [black!40] ($0.5*(w14)+0.5*(v11)+(0,0.25)$) node { $R_3$};

\draw ($(u1)-(0.3,0)$) node {$u$};
\draw [black!40] ($(v32)+(0.3,0)$) node {$v_1$};
\draw ($(v22)+(0.3,0)$) node {$v_2$};
\draw [black!40] ($(v12)+(0.3,0)$) node {$v_3$};
\draw [black!40] ($0.5*(v31)+0.5*(v32)+(0,0.2)$) node {$j_1$};
\draw ($0.5*(v21)+0.5*(v22)+(0,0.2)$) node {$j_2$};
\draw [black!40] ($0.5*(v11)+0.5*(v12)+(0,0.2)$) node {$j_3$};
\draw [black!40] ($0.5*(w31)+0.5*(w32)+(0,0.2)$) node {$j_1$};
\draw ($0.5*(w21)+0.5*(w22)+(0,0.2)$) node {$j_2$};
\draw [black!40] ($0.5*(w11)+0.5*(w12)+(0,0.2)$) node {$j_3$};
\draw ($0.5*(u1)+0.5*(u2)+(0,0.2)$) node {$i$};
\draw [black!40] ($0.5*(w34)+0.5*(w33)+(0,0.2)$) node {$i$};
\draw ($0.5*(w24)+0.5*(w23)+(0,0.2)$) node {$i$};
\draw [black!40] ($0.5*(w14)+0.5*(w13)+(0,0.2)$) node {$i$};

\foreach \n in {1,2}
{
\draw [fill] (u\n) circle[radius=0.05];
}
\foreach \n in {2}
\foreach \m in {1,2,3}
{
\draw [fill] (w\n\m) circle[radius=0.05];
}

\foreach \n in {2}
\foreach \m in {1,2}
{
\draw [fill] (v\n\m) circle[radius=0.05];
}

\foreach \n in {1,3}
\foreach \m in {1,2,3}
{
\draw [fill,black!40] (w\n\m) circle[radius=0.05];
}

\foreach \n in {1,3}
\foreach \m in {1,2}
{
\draw [fill,black!40] (v\n\m) circle[radius=0.05];
}

\end{tikzpicture}
\end{center}
\caption{In Part~\ref{partB2}, as on the left, for each $i\in [n]$ and $u\in S_i\setminus R_i$ we find an edge from $u$ to $Y_i$ along with a monochromatic matching with the same colour, with one edge for each $j$ such that $\{(i,u),(j,v)\}\in \mathcal{I}$ (depicted for $\{(i,u),(j_1,v_1)\},\{(i,u),(j_2,v_2)\},\{(i,u),(j_3,v_3)\}\in \mathcal{I}$).
Then, in Part~\ref{partB3}, for each pair $\{(i,u),(j,v)\}\in \mathcal{I}$ we find $L$-links which connect together some of the edges found for $(i,u)$ and $(j,v)$, as depicted on the right for $(i,u)$ and $(j_2,v_2)$.
Each path $P_1,P_2,P_3,Q_1,Q_2,Q_3,R_1,R_2,R_3$ has length 62 and forms a link with the pattern $L$, where, for each $j\in [3]$, $P_j$, $Q_j$ and $R_j$ use a disjoint set of colours and new vertices coming from $Z_i$.}\label{fig:B2andB3}
\end{figure}
\end{center}

Now, if we find an $(x_1,x_2,L)$-link $P$, an $(x_3,y_3,L)$-link $Q$ and a $(y_2,y_1,L)$-link $R$ so that
\[
ux_1Px_2x_3Qy_3y_2Ry_1v
\]
is a path in $G$ (see the right of Figure~\ref{fig:B2andB3}), then having added the odd edges of this path to $\hat{M}_i$ and the even edges to $\hat{M}_j$, by switching between these edges we have the change we want in $\hat{M}_i$ and $\hat{M}_j$. Importantly, for each $j\in J_{i,u}$, this uses the same edge, $ux_1$, at $u$, allowing us to only add one edge at $u$ in $\hat{M}_i$, while the conditions on $\mathcal{C}$ (namely \ref{prop:A:correct1} and \ref{prop:A:correct4}) will imply that we only do a switch involving the $ux_1$ edge at most once, as there will be at most one $j\in J_{i,u}$ with $\{(i,u),(j,v)\}\in \mathcal{C}$ for some $v$. Considering pairs $\{(i,u)(j,v)\} \in \mathcal{I}_{\tau}$, we find the paths $P$, $Q$ and $R$ using internal vertices coming from $Z_i~(=Z_j=Z_{\tau})$ and colours from $D_3$.

Of course, in finding all of these edges, matchings and paths, we need to ensure that all of the edges assigned to near-matching $\hat{M}_i$ form a rainbow matching for each $i \in [n]$, and that the matchings are all edge-disjoint. This requires some delicacy, but overall the above sketch gives the key mechanism that we use to find the near-matchings we need.
We will find the small matchings $M_{i,u}$, $i\in [n]$ and $u\in S_i\setminus R_i$, in Part~\ref{partB2}, before finding the switching paths in Part~\ref{partB3}. To recap, then, we will do the following.

\begin{enumerate}[label = \textbf{\Alph{enumi}}]\addtocounter{enumi}{1}
\item Realise the absorption structure in the random colouring by doing the following:
\begin{enumerate}[label = \textbf{\Alph{enumi}.\arabic{enumii}}]
\item \label{partB1} Edge-disjointly matching $T_i$ into $X_i$ for each $i\in [n]$ using colours in $D_1$.
\item \label{partB2} Edge-disjointly finding small matchings for each $i\in [n]$ and $u\in S_i\setminus R_i$, using colours in $D_2$ and vertices in $Y_i$.
\item \label{partB3} Edge-disjointly finding the switching paths using colours in $D_3$ and vertices in $Z_i$.
\end{enumerate}
\end{enumerate}

\smallskip

\noindent\textbf{Finding matchings/paths using Theorem~\ref{thm:nibble}.} Each of the structures mentioned above will be found using the R\"odl nibble, as discussed in Section~\ref{sec:nibble}, via Theorem~\ref{thm:nibble} applied to an auxiliary hypergraph. Using an auxiliary hypergraph in this manner was first done by Kim, K{\"u}hn, Kupavskii, and Osthus~\cite{kim2020rainbow}.
A rough overview goes as follows. Suppose we have in $G$ collections of edge-coloured subgraphs $\mathcal{F}_{i,j}$, $i\in [n]$ and $j\in [m]$ (for some $m$), and we wish to choose subgraphs $F_{i,j}\in \mathcal{F}_{i,j}$ which are all edge-disjoint and, for each $i\in [n]$, $F_{i,j}$ and $F_{i,j'}$ are colour- and vertex-disjoint if $j\neq j'$. We form
a hypergraph $\cH$ with 4 vertex classes
\begin{equation}\label{eqn:Hvx}
\begin{array}{ll}
\textbf{i)}\;\; [n]\times [m]
&\;\;\textbf{ii)}\;\;  \bigcup_{i\in [n]}(\{i\}\times V(G))\\
\textbf{iii)}\;\; \; \bigcup_{i\in [n]}(\{i\}\times C(G))
&\;\;\textbf{iv)}\;\; E(G),
\end{array}
\end{equation}
where, for each $i\in [n]$, $j\in [m]$ and $F\in \mathcal{F}_{i,j}$ we add the edge
\[
\{(i,j)\}\cup (V(F)\times \{i\})\cup (C(F)\times \{i\})\cup E(G)
\]
to $\mathcal{H}$. Observe that if the edges corresponding to $F_{i,j}\in \mathcal{F}_{i,j}$, $i\in [n]$ and $j\in [m]$, form a matching in $\mathcal{H}$, then we have exactly the conditions we want of edge-disjointness and (for each $i\in [n]$) colour- and vertex-disjointness. By setting up $\mathcal{H}$ to be a uniform, almost regular hypergraph with small codegrees, we will be able to use Theorem~\ref{thm:nibble} to find a large matching in $\mathcal{H}$, which would then allow us to find many of the graphs $F_{i,j}$, $i\in [n]$ and $j\in [m]$. The remaining graphs $F_{i,j}$, $i\in [n]$ and $j\in [m]$, will then be found using vertices, colours and edges set aside for this purpose.

We will need further properties from the subgraphs we find. In particular, we will need some delicate conditions from the subgraphs found in Part~\ref{partB2} so that we can then apply Theorem~\ref{thm:nibble} again for Part~\ref{partB3}, as the pairs of vertices we wish to find paths between will depend on the structures found in Part~\ref{partB2}. To do this, we use a careful choice of weight functions for the application of Theorem~\ref{thm:nibble}, and it is here that we need the full power of Theorem~\ref{thm:nibble} over previous implementations of the semi-random method.

The simplest of our weight functions will, for example, ensure that, when finding subgraphs, we will use most of the edges we have allocated for this purpose in the subgraphs found. To record such properties efficiently, we will use the following definition.

\begin{defn}
An edge-coloured graph $G$ is \emph{$m$-bounded} if every $v\in V(G)$ has $d_G(v)\leq m$ and every colour appears on at most $m$ edges in $G$.
\end{defn}

\medskip

\noindent\textbf{Variables and applications of Theorem~\ref{thm:nibble}.}
We now discuss further our applications of the semi-random method via Theorem~\ref{thm:nibble} to auxiliary hypergraphs. We will apply this 3 times, to auxiliary hypergraphs $\mathcal{H}_1$, $\mathcal{H}_2$ and $\mathcal{H}_3$. The application to $\mathcal{H}_1$ is the most straightforward, and so we sketch this briefly. The application to $\mathcal{H}_3$ uses the outcome of the application to $\mathcal{H}_2$, and therefore the order of variables in critical here, and so we also discuss this. The relevant variables from \eqref{eq:hierarchy} are
\[
\frac{1}{n}\llpoly \eps \llpoly \gamma\llpoly \beta.
\]
For the initial properties we need, in Section~\ref{sec:Bsetup}, we use the error term $\eps$. That is, for example, we will have properties like $|X_i|=(1\pm \eps)2p_Xn$ for each $i\in [n]$.

In Part~\ref{partB1}, for the variables in Theorem~\ref{thm:nibble} we will use $r_0=4$, $\delta_0=1/10$ and thus set $\eps_0=\delta_0^2/50r_0$ and have some $\Delta_0$ as in that theorem.
We will construct $\mathcal{H}_1$ which is a 4-uniform hypergraph with $d_{\mathcal{H}_1}(v)=(1\pm \eps)\delta_1$ for each $v\in V(\mathcal{H}_1)$ (see Claim~\ref{clm:H1almostregular}), where we will have $\delta_1=q_1n\leq n$ for some $q_1$ depending on the variables in \eqref{eq:hierarchy}, so that $1/n\llpoly q_1$. In particular, then, we will have $n^{0.9}\leq \delta_1\leq n$.
Set $\Delta=(1+\eps)\delta_1$, so that $\Delta(\mathcal{H}_1)\leq \Delta$ and $\Delta\geq \Delta_0$ as $1/n\ll 1$.
We will have that $\Delta^c(\mathcal{H}_1)\leq 1$ (see Claim~\ref{clm:H1lowcod}), so that $\Delta^c(\mathcal{H}_1)\leq \Delta^{1-\delta_0}$.

Furthermore, there will be a collection $\mathcal{W}_1$ of at most $4n\leq \exp(\Delta^{\eps_0^2})$ weight functions such that, for each $w\in \mathcal{W}_1$, $w(e)\leq 1$ for each $e\in E(\mathcal{H}_1)$ and $w(E(\mathcal{H}_1))\geq n^{3/2}$ for each $w\in \mathcal{W}_1$, so that
\[
w(E(\mathcal{H}_1))\geq \Delta^{1+\delta_0}\cdot \max_{e\in E(\mathcal{H}_1)}w(e).
\]
Then, using the property from Theorem~\ref{thm:nibble}, we can find a matching $\cM_1$ in $\mathcal{H}_1$ such that, for each $w\in \mathcal{W}_1$,
\begin{equation*}
w(\mathcal{M}_1)=(1\pm \Delta^{-\eps_0})\cdot w(E(\mathcal{H}_1))/\Delta=(1\pm 2\eps)\cdot\delta_1^{-1} \cdot w(E(\mathcal{H}_1)),
\end{equation*}
where we have used that $1/n\llpoly \eps$ and $\Delta\geq n^{0.9}$.

In Part~\ref{partB2}, letting $r=24$, we will construct $\mathcal{H}_2$ which is a $(7r+4)$-uniform hypergraph. When bounding the vertex degrees of $\mathcal{H}_2$ we will use $r+1$ different properties, so from our initial error term of $\eps$, we will increase this, showing that $d_{\mathcal{H}_2}(v)=(1\pm 100\eps)\delta_2$ for each $v\in V(\mathcal{H}_2)$ (see Claim~\ref{clm:H2almostregular}), where we will have $\delta_2=q_2n^{r+1}\leq n^{r+1}$ for some $q_2$ depending on the variables in \eqref{eq:hierarchy}, so that $1/n\llpoly q_2$. In particular, then, we will have $n^{r+0.9}\leq \delta_2\leq n^{r+1}$.

We will have that $\Delta^c(\mathcal{H}_2)=O(n^{r+0.5})$ (see Claim~\ref{clm:H2lowcod}), so that $\Delta^c(\mathcal{H}_2)\leq (\Delta(\mathcal{H}_2))^{1-c}$, for some small fixed $c>0$. We will consider $O(n^2)$ weight functions $w$, such that, for each such $w$, we have $w(E(\mathcal{H}_2))\geq n^{r+1.5}$.
Then, from Theorem~\ref{thm:nibble}, we will find a matching $\cM_3$ in $\mathcal{H}_3$ such that, for each of these weight functions $w$,
\begin{equation*}
w(\mathcal{M}_3)=(1\pm (\Delta(\mathcal{H}_3))^{-c'})\cdot w(E(\mathcal{H}_2))/\Delta(\mathcal{H}_3)=(1\pm 101\eps)\cdot\delta_2^{-1} w(E(\mathcal{H}_3))=(1\pm \gamma)\cdot \delta_2^{-1} w(E(\mathcal{H}_3)),
\end{equation*}
for some small fixed $c'>0$, where we have used that $1/n\llpoly \eps\llpoly \gamma$.

The properties produced by this application of Theorem~\ref{thm:nibble} in Part~\ref{partB2} will then give bounds for our last auxiliary hypergraph $\mathcal{H}_3$, allowing us to show that  $d_{\mathcal{H}_3}(v)=(1\pm 10\gamma)\delta_3$ for each $v\in V(\mathcal{H}_3)$ (see Claim~\ref{clm:H2almostregular}), where we will have $\delta_3=q_3n^{30}$ for some $q_3$ depending on the variables in \eqref{eq:hierarchy}, so that $1/n\llpoly q_3$. We will have that $\mathcal{H}_3$ is 247-uniform and $\Delta^c(\mathcal{H}_3)=O(n^{29.5})$ (see Claim~\ref{clm:H3lowcod}) and consider $O(n)$ weight functions $w$, each with $w(E(\mathcal{H}_3))\geq n^{30.25}$.
Similarly to the application of Theorem~\ref{thm:nibble} in Part~\ref{partB2}, we will then be able to find a matching $\cM_3$ in $\mathcal{H}_3$ such that, for each of these weight functions $w$,
\begin{equation*}
w(\mathcal{M}_3)=(1\pm 20\gamma)\cdot \delta_3^{-1} w(E(\mathcal{H}_3)).
\end{equation*}
As $\gamma\llpoly \beta$, this will then allow us to record our final properties using the variable $\beta$ for \ref{prop:real:regularity}.

\medskip

\noindent\textbf{Section outline.}
After some further set-up, in Section~\ref{sec:Bsetup} we will record a long list of properties that hold together with high probability in $G\sim G^\col_{[n]}$. Then, we will carry out Parts~\ref{partB1}--\ref{partB3} in Sections~\ref{sec:partB1}--\ref{sec:partB3} respectively. Finally, in Section~\ref{sec:partBfinal}, we will show that the near-matchings we have found satisfy our desired conditions, completing the proof of Lemma~\ref{keylemma:realisation}.


\subsection{Further set-up and properties for Part~\ref{partB}}\label{sec:Bsetup}
Take the set-up from Lemma~\ref{keylemma:realisation}.
Recall that $\beta_0=1/(1+\beta)$ and, for each $i\in [n]$, we have partitions $X_i=X_{i,0}\cup X_{i,1}$, $Y_i=Y_{i,0}\cup Y_{i,1}$ and $Z_i=Z_{i,0}\cup Z_{i,1}$ and random subsets $C_i\subset C$ and $D_{j,i}\subset D_j$ for each $j\in [3]$.
Furthermore, the edges of $G$ are randomly partitioned as $E^\abs=E^\abs_0\cup E^\abs_1$, and then $E^\abs_1$ is randomly partitioned as $E^\abs_{1,A}\cup E^\abs_{1,B}\cup E^\abs_{1,M}$.

Using Lemma~\ref{keylemma:absorption}, for each $\tau\in \mathcal{T}$, let
\[
\mathcal{I}_\tau\subset \{\{(i,u),(j,v)\}: i,j\in I_\tau,i\neq j, u \in  S_i \setminus (R_i \cup T_j), v \in S_j \setminus (T_i \cup R_j),u\neq v,u{\sim}_{A/B}v\}
\]
such that \ref{prop:abs:regularityout}--\ref{prop:abs:corrections} hold. Let $\mathcal{I}=\cup_{\tau\in \mathcal{T}}I_\tau$. For each $\tau\in \mathcal{T}$, $i\in I_\tau$ and $u\in S_i\setminus R_i$, let $J_{i,u}$ be the set of $j$ for which there is some $v$ with $\{(i,u),(j,v)\}\in \mathcal{I}_\tau$. For each $i\in [n]$ and $u\in S_i\setminus R_i$, let $Y_{i,u,0}=\bigcap_{j\in J_{i,u}\cup\{i\}}Y_{j,0}$.

For each $i,j\in [n]$ with $i\sim_{\mathcal{T}}j$, let $\mathcal{L}_{ij}$ be the set of links $R$ such that, for some distinct $u,v\in S_i$, $R$ is a
$(u,v,L)$-link with colours in $D_3\setminus (C_i\cup C_j\cup D_{3,i}\cup D_{3,j})$, edges in $E^\abs_0$ and internal vertices in $Z_{i,0}\cap Z_{j,0}$.

For any vertex $v$ in $G$, if $i$ and $j$ are in the same tribe then the probability that $v\in Z_{i,0}\cap Z_{j,0}$ is $\beta_0^2p_Z$.  For any  colour $c\in C$, and any distinct $i,j\in [n]$, the probability that $c\in D_3\setminus (C_i\cup C_j\cup D_{3,i}\cup D_{3,j})$ is $\beta_0^2p_{\abs}^2p_3$. For any edge $e\in E(G)$, the probability that $e\in E^\abs_0$ is $\beta_0p_\abs$.
Thus, it will be convenient to set
\[
p_{\mathrm{vx}}=\beta_0^2p_Z,\;\;\; p_{\mathrm{col}}=\beta_0^2p_{\abs}^2p_3,\;\;\text{ and }\;\; p_{\mathrm{edge}}=\beta_0p_\abs.
\]
For each path $P$ in $G$ with length 62 which has 30 colours, and each distinct $i$ and $j$ in the same tribe, the probability that $E(P)\subset E_0^\abs$, and $C(P)\subset D_3\setminus (C_i\cup C_j\cup D_{3,i}\cup D_{3,j})$, and all the interior vertices of $P$ are in $Z_{i,0}\cap Z_{j,0}$ is $p_{\mathrm{vx}}^{61}\cdot p_{\mathrm{col}}^{30}\cdot p_{\mathrm{edge}}^{62}$.
We will let $\Phi$ be (very close to) the expected number of $(u,v,L)$-links in $G\sim G^\col_{[n]}$ for any fixed pair of vertices $u\simAB v$, setting
\begin{equation}\label{eqn:Phidefn}
\Phi:=p_{\mathrm{vx}}^{61}\cdot p_{\mathrm{col}}^{30}\cdot p_{\mathrm{edge}}^{62}\cdot n^{30}.
\end{equation}

Let $\mathcalJJJ=\{\{(i,u),(j,v)\}:i,j\in [n],i\neq j,u,v\in V(G),u\sim_{A/B}v\}$.
For the links that we want that we do not find using an application of Theorem~\ref{thm:nibble}, we will find such an $L$-link for $\{(i,u),(j,v)\}\in \mathcalJJJ$ using colours in $(D_{3,i}\cap D_{3,j})\setminus (C_i\cup C_j)$, edges in $E_1^\abs$ and vertices in $Z_{i,1}\cap Z_{j,1}$, so it will be convenient to set
\[
\beta_1=1-\beta_0,\;\;\; q_{\mathrm{vx}}=\beta_1^2p_Z,\;\;\; q_{\mathrm{col}}=\beta_1^2p_{\abs}^2p_3,\;\;\text{ and }\;\; q_{\mathrm{edge}}=\beta_1p_\abs,
\]
and let
\begin{equation}\label{eqn:Phionedefn}
\Phi_1:=q_{\mathrm{vx}}^{61}\cdot q_{\mathrm{col}}^{30}\cdot q_{\mathrm{edge}}^{62}\cdot n^{30},
\end{equation}
so that the expected number of such links is (very close to) $\Phi_1$.

\begin{claim}\label{clm:properties} With high probability, we have the following properties.
\stepcounter{propcounter}

\smallskip

\textbf{Properties for \ref{partB1}: hypergraph degrees}
\begin{enumerate}[label = {{\textbf{\Alph{propcounter}\arabic{enumi}}}}]
\item \label{prop:B1:1} For each $i\in [n]$ and $u\in V(G)$, $|\{v\in X_{i,0}:c(uv)\in D_1\setminus (C_i\cup D_{1,i}),uv\in E^{\mathrm{abs}}_0\}|=(1\pm \eps)\beta_0^3p_Xp_1p_\abs^2 n$.
\item \label{prop:B1:2} For each $i\in [n]$ and $v\in V(G)$, $|\{u\in T_i:c(uv)\in D_1\setminus (C_i\cup D_{1,i}),uv\in E^\abs_0\}|=(1\pm \eps)\beta_0^2p_Tp_1p_\abs^2 n$.
\item \label{prop:B1:3} For each $i\in [n]$ and $c\in C$, $|\{uv\in E_0^{\mathrm{abs}}:c(uv)=c,u\in T_i,v\in X_{i,0}\}|=(1\pm \eps)2\beta_0^2 p_\abs p_Tp_X n$.
\item \label{prop:B1:4} For each distinct $u,v\in V(G)$ and $c\in D_1$, $|\{i\in [n]:u\in T_i,v\in X_{i,0},c\notin C_i\cup  D_{1,i}\}|=(1\pm \eps)\beta_0^2p_Tp_Xp_\abs n$.

\smallskip

\textbf{Properties for \ref{partB1}: boundedness and missing edges}
\item \label{prop:B1:bounded:1} For each $v\in V(G)$, $|\{i\in [n]:v\in T_i\text{ or }v\in X_{i,0}\}|=(1\pm \eps)2p_Tn$.
\item \label{prop:B1:lastfew:2} For each $i\in [n]$, $|T_i|=(1\pm \eps)p_Tn$, $|X_i|=(1\pm \eps)p_Xn$ and $|X_{i,0}|=(1\pm \eps)\beta_0p_Xn$.
\item \label{prop:B1:bounded:2} For each $v\in V(G)$, $|\{uv\in E_0^\abs:c(uv)\in D_1\}|=(1\pm \eps)\beta_0 p_1p_\abs n$.
\item \label{prop:B1:follow:1} For each $i\in [n]$, $X\in \{A,B\}$ and $u\in X$, $|\{v\in X_{i,1}:c(uv)\in D_{1,i}\setminus C_i,uv\in E^{\mathrm{abs}}_{1,X}\}|=(1\pm 2\eps)(1-\beta_0)^3p_Xp_\abs^2n$.
\item \label{prop:B1:lastfew:1} For each $c\in D_1$ and $\phi\in \mathcal{F}$, $|\{i\in I_\phi:c\in D_1\setminus (C_i\cup D_{1,i})\}|=(1\pm \eps)\beta_0 p_\abs p_\tr p_\fa n$ and $|\{uv\in E_0^\abs:c(uv)=c\}|=(1\pm \eps)\beta_0p_\abs n$.
\item \label{prop:B1:Gabsbounded} Setting $G^\abs_1$ to be the graph with vertex set $V(G)$ and edge set $E^\abs_1$, $G^\abs_1|_{D_1}$ is $(\gamma n)$-bounded.

\smallskip


\textbf{Properties for \ref{partB2}: hypergraph degrees}
\item For each $i\in [n]$ and $u\in S_i\setminus R_i$, there are $(1\pm \eps)\beta_0^{2r+3}p_2p_Yp_\abs^{r+2} n$ choices of $v\in Y_{i,u,0}$ with $uv\in E^\abs_0$, and $c(uv)\in D_{2}\setminus (C_j\cup D_{2,j})$ for each $j\in J_{i,u}\cup \{i\}$.\label{cond:newforv1}
\item For each $\phi\in \mathcal{F}$, distinct $i,j\in \phi$, and $c\in C$, there are $(1\pm \eps)\beta_0^5p_Y^2p_{\abs}n$ choices of a colour-$c$ edge in $E_0^\abs$ with vertices in $Y_{i,0}\cap Y_{j,0}$.\label{cond:newforcedge1}
\item For each $i\in [n]$ and $x\in Y_{i,0}$, there are $(1\pm \eps)\beta_0^{2r+2}p_{S-R}p_2p_\abs^{r+2} n$ choices of $u\in S_i\setminus R_i$ such that $c(ux)\in D_{2}\setminus (C_i\cup D_{2,i})$, $ux\in E^\abs_0$, and, for each $j\in J_{i,u}$, $x\in Y_{j,0}$ and $c(ux)\in D_{2}\setminus (C_j\cup D_j)$.
\label{prop:forB2:deg:1}

\item For each $i\in [n]$ and $x\in Y_{i,0}$, there are $(1\pm \eps)2\beta_0p_{S-R} rn$ choices of $u\in S_i\setminus R_i$ and $j\in J_{i,u}$ such that $x\in Y_{j,0}$.\label{prop:forB2:deg:2}

\item For each $i\in [n]$, $u\in S_i\setminus R_i$, $j\in J_{i,u}$ and $x\in Y_{i,0}\cap Y_{j,0}$, there are
$(1\pm \eps)\beta_0^{2r+6}p_2p_Y^2p_\abs^{r+3} n$ choices of
$c$ for which $c\in D_2\setminus (C_{i'}\cup D_{2,i'})$ for each $i'\in J_{i,u}\cup \{i\}$ and such that there is a colour-$c$ edge in $E^\abs_0$ from $u$ to $Y_{i,u,0}$ and from $x$ to $Y_{i,0}\cap Y_{j,0}$.
\label{prop:forB2:deg:3}

\item For each $j\in [n]$ and $x\in Y_{j,0}$, there are $(1\pm \eps)r\beta_0^{2r+2}p_{S-R}p_2p_\abs^{r+2} n$ choices of $i\in [n]$ and $u\in S_i\setminus R_i$ such that $j\in J_{i,u}$, $x\in Y_{j',0}$ for each  $j'\in J_{i,u}\cup \{i\}$, $ux$ is in $E_0^\abs$ and has colour, $c$ say, which is in $D_2\setminus (C_{j'}\cup D_{2,j'})$ for each $j'\in Y_{i,u}\cup \{i\}$.\label{prop:forB2:deg:4}

\item For each $j\in [n]$ and $x\in Y_{j,0}$, there are $(1\pm \eps)2r\beta_0p_{S-R} n$ choices of $i\in [n]$ and $u\in S_i\setminus R_i$ such that $j\in J_{i,u}$ and $x\in Y_{i,0}$. \label{prop:forB2:deg:5}

\item For each $i\in [n]$ and $c\in D_2\setminus (C_i\cup D_{2,i})$, there are $(1\pm \eps)2\beta_0^{2r+2}p_{S-R}p_Yp_\abs^{r+1} n$ choices of $u\in S_i\setminus R_i$ so that $c\in D_2\setminus (C_{j'}\cup D_{2,j'})$ for each $j'\in J_{i,u}$ and $u$ has a
colour-$c$ edge in $E^\abs_2$ to $Y_{i,u,0}$.\label{prop:forB2:deg:7}

\item For each $j\in [n]$ and $c\in D_2\setminus (C_j\cup D_{2,j})$, there are $(1\pm \eps)2r\beta_0^{2r+2}p_{S-R}p_Yp_\abs^{r+1} n$ choices of $i\in [n]$ and $u\in S_i\setminus R_i$ such that $j\in J_{i,u}$ and $c\in D_2\setminus (C_{j'}\cup D_{2,j'})$ for each $j'\in J_{i,u}\cup \{i\}$ and $u$ has a colour-$c$ edge in $E^\abs_2$ to $Y_{i,u,0}$.
\label{prop:forB2:deg:8}

\item For each $xy\in E(G|_{D_2})$, there are $(1\pm \eps)\beta_0^{2r+2}p_{S-R}p_Yp_\abs^{r+1} n$ choices of $i\in [n]$ such that $x\in S_i\setminus R_i$, $c(xy)\in D_2\setminus (C_{j}\cup D_{2,j})$ for each $j\in J_{i,u}\cup \{i\}$ and $y\in Y_{i,u,0}$.
\label{prop:forB2:deg:9}
\item For each $xy\in E(G|_{D_2})$, there are $(1\pm \eps)2r\beta_0^{2r+7}p_{S-R}p_Y^3p_\abs^{r+2} n^2$ choices for $i\in [n]$, $u\in S_i\setminus R_i$ and $j\in J_{i,u}$ such that $c(xy)\in D_2\setminus (C_{j'}\cup D_{2,j'})$ for each $j'\in J_{i,u}\cup \{i\}$, $x,y\in Y_{i,0}\cap Y_{j,0}$ and $u$ has a colour-$c(xy)$ neighbour in $E_0^\abs$ in $Y_{i,u,0}$.
\label{prop:forB2:deg:10}

\smallskip

\textbf{Properties for \ref{partB2}: weight functions}
\item\label{prop:B2:wgt:1} For each $i\in [n]$ and $x\in Z_{i,0}$, $|\{(u,j):u\in S_i\setminus R_i,j\in J_{i,u},x\in Z_{j,0}\}|=(1\pm \eps)\cdot 2r\beta_0p_{S-R}n$.
\item \label{prop:B2:wgt:3} For each $j\in [n]$ and $x\in Z_{j,0}$, $|\{(i,u):u\in S_i\setminus R_i,j\in J_{i,u},x\in Z_{i,0},u\sim_{A/B}x\}|=(1\pm \eps)\cdot r\beta_0p_{S-R}n$.
\item \label{prop:B2:wgt:5} For each $i\in [n]$ and $x\in Z_{i,0}$, $|\{(j,u):u\in S_i\setminus R_i,j\in J_{i,u},x\in Z_{j,0},u\sim_{A/B}x\}|=(1\pm \eps)\cdot r\beta_0p_{S-R}n$.
\item \label{prop:B2:wgt:6} For each $j\in [n]$ and $x\in Z_{j,0}$, $|\{(i,u):u\in S_i\setminus R_i,j\in J_{i,u},x\in Z_{i,0}\}|=(1\pm \eps)\cdot 2r\beta_0p_{S-R}n$.

\item\label{prop:B2:wgt:2} For each $i\in [n]$, $u\in S_i\setminus R_i$, $j\in J_{i,u}$, $x\in V(G)$ and $c\in C$, there are $(1\pm \eps)\beta_0^8p_Y^2p_3p_\abs^4n$ edges in $E_0^\abs$ with colour $c$ and vertices in $Y_{i,0}\cap Y_{j,0}$ which have an edge to $x$ in $E_0^\abs$ with colour in $D_3\setminus (C_i\cup C_j\cup D_{3,i}\cup D_{3,j})$.

\item \label{prop:B2:wgt:4} For each $i\in [n]$, $u\in S_i\setminus R_i$, $j\in J_{i,u}$, and $x\in V(G)$ with $x\simAB u$, there are
 $(1\pm \eps)\beta_0^{2r+6}p_Yp_2p_3p_\abs^{r+5} n$ vertices $v\in Y_{i,u,0}$ with $uv,xv\in E^\abs_0$, $c(uv)\in D_{2}\setminus (C_{i'}\cup D_{2,i'})$ for each $i'\in J_{j,u}\cup \{j\}$, and $c(xv)\in D_3\setminus (C_i\cup C_j\cup D_{3,i}\cup D_{3,j})$.

\item \label{prop:B2:wgt:7} For each $i\in [n]$ and $c\in D_3\setminus (C_i\cup C_{3,i})$, there are $(1\pm \eps)\beta_0 p_\abs r\cdot p_{S-R}n$ choices of $u\in S_i\setminus R_i$ and $j\in J_{i,u}$ with $c\notin C_{j}\cup D_{3,j}$.

\item \label{prop:B2:wgt:9} For each $j\in [n]$ and $c\in D_3\setminus (C_j\cup C_{3,j})$, there are $(1\pm \eps)\cdot r\beta_0p_\abs p_{S-R}n$ choices for $i\in [n]$ and $u\in S_i\setminus R_i$ with $j\in J_{i,u}$ and $c\notin C_i\cup D_{3,i}$.


\item \label{prop:B2:wgt:8} For each $i\in [n]$, $u\in S_i\setminus R_i$, $j\in J_{i,u}$ and distinct $c,c'\in C$, there are $(1\pm \eps)\beta_0^6p_Y^2p_Zp_\abs^2n$ choices
for $w\in Y_{i,0}\cap Y_{j,0}$ which has a colour-$c$ edge in $E_0^\abs$ to $Z_{j,0}\cap Z_{j',0}$ and a colour-$c'$ edge in $E_0^\abs$ to $Y_{j,0}\cap Y_{j',0}$.

\item \label{prop:B2:wgt:10} For each $i\in [n]$ $u\in S_i\setminus R_i$, $j\in J_{i,u}$, and $c'\in C$ there are $(1\pm \eps)\beta_0^{2r+6}p_Yp_Zp_2p_\abs^{r+3} n$ choices of $v\in Y_{i,u,0}$ with $uv\in E^\abs_0$ and $c(uv)\in D_{2}\setminus (C_{i'}\cup D_{2,i'})$ for each $i'\in J_{i,u}\cup \{i\}$ and there is a colour-$c'$ edge from $v$ to $Z_{i,0}\cap Z_{j,0}$ in $E_0^\abs$.


\item \label{prop:B2:wgt:11} For each $xy\in E(G)$ with $c(xy)\in D_3$, there are $\beta_0^{2r+6}p_Zp_Yp_2p_\abs^{r+3}rp_{S-R}n^2$ choices for $i\in [n]$,  $u\in S_i\setminus R_i$ and $j\in J_{i,u}$ with $x\in Y_{i,u,0}$, $c(ux)\in D_2\setminus (C_{j'}\cup D_{2,j'})$ for each $j'\in J_{i,u}$, $y\in Z_{i,0}\cap Z_{j,0}$ and $c(xy)\notin C_i\cup C_j\cup D_{3,i}\cup D_{3,j}$.

\item \label{prop:B2:wgt:12}
For each $xy\in E(G)$ with $c(xy)\in D_3$, there are $(1\pm \eps)\cdot r\beta_0^{6}p_\abs^2p_Yp_Zp_{S-R}n^2$ choices for $i\in [n]$, $u\in S_i\setminus R_i$ and $j\in J_{i,u}$ with $c(xy)\notin (C_i\cup C_j\cup D_{3,i}\cup D_{3,j})$, $x\in Y_{i,0}\cap Y_{j,0}$ and $y\in Z_{i,0}\cap Z_{j,0}$.

\item \label{prop:B2:wgt:13}
For each $xy\in E(G)$ with $c(xy)\in D_3$, and each $i\in [n]$, $u\in S_i\setminus R_i$ and $j\in J_{i,u}$ there are $(1\pm \eps)\beta_0^{2r+6}p_Yp_2p_\abs^{r+2} n$ choices of $v\in Y_{i,u,0}$ with $uv\in E^\abs_0$ and $c(uv)\in D_{2}\setminus (C_{i'}\cup D_{2,i'})$ for each $i'\in J_{i,u}\cup \{i\}$ such that $x$ has a colour-$c(uv)$ edge in $E_0^\abs$ to $Y_{i,0}\cap Y_{j,0}$ .

\item \label{prop:B2:wgt:14} For each $xy\in E(G)$ with $c(xy)\in D_3$, there are $\beta_0^{6}p_Z^2p_\abs^{2}rp_{S-R}n^2$ choices for $i\in [n]$, $u\in S_i\setminus R_i$ and $j\in J_{i,u}$ with $x,y\in Z_{i,0}\cap Z_{j,0}$ and $c(xy)\notin C_i\cup C_j\cup D_{3,i}\cup D_{3,j}$.

\item For each $i\in [n]$, $|S_i\setminus R_i|=(1\pm \eps)\cdot 2p_{S-R}n$ and $|Y_{i,0}|=(1\pm \eps)\cdot 2\beta_0p_Yn$.\label{prop:B2:sizeofSminusR}

\item For each $u\in V(G)$ and $\phi\in \mathcal{F}$, $|\{i\in I_\phi:u\in S_i\setminus R_i\}|=(1\pm \eps)\cdot p_{S-R}p_\tr p_\fa n$ and $|\{i\in I_\phi:u\in Y_{i,0}\}|=(1\pm \eps)\cdot \beta_0p_Ynp_\tr p_\fa$.\label{prop:B1:lastfew:4}

\item For each $c\in C(G)$ and $\phi\in \mathcal{F}$, $|\{i\in I_\phi:c\in D_2\setminus (C_i\cup D_{2,i})\}|=(1\pm \eps)\beta_0p_\abs p_\tr p_\fa n$.\label{prop:B1:lastfew:3}

\smallskip

\textbf{Properties for \ref{partB2}: missing small matchings}

\item For each $i\in [n]$, $u\in S_i\setminus R_i$ and $X\in \{A,B\}$, there are at least $10\sqrt{\gamma}n$ vertices $v\in \cap_{j\in J_{i,u}\cup \{i\}} Y_{j,1}$ with colour in $\cap_{j\in J_{i,u}\cup \{i\}}(D_{2,j}\setminus C_j)$
and such that $uv\in E_{1,X}^\abs$.\label{prop:B2:missing:uv}

\item For each $i\in [n]$, $u\in S_i\setminus R_i$ and $c\in C$, there are at least $\gamma^{1/3}n$ edges in $E_{1,M}^\abs$ with colour $c$ and vertices in $\cap_{j\in J_{i,u}\cup \{i\}} Y_{j,1}$. \label{prop:B2:missing:Muv}

\smallskip

\textbf{Properties for \ref{partB3}: relevant properties of $\mathcal{I}$}
\item \label{prop:fromB2:5}  For each $i\in [n]$ and $x\in Z_{i,0}$, there are $(1\pm \eps)\beta_0\cdot p_{\mathcal{I}}\cdot n$ labelled\footnote{I.e., we consider the number of choices of $(i,u,j,v)$ for which $\{(i,u),(j,v)\}\in \mathcal{I}$.} choices for $\{(i,u),(j,v)\}\in \mathcal{I}$ with $x\in Z_{j,0}$ and $u\not\simAB x$.

\item \label{prop:fromB2:5b}  For each $i\in [n]$ and $x\in Z_{i,0}$, there are $(1\pm \eps)\beta_0\cdot p_{\mathcal{I}}\cdot n$ labelled choices for $\{(i,u),(j,v)\}\in \mathcal{I}$ with $x\in Z_{j,0}$ and $u\simAB x$.

\item \label{prop:fromB2:8}
For each $i\in [n]$ and $c\in D_3\setminus (C_i\cup D_{3,i})$, there are $(1\pm \eps)
2\beta_0p_\abs\cdot p_{\mathcal{I}}\cdot n$
 labelled choices for $\{(i,u),(j,v)\}\in \mathcal{I}$ with $c\notin C_j\cup D_{3,j}$.

 \item \label{prop:fromB2:13} For each $xy\in E_0^\abs$ with $c(xy)\in D_3$, there are $(1\pm \eps)2{p}_{\mathrm{vx}}^2\cdot {p}_{\mathrm{col}}\cdot p_{\mathcal{I}}\cdot n^2$ labelled choices
 for  $\{(i,u),(j,v)\}\in \mathcal{I}$ for which $c(xy)\in D_3\setminus (C_i\cup C_j\cup D_{3,i}\cup D_{3,j})$ and $x,y\in Z_{i,0}\cap Z_{j,0}$.

\smallskip

\textbf{Properties for \ref{partB3}: link counts for hypergraph degrees}

\item \label{prop:B3:cod} \ref{prop:links:totalnumber}--\ref{prop:links:throughedgeandvertex} hold with $\Phi_0=n^{30}$.

\item \label{prop:B3:1} For each $\{(i,u),(j,v)\}\in \mathcalJJJ$, the number of $(u,v,L)$-links in $\mathcal{L}_{ij}$ is $(1\pm \eps)\Phi$.


\item \label{prop:B3:2} For each $\{(i,u),(j,v)\}\in \mathcalJJJ$ and $x\in V(G)\setminus \{u,v\}$ with $c(ux)\in D_3\setminus (C_i\cup C_j\cup D_{3,i}\cup D_{3,j})$, $x\in Z_{i,0}\cap Z_{j,0}$ and $ux\in E_0^\abs$,
the number of $(u,v,L)$-links in $\mathcal{L}_{ij}$ in which $x$ is the 2nd vertex is
$(1\pm \eps)\cdot \Phi \cdot p_{\mathrm{vx}}^{-1}\cdot p_{\mathrm{col}}^{-1}\cdot p_{\mathrm{edge}}^{-1}\cdot n^{-1}$.

\item \label{prop:B3:6} For each $k$ with $3\leq k\leq 59$ and each
$\{(i,u),(j,v)\}\in \mathcalJJJ$ and  $x\in V(G)\setminus \{u,v\}$ with $x\in Z_{i,0}\cap Z_{j,0}$,
the number of $(u,v,L)$-links in $\mathcal{L}_{ij}$ in which $x$ is the $k$th vertex is $(1\pm \eps)\cdot \Phi \cdot p_{\mathrm{vx}}^{-1}\cdot n^{-1}$.


\item \label{prop:B3:7}\label{prop:B3:10} For each $\{(i,u),(j,v)\}\in \mathcalJJJ$ and $xy\in E_0^\abs$ with $x=u$, $c(xy)\in D_3\setminus (C_i\cup C_j\cup D_{3,i}\cup D_{3,j})$ and $y\in Z_{i,0}\cap Z_{j,0}$, the number of $(u,v,L)$-links in $\mathcal{L}_{ij}$ which have $xy$ as the 1st edge is $(1\pm \eps)\cdot \Phi \cdot  p_{\mathrm{vx}}^{-1}\cdot p_{\mathrm{col}}^{-1}\cdot p_{\mathrm{edge}}^{-1}\cdot n^{-1}$.

\item \label{prop:B3:9} For each $k$ with $2\leq k\leq 59$, and each $\{(i,u),(j,v)\}\in \mathcalJJJ$ and $c'\in D_3\setminus (C_i\cup C_j\cup D_{3,i}\cup D_{3,j})$,
the number of $(u,v,L)$-links in $\mathcal{L}_{ij}$ whose $k$th edge has colour $c'$ is $(1\pm \eps)\cdot \Phi \cdot  p_{\mathrm{col}}^{-1}\cdot n^{-1}$.

\item \label{prop:B3:11}  For each $\{(i,u),(j,v)\}\in \mathcalJJJ$ and $xy\in E_0^\abs$ for which
$c(ux),c(xy)\in D_3\setminus (C_i\cup C_j\cup D_{3,i}\cup D_{3,j})$, $x,y\in Z_{i,0}\cap Z_{j,0}$,
and $ux,xy\in E_0^\abs$, the number of $(u,v,L)$-links in $\mathcal{L}_{ij}$ which have $xy$ as the 2nd edge is
 $(1\pm \eps)\cdot \Phi \cdot  p_{\mathrm{vx}}^{-2}\cdot p_{\mathrm{col}}^{-2}\cdot p_{\mathrm{edge}}^{-2}\cdot n^{-2}$.

 \item \label{prop:B3:14} For each $k$ with $3\leq k\leq 59$, and each  $\{(i,u),(j,v)\}\in \mathcalJJJ$ and $xy\in E_0^\abs$ with $c(xy)\in D_3\setminus (C_i\cup C_j\cup D_{3,i}\cup D_{3,j})$ and $x,y\in Z_{i,0}\cap Z_{j,0}$,
  the number of $(u,v,L)$-links in $\mathcal{L}_{ij}$ which have $xy$ as the $k$th edge is
   $(1\pm \eps)\cdot \Phi \cdot  p_{\mathrm{vx}}^{-2}\cdot p_{\mathrm{col}}^{-1}\cdot p_{\mathrm{edge}}^{-1}\cdot n^{-2}$.

\item \label{prop:B3:assorted:1} For each $c\in C$, $|\{e\in E(G_0^\abs):c(e)=c\}|=(1\pm \eps)\cdot \beta_0p_\abs n$
and $|\{i\in [n]:c\in D_3\setminus (C_i\cup D_{3,i})\}|=(1\pm \eps)\cdot \beta_0p_\abs n$.

\item \label{prop:B3:assorted:2} For each $v\in V(G)$, $|\{i\in [n]:v\in Z_{i,0}\}|=(1\pm \eps)\cdot p_Z\beta_0n$.

\smallskip

\textbf{Property for \ref{partB3}: missing links}

\item For each $\{(i,u),(j,v)\}\in \mathcalJJJ$, the number of $(u,v,L)$-links with colours in $(D_{3,i}\cup D_{3,j})\setminus (C_i\cup C_j)$, edges in $E^\abs_1$ and internal vertices in $Z_{i,1}\cap Z_{j,1}$ is $(1\pm \eps)\Phi_1$.\label{prop:B3:missing}

\textbf{Properties for bounded remainder}

\item For each $v\in V(G)$ and $\phi\in \mathcal{F}$, $|\{i\in I_\phi:v\in X_{i,1}\cup Y_{i,1}\cup Z_{i,1}\}|\leq \gamma p_\tr p_\fa n$.\label{prop:finalB:1}

\item For each $k\in [3]$, $c\in D_k$ and $\phi\in \mathcal{F}$, $|\{i\in I_\phi:c\in D_{k,i}\}|\leq \gamma p_\tr p_\fa  n$.\label{prop:finalB:2}

\item For each $v\in V(G)$, $|\{u:uv\in E^\abs_1\}|=(1\pm \alpha)p_\abs n$.\label{prop:finalB:3}

\item For each $i\in [n]$, $|Z_{i,0}|=(1\pm \eps)\beta_0 p_Zn$. \label{prop:sizeofZi0}
\end{enumerate}
\end{claim}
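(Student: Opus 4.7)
\medskip

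\noindent\textbf{Proof proposal for Claim~\ref{clm:properties}.} The plan is to show each of the listed properties individually with probability $1-n^{-\omega(1)}$ and then take a union bound over the $O(\mathrm{poly}(n))$ properties. Sort the properties into three groups according to which sources of randomness they depend on: (i) those depending only on the random vertex/colour/edge partitions from Section~\ref{sec:choosevxsets} (sizes like \ref{prop:B2:sizeofSminusR}, \ref{prop:sizeofZi0}, \ref{prop:B1:lastfew:2}, \ref{prop:B1:lastfew:4}, \ref{prop:finalB:1}, \ref{prop:finalB:2}, \ref{prop:finalB:3} and counts like \ref{prop:B2:wgt:1}--\ref{prop:B2:wgt:6}, \ref{prop:fromB2:5}--\ref{prop:fromB2:8}); (ii) those depending only on $G\sim G^{\col}_{[n]}$ (most importantly \ref{prop:B3:cod}, but also the base quantities used in deriving the other link-count properties); and (iii) those depending on both (the bulk of the list, including the degree conditions for the hypergraphs $\mathcal{H}_1,\mathcal{H}_2,\mathcal{H}_3$ and the weight-function counts for Parts~\ref{partB1}--\ref{partB3}).

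For group~(i), the expected values factor as products of the marginal probabilities $p_T,p_X,p_U,p_Y,p_Z,p_j,\beta_0,p_\abs,\ldots$ from Section~\ref{sec:variables}, and concentration follows from Lemma~\ref{chernoff} applied to hypergeometric/binomial variables (and in some cases from McDiarmid's inequality, Lemma~\ref{lem:mcdiarmidchangingc}, when multiple independent partitions are combined). Since the hierarchy \eqref{eq:hierarchy} yields $1/n\llpoly \eps$ times each of the relevant probabilities, each individual Chernoff deviation probability is $\exp(-\omega(\log n))$, so even after the union bound the resulting error stays within the $(1\pm\eps)$ window claimed. Property \ref{prop:B3:cod} is immediate from Theorem~\ref{thm:Llinks} with a suitable rescaling of $\eps$, and similar `pure $G$' statements follow from the lemmas of Section~\ref{sec:rand}.

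The group~(iii) properties are the ones that require the most care. The strategy is to first condition on $G$ satisfying the conclusion of Theorem~\ref{thm:Llinks} and of the auxiliary lemmas in Section~\ref{sec:rand}, which happens with probability $1-n^{-\omega(1)}$. For a property such as \ref{prop:B1:1}--\ref{prop:B1:4}, we write the count as a sum of indicator variables, one for each candidate object (vertex, colour, or edge of $G$), where the indicator depends on which random sets (e.g.\ $X_{i,0}$, $D_1$, $C_i$, $D_{1,i}$, $E^{\abs}_0$) it lands in. Because the partitions from Section~\ref{sec:choosevxsets} are drawn independently of $G$, conditional on $G$ the indicators are independent with explicit product-form marginal probabilities; Chernoff and McDiarmid give concentration at rate $\exp(-\Omega(\eps^2 n))$. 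For the link-count properties \ref{prop:B3:1}--\ref{prop:B3:14}, one writes the count as $\sum_{R}\mathbf{1}[R\text{ satisfies the constraints}]$ where $R$ ranges over $(u,v,L)$-links in $G$, and uses Theorem~\ref{thm:Llinks} for the total count (to compute the expectation) together with the codegree bounds \ref{prop:links:cod:twovertices}--\ref{prop:links:throughedgeandvertex} to bound the effect of swapping any single vertex/colour/edge in the partition, so that Lemma~\ref{lem:mcdiarmidchangingc} yields concentration of the form used at \eqref{eq:mcdexample}. The `missing structure' properties \ref{prop:B2:missing:uv}, \ref{prop:B2:missing:Muv}, \ref{prop:B3:missing} work identically but on the complementary index $\beta_1$ partition.

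The hard part is to set up the McDiarmid applications in group~(iii) cleanly: each link count depends on the random partitions through $O(n)$ to $O(n^2)$ binary coordinates (choice of set for each vertex/colour/edge), and we need the per-coordinate Lipschitz constant times $\sqrt{\text{(number of coordinates)}}$ to be much less than the expectation so that the McDiarmid deviation is $\exp(-n^{\Omega(1)})$. The codegree bounds \ref{prop:links:cod:twovertices}--\ref{prop:links:throughedgeandvertex} from Theorem~\ref{thm:Llinks} are exactly what is needed to bound each Lipschitz constant by a suitable power of $n$ (typically $O(\Phi/n^2)$ or $O(\Phi/n^3)$). Since the expectation $\Phi$ defined at \eqref{eqn:Phidefn} is polynomial in $n$ and the hierarchy \eqref{eq:hierarchy} makes all the probabilities $p_{\mathrm{vx}},p_{\mathrm{col}},p_{\mathrm{edge}}$ bounded below by a positive constant times a polynomial in the fixed parameters, the McDiarmid calculation goes through for each property in turn with error probability $n^{-\omega(1)}$. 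Finally, we take a union bound over the polynomially many properties.
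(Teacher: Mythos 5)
Your overall strategy matches the paper's: first condition on the `pure $G$' property \ref{prop:B3:cod} (i.e.\ \ref{prop:links:totalnumber}--\ref{prop:links:throughedgeandvertex} from Theorem~\ref{thm:Llinks}) holding, then reveal the random partitions and use Chernoff/McDiarmid with a union bound over polynomially many properties, using the codegree bounds of Theorem~\ref{thm:Llinks} as the Lipschitz bounds in the McDiarmid applications for the link-count properties. The sample calculation you describe for \ref{prop:B3:1}--\ref{prop:B3:14} is exactly what the paper carries out for \ref{prop:B3:11}.

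There is, however, a meaningful imprecision in how you justify concentration for the partition-dependent counts. You write that ``conditional on $G$ the indicators are independent with explicit product-form marginal probabilities,'' and relegate McDiarmid to a side remark. This is not correct for most of the harder properties in the list, for two separate reasons. First, the tribe/family nesting in Section~\ref{sec:choosevxsets} means that, say, the events $\{u\in T_i, v\in X_{i,0}\}$ and $\{u\in T_j, v\in X_{j,0}\}$ are not independent when $i,j$ lie in the same tribe (they share $S_\tau, X_\tau, U_\phi,\dots$); the paper's proof of \ref{prop:B1:4} handles this by conditioning level by level through tribes, then families, then individuals, applying Chernoff at each level with the union bound inside. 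Second, and more importantly, a large block of properties (\ref{prop:forB2:deg:1}--\ref{prop:forB2:deg:10}, \ref{prop:B2:wgt:1}--\ref{prop:B2:wgt:14}, \ref{prop:fromB2:5}--\ref{prop:fromB2:13}, \ref{prop:B2:missing:uv}, \ref{prop:B2:missing:Muv}) count objects filtered through the sets $J_{i,u}$, which are defined from the schematic $\mathcal{I}$ produced by Lemma~\ref{keylemma:absorption}. Since $\mathcal{I}$ itself is a function of the random vertex partitions, the indicator for ``$x\in Y_{j,0}$ for all $j\in J_{i,u}$'' couples coordinates in a way that has no product form. The paper resolves this by invoking the structural bounds \ref{prop:abs:regularityout}, \ref{prop:abs:boundedin}, and \ref{prop:abs:lowcodegree1}--\ref{prop:abs:lowcodegree3} of Lemma~\ref{keylemma:absorption} to show that any single coordinate of the partition affects the count through at most $O(1)$ (or at most $n^{1/3}$) of the pairs, which is exactly what makes McDiarmid applicable with an acceptable Lipschitz constant. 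Your proposal needs to make this dependence explicit and state which codegree properties of $\mathcal{I}$ are used; without them, the independence claim is false and a naive Chernoff bound does not apply.
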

\begin{proof}[Proof of Claim~\ref{clm:properties}] By Theorem~\ref{thm:Llinks}, with high probability, \ref{prop:links:totalnumber}--\ref{prop:links:throughedgeandvertex} hold with $\Phi_0=n^{30}$. That is, with high probability \ref{prop:B3:cod} holds. Supposing, then, that \ref{prop:B3:cod} holds, we choose our various random vertex, colour and edge partitions, and show that each other property from \ref{prop:B1:1}--\ref{prop:sizeofZi0} will hold with high probability. There is a large overlap in proving that each of the properties hold with high probability. For brevity, we will select only some key properties to prove this for explicitly, as follows, where we have selected a range of properties that cover the different approaches required.

\smallskip

\noindent\ref{prop:B1:1}: Let $i\in [n]$ and $u\in V(G)$, and let $Z=\{v\in X_{i,0}:c(uv)\in D_1\setminus (C_i\cup D_{1,i}),uv\in E^{\mathrm{abs}}_0\}$.
Let $V_u=|\{v\in V(G):c(uv)\in D_1\}|$, so that $|V_u|=|D_1|$, and thus, with probability $1-o(n^{-\omega(1)})$ by Lemma~\ref{chernoff}, $|D_1|=(1\pm \eps/2)n$.
Then, having chosen $D_1$, for each $v\in B_u$, $\P(v\in X_{i,0})=\beta_0p_X$, $\P(c(uv)\notin (C_i\cup D_{i,1}))=\beta_0p_\abs$, and $\P(uv\in E_0^\abs)=\beta_0p_\abs$, so that $\P(v\in Z)=\beta_0^3p_Xp_\abs^2$, and, hence $\E|Z|=(1\pm \eps/2)\beta_0^3p_Xp_\abs^2n$.
The events $\{v\in Z\}$, $v\in V_u$, are independent, so, as $1/n\llpoly \beta_0,p_X,p_\abs,p_1$, by Lemma~\ref{chernoff}, with probability $1-o(n^{-2})$ we have that $|Z|=(1\pm \eps)\beta_0^3p_Xp_1p_\abs^2n$. Taking a union bound then completes the proof that \ref{prop:B1:1} holds with high probability.

\smallskip

\noindent\ref{prop:B1:4}: Let $u,v\in V(G)$ be distinct and $c\in D_1$. Let $Z=\{i\in [n]:u\in T_i,v\in X_{i,0},c\notin C_i\cup  D_{1,i}\}$. A feature here is that the events $\{i\in Z\}$
 and $\{j\in Z\}$ are not independent if $i$ and $j$ are from the same tribe. Let $\mathcal{T}'\subset \mathcal{T}$ be the set of tribes for which $u\in S_\tau$ and $v\in X_{\tau}$, noting that, by Lemma~\ref{chernoff}, with probability $1-o(n^{-\omega(1)})$ we have $|\mathcal{T}'|=(1\pm \eps/4)p_Sp_Xp_{\tr}^{-1}$, where we have used that $p_{tr}\llpoly p_X,p_S,\eps$. For each $\tau\in \mathcal{T}'$, let $\mathcal{F}_\tau'$ be the set of $\phi\in \mathcal{F}_\tau$ for which $u\in U_i$.
By Lemma~\ref{chernoff} and a union bound, with probability $1-o(n^{-\omega(1)})$ we have $|\mathcal{F}'_\tau|=(1\pm \eps/4)(p_U/p_S)p_{\fa}^{-1}$ for each $\tau\in \mathcal{T}$.
 For each $\phi\in \mathcal{F}_\tau$, let $I_\phi'$ be the set of $i\in I_\phi$ such that $u\in S_i$, $v\in X_{i,0}$ and $c\notin C_i\cup D_{1,i}$. By Lemma~\ref{chernoff} and a union bound, with probability $1-o(n^{-\omega(1)})$, we have that $|I_\phi'|=(1\pm \eps/4)\cdot (p_T/p_U)\cdot \beta_0\cdot \beta_0p_\abs \cdot p_{tr}p_{\fa}n$ for each $\phi\in \mathcal{T}$. Putting this altogether, and taking a union bound, we then have, with high probability,
 \[
|Z|=\sum_{\tau\in \mathcal{T}'}\sum_{\phi\in \mathcal{F}_\tau'}I_\phi'=(1\pm \eps)\beta_0^2p_Tp_Xp_\abs n,
 \]
 for every $u,v \in V(G)$ and $c \in D_1$, as required.

\smallskip

\noindent\ref{cond:newforv1}: Let $\phi\in \mathcal{F}$, $i\in I_\phi$, $J\subset I_\phi\setminus \{i\}$ with $|J|=r$, and $u\in V(G)$. Let $V_u=|\{v\in V(G):c(uv)\in D_2\}|$, so that $|V_u|=|D_2|$, and thus, with probability $1-o(n^{-\omega(1)})$ by Lemma~\ref{chernoff}, $|V_u|=(1\pm \eps/2)p_2n$.
Let $Z_{i,J}=\{v\in V_u:v\in \cap_{j\in J\cup \{i\}}Y_j,uv\in E^\abs_0,c(uv)\in D_2\setminus (C_j\cup D_{2,j})\text{ for each }j\in J\cup \{i\}\}$. Note that, for each $v\in V_u$,
\[
\P(v\in Z_{i,J})=\beta_0^{r+1}p_Y\cdot \beta_0p_\abs\cdot (\beta_0 p_\abs)^{r+1},
\]
Thus, by Lemma~\ref{chernoff} and a union bound, with high probability $|Z_{i,J}|=(1\pm \eps)\beta_0^{2r+3}p_2p_Yp_\abs^{r+2} n$ for all such $i$ and $J$, whence we have that \ref{cond:newforv1} holds.

\smallskip

\noindent\ref{cond:newforcedge1}: This holds with high probability similarly to \ref{prop:forB2:deg:1}. Let us note that, for each $c\in C$ and each $\phi\in \mathcal{F}$ and distinct $i,j\in I_\phi$, for each edge $xy$ in $G$ with colour $c$, $\P(x\in Y_i\cap Y_j)=\P(x\in Y_\phi)=p_Y$, and hence $\P(x\in Y_{i,0}\cap Y_{j,0})=\beta_0^2p_Y$, so that $\P(x,y\in  Y_{i,0}\cap Y_{j,0},xy\in E_0^\abs)=\beta_0^5p_Y^2p_\abs$.

\smallskip

\noindent\ref{prop:forB2:deg:1}: Let $i\in [n]$ and $x\in V(G)$. With probability \wvhp we have $|D_2|=(1\pm \eps/6)p_2n$. Let $Z_{\phi,x}$ be the set of $u\in S_i\setminus R_i$ such that $c(ux)\in D_2$, noting that we have with probability \wvhp that $|Z_{\phi,x}|=(1\pm \eps/4)p_{S-R}\cdot |D_2|=(1\pm \eps/2)p_{S-R}p_2n$. Then, after choosing the remaining initial vertex sets, choose $\mathcal{I}$, and then choose the sets $D_{2,j}$, $j\in [n]$, the set of edges $E^\abs_0$, and the sets $Y_{j,0}$, $j\in [n]$.
 Suppose $x\in Y_{i}$ (for otherwise no corresponding bound in \ref{prop:forB2:deg:1} is claimed). Let $Z'_{\phi,x}$ be the set of $u\in Z_{\phi,x}$ for which $c(ux)\notin C_i\cup D_{2,i}$, $ux\in E^\abs_0$, and, for each $j\in J_{i,u}$, $x\in Y_{j,0}$ and $c(ux)\notin C_j\cup D_j$. For each $u\in Z_{\phi,x}$, we have $\P(u\in Z_{\phi,x}')=\beta_0p_\abs\cdot \beta_0p_\abs\cdot \beta_0^r\cdot (\beta_0p_\abs)^r=\beta_0^{2r+2}p_\abs^{r+2}$. The events $\{u\in Z_{\phi,x}'\}$, $u\in Z_{\phi,x}$ are not independent, but,
 by \ref{prop:abs:boundedin}, the dependence is low enough that an application of Lemma~\ref{lem:mcdiarmidchangingc} shows that $|Z_{\phi,x}'|=(1\pm \eps)\beta_0^{2r+2}p_\abs^{r+2}\cdot p_{S-R}p_2n$ with probability \wvhp. Thus, using a union bound, we have that with high probability \ref{prop:forB2:deg:1} holds, where we record this only for $x\in Y_{i,0}$ to match its application.

\smallskip

\noindent\ref{prop:forB2:deg:2}: This follows similarly, though more simply, to \ref{prop:forB2:deg:1}, and we include its proof to emphasise where the term $2r$ in the expression arises. Let $i\in [n]$ and $x\in V(G)$. With probability \wvhp we have that $|S_i\setminus R_i|=(1\pm \eps/3)2p_{S-R}n$ (as $V(G)$ has $2n$ vertices). We can suppose, again, that $x\in Y_i$.
Then, for each of the $|S_i\setminus R_i|\cdot r$ possibilities for $(u,j)$ with $u\in S_i\setminus R_i$ and $j\in J_{i,u}$,  $\P(x\in Y_{j,0})=\beta_0$. For each $j'\in [n]$, the choice of $Y_{j',0}\subset Y_i$ influences whether $x\in Y_{j,0}$ for at most $10^6$ of the pairs $(u,j)$ by \ref{prop:abs:boundedin}, and therefore, by Lemma~\ref{lem:mcdiarmidchangingc}, we have that with probability \wvhp there are, in total, $(1\pm \eps)2\beta_0p_{S-R}n$ choices for $u\in S_i\setminus R_i$ and $j\in J_{i,u}$ with $x\in Y_{j,0}$.

\smallskip

\noindent\ref{prop:forB2:deg:3}: This follows similarly to previous properties, using Lemma~\ref{lem:mcdiarmidchangingc}, but let us comment that that the edge $ux$ may have colour in $D_2$, and this colour may then never be counting among those colours $c$ counted in \ref{prop:forB2:deg:3}. Of course, one colour is comfortably lost in the error terms used.

\smallskip

\noindent\ref{prop:forB2:deg:4}: We discuss this property because, having fixed $j\in [n]$, we count certain choices for $i\in [n]$ with $j\in J_{i,u}$ for some $u$ (rather than, correspondingly, counting certain choices for $i\in [n]$ with $i\in J_{j,u}$ for some $u$, as for \ref{prop:forB2:deg:2}).
Let $j\in [n]$ and $x\in V(G)$, and choose the initial vertex partitions. Let $\phi\in \mathcal{F}$ be such that $i\in I_\phi$. By Lemma~\ref{chernoff}, with probability \wvhp, we have that, for each $i\in I_\phi$, there are $(1\pm \eps/4)2p_{S-R}n$ choices of $u\in S_i\setminus R_i$.
Choose $\mathcal{I}$, and note that then, by \ref{prop:abs:regularityout}, there are $r|S_{j}\setminus R_j|$ choices for $(v,i,u)$ such that $\{(j,v),(i,u)\}\in \mathcal{I}$, and thus $(1\pm \eps/4)2rp_{S-R}n$ choices for $(i,u)$ such that $j\in J_{i,u}$.
Then, choose the colour partition $D_1\cup D_2\cup D_3$. Using \ref{prop:abs:boundedin} and Lemma~\ref{lem:mcdiarmidchangingc}, with probability \wvhp there are $(1\pm \eps/2)2rp_{S-R}p_2n$ choices for $(i,u)$ such that $j\in J_{i,u}$ and $ux$ has colour in $D_2$. As before, we can assume that $x\in Y_j$.
Then, choosing the further random vertex sets, the edges in $E_0^\abs$ and the sets $C_{j'}$ and $D_{2,j'}$, $j'\in [n]$, for each such choice for $(i,u)$ such that $j\in J_{i,u}$ and $c(ux)\in D_2$, the probability that $x\in Y_{j',0}$ for each  $j'\in (J_{i,u}\cup \{i\})\setminus \{j\}$, $ux$ is in $E_0^\abs$ and $c(ux)\notin C_{j'}\cup D_{2,j'}$ for each $j'\in Y_{i,u}\cup \{i\}$ is $\beta_0^r\cdot \beta_0p_\abs \cdot (\beta_0p_\abs)^{r+1}$.
Using \ref{prop:abs:boundedin}, \ref{prop:abs:lowcodegree2}, and Lemma~\ref{lem:mcdiarmidchangingc}, with probability \wvhp  there are $(1\pm \eps)r\beta_0^{2r+2}p_{S-R}p_2p_\abs^{r+2} n$ choices of $i\in [n]$ and $u\in S_i\setminus R_i$ such that $j\in J_{i,u}$, $x\in Y_{j',0}$ for each  $j'\in J_{i,u}\cup \{i\}$, $ux$ is in $E_0^\abs$ and $c(ux)\in D_2\setminus (C_{j'}\cup D_{2,j'})$ for each $j'\in Y_{i,u}\cup \{i\}$.

\smallskip

\noindent \ref{prop:B2:missing:uv}, \ref{prop:B2:missing:Muv}: These properties follows similarly to our others, but note that due to their use we only record a loose lower bound on the vertices/edge counted, using that $\gamma\llpoly \beta,p_Y,p_2,p_\abs,\beta_0$.

\smallskip

\noindent\ref{prop:fromB2:5}--\ref{prop:fromB2:13}: These properties follows similarly to our others, but we comment on them as they are the first to use $p_{\mathcal{I}}=24p_{S-R}=rp_{S-R}$, $q_\col=\beta^2_1p_\abs^2p_3$ and $q_{\mathrm{vx}}=\beta_1^2p_Z$, where $\beta_0=1-\beta_0$.
We set $p_{\mathcal{I}}$ so that we will have $|\mathcal{I}|\approx p_{\mathcal{I}}n^2$.
As each $\mathcal{I}$ is a set of pairs, for each $i\in [n]$, $|\{(u,j,v):\{(i,u),(j,v)\}\in \mathcal{I}\}|\approx 2p_{\mathcal{I}}n$,
and, for each $X\in \{A,B\}$ we will have, for each $i\in [n]$, $|\{(u,j,v):\{(i,u),(j,v)\}\in \mathcal{I},u\in X\}|\approx 2p_{\mathcal{I}}n$. In \ref{prop:fromB2:5}--\ref{prop:fromB2:8}, we require some extra condition, where the expected number of triples $(u,j,v)$ satisfying this as well can be calculated as with our other properties, and shown to be likely concentrated around this expectation using Lemma~\ref{lem:mcdiarmidchangingc}.

As an example, we will do the more complicated property \ref{prop:fromB2:13} more carefully. Let then $xy\in E(G)$ with $c(xy)\in D_3$.
Let $\mathcal{T}'$ be the set of $\tau\in \mathcal{T}$ with $x,y\in Z_{\tau}$, so that, by Lemma~\ref{chernoff}, we have with probability \wvhp that $|\mathcal{T}'|=(1\pm \eps/4)p_Z^2p_{\tr}^{-1}$. Furthermore, with probability \wvhp we have that $|S_i\setminus R_i|=(1\pm \eps/4)2p_{S-R}n$ for each $i\in [n]$.
For each $\tau\in \mathcal{T}'$, $\phi\in \mathcal{F}_\tau$ and $\{(i,u),(j,v)\}\in \mathcal{I}_\tau$,
$\P(c(xy)\notin C_i\cup C_j\cup D_{3,i}\cup D_{3,j}\land x,y\in Z_{i,0}\cap Z_{j,0})=p_\abs^2\beta_0^2\cdot \beta_0^2$. Therefore, using \ref{prop:abs:regularityout}, \ref{prop:abs:boundedin} and Lemma~\ref{lem:mcdiarmidchangingc}, we have that, with probability \wvhp,
 \begin{align*}
|\{(i,u,j,v):\{(i,u),(j,v)\}\in \mathcal{I},c(xy)\in D_3\setminus (C_i\cup C_j&\cup D_{3,i}\cup D_{3,j}),x,y\in Z_{i,0}\cap Z_{j,0}\}|\\
&=(1\pm \eps)\cdot (1\pm \eps/4)p_Z^2p_{\tr}^{-1}\cdot 2p_{S-R}n\cdot p_\abs^2\beta_0^2\cdot \beta_0^2\\
&=(1\pm \eps)\cdot 2{p}_{\mathrm{vx}}^2\cdot {p}_{\mathrm{col}}\cdot p_{\mathcal{I}}\cdot n^2,
 \end{align*}
 as required.

\smallskip

\noindent\ref{prop:B3:1}--\ref{prop:B3:14}, \ref{prop:B3:missing}: As we have assumed that \ref{prop:links:totalnumber}--\ref{prop:links:throughedgeandvertex} hold with $\Phi_0=n^{30}$, these will follow relatively straightforwardly using Lemma~\ref{lem:mcdiarmidchangingc}. As they follow similarly from each other (and more examples are done in detail for \ref{prop:C:forvxbalancing}), we will pick only one example, \ref{prop:B3:11} to do here in detail.

\smallskip

\noindent\ref{prop:B3:11}: Let $\{(i,u),(j,v)\}\in \mathcalJJJ$ and $xy\in E(G)$ with $y\simAB u$.
Let $\mathcal{L}$ be the set of $(u,v,L)$-links in $G$ which have $xy$ as their second edge, so that, by \ref{prop:links:throughedge} $|\mathcal{L}|=(1\pm \eps/4)n^{28}$.
Let $\mathcal{L}'$ be the set of $S\in \mathcal{L}$ with $V(S)\setminus \{u,v,x,y\}\subset Z_i\cap Z_j$, $C(S)\setminus \{c(ux),c(xy)\}\subset D_3\setminus (C_i\cup C_j\cup D_{3,i}\cup D_{3,j})$ and $E(S)\setminus \{ux,xy\}\in E^\abs_0$. Then, using \ref{eqn:Phidefn},
\begin{align*}
\E|\mathcal{L}'|&=(1\pm \eps/4)n^{28}(\beta_0^2p_Z)^{59}(\beta_0^2p_\abs^2p_3)^{28}(\beta_0p_\abs)^{60}=(1\pm \eps/4)p_{\mathrm{vx}}^{59}p_{\col}^{28}p_{\mathrm{edge}}^{28}\\
&=(1\pm \eps/4)\Phi \cdot  p_{\mathrm{vx}}^{-2}\cdot p_{\mathrm{col}}^{-2}\cdot p_{\mathrm{edge}}^{-2}\cdot n^{-2}.
\end{align*}
Now, there are $2n-4$ vertices in $V(G)\setminus \{u,v,x,y\}$, and whether of not each of them is in $Z_i\cap Z_j$ affects $|\mathcal{L}'|$ by at most $O(n^{27})$ by \ref{prop:links:throughedgeandvertex}. There are $n-2$ colours not in $\{c(ux),c(xy)\}$, and whether each of these is in $Z_\tau$ or not affects $|\mathcal{L}'|$ by at most $O(n^{27})$ by \ref{prop:links:2edgesofdifferentcoloursanddisjoint} (summed over the edges of that colour in $G-\{u,v,x,y\}$) and \ref{prop:links:throughedgeandvertex} (summed over the neighbours of $\{u,v,x,y\}$ of that colour in $G$).
There are at most $n^2$ edges in $G-\{u,v,x,y\}$, and whether each of these is in $E^\abs_0$ or not affects $|\mathcal{L}'|$ by at most $O(n^{26})$ by \ref{prop:links:2edgesofdifferentcoloursanddisjoint}. There are at most $4n$ edges in $G$ with exactly one vertex in $\{u,v,x,y\}$, and whether or not each of these is in $E^\abs_0$ or not affects $|\mathcal{L}'|$ by at most $O(n^{27})$ by \ref{prop:links:throughedgeandvertex}.

Therefore, by Lemma~\ref{lem:mcdiarmidchangingc}, with $t=(\eps/4)\cdot\Phi \cdot  p_{\mathrm{vx}}^{-2}\cdot p_{\mathrm{col}}^{-2}\cdot p_{\mathrm{edge}}^{-2}\cdot n^{-2}$, we have
\begin{align*}
\P(|\mathcal{L}'|\neq (1\pm \eps)\Phi \cdot&  p_{\mathrm{vx}}^{-2}\cdot p_{\mathrm{col}}^{-2}\cdot p_{\mathrm{edge}}^{-2}\cdot n^{-2})\\
&
\leq 2\exp\left(-\frac{2t^2}{O(n\cdot (n^{27})^2+n\cdot (n^{27})^2+n^2\cdot (n^{26})^2+n\cdot (n^{27})^2}\right)\\
&\leq 2\exp\left(-\Omega(\Phi^2\cdot n^{-55})\right)=n^{-\omega(1)},
\end{align*}
where we have used that $1/n\llpoly p_{\mathrm{vx}},p_{\mathrm{col}},p_{\mathrm{edge}}$. Therefore, taking a union bound, we have that \ref{prop:B3:11} holds with high probability.
\end{proof}


\subsection{Part~\ref{partB1}: Matching into $X_i$}\label{sec:partB1}
In Part~\ref{partB1}, we will find matchings $\hat{M}_{i,1}$, $i\in [n]$, satisfying the following properties.
\stepcounter{propcounter}
\begin{enumerate}[label = {{\textbf{\Alph{propcounter}\arabic{enumi}}}}]
\item For each $i\in [n]$, $\hat{M}_{i,1}$ is a rainbow matching from $T_i$ into $X_i$ which covers $T_i$ and has colours in $D_1\setminus C_i$.\label{prop:B1final:2}
\item The matchings $\hat{M}_{i,1}$, $i\in [n]$, are edge-disjoint and their edges are all in $E^\abs$.\label{prop:B1final:3}
\item $G^\abs|_{D_1}-\hat{M}_{1,1}-\hat{M}_{2,1}-\ldots -\hat{M}_{n,1}$ is $(4\gamma n)$-bounded.\label{prop:B1final:1}
\end{enumerate}
For this, define a 4-partite 4-uniform hypergraph $\cH_1$ with vertex classes

\begin{equation}\label{eqn:H1vx}
\begin{array}{ll}
\textbf{i)}\;\; \mathcal{V}_T:=\bigcup_{i\in [n]}(\{i\}\times T_i)
&\;\;\textbf{ii)}\;\;  \mathcal{V}_X:=\bigcup_{i\in [n]}(\{i\}\times X_{i,0})\\
\textbf{iii)}\;\; \; \mathcal{C}_1:=\bigcup_{i\in [n]}(\{i\}\times (D_1\setminus (C_i\cup D_{1,i}))
&\;\;\textbf{iv)}\;\; \mathcal{E}_1:=E(G|_{D_1})\cap E^{\mathrm{abs}}_0
\end{array}
\end{equation}
where, for each $i\in [n]$, and each edge $uv\in E^{\mathrm{abs}}_0$ with colour $c\in D_1\setminus (C_i\cup D_{1,i})$, with $u\in T_i$ and $v\in X_{i,0}$, we add the edge
\begin{equation}\label{eqn:edgestoH1}
\{(i,u),(i,v),(i,c),uv\}
\end{equation}
to $\mathcal{H}_1$.
Let
\[
\delta_1=\beta_0^3p_Xp_1p_\abs^2 n.
\]
We will now show that $\mathcal{H}_1$ is almost $\delta_1$-regular, as follows.

\begin{claim}\label{clm:H1almostregular}
For each $v\in V(\mathcal{H}_1)$, we have $d_{\cH_1}(v)=(1\pm \eps)\delta_1$.
\end{claim}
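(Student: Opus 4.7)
The plan is simply to unpack the degree of each of the four types of vertex in $\mathcal{H}_1$ and match each count against one of the four enumerated properties \ref{prop:B1:1}--\ref{prop:B1:4} from Claim~\ref{clm:properties}. Because $\mathcal{H}_1$ is $4$-partite with classes $\mathcal{V}_T, \mathcal{V}_X, \mathcal{C}_1, \mathcal{E}_1$, each hyperedge \eqref{eqn:edgestoH1} contributes to exactly one vertex in each class, so the four cases can be handled independently.

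First I would fix $(i,u)\in \mathcal{V}_T$. By \eqref{eqn:edgestoH1}, edges of $\mathcal{H}_1$ containing $(i,u)$ are in bijection with vertices $v\in X_{i,0}$ such that $uv\in E^{\mathrm{abs}}_0$ and $c(uv)\in D_1\setminus(C_i\cup D_{1,i})$. Applying \ref{prop:B1:1} directly gives $d_{\mathcal{H}_1}((i,u))=(1\pm\eps)\beta_0^3 p_X p_1 p_{\mathrm{abs}}^2 n = (1\pm \eps)\delta_1$. Next, for $(i,v)\in\mathcal{V}_X$, edges containing $(i,v)$ are in bijection with $u\in T_i$ satisfying the same colour/edge conditions, so \ref{prop:B1:2} gives $(1\pm\eps)\beta_0^2 p_T p_1 p_{\mathrm{abs}}^2 n$. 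To match $\delta_1$, I use $p_X=(1+\beta)p_T$ and $\beta_0=1/(1+\beta)$, which yields $\beta_0 p_X=p_T$ and so $\beta_0^2 p_T p_1 p_{\mathrm{abs}}^2=\beta_0^3 p_X p_1 p_{\mathrm{abs}}^2$, as required.

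For $(i,c)\in\mathcal{C}_1$, the edges containing $(i,c)$ correspond to edges $uv\in E^{\mathrm{abs}}_0$ of colour $c$ with $u\in T_i$, $v\in X_{i,0}$. Property \ref{prop:B1:3} gives $(1\pm\eps)\cdot 2\beta_0^2 p_{\mathrm{abs}} p_T p_X n$. Finally, for $uv\in\mathcal{E}_1$, because $T_i$ and $X_{i,0}$ are disjoint (both lie in the disjoint parts $S_i$ and $X_i$ of the vertex partition), the degree splits as
\[
|\{i:u\in T_i, v\in X_{i,0}, c(uv)\notin C_i\cup D_{1,i}\}| + |\{i:v\in T_i, u\in X_{i,0}, c(uv)\notin C_i\cup D_{1,i}\}|,
\]
and each summand is $(1\pm\eps)\beta_0^2 p_T p_X p_{\mathrm{abs}} n$ by \ref{prop:B1:4}, so the total is $(1\pm\eps)\cdot 2\beta_0^2 p_T p_X p_{\mathrm{abs}} n$.

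To conclude, I need to verify the single identity $2 p_T = \beta_0 p_1 p_{\mathrm{abs}}$, which makes both of the last two expressions equal to $\delta_1$. From the definitions in Section~\ref{sec:variables}, $p_1=2(1+\beta)p_R/((1+\alpha)(1-p_{\mathrm{bal}}))$, $p_R=(1+\alpha)p_T$, and $p_{\mathrm{abs}}=1-p_{\mathrm{bal}}$, so $p_1 p_{\mathrm{abs}}=2(1+\beta)p_T$; multiplying by $\beta_0=1/(1+\beta)$ gives $2p_T$, as needed. There is no real obstacle here — the claim is essentially a bookkeeping check that the hypergraph $\mathcal{H}_1$ has been constructed with parameters compatible with the properties \ref{prop:B1:1}--\ref{prop:B1:4} that hold whp, together with the design of the variables in Section~\ref{sec:variables}.
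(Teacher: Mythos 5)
Your proof is correct and follows essentially the same route as the paper's: for each of the four vertex classes, identify the degree with the count bounded in the corresponding property \ref{prop:B1:1}--\ref{prop:B1:4}, then verify the algebraic identities ($p_T=\beta_0 p_X$ and $2p_T=\beta_0 p_1 p_{\mathrm{abs}}$, equivalently $2p_X=p_{\mathrm{abs}}p_1$) that equate each expression with $\delta_1$. The splitting of the degree of $uv\in\mathcal{E}_1$ into two disjoint cases (valid since $T_i\cap X_{i,0}=\emptyset$) is exactly what the paper does as well.
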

\begin{proof}[Proof of Claim~\ref{clm:H1almostregular}]
We check this for the vertices in each of the 4 classes in the order at \eqref{eqn:H1vx}.

\noindent\textbf{i)} Let $(i,u)\in \mathcal{V}_T$, so that $i\in [n]$ and $u\in T_i$. Then,
\[
d_{\mathcal{H}_1}((i,u))=|\{v\in X_{i,0}:c(uv)\in D_1\setminus (C_i\cup D_{1,i}),uv\in E^\abs_0\}|\overset{\ref{prop:B1:1}}{=} (1\pm\eps )\delta_1.
\]
\noindent\textbf{ii)} Let $(i,v)\in \mathcal{V}_X$, so that $i\in [n]$ and $v\in X_{i,0}$. Then, as $p_T=\beta_0p_X$,
\[
d_{\mathcal{H}_1}((i,v))=|\{u\in T_i:c(uv)\in D_1\setminus (C_i\cup D_{1,i}),uv\in E^\abs_0\}|\overset{\ref{prop:B1:2}}{=} (1\pm\eps )\delta_1.
\]
\noindent\textbf{iii)}
Let $(i,c)\in \mathcal{C}_1$, so that $i\in [n]$ and $c\in D_1\setminus (C_i\cup D_{1,i})$. Then, as $p_T=\beta_0p_X$ and $2p_X=p_\abs p_1$,
\[
d_{\mathcal{H}_1}((i,c))=|\{uv\in E_0^\abs:c(uv)=c,u\in T_i,v\in X_{i,0}\}|\overset{\ref{prop:B1:3}}{=} (1\pm\eps )\delta_1
\]
\noindent\textbf{iv)} Let $uv\in \mathcal{E}_1$, so that $c(uv)\in D_1$ and $uv\in E^\abs_0$. Then, as $p_T=\beta_0p_X$ and $2p_X=p_\abs p_1$,\renewcommand{\qedsymbol}{$\boxdot$}
\begin{align*}
d_{\mathcal{H}_1}(uv)&=|\{i\in [n]:u\in T_i,v\in X_{i,0},c(uv)\in D_1\setminus (C_i\cup D_{1,i})\}|\\
&\hspace{2cm}+|\{i\in [n]:u\in X_{i,0},v\in T_i,c(uv)\in D_1\setminus (C_i\cup D_{1,i})\}|\overset{\ref{prop:B1:4}}{=}(1\pm\eps)\delta_1.\qedhere
\end{align*}
\end{proof}
\renewcommand{\qedsymbol}{$\square$}

Moreover, $\mathcal{H}_1$ has codegrees at most 1, as follows.

\begin{claim}\label{clm:H1lowcod}
$\Delta^c(\cH_1)\leq 1$.
\end{claim}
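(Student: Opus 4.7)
The plan is to exploit the 4-partite structure of $\mathcal{H}_1$ (with classes $\mathcal{V}_T$, $\mathcal{V}_X$, $\mathcal{C}_1$, $\mathcal{E}_1$) together with the rigid form of its hyperedges. Since every hyperedge meets each class in exactly one vertex, any pair of vertices lying in the same class has codegree zero, so I only need to handle the $\binom{4}{2}=6$ cross-class cases.

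The key observation is that, from \eqref{eqn:edgestoH1}, every hyperedge of $\mathcal{H}_1$ is completely determined by the pair $(i,uv)$, where $i \in [n]$ and $uv$ is an edge of $G$ satisfying $u \in T_i$, $v \in X_{i,0}$, $c(uv) \in D_1 \setminus (C_i \cup D_{1,i})$, and $uv \in E_0^\abs$. It therefore suffices to verify that any cross-class pair of vertices of $\mathcal{H}_1$ determines both $i$ and $uv$.

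When one of the two vertices lies in $\mathcal{E}_1$, it \emph{is} the edge $uv$ itself, so $uv$ is immediately determined, and the other vertex --- necessarily of the form $(i,\cdot)$ for some $i \in [n]$ --- supplies the index $i$. This handles three of the six cases. In the remaining three cases both vertices are of the form $(i,\cdot)$, one each from two of $\mathcal{V}_T$, $\mathcal{V}_X$, $\mathcal{C}_1$; their first coordinates must then agree (otherwise no hyperedge contains both), which pins down $i$, after which two of $u,v,c$ are given. Since $G$ is properly coloured, the colour-$c$ neighbour of $u$ (or of $v$) is unique, so the remaining one of $u,v,c$ --- and hence the edge $uv$ --- is determined, giving at most one containing hyperedge.

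No real obstacle arises here: the claim is a bookkeeping exercise combining the 4-partite definition of $\mathcal{H}_1$ with the fact that a proper edge-colouring of $G$ forces the colour-$c$ neighbour of any vertex to be unique.
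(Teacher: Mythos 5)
Your proof is correct and takes essentially the same approach as the paper: both arguments show that any two vertices appearing together in a hyperedge determine $i$ and at least two of $u,v,c$, and then invoke properness of the colouring to recover the third, hence the whole hyperedge. Your explicit case split by class is just a slightly more spelled-out presentation of the paper's one-line observation.
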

\begin{proof}[Proof of Claim~\ref{clm:H1lowcod}] This follows as any edge $e=\{(i,u),(i,v),(i,c),uv\}\in E(\cH_1)$ is uniquely determined by any two of its vertices. Indeed, any two vertices from $e$ determines $i$ and at least 2 of $u$, $v$ and $c$.  Knowing $u$ and $v$ determines $c=c(uv)$ while knowing $c$ and $u$ determines $v$ (the neighbour of $u$ along a colour-$c$ edge), and, similarly, knowing $c$ and $v$ determines $u$. Thus, any two vertices from $e$ determines all of $i$, $u$, $v$, and $c$, and hence $e$.
\claimproofend

We now set up the weight functions we will use in the application of Theorem~\ref{thm:nibble} to $\mathcal{H}_1$.
For each $i\in [n]$, $v\in V(G)$, and $c\in D_1$, let $w_i$, $w_v^T$, $w_v^X$ and $w_c$ be the indicator function for whether an edge in $E(\cH_1)$ uses $i$, $v$ in the vertex from $\mathcal{V}_T$, $v$ in the vertex from $\mathcal{V}_X$, and $c$, respectively, i.e., for each $e=\{(i',u),(i',v'),(i',c'),uv'\}\in E(\cH_1)$ with $u\in T_i$ and $v'\in X_{i,0}$, we set
\[
w_i(e)=\mathbf{1}_{\{i'=i\}},\;\;\; w_v^T(e)=\mathbf{1}_{\{v=u\}},\;\;\; w_v^X(e)=\mathbf{1}_{\{v=v'\}}\;\;\text{and}\;\; w_c(e)=\mathbf{1}_{\{c'=c\}}.
\]
Furthermore, for each $v\in V(G)$, $c\in D_1$, and $\phi\in \mathcal{F}$, for each $e=\{(i',u),(i',v'),(i',c'),uv'\}\in E(\cH_1)$, let
\[
w_{v,\phi}^X(e)=\mathbf{1}_{\{v'=v,i'\in I_\phi\}}\;\;\;\text{ and }\;\;\; w_{c,\phi}(e)=\mathbf{1}_{\{c'=c,i'\in I_\phi\}}.
\]
Let $\mathcal{W}_1=\{w_i:i\in [n]\}\cup \{w_v^T:v\in V(G)\}\cup \{w_v^X:v\in V(G)\}\cup \{w_c:c\in D_1\}\cup \{w_{v,\phi}^X:v\in V(G),\phi\in \mathcal{F}\}\cup \{w_{c,\phi}:c\in C(G),\phi\in \mathcal{F}\}$.

For each $i\in [n]$, as each  $e\in E(\mathcal{H}_1)$ with $w_i(e)=1$ contains exactly one vertex $(i,u)$ with $u\in T_i$, we have, using Claim~\ref{clm:H1almostregular}, that
\begin{equation}\label{eqn:W1:itotalweight}
w_i(E(\mathcal{H}_1))=\sum_{u\in T_i}d_{\mathcal{H}_1}((i,u))=(1\pm \eps)\delta_1\cdot |T_i|.
\end{equation}
Similarly, for each $c\in D_1$, we have
\begin{equation}\label{eqn:W1:ctotalweight}
w_c(E(\mathcal{H}_1))=\sum_{uv\in \mathcal{E}_1:c(uv)=c}d_{\mathcal{H}_1}(uv)=(1\pm \eps)\delta_1\cdot |\{uv\in E_0^\abs:c(uv)=c\}|.
\end{equation}
Furthermore, for each $c\in D_1$ and $\phi\in \mathcal{F}$, we have
\begin{equation}\label{eqn:W1:cphitotalweight}
w_{c,\phi}(E(\mathcal{H}_1))=\sum_{i\in I_\phi:(i,c)\in V(\mathcal{H}_1)}d_{\mathcal{H}_1}((i,c))=(1\pm \eps)\delta_1\cdot |\{i\in I_\phi:c\notin C_i\cup D_{i,1}\}|.
\end{equation}
For each $v\in V(G)$, we have
\begin{equation}\label{eqn:W1:vtotalweight0}
w^T_v(E(\mathcal{H}_1))=\sum_{i\in [n]:v\in T_i}d_{\mathcal{H}_1}((i,u))=(1\pm \eps)\delta_1\cdot |\{i\in [n]:v\in T_i\}|,
\end{equation}
\begin{equation}\label{eqn:W1:vtotalweight}
w_v^X(E(\mathcal{H}_1))=\sum_{i\in [n]:v\in X_{i,0}}d_{\mathcal{H}_1}((i,u))=(1\pm \eps)\delta_1\cdot |\{i\in [n]:v\in X_{i,0}\}|,
\end{equation}
and, for each $\phi\in \mathcal{F}$,
\begin{equation}\label{eqn:W1:vphitotalweight}
w_{v,\phi}^X(E(\mathcal{H}_1))=\sum_{i\in I_\phi:v\in X_{i,0}}d_{\mathcal{H}_1}((i,u))=(1\pm \eps)\delta_1\cdot |\{i\in I_\phi:v\in X_{i,0}\}|.
\end{equation}
In particular, \eqref{eqn:W1:itotalweight}--\eqref{eqn:W1:vphitotalweight} imply that, for each $w\in \mathcal{W}_1$, $w(E(\mathcal{H}_1))\geq n^{3/2}$.
Therefore, by Claims~\ref{clm:H1almostregular},~\ref{clm:H1lowcod} and Theorem~\ref{thm:nibble}, we can find a matching $\cM_1$ in $\mathcal{H}_1$ such that, for each $w\in \mathcal{W}_1$,
\begin{equation}\label{eqn:M1:balanced}
w(\mathcal{M}_1)=(1\pm \gamma)\cdot\delta_1^{-1} w(E(\mathcal{H}_1)).
\end{equation}
For each $i\in [n]$, setting $\hat{M}_{i,1,0}=\{uv:\{(i,u),(i,v),(i,c(uv)),uv\}\in \mathcal{M}_1\}$, we have the following properties.

\begin{claim}\label{clm:M1props}
\begin{enumerate}[label = {{\textbf{\alph{enumi})}}}]\item The matchings $\hat{M}_{i,1,0}$, $i\in [n]$, are edge-disjoint.\label{prop:hatM:b}
\item For each $i\in [n]$, $\hat{M}_{i,1,0}$ is a rainbow matching from $T_i$ to $X_{i,0}$ with colours in $D_1\setminus (C_i\cup D_{1,i})$.\label{prop:hatM:a}
\item For each $i\in [n]$, $|T_i\setminus V(\hat{M}_{i,1,0})|\leq 2\gamma |T_i|$ and $|X_{i,0}\setminus V(\hat{M}_{i,1,0})|\leq \gamma n$.\label{prop:hatM:c}
\item For each $v\in V(G)$, $|\{i\in [n]:v\in T_i\setminus V(\hat{M}_{i,1,0})\}|\leq 2\gamma n$.\label{prop:hatM:cb}
\item For each $v\in V(G)$ and $\phi\in \mathcal{F}$, $|\{i\in I_\phi:v\in X_{i,0}\setminus V(\hat{M}_{i,1,0}\})|\leq 2\gamma p_\tr p_\fa n$.\label{prop:hatM:cbb}
\item For each $c\in D_1$ and $\phi\in \mathcal{F}$, $|\{i\in I_\phi:c\notin C(\hat{M}_{i,1,0})\cup C_i\cup D_{1,i}\}|\leq 2\gamma p_\tr p_\fa n$.\label{prop:hatM:cbbb}
\item Setting $G^\abs_0$ to be the graph with vertex set $V(G)$ and edge set $E^\abs_0$, we have $G^\abs_0|_{D_1}-\hat{M}_{1,1,0}-\hat{M}_{2,1,0}-\ldots -\hat{M}_{n,1,0}$ is $(3\gamma n)$-bounded.\label{prop:hatM:d}
\end{enumerate}
\end{claim}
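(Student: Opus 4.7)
The plan is to deduce Claim~\ref{clm:M1props} directly from the structure of $\mathcal{H}_1$ together with the pseudorandomness conclusion \eqref{eqn:M1:balanced} of Theorem~\ref{thm:nibble} applied with the weight set $\mathcal{W}_1$. The statements \ref{prop:hatM:b} and \ref{prop:hatM:a} are purely structural and follow from the design of $\mathcal{H}_1$: by the form of the edges at \eqref{eqn:edgestoH1}, two edges of $\mathcal{M}_1$ coming from the same index $i$ must meet different vertices of $\mathcal{V}_T$ and $\mathcal{V}_X$, so $\hat{M}_{i,1,0}$ is a matching from $T_i$ into $X_{i,0}$; similarly they must meet different vertices of $\mathcal{C}_1$, so $\hat{M}_{i,1,0}$ is rainbow with colours in $D_1\setminus(C_i\cup D_{1,i})$. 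Edge-disjointness across different $i$ is immediate because each $uv\in\mathcal{E}_1$ appears in at most one edge of $\mathcal{M}_1$.

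For the remaining quantitative statements, the idea is that each weight function $w\in\mathcal{W}_1$ counts precisely the quantity we wish to bound. First I would verify \ref{prop:hatM:c}: the value $w_i(\mathcal{M}_1)$ equals $|\hat{M}_{i,1,0}|$, so combining \eqref{eqn:M1:balanced} with \eqref{eqn:W1:itotalweight} gives $|\hat{M}_{i,1,0}|=(1\pm 2\gamma)|T_i|$, from which the bound on $|T_i\setminus V(\hat{M}_{i,1,0})|$ is immediate; the bound on $|X_{i,0}\setminus V(\hat{M}_{i,1,0})|$ follows by subtracting this from $|X_{i,0}|=(1\pm\eps)\beta_0 p_X n$ (from \ref{prop:B1:lastfew:2}) and using $\beta_0 p_X=p_T$ together with $p_T\ll 1$. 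For \ref{prop:hatM:cb}, \ref{prop:hatM:cbb}, and \ref{prop:hatM:cbbb} I would apply the same template: $w^T_v(\mathcal{M}_1)$, $w^X_{v,\phi}(\mathcal{M}_1)$, and $w_{c,\phi}(\mathcal{M}_1)$ count the three quantities $|\{i:v\in T_i\cap V(\hat{M}_{i,1,0})\}|$, $|\{i\in I_\phi:v\in X_{i,0}\cap V(\hat{M}_{i,1,0})\}|$, and $|\{i\in I_\phi:c\in C(\hat{M}_{i,1,0})\}|$, respectively, and comparing to \eqref{eqn:W1:vtotalweight0}, \eqref{eqn:W1:vphitotalweight} and \eqref{eqn:W1:cphitotalweight} (using \ref{prop:B1:bounded:1}, the Chernoff-type bound on $|\{i\in I_\phi:v\in X_{i,0}\}|$, and \ref{prop:B1:lastfew:1}) produces shortfalls of at most $2\gamma$ times the relevant totals, which sit inside the stated bounds.

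Finally, for \ref{prop:hatM:d} I would split into the vertex- and colour-boundedness conditions. For a vertex $v$, the total number of edges of $\bigcup_i\hat{M}_{i,1,0}$ incident to $v$ equals $w^T_v(\mathcal{M}_1)+w^X_v(\mathcal{M}_1)$ since each incident edge contributes to $v\in V(\hat{M}_{i,1,0})$ via exactly one of $T_i$ or $X_{i,0}$. Using \eqref{eqn:M1:balanced} together with \eqref{eqn:W1:vtotalweight0}, \eqref{eqn:W1:vtotalweight} and \ref{prop:B1:bounded:1}, this sum equals $(1\pm 2\gamma)(1\pm\eps)\cdot 2p_Tn$, while $d_{G^{\mathrm{abs}}_0|_{D_1}}(v)=(1\pm\eps)\beta_0 p_1 p_{\mathrm{abs}}n=(1\pm\eps)\cdot 2p_Tn$ by \ref{prop:B1:bounded:2} and the identity $\beta_0 p_1 p_{\mathrm{abs}}=2p_T$ (which follows directly from the definitions). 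Subtracting, the surplus degree is $O((\gamma+\eps)p_Tn)\leq 3\gamma n$. For a colour $c$, I would use $w_c(\mathcal{M}_1)$ together with \eqref{eqn:W1:ctotalweight} and \ref{prop:B1:lastfew:1} in exactly the same way.

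No single step is genuinely difficult: the only place requiring care is the arithmetic matching between the "ideal" quantities on each vertex class of $\mathcal{H}_1$ and the external counts (notably checking the identity $\beta_0 p_1 p_{\mathrm{abs}}=2p_T$ so that the total edge count at each vertex of $G^{\mathrm{abs}}_0|_{D_1}$ really does equal the combined $w^T_v+w^X_v$ count up to the $(1\pm 2\gamma)$ factor). Since $\eps\llpoly\gamma$ by \eqref{eq:hierarchy}, the various $(1\pm\eps)$ and $(1\pm\gamma)$ factors combine to give the stated $2\gamma$ and $3\gamma$ constants with room to spare.
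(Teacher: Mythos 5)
Your proposal is correct and follows essentially the same line of argument as the paper: parts (a) and (b) from the structure of the $\mathcal{H}_1$-edges at \eqref{eqn:edgestoH1} and the matching property of $\mathcal{M}_1$, and parts (c)--(g) by comparing $w(\mathcal{M}_1)$ to $w(E(\mathcal{H}_1))/\delta_1$ via \eqref{eqn:M1:balanced} and the total-weight computations \eqref{eqn:W1:itotalweight}--\eqref{eqn:W1:vphitotalweight}, together with the identities $\beta_0 p_X=p_T$ and $\beta_0 p_1 p_\abs=2p_T$. The identification $w_i(\mathcal{M}_1)=|\hat{M}_{i,1,0}|=|T_i\cap V(\hat{M}_{i,1,0})|$ and the use of \ref{prop:B1:bounded:1}, \ref{prop:B1:bounded:2}, \ref{prop:B1:lastfew:1}, \ref{prop:B1:lastfew:2} match the paper's proof exactly.
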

\begin{proof} \ref{prop:hatM:b}: This follows as each edge $uv\in \mathcal{E}_1$ appears at most once in the edges of $\mathcal{M}_1$.

\smallskip

\noindent\ref{prop:hatM:a}: For each $i\in [n]$, that $\hat{M}_{i,1,0}$ is rainbow follows from the fact that the vertices $\{i\}\times \{c\}$, $c\in D_1\setminus (C_i\cup D_{i,1})$, appear at most once in the edges of the matching $\mathcal{M}_1$, while the other properties follow from the choice of the edges of $\mathcal{H}_1$ at \eqref{eqn:edgestoH1}.

\smallskip

\noindent\ref{prop:hatM:c}: For each $i\in [n]$,
\[
|T_i\cap V(\hat{M}_{i,1,0})|=w_i(\mathcal{M}_1)\overset{\eqref{eqn:M1:balanced}}{=}(1\pm \gamma)\cdot\delta_1^{-1} w_i(E(\mathcal{H}_1))\overset{\eqref{eqn:W1:itotalweight}}{=}(1\pm 2\gamma)|T_i|,
\]
so that $|T_i\setminus \hat{M}_{i,1,0}|\leq 2\gamma |T_i|\leq \gamma n$. Then,
\[
|X_{i,0}\setminus V(\hat{M}_{i,1,0})|=|T_i\setminus \hat{M}_{i,1,0}|+|X_{i,0}|-|T_i|\overset{\ref{prop:B1:lastfew:2}}{\leq} \gamma n+(1+\eps)\beta_0p_Xn-(1-\eps)p_Tn\leq 2\gamma n,
\]
where we have used \ref{prop:B1:lastfew:2} and that $p_X=(1+\beta)p_T=p_T/\beta_0$.

\smallskip

\noindent\ref{prop:hatM:cb}, \ref{prop:hatM:cbb}: For each $v\in V(G)$,
\begin{align*}
|\{i\in [n]:v\in T_i\setminus \hat{M}_{i,1,0}\}|&=|\{i\in [n]:v\in T_i\}|-w_v^T(\mathcal{M}_1)\\
&\overset{\eqref{eqn:M1:balanced}}{\leq}|\{i\in [n]:v\in T_i\}|-(1-\gamma)\cdot\delta_1^{-1} w_v^T(E(\mathcal{H}_1))\\
&\overset{\eqref{eqn:W1:vtotalweight0}}{\leq}2\gamma|\{i\in [n]:v\in T_i\}|\leq 2\gamma n,
\end{align*}
and therefore \ref{prop:hatM:cb} holds.
Similarly, using instead $w_{v,\phi}^X$, \eqref{eqn:M1:balanced} and \eqref{eqn:W1:vphitotalweight}, we have that \ref{prop:hatM:cbb} holds.

\smallskip

\noindent\ref{prop:hatM:cbbb}: For each $c\in D_1$ and $\phi\in \mathcal{F}$,
\begin{align*}
|\{i\in I_\phi:c\notin C(\hat{M}_{i,1,0})\cup C_i\cup D_{1,i}\}|&=|\{i\in I_\phi:c\in D_1\setminus (C_i\cup D_{1,i})\}|-w_{c,\phi}(\mathcal{M}_1)\\
&\overset{\ref{prop:B1:lastfew:1},\eqref{eqn:M1:balanced}}{\leq}(1+ \eps)\beta_0 p_\abs p_\tr p_\fa n-(1-\gamma)\cdot\delta_1^{-1} w_{c,\phi}(E(\mathcal{H}_1))\\
&\overset{\eqref{eqn:W1:cphitotalweight}}{\leq}2p_\tr p_\fa \gamma n,
\end{align*}
and therefore \ref{prop:hatM:cbbb} holds.

\smallskip

\noindent\ref{prop:hatM:d}: For each $c\in D_1$,
\begin{align*}
|\{e\in E(\hat{M}_{1,1,0}\cup \ldots \cup \hat{M}_{n,1,0}):c(e)=e\}|&=w_c(\mathcal{M}_1)\overset{\eqref{eqn:M1:balanced}}{=}(1\pm \gamma)\cdot\delta_1^{-1} w_c(E(\mathcal{H}_1))
\\
&\overset{\eqref{eqn:W1:ctotalweight}}{=}(1\pm 2\gamma)\cdot |\{uv\in E_0^\abs:c(uv)=c\}|.
\end{align*}
Thus, the number of edges of colour $c$ in $G^\abs_0|_{D_1}-\hat{M}_{1,1,0}-\hat{M}_{2,1,0}-\ldots -\hat{M}_{n,1,0}$  is at most $2\gamma\cdot |\{uv\in E_0^\abs:c(uv)=c\}|\leq 2\gamma n$.

Furthermore, for each $v\in V(G)$, we have that
\begin{align}
|\{e\in E(\hat{M}_{1,1,0}\cup \ldots \cup \hat{M}_{n,1,0}):v\in V(e)\}|&=w_v^T(\mathcal{M}_1)+w_v^X(\mathcal{M}_1)\\
&\overset{\eqref{eqn:M1:balanced}}{=}(1\pm \gamma)\cdot\delta_1^{-1} (w_v^T(E(\mathcal{H}_1)v+w_v^X(E(\mathcal{H}_1))\nonumber
\\
&\overset{\eqref{eqn:W1:vtotalweight0},\eqref{eqn:W1:vtotalweight}}{=}(1\pm 2\gamma)|\{i\in [n]:v\in T_i\text{ or }v\in X_{i,0}\}|.\label{eqn:M1:coversvertices}
\end{align}
Thus, the degree of $v$ in $G^\abs|_{D_1}-\hat{M}_{1,1,0}-\hat{M}_{2,1,0}-\ldots -\hat{M}_{n,1,0}$  is at most
\begin{align*}
|\{uv\in E_0^\abs:&c(uv)\in D_1\}|-|\{e\in E(\hat{M}_{1,1,0}\cup \ldots \cup \hat{M}_{n,1,0}):v\in V(e)\}|\\
&\overset{\ref{prop:B1:bounded:1},\eqref{eqn:M1:coversvertices}}{\leq} (1+ \eps)\beta_0 p_1p_\abs n-
(1- 2\gamma)|\{i\in [n]:v\in T_i\text{ or }v\in X_{i,0}\}|\\
&\overset{\ref{prop:B1:bounded:2}}{\leq} (1+ \eps)\beta_0 p_1p_\abs n-
(1- 2\gamma)(1- \eps)2p_Tn\\
&\leq 3\gamma \beta_0 p_1p_\abs n\leq 2\gamma n,
\end{align*}
where we have used that $2p_T=\beta_0 p_1p_\abs$.
Therefore, \ref{prop:hatM:d} holds.
\claimproofend

We will now find matchings $\hat{M}_{i,1,1}$, $i\in [n]$, to cover the uncovered vertices in $T_i\setminus \hat{M}_{i,1,0}$, using from Claim~\ref{clm:M1props} \ref{prop:hatM:c}, that these are small sets.

For this, take matchings $\hat{M}_{i,1,1}$, $i\in [n]$, which maximise $\sum_{i\in [n]}|\hat{M}_{i,1,1}|$ subject to the following properties.
\begin{enumerate}[label = \textbf{\arabic{enumi})}]
\item For each $i\in [n]$, $\hat{M}_{i,1,1}$ is a rainbow matching between $T_i\setminus V(\hat{M}_{i,1,0})$ and $X_{i,1}$ of edges in $E_1^{\abs}$ with colour in $D_{1,i}\setminus C_i$.\label{prop:forB1follow1}
\item For each $i\in [n]$, and $xy\in \hat{M}_{i,1,1}$ with $x\in T_i$, if $X\in \{A,B\}$ is such that $x\in X$, then $xy\in E_{1,X}^\abs$.\label{prop:interim}
\item The matchings  $\hat{M}_{i,1,1}$, $i\in [n]$, are edge-disjoint from each other.\label{prop:forB1follow2}
\end{enumerate}

We will show that the matchings  $\hat{M}_{i,1,1}$, $i\in [n]$, have the properties we need, as follows.

\begin{claim}\label{clm:mi01big} For each $i\in [n]$,
$|\hat{M}_{i,1,1}|=|T_i|-|\hat{M}_{i,1,0}|$.
\end{claim}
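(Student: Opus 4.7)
The plan is to argue by contradiction using the maximality of the collection $(\hat{M}_{j,1,1})_{j\in[n]}$. Suppose some $i\in[n]$ has $|\hat{M}_{i,1,1}|<|T_i|-|\hat{M}_{i,1,0}|$; then there is a vertex $x\in T_i\setminus V(\hat{M}_{i,1,0})$ not covered by $\hat{M}_{i,1,1}$, and by symmetry we may assume $x\in A$. It suffices to produce a vertex $v\in X_{i,1}$ such that $xv$ may be added to $\hat{M}_{i,1,1}$ while preserving \ref{prop:forB1follow1}--\ref{prop:forB1follow2}, since this strictly increases $\sum_j|\hat{M}_{j,1,1}|$ and contradicts the choice of the collection.

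The candidate set is $V_x := \{v\in X_{i,1} : c(xv)\in D_{1,i}\setminus C_i \text{ and } xv\in E^{\abs}_{1,A}\}$, for which \ref{prop:B1:follow:1} gives $|V_x|\geq (1-2\eps)(1-\beta_0)^3 p_X p_{\abs}^2 n$. Each $v\in V_x$ automatically satisfies the colour- and edge-class conditions of \ref{prop:forB1follow1} and \ref{prop:interim}, so the task reduces to excluding the $v\in V_x$ that are blocked for one of three reasons: (i) $v\in V(\hat{M}_{i,1,1})$; (ii) $c(xv)\in C(\hat{M}_{i,1,1})$; or (iii) $xv\in\hat{M}_{j,1,1}$ for some $j\neq i$. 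For (i) and (ii), Claim~\ref{clm:M1props}\ref{prop:hatM:c} gives $|\hat{M}_{i,1,1}|\leq |T_i\setminus V(\hat{M}_{i,1,0})|\leq 2\gamma n$, and the properness of $c$ ensures each colour in $C(\hat{M}_{i,1,1})$ appears on at most one edge at $x$; together these rule out at most $4\gamma n$ vertices. For (iii), each $\hat{M}_{j,1,1}$ with $j\neq i$ meets $x$ in at most one edge, and this only occurs when either $x\in T_j\setminus V(\hat{M}_{j,1,0})$ or $x\in X_{j,1}$: Claim~\ref{clm:M1props}\ref{prop:hatM:cb} bounds the number of $j$ of the first kind by $2\gamma n$, and summing \ref{prop:finalB:1} over $\phi\in\mathcal{F}$ together with $|\mathcal{F}|=(1\pm\eps^2)p_\tr^{-1}p_\fa^{-1}$ bounds the number of the second kind by at most $2\gamma n$.

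Altogether, at most $8\gamma n$ vertices of $V_x$ are blocked, which is far fewer than $|V_x|$ because $\gamma\llpoly\beta$ in the hierarchy \eqref{eq:hierarchy} and $(1-\beta_0)^3 p_X p_{\abs}^2=\Theta(\beta^3 p_X)$ with $\beta$ and $p_X$ bounded below by polynomial functions of $\gamma$. A valid extension $v$ therefore exists, giving the desired contradiction. The argument is a short greedy count and there is no real obstacle; the only step that requires care is (iii), where the two separate ways $x$ can appear in a foreign matching must each be controlled, which Claim~\ref{clm:M1props}\ref{prop:hatM:cb} and \ref{prop:finalB:1} together achieve.
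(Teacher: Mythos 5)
Your proof is correct and follows the same maximality-by-contradiction structure as the paper's, with the same candidate set from \ref{prop:B1:follow:1} and the same use of Claim~\ref{clm:M1props}\ref{prop:hatM:c} and \ref{prop:hatM:cb}. The one place you diverge is in case (iii): the paper uses property \ref{prop:interim} (which was built into the choice of the $\hat M_{j,1,1}$ precisely for this step) to conclude that if the edge $xv$ with $x\in T_i\cap A$ and $xv\in E^\abs_{1,A}$ lies in $\hat M_{j,1,1}$, then necessarily $x\in T_j\setminus V(\hat M_{j,1,0})$ -- since $x\in X_{j,1}$ would put the $T_j$-endpoint $v$ in $B$ and force $xv\in E^\abs_{1,B}$, a contradiction. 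You instead keep the $x\in X_{j,1}$ subcase alive and bound it via \ref{prop:finalB:1}; this subcase is in fact empty, so your count is looser, but it is still $O(\gamma n)\ll |V_x|$ and the argument closes.
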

\begin{proof}[Proof of Claim~\ref{clm:mi01big}]
Suppose to the contrary, there is some $i\in [n]$ with $|\hat{M}_{i,1,1}|\neq|T_i|-|\hat{M}_{i,1,0}|$ so that, from \ref{prop:forB1follow1}, $|\hat{M}_{i,1,1}|<|T_i|-|\hat{M}_{i,1,0}|$, and there is some $u\in T_i\setminus V(\hat{M}_{i,1,0})$.
Suppose that $u\in A$, where the case where $u\in B$ follows similarly. By the maximality of $\sum_{i\in [n]}|\hat{M}_{i,1,1}|$, every neighbouring edge from $u$ to $X_{i,1}$ with colour in $D_{1,i}\setminus C_i$ must have its non-$u$ vertex in $V(\hat{M}_{i,1,1})$ or its colour in $C(\hat{M}_{i,1,1})$, or be in a matching $\hat{M}_{j,1,1}$ for some $j\in [n]$ with $u\in T_j\setminus V(\hat{M}_{j,1,0})$.
Therefore, using Claim~\ref{clm:M1props} \ref{prop:hatM:c} and \ref{prop:hatM:cb},
\begin{align*}
|\{v\in X_{i,1}:c(uv)\in D_{1,i}\setminus C_i,uv\in E^{\mathrm{abs}}_{1,X}\}|&\leq 2|\hat{M}_{i,1,1}|+|\{j\in [n]:u\in T_j\setminus \hat{M}_{i,1,0}\}|\leq 4\gamma n,
\end{align*}
which, as $\gamma\llpoly p_x,p_\abs$, contradicts \ref{prop:B1:follow:1}.
\claimproofend

For each $i\in [n]$, let $\hat{M}_{i,1,1}=\{uv_{i,u}:u\in T_i\setminus V(\hat{M}_{i,1,0})\}$ and set $\hat{M}_{i,1}=\hat{M}_{i,1,0}\cup \hat{M}_{i,1,1}$. We show that $\hat{M}_{i,1}$, $i\in [n]$, satisfy \ref{prop:B1final:2}--\ref{prop:B1final:1}.
Indeed, \ref{prop:B1final:2} follows from Claim~\ref{clm:M1props} \ref{prop:hatM:a}, Claim~\ref{clm:mi01big} and \ref{prop:forB1follow1}.
Furthermore, \ref{prop:B1final:3} follows from Claim~\ref{clm:M1props} \ref{prop:hatM:b} and \ref{prop:forB1follow2}.
Finally, \ref{prop:B1final:1} follows from Claim~\ref{clm:M1props} \ref{prop:hatM:d} and \ref{prop:B1:Gabsbounded}.


\subsection{Part~\ref{partB2}: Small matchings with $Y_i$}\label{sec:partB2}
We now embark on Part~\ref{partB2}, where we will find edge-disjoint rainbow matchings $\hat{M}_{i,2}$, $i\in [n]$, and a set
\[
\mathcal{J}\subset \{\{(i,u),(j,v)\}:i,j\in [n],i\neq j,u,v\in V(G),u\neq v,u,v\in Y_{i}\cap Y_{j}\},
\]
satisfying a range of properties later stated as \ref{prop:fromB2:1}--\ref{prop:forBfinalfromB2:4}. To do this,  as discussed in Section~\ref{sec:Bsketch}, we will find, for each $i\in [n]$ and $u\in S_i\setminus R_i$ a tuple $(v_{i,u}, c_{i,u},M_{i,u},\omega_{i,u})$, so that these tuples will have various desirable properties, including that $M_{i,u}$ is a small monochromatic matching in $G$ with vertices in $Y_i$ and the colour of its edges will be in $D_2$, and $uv_{i,u}$ is a disjoint edge of the same colour with $v_{i,u}$ also in $Y_i$. Recalling that $r=24$, this matching will have $r$ edges.
We will then divide, for each $i\in [n]$ and $u\in S_i\setminus R_i$, the edges $\{uv_{i,u}\cup M_{i,u}\}$ among the matchings $\hat{M}_{j,2}$, $j\in \{i\}\cup J_{i,u}$, while taking, for each $\{(i,u),(j,v)\}$ three particular pairs $\{(i,u'),(j,v')\}$ with $u',v'\in V(M_{i,u})\cup V(M_{j,v}) \cup \{v_{i,u}, v_{j,v}\}$ into $\mathcal{J}$ (see Section~\ref{sec:part2choosefinally}).

For each $i\in [n]$ and $u\in S_i\setminus R_i$, recall that $J_{i,u}$ is the set of $j$ for which there is some $v$ with $\{(i,u),(j,v)\}\in \mathcal{I}$, that $|J_{i,u}|=r$, and that $Y_{i,u,0}=\bigcap_{j\in J_{i,u}\cup \{i\}}Y_{j,0}$.
For each $i\in [n]$ and $u\in S_i\setminus R_i$, let $\mathcal{R}_{i,u}$ be the set of tuples $(v,M,c,\omega)$, where
\begin{itemize}
\item $M\cup \{uv\}$ is a matching of $r+1$ edges in $E^\abs_0$ with colour $c$ and $\omega:J_{i,u}\to M$ is a bijection,
\item $v\in Y_{i,u,0}=\bigcap_{j\in J_{i,u}\cup \{i\}}Y_{j,0}$,
\item $V(M)\subset Y_{i,0}$,
\item for each $j\in J_{i,u}$, $V(\omega(j))\subset Y_{j,0}$, and
\item for each $j\in \{i\}\cup J_{i,u}$, $c\in D_2\setminus (C_j\cup D_{2,j})$.
\end{itemize}

Define an auxiliary hypergraph $\cH_2$ with 4 vertex classes
\begin{equation}\label{eqn:H2vx}
\begin{array}{ll}
\textbf{i)}\;\; \mathcal{V}_{S-R}:=\cup_{i\in [n]}(\{i\}\times (S_i\setminus R_i))
&\;\;\textbf{ii)}\;\;  \mathcal{V}_Y:=\cup_{i\in [n]}(\{i\}\times Y_{i,0})\\
\textbf{iii)}\;\;  \mathcal{C}_2:=\cup_{i\in [n]}(\{i\}\times (D_{2}\setminus (C_i\cup D_{2,i})))
&\;\;\textbf{iv)}\;\; \mathcal{E}_2:=E(G|_{D_2})\cap E^{\mathrm{abs}}_0
\end{array}
\end{equation}
where, for each $i\in [n]$, $u\in S_i\setminus R_i$, and $(v,M,c,\omega)\in \mathcal{R}_{i,u}$, we add the edge
\begin{equation}\label{eq:edgesofH2}
E_{(i,u,v,M,c,\omega)}:=\{(i,u)\}\cup \left((J_{i,u}\cup \{i\})\times \{c,v\}\right)\cup \left(\cup_{j\in J_{i,u}}\{i,j\}\times V(\omega(j))\right)
\cup M\cup \{uv\}
\end{equation}
to $\mathcal{H}_2$. Each edge, then, has $1+(r+1)\cdot 2+r\cdot 4+r+1=7r+4$ vertices, and, hence, $\mathcal{H}_2$ is a $(7r +4)$-uniform hypergraph. We will now show, in Sections~\ref{sec:H2:almostregular} and~\ref{sec:H2:lowcod} respectively, that $\mathcal{H}_2$ is almost regular with low codegrees.


\subsubsection{Vertex degrees of $\mathcal{H}_2$}\label{sec:H2:almostregular}
Setting
\[
\delta_2=\beta_0^{7r+3}p_2p_Y^{2r+1}p_\abs^{2r+2} n^{r+1},
\]
we will show that $\mathcal{H}_2$ is almost $\delta_2$-regular, as follows.

\begin{claim}\label{clm:H2almostregular}
For each $v\in V(\mathcal{H}_2)$, we have $d_{\cH_2}(v)=(1\pm 100\eps)\cdot \delta_2$.
\end{claim}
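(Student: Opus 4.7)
The plan is to verify $d_{\mathcal{H}_2}(v)=(1\pm100\eps)\delta_2$ for $v$ in each of the four vertex classes $\mathcal{V}_{S-R},\mathcal{V}_Y,\mathcal{C}_2,\mathcal{E}_2$ in turn. In every case I would decompose the set of edges of $\mathcal{H}_2$ incident to $v$ according to the combinatorial role $v$ plays in the tuple $(i',u',v',M',c',\omega')$ indexing the edge, and then count each role using the appropriate property from Claim~\ref{clm:properties}. In every sub-case the count factorises as a ``structural'' factor (the number of choices of $(i',u',v',c')$ consistent with the role, read off from one of \ref{cond:newforv1} or \ref{prop:forB2:deg:1}--\ref{prop:forB2:deg:10}) times an ``extension'' factor (for a fixed $c'$, the number of ways to greedily complete the remaining $r$ or $r-1$ colour-$c'$ matching edges $\omega'(j)$, using \ref{cond:newforcedge1}). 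Each greedy step loses only a factor $(1\pm2\eps)$, since each previously chosen edge blocks $O(1)$ further options against the $\Omega(n)$ available (every vertex lies in exactly one colour-$c'$ edge of $G$).

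For $(i,u)\in\mathcal{V}_{S-R}$ the degree equals $|\mathcal{R}_{i,u}|$ and is obtained by applying \ref{cond:newforv1} followed by the $r$-fold greedy extension, giving $(1\pm O(r\eps))\delta_2$ directly. For $(i,c)\in\mathcal{C}_2$ the two sub-cases $i'=i$ and $i\in J_{i',u'}$ are treated using \ref{prop:forB2:deg:7} and \ref{prop:forB2:deg:8} respectively; their contributions sum to $2(r+1)$ times a common base quantity, which equals $\delta_2$ on using the relation $p_2\cdot p_\abs/(p_S-p_R)=50(1+\beta)$ from Section~\ref{sec:variables} together with $\beta_0(1+\beta)=1$ and $r=24$. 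For $uv\in\mathcal{E}_2$ the analogous sub-cases $uv=u'v'$ or $uv\in M'$ use \ref{prop:forB2:deg:9} (applied with both orientations $x=u,y=v$ and $x=v,y=u$, which pick out disjoint sets of main indices $i'$ since $S_i\cap Y_i=\emptyset$) and \ref{prop:forB2:deg:10}; their contributions again sum to $2(r+1)=50$ times the same base quantity, yielding $\delta_2$.

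For $(i,x)\in\mathcal{V}_Y$ there are four sub-cases, indexed by whether $x$ equals $v'$ or is an endpoint of some matching edge $\omega'(j_0)$, and whether $i=i'$ or $i\in J_{i',u'}$. The first two are handled by \ref{prop:forB2:deg:1} and \ref{prop:forB2:deg:4} together with the $r$-fold greedy extension. The latter two require a further decomposition: for each valid pair $(u',j_0)$ (counted via \ref{prop:forB2:deg:2}) and each valid colour $c'$ for which $x$'s unique colour-$c'$ neighbour lies in $Y_{i,0}\cap Y_{j_0,0}$ and $u'$ has a compatible colour-$c'$ neighbour in $Y_{i,u',0}$ (counted via \ref{prop:forB2:deg:3}), the edge $\omega'(j_0)$ through $x$ is then forced and the remaining $r-1$ matching edges are chosen greedily. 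Summing the four contributions produces a factor $1+r+2r+2r=5r+1=121$ times a common base quantity, which equals $\delta_2$ via $p_Y/(p_S-p_R)=121(1+\beta)$ from Section~\ref{sec:variables} and $\beta_0(1+\beta)=1$.

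The main obstacle is the bookkeeping: each vertex class has up to four sub-cases, and in each sub-case one must check that the arithmetic of the exponents in $\beta_0,p_Y,p_\abs,p_2,p_{S-R},n$ together with the role multiplicities $1,r,2r,2r$ conspire to give exactly $\delta_2$. This is guaranteed by the variable choices in Section~\ref{sec:variables}, specifically $p_Y=(1+\beta)\cdot 121(p_S-p_R)$, $p_2=50(1+\beta)(p_S-p_R)/(1-p_\bal)$, and $|J_{i,u}|=r=24$, which were engineered precisely so that these combinatorial multiplicities $2(r+1)$ and $5r+1$ align with the ratios of the relevant $p$-parameters. Since each degree count is a product of at most $r+2$ factors of $(1\pm2\eps)$ (one structural, plus up to $r$ greedy steps, plus an initial sub-case sum), the overall error is $(1+2\eps)^{r+2}\le 1+O(r\eps)$, which for $r=24$ and $\eps$ in our hierarchy is well within the claimed $(1\pm100\eps)$ bound.
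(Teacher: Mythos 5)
Your proposal matches the paper's proof essentially verbatim: the same four-way decomposition by vertex class, the same sub-case structure (one case for $\mathcal{V}_{S-R}$, four for $\mathcal{V}_Y$, two for $\mathcal{C}_2$, two for $\mathcal{E}_2$), the same use of \ref{cond:newforv1}, \ref{cond:newforcedge1} and \ref{prop:forB2:deg:1}--\ref{prop:forB2:deg:10} for the structural/extension factorisation, and the same arithmetic reduction via the relations $\beta_0 p_Y = 121\,p_{S-R}$ and $\beta_0 p_2 p_\abs = 50\,p_{S-R}$. The only slip is cosmetic: in the two $\mathcal{V}_Y$ sub-cases where $x$ lies on a matching edge, the paper invokes \ref{prop:forB2:deg:2} for the $j=i$ sub-case but \ref{prop:forB2:deg:5} for the $j\neq i$ sub-case (the two properties give identical asymptotics but count different tuples), whereas you attribute both to \ref{prop:forB2:deg:2}.
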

\begin{proof}[Proof of Claim~\ref{clm:H2almostregular}]
We check this for the vertices in each of the 4 classes in the order at \eqref{eqn:H2vx}.

\noindent\textbf{i)} Let $(i,u)\in \mathcal{V}_{S-R}$, so that $i\in [n]$ and $u\in S_i\setminus R_i$. Then, by \ref{cond:newforv1} and \ref{cond:newforcedge1}, we have
\[
d_{\mathcal{H}_2}((i,u))=  (1\pm \eps)\cdot \beta_0^{2r+3}p_2p_Yp_\abs^{r+2} n\cdot  \left((1\pm 2\eps)\cdot (\beta_0^5p_Y^2p_{\abs}n\right)^r{=} (1\pm 100\eps )\cdot \delta_2.
\]

\smallskip


\noindent\textbf{ii)} Let $(j,x)\in \mathcal{V}_Y$, so that $j\in [n]$ and $x\in Y_{j,0}$. There are four different ways $(j,x)$ can arise in an edge $E_{(i,u,v,M,c,\omega)}$ for some $i\in [n]$, $u\in S_{i}\setminus R_{i}$, and $(v,M,c,\omega)\in \mathcal{R}_{i,u}$:
\textbf{a)} $j=i$ and $x=v$, \textbf{b)} $j=i$ and $x\neq v$, \textbf{c)} $j\neq i$ and $x=v$, and \textbf{d)} $j\neq i$ and $x\neq v$. We count these in turn.

\smallskip

\textbf{a)} $j=i$ and $x=v$:
Let $i=j$. Pick $u\in S_i\setminus R_i$ such that $c(ux)\in D_{2}\setminus (C_i\cup D_{2,i})$, $ux\in E^\abs_0$, and, for each $j\in J_{i,u}$, $x\in Y_{j,0}$ and $c(ux)\in D_{2}\setminus (C_j\cup D_j)$
(with $(1\pm \eps)\beta_0^{2r+3}p_2p_\abs^{r+1} n$ choices by \ref{prop:forB2:deg:1}). Then, iteratively pick $r$ edges of colour $c$ in $E^\abs_0$ disjointly from within, respectively, $Y_{i,0}\cap Y_{j',0}$ for each $j'\in J_{i,u}$ (each time having $(1\pm 2\eps)\cdot \beta_0^5p_Y^2p_{\abs}n$ choices by \ref{cond:newforcedge1}). Thus, recalling certain relationships between variables from Section \ref{sec:variables}, as $\beta_0p_Y=121p_{S-R}$, the total number of choices is
\[
(1\pm \eps)\cdot \beta_0^{2r+2}p_{S-R}p_2p_\abs^{r+2} n\cdot \left((1\pm 2\eps)\cdot \beta_0^5p_Y^2p_{\abs}n\right)^r{=} (1\pm 100\eps)\cdot \delta_2/121.
\]

\smallskip

\textbf{b)} $j=i$ and $x\neq v$: Let $i=j$. Pick $u\in S_i\setminus R_i$ and $j'\in J_{i,u}$ such that $x\in Y_{j',0}$
 (with $(1\pm \eps)2\beta_0p_{S-R} rn$ choices by \ref{prop:forB2:deg:2}).
Pick $c$ for which $c\in D_2\setminus (C_{i'}\cup D_{2,i'})$ for each $i'\in J_{i,u}\cup \{i\}$ and such that there is a colour-$c$ edge in $E^\abs_0$ from $u$ to $Y_{i,u,0}$ and from $x$ to $Y_{i,0}\cap Y_{j',0}$
(with $(1\pm 2\eps)\beta_0^{2r+6}p_2p_Y^2p_\abs^{r+3}n$ choices by \ref{prop:forB2:deg:3}).
Then, iteratively pick $r-1$ edges of colour $c$ disjointly from within, respectively, $Y_{i,0}\cap Y_{i',0}$ for each $i'\in J_{i,u}\setminus\{j'\}$ (each time having $(1\pm 2\eps)\beta_0^5p_Y^2p_{\abs}n$  choices by \ref{cond:newforcedge1}). Thus, as $\beta_0p_Y=121p_{S-R}$, the total number of choices is
\[
(1\pm \eps)\cdot 2\beta_0p_{S-R} rn\cdot (1\pm 2\eps)\cdot \beta_0^{2r+6}p_2p_Y^2p_\abs^{r+3}n\cdot \left((1\pm 2\eps)\cdot \beta_0^5p_Y^2p_{\abs}n\right)^{r-1}{=} (1\pm 100\eps )\cdot \delta_2\cdot 2r/121.
\]

\smallskip

\textbf{c)} $j\neq i$ and $x= v$: Pick $i\in [n]$ and $u\in S_i\setminus R_i$ such that $j\in J_{i,u}$, $x\in Y_{j',0}$ for each  $j'\in J_{i,u}\cup \{i\}$, $ux$ is in $E_0^\abs$ and has colour, $c$ say, in $D_2\setminus (C_{j'}\cup D_{2,j'})$ for each $j'\in Y_{i,u}\cup \{i\}$
(with $(1\pm \eps)r\beta_0^{2r+2}p_{S-R}p_2p_\abs^{r+2} n$ choices by \ref{prop:forB2:deg:4}). Then, iteratively pick $r$ edges of colour $c$ in $E^\abs_0$ disjointly from within, respectively, $Y_{i,0}\cap Y_{j',0}$ for each $j'\in J_{i,u}$ (each time having $(1\pm 2\eps)\beta_0^5p_Y^2p_{\abs}n$ choices by \ref{cond:newforcedge1}). Thus, as $\beta_0p_Y=121p_{S-R}$, the total number of choices is
\[
(1\pm \eps)r\beta_0^{2r+2}p_{S-R}p_2p_\abs^{r+2} n\cdot \left((1\pm 2\eps)\beta_0^5p_Y^2p_{\abs}n\right)^r{=} (1\pm 100\eps)\cdot \delta_2\cdot r/121.
\]

\smallskip

\textbf{d)} $j\neq i$ and $x\neq v$: Pick $i\in [n]$ and $u\in S_i\setminus R_i$ such that $j\in J_{i,u}$ and $x\in Y_{i,0}$ (with $(1\pm \eps)2\beta_0p_{S-R} rn$ choices by \ref{prop:forB2:deg:5}).
Pick $c$ for which $c\in D_2\setminus (C_{i'}\cup D_{2,i'})$ for each $i'\in J_{i,u}\cup \{i\}$ and such that there is a colour-$c$ edge in $E^\abs_0$ from $u$ to $Y_{i,u,0}$ and from $x$ to $Y_{i,0}\cap Y_{j,0}$
(with $(1\pm 2\eps)\beta_0^{2r+6}p_2p_Y^2p_\abs^{r+3}n$ choices by \ref{prop:forB2:deg:3}).
Then, iteratively pick $r-1$ edges of colour $c$ disjointly from within, respectively, $Y_{i,0}\cap Y_{i',0}$ for each $i'\in J_{i,u}\setminus\{j'\}$ (each time having $(1\pm 2\eps)\beta_0^5p_Y^2p_{\abs}n$  choices by \ref{cond:newforcedge1}). Thus, as $\beta_0p_Y=121p_{S-R}$, the total number of choices is
\[
(1\pm \eps)\cdot 2r\beta_0p_{S-R} n\cdot (1\pm 2\eps)\cdot \beta_0^{2r+6}p_2p_Y^2p_\abs^{r+3}n\cdot \left((1\pm 2\eps)\cdot \beta_0^5p_Y^2p_{\abs}n\right)^{r-1}{=} (1\pm 100\eps )\cdot \delta_2\cdot 2r/121.
\]

\smallskip

Therefore, recalling that $r=24$, in total for \textbf{ii)}, for each $(j,x)\in \mathcal{V}_Y$ we have
\[
d_{\mathcal{H}_2}((j,x))= (1\pm 100\eps)\cdot \frac{5r+1}{121}\cdot \delta_2 {=} (1\pm 100\eps )\cdot \delta_2.
\]

\smallskip


\noindent\textbf{iii)}
Let $(j,c)\in \mathcal{C}_2$, so that $j\in [n]$ and $c\in D_{2}\setminus (C_j\cup D_{2,j})$. There are two different ways $(j,c)$ can arise in an edge $E_{(i,u,v,M,c,\omega)}$ for some $i\in [n]$, $u\in S_{i}\setminus R_{i}$, and $(v,M,c,\omega)\in \mathcal{R}_{i,u}$:
\textbf{a)} $j=i$ and \textbf{b)} $j\neq i$. We count these in turn.

\smallskip

\textbf{a)} $j=i$. Pick $u\in S_i\setminus R_i$ so that $c\notin C_{j'}\cup D_{2,j'}$ for each $j'\in J_{i,u}$ and $u$ has a colour-$c$ edge in $E^\abs_0$ to $Y_{i,u,0}$
(with $(1\pm \eps)2\beta_0^{2r+2}p_{S-R}p_Yp_\abs^{r+1} n$ choices by \ref{prop:forB2:deg:7}).
Then, iteratively pick $r$ edges of colour $c$ in $E^\abs_0$ disjointly from within, respectively, $Y_{i,0}\cap Y_{j',0}$ for each $j'\in J_{i,u}$ (each time having $(1\pm 2\eps)\beta_0^5p_Y^2p_{\abs}n$ choices by \ref{cond:newforcedge1}). Thus, as $\beta_0p_2p_\abs=50p_{S-R}$, the total number of choices is
\[
(1\pm \eps)\cdot 2\beta_0^{2r+2}p_{S-R}p_Yp_\abs^{r+1} n\cdot \left((1\pm 2\eps)\cdot \beta_0^5p_Y^2p_{\abs}n\right)^r{=} (1\pm 100\eps )\cdot \delta_2\cdot 2/50.
\]

\smallskip

\textbf{b)} $j\neq i$. Pick $i\in [n]$ and $u\in S_i\setminus R_i$ such that $j\in J_{i,u}$ and $c\in D_2\setminus (C_{j'}\cup D_{2,j'})$ for each $j'\in J_{i,u}\cup \{i\}$ and $u$ has a colour-$c$ edge in $E^\abs_0$ to $Y_{i,u,0}$
(with $(1\pm \eps)2r\beta_0^{2r+2}p_{S-R}p_Yp_\abs^{r+1} n$ choices by \ref{prop:forB2:deg:8}).
Then, iteratively pick $r$ edges of colour $c$ disjointly from within, respectively, $Y_{i,0}\cap Y_{i',0}$ for each $i'\in J_{i,u}$ (each time having $(1\pm 2\eps)\beta_0^5p_Y^2p_{\abs}n$ choices by \ref{cond:newforcedge1}). Thus, as $\beta_0p_2p_\abs=50p_{S-R}$, the total number of choices is
\[
(1\pm \eps)\cdot 2r\beta_0^{2r+2}p_{S-R}p_Yp_\abs^{r+1} n\cdot \left((1\pm 2\eps)\cdot \beta_0^5p_Y^2p_{\abs}n\right)^{r}{=} (1\pm 100\eps )\cdot \delta_2\cdot 2r/50.
\]

\smallskip

Therefore, in total for \textbf{iii)}, for each $(j,c)\in \mathcal{C}_2$ we have
\[
d_{\mathcal{H}_2}((j,c))= (1\pm 100\eps)\cdot \frac{2r+2}{50}\cdot \delta_2 {=} (1\pm 100\eps )\cdot \delta_2.
\]


\noindent\textbf{iv)} Let $xy\in \mathcal{E}_2$, so that $c(xy)\in D_{2}$ and $xy\in E^\abs_0$.
There are two different ways $xy$ can arise in an edge $E_{(i,u,v,M,c,\omega)}$ for some $i\in [n]$, $u\in S_{i}\setminus R_{i}$, and $(v,M,c,\omega)\in \mathcal{R}_{i,u}$:
\textbf{a)} $xy=uv$ and \textbf{b)} $xy\in M$. We count these in turn.

\smallskip

\textbf{a)} $xy=uv$. Pick $i\in [n]$ such that $x\in S_i\setminus R_i$, $c(xy)\in D_2\setminus (C_{j}\cup D_{2,j})$ for each $j\in J_{i,u}\cup \{i\}$ and $y\in Y_{i,u,0}$
(with $(1\pm \eps)\beta_0^{2r+2}p_{S-R}p_Yp_\abs^{r+1} n$ choices by \ref{prop:forB2:deg:9}).
Then, iteratively pick $r$ edges of colour $c$ in $E^\abs_0$ disjointly from within, respectively, $Y_{i,0}\cap Y_{j,0}$ for each $j\in J_{i,u}$ (each time having $(1\pm 2\eps)\beta_0^5p_Y^2p_{\abs}n$ choices by \ref{cond:newforcedge1}).
Thus, as $\beta_0p_2p_\abs=50p_{S-R}$, the total number of choices is
\[
(1\pm\eps)\cdot\beta_0^{2r+2}p_{S-R}p_Yp_\abs^{r+1}n\cdot \left((1\pm \eps)\cdot \beta_0^5p_Y^2p_{\abs}n\right)^r{=} (1\pm 100\eps )\cdot \delta_2/50.
\]
Counting similarly with $x$ and $y$ interchanged, we get another $(1\pm 100\eps)\cdot \delta_2/50$ choices.

\smallskip

\textbf{b)} $xy\in M$. Pick $i\in [n]$, $u\in S_i\setminus R_i$ and $j\in J_{i,u}$ such that $c(xy)\in D_2\setminus (C_{j'}\cup D_{2,j'})$ for each $j'\in J_{i,u}\cup \{i\}$, $x,y\in Y_{i,0}\cap Y_{j,0}$ and $u$ has a colour-$c(xy)$ neighbour in $E_0^\abs$ in $Y_{i,u,0}$
(with $(1\pm \eps)2r\beta_0^{2r+7}p_{S-R}p_Y^3p_\abs^{r+2} n^2$ choices by \ref{prop:forB2:deg:10}).
Then, iteratively pick $r-1$ edges of colour $c$ in $E^\abs_0$ disjointly from within, respectively, $Y_{i,0}\cap Y_{j',0}$ for each $j'\in J_{i,u}\setminus \{j\}$ (each time having $(1\pm 2\eps)\beta_0^5p_Y^2p_{\abs}n$ choices by \ref{cond:newforcedge1}). Thus, as $\beta_0p_2p_\abs=50p_{S-R}$, the total number of choices is
\[
(1\pm \eps)\cdot 2r\beta_0^{2r+7}p_{S-R}p_Y^3p_\abs^{r+2}n^2\cdot \left((1\pm 2\eps)\cdot \beta_0^5p_Y^2p_{\abs}n\right)^{r-1}{=} (1\pm 100\eps )\cdot \delta_2\cdot 2r/50.
\]

Therefore,  in total for \textbf{iv)}, for each $xy\in \mathcal{E}_2$ we have\renewcommand{\qedsymbol}{$\boxdot$}
\[
d_{\mathcal{H}_2}(xy)= (1\pm 100\eps)\cdot \frac{1+1+2r}{50}\cdot \delta_2 {=} (1\pm 100\eps )\cdot \delta_2.\qedhere
\]
\end{proof}
\renewcommand{\qedsymbol}{$\square$}


\subsubsection{Codegrees of $\mathcal{H}_2$}\label{sec:H2:lowcod}
We will now show that the codegrees of $\mathcal{H}_2$ are all $O(n^{r+0.5})$, where $r=24$.
As the vertex degrees of $\mathcal{H}_2$ are (by Claim~\ref{clm:H2almostregular}) all around $\delta_2$, where $\delta_2=\beta_0^{7r+3}p_2p_Y^{2r+1}p_\abs^{2r+2} n^{r+1}$, and $1/n\llpoly \beta_0,p_2,p_Y,p_\abs$, these codegrees are all much smaller than the vertex degrees in $\mathcal{H}_2$.

\begin{claim}\label{clm:H2lowcod}
$\Delta^c(\cH_2)=O(n^{r+0.5})$.
\end{claim}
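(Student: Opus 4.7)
The plan is a case analysis on the vertex classes of $x$ and $y$. Each edge $E_{(i,u,v,M,c,\omega)}$ of $\mathcal{H}_2$ is determined by the tuple $(i,u,v,M,c,\omega)$, which has $r+1$ ``free'' parameters (the vertex $v$ together with the $r$ matching edges of $M$) contributing roughly $n$ choices each, consistent with the degree $\delta_2 = \Theta(n^{r+1})$ from Claim~\ref{clm:H2almostregular}. Since the target $O(n^{r+0.5})$ falls only $n^{0.5}$ below $\delta_2$, it suffices to show that fixing a second vertex saves at least $n^{0.5}$ choices; in nearly every subcase we in fact save a full factor of $n$.

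If both $x, y \in \mathcal{V}_{S-R}$ then the codegree is $0$, since every edge of $\mathcal{H}_2$ contains a unique $\mathcal{V}_{S-R}$-vertex. If exactly one of $x, y$, say $x = (i,u)$, lies in $\mathcal{V}_{S-R}$, then $i$, $u$, and $J_{i,u}$ are fixed, and the second vertex forces one of the free parameters: $y \in \mathcal{V}_Y$ pins down either $v$ or an endpoint of some $\omega(j')$; $y \in \mathcal{C}_2$ determines $c$; and $y \in \mathcal{E}_2$ determines one of the $r+1$ edges of $M \cup \{uv\}$. In every such subcase a direct counting argument using the almost-regularity estimates from Section~\ref{sec:Bsetup} (combined, where relevant, with the uniqueness statement \ref{prop:abs:nocodegree} to recover the remaining matching edges) yields $O(n^r)$.

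The interesting cases are those in which neither of $x, y$ lies in $\mathcal{V}_{S-R}$, because then we must additionally bound the number of compatible tuples $(i, u)$. Here the codegree bounds of the absorption schematic from \ref{prop:abs:nocodegree} and \ref{prop:abs:boundedin}--\ref{prop:abs:lowcodegree3} become essential: each of these caps a natural count of tuples involving $\mathcal{I}_\tau$ or $J_{i,u}$ (for example, pairs $(u,v)$ with $\{(i,u),(j,v)\} \in \mathcal{I}_\tau$ by \ref{prop:abs:lowcodegree1}, or quadruples $(i,u,v,v')$ with $\{(i,u),(j,v)\},\{(i,u),(j',v')\} \in \mathcal{I}_\tau$ by \ref{prop:abs:lowcodegree2}) by at most $n^{1/3}$. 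Combining such a bound with the $O(n^{r})$ extensions for the remaining matching edges and $v$ (via the estimates of Claim~\ref{clm:properties}), one obtains $O(n^{r+1/3})$ in every such subcase, comfortably below the target.

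The main obstacle will be bookkeeping: a single $(j,w) \in \mathcal{V}_Y$ can play up to $5r+1$ distinct structural roles inside an edge (as the image of $v$ under one of the $r+1$ indices in $J_{i,u} \cup \{i\}$, or as one of the two endpoints of $\omega(j')$ for $j' \in J_{i,u}$, indexed by either $i$ or $j'$), and analogous multiplicities hold for $\mathcal{C}_2$ and $\mathcal{E}_2$. Moreover, the cases where neither vertex sits in $\mathcal{V}_{S-R}$ must be split further according to whether the two vertices force the same or distinct indices $j, j' \in J_{i,u}$, and similarly for coincidences of colours or matching edges. Nevertheless, every individual subcase reduces to a short counting argument using Claim~\ref{clm:properties} together with \ref{prop:abs:nocodegree}--\ref{prop:abs:lowcodegree3}, all of which were engineered in Part~\ref{partA} precisely to make this step go through.
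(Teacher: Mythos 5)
Your proposal takes essentially the same approach as the paper: organise by which vertex classes of $\mathcal{H}_2$ the two fixed vertices fall into, observe that a $\mathcal{V}_{S-R}$-vertex determines $(i,u)$ and hence $J_{i,u}$ at once, and use the absorption-schematic codegree bounds \ref{prop:abs:nocodegree}, \ref{prop:abs:lowcodegree1}--\ref{prop:abs:lowcodegree3} to bound the number of compatible $(i,u)$ when neither vertex lies in $\mathcal{V}_{S-R}$. The paper carries out the same plan by enumerating the seventeen possible ``information tuples'' that two vertices can yield; your presentation by vertex class is a re-organisation of that case split rather than a different argument.

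One small imprecision worth fixing: you write that the absorption-schematic codegree bounds are ``essential'' in \emph{every} subcase where neither $x$ nor $y$ lies in $\mathcal{V}_{S-R}$. In fact when the two vertices jointly reveal two edges of $M\cup\{uv\}$ (both in $\mathcal{E}_2$), or when they reveal $i$ together with two vertices/colours of $M\cup\{uv\}$ (both of the form $(i,\cdot)$), the paper gets $O(n^r)$ by direct counting without appealing to \ref{prop:abs:lowcodegree1}--\ref{prop:abs:lowcodegree3}: there are $n$ raw choices for the remaining index, but the two pinned parameters save a compensating factor of $n$ from the $n^{r+1}$-sized ``free'' part. The codegree bounds from Part~\ref{partA} are only needed when one knows some $j\in J_{i,u}$ (or two such indices) without knowing $i$, which is precisely where the $n^{1/3}$ savings kick in. Since you only claim the weaker conclusion $O(n^{r+1/3})$ in these cases this does not affect correctness, but the attribution of which lemma does the work should be tightened when fleshing out the sketch.
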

\begin{proof}[Proof of Claim~\ref{clm:H2lowcod}] Let
$i\in [n]$, $u\in S_i\setminus R_i$, and $(v,M,c,\omega)\in \mathcal{R}_{i,u}$, and consider the edge $e=E_{(i,u,v,M,c,\omega)}$ (using \eqref{eq:edgesofH2}), so that
\[
e=\{(i,u)\}\cup \left((J_{i,u}\cup \{i\})\times \{c,v\}\right)\cup \left(\cup_{j\in J_{i,u}}\{i,j\}\times V(\omega(j))\right)
\cup M\cup \{uv\}.
\]
Let ${v}_1$ and $v_2$ be two vertices in $e$. We will check the codegree in cases \textbf{i)}--\textbf{xvii)} as follows.

If from ${v}_1$ and $v_2$ we do not know $i$ or any $j\in J_{i,u}$, then (as $v_1,v_2\in M\cup \{uv\}$) from ${v}_1$ and $v_2$ we can write down a triple
\[
\text{\textbf{i)}\;\;}(u,w,c) \;\;\;\text{ or }\;\;\; \text{\textbf{ii)}\;\;}(w,w',c)
\]
where $w,w'\in V(M)$ are not in the same edge in $M$. If from ${v}_1$ and $v_2$ we know $i$ but no $j\in J_{i,u}$, then (as knowing any 2 of $u,v$ and $c$ determines all of them) from ${v}_1$ and $v_2$ we can write down one of the triples
\[
\text{\textbf{iii)}\;}(i,u,c) \;\;\;\text{\textbf{iv)}\;}(i,u,w) \;\;\; \text{\textbf{v)}\;}(i,v,w) \;\;\; \text{\textbf{vi)}\;}(i,c,w) \;\;\; \text{\textbf{vii)}\;}(i,w,w'),
\]
where $w,w'\in V(M)$ are not in the same edge in $M$. If from ${v}_1$ and $v_2$ we know $i$ and one $j\in J_{i,u}$, then from ${v}_1$ and $v_2$ we can write down one of the triples
\[
\text{\textbf{viii)}\;}(i,j,v) \;\;\;\text{\textbf{ix)}\;}(i,j,c) \;\;\;
\text{\textbf{x)}\;}(i,j,w),
\]
where $w\in V(M)$. If from ${v}_1$ and $v_2$ we do not know $i$ and know exactly one $j\in J_{i,u}$, then from ${v}_1$ and $v_2$ we can write down one of the triples
\[
\text{\textbf{xi)}\;}(j,v,c) \;\;\;\text{\textbf{xii)}\;}(j,v,w) \;\;\;\text{\textbf{xiii)}\;}(j,c,w) \;\;\;
\text{\textbf{xiv)}\;}(j,w,w'),
\]
where $w,w'\in V(M)$ are not in the same edge in $M$. Finally, if from ${v}_1$ and $v_2$ we know distinct $j,j'\in J_{i,u}$, then from ${v}_1$ and $v_2$ we can write down one of the triples
\[
\text{\textbf{xv)}\;}(j,j',v) \;\;\;\text{\textbf{xvi)}\;}(j,j',c) \;\;\;\text{\textbf{xvii)}\;}(j,j',w),
\]
where $w\in V(M)$.

We now show that, in each case \textbf{i)}--\textbf{xvii)}, we have $d_{\mathcal{H}_3}(v_1,v_2)=O(n^{r+0.5})$.
For \textbf{i)} and \textbf{ii)}, we know the colour of the edges in $M\cup \{uv\}$ and vertices from 2 different edges, so there are at most $n^{r-1}$ choices for the other edges in $M\cup \{uv\}$, and then at most $n$ choices for $i$, $O(1)$ choices for $u$ and $O(1)$ choices for $\omega$, so that $d_{\mathcal{H}_3}(v_1,v_2)=O(n^r)=O(n^{r+0.5})$.

For \textbf{iii)} and \textbf{vi)}, we know $i$, $c$ and one vertex in one of the edges in $M\cup \{uv\}$, and therefore there are at most $n^r$ ways to choose the remaining edges of $M\cup\{uv\}$, after which there are $O(1)$ choices for $u$ and then $\omega$, so that $d_{\mathcal{H}_3}(v_1,v_2)=O(n^r)=O(n^{r+0.5})$.
For \textbf{iv)}, \textbf{v)} and \textbf{vii)}, we know $i$ and two vertices in different edges of $M\cup\{uv\}$, and therefore, after choosing $c$ with at most $n$ choices, there are then at most $n^{r-1}$ ways to choose the remaining edges of $M\cup\{uv\}$, after which there are $O(1)$ choices for $u$ and then $\omega$, so that $d_{\mathcal{H}_3}(v_1,v_2)=O(n^r)=O(n^{r+0.5})$.

For \textbf{xi)}, we know $u$ from $v$ and $c$, and therefore from \ref{prop:abs:lowcodegree3} have at most $\sqrt{n}$ choices for $i$, after which there are at most $n^r$ possibilities for $M$, so that $d_{\mathcal{H}_3}(v_1,v_2)=O(n^{r+0.5})$. For \textbf{xii)}--\textbf{xiv)}, as $j$ is known, there are at most $n^{1.5}$ choices for $(i,u)$ by \ref{prop:abs:lowcodegree1}, after which we know either the colour of the edges in $M\cup \{uv\}$ and two vertices from different edges in $M\cup\{uv\}$, or and three vertices from different edges in $M\cup\{uv\}$.
In either case, we have at most $n^{r-1}$ choices for the edges in $M\cup \{uv\}$, and therefore  $d_{\mathcal{H}_3}(v_1,v_2)=O(n^{r+0.5})$.

For \textbf{xv)}--\textbf{xvii)}, by \ref{prop:abs:lowcodegree2}, there are at most $\sqrt{n}$ choices for $(i,u)$ with $j,j'\in J_{i,u}$, after which, as either the colour of the edges in $M\cup\{uv\}$ is known (from the triple directly or from $uv$) or a vertex in an edge of $M$, there are at most $n^r$ choices for edges $M\cup \{uv\}$, and therefore $d_{\mathcal{H}_3}(v_1,v_2)=O(n^{r+0.5})$.
\claimproofend


\subsubsection{Weight functions for properties for Part~\ref{partB3}}\label{sec:complicatedweight}

Before considering some of the simpler weight functions needed to complete Part~\ref{partB2}, we address some important weight functions that are needed to prove certain properties required to complete Part~\ref{partB3}. In particular, these relate to properties \ref{prop:fromB2:1}-\ref{prop:fromB2:10} which are only stated later, in Section \ref{sec:part2choosefinally}, and are important for proving the properties required of $\mathcal{J}$ for $\mathcal{H}_3$.

For \ref{prop:fromB2:1}: For each $j\in [n]$ and $x\in Z_{j,0}$, define $w^{\ref{prop:fromB2:1}:\mathrm{same}}_{j,x},w^{\ref{prop:fromB2:1}:\mathrm{other}}_{j,x}:E(\mathcal{M}_2)\to \mathbb{N}$ by,
for each $i\in [n]$, $u\in S_i\setminus R_i$, and $(v,M,c,\omega)\in \mathcal{R}_{i,u}$ 
\begin{itemize}
\item if $j=i$, then letting $w^{\ref{prop:fromB2:1}:\mathrm{same}}_{j,x}(E_{(i,u,v,M,c,\omega)})$ be the number of $j'\in J_{i,u}$ with $x\in Z_{j',0}$ such that there is an edge from $x$ to $V(\omega(j'))$ in $E_0^\abs$ with colour in $D_3\setminus (C_i\cup C_{j'}\cup D_{3,i}\cup D_{3,j'})$, and 0 otherwise.
\item $j\in J_{i,u}$, then let $w^{\ref{prop:fromB2:1}:\mathrm{other}}_{j,x}(E_{(i,u,v,M,c,\omega)})$ be 1 if there is an edge from $x$ to $v$ in $E_0^\abs$ with colour in $D_3\setminus (C_i\cup C_j\cup D_{3,i}\cup D_{3,j})$, and 0 otherwise.
\end{itemize}

\ifabbrev\else
For $w^{\ref{prop:fromB2:1}:\mathrm{same}}_{j,x}(E(\mathcal{H}_2))$: There are $(1\pm \eps)\cdot 2r\beta_0p_{S-R}n$ choices for $u\in S_j\setminus R_j$ and $j'\in J_{j,u}$ with $x\in Z_{j',0}$ by \ref{prop:B2:wgt:1}. After this, there are $(1\pm \eps)\beta_0^{2r+3}p_Yp_2p_\abs^{r+2} n$ choices of $v\in Y_{j,u,0}$ with $uv\in E^\abs_0$ and $c(uv)\in D_{2}\setminus (C_{i'}\cup D_{2,i'})$ for each $i'\in J_{j,u}\cup \{j\}$ by \ref{cond:newforv1}.
After this, there are $(1\pm \eps)\beta_0^8p_Y^2p_3p_\abs^4n$ choices for $\omega(j')$
such that there is an edge from $x$ to $V(\omega(j'))$ in $E_0^\abs$ with colour in $D_3\setminus (C_j\cup C_{j'}\cup D_{3,j}\cup D_{3,j'})$ by \ref{prop:B2:wgt:2}.
After this, there are $(1\pm \eps)\beta_0^5p_Y^2p_{\abs}n$ choices for each edge in $M\setminus \{\omega(j')\}$ by \ref{cond:newforcedge1}.

Therefore, in total, we have
\begin{align*}
w^{\ref{prop:fromB2:1}:\mathrm{same}}_{j,x}(E(\mathcal{H}_2))&=(1\pm 10\eps)\cdot 2r\beta_0p_{S-R}n\cdot \beta_0^{2r+3}p_Yp_2p_\abs^{r+2} n
\cdot
\beta_0^8p_Y^2p_3p_\abs^2n\cdot \left(\beta_0^5p_Y^2p_{\abs}n\right)^{r-1}
\\
&=(1\pm 10\eps)2(p_{S}-p_R)n\cdot r\cdot \delta_2\cdot \beta_0^4\cdot \beta_0p_\abs^3\cdot p_3
\\
&=(1\pm 10\eps)\cdot \frac{2}{3}p_{\mathcal{J}}n\cdot \delta_2\cdot \beta_0\cdot p_{\mathrm{edge}}\cdot p_{\mathrm{col}}.
\end{align*}

For $w^{\ref{prop:fromB2:1}:\mathrm{other}}_{j,x}(E(\mathcal{H}_2))$: There are $(1\pm \eps)\cdot r\beta_0p_{S-R}n$ choices for $i\in [n]$ and $u\in S_i\setminus R_i$ with $j\in J_{i,u}$, $x\in Z_{i,0}$ and $u\sim_{A/B}x$ by \ref{prop:B2:wgt:3}.
After this, there are $(1\pm \eps)\beta_0^{2r+6}p_Yp_2p_3p_\abs^{r+5} n$ choices of $v\in Y_{i,u,0}$ with $uv\in E^\abs_0$ and  $c(uv)\in D_{2}\setminus (C_{i'}\cup D_{2,i'})$ for each $i'\in J_{j,u}\cup \{j\}$ and $xv$ is an edge of $E^\abs_0$ with colour in $D_3\setminus (C_i\cup C_j\cup D_{3,i}\cup D_{3,j})$ by \ref{prop:B2:wgt:4}.
After this, there are $(1\pm \eps)\beta_0^5p_Y^2p_{\abs}n$ choices for each edge in $M$ by \ref{cond:newforcedge1}.

Therefore, in total, we have
\begin{align*}
w^{\ref{prop:fromB2:1}:\mathrm{other}}_{j,x}(E(\mathcal{H}_2))&=(1\pm 10\eps)\cdot 2r\beta_0p_{S-R}n\cdot \beta_0^{2r+4}p_Yp_2p_3p_\abs^{r+5} n\cdot \left(\beta_0^5p_Y^2p_{\abs}n\right)^{r}
\\
&=(1\pm 10\eps)(p_{S}-p_R)n\cdot r\cdot \delta_2\cdot \beta_0^4\cdot \beta_0p_\abs^3\cdot p_3
\\
&=(1\pm 10\eps)\cdot \frac{1}{3}p_{\mathcal{J}}n\cdot \delta_2\cdot \beta_0\cdot p_{\mathrm{edge}}\cdot p_{\mathrm{col}}.
\end{align*}
\fi
Letting $w^{\ref{prop:fromB2:1}}_{j,x}=w^{\ref{prop:fromB2:1}:\mathrm{same}}_{j,x}+w^{\ref{prop:fromB2:1}:\mathrm{other}}_{j,x}$, we therefore have that
\begin{equation}\label{eqn:W2:complicated1:totalweight}
w^{\ref{prop:fromB2:1}}_{j,x}(E(\mathcal{H}_2))=(1\pm 10\eps)p_{\mathcal{J}}n\cdot \delta_2\cdot \beta_0\cdot p_{\mathrm{edge}}\cdot p_{\mathrm{col}}.
\end{equation}


\medskip

For \ref{prop:fromB2:2}: For each $j\in [n]$ and $x\in Z_{j,0}$, define $w^{\ref{prop:fromB2:2}:\mathrm{same}}_{j,x},w^{\ref{prop:fromB2:2}:\mathrm{other}}_{j,x}:E(\mathcal{M}_2)\to \mathbb{N}$ by,
for each $i\in [n]$, $u\in S_i\setminus R_i$, and $(v,M,c,\omega)\in \mathcal{R}_{i,u}$ 
\begin{itemize}
\item if $j=i$, then let $w^{\ref{prop:fromB2:1}:\mathrm{same}}_{j,x}(E_{(i,u,v,M,c,\omega)})$ be the number of $j'\in J_{i,u}$ for which there is an edge from $x$ to $v$ in $E_0^\abs$ with colour in $D_3\setminus (C_i\cup C_j\cup D_{3,i}\cup D_{3,j})$, and 0 otherwise.
\item if $j\in J_{i,u}$,  let $w^{\ref{prop:fromB2:1}:\mathrm{other}}_{j,x}$ be 1 if there is an edge from $x$ to $V(\omega(j))$ in $E_0^\abs$ with colour in $D_3\setminus (C_i\cup C_{j}\cup D_{3,i}\cup D_{3,j})$, and 0 otherwise.
\end{itemize}

\ifabbrev\else
For $w^{\ref{prop:fromB2:2}:\mathrm{same}}_{j,x}(E(\mathcal{H}_2))$: There are $(1\pm \eps)\cdot r\beta_0p_{S-R}n$ choices for $u\in S_j\setminus R_j$ and $j'\in J_{j,u}$ with $x\sim_{A/B} u$ and $x\in Z_{j',0}$ by \ref{prop:B2:wgt:5}, which is a fraction around $1/r$ of the possibilities in the first choice for examining $w^{\ref{prop:fromB2:1}:\mathrm{other}}_{j,x}$.
Then, continuing as did for $w^{\ref{prop:fromB2:1}:\mathrm{other}}_{j,x}$, using \ref{prop:B2:wgt:4} and \ref{cond:newforcedge1}, we have, as $w^{\ref{prop:fromB2:2}:\mathrm{same}}_{j,x}$ takes values in $\{0,r\}$, that,
\begin{align*}
w^{\ref{prop:fromB2:2}:\mathrm{same}}_{j,x}(E(\mathcal{H}_2))&=(1\pm 10\eps)\cdot \frac{1}{3}p_{\mathcal{J}}n\cdot \delta_2\cdot \beta_0\cdot p_{\mathrm{edge}}\cdot p_{\mathrm{col}}.
\end{align*}

For $w^{\ref{prop:fromB2:2}:\mathrm{other}}_{j,x}(E(\mathcal{H}_2))$: There are $(1\pm \eps)\cdot 2r\beta_0p_{S-R}n$ choices for $i\in [n]$ and $u\in S_i\setminus R_i$ with $j\in J_{i,u}$ and $x\in Z_{i,0}$ by \ref{prop:B2:wgt:6}, which is around twice as many as the possibilities in the first choice for examining $w^{\ref{prop:fromB2:1}:\mathrm{same}}_{j,x}$.
Continuing as did for $w^{\ref{prop:fromB2:1}:\mathrm{same}}_{j,x}$, using \ref{cond:newforv1}, \ref{prop:B2:wgt:2} and  \ref{cond:newforcedge1}, we get
\begin{align*}
w^{\ref{prop:fromB2:2}:\mathrm{other}}_{j,x}(E(\mathcal{H}_2))&=(1\pm 10\eps)\cdot \frac{2}{3}p_{\mathcal{J}}n\cdot \delta_2\cdot \beta_0\cdot p_{\mathrm{edge}}\cdot p_{\mathrm{col}}.
\end{align*}
\fi
Letting $w^{\ref{prop:fromB2:2}}_{j,x}=w^{\ref{prop:fromB2:2}:\mathrm{same}}_{j,x}+w^{\ref{prop:fromB2:2}:\mathrm{other}}_{j,x}$, we therefore have that
\begin{equation}\label{eqn:W2:complicated2:totalweight}
w^{\ref{prop:fromB2:2}}_{j,x}(E(\mathcal{H}_2))=(1\pm 10\eps)\cdot p_{\mathcal{J}}n\cdot \delta_2\cdot \beta_0\cdot p_{\mathrm{edge}}\cdot p_{\mathrm{col}}.
\end{equation}


For \ref{prop:fromB2:6}: For each $j\in [n]$ and $c'\in D_3\setminus (C_j\cup D_{3,j})$, define $w^{\ref{prop:fromB2:6}:\mathrm{same}}_{j,c'},w^{\ref{prop:fromB2:6}:\mathrm{other}}_{j,c'}:E(\mathcal{M}_2)\to \mathbb{N}$ by,
for each $i\in [n]$, $u\in S_i\setminus R_i$, and $(v,M,c,\omega)\in \mathcal{R}_{i,u}$ 
\begin{itemize}
\item if $j=i$, then letting $w^{\ref{prop:fromB2:1}:\mathrm{same}}_{j,c}(E_{(i,u,v,M,c,\omega)})$ be the number of pairs $(j',w)$ with $j'\in J_{i,u}$ and $w\in V(\omega(j'))$ for which $c'\notin C_{j'}\cup D_{3,j'}\}$
 and $w$ has a colour-$c'$ edge to $Z_{i,0}\cap Z_{j',0}$ in $E_0^\abs$, and 0 otherwise, and
\item if $j\in J_{i,u}$, then let $w^{\ref{prop:fromB2:1}:\mathrm{other}}_{j,c'}$ be 1 if $c'\notin C_{i}\cup D_{3,i}$ and there is a colour-$c'$ edge from $v$ to $Z_{i,0}\cap Z_{j,0}$ in $E_0^\abs$, and 0 otherwise.
\end{itemize}

\ifabbrev\else
For $w^{\ref{prop:fromB2:6}:\mathrm{same}}_{j,c'}(E(\mathcal{H}_2))$: There are $2\beta_0 p_\abs r\cdot p_{S-R}n$ choices of $u\in S_j\setminus R_j$ and $j'\in J_{j,u}$ with $c'\notin C_{j'}\cup D_{3,j'}$ by \ref{prop:B2:wgt:7}.
After this, there are $(1\pm \eps)\beta_0^{2r+3}p_Yp_2p_\abs^{r+2} n$ choices of $v\in Y_{j,u,0}$ with $uv\in E^\abs_0$ and $c(uv)\in D_{2}\setminus (C_{i'}\cup D_{2,i'})$ for each $i'\in J_{j,u}\cup \{j\}$ by \ref{cond:newforv1}.
Then, by \ref{prop:B2:wgt:8}, there are $(1\pm \eps)2\beta_0^8p_Y^2p_Zp_\abs^2n$ choices for $w\in Y_{j,0}\cap Y_{j',0}$ which has a colour-$c'$ edge in $E_0^\abs$ to $Z_{j,0}\cap Z_{j',0}$ and a colour-$c(uv)$ edge in $E_0^\abs$ to $Y_{j,0}\cap Y_{j',0}$. As this determines the edge with colour $c=c(uv)$ in the matching $M$, we then have
$(1\pm \eps)\beta_0^5p_Y^2p_{\abs}n$ choices for each of the $r-1$ other edges in $M\setminus \{\omega(j')\}$ by \ref{cond:newforcedge1}.

Therefore, in total, we have
\begin{align*}
w^{\ref{prop:fromB2:6}:\mathrm{same}}_{j,c'}(E(\mathcal{H}_2))&=(1\pm 10\eps)\cdot
2\beta_0 p_\abs r\cdot p_{S-R}n
\cdot
\beta_0^{2r+3}p_Yp_2p_\abs^{r+2} n
\cdot
2\beta_0^8p_Yp_Zp_\abs^2n
\cdot
\left(\beta_0^5p_Y^2p_{\abs}n\right)^{r-1}
\\
&=(1\pm 10\eps)2(p_{S}-p_R)n\cdot r\cdot \delta_2\cdot \beta_0^4p_\abs^2\cdot p_Z
\\
&=(1\pm 10\eps)\cdot \frac{4}{3}{p}_{\mathrm{vx}}\cdot \hat{p}_{\mathrm{col}}\cdot p_{\mathrm{edge}}\cdot p_{\mathcal{J}}n\cdot \delta_2.
\end{align*}

For $w^{\ref{prop:fromB2:6}:\mathrm{other}}_{j,c'}(E(\mathcal{H}_2))$: There are $(1\pm \eps)\cdot 2r\beta_0p_\abs p_{S-R}n$ choices for $i\in [n]$ and $u\in S_i\setminus R_i$ with $j\in J_{i,u}$ and $c'\notin C_i\cup D_{3,i}$ by \ref{prop:B2:wgt:9}.
After this, there are $(1\pm \eps)\beta_0^{2r+6}p_Yp_Zp_2p_\abs^{r+3} n$ choices of $v\in Y_{i,u,0}$ with $uv\in E^\abs_0$ and $c(uv)\in D_{2}\setminus (C_{i'}\cup D_{2,i'})$ for each $i'\in J_{i,u}\cup \{i\}$ and there is a colour-$c'$ edge from $v$ to $Z_{i,0}\cap Z_{j,0}$ in $E_0^\abs$ by \ref{prop:B2:wgt:10}.
After this, there are $(1\pm \eps)\beta_0^5p_Y^2p_{\abs}n$ choices for each edge in $M$ by \ref{cond:newforcedge1}.

Therefore, in total, we have
\begin{align*}
w^{\ref{prop:fromB2:6}:\mathrm{other}}_{j,c'}(E(\mathcal{H}_2))&=(1\pm 10\eps)\cdot 2r\beta_0p_\abs p_{S-R}n\cdot \beta_0^{2r+6}p_Yp_Zp_2p_\abs^{r+3} n\cdot \left(\beta_0^5p_Y^2p_{\abs}n\right)^{r}
\\
&=(1\pm 10\eps)(p_{S}-p_R)n\cdot r\cdot \delta_2\cdot \beta_0^4p_\abs^2\cdot p_Z
\\
&=(1\pm 10\eps)\cdot \frac{2}{3}{p}_{\mathrm{vx}}\cdot \hat{p}_{\mathrm{col}}\cdot p_{\mathrm{edge}}\cdot p_{\mathcal{J}}n\cdot \delta_2.
\end{align*}
\fi
Letting $w^{\ref{prop:fromB2:6}}_{j,c'}=\frac{1}{2}w^{\ref{prop:fromB2:6}:\mathrm{same}}_{j,c'}+\frac{1}{2}w^{\ref{prop:fromB2:6}:\mathrm{other}}_{j,c'}$, we therefore have that
\begin{equation}\label{eqn:W2:complicated3:totalweight}
w^{\ref{prop:fromB2:6}}_{j,c'}(E(\mathcal{H}_2))=(1\pm 10\eps)\cdot p_{\mathcal{J}}n\cdot \delta_2\cdot \beta_0\cdot p_{\mathrm{edge}}\cdot p_{\mathrm{col}}.
\end{equation}


For \ref{prop:fromB2:7}: For each $j\in [n]$ and $c'\in D_3\setminus (C_j\cup D_{3,j})$, define $w^{\ref{prop:fromB2:7}:\mathrm{same}}_{j,c'},w^{\ref{prop:fromB2:7}:\mathrm{other}}_{j,c'}:E(\mathcal{M}_2)\to \mathbb{N}$ by,
for each $i\in [n]$, $u\in S_i\setminus R_i$, and $(v,M,c,\omega)\in \mathcal{R}_{i,u}$ 
\begin{itemize}
\item if $j=i$, then letting $w^{\ref{prop:fromB2:7}:\mathrm{same}}_{j,c}(E_{(i,u,v,M,c,\omega)})$ be
the number of $j'\in J_{i,u}$ for which $c'\notin C_{j'}\cup D_{3,j'}$ and there is a colour-$c'$ edge from $v$ to $Z_{j,0}\cap Z_{j',0}$ in $E_0^\abs$, and 0 otherwise, and
\item if $j\in J_{i,u}$, then letting $w^{\ref{prop:fromB2:7}:\mathrm{other}}_{j,c'}(E_{(i,u,v,M,c,\omega)})$ be the number of vertices in $\omega(i)$ with a colour-$c'$ edge in $E_0^\abs$ to $Z_{i,0}\cap Z_{j,0}$ if $c'\notin C_{i}\cup D_{3,i}$, and 0 otherwise.
\end{itemize}

\ifabbrev\else
For $w^{\ref{prop:fromB2:7}:\mathrm{same}}_{j,c'}(E(\mathcal{H}_2))$:
There are $2r\beta_0p_\abs p_{S-R}n$ choices of $u\in S_j\setminus R_j$ and $j'\in J_{j,u}$ for which $c'\notin C_{j'}\cup D_{3,j'}$ by \ref{prop:B2:wgt:7}.
After this, there are $(1\pm \eps)\beta_0^{2r+6}p_Yp_Zp_2p_\abs^{r+3} n$ choices of $v\in Y_{i,u,0}$ with $uv\in E^\abs_0$ and $c(uv)\in D_{2}\setminus (C_{i'}\cup D_{2,i'})$ for each $i'\in J_{i,u}\cup \{i\}$ and there is a colour-$c'$ edge from $v$ to $Z_{j,0}\cap Z_{j',0}$ in $E_0^\abs$ by \ref{prop:B2:wgt:10}.
After this, there are $(1\pm \eps)\beta_0^5p_Y^2p_{\abs}n$ choices for each edge in $M$ by \ref{cond:newforcedge1}.

Therefore, in total, we have
\begin{align*}
w^{\ref{prop:fromB2:7}:\mathrm{other}}_{j,c'}(E(\mathcal{H}_2))&=(1\pm 10\eps)\cdot 2r\beta_0p_\abs p_{S-R}n\cdot \beta_0^{2r+6}p_Yp_Zp_2p_\abs^{r+3} n\cdot \left(\beta_0^5p_Y^2p_{\abs}n\right)^{r}
\\
&=(1\pm 10\eps)2(p_{S}-p_R)n\cdot r\cdot \delta_2\cdot \beta_0^4p_\abs^2\cdot p_Z
\\
&=(1\pm 10\eps)\cdot \frac{2}{3}{p}_{\mathrm{vx}}\cdot \hat{p}_{\mathrm{col}}\cdot p_{\mathrm{edge}}\cdot p_{\mathcal{J}}n\cdot \delta_2.
\end{align*}

For $w^{\ref{prop:fromB2:7}:\mathrm{other}}_{j,c'}(E(\mathcal{H}_2))$:
There are $(1\pm \eps)\cdot 2r\beta_0p_\abs p_{S-R}n$ choices for $i\in [n]$ with $j\in J_{i,u}$ and $c'\notin C_i\cup D_{3,i}$ by \ref{prop:B2:wgt:9}.
After this, there are $(1\pm \eps)\beta_0^{2r+3}p_Yp_2p_\abs^{r+2} n$ choices of $v\in Y_{j,u,0}$ with $uv\in E^\abs_0$ and $c(uv)\in D_{2}\setminus (C_{i'}\cup D_{2,i'})$ for each $i'\in J_{j,u}\cup \{j\}$ by \ref{cond:newforv1}.
Then, by \ref{prop:B2:wgt:8}, there are $(1\pm \eps)2\beta_0^8p_Y^2p_Zp_\abs^2n$ choices for $w\in Y_{j,0}\cap Y_{j',0}$ which has a colour-$c'$ edge in $E_0^\abs$ to $Z_{j,0}\cap Z_{j',0}$ and a colour-$c(uv)$ edge in $E_0^\abs$ to $Y_{j,0}\cap Y_{j',0}$. As this determines the edge with colour $c=c(uv)$ in the matching $M$, we then have
$(1\pm \eps)\beta_0^5p_Y^2p_{\abs}n$ choices for each of the $r-1$ other edges in $M\setminus \{\omega(j')\}$ by \ref{cond:newforcedge1}.

Therefore, in total, we have
\begin{align*}
w^{\ref{prop:fromB2:7}:\mathrm{other}}_{j,c'}(E(\mathcal{H}_2))&=(1\pm 10\eps)\cdot
4\beta_0 p_\abs r\cdot p_{S-R}n
\cdot
\beta_0^{2r+3}p_Yp_2p_\abs^{r+2} n
\cdot
\beta_0^8p_Yp_Zp_\abs^2n
\cdot
\left(\beta_0^5p_Y^2p_{\abs}n\right)^{r-1}
\\
&=(1\pm 10\eps)4(p_{S}-p_R)n\cdot r\cdot \delta_2\cdot \beta_0^4p_\abs^2\cdot p_Z
\\
&=(1\pm 10\eps)\cdot \frac{4}{3}{p}_{\mathrm{vx}}\cdot \hat{p}_{\mathrm{col}}\cdot p_{\mathrm{edge}}\cdot p_{\mathcal{J}}n\cdot \delta_2.
\end{align*}

\fi
Letting $w^{\ref{prop:fromB2:7}}_{j,c'}=\frac{1}{2}w^{\ref{prop:fromB2:7}:\mathrm{same}}_{j,c'}+\frac{1}{2}w^{\ref{prop:fromB2:7}:\mathrm{other}}_{j,c'}$,
we therefore have that
\begin{equation}\label{eqn:W2:complicated4:totalweight}
w^{\ref{prop:fromB2:6}}_{j,c'}(E(\mathcal{H}_2))=(1\pm 10\eps){p}_{\mathrm{vx}}\cdot \hat{p}_{\mathrm{col}}\cdot p_{\mathrm{edge}}\cdot p_{\mathcal{J}}n\cdot \delta_2.
\end{equation}


For \ref{prop:fromB2:9}: For each $xy\in E_0^\abs$ with $c(xy)\in D_3$, define $w^{\ref{prop:fromB2:9}:\mathrm{first}}_{xy},w^{\ref{prop:fromB2:9}:\mathrm{match}}_{xy}:E(\mathcal{M}_2)\to \mathbb{N}$ by,
for each $i\in [n]$, $u\in S_i\setminus R_i$, and $(v,M,c,\omega)\in \mathcal{R}_{i,u}$ 
\begin{itemize}
\item if $v\in \{x,y\}$, $y\in Z_{i,0}$ and $c(xy)\notin C_i\cup D_{3,i}$, then letting $w^{\ref{prop:fromB2:9}:\mathrm{first}}_{xy}(E_{(i,u,v,M,c,\omega)})$ be the number of $j\in J_{i,u}$ for which $y\in Z_{j,0}$ and
$c(xy)\notin C_j\cup D_{3,j}$, and 0 otherwise, and
\item if $v\notin \{x,y\}$, $x,y\in Z_{i,0}$ and $c(xy)\notin C_i\cup D_{3,i}$, then letting $w^{\ref{prop:fromB2:9}:\mathrm{match}}_{xy}(E_{(i,u,v,M,c,\omega)})$ be the number of $j\in J_{i,u}$ with $x,y\in Z_{j,0}$ and
$c(xy)\notin C_j\cup D_{3,j}$, and 0 otherwise.
\end{itemize}

\ifabbrev\else
For $w^{\ref{prop:fromB2:9}:\mathrm{first}}_{xy}(E(\mathcal{H}_2))$: There are $(1\pm\eps)\beta_0^{2r+7}p_Zp_Yp_2p_\abs^{r+4}rp_{S-R}n^2$ choices for $i\in [n]$,  $u\in S_i\setminus R_i$ and $j\in J_{i,u}$ with $x\in Y_{i,u,0}$, $ux\in E_0^\abs$, $c(ux)\in D_2\setminus (C_{j'}\cup D_{2,j'})$ for each $j'\in J_{i,u}$, $y\in Z_{i,0}\cap Z_{j,0}$ and $c(xy)\notin C_i\cup C_j\cup D_{3,i}\cup D_{3,j}$ by \ref{prop:B2:wgt:11}.
After this, there are $(1\pm\eps)\beta_0^5p_Y^2p_{\abs}n$ choices for each edge in $M$ by \ref{cond:newforcedge1}.

Therefore, in total, counting similarly with $x$ and $y$ switched, we have
\begin{align*}
w^{\ref{prop:fromB2:1}:\mathrm{first}}_{xy}(E(\mathcal{H}_2))&=(1\pm 10\eps)\cdot 2\beta_0^{2r+7}p_Zp_Yp_2p_\abs^{r+4}rp_{S-R}n^2
\cdot \left(\beta_0^5p_Y^2p_{\abs}n\right)^{r}
\\
&=(1\pm 10\eps)2(p_{S}-p_R)n\cdot r\cdot \delta_2\cdot \beta_0^4\cdot p_\abs^2 p_Z
\\
&=(1\pm 10\eps)\cdot \frac{2}{3}{p}_{\mathrm{vx}}^2\cdot p_3\beta_0^4p_\abs^4\cdot p_{\mathrm{edge}}\cdot p_{\mathcal{I}}\cdot n^2.
\end{align*}

For $w^{\ref{prop:fromB2:9}:\mathrm{match}}_{xy}(E(\mathcal{H}_2))$: There are $(1\pm \eps)\cdot r\beta_0^{6}p_\abs^2p_Yp_Zp_{S-R}n^2$ choices for $i\in [n]$, $u\in S_i\setminus R_i$ and $j\in J_{i,u}$ with $c(xy)\notin (C_i\cup C_j\cup D_{3,i}\cup D_{3,j})$, $x\in Y_{i,0}\cap Y_{j,0}$, $y\in Z_{i,0}\cap Z_{j,0}$ by \ref{prop:B2:wgt:12}.
After this, there are $(1\pm \eps)\beta_0^{2r+6}p_Yp_2p_\abs^{r+2} n$ choices of $v\in Y_{i,u,0}$ with $uv\in E^\abs_0$ and $c(uv)\in D_{2}\setminus (C_{i'}\cup D_{2,i'})$ for each $i'\in J_{i,u}\cup \{i\}$ such that $x$ has a colour-$c(uv)$ edge in $E_0^\abs$ to $Y_{i,0}\cap Y_{j,0}$ by \ref{prop:B2:wgt:13}.
After this, there are $(1\pm \eps)\beta_0^5p_Y^2p_{\abs}n$ choices for each of the remaining $r-1$ edges in $M$ by \ref{cond:newforcedge1}.
Therefore, in total, we have
\begin{align*}
w^{\ref{prop:fromB2:9}:\mathrm{match}}_{xy}(E(\mathcal{H}_2))&=(1\pm 10\eps)\cdot 2r\beta_0^{6}p_\abs^2p_Yp_Zp_{S-R}n^2
\cdot \beta_0^{2r+6}p_Yp_2p_\abs^{r+2} n\cdot
\left(\beta_0^5p_Y^2p_{\abs}n\right)^{r-1}
\\
&=(1\pm 10\eps)(p_{S}-p_R)n\cdot r\cdot \delta_2\cdot \beta_0^4\cdot p_\abs^2
\\
&=(1\pm 10\eps)\cdot \frac{1}{3}p_{\mathcal{J}}n\cdot \delta_2\cdot \beta_0\cdot p_{\mathrm{edge}}\cdot \beta_0^2\cdot p_\abs^2.
\end{align*}\fi
Letting
$w^{\ref{prop:fromB2:9}}_{xy}=w^{\ref{prop:fromB2:9}:\mathrm{first}}_{xy}+w^{\ref{prop:fromB2:9}:\mathrm{match}}_{xy}$, we therefore have that
\begin{equation}\label{eqn:W2:complicated5:totalweight}
w^{\ref{prop:fromB2:9}}_{xy}(E(\mathcal{H}_2))=(1\pm 10\eps){p}_{\mathrm{vx}}\cdot \beta_0^2\cdot p_\abs^2\cdot p_{\mathcal{J}}\cdot n.
\end{equation}

For \ref{prop:fromB2:10}: For each $xy\in E_0^\abs$ with $c(xy)\in D_3$, define $w^{\ref{prop:fromB2:10}:\mathrm{first}}_{xy},w^{\ref{prop:fromB2:10}:\mathrm{match}}_{xy}:E(\mathcal{M}_2)\to \mathbb{N}$ by,
for each $i\in [n]$, $u\in S_i\setminus R_i$, and $(v,M,c,\omega)\in \mathcal{R}_{i,u}$ 
\begin{itemize}
\item if $vx\in E^\abs_0$, $c(uv)\in D_3\setminus (C_i\cup D_{3,i})$, $x,y\in Z_i$, then letting $w^{\ref{prop:fromB2:10}:\mathrm{first}}_{xy}(E_{(i,u,v,M,c,\omega)})$ be the number of $j\in J_{i,u}$ for which $x,y\in Z_{j,0}$ and $c(xy)\notin C_j\cup D_{3,j}$, and 0 otherwise, and
\item if $x,y\in Z_{i,0}$ and $c(xy)\notin C_i\cup D_{3,i}$, then letting $w^{\ref{prop:fromB2:10}:\mathrm{match}}_{xy}(E_{(i,u,v,M,c,\omega)})$ be the number of $j\in J_{i,u}$ with $x,y\in Z_{j,0}$ and $c(xy)\notin C_j\cup D_{3,j}$ for which there is an edge in $E^\abs_0$ from $x$ to $\omega(j)$ with colour in $D_3\setminus (C_i\cup C_j\cup D_{3,i}\cup D_{3,j})$, and 0 otherwise.
\end{itemize}

\ifabbrev\else
For $w^{\ref{prop:fromB2:10}:\mathrm{first}}_{xy}(E(\mathcal{H}_2))$: There are $(1\pm\eps)\beta_0^{6}p_Z^2p_\abs^{2}rp_{S-R}n^2$ choices for $i\in [n]$,  $u\in S_i\setminus R_i$ and $j\in J_{i,u}$ with $x,y\in Z_{i,0}\cap Z_{j,0}$ and $c(xy)\notin C_i\cup C_j\cup D_{3,i}\cup D_{3,j}$ by \ref{prop:B2:wgt:14}.
After this, there are $(1\pm \eps)\beta_0^{2r+6}p_Yp_2p_3p_\abs^{r+5} n$ choices of $v\in Y_{i,u,0}$ with $uv\in E^\abs_0$ and  $c(uv)\in D_{2}\setminus (C_{i'}\cup D_{2,i'})$ for each $i'\in J_{j,u}\cup \{j\}$ and $xv$ is an edge of $E^\abs_0$ with colour in $D_3\setminus (C_i\cup C_j\cup D_{3,i}\cup D_{3,j})$ by \ref{prop:B2:wgt:4}.
After this, there are $(1\pm \eps)\beta_0^5p_Y^2p_{\abs}n$ choices for each edge in $M$ by \ref{cond:newforcedge1}.

Therefore, in total, counting similarly with $x$ and $y$ switched, we have
\begin{align*}
w^{\ref{prop:fromB2:1}:\mathrm{first}}_{xy}(E(\mathcal{H}_2))&=(1\pm 10\eps)\cdot \beta_0^{6}p_Z^2p_\abs^{2}rp_{S-R}n^2
\cdot \beta_0^{2r+6}p_Yp_2p_3p_\abs^{r+5}n
\cdot \left(\beta_0^5p_Y^2p_{\abs}n\right)^{r}
\\
&=(1\pm 10\eps)2(p_{S}-p_R)n\cdot r\cdot \delta_2\cdot \beta_0^9\cdot p_\abs^5 p_Z^2\cdot p_3
\\
&=(1\pm 10\eps)\cdot \frac{2}{3}{p}_{\mathrm{vx}}\cdot \beta_0^2\cdot p_\abs^2\cdot p_{\mathcal{J}}\cdot n.
\end{align*}

For $w^{\ref{prop:fromB2:10}:\mathrm{match}}_{xy}(E(\mathcal{H}_2))$: There are $(1\pm \eps)\cdot r\beta_0^{6}p_\abs^2p_Z^2p_{S-R}n^2$ choices for $i\in [n]$, $u\in S_i\setminus R_i$ and $j\in J_{i,u}$ with  $x,y\in Z_{i,0}\cap Z_{j,0}$ and $c(xy)\notin (C_i\cup C_j\cup D_{3,i}\cup D_{3,j})$, by \ref{prop:B2:wgt:14}.
After this, there are $(1\pm \eps)\beta_0^{2r+3}p_Yp_2p_\abs^{r+2} n$ choices of $v\in Y_{j,u,0}$ with $uv\in E^\abs_0$ and $c(uv)\in D_{2}\setminus (C_{i'}\cup D_{2,i'})$ for each $i'\in J_{j,u}\cup \{j\}$ by \ref{cond:newforv1}.
Then, by \ref{prop:B2:wgt:2}, there are $(1\pm \eps)\beta_0^8p_Y^2p_3p_\abs^4n$ choices for the edge $\omega(j)$.
After this, there are $(1\pm \eps)\beta_0^5p_Y^2p_{\abs}n$ choices for each of the remaining $r-1$ edges in $M$ by \ref{cond:newforcedge1}.
Therefore, in total, counting similarly with $x$ and $y$ switched, we have
\begin{align*}
w^{\ref{prop:fromB2:10}:\mathrm{match}}_{xy}(E(\mathcal{H}_2))&=(1\pm 10\eps)\cdot 2r\beta_0^{6}p_\abs^2p_Z^2p_{S-R}n^2
\cdot \beta_0^{2r+3}p_Yp_2p_\abs^{r+2} n
\cdot \beta_0^8p_Y^2p_3p_\abs^4n\cdot
\left(\beta_0^5p_Y^2p_{\abs}n\right)^{r-1}
\\
&=(1\pm 10\eps)2(p_{S}-p_R)n\cdot r\cdot \delta_2\cdot \beta_0^9\cdot p_\abs^5 p_Z^2\cdot p_3
\\
&=(1\pm 10\eps)\cdot \frac{2}{3}{p}_{\mathrm{vx}}\cdot \beta_0^2\cdot p_\abs^2\cdot p_{\mathcal{J}}\cdot n.
\end{align*}\fi
Letting $w^{\ref{prop:fromB2:10}}_{xy}=w^{\ref{prop:fromB2:10}:\mathrm{first}}_{xy}+w^{\ref{prop:fromB2:10}:\mathrm{match}}_{xy}$, we therefore have that
\begin{equation}\label{eqn:W2:complicated6:totalweight}
w^{\ref{prop:fromB2:10}}_{xy}(E(\mathcal{H}_2))=(1\pm 10\eps){p}_{\mathrm{vx}}\cdot \beta_0^2\cdot p_\abs^2\cdot p_{\mathcal{J}}\cdot n.
\end{equation}

Let
\begin{align}
\mathcal{W}_2=\{w^{\ref{prop:fromB2:1}}_{j,x},w^{\ref{prop:fromB2:2}}_{j,x}:j\in [n],x\in Z_{j,0}\}\cup
\{w_{j,c}^{\ref{prop:fromB2:6}},w_{j,c}^{\ref{prop:fromB2:7}}:&j\in [n],c\in D_3\setminus (C_j\cup D_{3,j})\}\nonumber\\
&\cup \{w_{xy}^{\ref{prop:fromB2:9}},w_{xy}^{\ref{prop:fromB2:10}}:xy\in E_0^{\abs}\text{ s.t.\ }c(xy)\in D_3\},\label{eq:defW2nonprime}
\end{align}
and note that from \eqref{eqn:W2:complicated1:totalweight}--\eqref{eqn:W2:complicated6:totalweight}, we have that $w(E(\mathcal{H}_2))\geq \sqrt{n}\delta_2$ for each $w\in \mathcal{W}_2$.


\subsubsection{Simpler weight functions for Part~\ref{partB2}}\label{sec:simplerweight}
We now define the weight functions that will allow us to control the edges in $E_0^\abs$ with colour in $D_2$ that we do no end up using in the matchings found in Part~\ref{partB2}.
For each $i\in [n]$, $\phi\in \mathcal{F}$, $v\in V(G)$, and $c\in D_2$, and each $i'\in [n]$, $u\in S_i\setminus R_i$, and $(v',M,c',\omega)\in R_{i',u}$, recalling the edge $E_{(v',M,c',\omega)}$ of $\mathcal{H}_2$ from \eqref{eq:edgesofH2}, we define
\[
w^{\mathrm{same}}_v(E_{(i',u,v',M,c',\omega)})=\mathbf{1}_{\{v=v'\}},\;\;\; w^{\mathrm{diff}}_v(E_{(i',u,v',M,c',\omega)})=\mathbf{1}_{\{v\in V(M)\}}
\]
\[
w^{\mathrm{same}}_{v,\phi}(E_{(i',u,v',M,c',\omega)})=\mathbf{1}_{\{v=v',i'\in I_\phi\}},\;\;\;w_{c,\phi}(E_{(i',u',v,M,c',\omega)})=\mathbf{1}_{\{c'=c,i'\in I_\phi\}}
\]
\[
w^{\mathrm{same}}_i(E_{(i',u,v',M,c',\omega)})=\mathbf{1}_{\{i=i'\}},\;\;\;\;\;\text{and}\;\;
w^{\mathrm{diff}}_i(E_{(i',u,v',M,c,\omega)})=\mathbf{1}_{\{i\in J_{i,u}\}},
\]
and, if $v\in Y_{i,0}$, we define
\[
w^{\textrm{cod}}_{i,v}(E_{(i',u,v',M,c',\omega)})=r\cdot \mathbf{1}_{\{i=i'\}}\cdot \mathbf{1}_{\{v=v'\}}+\mathbf{1}_{\{i\in J_{i,u}\}}\cdot \mathbf{1}_{\{v\in V(M)\}}.
\]

Let $\mathcal{W}'_2=\{w^{\mathrm{same}}_i,w^{\mathrm{diff}}_i:i\in [n]\}\cup \{w^{\mathrm{same}}_v,w_v^{\mathrm{diff}}:v\in V(G)\}\cup \{w_{c,\phi}:c\in D_2,\phi\in \mathcal{F}\}$, and let $\mathcal{W}''_2=\{w^{\textrm{cod}}_{i,v}:i\in [n],v\in Y_{i,0}\}$.

For each $i\in [n]$,
we have, using Claim~\ref{clm:H2almostregular}, that
\begin{equation}\label{eqn:W2:isametotalweight}
w_i^{\mathrm{same}}(E(\mathcal{H}_2))=\sum_{u\in S_i\setminus R_i}d_{\mathcal{H}_2}((i,u))=(1\pm \eps)|S_i\setminus R_i|\delta_2\overset{\ref{prop:B2:sizeofSminusR}}{=}(1\pm 2\eps)\cdot p_{S-R}n\cdot \delta_2.
\end{equation}
Furthermore,
\begin{equation}\label{eqn:W2:idifftotalweight}
w_i^{\mathrm{diff}}(E(\mathcal{H}_2))=|\{(i',u):i\in J_{i',u}\}|\cdot d_{\mathcal{H}_2}((i',u))=(1\pm \eps)\cdot r\cdot |S_i\setminus R_i|\delta_2\overset{\ref{prop:B2:sizeofSminusR}}{=}(1\pm 2\eps)\cdot r\cdot p_{S-R}n\cdot \delta_2.
\end{equation}
For each $v\in V(G)$,
using the proof of Claim~\ref{clm:H2almostregular}, and in particular counting part \textbf{ii)a)} over each possible $i\in [n]$ and $i\in I_\phi$, $\phi\in \mathcal{F}$, respectively, we have
\begin{equation}\label{eqn:W2:vsametotalweight}
w_v^{\mathrm{same}}(E(\mathcal{H}_2))=(1\pm \eps)\cdot n\cdot \delta_2/121,
\end{equation}
and, for each $\phi\in \mathcal{F}$,
\begin{equation}\label{eqn:W2:vsamewithtotalweight}
w_{v,\phi}^{\mathrm{same}}(E(\mathcal{H}_2))=(1\pm \eps)\cdot p_\tr p_\fa n\cdot \delta_2/121.
\end{equation}
Furthermore, using the proof of Claim~\ref{clm:H2almostregular}, and in particular counting part \textbf{ii)b)} over each possible $i\in [n]$, we have
\begin{equation}\label{eqn:W2:vdifftotalweight}
w_v^{\mathrm{diff}}(E(\mathcal{H}_2))=(1\pm \eps)\cdot n\cdot \delta_2\cdot 2r/121.
\end{equation}
Finally, for each $c\in D_2$ and $\phi\in \mathcal{F}$, from Claim~\ref{clm:H3almostregular}, and as each edge in $E(H_2)$ which uses $c$ contains $r+1$ different pairs $(i,c)$ for some $i$, we have
\begin{equation}\label{eqn:W2:ctotalweight}
w_{c,\phi}(E(\mathcal{H}_2))=\frac{1}{r+1}\sum_{i\in I_\phi:c\in D_2\setminus (C_i\cup D_{2,i})}d_{\mathcal{H}_2}((i,c))\overset{\ref{prop:B1:lastfew:3}}=(1\pm \eps)\cdot \frac{1}{r+1}\cdot \beta_0p_\abs p_\tr p_\fa n\cdot \delta_2.
\end{equation}

In particular, \eqref{eqn:W2:isametotalweight}--\eqref{eqn:W2:ctotalweight} imply that, for each $w\in \mathcal{W}_2'$, $w(E(\mathcal{H}_2))\geq n^{1/2}\cdot \delta_2$. Furthermore, for each $i\in [n]$ and $v\in Y_{i,0}$, we have, using Claim~\ref{clm:H2lowcod}, that
\begin{equation}\label{eq:forivcod}
w^{\textrm{cod}}_{i,v}(E(\mathcal{H}_2))\leq \sum_{u\in S_i\setminus R_i}\sum_{j\in J_{i,u}} |\{e\in E(\mathcal{H}_2):(i,u),(j,v)\in V(e)\}|=O(n\cdot r\cdot n^{r+0.5})\leq \delta_2\cdot n^{0.6}.
\end{equation}


\subsubsection{Choice of $\mathcal{M}_2$ and its properties}\label{sec:choiceofM2}

Recalling $\mathcal{W}_2$, $\mathcal{W}_2'$, $\mathcal{W}_2''$ from Sections~\ref{sec:complicatedweight} and \ref{sec:simplerweight}, we have shown that $w(E(\mathcal{H}_2))\geq n^{1/2}\cdot \delta_2$ holds for each $w\in \mathcal{W}_2\cup \mathcal{W}_2'$.
Therefore, by Claims~\ref{clm:H2almostregular},~\ref{clm:H2lowcod} and Theorem~\ref{thm:nibble}, we can find a matching $\cM_2$ in $\mathcal{H}_2$ such that, for each $w\in \mathcal{W}_2\cup \mathcal{W}_2'$,
\begin{equation}\label{eqn:M2:balanced}
w(\mathcal{M}_2)=(1\pm 200\eps)\cdot\delta_2^{-1} w(E(\mathcal{H}_2)),
\end{equation}
and, for each $w\in \mathcal{W}_2''$, we have\footnote{More formally, for each such $w \in \mathcal{W}_2''$ we add an arbitrary function $w'$ with total weight $n^{0.1}\cdot \delta_2$, say, to ensure that $(w+w')((E(\mathcal{H}_2)))\geq n^{0.1}$ before the application of Theorem~\ref{thm:nibble} using the function $w+w'$, from which we only take the upper bound for $w$.}
 $w(\mathcal{M}_2)\leq 2n^{0.6}$.

Let $J=\{(i,u)\in V(\mathcal{M}_2):i\in [n],u\in S_i\setminus R_i\}$. For each $(i,u)\in J$, let $(v_{i,u},M_{i,u},c_{i,u},\omega_{i,u})\in \mathcal{R}_{i,u}$ be the such that $E_{(i,u,v_{i,u},M_{i,u},c_{i,u},\omega_{i,u})}$ is the edge in $\mathcal{H}_2$ containing $(i,u)$. For each $(i,u)\in J$ and $j\in J_{i,u}$, let $x_{i,u,j}$ be the vertex of $\omega_{i,u}(j)$ with $x_{i,u,j}\simAB v_{i,u}$ and let $y_{i,u,j}$ be the vertex of $\omega_{i,u}(j)$ with $x_{i,u,j}\simAB v_{i,u}$.

For each $i\in [n]$, let
\begin{equation}\label{eqn:calFidefn}
\mathcal{F}_i=\left(\cup_{u\in S_i\setminus R_i:(i,u)\in J}\cup_{j\in J_{i,u}}\{(j,v_{i,u}),(j,y_{i,u,j})\}\right)\cup \left(\cup_{(j,u)\in J:i\in J_{j,u}}(j,x_{j,u,i})\right),
\end{equation}
which is a set of tuples which we will eventually add to $\mathcal{J}$ in a pair with $(i,v)$ for some $v\in V(G)$.
For each $i\in [n]$, let
\begin{equation}\label{eqn:calGidefn}
\mathcal{G}_i=\left(\cup_{u\in S_i\setminus R_i:(i,u)\in J}\cup_{j\in J_{i,u}}\{(j,x_{i,u,j})\}\right)\cup \left(\cup_{(j,u)\in J:i\in J_{j,u}}\{(j,v_{j,u}),(j,y_{j,u,i})\}\right)
\end{equation}
which is a set of pairs $(j,v)$ for which we will add some pair $\{(i,v),(j,v')\}$ to $\mathcal{J}$ for some $v'$.
For each $v\in V(G)$, let $\mathcal{K}_v$ be the multi-set
\begin{equation}\label{eqn:calKxdefn}
\left(\cup_{(i,u)\in J:v_{i,u}=v}\cup_{j\in J_{i,u}}(j,i)\}\right)\cup \left(\cup_{(i,u)\in J}\cup_{j\in J_{i,u}:x_{i,u,j}=v}(i,j)\right)
\cup \left(\cup_{(i,u)\in J}\cup_{j\in J_{i,u}:y_{i,u,j}=v}(j,i)\right).
\end{equation}

\begin{claim}\label{clm:propsfromBtwopart1} The following hold.

\stepcounter{propcounter}
\begin{enumerate}[label = {{\textbf{\Alph{propcounter}\arabic{enumi}}}}]

\item  \label{prop:fromB2:1:forJhat} For each $i\in [n]$ and $x\in Z_{i,0}$, there are $(1\pm 2\gamma)\beta_0\cdot p_{\mathrm{col}}\cdot p_{\mathrm{edge}}\cdot p_{\mathcal{J}}\cdot n$ choices for $(j,u)\in \mathcal{G}_i$ such that
 $x\in Z_{j,0}$, $c(ux)\in D_3\setminus (C_{i}\cup C_j\cup D_{3,i}\cup D_{3,j})$ and $ux\in E^\abs_0$.

\item  \label{prop:fromB2:2:forJhat} For each $i\in [n]$ and $x\in Z_{i,0}$, there are $(1\pm 2\gamma)\beta_0\cdot p_{\mathrm{col}}\cdot p_{\mathrm{edge}}\cdot p_{\mathcal{J}}\cdot n$ choices for $(j,v)\in \mathcal{F}_i$ for which $x\in Z_{j,0}$, $c(xv)\in D_3\setminus (C_{i}\cup C_j\cup D_{3,i}\cup D_{3,j})$ and $xv\in E^\abs_0$.

\item  \label{prop:fromB2:6:forJhat} For each $i\in [n]$ and $c\in D_3\setminus (C_i\cup D_{3,i})$, there are $(1\pm 2\gamma){p}_{\mathrm{vx}}\cdot \beta_0p_\abs\cdot p_{\mathrm{edge}}\cdot p_{\mathcal{J}}\cdot n$ choices for  $(j,u)\in \mathcal{G}_i$
 for which $c\in D_3\setminus (C_j\cup D_{3,j})$ and there is a colour-$c$ edge from $u$ to $Z_{i,0}\cap Z_{j,0}$ in $E_0^\abs$.

\item  \label{prop:fromB2:7:forJhat} For each $i\in [n]$ and $c\in D_3\setminus (C_i\cup D_{3,i})$, there are $(1\pm 2\gamma){p}_{\mathrm{vx}}\cdot \beta_0p_\abs\cdot p_{\mathrm{edge}}\cdot p_{\mathcal{J}}\cdot n$ choices for  $(j,v)\in \mathcal{F}_i$  for which $c\in D_3\setminus (C_j\cup D_{3,j})$ and there is a colour-$c$ edge from $v$ to $Z_{i,0}\cap Z_{j,0}$ in $E_0^\abs$.

\item \label{prop:fromB2:9:forJhat} For each $xy\in E_0^\abs$ with $c(xy)\in D_3$, there are $(1\pm 2\gamma){p}_{\mathrm{vx}}\cdot \beta_0^2\cdot p_\abs^2\cdot p_{\mathcal{J}}\cdot n$
choices of $(i,j)\in \mathcal{K}_x$
for which $c(xy)\in D_3\setminus (C_i\cup C_j\cup D_{3,i}\cup D_{3,j})$ and $y\in Z_{i,0}\cap Z_{j,0}$.
\item  \label{prop:fromB2:10:forJhat} For each $xy\in E_0^\abs$ with $c(xy)\in D_3$, there are $(1\pm 2\gamma){p}_{\mathrm{vx}}^2\cdot {p}_{\mathrm{col}}\cdot \beta_0^2p_\abs^2\cdot p_{\mathrm{edge}}\cdot p_{\mathcal{J}}\cdot n^2$
 choices for  $u\in V(G)$ and $(i,j)\in \mathcal{K}_x$ for which $c(ux),c(xy)\in D_3\setminus (C_i\cup C_j\cup D_{3,i}\cup D_{3,j})$, $x,y\in Z_{i,0}\cap Z_{j,0}$ and $ux\in E_0^\abs$.

\item\label{prop:from:B2:vertexinJ:forJhat} For each $v\in V(G)$, there are $(1\pm \gamma)p_{\mathcal{J}}n$ choices of $(i,j)$ with $(j,v)\in \mathcal{G}_i$.

\item \label{prop:from:B2:11a:forJhat} For each $i\in [n]$, there are at most $\gamma n$ vertices $u\in S_i\setminus R_i$ with $(i,u)\notin V(\mathcal{M}_2)$.

\item \label{prop:from:B2:11b:forJhat} For each $u\in V(G)$, there are at most $\gamma n$ values of $i\in [n]$ for which $u\in S_i\setminus R_i$ but $(i,u)\notin V(\mathcal{M}_2)$.

\item \label{prop:from:B2:11b:forJhat:new} For each $u\in V(G)$ and $\phi\in \mathcal{F}$, there are at most $p_\tr p_\fa \gamma n$ values of $i\in I_\phi$ for which $u\in Y_{i,0}$ but $(i,u)\notin V(\mathcal{M}_2)$.

\item \label{prop:from:B2:11ab:forJhat} For each $i\in [n]$, there are at most $\gamma n$ pairs $(j,u)$ with $u\in S_j\setminus R_j$, $i\in J_{i,u}$ with $(j,u)\notin V(\mathcal{M}_2)$.

\item \label{prop:forB3cod:1:forJhat}  For each $i\in [n]$ and $v\in V(G)$, there are at most $2rn^{0.6}$ choices for $j$ with $(j,v)\in \mathcal{F}_i$.
\end{enumerate}

\end{claim}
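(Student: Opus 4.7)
The plan is to deduce each conclusion from the matching $\mathcal{M}_2$ returned by Theorem~\ref{thm:nibble}, using the balanced-weight statement \eqref{eqn:M2:balanced} applied to the weight functions constructed in Sections~\ref{sec:complicatedweight} and~\ref{sec:simplerweight}, together with the total weights that were already computed there. The key observation throughout is that each weight function was tailored so that its value on $\mathcal{M}_2$ counts exactly (or with a known constant multiplicity) the quantity appearing in one of the conclusions.

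First I would handle the six ``density'' conclusions \ref{prop:fromB2:1:forJhat}--\ref{prop:fromB2:10:forJhat}. Each one pairs with one of the weight functions $w^{\ref{prop:fromB2:1}}_{j,x}, w^{\ref{prop:fromB2:2}}_{j,x}, w^{\ref{prop:fromB2:6}}_{j,c'}, w^{\ref{prop:fromB2:7}}_{j,c'}, w^{\ref{prop:fromB2:9}}_{xy}, w^{\ref{prop:fromB2:10}}_{xy}$ defined in Section~\ref{sec:complicatedweight}. For example, for \ref{prop:fromB2:1:forJhat} one checks that the number of $(j,u)\in\mathcal{G}_i$ (with the fixed $i,x$) satisfying the prescribed edge/colour condition equals $w^{\ref{prop:fromB2:1}}_{i,x}(\mathcal{M}_2)$: the ``same'' summand $w^{\ref{prop:fromB2:1}:\mathrm{same}}_{i,x}$ tallies, over edges $E_{(i,u,v,M,c,\omega)}\in\mathcal{M}_2$ whose first coordinate is $i$, the elements of the form $(j',x_{i,u,j'})$ in the first union of \eqref{eqn:calGidefn}, while the ``other'' summand $w^{\ref{prop:fromB2:1}:\mathrm{other}}_{i,x}$ tallies the elements $(i',v_{i',u})$ from the second union. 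Applying \eqref{eqn:M2:balanced} with \eqref{eqn:W2:complicated1:totalweight} and using $\eps\llpoly\gamma$ then gives \ref{prop:fromB2:1:forJhat}. The arguments for \ref{prop:fromB2:2:forJhat}--\ref{prop:fromB2:10:forJhat} are entirely analogous, using the total weights \eqref{eqn:W2:complicated2:totalweight}--\eqref{eqn:W2:complicated6:totalweight} and the definitions of $\mathcal{F}_i$ and $\mathcal{K}_x$ in \eqref{eqn:calFidefn} and \eqref{eqn:calKxdefn}.

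Next I would handle the ``coverage'' properties \ref{prop:from:B2:vertexinJ:forJhat}--\ref{prop:from:B2:11ab:forJhat} using the simpler weight functions of $\mathcal{W}_2'$. For \ref{prop:from:B2:vertexinJ:forJhat}, the count of pairs $(i,j)$ with $(j,v)\in\mathcal{G}_i$ decomposes as a linear combination of $w_v^{\mathrm{same}}(\mathcal{M}_2)$ (with coefficient $r$, arising from $j$ ranging over $J_{i,u}$ for each edge in which $v$ plays the role of $v'$) and $w_v^{\mathrm{diff}}(\mathcal{M}_2)$ (with coefficient $2$, from the two elements $(i',v_{i',u})$ and $(i',y_{i',u,i})$ per edge in which $v\in V(M)$); combining \eqref{eqn:M2:balanced} with \eqref{eqn:W2:vsametotalweight}, \eqref{eqn:W2:vdifftotalweight} and the relations $2p_{S-R}r=p_{\mathcal{J}}/2$ from Section~\ref{sec:variables} yields the claimed bound. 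For \ref{prop:from:B2:11a:forJhat}, the number of uncovered $u\in S_i\setminus R_i$ equals $|S_i\setminus R_i|-w_i^{\mathrm{same}}(\mathcal{M}_2)$, which by \eqref{eqn:M2:balanced}, \eqref{eqn:W2:isametotalweight} and \ref{prop:B2:sizeofSminusR} is at most $\gamma n$. Properties \ref{prop:from:B2:11b:forJhat}, \ref{prop:from:B2:11b:forJhat:new} and \ref{prop:from:B2:11ab:forJhat} are obtained identically using $w_v^{\mathrm{same}}$, $w_{v,\phi}^{\mathrm{same}}$ and $w_i^{\mathrm{diff}}$, with total weights \eqref{eqn:W2:vsametotalweight}, \eqref{eqn:W2:vsamewithtotalweight} and \eqref{eqn:W2:idifftotalweight} respectively.

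Finally, the codegree bound \ref{prop:forB3cod:1:forJhat} is obtained from the upper bound $w^{\mathrm{cod}}_{i,v}(\mathcal{M}_2)\leq 2n^{0.6}$ produced by Theorem~\ref{thm:nibble} in view of \eqref{eq:forivcod}. The weight $w^{\mathrm{cod}}_{i,v}$ was designed so that $r\cdot|\{j:(j,v)\in\mathcal{F}_i\}|\leq w^{\mathrm{cod}}_{i,v}(\mathcal{M}_2)$: the $r\cdot\mathbf{1}_{\{i=i',v=v'\}}$ term accounts for the $r$ elements $(j,v_{i,u})$ with $j\in J_{i,u}$ contributed by each edge where $v$ plays the role of $v_{i,u}$, and the $\mathbf{1}_{\{i\in J_{i',u},v\in V(M)\}}$ term accounts for the elements $(i',x_{i',u,i})$ and also the $(j,y_{i,u,j})$-type elements in $\mathcal{F}_i$. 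Thus $|\{j:(j,v)\in\mathcal{F}_i\}|\leq w^{\mathrm{cod}}_{i,v}(\mathcal{M}_2)\leq 2rn^{0.6}/r\cdot r = 2rn^{0.6}$ after accounting for the scaling, which is the desired bound.

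The main obstacle is purely bookkeeping: one must carefully match each edge $E_{(i,u,v,M,c,\omega)}\in\mathcal{M}_2$ to the correct subsets of $\mathcal{F}_i$, $\mathcal{G}_i$ and $\mathcal{K}_x$, keeping track of the bipartition-class distinction between $x_{i,u,j}$ and $y_{i,u,j}$ within each $\omega_{i,u}(j)$, and verifying that the ``same''/``other'' decomposition of each weight function in $\mathcal{W}_2$ precisely recovers the two halves of the relevant union in \eqref{eqn:calFidefn}--\eqref{eqn:calKxdefn}. Once these identifications are set, every estimate reduces to substituting \eqref{eqn:M2:balanced} and invoking the previously computed value of $w(E(\mathcal{H}_2))$, with the slack $\eps\llpoly\gamma$ absorbing the $200\eps$ error in \eqref{eqn:M2:balanced}.
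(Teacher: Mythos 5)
Your overall strategy coincides with the paper's: you identify the quantity in each conclusion with a weight-function evaluation $w(\mathcal{M}_2)$ for $w$ in $\mathcal{W}_2,\mathcal{W}_2',\mathcal{W}_2''$, then invoke \eqref{eqn:M2:balanced} and the pre-computed totals \eqref{eqn:W2:complicated1:totalweight}--\eqref{eqn:W2:complicated6:totalweight} and \eqref{eqn:W2:isametotalweight}--\eqref{eq:forivcod}. This is exactly the paper's route. However, two of your bookkeeping claims are wrong in ways that would break the final numbers.

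For \itref{prop:from:B2:vertexinJ:forJhat} you claim the decomposition is $r\cdot w_v^{\mathrm{same}}(\mathcal{M}_2)+2\cdot w_v^{\mathrm{diff}}(\mathcal{M}_2)$, citing "the two elements $(i',v_{i',u})$ and $(i',y_{i',u,i})$ per edge in which $v\in V(M)$." But those two elements have \emph{different} second coordinates: $v_{i',u}\notin V(M_{i',u})$ while $y_{i',u,i}\in V(M_{i',u})$, so at most one of them matches a fixed $v$. The correct decomposition has coefficient $1$ on $w_v^{\mathrm{diff}}$: for a fixed $v$ the contributions $(j,x_{i,u,j})$ from the first union (when $v$ is of $x$-type in $V(M_{i,u})$) and $(j,y_{j,u,i})$ from the second union (when $v$ is of $y$-type in $V(M_{j,u})$) together count each $(i',u')\in J$ with $v\in V(M_{i',u'})$ exactly once, giving $w_v^{\mathrm{diff}}(\mathcal{M}_2)$, while $(j,v_{j,u})$ with $v_{j,u}=v$ gives $r\cdot w_v^{\mathrm{same}}(\mathcal{M}_2)$. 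Your version would give $(1\pm\gamma)(r+4r)p_{S-R}n=(1\pm\gamma)\tfrac{5}{3}p_{\mathcal{J}}n$ rather than the required $(1\pm\gamma)p_{\mathcal{J}}n=(1\pm\gamma)\cdot 3rp_{S-R}n$, so this is not a harmless typo; the relation you quote, $2p_{S-R}r=p_{\mathcal{J}}/2$, is also false (the true identity is $p_{\mathcal{J}}=3rp_{S-R}$).

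For \itref{prop:forB3cod:1:forJhat} you assert $r\cdot|\{j:(j,v)\in\mathcal{F}_i\}|\le w^{\mathrm{cod}}_{i,v}(\mathcal{M}_2)$, but that would require the whole count to enter with the factor $r$, whereas in $w^{\mathrm{cod}}_{i,v}$ only the $\{v'=v\}$ indicator is scaled by $r$ and the $\{v\in V(M)\}$ indicator contributes weight $1$. Moreover the computation $2rn^{0.6}/r\cdot r=2rn^{0.6}$ is not a meaningful inequality chain. The intended argument is simply that $|\{j:(j,v)\in\mathcal{F}_i\}|$ is bounded above by a fixed constant multiple of $w^{\mathrm{cod}}_{i,v}(\mathcal{M}_2)$ (plus $O(1)$ for the edge with $i'=i$, $v\in V(M)$ that the weight function does not register), and by the design guarantee following Theorem~\ref{thm:nibble} one has $w^{\mathrm{cod}}_{i,v}(\mathcal{M}_2)\leq 2n^{0.6}$; the stated bound $2rn^{0.6}$ then has ample slack. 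You should replace the spurious factor-of-$r$ scaling with a clean bound of this form.
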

\begin{proof}[Proof of Claim~\ref{clm:propsfromBtwopart1}]
\ref{prop:fromB2:1:forJhat}, \ref{prop:fromB2:2:forJhat}: For \ref{prop:fromB2:1:forJhat}, note that, for each $i\in [n]$ and $x\in Z_{i,0}$, we have
\begin{align*}
|\{(j,u)\in \mathcal{G}_i:&x\in Z_{j,0},c(ux)\in D_3\setminus (C_{i}\cup C_j\cup D_{3,i}\cup D_{3,j}),ux\in E^\abs_0\}|=w^{\ref{prop:fromB2:1}}_{i,x}(\mathcal{M}_2)\\
&\overset{\eqref{eqn:M2:balanced}}=(1\pm \gamma)\cdot\delta_2^{-1}\cdot w^{\ref{prop:fromB2:1}}_{i,x}(E(\mathcal{H}_2))\\
&\overset{\eqref{eqn:W2:complicated1:totalweight}}{=} (1\pm 2\gamma)\cdot \beta_0\cdot p_{\mathrm{col}}\cdot p_{\mathrm{edge}}\cdot p_{\mathcal{J}}\cdot n,
\end{align*}
as claimed. Similarly, but using $w^{\ref{prop:fromB2:2}}_{j,x}$, \eqref{eqn:M2:balanced}, and \eqref{eqn:W2:complicated2:totalweight}, we have that \ref{prop:fromB2:2:forJhat} holds.

\smallskip

\noindent \ref{prop:fromB2:6:forJhat},\ref{prop:fromB2:7:forJhat}: For \ref{prop:fromB2:6:forJhat}, note that, for each $i\in [n]$ and $c\in D_3\setminus (C_i\cup D_{3,i})$, we have
\begin{align*}
&|\{(j,u)\in \mathcal{G}_i:c\in D_3\setminus (C_j\cup D_{3,j}),u\text{ has a colour-$c$ neighbour in }Z_{i,0}\cap Z_{j,0}\text{ in }E_0^\abs\}|\\
&=w^{\ref{prop:fromB2:6}}_{i,c}(\mathcal{M}_2)\\
&\overset{\eqref{eqn:M2:balanced}}=(1\pm \gamma)\cdot\delta_2^{-1}\cdot w^{\ref{prop:fromB2:6}}_{i,c}(E(\mathcal{H}_2))\\
&\overset{\eqref{eqn:W2:complicated3:totalweight}}{=} (1\pm 2\gamma)\cdot {p}_{\mathrm{vx}}\cdot \beta_0p_\abs\cdot p_{\mathrm{edge}}\cdot p_{\mathcal{J}}\cdot n,
\end{align*}
as claimed. Similarly, but using $w^{\ref{prop:fromB2:7}}_{i,c}$, \eqref{eqn:M2:balanced}, and \eqref{eqn:W2:complicated4:totalweight}, we have that \ref{prop:fromB2:7:forJhat} holds.

\smallskip

\noindent\ref{prop:fromB2:9:forJhat}: Note that, for each $xy\in E_0^\abs$ with $c(xy)\in D_3$, we have
\begin{align*}
&|\{(i,j)\in \mathcal{K}_x:c(xy)\in D_3\setminus (C_i\cup C_j\cup D_{3,i}\cup D_{3,j})\text{ and }y\in Z_{i,0}\cap Z_{j,0}\}|\\
&=w^{\ref{prop:fromB2:9}}_{xy}(\mathcal{M}_2)\\
&\overset{\eqref{eqn:M2:balanced}}=(1\pm \gamma)\cdot\delta_2^{-1}\cdot w^{\ref{prop:fromB2:9}}_{xy}(E(\mathcal{H}_2))\\
&\overset{\eqref{eqn:W2:complicated5:totalweight}}{=} (1\pm 2\gamma)\cdot {p}_{\mathrm{vx}}\cdot \beta_0^2\cdot p_\abs^2\cdot p_{\mathcal{J}}\cdot n,
\end{align*}
as claimed.

\smallskip

\noindent\ref{prop:fromB2:10:forJhat}: For each $xy\in E_0^\abs$ with $c(xy)\in D_3$, there are $(1\pm 2\gamma){p}_{\mathrm{vx}}^2\cdot {p}_{\mathrm{col}}\cdot \beta_0^2p_\abs^2\cdot p_{\mathrm{edge}}\cdot p_{\mathcal{J}}\cdot n^2$
 choices for  $u\in V(G)$ and $(i,j)\in \mathcal{K}_x$ for which $c(ux),c(xy)\in D_3\setminus (C_i\cup C_j\cup D_{3,i}\cup D_{3,j})$, $x,y\in Z_{i,0}\cap Z_{j,0}$ and $ux\in E_0^\abs$.

\smallskip

\noindent\ref{prop:from:B2:vertexinJ:forJhat}: Let $v\in V(G)$.  Then,
\begin{align*}
|\{(i,j):(j,v)\}\in \mathcal{F}_i\}|&=r\cdot |\{(i,u)\in J:v_{i,u}=v\}|+|\{(i,u)\in J:v\in V(M_{i,u})\}|
\\
&=r\cdot w^{\mathrm{same}}_u(\mathcal{M}_2)+w^{\mathrm{diff}}_u(\mathcal{M}_2)\\
&\overset{\eqref{eqn:M2:balanced},\eqref{eqn:W2:vsametotalweight},\eqref{eqn:W2:vdifftotalweight}}{=}
r\cdot (1\pm \gamma)\cdot p_{S-R}n\cdot \delta_2+(1\pm \gamma)\cdot r\cdot p_{S-R}n\cdot \delta_2=(1\pm \gamma)p_{\mathcal{J}}n,
\end{align*}
as required.

\smallskip

\noindent\ref{prop:from:B2:11a:forJhat}, \ref{prop:from:B2:11b:forJhat}, \ref{prop:from:B2:11b:forJhat:new}: Let $i\in [n]$. Then,
\begin{align*}
|\{u\in S_i\setminus R_i:(i,u)\notin V(\mathcal{M}_2)\}|&=|S_i\setminus R_i|-|\{u\in S_i\setminus R_i:(i,u)\in V(\mathcal{M}_2)\}|\\
&\overset{\ref{prop:B2:sizeofSminusR}}{\leq}(1+ \eps)\cdot p_{S-R}n -w_i^{\mathrm{same}}(\mathcal{M}_2)\\
&\overset{\eqref{eqn:M2:balanced},\eqref{eqn:W2:isametotalweight}}{\leq} 2\gamma p_{S-R}n\leq \gamma n,
\end{align*}
so that \ref{prop:from:B2:11a:forJhat} holds. Similarly, \ref{prop:from:B2:11b:forJhat} follows for each $u\in V(G)$ using $w_u^{\mathrm{same}}$, using \ref{prop:B2:sizeofSminusR}, \eqref{eqn:M2:balanced}, and \eqref{eqn:W2:vsametotalweight}. Furthermore, \ref{prop:from:B2:11b:forJhat:new} follows for each $u\in V(G)$ and $\phi\in \mathcal{F}$ using $w_{u,\phi}^{\mathrm{same}}$, \ref{prop:B1:lastfew:4}, \eqref{eqn:M2:balanced}, and \eqref{eqn:W2:vsamewithtotalweight}.

\smallskip

\noindent\ref{prop:from:B2:11ab:forJhat}: Let $i\in [n]$. Then,
\begin{align*}
|\{(j,u):u\in S_j\setminus R_j,i\in J_{i,u},&(j,u)\notin V(\mathcal{M}_2)\}|\\
&=r\cdot |S_i\setminus R_i|-|\{(j,u):u\in S_j\setminus R_j,i\in J_{i,u},(j,u)\in V(\mathcal{M}_2)\}|\\
&\overset{\ref{prop:B2:sizeofSminusR}}{\leq}r\cdot (1+\eps)\cdot p_{S-R}n-w_i^{\mathrm{diff}}(\mathcal{M}_2)\\
&\overset{\eqref{eqn:M2:balanced},\eqref{eqn:W2:idifftotalweight}}{\leq} r\cdot 2\gamma p_{S-R}n\leq \gamma n,
\end{align*}
as required.

\smallskip

\noindent\ref{prop:forB3cod:1:forJhat}:  For each $i\in [n]$ and $v\in V(G)$, we have,
\begin{align*}
|\{j:(j,v)\in \mathcal{F}_i\}|= w_{i,v}^{\mathrm{cod}}(E(\mathcal{M}_2))\leq 2rn^{0.6},
\end{align*}
as required.
\claimproofend

\subsubsection{Missing matchings}
Let $\bar{J}=\{(i,u):i\in [n],u\in S_i\setminus R_i\}\setminus J$, where for each $(i,u)\in \bar{J}$ we have $(i,u)\notin V(\mathcal{M}_2)$.
We will now find, for each $(i,u)\in \bar{J}$, a tuple $(v_{i,u},M_{i,u},c_{i,u},\omega_{i,u})$ similar to the one found for each $(i',u')\in J$ in Section~\ref{sec:choiceofM2}, except using, for example, vertices in $Y_{i,1}$ instead of $Y_{i,0}$. We start by choosing the vertices $v_{i,u}$ and colours $c_{i,u}$, for each $(i,u)\in \bar{J}$.

Take a maximal set $\bar{J}'\subset \bar{J}$ for which there are $v_{i,u}$ and $c_{i,u}$, $(i,u)\in \bar{J}'$ such that the following hold.
\stepcounter{propcounter}
\begin{enumerate}[label = {{\textbf{\Alph{propcounter}\arabic{enumi}}}}]
\item For each $(i,u)\in \bar{J}'$, $v_{i,u}\in \cap_{j\in J_{i,u}\cup \{i\}}Y_{j,1}$, $c_{i,u}\in \cap_{j\in J_{i,u}\cup \{i\}} \cap_{j\in J_{i,u}\cup \{i\}}(D_{2,j}\setminus C_j)$, and $uv_{i,u}$ is a colour-$c_{i,u}$ edge which is in $E_{1,X}^\abs$ where $X\in \{A,B\}$ is such that $u\in X$.\label{prop:missingviuciu:1}
\item For each $i\in [n]$, the vertices $v_{i,u}$, $(i,u)\in \bar{J}'$, and $v_{j,u'}$, $(j,u')\in \bar{J}'$ with $i\in J_{j,u'}$ are all distinct.\label{prop:missingviuciu:2}
\item For each $i\in [n]$, the colours $c_{i,u}$, $(i,u)\in \bar{J}'$, and $c_{j,u'}$, $(j,u')\in \bar{J}'$ with $i\in J_{j,u'}$ are all distinct.\label{prop:missingviuciu:3}
\item The edges $uv_{i,u}$, $(i,u)\in \bar{J}'$, are all distinct.\label{prop:missingviuciu:4}
\item For each $v\in V(G)$, there are at most $\sqrt{\gamma} n$ pairs $(i,u)\in \bar{J}'$ for which $v=v_{i,u}$.\label{prop:missingviuciu:5a}
\item For each $i\in [n]$ and $v\in V(G)$, there are at most $r\cdot n^{0.6}/4$ choices for $(u,j)$ for which $(i,u)\in \bar{J}'$, $i\in J_{j,u}$ and $v_{j,u}=v$.\label{prop:missingviuciu:5b}
\item For each $c\in C$, there are at most $\sqrt{\gamma} n$ $(i,u)\in \bar{J}'$ with $c_{i,u}=c$.\label{prop:missingviuciu:5}
\end{enumerate}

We now infer that we can find a suitable $v_{i,u}$ and $c_{i,u}$ for every $(i,u)\in \bar{J}$.

\begin{claim}\label{clm:allfoundviuciu}
$\bar{J}'=\bar{J}$.
\end{claim}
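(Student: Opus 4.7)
The plan is a greedy/maximality contradiction. Suppose for contradiction that $\bar{J}' \neq \bar{J}$ and pick some $(i,u) \in \bar{J} \setminus \bar{J}'$. By \ref{prop:B2:missing:uv}, the set of candidate vertices $v \in \bigcap_{j \in J_{i,u} \cup \{i\}} Y_{j,1}$ for which $uv \in E_{1,X}^{\abs}$ (with $X \in \{A,B\}$ such that $u \in X$) and $c(uv) \in \bigcap_{j \in J_{i,u}\cup\{i\}}(D_{2,j} \setminus C_j)$ has size at least $10\sqrt{\gamma}n$. Setting $v_{i,u} := v$ and $c_{i,u} := c(uv)$ for any such $v$ automatically satisfies \ref{prop:missingviuciu:1}; so the goal reduces to showing that conditions \ref{prop:missingviuciu:2}--\ref{prop:missingviuciu:5} together exclude strictly fewer than $10\sqrt{\gamma}n$ candidates, which would force the existence of a valid choice and contradict the maximality of $\bar{J}'$.

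The crucial global input is $|\bar{J}'| \leq |\bar{J}| \leq 2\gamma n^2$ from \ref{prop:from:B2:11a:forJhat}, together with the refined fact that for each fixed $i$ there are at most $2\gamma n$ ``$i$-conflicting'' pairs $(j,u') \in \bar{J}'$ (either $j = i$, bounded by $\gamma n$ via \ref{prop:from:B2:11a:forJhat}, or $i \in J_{j,u'}$, bounded by $\gamma n$ via \ref{prop:from:B2:11ab:forJhat}). This forces condition \ref{prop:missingviuciu:2} to exclude at most $2\gamma n$ candidate $v$'s, and condition \ref{prop:missingviuciu:3} to exclude at most $2\gamma n$ (each forbidden colour eliminating one candidate at $u$ since $G$ is properly coloured). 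Condition \ref{prop:missingviuciu:4} blocks at most $\gamma n + \sqrt{\gamma}n$ candidates via its two edge-collision types (using \ref{prop:from:B2:11b:forJhat} for the first and the current instance of \ref{prop:missingviuciu:5a} for the second). For the cap conditions \ref{prop:missingviuciu:5a} and \ref{prop:missingviuciu:5}, a simple total-mass estimate gives at most $|\bar{J}'|/(\sqrt{\gamma}n) \leq 2\sqrt{\gamma}n$ saturated vertices (respectively colours). For \ref{prop:missingviuciu:5b}, the low-codegree estimate \ref{prop:abs:boundedin} caps, for each $u$ with $(i,u) \in \bar{J}'$, the number of $j$ with $i \in J_{j,u}$ by $n^{1/3}$, so the total 5b mass at $i$ is at most $\gamma n \cdot n^{1/3} = \gamma n^{4/3}$, whence at most $O(\gamma n^{0.73}/r)$ vertices are near saturation, which is $\ll \sqrt{\gamma}n$. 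Summing the six contributions gives $O(\sqrt{\gamma}n) \ll 10\sqrt{\gamma}n$, so a valid candidate always exists.

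The step I expect to be the main obstacle is handling property \ref{prop:missingviuciu:5b}: its cap $rn^{0.6}/4$ is much tighter than the $\sqrt{\gamma}n$ caps appearing in \ref{prop:missingviuciu:5a} and \ref{prop:missingviuciu:5}, and one genuinely needs the low-codegree bound \ref{prop:abs:boundedin} to keep the total 5b mass small enough that the Markov-type count of near-saturated $v$'s stays $o(\sqrt{\gamma}n)$. All other exclusion counts reduce cleanly either to the $2\gamma n$ conflicting-pair bound or to the total-mass-over-cap estimate, so the argument is tight only at \ref{prop:missingviuciu:5b}.
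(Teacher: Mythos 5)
Your proposal is correct and takes essentially the same approach as the paper: a maximality contradiction against \ref{prop:B2:missing:uv}, bounding the candidates excluded by each of \ref{prop:missingviuciu:2}--\ref{prop:missingviuciu:5} via the global size bounds \ref{prop:from:B2:11a:forJhat}, \ref{prop:from:B2:11b:forJhat} and \ref{prop:from:B2:11ab:forJhat} together with Markov-type total-mass estimates for the cap conditions. You have actually been slightly more careful than the paper on the one non-routine point you flag: the paper defines the analogous forbidden set $V_3^{\mathrm{forb}}$ for condition \ref{prop:missingviuciu:5b} but never explicitly bounds it, whereas you correctly observe that the codegree bound \ref{prop:abs:boundedin} caps the 5b mass at $\gamma n\cdot n^{1/3}$, giving $O(\gamma n^{0.73}/r)\ll\sqrt{\gamma}n$ saturated vertices.
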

\begin{proof}[Proof of Claim~\ref{clm:allfoundviuciu}]
Suppose otherwise, so that, in particular, we can choose some $(i,u)\in \bar{J}\setminus \bar{J}'$. Let $v_{i,u}$ and $c_{i,u}$, $(i,u)\in \bar{J}'$, be such that \ref{prop:missingviuciu:1}--\ref{prop:missingviuciu:5} hold.
Suppose $u\in A$, where the case where $u\in B$ follows similarly.
Let $V_1^{\mathrm{forb}}=\{v_{i,u'}:(i,u')\in \bar{J}'\}$, $V_2^{\mathrm{forb}}=\{v:|\{(i,u)\in \bar{J}':v_{i,u}=v\}|\geq \sqrt{\gamma}n/2\}$, $V_3^{\mathrm{forb}}=\{v\in V(G):|\{(u,j):(i,u)\in \bar{J}',i\in J_{j,u},v_{j,u}=v\}|\geq r\cdot n^{0.6}/8\}$,
$C^{\mathrm{forb}}_1=\{v_{i,u'}:(i,u')\in \bar{J}'\}$, $C^{\mathrm{forb}}_2=\{c\in C:|\{(i,u)\in \bar{J}':c_{i,u}=c\}|\geq \sqrt{\gamma}n/2\}|$
 and $E^{\mathrm{forb}}_A=\{u'v_{i',u'}:(i',u')\in \bar{J}'\}$.

By \ref{prop:from:B2:11a:forJhat} and \ref{prop:from:B2:11ab:forJhat}, we have that $|V^{\mathrm{forb}}_1|,|C^{\mathrm{forb}}_1|\leq 2\gamma n$. Then, as, by \ref{prop:from:B2:11a:forJhat}, $|\bar{J}'|\leq \gamma n^2$, we have $|V^{\mathrm{forb}}_2|,|C^{\mathrm{forb}}_2|\leq 2\sqrt{\gamma}n$.
Furthermore, the number of edges in $E^{\mathrm{forb}}_{A}$ containing $u$ is, by \ref{prop:from:B2:11b:forJhat}, at most $\gamma n$. Therefore, by \ref{prop:B2:missing:uv}, there is some choice for $v_{i,u}\in \left(\cap_{j\in J_{i,u}\cup \{i\}} Y_{j,1}\right)\setminus V^{\mathrm{forb}}$ such that $uv_{i,u}\in E_{1,X}^\abs\setminus E^{\mathrm{forb}}$ and $c(uv_{i,u})\in \left(\cap_{j\in J_{i,u}\cup \{i\}})D_{3,j}\right)\setminus (C^{\mathrm{forb}}_1\cup C^{\mathrm{forb}}_2)$. Letting $c_{i,u}=c(uv_{i,u})$, the pair $c_{i,u},v_{i,u}$ show that $\bar{J}'$ contradicts the maximality of $\bar{J}$.
\claimproofend

Let, then, $v_{i,u}$ and $c_{i,u}$, $(i,u)\in \bar{J}$, be such that \ref{prop:missingviuciu:1}--\ref{prop:missingviuciu:5} hold.
For each $(i,u)\in \bar{J}$, we will now find $M_{i,u}$ and $\omega_{i,u}$. For this, let $\bar{J}''\subset J_{i,u}$ be a maximal set for which there are $M_{i,u}$ and $\omega_{i,u}$, $(i,u)\in \bar{J}''$ such that the following hold.
\stepcounter{propcounter}
\begin{enumerate}[label = {{\textbf{\Alph{propcounter}\arabic{enumi}}}}]
\item For each $(i,u)\in \bar{J}''$, $M_{i,u}$ is a colour-$c_{i,u}$ matching in $E_0^\abs$ and $\omega_{i,u}:J_{i,u}\to M_{i,u}$ is a bijection.\label{prop:missingMiuomiu:1}
\item For each $(i,u)\in \bar{J}''$, and each $j\in J_{i,u}$, $V(\omega_{i,u}(j))\subset Y_{i,1}\cap Y_{j,1}$ and $\omega_{i,u}(j)\in E^\abs_{1,M}$.\label{prop:missingMiuomiu:2}
\item For each $i\in [n]$, the sets $V(M_{i,u})$, $(i,u)\in \bar{J}''$, and $V(M_{j,u'})$, $(j,u')\in \bar{J}''$ and $i\in J_{i,u}$, are all disjoint from each other and
from $\{v_{i,u}:(i,u)\in \bar{J}\}$ and $\{v_{j,u'}:(j,u')\in \bar{J},i\in J_{j,u'}\}$.\label{prop:missingMiuomiu:3}
\item The matchings $M_{i,u}$, $(i,u)\in \bar{J}''$, are all edge-disjoint.\label{prop:missingMiuomiu:4}
\item For each $v\in V(G)$, there are at most $4\sqrt{\gamma} n$ pairs $(i,u)\in \bar{J}''$ for which $v\in V(M_{i,u})\cup \{v_{i,u}\}$.\label{prop:missingMiuomiu:5}
\item For each $i\in [n]$ and $v\in V(G)$, there are at most $r\cdot n^{0.6}$ choices for $(u,j)$ for which $(i,u)\in \bar{J}''$, $j\in J_{i,u}$ and $v\in V(M_{i,u})$ or for which $(j,u)\in \bar{J}''$, $i\in J_{j,u}$ and $v_{j,u}=v$.\label{prop:missingMiuomiu:6}
\end{enumerate}

We now infer that we can find a suitable matching $M_{i,u}$ and function $\omega_{i,u}$ for each $(i,u)\in \bar{J}$.
\begin{claim}\label{clm:allfoundMiuomiu}
$\bar{J}''=\bar{J}$.
\end{claim}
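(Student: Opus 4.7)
The plan is to argue by maximality, in direct parallel with Claim~\ref{clm:allfoundviuciu}. Suppose for contradiction $\bar{J}''\subsetneq \bar{J}$ and fix some $(i,u)\in \bar{J}\setminus \bar{J}''$. I will construct $M_{i,u}$ and $\omega_{i,u}$ by iterating over $j\in J_{i,u}$ and greedily selecting, at each step, an edge in $E^\abs_{1,M}$ of colour $c_{i,u}$ with both endpoints in $Y_{i,1}\cap Y_{j,1}$ that avoids a carefully chosen forbidden set of vertices and edges. Success at all $r$ steps contradicts the maximality of $\bar{J}''$.

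At each step the pool of candidates has size at least $\gamma^{1/3}n$, by \ref{prop:B2:missing:Muv} applied to $c_{i,u}$ and $(i,u)$, using that $\bigcap_{j'\in J_{i,u}\cup\{i\}}Y_{j',1}\subseteq Y_{i,1}\cap Y_{j,1}$. From \ref{prop:missingMiuomiu:3} and \ref{prop:missingMiuomiu:4}, the forbidden vertices comprise the vertices of earlier matchings $M_{i',u'}$ (for $(i',u')\in \bar{J}''$ with $i'=i$ or $i\in J_{i',u'}$) together with the vertices $v_{i',u'}$ under the same index constraints. Using \ref{prop:from:B2:11a:forJhat} and \ref{prop:from:B2:11ab:forJhat}, each of the two index classes contributes at most $\gamma n$ elements of $\bar{J}''$, so these sets have total size $O(r\gamma n)$. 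The forbidden edges coming from \ref{prop:missingMiuomiu:4} likewise number $O(r\gamma n)$, and by \ref{prop:missingviuciu:5} only $O(r\sqrt{\gamma}n)$ of them actually carry colour $c_{i,u}$ and hence are genuinely obstructive.

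The multiplicity constraints \ref{prop:missingMiuomiu:5} and \ref{prop:missingMiuomiu:6} forbid vertices that are close to saturating their quotas. The total multiplicity contributing to \ref{prop:missingMiuomiu:5}, summed over all vertices, is at most $(2r+1)|\bar{J}''|\leq (2r+1)\gamma n^2$, so the number of vertices that have already used more than half of the $4\sqrt{\gamma}n$ budget is $O(r\sqrt{\gamma}n)$. For \ref{prop:missingMiuomiu:6}, after fixing $i$, the analogous total over $v$ is $O(r\gamma n)$ by \ref{prop:from:B2:11a:forJhat} and \ref{prop:from:B2:11ab:forJhat}, so only $O(\gamma n^{0.4})$ vertices cross half the $rn^{0.6}$ threshold. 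Each forbidden vertex rules out at most one candidate edge (since each vertex has at most one colour-$c_{i,u}$ neighbour), so the total number of candidate edges killed at a given step is $O(r\sqrt{\gamma}n)$, plus at most $2r$ additional vertices used by the partial $\omega_{i,u}$ constructed so far in this step.

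Since $\gamma\llpoly\beta\llpoly 1/\log n$ we have $\gamma^{1/3}\gg r\sqrt{\gamma}$, so the pool of candidates strictly exceeds the total obstruction at each of the $r$ greedy steps, and the construction succeeds. Adding $(i,u)$ to $\bar{J}''$ then contradicts its maximality. I do not anticipate a real obstacle: the structure of the argument is identical to the proof of Claim~\ref{clm:allfoundviuciu}, with only slightly more bookkeeping because each newly added tuple $(i,u)$ now brings $r$ edges rather than a single edge, and the polynomial hierarchy of constants absorbs the extra factors of $r$ comfortably.
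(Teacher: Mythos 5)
Your proposal follows the same route as the paper (a greedy/maximality argument), and the numerical margins are ample, but there is one concrete omission in how you identify the forbidden vertices coming from \ref{prop:missingMiuomiu:3}. That condition is quantified over \emph{every} index $i'\in[n]$, and when $(i,u)$ is added, the new set $V(M_{i,u})$ enters the disjointness requirement for every $i'\in J_{i,u}\cup\{i\}$ (since the edge $\omega_{i,u}(j)$ gets assigned to $\hat{M}_{j,2}$ for each $j\in J_{i,u}$, not just to $\hat{M}_{i,2}$). You only take the union of forbidden vertex sets for $i'=i$, whereas the paper forms $V^{\mathrm{forb}}_j$ for each $j\in J_{i,u}\cup\{i\}$ and then sets $V^{\mathrm{forb}}=\bigcup_{j\in J_{i,u}\cup\{i\}}V^{\mathrm{forb}}_j$. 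Concretely: for $j\in J_{i,u}$, you must avoid $V(M_{j,u'})$ for $(j,u')\in\bar{J}''$ and $V(M_{j',u'})$ for $(j',u')\in\bar{J}''$ with $j\in J_{j',u'}$, and likewise for the $v_{\cdot,\cdot}$'s — these are entirely absent from your set.

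Correcting this inflates the forbidden set from $O(r\gamma n)$ to $O(r^2\gamma n)$. This does not threaten the conclusion: $O(r^2\gamma n)$ is still dominated by your $W^{\mathrm{forb}}_0$-type obstruction of size $O(r\sqrt{\gamma}n)$, so the final inequality $\gamma^{1/3}n \gg r\sqrt{\gamma}n$ is unchanged and the greedy argument goes through. The rest of the bookkeeping (\ref{prop:missingMiuomiu:5}, \ref{prop:missingMiuomiu:6}, \ref{prop:missingviuciu:5}, each forbidden vertex killing at most one colour-$c_{i,u}$ edge) is handled essentially as in the paper. So: same approach, correct conclusion, but the characterization of the forbidden set is incomplete and should range over all $j\in J_{i,u}\cup\{i\}$.
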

\begin{proof}[Proof of Claim~\ref{clm:allfoundMiuomiu}] Suppose otherwise, so that, in particular, we can choose some $(i,u)\in \bar{J}\setminus \bar{J}'$. Let $M_{i',u'}$ and $\omega_{i',u'}$, $(i',u')\in \bar{J}''$, be such that \ref{prop:missingMiuomiu:1}--\ref{prop:missingMiuomiu:6} hold.

For each $j\in J_{i,u}\cup \{i\}$, let
\[
V^{\mathrm{forb}}_j=\{v_{j,u'}:(j,u')\in \bar{J}\}\cup \left(\cup_{(j,v)\in \bar{J}''}V(M_{j,v}))\right)\cup\left(\cup_{(j',v)\in \bar{J}'':j\in J_{j',v}}V(M_{j',v})\right),
\]
so that, from \ref{prop:from:B2:11a:forJhat} and \ref{prop:from:B2:11ab:forJhat},
we have $|V^{\mathrm{forb}}|\leq (2r+2)\cdot 2\gamma n)$. Let $V^{\mathrm{forb}}=\cup_{j\in J_{i,u}\cup \{i\}}V^{\mathrm{forb}}_j$, so that $|V^{\mathrm{forb}}|\leq (2r+2)^2\gamma n$.

Let $W^{\mathrm{forb}}_0=\{v\in V(G):|\{(i',u')\in \bar{J}'':v\in V(M_{i',u'})\cup \{v_{i',u'}\}\}|\geq 2\sqrt{\gamma}n\}$ and
$W^{\mathrm{forb}}_1=\{v\in V(G):|\{(u',j):(i,u')\in \bar{J}'',j\in J_{i,u'},v\in V(M_{i,u'})\text{ or }(j,u')\in \bar{J}'',i\in J_{j,u'},v_{j,u'}=v\}|\geq r\cdot n^{0.6}/2\}$. From \ref{prop:missingviuciu:5a}, and as $|\bar{J}|\leq \gamma n^2$ by \ref{prop:from:B2:11a:forJhat}, we have that $|W^{\mathrm{forb}}_0|\leq 4\sqrt{\gamma} n$.
From \ref{prop:missingviuciu:5b}, we have $|W^{\mathrm{forb}}_1|\cdot r\cdot n^{0.6}/8\geq 2n\cdot 2r+2r\cdot n$, so that $|W^{\mathrm{forb}}_1|\leq \sqrt{n}$.

Let $E^{\mathrm{forb}}=\{e\in \cup_{(j,u')\in \bar{J}''}(M_{j,u'}\cup \{u'v_{j,u'}\},:c(e)=c_{i,u}\}$, so that, by \ref{prop:missingviuciu:5}, we have $|E^{\mathrm{forb}}|\leq (r+1)\sqrt{\gamma}n$.
Now, using \ref{prop:B2:missing:Muv}, for each $j\in J_{i,u}$, let $\omega_{i,u}(j)$ be an edge of colour $c$ in $E^\abs_{1,M}\setminus E^{\mathrm{forb}}$ with vertices in $(Y_{i,1}\cap Y_{j,1})\setminus (V^{\mathrm{forb}}\cup W^{\mathrm{forb}}_0\cup W^{\mathrm{forb}}_1)$, so that $\omega_{i,u}(j)$ are distinct. Let $M_{i,u}=\{\omega_{i,u}(j):j\in J_{i,u}\}$.
Noting that $\bar{J}''\cup \{(i,u)\}$ satisfies \ref{prop:missingMiuomiu:1}--\ref{prop:missingMiuomiu:6} with $\bar{J}''$ replaced by $\bar{J}''\cup \{(i,u)\}$ contradicts the maximality of $\bar{J}''$.
\claimproofend

For each $(i,u)\in \bar{J}$ and $j\in J_{i,u}$, let $x_{i,u,j}$ be the vertex of $\omega_{i,u}(j)$ with $x_{i,u,j}\simAB v_{i,u}$ and let $y_{i,u,j}$ be the vertex of $\omega_{i,u}(j)$ with $x_{i,u,j}\simAB v_{i,u}$.
Note that we now have for each $i\in [n]$, $v_{i,u},M_{i,u},c_{i,u},\omega_{i,u}$, where $M_{i,u}=\{x_{i,u,j}y_{i,u,j}:j\in J_{i,u}\}$.


\subsubsection{Choice of the matchings $\hat{M}_{i,2}$, $\mathcal{J}$, and their properties}\label{sec:part2choosefinally}
For each $i\in [n]$, let
\[
\hat{M}_{i,2}=\left(\cup_{u\in S_i\setminus R_i}\{uv_{i,u}\}\right)\cup \left(\cup_{j\in [n],u\in S_j\setminus R_j:i\in J_{i,u}}\{\omega_{j,u}(i)\}\right).
\]
Let
\begin{equation}\label{eqn:Jdefn}
\mathcal{J}=\cup_{\{(i,u),(j,v)\}\in \mathcal{I}}\{\{(j,v_{i,u}),(i,x_{i,u,j})\},\{(j,y_{i,u,j}),(i,y_{j,v,i})\},\{(i,v_{j,v}),(j,x_{j,v,i})\}\}.
\end{equation}
We now record the properties of $\hat{M}_{i,2}$ and $\mathcal{J}$ that we need.
\begin{claim}\label{claim:outcomeofB2}
\stepcounter{propcounter}
The following hold.
\begin{enumerate}[label = {{\textbf{\Alph{propcounter}\arabic{enumi}}}}]
\item  \label{prop:fromB2:1} For each $i\in [n]$ and $x\in Z_{i,0}$, there are $(1\pm 4\gamma)\beta_0\cdot p_{\mathrm{col}}\cdot p_{\mathrm{edge}}\cdot p_{\mathcal{J}}\cdot n$ choices for
$\{(i,u),(j,v)\}\in \mathcal{J}$ for which $x\in Z_{j,0}$, $c(ux)\in D_3\setminus (C_{i}\cup C_j\cup D_{3,i}\cup D_{3,j})$ and $ux\in E^\abs_0$.

\item  \label{prop:fromB2:2} For each $i\in [n]$ and $x\in Z_{i,0}$, there are $(1\pm 4\gamma)\beta_0\cdot p_{\mathrm{col}}\cdot p_{\mathrm{edge}}\cdot p_{\mathcal{J}}\cdot n$ choices for $\{(i,u),(j,v)\}\in \mathcal{J}$ for which $x\in Z_{j,0}$, $c(xv)\in D_3\setminus (C_{i}\cup C_j\cup D_{3,i}\cup D_{3,j})$ and $xv\in E^\abs_0$.

\item  \label{prop:fromB2:6} For each $i\in [n]$ and $c'\in D_3\setminus (C_i\cup D_{3,i})$, there are $(1\pm 4\gamma){p}_{\mathrm{vx}}\cdot \beta_0p_\abs\cdot p_{\mathrm{edge}}\cdot p_{\mathcal{J}}\cdot n$ choices for  $\{(i,u),(j,v)\}\in \mathcal{J}$ for which $c'\in D_3\setminus (C_j\cup D_{3,j})$ and there is a colour-$c'$ edge from $u$ to $Z_{i,0}\cap Z_{j,0}$ in $E_0^\abs$.

\item  \label{prop:fromB2:7} For each $i\in [n]$ and $c'\in D_3\setminus (C_i\cup D_{3,i})$, there are $(1\pm 4\gamma){p}_{\mathrm{vx}}\cdot \beta_0p_\abs\cdot p_{\mathrm{edge}}\cdot p_{\mathcal{J}}\cdot n$ choices for  $\{(i,u),(j,v)\}\in \mathcal{J}$ for which $c'\in D_3\setminus (C_j\cup D_{3,j})$ and there is a colour-$c'$ edge from $v$ to $Z_{i,0}\cap Z_{j,0}$ in $E_0^\abs$.

\item \label{prop:fromB2:9} For each $xy\in E_0^\abs$ with $c(xy)\in D_3$, there are $(1\pm 4\gamma){p}_{\mathrm{vx}}\cdot \beta_0^2\cdot p_\abs^2\cdot p_{\mathcal{J}}\cdot n$
choices of $\{(i,u),(j,v)\}\in \mathcal{J}$ with $u=x$
for which $c(xy)\in D_3\setminus (C_i\cup C_j\cup D_{3,i}\cup D_{3,j})$ and $y\in Z_{i,0}\cap Z_{j,0}$.

\item  \label{prop:fromB2:10} For each $xy\in E_0^\abs$ with $c(xy)\in D_3$, there are $(1\pm 4\gamma){p}_{\mathrm{vx}}^2\cdot {p}_{\mathrm{col}}\cdot \beta_0^2p_\abs^2\cdot p_{\mathrm{edge}}\cdot p_{\mathcal{J}}\cdot n^2$
 choices for  $\{(i,u),(j,v)\}\in \mathcal{J}$ for which $c(ux),c(xy)\in D_3\setminus (C_i\cup C_j\cup D_{3,i}\cup D_{3,j})$, $x,y\in Z_{i,0}\cap Z_{j,0}$ and $ux\in E_0^\abs$.

\item\label{prop:from:B2:vertexinJ} For each $u\in V(G)$, there are $(1\pm \beta)p_{\mathcal{J}}n$ triples $(i,j,v)$ with $\{(i,u),(j,v)\}\in \mathcal{J}$.

\item \label{prop:from:B2:11a} For each $i\in [n]$, there are $(1\pm \beta) 2p_{\mathcal{J}}n$ triples $(u,j,v)$ with  $\{(i,u),(j,v)\}\in \mathcal{J}$.

\item \label{prop:from:B2:11} For each distinct $i,j\in [n]$ there are at most $3\sqrt{n}$ pairs $(u,v)$ with $\{(i,u),(j,v)\}\in \mathcal{J}$.

\item \label{prop:forB3cod:1}  For each $i\in [n]$ and $v\in V(G)$, there are at most $3r\cdot n^{0.6}$ pairs $(j,u)$ with $\{(i,u),(j,v)\}\in \mathcal{J}$.
\item \label{prop:forB3cod:2}  For each $i\in [n]$ and $u\in V(G)$, there is at most $1$ pair $(j,v)$ with $\{(i,u),(j,v)\}\in \mathcal{J}$.

\item \label{prop:forBfinalfromB2} For each $u\in V(G)$ and $\phi\in \mathcal{F}$,
\[
|\{i\in I_\phi:v\in Y_{i,0}\setminus (\cup_{u,j,u':\{(i,u),(j,u')\}\in \mathcal{I}}\{v_{i,u},x_{i,u,j},y_{i,u,j},v_{j,u'},x_{j,u',i},y_{j,u',i}\})\}|\leq \gamma p_\tr p_\fa n.
\]

\item \label{prop:forBfinalfromB2:2} For each $c\in D_2$ and $\phi\in \mathcal{F}$,
\[
|\{i\in I_\phi:c \notin (C(\hat{M}_{i,2}) \cup C_i\cup D_{2,i})\}|\leq \gamma p_\tr p_\fa n.
\]

\item \label{prop:forBfinalfromB2:3} For each $i\in [n]$,
\[
|Y_{i,0}\setminus (\cup_{u,j,u':\{(i,u),(j,u')\}\in \mathcal{I}}\{v_{i,u},x_{i,u,j},y_{i,u,j},v_{j,u'},x_{j,u',i},y_{j,u',i}\})\}|\leq \gamma n.
\]
\item For each $i\in [n]$, $\hat{M}_{i,2}$ is a rainbow matching with colours in $D_2\setminus C_i$.\label{prop:forBfinalfromB2:4}
\end{enumerate}
\end{claim}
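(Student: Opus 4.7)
The plan is to derive Claim~\ref{claim:outcomeofB2} by combining the ``main'' counts from Claim~\ref{clm:propsfromBtwopart1} (which capture only the part of the structure built from $\mathcal{M}_2$) with the extension of $(v_{i,u}, M_{i,u}, c_{i,u}, \omega_{i,u})$ over the ``missing'' pairs $\bar{J}$, using that $|\bar J|$ is small enough that it contributes negligibly to every count. Observe that, by the definition of $\mathcal{J}$ at \eqref{eqn:Jdefn} and the definitions of $\mathcal{F}_i$, $\mathcal{G}_i$, $\mathcal{K}_v$ at \eqref{eqn:calFidefn}--\eqref{eqn:calKxdefn}, the pairs of $\mathcal{J}$ whose first coordinate involves $(i,\cdot)$ split naturally into those landing in $\mathcal{F}_i$ and those in $\mathcal{G}_i$, while the multiset $\mathcal{K}_v$ records the corresponding index pairs through a given vertex $v$.

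For properties \ref{prop:fromB2:1}--\ref{prop:fromB2:10}, I would match each claim to its $\mathcal{F}_i$/$\mathcal{G}_i$/$\mathcal{K}_v$ analogue (\ref{prop:fromB2:1:forJhat}--\ref{prop:fromB2:10:forJhat}), which already gives the right main term $(1\pm 2\gamma)\cdot(\ldots)$. The contribution from $\bar J$ must then be added: by \ref{prop:from:B2:11a:forJhat} we have $|\bar J|\le \gamma n^2$, and each $(i,u)\in \bar J$ contributes $O(1)$ new pairs to $\mathcal{J}$; counting the analogous configurations within these pairs gives at most $O(\gamma n^2) \cdot O(1) \cdot n^{-1}$ contributions to each count, which is absorbed into the $\gamma\llpoly \beta$ slack when one widens the tolerance from $2\gamma$ to $4\gamma$ (and the main terms are $\Theta(p_{\mathcal{J}}n)=\Theta(p_{S-R}n)$, comfortably above $\gamma n$). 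The same principle covers \ref{prop:from:B2:vertexinJ}, \ref{prop:from:B2:11a}: the count $w_u^{\mathrm{same}}+w_u^{\mathrm{diff}}$ evaluated on $\mathcal{M}_2$ (via \eqref{eqn:W2:vsametotalweight}, \eqref{eqn:W2:vdifftotalweight}, \eqref{eqn:M2:balanced}) yields the bulk, and each $(i,u)\in\bar J$ adds $O(r)$ to the vertex count, giving the claimed $(1\pm\beta)p_{\mathcal{J}}n$.

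Property \ref{prop:from:B2:11} is the codegree-on-$\mathcal{J}$ statement and has two sources. Within pairs coming from $\mathcal{M}_2$, the bound $w^{\mathrm{cod}}_{i,v}(\mathcal{M}_2)\le 2n^{0.6}$ from \eqref{eq:forivcod} combined with \ref{prop:forB3cod:1:forJhat} controls how many pairs $(j,v)$ can sit in $\mathcal{F}_i\cup \mathcal{G}_i$ for a fixed $v$, and then a pair $\{(i,u),(j,v)\}\in\mathcal{J}$ for a fixed ordered $(i,j)$ forces $u$ via the structure coming from \ref{prop:abs:nocodegree} (once we pin down which of the three types in \eqref{eqn:Jdefn} we are in, the partner is essentially determined). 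Within pairs coming from $\bar J$, the maximality conditions \ref{prop:missingviuciu:5a}, \ref{prop:missingviuciu:5b}, and \ref{prop:missingMiuomiu:5}, \ref{prop:missingMiuomiu:6} play the role of the weight bound, giving at most $\sqrt{\gamma}n + r n^{0.6}/4 \le \sqrt n$ pairs; summed with the $\mathcal{M}_2$ contribution this is at most $3\sqrt n$. Property \ref{prop:forB3cod:1} follows from \ref{prop:forB3cod:1:forJhat} plus \ref{prop:missingviuciu:5b} and \ref{prop:missingMiuomiu:6}, and \ref{prop:forB3cod:2} is immediate from \ref{prop:abs:nocodegree} together with the fact that distinct $(i,u,j)$ give rise to distinct left coordinates $(i,u)$ in each of the three triples in \eqref{eqn:Jdefn}.

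Finally, \ref{prop:forBfinalfromB2}--\ref{prop:forBfinalfromB2:3} bound how much of $Y_{i,0}$ or $D_2$ is left uncovered by $\hat{M}_{i,2}$ relative to $\mathcal{I}$: these come directly from the $w^{\mathrm{same}}_v$, $w^{\mathrm{diff}}_v$, $w^{\mathrm{same}}_{v,\phi}$, $w_{c,\phi}$ identities \eqref{eqn:W2:vsametotalweight}--\eqref{eqn:W2:ctotalweight} plugged into \eqref{eqn:M2:balanced}, together with \ref{prop:B1:lastfew:4}, \ref{prop:B1:lastfew:3}, and the trivial observation that the $\bar J$-contribution leaves at most $O(\gamma n)$ additional vertices/colours uncovered per $(\phi, u)$. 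Property \ref{prop:forBfinalfromB2:4} (that $\hat M_{i,2}$ is a rainbow matching with colours in $D_2\setminus C_i$) follows because (i) $\mathcal{M}_2$ is a matching in $\mathcal{H}_2$, so inside the $J$-part all edges of $\hat M_{i,2}$ are vertex- and colour-disjoint and use colours in $D_2\setminus (C_i\cup D_{2,i})\subset D_2\setminus C_i$ by the edge-definition \eqref{eq:edgesofH2}; (ii) inside the $\bar J$-part the maximality conditions \ref{prop:missingviuciu:1}--\ref{prop:missingviuciu:4} and \ref{prop:missingMiuomiu:1}--\ref{prop:missingMiuomiu:4} encode exactly vertex- and colour-disjointness and membership of $E^\abs_{1,X}\cup E^\abs_{1,M}$ with colours in $D_{2,i}\setminus C_i \subset D_2 \setminus C_i$; and (iii) the $J$-part uses edges of $E^\abs_0$ while the $\bar J$-part uses edges of $E^\abs_1$, so the two parts are automatically edge-disjoint, and their colour sets are disjoint because one lies in $D_2\setminus D_{2,i}$ and the other in $D_{2,i}$.

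The main obstacle I anticipate is the careful accounting in \ref{prop:from:B2:11} and \ref{prop:forB3cod:1}: these codegrees live at the boundary of what the weight-function $w^{\mathrm{cod}}_{i,v}$ can control via \eqref{eq:forivcod}, and one must verify that the $\bar J$-contribution does not spoil the $O(n^{0.6})$/$O(\sqrt n)$ bounds; this is precisely why conditions \ref{prop:missingviuciu:5b} and \ref{prop:missingMiuomiu:6} were built into the greedy extension, and checking they survive the maximality argument (i.e., that the forbidden sets in the proofs of Claims~\ref{clm:allfoundviuciu}--\ref{clm:allfoundMiuomiu} are small enough) is the delicate point.
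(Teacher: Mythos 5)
Your overall framework matches the paper's: combine the $\mathcal{M}_2$-level counts in Claim~\ref{clm:propsfromBtwopart1} with small corrections from the greedy extension over $\bar J$, using the various maximality conditions \ref{prop:missingviuciu:5a}--\ref{prop:missingMiuomiu:6}. Your treatment of \ref{prop:fromB2:1}--\ref{prop:fromB2:10}, \ref{prop:from:B2:vertexinJ}, \ref{prop:forB3cod:1}, \ref{prop:forBfinalfromB2}--\ref{prop:forBfinalfromB2:4} is essentially right, and your detailed argument for \ref{prop:forBfinalfromB2:4} (vertex/colour-disjointness inside each of the $J$ and $\bar J$ parts separately, plus automatic edge- and colour-disjointness between the two because $E_0^\abs$ vs.\ $E_1^\abs$ and $D_2\setminus D_{2,i}$ vs.\ $D_{2,i}$) matches the paper's.

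However, there are two genuine gaps. First, your proof of \ref{prop:from:B2:11} is not correct: you reach for $w^{\mathrm{cod}}_{i,v}$ and \ref{prop:forB3cod:1:forJhat}, which control codegrees over a fixed $(i,v)$, not a fixed pair $\{i,j\}$ of indices, and your claim that ``once we pin down the type in \eqref{eqn:Jdefn}, the partner is essentially determined'' is also wrong --- with $\{i,j\}$ and the type fixed, the pair in $\mathcal{J}$ is still parameterized by the originating pair $\{(i,u),(j,v)\}\in\mathcal{I}$, and by \ref{prop:abs:nocodegree} this is determined only once one further chooses $u$, for which there may be up to $n^{1/3}$ choices. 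The correct argument is purely structural and needs no $J$/$\bar J$ split: by \eqref{eqn:Jdefn}, each unordered pair of $\mathcal{I}$ with indices $\{i,j\}$ contributes exactly three unordered pairs to $\mathcal{J}$ with the same $\{i,j\}$, and by \ref{prop:abs:lowcodegree1} there are at most $n^{1/3}$ pairs in $\mathcal{I}$ for each fixed $\{i,j\}$, giving at most $3n^{1/3}\le 3\sqrt n$. (The same structural observation gives \ref{prop:from:B2:11a} directly from \ref{prop:abs:regularityout} and \ref{prop:B2:sizeofSminusR} as $3r|S_i\setminus R_i|$, without any weight functions and with only an $\eps$-error rather than a $\gamma$-error; your detour works but is unnecessary.)

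Second, your claim that \ref{prop:forB3cod:2} is ``immediate from \ref{prop:abs:nocodegree}'' is also incorrect: \ref{prop:abs:nocodegree} is an $\mathcal{I}$-level statement (one $v$ per $(i,j,u)$) and does not by itself rule out a vertex $u'$ appearing as the $(i,\cdot)$-coordinate of two different pairs in $\mathcal{J}$ arising from two different originating pairs in $\mathcal{I}$. The actual uniqueness comes from vertex-disjointness of the tuples: for $u'\in Y_{i,0}$, the hypergraph vertex $(i,u')$ lies in at most one edge of the matching $\mathcal{M}_2$; for $u'\in Y_{i,1}$, \ref{prop:missingMiuomiu:3} forces disjointness of the $V(M_{i,u})\cup\{v_{i,u}\}$ sets over the $\bar J$-part. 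Finally, a softer point: your error accounting for the $\bar J$-contribution to \ref{prop:fromB2:1}--\ref{prop:fromB2:10} (``$O(\gamma n^2)\cdot O(1)\cdot n^{-1}$'') is too coarse; for statements like \ref{prop:fromB2:9} and \ref{prop:fromB2:10} the quantity you fix is an edge $xy$, not an index $i$, so the relevant bound is the vertex-codegree condition \ref{prop:missingMiuomiu:5} rather than $|\bar J|$ being small, and this is the specific reason those maximality conditions were imposed.
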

\begin{proof}[Proof of Claim~\ref{claim:outcomeofB2}]
\ref{prop:fromB2:1}, \ref{prop:fromB2:6}: Let $i\in [n]$ and $x\in Z_{i,0}$. Note that if $\{(i,u'),(j,v')\}\in \mathcal{J}$, then, from \eqref{eqn:Jdefn}
there is some $\{(i,u),(j,v)\}\in \mathcal{I}$ with $(u',v')\in \{(x_{i,u,j},v_{i,u}),(y_{j,v,i},y_{i,u,j}),(v_{j,v},x_{j,v,i})\}$.
If $(i,u)$ and $(j,v)$ are both in $J$, then $(j,v_{i,u}),(j,y_{i,u,j}),(i,x_{j,v,i})\in \mathcal{G}_i$, and thus $(j,v')\in \mathcal{G}_i$, while there are at most $\gamma n$ vertices $u\in S_i\setminus R_i$ for which $(i,u)\in \bar{J}$ by \ref{prop:from:B2:11a:forJhat}. Therefore, by \ref{prop:fromB2:1:forJhat},
\begin{align*}
|\{\{(i,u),(j,v)\}\in &\mathcal{J}:x\in Z_{j,0},c(ux)\in D_3\setminus (C_{i}\cup C_j\cup D_{3,i}\cup D_{3,j}),ux\in E^\abs_0\}|\\
&=(1\pm 4\gamma)\beta_0\cdot p_{\mathrm{col}}\cdot p_{\mathrm{edge}}\cdot p_{\mathcal{J}}\cdot n+2\gamma n=(1\pm 4\gamma)\beta_0\cdot p_{\mathrm{col}}\cdot p_{\mathrm{edge}}\cdot p_{\mathcal{J}}\cdot n,
\end{align*}
so that \ref{prop:fromB2:1} holds, while, by \ref{prop:fromB2:6:forJhat},
\begin{align*}
|\{\{(i,u),(j,v)\}\in &\mathcal{J}:c'\in D_3\setminus (C_j\cup D_{3,j}),\exists\text{ a colour-$c'$ edge from $u$ to $Z_{i,0}\cap Z_{j,0}$ in $E_0^\abs$}\}|\\
&=(1\pm 4\gamma){p}_{\mathrm{vx}}\cdot \beta_0p_\abs\cdot p_{\mathrm{edge}}\cdot p_{\mathcal{J}}\cdot n,
\end{align*}
so that \ref{prop:fromB2:6} holds.

\smallskip

\noindent\ref{prop:fromB2:2}, \ref{prop:fromB2:7}: Similarly, it follows that if $\{(i,u'),(j,v')\}\in \mathcal{J}$, then, if this is $\mathcal{J}$ due to $\{(i,u),(j,v)\}\in \mathcal{I}$, we have that if $(i,u)$ and $(j,v)$ are both in $J$, then $(j,u')\in \mathcal{F}_i$. Thus, by \ref{prop:from:B2:11a:forJhat} and, respectively, \ref{prop:fromB2:2:forJhat} and \ref{prop:fromB2:7:forJhat}, we can conclude that \ref{prop:fromB2:2} and \ref{prop:fromB2:7} hold.

\smallskip

\noindent\ref{prop:fromB2:9}: Let $xy\in E_0^\abs$ with $c(xy)\in D_3$. If $(i,j)\in \mathcal{K}_x$, then for some $v'$ we have $\{(i,x),(j,v')\}\in \mathcal{J}$ from \eqref{eqn:calKxdefn}. Thus, from \eqref{eqn:Jdefn}, \ref{prop:fromB2:9:forJhat} and \ref{prop:missingMiuomiu:5}, we have that \ref{prop:fromB2:9} holds.

\smallskip

\noindent\ref{prop:fromB2:10}: Let $xy\in E_0^\abs$ with $c(xy)\in D_3$. Similarly to \ref{prop:fromB2:9}, but using \eqref{eqn:Jdefn}, \ref{prop:fromB2:10:forJhat}  and \ref{prop:missingMiuomiu:5} (summed over all $v\in V(G)$), we have that \ref{prop:fromB2:10} holds.

\smallskip

\noindent\ref{prop:from:B2:vertexinJ}: This follows from \eqref{eqn:Jdefn}, \ref{prop:from:B2:vertexinJ:forJhat} and \ref{prop:missingMiuomiu:5}.

\smallskip

\noindent\ref{prop:from:B2:11a}: Let $i\in [n]$. For each $(u,j,v)$ with $\{(i,u),(j,v)\}\in \mathcal{I}$, by \eqref{eqn:Jdefn}, there are 3 triples $(u',j',v')$ with $\{(i,u'),(j',v')\}\in \mathcal{J}$. Therefore, by \ref{prop:abs:regularityout} and \ref{prop:B2:sizeofSminusR}, we have that the number of triple $(u,j,v)$ with  $\{(i,u),(j,v)\}\in \mathcal{J}$ is $3r\cdot |S_i\setminus R_i|=3r\cdot (1\pm \eps)\cdot 2p_{S-R}n=(1\pm \eps)\cdot 2p_\mathcal{J}n$, and thus \ref{prop:from:B2:11a} holds.

\smallskip

\noindent\ref{prop:from:B2:11}: Let $i,j\in [n]$ be distinct. Then, as for each $(u,v)$ with $\{(i,u),(j,v)\}\in \mathcal{I}$ there are 2 triples $(u',v')$ with $\{(i,u'),(j,v')\}\in \mathcal{J}$, we have that \ref{prop:from:B2:11} follows from \ref{prop:abs:lowcodegree1}.

\smallskip

\noindent\ref{prop:forB3cod:1}: Let $i\in [n]$ and $v\in V(G)$. Then, by \ref{prop:forB3cod:1:forJhat} and \ref{prop:missingMiuomiu:6} we have that \ref{prop:forB3cod:1} holds.

\smallskip

\noindent\ref{prop:forB3cod:2}: Let $i\in [n]$ and $u\in V(G)$, and note that if there is some $(j,v)$ with $\{(i,u),(j,v)\}\in \mathcal{J}$, then $u\in Y_{i}$. If $u\in Y_{i,0}$, then there is a unique such $(j,v)$, coming from the unique edge of $\mathcal{M}_2$, while if $u\in Y_{i,1}$, then there is a unique such $(j,v)$ by \ref{prop:missingMiuomiu:3}.

\smallskip

\noindent\ref{prop:forBfinalfromB2}: For each $u\in V(G)$ and $\phi\in \mathcal{F}$,
\begin{align*}
|\{i\in I_\phi:v\in Y_{i,0}\setminus &(\cup_{u,j,u':\{(i,u),(j,u')\}\in \mathcal{I}}\{v_{i,u},x_{i,u,j},y_{i,u,j},v_{j,u'},x_{j,u',i},y_{j,u',i}\})\}|\\
&=|\{i\in I_\phi:v\in Y_{i,0}\}|-|\{i\in I_\phi:v\in Y_{i,0},(i,v)\notin V(\mathcal{M}_2)\}|\\
&\overset{\ref{prop:from:B2:11b:forJhat:new}}{\leq} 2\gamma p_\tr p_\fa n,
\end{align*}
and therefore \ref{prop:forBfinalfromB2} holds.

\smallskip

\noindent\ref{prop:forBfinalfromB2:2}: Let $c\in D_2$ and $\phi\in \mathcal{F}$. For each $i\in [n]$, if $(i,c)\in V(\mathcal{M}_2)$, then $\hat{M}_{i,2}$ has an edge of colour $c$, and when this occurs there are $r+1$ pairs $(j,c)\in V(\mathcal{M}_2)$ with different values of $j$. Thus,
\begin{align*}
|\{i\in [n]:c \notin (C(\hat{M}_{i,2}) \cup C_i\cup D_{2,i})\}|&=|\{i\in [n]:c\in D_2\setminus (C_i\cup D_{2,i})\}|-(r+1)\cdot w_c(\mathcal{M}_2)\\
&
\overset{\ref{prop:B1:lastfew:3},\eqref{eqn:M2:balanced}}{\leq}(1\pm \eps)\beta_0p_\abs n-(1+\gamma)\cdot \delta_2^{-1}\cdot w_c(E(\mathcal{H}_2))\\
&\overset{\eqref{eqn:W2:ctotalweight}}
\leq 2\gamma n,
\end{align*}
as required.

\smallskip

\noindent\ref{prop:forBfinalfromB2:3}: For each $i\in [n]$,
\begin{align*}
|Y_{i,0}\setminus &(\cup_{u,j,u':\{(i,u),(j,u')\}\in \mathcal{I}}\{v_{i,u},x_{i,u,j},y_{i,u,j},v_{j,u'},x_{j,u',i},y_{j,u',i}\})\}|\\
&=|Y_{i,0}|-|\{u:\exists(j,u')\text{ s.t.\ }\{(i,u),(j,u')\}\in \mathcal{I}\}|-5|\{(u,j,u'):\{(i,u),(j,u')\}\in \mathcal{I}\}|\\
&=|Y_{i,0}|-(5r+1)|S_i\setminus R_i|
\overset{\ref{prop:B2:sizeofSminusR}}{\leq} (1+\eps)\beta_0p_Yn-(1-\eps)121p_{S-R}n
\leq 2\beta n,
\end{align*}
as required.
\smallskip

\noindent\ref{prop:forBfinalfromB2:4}: Finally, note that \ref{prop:forBfinalfromB2:4} follows from \ref{prop:missingviuciu:2}, \ref{prop:missingviuciu:3} and \ref{prop:missingMiuomiu:3}, and the construction of $\mathcal{H}_2$ and choices of $\mathcal{M}_2$.
\claimproofend


\subsection{Part~\ref{partB3}: Switching paths with $Z_i$}\label{sec:partB3}
Recalling the set $\mathcal{J}$ from Part~\ref{partB2}, for each $f=\{(i,u),(j,v)\}\in \mathcal{J}$, let $\mathcal{R}_f$ be the set of $(u,v,L)$-links with internal vertices in $Z_{i,0}\cap Z_{j,0}$, colours in $D_3\setminus (C_i\cup C_j\cup D_{i,0}\cup D_{j,0})$ and edges in $E^{\mathrm{abs}}_{0}$.

Define an auxiliary hypergraph $\mathcal{H}_3$ with 4 vertex classes
\begin{equation}\label{eqn:H3vx}
\begin{array}{ll}
\textbf{i)}\;\; \mathcal{J} &\;\;\textbf{ii)}\;\;  \mathcal{V}_Z:=\cup_{i\in [n]}(\{i\}\times Z_{i,0})\\
\textbf{iii)}\;\; \mathcal{C}_3:=\cup_{i\in [n]}(\{i\}\times (D_{3}\setminus (C_i\cup D_{3,i})))
&\;\;\textbf{iv)}\;\; \mathcal{E}_3:=E(G|_{D_3})\cap E^{\mathrm{abs}}_0
\end{array}
\end{equation}
where, for each $f=\{(i,u),(j,v)\}\in \mathcal{J}$ and $S\in \mathcal{R}_{f}$, we add the edge
\[
\{f\}\cup (\{i,j\}\times ((V(S)\setminus\{u,v\})\cup C(S))\cup E(S).
\]
As for each $f=\{(i,u),(j,v)\}\in \mathcal{J}$ and $S\in \mathcal{R}_{f}$, $S$ is a path with 31 colours, 62 edges and 61 internal vertices, each edge of $\mathcal{H}_3$ has $1+2(61+31)+62=247$ vertices, and thus $\mathcal{H}_3$ is a 247-uniform hypergraph. We will now show that $\mathcal{H}_3$ is almost regular (in Section~\ref{sec:H3:almostregular}) with low codegrees (in Section~\ref{sec:H3:lowcod}).


\subsubsection{Vertex degrees in $\mathcal{H}_3$}\label{sec:H3:almostregular}
Recalling $\Phi$ from \eqref{eqn:Phidefn}, let $\delta_3=\Phi=p_{\mathrm{vx}}^{61}\cdot p_{\mathrm{col}}^{30}\cdot p_{\mathrm{edge}}^{62}\cdot n^{30}$. 
We now show that $\mathcal{H}_3$ is almost $\delta_3$-regular.
\begin{claim}\label{clm:H3almostregular}
For each $v\in V(\mathcal{H}_3)$, we have $d_{\cH_3}(v)=(1\pm 10\gamma)\delta_3$.
\end{claim}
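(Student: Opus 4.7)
The plan is to mirror the four-case analysis already carried out for $\mathcal{H}_2$ in Claim~\ref{clm:H2almostregular}: I will treat each of the four vertex classes of $\mathcal{H}_3$ (namely $\mathcal{J}$, $\mathcal{V}_Z$, $\mathcal{C}_3$, and $\mathcal{E}_3$) separately. In each case I enumerate the edges of $\mathcal{H}_3$ through the fixed vertex by choosing the position within the link $S$ at which the vertex appears, multiplying a ``position'' count (how many pairs $f \in \mathcal{J}$ together with the appropriate incidence data are available) by a ``link-completion'' count (how many links in $\mathcal{R}_f$ realise that position). The former counts come from the $\mathcal{J}$-structure properties \ref{prop:fromB2:1}--\ref{prop:forB3cod:2} proved at the end of Part~\ref{partB2}, and the latter from the link counts \ref{prop:B3:1}--\ref{prop:B3:14}.

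For case (i), $f \in \mathcal{J}$: $d_{\mathcal{H}_3}(f) = |\mathcal{R}_f|$ by definition, so \ref{prop:B3:1} gives $(1\pm\eps)\Phi$ directly. For case (ii), $(i,x) \in \mathcal{V}_Z$: I split by the position $k \in \{2,\ldots,62\}$ of $x$ in $S$. The endpoint-adjacent positions $k = 2, 62$ are handled by combining \ref{prop:fromB2:1}/\ref{prop:fromB2:2} with \ref{prop:B3:2}, since in these cases the edge from $u$ or $v$ to $x$ is already specified and its colour/membership in $E_0^{\mathrm{abs}}$ must be controlled. For $3 \leq k \leq 61$ the $k$th vertex is genuinely internal, so I use \ref{prop:B3:6} combined with the count of triples $(u,j,v)$ with $\{(i,u),(j,v)\}\in \mathcal{J}$ and $x \in Z_{j,0}$ (which follows from \ref{prop:from:B2:11a} together with the defining probability $\beta_0 = \P(x \in Z_{j,0} \mid x \in Z_i)$; this expectation estimate is easy to convert into a high-probability bound using the same McDiarmid-style argument that underpinned Claim~\ref{clm:properties}). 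Cases (iii) and (iv) are entirely analogous: for $(i,c) \in \mathcal{C}_3$ I partition by the position $k \in \{1,\ldots,31\}$ of $c$ on $S$, using \ref{prop:fromB2:6}/\ref{prop:fromB2:7} with \ref{prop:B3:9} at the two extreme positions and a bulk estimate in between, and for $xy \in \mathcal{E}_3$ I split by the position of $xy$ among the 62 edges, using \ref{prop:B3:7}, \ref{prop:B3:11}, and \ref{prop:B3:14} in combination with \ref{prop:fromB2:9} and \ref{prop:fromB2:10} at the boundary positions.

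The conceptual content of the argument is light; all the genuine work was done in establishing Theorem~\ref{thm:Llinks} and Claim~\ref{claim:outcomeofB2}. What remains is purely arithmetical: verifying that in each case the position-by-position contributions telescope to $(1 \pm 10\gamma)\Phi$. Each factor $p_{\mathrm{vx}}, p_{\mathrm{col}}, p_{\mathrm{edge}}$ appearing in the denominator of a link count (from \ref{prop:B3:2}, \ref{prop:B3:6} etc.) must be matched by exactly the same factor appearing in the numerator of the corresponding $\mathcal{J}$-count (via \ref{prop:fromB2:1} etc.), and the number of positions contributing each ``shape'' of term must line up with the bookkeeping constants from Section~\ref{sec:variables} (in particular the defining relation $p_{\mathcal{J}} = 72 p_{S-R}$ and the identity $\delta_3 = \Phi$).

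The main obstacle, if any, is ensuring that the error terms in each position estimate compose without blowup: since the summation runs over at most $2\cdot 62 = 124$ positions and each individual estimate has a multiplicative $(1 \pm O(\gamma))$ error, the final error is $(1 \pm O(\gamma))$ and comfortably fits into the $(1\pm 10\gamma)$ target. The only non-routine step is the intermediate-position vertex count in case (ii), where we need a count of $\{(i,u),(j,v)\} \in \mathcal{J}$ with $x \in Z_{j,0}$ uniformly in $x$; this can be added as an additional property in the style of Claim~\ref{clm:properties} (it follows from the same kind of concentration argument using McDiarmid's inequality together with the codegree bounds \ref{prop:forB3cod:1} and \ref{prop:forB3cod:2} to control the Lipschitz constants), and once available the rest of the case analysis is routine verification.
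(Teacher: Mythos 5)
Your proposal mirrors the paper's proof of Claim~\ref{clm:H3almostregular} essentially exactly: the same four-case split by vertex class, the same position-by-position enumeration within each case, and the same pairing of a $\mathcal{J}$-side count with a link-side count from \ref{prop:B3:1}--\ref{prop:B3:14}. Three points of imprecision are worth flagging so the detailed verification does not go astray. First, the ``additional property'' you want for the interior-position vertex count in case (ii) already exists in Claim~\ref{clm:properties}, as \ref{prop:fromB2:5} and \ref{prop:fromB2:5b}; these count labelled $\mathcal{I}$-pairs (not $\mathcal{J}$-pairs) with $x\in Z_{j,0}$, \emph{separated by whether $u\simAB x$}, and the passage to $\mathcal{J}$-counts is then deterministic because each unordered $\mathcal{I}$-pair spawns exactly three $\mathcal{J}$-pairs with the same $(i,j)$. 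The parity separation is not cosmetic: for a fixed $k$, \ref{prop:B3:6} only applies to $x$ in the right $A/B$ class, and the $2{:}1$ and $1{:}2$ mixing of parities in the $\mathcal{I}\!\to\!\mathcal{J}$ map is precisely what yields the factor $3\beta_0 p_{\mathcal{I}}n = \beta_0 p_{\mathcal{J}}n$ per position; a naive use of \ref{prop:from:B2:11a}$\times\beta_0$ ignores this and would have to separately argue that the two parity classes are balanced. Second, since that count depends only on $\mathcal{I}$ and the $Z_{j,0}$-partition (not on the output of the nibble on $\mathcal{H}_2$), its Lipschitz control in a McDiarmid argument comes from \ref{prop:abs:lowcodegree1}--\ref{prop:abs:lowcodegree3}, not from \ref{prop:forB3cod:1}/\ref{prop:forB3cod:2} which are themselves outputs of Part~\ref{partB2} and could produce a circularity if used there. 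Third, in case (iii) the first/last-edge colour positions are governed by \ref{prop:B3:7} (the fixed-first-edge link count) after using \ref{prop:fromB2:6}/\ref{prop:fromB2:7} to locate the unique colour-$c$ edge at $u$ or $v$; \ref{prop:B3:9} is the bulk estimate for interior colour positions, not the extreme ones. None of this changes the verdict that your approach is the one the paper takes.
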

\begin{proof}[Proof of Claim~\ref{clm:H3almostregular}]
We check this for vertices in each of the 4 classes in the order at \eqref{eqn:H3vx}.

\smallskip

\noindent\textbf{i)} Let $f=\{(i,u),(j,v)\}\in \mathcal{J}$. Then, using \ref{prop:B3:1}, we have $d_{\mathcal{H}_3}(f)=(1\pm \eps)\Phi=(1\pm 10\gamma)\delta_3$.

\smallskip

\noindent\textbf{ii)} Let $(i,x)\in \mathcal{V}_Z$ so that $i\in [n]$ and $x\in Z_{i,0}$. We count the number of $f=\{(i,u),(j,v)\}\in \mathcal{J}$ and $S\in \mathcal{R}_f$ for which $x$ is an internal vertex of $V(S)$ in cases depending on the position of $x$ in $S$.
\begin{itemize}
\item $x$ is the 2nd vertex of $S$: From \ref{prop:fromB2:1} we can bound the number of choices for $\{(i,u),(j,v)\}\in \mathcal{J}$ for which $x\in Z_{i,0}\cap Z_{j,0}$, $c(ux)\in D_3\setminus (C_{i}\cup C_j\cup D_{3,i}\cup D_{3,j})$ and $ux\in E^\abs_0$,
and thus, by \ref{prop:B3:2}, the number of choices for $S$ in this case is
\[
(1+5\gamma)\cdot \beta_0\cdot p_{\mathrm{col}}\cdot p_{\mathrm{edge}}\cdot p_{\mathcal{J}}\cdot n\cdot (1\pm \eps)\cdot \Phi \cdot p_{\mathrm{vx}}^{-1}\cdot p_{\mathrm{col}}^{-1}\cdot p_{\mathrm{edge}}^{-1}\cdot n^{-1}=(1\pm 10\gamma)\cdot \beta_0\cdot p_{\mathcal{J}}\cdot \Phi \cdot p_{\mathrm{vx}}^{-1}.
\]
\item $x$ is the 62nd vertex of $S$: By \ref{prop:fromB2:2}, we can bound the number of choices for $f=\{(i,u),(j,v)\}\in \mathcal{J}$ for which $x\in Z_{i,0}\cap Z_{j,0}$, $c(xv)\in D_3\setminus (C_{i}\cup C_j\cup D_{3,i}\cup D_{3,j})$ and $xv\in E^\abs_0$,
and thus, by \ref{prop:B3:2}, the number of choices for $S$ in this case is
\[
(1+5\gamma)\cdot \beta_0\cdot p_{\mathrm{col}}\cdot p_{\mathrm{edge}}\cdot p_{\mathcal{J}}\cdot n\cdot (1\pm \eps)\cdot \Phi \cdot p_{\mathrm{vx}}^{-1}\cdot p_{\mathrm{col}}^{-1}\cdot p_{\mathrm{edge}}^{-1}\cdot n^{-1}=(1\pm  10\gamma)\cdot \beta_0\cdot p_{\mathcal{J}}\cdot \Phi \cdot p_{\mathrm{vx}}^{-1}.
\]

\item $x$ is the $k$th vertex of $S$, with $3\leq k\leq 62$:
By \ref{prop:fromB2:5} or \ref{prop:fromB2:5b}, we can bound the number of choices for $f=\{(i,u),(j,v)\}\in \mathcal{J}$ for which $x\in Z_{i,0}\cap Z_{j,0}$ and $x\simAB u$ if $k$ is odd and $x\not\simAB u$ if $k$ is even,
and thus, by \ref{prop:B3:6}, the number of choices for $S$ in this case is
\[
(1+5\gamma)\cdot 3\cdot \beta_0\cdot p_{\mathcal{I}}\cdot n\cdot (1\pm \eps)\cdot \Phi \cdot p_{\mathrm{vx}}^{-1}\cdot n^{-1}=(1\pm 10\gamma)\cdot \beta_0\cdot p_{\mathcal{J}}\cdot \Phi \cdot p_{\mathrm{vx}}^{-1}.
\]
\end{itemize}
Thus, as $61\beta_0p_{\mathcal{J}}=\beta _0^2p_Z=p_{\mathrm{vx}}$, in total we have $d_{\mathcal{H}_3}((i,v))=(1\pm  10\gamma)\cdot 61 \cdot \beta_0\cdot p_{\mathcal{J}}\cdot \Phi \cdot p_{\mathrm{vx}}^{-1}=(1\pm  10\gamma)\delta_3$.

\smallskip

\noindent\textbf{iii)} Let $(i,c)\in \mathcal{C}_3$, so that $i\in [n]$ and $c\in D_3\setminus (C_i\cup D_{3,i})$. We count the number of $f=\{(i,u),(j,v)\}\in \mathcal{J}$ and $S\in \mathcal{R}_f$ for which $c$ is used as a colour by considering the following cases:
\begin{itemize}
\item $c$ is the colour of the first edge of $S$: By \ref{prop:fromB2:6}, we can bound the number of choices for $f=\{(i,u),(j,v)\}\in \mathcal{J}$ for which $c\in D_3\setminus (C_i\cup C_j\cup D_{3,i}\cup D_{3,j})$ and there is a colour-$c$ edge from $u$ to $Z_{i,0}\cap Z_{j,0}$ in $E_0^\abs$, and thus, by \ref{prop:B3:7}, the number of choices for $S$ in this case is
\[
(1+5\gamma)\cdot 2\cdot  {p}_{\mathrm{vx}}\cdot \beta_0p_\abs\cdot p_{\mathrm{edge}}\cdot p_{\mathcal{J}}\cdot n\cdot (1\pm \eps)\cdot \Phi \cdot p_{\mathrm{vx}}^{-1}\cdot p_{\mathrm{col}}^{-1}\cdot p_{\mathrm{edge}}^{-1}\cdot n^{-1}=(1\pm  10\gamma)\cdot 2\cdot   \beta_0p_\abs\cdot p_{\mathcal{J}}\cdot \Phi \cdot p_{\mathrm{col}}^{-1}.
\]
\item $c$ is the colour of the last edge of $S$: By \ref{prop:fromB2:7}, we can bound the number of choices for $f=\{(i,u),(j,v)\}\in \mathcal{J}$ for which $c\in D_3\setminus (C_i\cup C_j\cup D_{3,i}\cup D_{3,j})$ and there is a colour-$c$ edge from $v$ to $Z_{i,0}\cap Z_{j,0}$ in $E_0^\abs$, and thus, by \ref{prop:B3:7}, the number of choices for $S$ in this case is
\[
(1+5\gamma)\cdot 2\cdot {p}_{\mathrm{vx}}\cdot \beta_0p_\abs\cdot p_{\mathrm{edge}}\cdot p_{\mathcal{J}}\cdot n\cdot (1\pm \eps)\cdot \Phi \cdot p_{\mathrm{vx}}^{-1}\cdot p_{\mathrm{col}}^{-1}\cdot p_{\mathrm{edge}}^{-1}\cdot n^{-1}=(1\pm  10\gamma)\cdot 2\cdot   \beta_0p_\abs\cdot p_{\mathcal{J}}\cdot \Phi \cdot p_{\mathrm{col}}^{-1}.
\]
\item $c$ is not the colour of the first or last edge of $S$: By \ref{prop:fromB2:8}, we can bound the number of choices for $f=\{(i,u),(j,v)\}\in \mathcal{J}$ for which $c\in D_3\setminus (C_i\cup C_j\cup D_{3,i}\cup D_{3,j})$, and thus, by \ref{prop:B3:9}, the number of choices for $S$ in this case is
\[
(1+5\gamma)\cdot 3\cdot 2\cdot  \beta_0p_\abs\cdot p_{\mathcal{I}}\cdot (1\pm \eps)\cdot \Phi \cdot  p_{\mathrm{col}}^{-1}\cdot n^{-1}=(1\pm  10\gamma)\cdot   2\cdot \beta_0p_\abs\cdot p_{\mathcal{J}}\cdot \Phi \cdot p_{\mathrm{col}}^{-1}.
\]
\end{itemize}
Thus, as $p_\col=\beta_0^2p_\abs^2p_3=62p_{\mathcal{J}}$ in total, we have $d_{\mathcal{H}_3}((i,c))=(1\pm  10\gamma)\cdot 62\cdot \beta_0p_\abs\cdot p_{\mathcal{J}}\cdot \Phi \cdot p_{\mathrm{col}}^{-1}=(1\pm  10\gamma)\delta_3$.

\smallskip

\noindent\textbf{iv)} Let $xy\in \mathcal{E}_3$, so that $xy\in E(G|_{D_3})\cap E_0^\abs$.  We count the number of $f=\{(i,u),(j,v)\}\in \mathcal{J}$ and $S\in \mathcal{R}_f$ for which $xy$ is an edge of $f$ by considering the following cases:
\begin{itemize}
\item $xy$ is the first or last edge of $S$: By \ref{prop:fromB2:9},  we can bound the number of choices for $f=\{(i,u),(j,v)\}\in \mathcal{J}$ with $u=x$ for which $c(xy)\in D_3\setminus (C_i\cup C_j\cup D_{3,i}\cup D_{3,j})$
and $y\in Z_{i,0}\cap Z_{j,0}$, and thus, by \ref{prop:B3:10}, the number of choices for $S$ where $xy$ is the first edge and $u=x$ is
\[
(1+5\gamma)\cdot {p}_{\mathrm{vx}}\cdot \beta_0^2p_\abs^2\cdot p_{\mathcal{J}}\cdot n\cdot (1\pm \eps)\cdot \Phi \cdot  p_{\mathrm{vx}}^{-1}\cdot p_{\mathrm{col}}^{-1}\cdot p_{\mathrm{edge}}^{-1}\cdot n^{-1}=(1\pm  10\gamma)\cdot p_{\mathcal{J}}\cdot \Phi \cdot  p_{\mathrm{edge}}^{-1}\cdot p_3^{-1}.
\]
As a similar bound holds with $x$ and $y$ switched, there are  $(1\pm  10\gamma)\cdot 2\cdot p_{\mathcal{J}}\cdot \Phi \cdot  p_{\mathrm{edge}}^{-1}\cdot p_3^{-1}$ choices for $S$ in total in this case.

\item $xy$ is the 2nd or 61st edge of $S$:  By \ref{prop:fromB2:10}, we can bound the number of choices for $f=\{(i,u),(j,v)\}\in \mathcal{J}$ for which $c(ux),c(xy)\in D_3\setminus (C_i\cup C_j\cup D_{3,i}\cup D_{3,j})$, $x,y\in Z_{i,0}\cap Z_{j,0}$,
and $ux\in E_0^\abs$, and thus, by \ref{prop:B3:11}, the total number of choices for $S$ in which $xy$ is the second vertex and $xy$ is the second edge is at most
\begin{align*}
(1+5\gamma)\cdot {p}_{\mathrm{vx}}^2\cdot & {p}_{\mathrm{col}}\cdot \beta_0^2p_\abs^2\cdot p_{\mathrm{edge}}\cdot p_{\mathcal{J}}\cdot n^2\cdot (1\pm \eps)\cdot \Phi \cdot  p_{\mathrm{vx}}^{-2}\cdot p_{\mathrm{col}}^{-2}\cdot p_{\mathrm{edge}}^{-2}\cdot n^{-2}\\
&=(1\pm  10\gamma)\cdot p_{\mathcal{J}}\cdot  \Phi \cdot  p_{\mathrm{edge}}^{-1}\cdot p_3^{-1}.
\end{align*}
As a similar bound holds with $x$ and $y$ switched, in total there are $(1\pm  10\gamma)\cdot 2\cdot p_{\mathcal{J}}\cdot  \Phi \cdot  p_{\mathrm{edge}}^{-1}\cdot p_3^{-1}$ choices for $S$ in this case (as when $xy$ is the 2nd edge of a $(u,v,L)$-link, it is the 61st edge of that subgraph considered as a $(v,u,L)$-link).

\item $xy$ is the $k$th edge of $S$, for $3\leq k\leq 60$: By \ref{prop:fromB2:13}, we can bound the number of choices  $f=\{(i,u),(j,v)\}\in \mathcal{J}$ for which $c(uv)\in D_3\setminus (C_i\cup C_j\cup D_{3,i}\cup D_{3,j})$
and $u,v\in Z_{i,0}\cap Z_{j,0}$, and thus, by \ref{prop:B3:14}, the number of choices for $S$ in which $xy$ is the $k$th edge is
\begin{align*}
(1+5\gamma)\cdot 3\cdot 2{p}_{\mathrm{vx}}^2\cdot& {p}_{\mathrm{col}}\cdot p_{\mathcal{I}}\cdot n^2\cdot (1\pm \eps)\cdot \Phi \cdot  p_{\mathrm{vx}}^{-2}\cdot p_{\mathrm{col}}^{-1}\cdot p_{\mathrm{edge}}^{-1}\cdot n^{-2}\\
&=(1\pm  10\gamma)\cdot 2p_{\mathcal{J}}\cdot  \Phi \cdot p_{\mathrm{edge}}^{-1}\cdot p_3^{-1}
\end{align*} choices for $S$.
\end{itemize}
As when $xy$ is the $k$th edge of a $(u,v,L)$-link, it is the $(62-k)th$ edge of that subgraph considered as a $(v,u,L)$-link, we have, in total, that $d_{\mathcal{H}_3}(xy)=(1\pm  10\gamma)\cdot 62\cdot p_{\mathcal{J}}\cdot  \Phi \cdot p_{\mathrm{edge}}^{-1}\cdot p_3^{-1}=(1\pm  10\gamma)\delta_3$, where we have used that $p_3p_{\mathrm{edge}}=p_3\beta_0p_{\abs}=62p_{\mathcal{J}}$.
\claimproofend


\subsubsection{Codegrees in $\mathcal{H}_3$}\label{sec:H3:lowcod}

We will now show that the codegrees of $\mathcal{H}_3$ are all $O(n^{29.5})$.
As the vertex degrees of $\mathcal{H}_3$ are (by Claim~\ref{clm:H3almostregular}) all around $\delta_3$, where $\delta_3=\Phi=p_{\mathrm{vx}}^{61}\cdot p_{\mathrm{col}}^{30}\cdot p_{\mathrm{edge}}^{62}\cdot n^{30}$, and $1/n\llpoly p_{\mathrm{vx}},p_{\mathrm{col}},p_{\mathrm{edge}}$,
these codegrees are all much smaller than the vertex degrees in $\mathcal{H}_3$.

\begin{claim}\label{clm:H3lowcod}
$\Delta^c(\cH_3)=O(n^{29.5})$.
\end{claim}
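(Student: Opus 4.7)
The plan is to bound $d_{\mathcal{H}_3}(v_1,v_2)$ for every pair of distinct vertices $v_1,v_2$ of $\mathcal{H}_3$ by a case analysis on the vertex classes in \eqref{eqn:H3vx}. Each edge of $\mathcal{H}_3$ is indexed by a pair $(f,S)$ with $f=\{(i,u),(j,v)\}\in \mathcal{J}$ and $S\in \mathcal{R}_f$ a $(u,v,L)$-link, and contains exactly one vertex from $\mathcal{J}$. Thus if $v_1,v_2\in \mathcal{J}$ the codegree is $0$; if exactly one of $v_1,v_2$ lies in $\mathcal{J}$, then $f$ is determined and the codegree equals the number of links $S\in \mathcal{R}_f$ satisfying one additional constraint (a prescribed internal vertex, colour, or edge), which by the relevant link-counting property in \ref{prop:B3:cod} (namely \ref{prop:links:throughvertex}, \ref{prop:links:throughcolour}, or \ref{prop:links:throughedge}) is at most $O(\Phi n^{-1})=O(n^{29})$.

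In the remaining cases neither $v_1$ nor $v_2$ lies in $\mathcal{J}$. Each vertex from $\mathcal{V}_Z$ or $\mathcal{C}_3$ has the form $(i_k,\cdot)$ and forces $i_k\in\{i,j\}$, thereby constraining $f$, whereas a vertex from $\mathcal{E}_3$ does not directly constrain $f$. The strategy is to bound the number of admissible $f$ using the $\mathcal{J}$-codegree properties from Claim~\ref{claim:outcomeofB2} (namely \ref{prop:from:B2:vertexinJ}, \ref{prop:from:B2:11a}, \ref{prop:from:B2:11}, \ref{prop:forB3cod:1}, and \ref{prop:forB3cod:2}), and then, for each admissible $f$, to bound the number of valid links $S$ by the appropriate codegree bound on $L$-links from \ref{prop:B3:cod}, and multiply. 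For concreteness, if $v_1=(i_1,x_1),v_2=(i_2,x_2)\in \mathcal{V}_Z$ with $i_1\neq i_2$, then $\{i,j\}=\{i_1,i_2\}$, so by \ref{prop:from:B2:11} there are at most $3\sqrt{n}$ admissible $f$, and by \ref{prop:links:cod:twovertices} each contributes $O(\Phi n^{-2})=O(n^{28})$ links, giving a bound of $O(n^{28.5})$; if instead $i_1=i_2$ then there are $O(n)$ admissible $f$ by \ref{prop:from:B2:11a}, for a total of $O(n^{29})$. The cases $v_1,v_2\in \mathcal{C}_3$ and $v_1\in\mathcal{V}_Z,v_2\in\mathcal{C}_3$ proceed analogously via \ref{prop:links:cod:twocolours} and \ref{prop:links:cod:1vertex1colour} respectively, treating separately the subcase where the prescribed colour appears on an edge incident to $u$ or $v$ at the prescribed vertex (in which case $u$ or $v$ is forced and hence $f$ is determined up to $O(1)$ choices via \ref{prop:forB3cod:2}).

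The main obstacle will be the cases involving $\mathcal{E}_3$, since an edge $e\in \mathcal{E}_3$ does not by itself restrict the indices of $f$. For $v_1=e,v_2=e'\in \mathcal{E}_3$ with $e,e'$ vertex-disjoint and of different colours, the plan is to split by whether $\{u,v\}$ meets $V(e)\cup V(e')$: for the at most $O(n)$ admissible $f$ with $u$ or $v$ in $V(e)\cup V(e')$ (bounded via \ref{prop:from:B2:vertexinJ}) we treat $e$ or $e'$ as the first or last edge of $S$ and use \ref{prop:links:throughedgeandvertex} to incorporate the other edge, while for the remaining $f$ we apply \ref{prop:links:2edgesofdifferentcoloursanddisjoint} to get $O(\Phi n^{-4})=O(n^{26})$ links per $f$, so summed over all $|\mathcal{J}|=O(n^2)$ admissible $f$ this contribution is $O(n^{28})$. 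Degenerate subcases (where $e,e'$ share a vertex or colour) and mixed cases pairing $\mathcal{E}_3$ with $\mathcal{V}_Z$ or $\mathcal{C}_3$ are handled in the same spirit, with $\mathcal{J}$-codegree bounds limiting the number of admissible $f$ and the link-codegree bounds of \ref{prop:B3:cod} limiting the links per $f$. In each case one obtains a bound of $O(n^{29.5})$ or better, completing the claim.
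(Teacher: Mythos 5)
Your proposal is essentially the same proof as the paper's: bound the number of admissible $f = \{(i,u),(j,v)\} \in \mathcal{J}$ using the codegree properties from Claim~\ref{claim:outcomeofB2} (namely \ref{prop:from:B2:11a}, \ref{prop:from:B2:11}, \ref{prop:from:B2:vertexinJ}, \ref{prop:forB3cod:1}, \ref{prop:forB3cod:2}), then bound the number of links $S \in \mathcal{R}_f$ per admissible $f$ via the $L$-link codegree bounds of Theorem~\ref{thm:Llinks}, and multiply. The paper organises the analysis by ``what information $v_1,v_2$ reveal'' (its cases \textbf{a)}--\textbf{c)}) rather than by vertex class, but these are equivalent partitions backed by the same cited facts.

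There is, however, a concrete error in the one subcase you work out in detail, and it is precisely the subcase that forces the exponent $29.5$. If $v_1=(i_1,x_1), v_2=(i_2,x_2) \in \mathcal{V}_Z$ with $i_1\neq i_2$, you do not in fact know that $x_1 \neq x_2$: every edge of $\mathcal{H}_3$ contains both $(i,z)$ and $(j,z)$ for each internal vertex $z$ of $S$, so $v_1=(i,z)$, $v_2=(j,z)$ with the same $z$ is a genuine pair. In that subcase \ref{prop:links:cod:twovertices} does not apply (it needs four distinct vertices), and the correct per-$f$ bound is $O(\Phi_0 n^{-1})=O(n^{29})$ from \ref{prop:links:throughvertex}. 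Combined with $3\sqrt{n}$ admissible $f$ from \ref{prop:from:B2:11}, this gives $O(n^{29.5})$ --- the paper's case \textbf{b)}, and the only place the $n^{1/2}$ slack is actually needed. The same caveat applies to $\mathcal{C}_3$--$\mathcal{C}_3$ with $c_1=c_2$. Your final sentence happens to be true, but the worked computation silently assumes $x_1\neq x_2$ and so skips the binding case; a complete write-up must treat it explicitly.

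As a minor side note, your route through $\mathcal{E}_3$--$\mathcal{E}_3$ is slightly different: you split on whether $\{u,v\}$ meets $V(e)\cup V(e')$ and use \ref{prop:links:2edgesofdifferentcoloursanddisjoint} to gain a factor $n^{-4}$ in the generic case, summing over all $O(n^2)$ of $\mathcal{J}$; the paper instead sums over $(i,f)$ with $u\notin V(e)\cup V(e')$ and gets by with only \ref{prop:links:throughedgeandvertex}. Both are valid.
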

\begin{proof}[Proof of Claim~\ref{clm:H3lowcod}] Let $f=\{(i,u),(j,v)\}\in \mathcal{J}$ and $S\in \mathcal{R}_f$, and consider the edge
\[
e=\{f\}\cup (\{i,j\}\times ((V(S)\setminus\{u,v\})\cup C(S))\cup E(S).
\]
Let ${v}_1$ and $v_2$ be two vertices in $e$.

If one of $v_1$ or $v_2$ is $f$, then note that from the other vertex we will know an internal vertex of $S$ (possibly by knowing an edge) or a colour of $S$, so therefore, by \ref{prop:B3:cod} (in particular \ref{prop:links:throughvertex} or \ref{prop:links:throughcolour}), we have that $d_{\mathcal{H}_3}(v_1,v_2)= O(n^{29})=O(n^{29.5})$.
Assume, then, that neither $v_1$ or $v_2$ is $f$.

Note that we have one of the following cases \ref{case:a}--\ref{case:c} (up to relabelling $i$ and $j$).
\begin{enumerate}[label = \textbf{\alph{enumi})}]
\item We know $i$ and either \label{case:a}
\begin{enumerate}[label = \textbf{\roman{enumii})}]
\item two internal vertices of $S$, or\label{case:a:1}
\item two colours of $S$, or\label{case:a:2}
\item an internal vertex and a colour of $S$.\label{case:a:3}
\end{enumerate}
\item We know $i$ and $j$ and either \label{case:b}
\begin{enumerate}[label = \textbf{\roman{enumii})}]
\item an internal vertex of $S$, or\label{case:b:1}
\item a colour of $S$.\label{case:b:2}
\end{enumerate}
\item We know two edges of $S$.\label{case:c}
\end{enumerate}

\smallskip

\ref{case:a} Knowing $i$, we have at most $3p_{\mathcal{J}}n\leq n$ choices for $\{(i,u),(j,v)\}\in \mathcal{J}$ by \ref{prop:from:B2:11a}, after which there are $O(n^{28})$ choices for a $(u,v,L)$-link in $G$ in \ref{case:a}\ref{case:a:1} and \ref{case:a:2} by \ref{prop:B3:cod} (in particular \ref{prop:links:cod:twovertices} or \ref{prop:links:cod:twocolours}), and thus $d_{\mathcal{H}_3}(v_1,v_2)=O(n^{29})$ in these cases. In \ref{case:a}\ref{case:a:3}, if colour $c$ and internal vertex $x$ are known, then, similarly by \ref{prop:from:B2:11a} and \ref{prop:B3:cod} (in particular \ref{prop:links:cod:1vertex1colour}), there are at most $O(n^{29})$ choices for $\{(i,u),(j,v)\}\in \mathcal{J}$ and a $(u,v,L)$-link $S$ in $G$ using $c$ and $x$ so that neither $ux$ or $vx$ is a colour-$c$ edge in $S$. To count the remaining choices $ux$ or $vx$ is a colour-$c$ edge in $S$, note that we have 2 choices for whether $u$ or $v$ is the colour-$c$ neighbour of $x$, after which, by \ref{prop:forB3cod:1} or \ref{prop:forB3cod:2}, there are $O(1)$ choices for $\{(i,u),(j,v)\}\in \mathcal{J}$ and then, by \ref{prop:B3:cod} (in particular \ref{prop:links:throughvertex}), $O(n^{29})$ choices of a $(u,v,L)$-link $S$ in $G$ using $x$ as an interior vertex. Thus, we also have $d_{\mathcal{H}_3}(v_1,v_2)=O(n^{29})$ in \ref{case:a}\ref{case:a:3},

\smallskip

\ref{case:b} Knowing $i$ and $j$, we have at most $3n^{1/2}$ choices for $\{(i,u),(j,v)\}\in \mathcal{J}$ by \ref{prop:from:B2:11}, after which there are $O(n^{29})$ choices for a $(u,v,L)$-link in $G$ in \ref{case:b}\ref{case:b:1} and \ref{case:b:2} by \ref{prop:B3:cod} (in particular \ref{prop:links:throughvertex} or \ref{prop:links:throughcolour}). Thus, $d_{\mathcal{H}_3}(v_1,v_2)=O(n^{29.5})$ in this case.

\smallskip

\ref{case:c} Suppose the known edges are $xy$ and $x'y'$. We count separately the possibilities for the link where \textbf{i)} there are at least three internal vertices of $S$ among these edges, or \textbf{ii)} where the edges lie at each end of the link.
For \textbf{i)}, after choosing $i\in [n]$ (with $n$ choices), there are  at most $3p_{\mathcal{J}}n\leq n$ choices for $\{(i,u),(j,v)\}\in \mathcal{J}$ by \ref{prop:from:B2:11a} for which $u\notin\{x,y,x',y'\}$, after which there are $O(n^{27})$ possibilities for a link containing $xy$ and $x'y'$ by \ref{prop:B3:cod} (in particular  \ref{prop:links:throughedgeandvertex}), for $O(n^{29})$ choices in total, so that $d_{\mathcal{H}_3}(v_1,v_2)= O(n^{29})$.
For \textbf{ii)}, we have at most 2 choices to pick $u\in \{x,y\}$ and $v\in \{x',y'\}$ with $u\sim_{A/B}v$. After choosing $i\in [n]$ (with at most $n$ choices), we have at most $O(1)$ choices for $\{(i,u),(j,v)\}\in \mathcal{J}$
by \ref{prop:forB3cod:1}, and then at most $O(n^{28})$ choices for a link with $xy$ as the first edge and $x'y'$ as the last edge by \ref{prop:links:cod:twovertices}. Therefore,
$d_{\mathcal{H}_3}(v_1,v_2)=O(n^{29})$ in this case as well.
\claimproofend


\subsubsection{Weight functions for Part~\ref{partB3} and the choice of $\mathcal{M}_3$}
We now define the weight functions we use with our application of Theorem~\ref{thm:nibble} to $\mathcal{H}_3$.
For each $i\in [n]$, $v\in V(G)$, $c\in D_3$, $\phi\in \mathcal{F}$, and each
\begin{equation}\label{eqn:edefn}
e=\{\{(i',u'),(j',v')\}\cup (\{i',j'\}\times ((V(S)\setminus\{u',v'\})\cup C(S))\cup E(S)\in E(\cH_3),
\end{equation}
set $w_i(e)=\mathbf{1}_{\{i\in \{i',j'\}\}}$,
\[
w^{\mathrm{link:end}}_v(e)=\mathbf{1}_{\{v\in \{u',v'\}\}},
\;\;\; w^{\mathrm{link:mid}}_v(e)=\mathbf{1}_{\{v\in V(S)\setminus\{u',v'\}\}},
\;\;\; w^{\mathrm{link:mid}}_{v,\phi}(e)=\mathbf{1}_{\{v\in V(S)\setminus\{u',v'\},i\in I_\phi\}},
\]
\[
 w_{c,\phi}(e)=\mathbf{1}_{\{c\in C(S),i\in I_\phi\}}\;\;\text{and}\;\; w_c(e)=\mathbf{1}_{\{c\in C(S)\}}.
\]
Let $\mathcal{W}_3=\{w_i:i\in [n]\}\cup \{w_v^{\mathrm{link:end}},w_v^{\mathrm{link:mid}}:v\in V(G)\}\cup \{w_c:c\in D_3\}\cup \{w_{v,\phi}^{\mathrm{link:mid}}:v\in V(G),\phi\in \mathcal{F}\}\cup \{w_{c,\phi}:c\in D_3,\phi\in \mathcal{F}\}$.

For each $i\in [n]$,
we have, using Claim~\ref{clm:H3almostregular}, that
\begin{equation}\label{eqn:W3:itotalweight}
w_i(E(\mathcal{H}_3))=\sum_{(u,j,v):\{(i,u),(j,v)\}\in \mathcal{J}}d_{\mathcal{H}_3}(\{(i,u),(j,v)\})=
|\{(u,j,v):\{(i,u),(j,v)\}\in \mathcal{J}\}|\cdot (1\pm 10\gamma)\cdot\delta_3.
\end{equation}
For each $v\in V(G)$,
using Claim~\ref{clm:H3almostregular}, we have
\begin{equation}\label{eqn:W3:vlinktotalweight}
w_v^{\mathrm{link:end}}(E(\mathcal{H}_3))=|\{(i,j,u):\{(i,u),(j,v)\}\in \mathcal{J}\}|\cdot (1\pm 10\gamma)\cdot\delta_3.
\end{equation}
Furthermore, for each $v\in V(G)$,
as each $e\in E(\mathcal{H}_3)$ corresponding to a link containing a vertex $v$ contains two vertices containing $v$ (i.e., $(i',v)$ and $(j',v)$ in the notation at \eqref{eqn:edefn}), we have by Claim~\ref{clm:H3almostregular} that
\begin{equation}\label{eqn:W3:vtotalweight}
w_v^{\mathrm{link:mid}}(E(\mathcal{H}_3))=\frac{1}{2}\cdot |\{i\in [n]:v\in Z_{i,0}\}|\cdot(1\pm 10\gamma)\cdot \delta_3,
\end{equation}
while, for each $\phi\in \mathcal{F}$,
\begin{equation}\label{eqn:W3:midvphitotalweight}
w_{v,\phi}^{\mathrm{link:mid}}(E(\mathcal{H}_3))=\frac{1}{2}\cdot |\{i\in I_\phi:v\in Z_{i,0}\}|\cdot(1\pm 10\gamma)\cdot \delta_3.
\end{equation}

Finally, for each $c\in D_3$, as each $e\in E(\mathcal{H}_3)$  corresponding to a link using the colour $c$ contains two vertices containing $c$ (i.e., $(i',c)$ and $(j',c)$ in the notation at \eqref{eqn:edefn})
we have by from Claim~\ref{clm:H3almostregular} that
\begin{equation}\label{eqn:W3:ctotalweight}
w_c(E(\mathcal{H}_3))=\frac{1}{2}\cdot |\{i\in [n]:c\in D_3\setminus (C_i\cup D_{3,i})\}|\cdot(1\pm 10\gamma)\cdot \delta_3,
\end{equation}
while, for each $\phi\in \mathcal{F}$,
\begin{equation}\label{eqn:W3:midcphitotalweight}
w_{c,\phi}(E(\mathcal{H}_3))=\frac{1}{2}\cdot |\{i\in I_\phi:c\in D_3\setminus (C_i\cup D_{3,i})\}|\cdot(1\pm 10\gamma)\cdot \delta_3.
\end{equation}

In particular, \eqref{eqn:W3:itotalweight}--\eqref{eqn:W3:ctotalweight} along with \ref{prop:from:B2:11a}, \ref{prop:from:B2:vertexinJ}, \ref{prop:B3:assorted:2}, and \ref{prop:B3:assorted:1} imply that, for each $w\in \mathcal{W}_3$, $w(E(\mathcal{H}_3))\geq \sqrt{n}\cdot \delta_3$.
Therefore, by Claims~\ref{clm:H3almostregular} and~\ref{clm:H3lowcod}, and Theorem~\ref{thm:nibble}, we can find a matching $\cM_3$ in $\mathcal{H}_3$ such that, for each $w\in \mathcal{W}_3$,
\begin{equation}\label{eqn:M3:balanced}
w(\mathcal{M}_3)=(1\pm 20\gamma)\cdot\delta_3^{-1} \cdot w(E(\mathcal{H}_3)).
\end{equation}

Now, for each $f=\{(i,u),(j,v)\}\in \mathcal{J}\cap V(\mathcal{M}_3)$, let $S_f$ be the $(u,v,L)$-link corresponding to the edge containing $f$ in $\mathcal{M}_3$, let $M_{f,i}$ be the path of the odd edges of this link and let $M_{f,j}$ be the path of the even edges of the link, noting that these are both rainbow matchings and that $C(M_{f,i})=C(M_{f,j})$. For each $i\in [n]$, let
\[
\hat{M}_{i,3,0}=\bigcup_{f=\{(i,u),(j,v)\}\in \mathcal{J}\cap V(\mathcal{M}_3)}M_{f,i}.
\]
Let $\mathcal{J}^-=\mathcal{J}\setminus V(\mathcal{M}_3)$, the set of instructions we have not found a link for. We now show the following properties of $\mathcal{J}^-$ and the matchings $\hat{M}_{i,3,0}$, $i\in [n]$.

\begin{claim}\label{clm:propsofMi30} \begin{enumerate}[label = \textbf{\alph{enumi})}]
\item The matchings $\hat{M}_{i,3,0}$, $i\in [n]$, are edge-disjoint.\label{prop:ofMi30:1}
\item For each $i\in [n]$, $\hat{M}_{i,3,0}$ is a rainbow matching with colours in $D_3\setminus (C_i\cup D_{3,i})$ and vertices in $S_i\cup Z_{i,0}$.\label{prop:ofMi30:2}
\item Setting $G_0^\abs$ to be the graph with vertex set $V(G)$ and edge set $E_0^\abs$, we have that $G_0^\abs|_{D_3}-\hat{M}_{1,3,0}-M_{2,3,0}-\ldots-\hat{M}_{n,3,0}$ is $(3\gamma n)$-bounded.\label{prop:ofMi30:3}
\item For each $i\in [n]$, $|\{(u,j,v):\{(i,u),(j,v)\}\in \mathcal{J}^-\}|{\leq} \gamma n$.\label{prop:ofJminus:1}
\item For each $u\in V(G)$, $|\{(i,j,v):\{(i,u),(j,v)\}\in \mathcal{J}^-\}|{\leq} \gamma n$.\label{prop:ofJminus:2}
\item For each $v\in V(G)$ and $\phi\in \mathcal{F}$, $|\{i\in I_\phi:v\in Z_{i,0}\setminus V(\hat{M}_{i,3,0})\}|\leq 2\gamma p_\tr p_\fa n$.\label{prop:finalB:fromB3}
\item For each $c\in D_3$ and $\phi\in \mathcal{F}$, $|\{i\in I_\phi:c \notin (C(\hat{M}_{i,3,0}) \cup C_i\cup D_{3,i})\}|\leq \gamma p_\tr p_\fa n$.\label{prop:finalB:fromB3:2}
\item For each $i\in [n]$, $|Z_{i,0}\setminus V(\hat{M}_{i,3,0})|\leq \gamma n$. \label{prop:finalB:fromB3:3}
\end{enumerate}
\end{claim}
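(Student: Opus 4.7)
The plan is to derive each of \ref{prop:ofMi30:1}--\ref{prop:finalB:fromB3:3} directly from the construction of $\mathcal{M}_3$ and the weight balance identity \eqref{eqn:M3:balanced}, following the same template used in Claim~\ref{clm:M1props} for Part~\ref{partB1}. For \ref{prop:ofMi30:1} and \ref{prop:ofMi30:2}, these are structural and follow directly from the fact that $\mathcal{M}_3$ is a matching in $\mathcal{H}_3$ together with the construction of the edges of $\mathcal{H}_3$: each edge $e\in E(G)$ corresponds to at most one vertex of $\mathcal{H}_3$, so a matching uses each such edge in at most one link, giving edge-disjointness across the $\hat{M}_{i,3,0}$; within a single $\hat{M}_{i,3,0}$, the only way an edge $e$ could appear twice would be via two edges of $\mathcal{M}_3$ sharing $e\in \mathcal{E}_3$, which is forbidden. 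Similarly, each colour $c\in D_3\setminus (C_i\cup D_{3,i})$ appears in at most one link assigned to $i$ because the vertex $(i,c)\in \mathcal{C}_3$ is in at most one edge of $\mathcal{M}_3$; and all vertices of $\hat{M}_{i,3,0}$ lie in $S_i\cup Z_{i,0}$ by the definition of $\mathcal{R}_f$ and by the choice of the edges of $\mathcal{H}_3$ at \eqref{eqn:H3vx}.

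For the quantitative properties, I will combine \eqref{eqn:M3:balanced} with the appropriate weight total equations. For \ref{prop:ofJminus:1}, using $w_i$, we have $w_i(\mathcal{M}_3)=|\{(u,j,v):\{(i,u),(j,v)\}\in \mathcal{J}\cap V(\mathcal{M}_3)\}|$, and \eqref{eqn:W3:itotalweight} together with \ref{prop:from:B2:11a} gives $w_i(E(\mathcal{H}_3))=(1\pm \beta)\cdot 2p_{\mathcal{J}}n\cdot (1\pm 10\gamma)\delta_3$, so by \eqref{eqn:M3:balanced} at most $40\gamma\cdot 2p_{\mathcal{J}}n\leq \gamma n$ triples $(u,j,v)$ are missed. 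The property \ref{prop:ofJminus:2} follows analogously from $w_v^{\mathrm{link:end}}$, \eqref{eqn:W3:vlinktotalweight}, and \ref{prop:from:B2:vertexinJ}.

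For \ref{prop:finalB:fromB3}, observe that $|\{i\in I_\phi:v\in Z_{i,0}\setminus V(\hat{M}_{i,3,0})\}|=|\{i\in I_\phi:v\in Z_{i,0}\}|-w_{v,\phi}^{\mathrm{link:mid}}(\mathcal{M}_3)$; combining \eqref{eqn:M3:balanced} with \eqref{eqn:W3:midvphitotalweight} and using $|\{i\in I_\phi:v\in Z_{i,0}\}|=(1\pm \eps)\beta_0 p_Z p_\tr p_\fa n$ (which follows by Chernoff from the partition set-up in Section~\ref{sec:choosevxsets}, using the same style of argument as for \ref{prop:B1:lastfew:4}) gives the bound $\leq 2\gamma p_\tr p_\fa n$. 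Property \ref{prop:finalB:fromB3:3} is the non-family version using $w_v^{\mathrm{link:mid}}$, \eqref{eqn:W3:vtotalweight} and \ref{prop:B3:assorted:2}. Property \ref{prop:finalB:fromB3:2} uses $w_{c,\phi}$, \eqref{eqn:W3:midcphitotalweight}, and a corresponding count of $\{i\in I_\phi:c\in D_3\setminus(C_i\cup D_{3,i})\}$ similar to \ref{prop:B1:lastfew:1}.

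Finally, for \ref{prop:ofMi30:3}, which controls both maximum degree and colour multiplicity in the leftover subgraph, we proceed as in the proof of Claim~\ref{clm:M1props}\ref{prop:hatM:d}. For each $c\in D_3$, the number of colour-$c$ edges used across all $\hat{M}_{i,3,0}$ is $w_c(\mathcal{M}_3)$, which by \eqref{eqn:M3:balanced} and \eqref{eqn:W3:ctotalweight} together with \ref{prop:B3:assorted:1} equals $(1\pm 20\gamma)\cdot \beta_0 p_\abs n$, and \ref{prop:B3:assorted:1} tells us that $|\{e\in E^\abs_0:c(e)=c\}|=(1\pm\eps)\beta_0p_\abs n$, so at most $2\gamma n$ colour-$c$ edges remain. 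For vertex degrees, sum $w_v^{\mathrm{link:end}}(\mathcal{M}_3)+w_v^{\mathrm{link:mid}}(\mathcal{M}_3)$ and compare with the degree of $v$ in $G_0^\abs|_{D_3}$, which with high probability has the expected value; the gap between the expected number of links through $v$ and the realised value from the matching gives a bound of at most $3\gamma n$. The only mildly delicate point, which I would address explicitly, is that the ``main obstacle'' of this proof is really checking these constants so that $20\gamma$-type slack from Theorem~\ref{thm:nibble} always translates to the desired $\gamma$ or $2\gamma$ thresholds; this is however purely arithmetic since $\gamma\llpoly \beta$ in the hierarchy \eqref{eq:hierarchy}.
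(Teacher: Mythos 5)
Your proof follows the same template as the paper's (and as in Claim~\ref{clm:M1props}): parts \textbf{a)} and \textbf{b)} by the matching property of $\mathcal{M}_3$, the quantitative parts by comparing $w(\mathcal{M}_3)$ against the weight totals via \eqref{eqn:M3:balanced}. Parts \textbf{a)}--\textbf{g)} are handled with essentially the correct weight functions, and your acknowledged deferral of constant-checking is reasonable for those.

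However, there is a genuine gap in your treatment of part \textbf{h)}, property \ref{prop:finalB:fromB3:3}. This property fixes $i\in [n]$ and bounds $|Z_{i,0}\setminus V(\hat{M}_{i,3,0})|$, i.e.\ it sums over $v$ for fixed $i$. It is \emph{not} ``the non-family version'' of \ref{prop:finalB:fromB3}, which fixes $v$ and sums over $i\in I_\phi$. The weight function $w_v^{\mathrm{link:mid}}$ (together with \eqref{eqn:W3:vtotalweight}) controls only the latter quantity $|\{i:v\in Z_{i,0}\setminus V(\hat{M}_{i,3,0})\}|$ for each $v$, and no aggregation over $v\in Z_{i,0}$ of those per-$v$ bounds produces the per-$i$ bound required. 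The correct argument instead uses $w_i$: each edge of $\mathcal{M}_3$ with $i\in\{i',j'\}$ covers exactly $61$ vertices of $Z_{i,0}$, so $|Z_{i,0}\setminus V(\hat{M}_{i,3,0})|=|Z_{i,0}|-61\,w_i(\mathcal{M}_3)$; one then applies \eqref{eqn:M3:balanced} and \eqref{eqn:W3:itotalweight} together with the observation that $|\{(u,j,v):\{(i,u),(j,v)\}\in\mathcal{J}\}|=3r\,|S_i\setminus R_i|$ and with the numerical coincidence built into the set-up, $\beta_0 p_Z=61\cdot 72\,p_{S-R}=61\cdot 3r\,p_{S-R}$ (from $p_Z=(1+\beta)\cdot 61\cdot 72\,(p_S-p_R)$), so that the leading terms cancel and only an $O(\gamma n)$ error remains. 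Without invoking the $w_i$ weight function and this variable bookkeeping, the argument for \textbf{h)} does not go through.

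One secondary caution, which you partly flagged: several of your intermediate identities (e.g.\ $|\{i\in I_\phi:v\in Z_{i,0}\setminus V(\hat{M}_{i,3,0})\}|=|\{i\in I_\phi:v\in Z_{i,0}\}|-w_{v,\phi}^{\mathrm{link:mid}}(\mathcal{M}_3)$, and the claim that the number of colour-$c$ edges used is $w_c(\mathcal{M}_3)$) are off by a factor of $2$, because each edge of $\mathcal{H}_3$ carries the pair $(i',v)$ \emph{and} $(j',v)$ (resp.\ $(i',c)$ and $(j',c)$) and the link $S$ has two edges of each colour. These factors are precisely the reason the weight-total equations \eqref{eqn:W3:vtotalweight}--\eqref{eqn:W3:midcphitotalweight} carry a leading $\tfrac12$. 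The factor-of-$2$ discrepancies cancel in the final bounds for \textbf{f)} and \textbf{g)}, but an explicit write-up must use the correct identities $|\{i\in I_\phi:(i,v)\in V(\mathcal{M}_3)\}|=2\,w_{v,\phi}^{\mathrm{link:mid}}(\mathcal{M}_3)$ and $|\{i:(i,c)\in V(\mathcal{M}_3)\}|=2\,w_{c,\phi}(\mathcal{M}_3)$ or the comparison against $|\{i\in I_\phi:v\in Z_{i,0}\}|$ will fail by roughly half that quantity, not by an $O(\gamma)$ error.
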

\begin{proof}[Proof of Claim~\ref{clm:propsofMi30}]
\ref{prop:ofMi30:1}: This follows as each edge in $\mathcal{E}_3$ appears in at most one of the edges of $\mathcal{M}_3$.

\smallskip

\noindent \ref{prop:ofMi30:2}: For each $i\in [n]$, as each pair $\{(i,x)\}$ or $\{(i,c)\}$, with $x\in V(G)$ and $c\in C$, appears at most once in the edges of $\mathcal{M}_3$, we have that the rainbow matchings $M_{f,i}$ corresponding to the edges of $\mathcal{M}_3$ involving $i$ are vertex- and colour-disjoint. That their vertices are in $S_i\cup Z_{i,0}$ and colours are in $D_3\setminus (C_i\cup D_{3,i})$ follows directly from the definition of the edges of $\mathcal{M}_3$.

\smallskip

\noindent \ref{prop:ofMi30:3}:
For each $c\in D_3$,
\begin{align}
|\{e\in E(G_0^\abs)\setminus &(\cup_{i\in [n]}\hat{M}_{i,3,0}):c(e)=c)\}|\nonumber\\
&=
|\{e\in E(G_0^\abs):c(e)=c\}|-|\{i\in [n]:(i,c)\in V(\mathcal{M}_3)\}|\nonumber\\
&\overset{\ref{prop:B3:assorted:1}}{\leq}
(1+ \eps)\cdot \beta_0p_\abs n-2\cdot w_c(E(\mathcal{H}_3))\nonumber\\
&\hspace{-0.3cm}\overset{\eqref{eqn:W3:ctotalweight},\eqref{eqn:M3:balanced}}{\leq}  (1+ \eps)\cdot \beta_0p_\abs n-(1- 20\gamma)\cdot (1-10\gamma)\cdot |\{i\in [n]:c\in D_3\setminus (C_i\cup D_{3,i})\}|\nonumber\\
&\overset{\ref{prop:B3:assorted:1}}{\leq} (1+ \eps)\cdot \beta_0p_\abs n-(1- 200\gamma)\cdot (1-\eps)^2\cdot \beta_0p_\abs n
\leq 3\gamma \beta_0p_\abs n\leq 10^3\gamma n.\label{eq:forboundednessofrestofG0:1}
\end{align}
Furthermore, for each $v\in V(G)$,
\begin{align}
|\{e&\in E(G_0^\abs|_{D_3})\setminus(\cup_{i\in [n]}\hat{M}_{i,3,0}):v\in V(e)\}|\nonumber \\
&=|\{e\in E(G_0^\abs|_{D_3}):v\in V(e)\}|-|\{i\in [n]:(i,v)\in V(\mathcal{M}_3)\}|\nonumber\\
&\overset{\ref{prop:B3:assorted:2}}{\leq}
(1+ \eps)\cdot \beta_0p_\abs p_3 n-w^{\mathrm{link:end}}_v(E(\mathcal{H}_3))-2\cdot w^{\mathrm{link:mid}}_v(E(\mathcal{H}_3))\nonumber\\
&\hspace{-0.6cm}\overset{\eqref{eqn:W3:vlinktotalweight},\eqref{eqn:W3:vtotalweight},\eqref{eqn:M3:balanced}}{\leq}  (1+ \eps)\cdot \beta_0p_\abs p_3 n-(1- 20\gamma)\cdot (1-10\gamma)\cdot (|\{(i,j,u):\{(i,u),(j,v)\}\in \mathcal{J}\}|\nonumber \\
&\hspace{4cm}+|\{i\in [n]:v\in Z_{i,0}\}|)\nonumber\\
&\hspace{-0.1cm}\overset{\ref{prop:from:B2:vertexinJ},\ref{prop:B3:assorted:2}}{\leq}  (1+ \eps)\cdot \beta_0p_\abs p_3 n-(1- 200\gamma)\cdot (1-\eps)\cdot ((1- \beta)p_{\mathcal{J}}n+(1- \eps)p_Z\beta_0n)\nonumber
\end{align}
\begin{align}
& =(1+ \eps)\cdot \beta_0p_\abs p_3 n-(1- 2\gamma)\cdot (1-\eps)\cdot (1- \beta)\cdot 62p_{\mathcal{J}}n
\nonumber\\
&\leq 2\beta \cdot \beta_0p_\abs p_3 n\leq 2\beta n.\label{eq:forboundednessofrestofG0:2}
\end{align}
In combination, \eqref{eq:forboundednessofrestofG0:1} and \eqref{eq:forboundednessofrestofG0:2} show that \ref{prop:ofMi30:3} holds.

\smallskip

\noindent\ref{prop:ofJminus:1}: For each $i\in [n]$,
\begin{align*}
|\{(u,j,v):\{(i,u),(j,v)\}\in \mathcal{J}^-\}|&\leq |\{(u,j,v):\{(i,u),(j,v)\}\in \mathcal{J}\}|-w_i(E(\mathcal{M}_3))\\
&\overset{\eqref{eqn:W3:itotalweight},\eqref{eqn:M3:balanced}}{\leq} 10^3\gamma\cdot |\{(u,j,v):\{(i,u),(j,v)\}\in \mathcal{J}\}|
\overset{\ref{prop:from:B2:11a}}\leq  \gamma n.
\end{align*}

\smallskip

\noindent\ref{prop:ofJminus:2}: For each $u\in V(G)$,\renewcommand{\qedsymbol}{$\boxdot$}
\begin{align*}
|\{(i,j,v):\{(i,u),(j,v)\}\in \mathcal{J}^-\}|&=|\{(i,j,v):\{(i,u),(j,v)\}\in \mathcal{J}\}|-w_v^{\mathrm{link:end}}(E(\mathcal{M}_3))\\
&\overset{\eqref{eqn:W3:vlinktotalweight},\eqref{eqn:M3:balanced}}{\leq} 10^3\gamma\cdot |\{(i,j,v):\{(i,u),(j,v)\}\in \mathcal{J}\}|\overset{\ref{prop:from:B2:vertexinJ}}{\leq}
\gamma n.\qedhere
\end{align*}

\smallskip

\noindent\ref{prop:finalB:fromB3}: For each $v\in V(G)$ and $\phi\in \mathcal{F}$,
\begin{align*}
|\{i\in [n]:v\in Z_{i,0}\setminus V(\hat{M}_{i,3,0})\}|&=|\{i\in I_\phi:v\in Z_{i,0},(i,v)\notin V(\mathcal{M}_3)\}|\\
&=|\{i\in I_\phi:v\in Z_{i,0}\}|-2\cdot w_{v,\phi}^{\mathrm{link:mid}}(E(\mathcal{M}_3))
\overset{\eqref{eqn:M3:balanced},\eqref{eqn:W3:midvphitotalweight}}{\leq} 2\gamma p_\tr p_\fa n.
\end{align*}

\smallskip

\noindent\ref{prop:finalB:fromB3:2}: For each $c\in D_3$ and $\phi\in \mathcal{F}$, $|\{i\in I_\phi:c \notin (C(\hat{M}_{i,3,0}) \cup C_i\cup D_{3,i})\}|\leq \gamma p_\tr p_\fa n$.
\begin{align*}
|\{i\in I_\phi:c \notin (C(\hat{M}_{i,3,0}) \cup C_i\cup D_{3,i})\}|&=|\{i\in I_\phi:c\in D_3\setminus (C_i\cup D_{3,i})\}|-|\{i\in I_\phi:(i,c)\in V(\mathcal{M}_3)\}|\\
&=|\{i\in I_\phi:c\in D_3\setminus (C_i\cup D_{3,i})\}|-2\cdot w_{c,\phi}(E(\mathcal{M}_3))\\
&\overset{\eqref{eqn:M3:balanced},\eqref{eqn:W3:midcphitotalweight}}{\leq} 2\gamma n.
\end{align*}

\smallskip

\noindent\ref{prop:finalB:fromB3:3}: For each $i\in [n]$,
\begin{align*}
|Z_{i,0}\setminus V(\hat{M}_{i,3,0})|&=|\{v\in Z_{i,0}:(i,v)\notin V(\mathcal{M}_3)\}|
=|Z_{i,0}|-2\cdot w_i(E(\mathcal{M}_3))\\
&\overset{\eqref{eqn:M3:balanced},\eqref{eqn:W3:itotalweight}}{\leq}
|Z_{i,0}|-(1-2\gamma)\cdot |\{(u,j,v):\{(i,u),(j,v)\}\in \mathcal{J}\}|\\
&=|Z_{i,0}|-(1-2\gamma)\cdot (1+5r)\cdot |S_i\setminus R_i|\\
&\overset{\ref{prop:sizeofZi0},\ref{prop:B2:sizeofSminusR}}\leq (1+\eps)2\beta_0p_Zn-(1-2\gamma)\cdot 121\cdot 2p_{S-R}n\leq 4\gamma n,
\end{align*}
as required.
\claimproofend


\subsubsection{Missing links and the choice of $\hat{M}_{i,3}$}
For each $f=\{(i,u),(j,v)\}\in \mathcal{J}^-$, let $\mathcal{R}^+_{f}$ be the set of $(u,v,L)$-links with colours in $(D_{3,i}\cup D_{3,j})\setminus (C_i\cup C_j)$, edges in $E^\abs_{1}$ and internal vertices in $Z_{i,1}\cap Z_{j,1}$,
so that $|\mathcal{R}^+_f|=(1\pm \eps)\Phi_1=q_{\mathrm{vx}}^{61}\cdot q_{\mathrm{col}}^{30}\cdot q_{\mathrm{edge}}^{62}\cdot n^{30}$ by \ref{prop:B3:missing}.

For each $f\in \mathcal{J}^-$, form $\mathcal{R}^0_{f}$ by selecting elements of $\mathcal{R}^+_f$ independently at random with probability $q_0=20\log^8n/\Phi_1$. By a simple application of Lemma~\ref{chernoff} and a union bound, then, we get that with high probability, for each $f\in \mathcal{J}^-$, $|\mathcal{R}^0_{f}|\geq 10\log^8n$. We now show the following claim.

\begin{claim}\label{clm:intermezzo} With high probability, for each $f=\{(i,u),(j,v)\}\in \mathcal{J}^-$ and $S\in \mathcal{R}^+_f$, the following hold.
\begin{enumerate}[label = \textbf{\arabic{enumi})}]
\item There are at most $\log^2n$ links $f'\in \mathcal{J}^-\setminus \{f\}$ and $S'\in \mathcal{R}^0_{f'}$ with $E(S)\cap E(S')\neq \emptyset$.\label{prop:inter:forB3follow1}
\item There are at most $\log^2n$ tuples $(u',j',v',f',S')$ with $f'=\{(i,u'),(j',v')\}\in \mathcal{J}^-\setminus\{f\}$, $S'\in \mathcal{R}^0_{f'}$ and either $(V(S)\cap V(S'))\setminus (\{u,v\}\cap \{u',v'\}) \neq \emptyset$ or $C(S)\cap C(S')\neq \emptyset$.\label{prop:inter:forB3follow3}
\end{enumerate}
\end{claim}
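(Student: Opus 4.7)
The plan is to fix $f \in \mathcal{J}^-$ and $S \in \mathcal{R}^+_f$, show that the count of bad pairs is concentrated below $\log^2 n$ by a Chernoff-type bound, then take a union bound over all $(f, S)$. For \ref{prop:inter:forB3follow1}, let $X$ denote the number of pairs $(f', S')$ with $f' \in \mathcal{J}^- \setminus \{f\}$, $S' \in \mathcal{R}^0_{f'}$, and $E(S) \cap E(S') \neq \emptyset$. Since the random sets $\mathcal{R}^0_{f'}$ are chosen independently across $f'$, and each element of $\mathcal{R}^+_{f'}$ is included in $\mathcal{R}^0_{f'}$ independently with probability $q_0 = 20\log^8 n/\Phi_1$, $X$ is a sum of independent Bernoulli indicators indexed by eligible $(f', S')$ pairs.

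The main deterministic input is a bound on $N_S := \sum_{f' \neq f} |\{S' \in \mathcal{R}^+_{f'} : E(S) \cap E(S') \neq \emptyset\}|$. I would estimate $N_S$ by summing over the $62$ edges $e \in E(S)$ and, for each such edge, over $f' \in \mathcal{J}^-$. For each fixed $f' = \{(i',u'),(j',v')\} \in \mathcal{J}^-$, the number of $S' \in \mathcal{R}^+_{f'}$ containing a specified edge $e$ is bounded by $O(\Phi_1/n^2)$, via a variant of the tight link-counting bound from Theorem~\ref{thm:Llinks} adapted to $\mathcal{R}^+_{f'}$; this adapted bound follows from \ref{prop:B3:missing} together with a McDiarmid-type concentration argument (Lemma~\ref{lem:mcdiarmidchangingc}) analogous to those used throughout the proof of Claim~\ref{clm:properties}. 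The effective number of relevant $f'$ per edge $e$ is controlled using the codegree bounds on $\mathcal{J}$ from Claim~\ref{claim:outcomeofB2}, in particular \ref{prop:forB3cod:1}, \ref{prop:forB3cod:2}, and \ref{prop:from:B2:11}, together with the independence of the random partitions $D_{3,i}, C_i, Z_{i,1}$, which restricts which $f'$ have colour and vertex constraints compatible with $e$.

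Given this bound on $N_S$, the expected count $\E X = q_0 N_S$ is sufficiently small that applying the Chernoff bound $\P(X \geq t) \leq (e \E X / t)^t$ with $t = \log^2 n$ yields $\P(X \geq \log^2 n) \leq n^{-\omega(1)}$. A union bound over the at most $|\mathcal{J}^-| \cdot \max_f |\mathcal{R}^+_f| = O(n^{32})$ many $(f, S)$ pairs then yields \ref{prop:inter:forB3follow1} with high probability. The case \ref{prop:inter:forB3follow3} is handled by a parallel argument that replaces edge overlaps with vertex and colour overlaps, employing the vertex and colour link-counting bounds \ref{prop:B3:2}--\ref{prop:B3:9} in place of the edge bound; the extra factor of $61 + 30$ from summing over internal vertices and colours of $S$ is absorbed into constants.

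The main obstacle will be obtaining a sufficiently sharp bound on $N_S$. Naively, summing over $f' \in \mathcal{J}^-$ (of which there are up to $\gamma n^2$) and over edges of $S$ gives only $N_S = O(\gamma \Phi_1)$, which makes $\E X = O(\gamma \log^8 n)$—too large to conclude directly via Chernoff with $t = \log^2 n$. To tighten this, one must exploit the random structure of $D_{3,i'}, C_{i'}, Z_{i',1}$ to show that only a $\mathrm{polylog}(n)$-fraction (or fewer) of $f' \in \mathcal{J}^-$ admit a link in $\mathcal{R}^+_{f'}$ passing through any particular edge/vertex/colour of $S$, reducing the effective codegree enough to make the concentration argument and subsequent union bound succeed.
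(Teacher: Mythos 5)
Your high-level structure — bound the number $N_S$ of eligible pairs $(f',S')$, compute $\mathbb{E}X=q_0N_S$, apply a Chernoff-type bound, and union-bound over the at most $\mathrm{poly}(n)$ many choices of $(f,S)$ — is exactly the paper's approach. But your final paragraph misidentifies a problem where there isn't one, and this stems from a misreading of the scale of $\gamma$.

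The key point you miss: in the hierarchy \eqref{eq:hierarchy}, $\gamma\llpoly\beta\llpoly\cdots\llpoly 1/\log n$, so $\gamma$ can be taken polynomially smaller than $q_{\mathrm{vx}},q_{\mathrm{col}},q_{\mathrm{edge}}$ and $1/\log n$, which are all bounded away from $1/n$. Consequently even the crudest bound on $N_S$ closes the argument. Indeed, the paper's bound is $N_S\le 10^5\gamma n^{30}$: by Claim~\ref{clm:propsofMi30}\,\itref{prop:ofJminus:1},\itref{prop:ofJminus:2} there are at most $\gamma n^2$ choices of $f'\in\mathcal{J}^-\setminus\{f\}$ (and at most $O(\gamma n)$ with $\{u',v'\}\cap V(S)\neq\emptyset$), and for each, by the \emph{unrestricted} link counts \ref{prop:links:throughedge} and \ref{prop:links:throughvertex} (ignoring all the colour/vertex/edge constraints of $\mathcal{R}^+_{f'}$), at most $O(n^{28})$ (resp.\ $O(n^{29})$) links through an edge of $S$. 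Since $10^5\gamma n^{30}<\Phi_1/(20\log^8 n)=1/q_0$, we get $\mathbb{E}X<1$, and Chernoff gives $\P(X\ge\log^2 n)=n^{-\omega(1)}$. Your estimate $N_S=O(\gamma\Phi_1)$, even if correct, would only be \emph{smaller}, and the resulting $\mathbb{E}X=O(\gamma\log^8 n)$ is not "too large" — with $\gamma\le\log^{-C}n$ for large $C$ it is $\ll1$ and $\P(X\ge\log^2 n)\le(e\mathbb{E}X/\log^2 n)^{\log^2 n}=n^{-\omega(1)}$. The refinement you propose (exploiting the randomness of $D_{3,i'},C_{i'},Z_{i',1}$ to thin the set of admissible $f'$) is unnecessary.

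A second, smaller, issue: your per-edge bound of "$O(\Phi_1/n^2)$" for $|\{S'\in\mathcal{R}^+_{f'}: e\in E(S')\}|$ is not justified as a worst-case statement. An edge $e$ that does satisfy the $\mathcal{R}^+_{f'}$-constraints contributes roughly $\Phi_1/(q_{\mathrm{vx}}^2 q_{\mathrm{col}} q_{\mathrm{edge}} n^2)$ many such links (the constraint probabilities on $e$ divide out), which is much larger than $\Phi_1/n^2$. The safe bound — and the one the paper uses — is the unrestricted $O(n^{28})$ from \ref{prop:links:throughedge}. As noted, this weaker bound is nevertheless ample because $\gamma$ is so small.
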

\begin{proof}[Proof of Claim~\ref{clm:intermezzo}] Let $f=\{(i,u),(j,v)\}\in \mathcal{J}^-$ and $S\in \mathcal{R}^+_f$. By Claim~\ref{clm:propsofMi30} \ref{prop:ofJminus:1} and \ref{prop:ofJminus:2}, there are at most $\gamma n^2$ choices
 for $\{(i',u'),(j',v')\}\in \mathcal{J}^-\setminus \{f\}$ and at most $100\gamma n$ choices for $\{(i',u'),(j',v')\}\in \mathcal{J}^-\setminus \{f\}$ such that $\{u',v'\}\cap V(S)\neq \emptyset$,
 after which there are at most $100n^{28}$ and $100n^{29}$ choices respectively for $S'\in \mathcal{R}^+_{f'}$ with $E(S)\cap E(S')\neq \emptyset$ by \ref{prop:B3:cod} (and in particular \ref{prop:links:throughedge} and \ref{prop:links:throughvertex}, respectively).

 Thus, there are at most $10^5\gamma n^{30}$ different $(f',S')$ with $f'\in \mathcal{J}^-\setminus \{f\}$, $S'\in \mathcal{R}^+_{f'}$, and $E(S)\cap E(S')\neq \emptyset$. As $10^5\gamma n^{30}<\Phi_1/(20\log^{8}n)=1/q_0$ (using that $\gamma\llpoly q_{\mathrm{vx}},q_{\mathrm{col}},q_{\mathrm{edge}},\log^{-1}n$), the expected number of such $(f',S')$ with $S'\in \mathcal{R}^0_{f'}$ is less than 1. Thus, by a simple application of Lemma~\ref{chernoff}, we have that \ref{prop:inter:forB3follow1} holds with probability $1-n^{-\omega(1)}$. Taking a union bound shows that, with high probability, \ref{prop:inter:forB3follow1} holds
for all $f=\{(i,u),(j,v)\}\in \mathcal{J}^-$ and $S\in \mathcal{R}^+_f$.

Now, let $f=\{(i,u),(j,v)\}\in \mathcal{J}^-$ and $S\in \mathcal{R}^+_f$ again. By  Claim~\ref{clm:propsofMi30} \ref{prop:ofJminus:1}
there are at most $\gamma n$ choices for a tuple $(u',j',v')$ such that, setting $f'=\{(i,u'),(j',v')\}$, $f'\in \mathcal{J}^-\setminus \{f\}$.
After this, if $\{u',v'\}\cap \{u,v\}=\emptyset$, then, by  \ref{prop:B3:cod} (and in particular \ref{prop:links:throughvertex} and \ref{prop:links:throughcolour}) there are at most $200n^{29}$ choices for $S'\in \mathcal{R}^+_{f'}$ with $V(S)\cap V(S')\neq \emptyset$ or $C(S)\cap C(S')\neq \emptyset$, for at most $200\gamma n^{30}$ choices in total.
On the other hand, by \ref{prop:forB3cod:1} and \ref{prop:forB3cod:2}, there are at most $10^7$ triples
$(u',j',v')$ with $\{u,v\}\cap \{u',v'\}\neq \emptyset$ and, setting $f'=\{(i,u'),(j',v')\}$, $f'\in \mathcal{J}^-\setminus \{f\}$. Then, as $u,v,u',v'\notin Z_{i,0}$, by \ref{prop:B3:cod} (and in particular \ref{prop:links:throughvertex} and \ref{prop:links:throughcolour}) there are at most $200n^{29}$ choices for $S'\in \mathcal{R}^+_{f'}$ with $(V(S)\cap V(S'))\setminus (\{u,v\}\cap \{u',v'\}) \neq \emptyset$ or $C(S)\cap C(S')\neq \emptyset$.

Thus, there are at most $200\gamma n^{30}$ different $(f',S')$ with $f'\in \mathcal{J}^-\setminus \{f\}$, $S'\in \mathcal{R}^+_{f'}$, and
$(V(S)\cap V(S'))\setminus (\{u,v\}\cap \{u',v'\}) \neq \emptyset$ or $C(S)\cap C(S')\neq \emptyset$. As $10^5\gamma n^{30}<\Phi_1$ the expected number of such $(f',S')$ with $S'\in \mathcal{R}^0_{f'}$ is less than 1. Therefore, similarly to as we did for  \ref{prop:inter:forB3follow1}, we can show that, with high probability, \ref{prop:inter:forB3follow3} holds
for all $f=\{(i,u),(j,v)\}\in \mathcal{J}^-$ and $S\in \mathcal{R}^+_f$.
\claimproofend

Therefore, as they hold together with high probability, we can take a choice of $\mathcal{R}^0_f$, $f\in \mathcal{J}'$ for which  $|\mathcal{R}^0_{f}|\geq 10\log^8n$ for each $f\in \mathcal{J}^-$ and, for each $f\in \mathcal{J}'$ and $S\in \mathcal{R}^0_f$, \ref{prop:inter:forB3follow1} and \ref{prop:inter:forB3follow3} holds. Then, for each $f\in \mathcal{J}'$, form $\mathcal{R}_f$ by selecting elements of $\mathcal{R}^0_f$ independently at random with probability $q_1=1/\log^2 n$,
and let $\mathcal{R}^-_f$ be the set of $S\in \mathcal{R}^0_f$ such that the following both hold.
\begin{enumerate}[label = \textbf{\roman{enumi})}]
\item There is no $f'\in \mathcal{J}^-\setminus \{f\}$ and $S'\in \mathcal{R}^0_{f'}$ with $E(S)\cap E(S')\neq \emptyset$.\label{prop:forB3follow1}
\item There is no $(u',j',v',f')$ with $f'=\{(i,u'),(j',v')\}\in \mathcal{J}^-\setminus \{f\}$ for which there is an $S'\in \mathcal{R}_{f'}^0$ with $(V(S)\cap V(S'))\setminus (\{u,v\}\cap \{u',v'\})\neq \emptyset$ or $C(S)\cap C(S')\neq \emptyset$.\label{prop:forB3follow3}
\end{enumerate}

\begin{claim}\label{clm:H3findsremainder}
With high probability, for each $f\in \mathcal{J}^-$, $\mathcal{R}^-_f\neq \emptyset$.
\end{claim}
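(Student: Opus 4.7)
The plan is, for each $f \in \mathcal{J}^-$, to identify a large collection $I \subseteq \mathcal{R}^0_f$ of candidates whose survival into $\mathcal{R}^-_f$ is governed by pairwise disjoint blocks of the independent $q_1$-coin flips used to form the sets $\mathcal{R}_{f'}$. The key quantitative input is Claim~\ref{clm:intermezzo}: for each $S \in \mathcal{R}^0_f$ the number of potential conflicts $(f', S')$ with $f' \in \mathcal{J}^-\setminus\{f\}$ and $S' \in \mathcal{R}^0_{f'}$ is at most $2\log^2 n$ (summing the bounds in parts 1) and 2)). Since $|\mathcal{R}^0_f| \geq 10\log^8 n$ is already established, we have many candidate $S$ to play with but only a $\mathrm{polylog}(n)$-sized list of things that could kill each one.

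Fix $f$ and condition on the realisation of every $\mathcal{R}^0_{f'}$, so the potential-conflict structure is determined. For each $S \in \mathcal{R}^0_f$ write $N(S)$ for its set of at most $2\log^2 n$ potential conflicts, and build an auxiliary graph $H$ on $\mathcal{R}^0_f$ by joining $S_1 \sim S_2$ whenever $N(S_1) \cap N(S_2) \neq \emptyset$. Applying Claim~\ref{clm:intermezzo} with the roles of $(f,S)$ and $(f',S')$ swapped, each fixed pair $(f',S')$ appears in $N(S)$ for at most $2\log^2 n$ choices of $S$, so $\Delta(H) \leq 4\log^4 n$. A greedy argument then produces an independent set $I \subseteq \mathcal{R}^0_f$ in $H$ of size at least $|\mathcal{R}^0_f|/(1+4\log^4 n) \geq 2\log^4 n$.

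For each $S \in I$, the event $\{S \in \mathcal{R}^-_f\}$ is precisely the event that $S' \notin \mathcal{R}_{f'}$ for every $(f',S') \in N(S)$; across $S \in I$ these events depend on pairwise disjoint collections of the independent $q_1$-coin flips and are therefore mutually independent, each with probability at least $(1-q_1)^{2\log^2 n} \geq e^{-3}$ for large $n$. Consequently,
\[
\P(\mathcal{R}^-_f = \emptyset) \;\leq\; (1 - e^{-3})^{|I|} \;\leq\; (1 - e^{-3})^{2\log^4 n} \;=\; n^{-\omega(1)},
\]
and a final union bound over the $O(n^2)$ choices of $f \in \mathcal{J}^-$ (controlled by Claim~\ref{clm:propsofMi30}~\ref{prop:ofJminus:1}) closes the proof. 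The main subtlety I expect is the symmetric application of Claim~\ref{clm:intermezzo} that bounds how often a single $(f',S')$ can appear as a potential conflict across different $S$; this controls $\Delta(H)$ and therefore the size of $I$, and without it the dependencies in $\{\mathcal{R}^-_f = \emptyset\}$ could not be reduced to the independent product form needed for the final bound.
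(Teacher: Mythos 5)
Your proof is correct, and it takes a genuinely different route from the paper's. The paper's proof sets $X_f=|\mathcal{R}^-_f|$, computes $\E X_f\geq |\mathcal{R}^0_f|/(10\log^2 n)$ from the per-candidate survival probability $q_1(1-q_1)^{2\log^2 n}$, and then applies McDiarmid's inequality (Lemma~\ref{lem:mcdiarmidchangingc}); the Lipschitz constant for each $q_1$-coin flip is controlled by precisely the symmetric application of Claim~\ref{clm:intermezzo} that you identify, namely that a fixed pair $(f',S')$ can conflict with at most $2\log^2 n$ candidates $S\in\mathcal{R}^0_f$. You use that same symmetric bound to control $\Delta(H)$ and extract a large independent set, reducing non-emptiness of $\mathcal{R}^-_f$ to a product of genuinely independent events. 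Both approaches lean on the same structural input (Claim~\ref{clm:intermezzo} applied in both directions) and deliver comparable quantitative bounds; the independent-set route makes the dependency structure explicit rather than hiding it in a bounded-differences inequality, which is somewhat cleaner.

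One small slip worth flagging: the set $\mathcal{R}^-_f$ is, as the paper's computation $\P(S\in\mathcal{R}^-_f)\geq q_1(1-q_1)^{2\log^2 n}$ and its subsequent use both require, a subset of $\mathcal{R}_f$ and not just of $\mathcal{R}^0_f$, so the event $\{S\in\mathcal{R}^-_f\}$ also demands that the coin flip placing $S$ in $\mathcal{R}_f$ comes up heads. Hence the per-candidate probability is $q_1(1-q_1)^{2\log^2 n}\geq 1/(10\log^2 n)$ rather than your $(1-q_1)^{2\log^2 n}\geq e^{-3}$. This does not break the argument: the extra coin flips $\{S\in\mathcal{R}_f\}$, $S\in I$, are pairwise distinct and disjoint from all the conflict coin flips, so the events $\{S\in\mathcal{R}^-_f\}$, $S\in I$, remain mutually independent, and with $|I|\geq 2\log^4 n$ you still obtain $\P(\mathcal{R}^-_f=\emptyset)\leq (1-1/(10\log^2 n))^{2\log^4 n}=n^{-\omega(1)}$, which is plenty for the union bound.
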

\begin{proof}[Proof of Claim~\ref{clm:H3findsremainder}] 
Let $f\in \mathcal{J}^-$ and $X_f=|\mathcal{R}^-_f|$. For each $S\in \mathcal{R}^0_f$, by the properties \ref{prop:inter:forB3follow1} and \ref{prop:inter:forB3follow3} from
Claim~\ref{clm:intermezzo}, we have
$\P(S\in \mathcal{R}^-)\geq q_1(1-q_1)^{2\log^2n}\geq 1/(10\log^2n)$, and, hence, $\E|X_f|\geq |\mathcal{R}^0_f|/(10\log^2n)$.

Now, for each $f'=\{(i',u'),(j',v')\}\in \mathcal{J}^-$ and $S'\in \mathcal{R}_{f'}^0$, there are at most $2\log^2n$ choices for $S\in \mathcal{R}^0_f$ which $E(S)\cap E(S')\neq \emptyset$, $(V(S)\cap V(S'))\setminus (\{u,v\}\cap \{u',v'\})\neq \emptyset$ or $C(S)\cap C(S')\neq \emptyset$.
Thus, there are at most $2\log^2n\cdot |\mathcal{R}^0_{f}|$ tuples $(f',S')\in \mathcal{R}_{f'}^0$ for which the event $\{S'\in\mathcal{R}^0_{f'}\}$ influences $X_f$, and each such event on its own can change $X_f$ by at most $2\log^2 n$. Therefore, by Lemma~\ref{lem:mcdiarmidchangingc} with $t=\E|X_f|\geq |\mathcal{R}^0_{f}|/(10\log^2n)$, we have, as $|\mathcal{R}^0_f|\geq 10\log^8n$,
\[
\P(X_f=0)\leq 2\exp\left(-\frac{2(|\mathcal{R}^0_{f}|/(10\log^2n))^2}{2\cdot 2\log^2 n\cdot |\mathcal{R}^0_{f}| \cdot (2\log n)^2}\right)
=2\exp\left(-\frac{|\mathcal{R}^0_{f}|}{800\log^6 n}\right)=n^{-\omega(1)},
\]
and thus, by a union bound, with high probability we have $\mathcal{R}^-_f\neq\emptyset$ for each $f\in \mathcal{J}'$.
\claimproofend

For each $f=\{(i,u),(j,v)\}\in \mathcal{J}^-$, using Claim~\ref{clm:H3findsremainder}, arbitrarily pick $S_f\in \mathcal{R}^-_f$, and, considering $S_f$ as a $(u,v,L)$-link,
let $M_{f,i}$ be the matching of the odd edges of this link and let $M_{f,j}$ be the matching of the even edges of the link, noting that these are both rainbow matchings and that $C(M_{f,i})=C(M_{f,j})$. For each $i\in [n]$, let
\[
\hat{M}_{i,3,1}=\bigcup_{f=\{(i,u),(j,v)\}\in \mathcal{J}^-}M_{f,i}.
\]
and let $\hat{M}_{i,3}=\hat{M}_{i,3,0}\cup \hat{M}_{i,3,1}$, noting that this is a rainbow matching with colours in $D_{3}\setminus C_{i}$.

For each $f=\{(i,u),(j,v)\}\in \mathcal{J}$, then, we have found a $(u,v,L)$-link $S_f$, which has edges in $E^\abs$, colours in $D_3\setminus (C_i\cup C_j)$ and interior vertices in $Z_i\cap Z_j$, and matchings $M_{f,i}$ and $M_{f,j}$ such that $S_f=M_{f,i}\cup M_{f,j}$.


\subsection{Proof of Lemma~\ref{keylemma:realisation}: properties of the absorption structure}\label{sec:partBfinal}
For each $i\in [n]$, let $\hat{M}_i=\hat{M}_{i,1}\cup \hat{M}_{i,2}\cup \hat{M}_{i,3}$.
As, for each $j\in [3]$, the matchings $\hat{M}_{1,j}$, \ldots, $\hat{M}_{n,j}$ use colours in $D_j$ and edges in $E^\abs_j$, and are edge-disjoint, we have that $\hat{M}_1,\ldots,\hat{M}_n$ are edge-disjoint subgraphs of $G[E^\abs]$.
We now confirm that \ref{prop:real:regularity}--\ref{prop:real:absorption} hold, completing the proof of Lemma~\ref{keylemma:realisation}.

\smallskip

\ref{prop:real:regularity} \ref{prop:B1b}: Let $v\in V(G)$ and $\phi\in \mathcal{F}$. As $\hat{M}_{i,1}$ is a matching from $S_i\setminus R_i$ into $X_i$ covering $S_i\setminus R_i$, we have
\begin{align*}
|\{i\in I_\phi:v\notin V(\hat{M}_i)\cup R_i\}|&\leq |\{i\in I_\phi:v\in X_{i,1}\cup Y_{i,1}\cup Z_{i,1}\}|+|\{i\in [n]:v\in X_{i,0}\setminus V(\hat{M}_{i,1,0}\}|\\
&\hspace{-1cm}+|\{i\in I_\phi:v\in Y_{i,0}\setminus (\cup_{u,j,u':\{(i,u),(j,u')\}\in \mathcal{I}}\{v_{i,u},x_{i,u,j},y_{i,u,j},v_{j,u'},x_{j,u',i},y_{j,u',i}\})\}|\\
&+|\{i\in I_\phi:v\in Z_{i,0}\setminus V(\hat{M}_{i,3,0})\}|\\
&\hspace{-2cm}\leq 4\beta p_\tr p_\fa n,
\end{align*}
where we have used \ref{prop:finalB:1}, Claim~\ref{clm:M1props} \ref{prop:hatM:cbb}, \ref{prop:forBfinalfromB2} and Claim~\ref{clm:propsofMi30} \ref{prop:finalB:fromB3}. Thus, \ref{prop:real:regularity} \ref{prop:B1b} holds.

\smallskip

\ref{prop:real:regularity} \ref{prop:B1c}: Let $c\in C(G)$, and let $k\in [3]$ be such that $c\in D_k$.
Then, by \ref{prop:finalB:2}, Claim~\ref{clm:M1props} \ref{prop:hatM:cbbb} (if $k=1$), \ref{prop:forBfinalfromB2:2} (if $k=2$) and Claim~\ref{clm:propsofMi30} \ref{prop:finalB:fromB3:2} (if $k=3$),
\begin{align*}
|\{i\in I_\phi:c \notin C(\hat{M}_{i}) \cup C_i\}|&\leq |\{i\in I_\phi:c\in D_{k,i}\}|+|\{i\in I_\phi:c \notin (C(\hat{M}_{i,j,0}) \cup C_i\cup D_{k,i})\}|\leq 2\beta p_\tr p_\fa n,
\end{align*}
and thus \ref{prop:real:regularity} \ref{prop:B1c} holds.

\smallskip

\ref{prop:real:regularity} \ref{prop:B1d}: Let $v \in V(G)$. Then, using Claim~\ref{clm:M1props} \ref{prop:hatM:cb} and \ref{prop:hatM:cbb}, \ref{prop:forBfinalfromB2}, and Claim~\ref{clm:propsofMi30} \ref{prop:finalB:fromB3},
\begin{align*}
\Big|\big\{u:uv\in &(\bigcup_{i \in [n]} \hat{M}_i)\big\}\Big|\geq |\{i\in [n]:v\in \hat{M}_{i,1}\}|+|\{i\in [n]:v\in \hat{M}_{i,2}\}|+|\{i\in [n]:v\in \hat{M}_{i,3}\}|\\
&\geq |\{i\in [n]:v\in T_i\cup X_{i,0}\}|-4\gamma n+|\{i\in [n]:v\in S_i\setminus R_i\}|\\
&\hspace{2cm}+|\{i\in [n]:v\in Y_{i,0}\}|-\gamma n+|\{i\in [n]:v\in Z_{i,0}\}|-\gamma n\\
&= n-|\{i\in [n]:v\in X_{i,1}\cup Y_{i,1}\cup Z_{i,1}\}|-|\{i\in [n]:v\in R_i\}|+|\{i\in [n]:v\in T_i\}|-10^5\gamma n\\
&\geq n-(1+\eps)(1-\beta_0)(p_X+p_Y+p_Y)n-(1+2\eps)\alpha p_Tn-10^5\gamma n \\
&\overset{\eqref{eq:newnewnew}}{=}n-(1+\eps)(1-\beta_0)(p_X+p_Y+p_Y)n-(1+2\eps)(p_{\mathrm{pt}}-\beta(p_T+p_S-p_R))\alpha p_Tn -10^5\gamma n\\
&\geq n-(1+\eps)\beta (p_X+p_Y+p_Y)n-(1+2\eps)(p_{\mathrm{pt}}-\beta(p_T+p_S-p_R))\alpha p_Tn -10^5\gamma n\\
&\geq n-(1+2\eps)\beta (p_S+p_X+p_Y+p_Y)n-(1+2\eps)(p_{\mathrm{pt}}-\beta(p_T-p_R))\alpha p_Tn-10^5\gamma n \\
&\geq n-1.5\beta n-(1+2\eps)p_{\mathrm{pt}}-10^5\gamma n.
\end{align*}
Thus, by \ref{prop:finalB:3}, the degree of $v$ in $G[E^\abs] \setminus (\bigcup_{i \in [n]} \hat{M}_i)$ is at most $2\beta n$.

\smallskip

\ref{prop:real:regularity} \ref{prop:B1e}: Let $i\in [n]$. Every vertex in $S_i\setminus R_i$ is in an edge in $\hat{M}_{i,1}\subset \hat{M}_i$, so that
\begin{align*}
|V(G)\setminus (R_i\cup V(\hat{M}_i)|&\leq |X_{i,0}\setminus V(\hat{M}_{i,1,0})|\\
&\;\;+|Y_{i,0}\setminus (\cup_{u,j,u':\{(i,u),(j,u')\}\in \mathcal{I}}\{v_{i,u},x_{i,u,j},y_{i,u,j},v_{j,u'},x_{j,u',i},y_{j,u',i}\})\}|\\
&\;\;+|Z_{i,0}\setminus V(\hat{M}_{i,3,0})|\leq 4\beta n,
\end{align*}
where we have used Claim~\ref{clm:M1props} \ref{prop:hatM:c}, \ref{prop:forBfinalfromB2:3} and Claim~\ref{clm:propsofMi30} \ref{prop:finalB:fromB3:3}.
 Thus, there are at most $4\beta n$ vertices in $V(G)\setminus R_i$ that have degree $0$ in $\hat{M}_i$, so that \ref{prop:real:regularity} \ref{prop:B1e} holds.

\smallskip

\ref{prop:real:vertices}: Let $i\in [n]$. We have $T_i\subset V(\hat{M}_{i,1})$, $T_i\subset S_i\setminus R_i\subset V(\hat{M}_{i,2})$ and, as $V(\hat{M}_{i,2})$ and $V(\hat{M}_{i,1})$ can be seen to be disjoint, we have that \ref{prop:real:vertices} holds.

\smallskip

\ref{prop:real:colours}: Let $i\in [n]$. As $\hat{M}_{i,1}$ is a rainbow matching with colours in $D_1\setminus C_i$ by \ref{prop:B1final:2}, $\hat{M}_{i,2}$  is a rainbow matching with colours in $D_2\setminus C_i$ by \ref{prop:forBfinalfromB2:4},
 and $\hat{M}_{i,3}$ is a rainbow matching with colours in $D_3\setminus C_i$, we have that \ref{prop:real:colours} holds.

\smallskip

\ref{prop:real:absorption}: Suppose there are edge-disjoint matchings  $\tilde{M}_1,\ldots,\tilde{M}_n$ in $G-\hat{M}_1-\ldots-\hat{M}_n$
such that \ref{prop:real:absorption} \ref{prop:Bi}--\ref{prop:Biii} all hold.
For each $i\in [n]$, let $R_i'=R_i\setminus V(\tilde{M}_i)=V(G)\setminus (V(\hat{M}_i)\cup V(\tilde{M}_i))$. Then, for each $i\in [n]$, by \ref{prop:real:absorption} \ref{prop:Bii} and \ref{prop:Biii},  as $\hat{M}_i\cup \tilde{M}_i$ has $n$ edges, and every vertex outside of $T_i$ has degree 0 or 1 in $\hat{M}_i\cup \tilde{M}_i$ and every vertex in $T_i$ has degree 2 in $\hat{M}_i\cup \tilde{M}_i$, we have that $|R_i'|=|T_i|$, while, for each $\tau\in \mathcal{T}$ and $\phi\in \mathcal{F}$, by \ref{prop:real:absorption} \ref{prop:Biii} we have
$\bigcup_{i \in I_\phi} R_i' =_{\mult} \bigcup_{i \in I_\phi} T_i$.

Then, by the property of $\mathcal{I}=\cup_{\tau\in \mathcal{T}}\mathcal{I}_\tau$ from \ref{prop:abs:corrections} in Lemma~\ref{keylemma:absorption}, we have that there exists $\mathcal{C} \subseteq \mathcal{I}$ satisfying the following.
\begin{enumerate}[label = {{\textbf{\alph{enumi})}}}]
\item \labelinthm{prop:inBfromA:correct1} For every $i \in [n]$ and $u \in T_i$, there is exactly one  $(j,v)$ such that $\{(i,u),(j,v)\} \in \mathcal{C}$.
\item \labelinthm{prop:inBfromA:correct2} For every $i \in [n]$ and $u \in R_i'$ there is exactly one  $(v,j)$ such that $\{(i,v),(j,u)\} \in \mathcal{C}$.
\item \labelinthm{prop:inBfromA:correct3} For every $i\in [n]$ and $u\in R_i\setminus R_i'$ there is no $(v,j)$ such that $\{(i,v),(j,u)\}\in \mathcal{C}$.
\item \labelinthm{prop:inBfromA:correct4} For every $i \in [n]$ and $u \in S_i\setminus (R_i \cup T_i)$, $(i,u)$ is $(\leq 1)$-balanced in $\mathcal{C}$.
\end{enumerate}

Now, for each $i\in [n]$, take $\hat{M}_i$ and, for each $\cup_{\{(i,u),(j,v)\}\in \mathcal{C}}$, remove the edges in
\[
\{uv_{i,u},x_{j,v,i}y_{j,v,i}\}\cup M_{\{(i,x_{i,u,j}),(j,v_{i,u})\},i}\cup M_{\{(i,y_{j,v,i}),(j,y_{i,u,j})\},i}\cup M_{\{(i,v_{j,v}),(j,x_{j,v,i})\},i}
\]
and add the edges
\[
\{vv_{j,v},x_{i,u,j}y_{i,u,j}\}\cup M_{\{(i,x_{i,u,j}),(j,v_{i,u})\},j}\cup M_{\{(i,y_{j,v,i}),(j,y_{i,u,j})\},j}\cup M_{\{(i,v_{j,v}),(j,x_{j,v,i})\},j},
\]
calling the final result $\hat{M}_i'$.
Note that, for each $\cup_{\{(i,u),(j,v)\}\in \mathcal{C}}$, this operation decreases the degree of $u$ in $\hat{M}_i$ by 1 and increases the degree of $u$ by $\hat{M}_j$ by 1, while increasing the degree of $u$ in $\hat{M}_i$ by 1 and decreasing the degree of $u$ by $\hat{M}_j$ by 1, while making no other changes in the colours or vertex degrees of $\hat{M}_i$ or $\hat{M}_j$, and only moving edges between $\hat{M}_i$ and $\hat{M}_j$.
By careful construction, we have that these alterations do not interfere with each other, and, therefore, it can be seen that $\tilde{M}_i\cup \hat{M}_i'$, $i\in [n]$, is a decomposition of $G$ into perfect rainbow matchings. This completes the proof of \ref{prop:real:absorption}, and hence \ref{keylemma:realisation}.


\section{Part~\ref{partC}: Covering, balancing, and the partition of the final edges}\label{sec:balanceandcover}
Throughout this section, and as our last task in this paper, we will prove Lemma~\ref{keylemma:completion}.
\ifsecsevenout\label{sec:discussiondeletionmethod}\label{sec:C4}
\else We start by giving an overview of its proof in Section~\ref{sec:Coverview} which, after recalling the key parts of the set-up, divides the proof into 4 subparts, Parts \ref{partC1}--\ref{partC4}, which are then carried out in Sections~\ref{sec:C1}--\ref{sec:C4} respectively after some additional set-up in Section~\ref{sec:Csetup}.


\subsection{Overview of Part~\ref{partC}}\label{sec:Coverview}

Take the set-up detailed in Sections~\ref{sec:variables} and~\ref{sec:choosevxsets}, where, in particular, we have $G\sim G^\col_{[n]}$ and that the edges of $G$ appear in $E^{{\bal}}\subset E(G)$ independently at random with probability $p_{{\bal}}$, while, for each $i\in [n]$, $C_i\subset C$ is a random set of colours where each colour is included independently at random with probability $p_{{\bal}}$ and $R_i$ is a random subset of $V(G)$ such that, for each $v\in R_i$, $\P(v\in R_i)=p_R$. We wish to show that, with high probability, we can do the following.
Suppose we have any edge set $\hat{E}\subset E(G)$, and any sets $\hat{V}_i\subset V(G)$ and $\hat{C}_i\subset C(G)$, $i\in [n]$, which satisfy \ref{key3:need0}--\ref{key3:need4}. Then, $\hat{E}$ can be partitioned into matchings $\tilde{M}_1,\ldots,\tilde{M}_n$ such that \ref{key3:outcome1}--\ref{key3:outcome3} hold, where in particular these properties require that, for each $i\in [n]$, $\tilde{M}_i$ is a rainbow matching with colour set $\hat{C}_i$ which covers all the vertices in $\hat{V}_i$ which are not in $R_i$, and so that vertices unused in $R_i$ are balanced among each family (that is, \ref{key3:outcome3} holds).

The conditions \ref{key3:need0}--\ref{key3:need4} variously make sure that such a partition is feasible based on edge and colour degrees (\ref{key3:need3} and \ref{key3:need4}) or make sure the colours, vertices, and edges involved are sufficiently random-looking to make this task achievable. To find $\tilde{M}_1,\ldots,\tilde{M}_n$, we will further split Part~\ref{partC} into 4 subparts, as follows.

\begin{enumerate}[label = \textbf{\Alph{enumi}}]\addtocounter{enumi}{2}
\item Covering, balancing, and splitting the final edges.
\begin{enumerate}[label = \textbf{\Alph{enumi}.\arabic{enumii}}]
\item Making sure the matching $\tilde{M}_i$, $i\in [n]$, will cover the vertices in $\hat{V}_i\setminus R_i$ and use the colours in $\hat{C}_i\setminus C_i$.\label{partC1}

\smallskip

\emph{We find matchings $\tilde{M}_{i,1}$, $i\in [n]$, which cover $\hat{V}_i\setminus R_i$ and use the colours in $\hat{C}_i\setminus C_i$ and whose inclusion in $\tilde{M}_i$ will thus ensure this property for $\tilde{M}_i$.}

\smallskip

\item Balancing colours between families.\label{partC3}

\smallskip

\emph{From an initial partition of the remaining edges (i.e., those not in $\tilde{M}_{i,1}$, $i\in [n]$), into $\hat{E}'_\phi$, $\phi\in \mathcal{F}$, we adjust this to give a partition $\hat{E}_\phi$, $\phi\in \mathcal{F}$, where each family $\phi\in \mathcal{F}$ has the right number of edges of each colour in $\hat{E}_\phi$ to
complete the matchings $\tilde{M}_i$, $i\in [n]$, while following the $\hat{C}_i$-rainbow conditions.}

\smallskip

\item \label{partC3b} Balancing vertex degrees between families.

\smallskip

\emph{Similarly to Part~\emph{\ref{partC3}}, we adjust the edge partition $\hat{E}_\phi$, $\phi\in \mathcal{F}$, (without changing the number of edges of each colour in each set in this partition) to give a partition $\hat{E}^*_\phi$, $\phi\in \mathcal{F}$, so that, for each vertex $v$, each family $\phi\in \mathcal{F}$ has the right number of edges at $v$ in $\hat{E}^*_\phi$ to
complete the matchings $\tilde{M}_i$, $i\in [n]$, in order that \emph{\ref{key3:outcome3}} holds.}

\smallskip

\item Partitioning the remaining edges allocated to each family $\phi\in \mathcal{F}$ to complete $\tilde{M}_i$, $i\in I_\phi$.\label{partC4}

\smallskip

\emph{For each family $\phi\in \mathcal{F}$, we partition the edges of $\hat{E}^*_\phi$ into $\tilde{M}_{i,2}$, $i\in I_\phi$, so that each $\tilde{M}_{i,1}\cup \tilde{M}_{i,2}$ is a rainbow matching using exactly the colours in $\hat{C}_i$ (and thus \emph{\ref{key3:outcome1}} holds) and whose vertices are in $R_i$, which then ensures that \emph{\ref{key3:outcome2}} holds.}
\end{enumerate}
\end{enumerate}

Parts~\ref{partC1} and~\ref{partC3} will be straightforward to carry out. Essentially, having set aside some random vertices, colours and edges for the task in the set-up in Section~\ref{sec:Csetup}, the matchings $\tilde{M}_{i,1}$ in Part~\ref{partC1} can be found greedily, while the initial partition of edges in Part~\ref{partC3} will be random and thus only require a small adjustment, which can be made by switching some small number of edges between the parts of the partition (relying on \ref{key3:need4}).

For Part~\ref{partC3b}, we are fortunate in that we have already done much of the required work, in Section~\ref{sec:rand}, where we showed the likely existence of many $(u,v,L)$-links for each distinct $u,v\in V(G)$ with $u\simAB v$ (where $L$ is the link defined in Theorem~\ref{thm:Llinks}). In the partition $\hat{E}_{\phi}$, $\phi\in \mathcal{F}$, each vertex $v$ will be in too many edges in some of these sets, and too few in some others, but in total it will be in the right number (due to \ref{key3:need3}). Not too disimilarly to some of our easier work in Section~\ref{sec:absorb}, we will be able to decompose the changes we need to make so that the problem is reduced to, for each pair of vertices $u,v$ with $u\simAB v$ and each distinct $\phi,\phi'\in \mathcal{F}$ with $u,v\in S_\phi\cap S_{\phi'}$, being able to swap edges between $\hat{E}_{\phi}$ and $\hat{E}_{\phi'}$ to reduce the degree of $u$ by 1 in $\hat{E}_{\phi}$ and increase it by 1 in $\hat{E}_{\phi'}$, and reduce the degree of $v$ by 1 in $\hat{E}_{\phi'}$ and increase it by~1 in $\hat{E}_{\phi}$, without making any changes to the number of edges of each colour in $\hat{E}_{\phi}$ and $\hat{E}_{\phi'}$ or to any of the other vertex degrees in these sets.

Suppose we could find a $(u,v,L)$-link $S$ in $G$ such that the odd edges are in  $\hat{E}_{\phi}$ and the even edges of $S$ are in $\hat{E}_{\phi'}$. Then, switching out the odd edges of $S$ from $\hat{E}_{\phi}$ for the even edges of $S$, and vice versa for $\hat{E}_{\phi'}$, we get exactly the change we want (cf.\ Figure~\ref{fig:simpleswitcher}).
Showing the existence of many such links is straight-forward using Theorem~\ref{thm:Llinks}. We will not need to set aside any set of links to do these alterations, and instead can show that there are sufficiently many of them, which will moreover be sufficiently well spread out, that enough can be found edge-disjointly to make all the corrections we require.

In Part~\ref{partC4}, following all of our work so far, we will finally arrive at an edge partitioning problem where we perfectly partition a set of edges into rainbow matchings with specific colours (as for the original problem solved by Theorem~\ref{thm:main}), except here the matchings in the partition will be less restricted in their vertex sets -- each $\tilde{M}_{i,2}$ will have a relatively small number of edges compared to the size of $R_i$. This is the key relaxation that will allow us to partition the remaining edges. For each colour $c$, we will have exactly the right number of edges of colour $c$ remaining to assign one to each $\tilde{M}_{i,2}$ for which we want an edge with colour $c$. The challenge is to do this so that each $\tilde{M}_{i,2}$ is a matching. We first sparsify an accompanying auxiliary graph  (see $L_c$ in Section~\ref{sec:C4}) by forbidding most of the possible assignments randomly. Then, we do a similar sparsification, but keep only the assignment of an edge $e$ to $\tilde{M}_{i,2}$ if we do not forbid this, but do forbid the assignment to $\tilde{M}_{i,2}$ of any edge intersecting with $e$. Then, we show that it is very likely we can use the remaining non-forbidden assignment possibilities to assign the remaining edges of colour $c$ to the required $\tilde{M}_{i,2}$, where we now have that this will give a matching.


\subsection{Set-up for Part~\ref{partC}}\label{sec:Csetup}

For each $i\in [n]$, partition $R_i=R_{i,1} \cup R_{i,2}$ by taking each $v\in R_i$ and independently at random allocating it so that $\P(v\in R_{i,j})=1/4$ for each $j\in [2]$. For each $i\in [n]$, similarly partition $C_i=C_{i,1}\cup C_{i,2}$ so that the location of each $c\in C_i$ is independent and such that $\P(v\in C_{i,1})=p_{\mathrm{cov}}/p_\bal$.
Similarly, partition $E^\partit=E_1\cup E_2\cup E_3\cup E_4$ so that, for each $e\in E^\partit$, $\P(e\in E_1)=p_{\mathrm{cov}}/p_{{\bal}}$, $\P(e\in E_2)=p_{\balcol}/p_{{\bal}}$ and $\P(e\in E_3)=p_{\balvx}/p_{{\bal}}$.
Then, partition $E_1=E^A_1\cup E^B_1\cup E^C_1$, by, for each $e\in E_1$, choosing the set for $e$ independently and uniformly at random.
 Partition $E_2=\bigcup_{\phi\in \mathcal{F}}E_{2,\phi}$ by, for each edge $e\in E_2$, independently and uniformly at random assigning $e$ to some $E_{2,\phi}$ for which $V(e)\subset S_\phi$. Similarly, partition $E_3=\bigcup_{\phi\in \mathcal{F}}E_{3,\phi}$ and $E_4=\bigcup_{\phi\in \mathcal{F}}E_{4,\phi}$.

For \ref{propforLc:2}--\ref{propforLc:1} later, let
\begin{equation}\label{eqn:nzeroetc}
n_0=1.01p_{\mathrm{pt}}p_{\tr}p_{\fa}n, \;\;\;D_0=p_R^2p_{\mathrm{pt}}p_\tr p_\fa n/8p_S^2,\;\;\text{ and }\;\;q_0=p_R^2/8p_S^2.
\end{equation}

\begin{claim}\label{clm:forpartC} With high probability, we have the following properties.
\stepcounter{propcounter}
\begin{enumerate}[label = {{\textbf{\Alph{propcounter}\arabic{enumi}}}}]
\item For each $i\in [n]$, $v\in V(G)$ and $X\in \{A,B\}$, there are at least $\frac{p_\cov^2p_Rn}{8}$ edges in $E_1^X$ between $v$ and $R_{i,1}$ with colour in $C_{i,1}$.\label{prop:C:forvxcovering}
\item For each $i\in [n]$ and $c\in C$, there are at least $\frac{p_\cov p_R^2n}{20}$ edges in $E_1^C$ with vertices in $R_{i,1}$ and colour $c$.\label{prop:C:forcolcovering}

\item For each $u\in V(G)$, $|\{\phi\in \mathcal{F}:u\in S_\phi\}|=(1\pm \eps)p_Sp_{\tr}^{-1}p_\fa^{-1}$.\label{prop:C:uplentyofphi}
\item For each distinct $u,v\in V(G)$, $|\{\phi\in \mathcal{F}:u,v\in S_\phi\}|=(1\pm \eps)p_S^2p_{\tr}^{-1}p_\fa^{-1}$.\label{prop:C:uvplentyofphi}
\item For each distinct $u,v,w\in V(G)$, $|\{\phi\in \mathcal{F}:u,v,w\in S_\phi\}|=(1\pm \eps)p_S^3p_{\tr}^{-1}p_\fa^{-1}$.\label{prop:partC:triplesofverticesinSphi}
\item For each $c\in C$ and $\phi\in \mathcal{F}$, \label{prop:C:colsalreadyquitebalanced}
$|\{e\in E_{2,\phi}\cup E_{3,\phi}\cup E_{4,\phi}:c(e)=c\}|=(1\pm \sqrt{p_\cov})p_{\bal}p_\fa p_\tr n$.
\item For each $c\in C$,
$|\{e\in E_{1}:c(e)=c\}|=(1\pm \eps)p_{\cov}n$.\label{prop:C:colsinE1}
\item For each $v\in V(G)$ and $\phi\in \mathcal{F}$,
$|\{e\in E_{1}:v\in V(e), V(e)\setminus \{v\}\subset S_\phi\}|=(1\pm \eps)p_{\cov}p_Sn$. \label{prop:C:verticesinE1}
\item For each $\phi\in \mathcal{F}$ and $v\in S_\phi$, $|\{e\in E_{2,\phi}\cup E_{3,\phi}\cup E_{4,\phi}:v\in V(e)\}|=(1\pm \sqrt{p_{\cov}})p_\bal p_S^{-1}p_\tr p_\fa n$ and $|\{e\in E_{2,\phi}:v\in V(e)\}|\leq 2 p_\balcol p_S^{-1}p_\tr p_\fa n$.
\label{prop:C:verticesalreadyquitebalanced}
\item For each $\phi\in \mathcal{F}$, $i\in I_\phi$ and $v\in S_i$,\label{prop:C:newnew} \[|\{e\in E_{2,\phi}\cup E_{3,\phi}\cup E_{4,\phi}:v\in V(e),c(e)\in C_i\}|=(1\pm p_{\cov})p_{\mathrm{pt}}^2p_S^{-1}p_\tr p_\fa n.\]
\item For each $\phi\in \mathcal{F}$, $i\in I_\phi$ and $v\in S_i$,
 $|\{e\in E_{1}:v\in V(e)\subset S_i,c(e)\in C_i\}|\leq (1\pm \eps)p_\cov p_{\mathrm{pt}}p_S n$.\label{prop:C:new}
\item For each $c\in C$ and $\phi\in \mathcal{F}$,
$|\{i\in I_\phi:c\in C_i\}|=(1\pm \eps)p_{\bal}p_\tr p_\fa n$ and $|\{i\in I_\phi:c\in C_{i,2}\}|=(1\pm p_{\balcol})p_{\bal}p_\tr p_\fa n$.\label{prop:C:colsinfamilies}
\item For each $c\in C$ and distinct $\phi,\phi'\in \mathcal{F}$,
$|\{e\in E_{2,\phi}:V(e)\subset S_\phi\cap S_{\phi'},c(e)=c\}|\geq p_\balcol p_S^{2}p_\tr p_\fa n/2$.\label{prop:C:forcolcoveringNEW}

\item For each $\phi\in \mathcal{F}$ and $v\in S_\phi$,
$\big||\{i\in I_\phi:v\in T_i\}|-|\{i\in I_\phi:v\in R_i\}|\big|=(1\pm \eps)\alpha p_Tp_S^{-1}p_{\tr}p_\fa n$.\label{prop:forC3:1}

\item For each $\phi\in \mathcal{F}$, $|S_\phi|\leq (2+\eps)p_Sn$. \label{prop:partC:Sphibound}
\item For each distinct $\phi,\phi'\in \mathcal{F}$ and $v\in S_\phi\cap S_{\phi'}$,\label{prop:forC3:maxdeg}
 \[|\{e\in E_{3,\phi}:V(e)\subset S_\phi\cap S_{\phi'},v\in V(e)\}|\leq 2p_{\balvx}p_S^{-1}p_{\tr}p_\fa n.\]

\item For each distinct $\phi,\phi'\in \mathcal{F}$ and distinct $x,y\in S_\phi\cap S_{\phi'}$ with $x\sim_{A/B}y$, letting $\mathcal{L}$ be the set of $(x,y,L)$-links (as defined in Theorem~\ref{thm:Llinks})
whose odd edges are in $E_{3,\phi}$ and whose even edges are in $E_{3,\phi'}$, and whose vertices are in $S_\phi\cap S_{\phi'}$, we have
\begin{enumerate}[label = \textbf{\alph{enumii}}]
\item $|\mathcal{L}|\geq p_\balvx(p_\balvx p_{\tr}p_{\fa})^{62}n^{30}$,\label{prop:C:linksa}
\item for each $e\in E(G)$, there are at most $(p_{\tr}p_{\fa})^{61}n^{28}$ links in $\mathcal{L}$ which use $e$, but not as the $k$th edge for any $k\in \{1,2,61,62\}$,\label{prop:C:linksb}
\item for each $e\in E(G)$, there are at most $(p_{\tr}p_{\fa})^{60}n^{28}$ links in $\mathcal{L}$ which use $e$ as either the 2nd or 61st edge,\label{prop:C:linksc}
\item for each $v\in V(G)\setminus \{x,y\}$, there are at most $(p_{\tr}p_{\fa})^{62}n^{29}$ links in $\mathcal{L}$ which use $v$ not as a neighbour of $x$ or $y$, and\label{prop:C:linksd}
\item for each $v\in V(G)\setminus \{x,y\}$, there are at most $(p_{\tr}p_{\fa})^{61}n^{29}$ links in $\mathcal{L}$ which use $v$ as a neighbour of $x$ or $y$.\label{prop:C:linkse}
\end{enumerate}\label{prop:C:forvxbalancing}

\item \label{propforLc:2} For each $\phi\in \mathcal{F}$, $c\in C$ and $I\subset I_\phi$ with $|I|\leq n_0/2D_0$, there are at least $D_0|I|$ edges $e\in E_{4,\phi}$ with colour $c$ such that $V(e)\subset R_{i,2}$ for some $i\in I$.
\item \label{propforLc:3} For each $\phi\in \mathcal{F}$, $c\in C$ and $E\subset \{e\in E(G):V(e)\subset S_\phi,c(e)=c\}$ with $|E|\leq n_0/2D_0$, there are at least $D_0|E|$ values of $i\in I_\phi$ such that $c\in C_{i,2}$ and there is some $e\in E$ such that $V(e)\subset R_{i,2}$.
\item \label{propforLc:1} For each $\phi\in \mathcal{F}$, $c\in C$, $I\subset I_\phi$ and $E\subset \{e\in E(G):V(e)\subset S_\phi,c(e)=c\}$ with $|I|,|E|\geq n_0/2D_0$, there are at least $q_0|I||E|$ pairs
$i\in I$ and $e\in E$ with $V(e)\subset R_{i,2}$.
\end{enumerate}
\end{claim}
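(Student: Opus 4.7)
The plan is to establish each of the properties \ref{prop:C:forvxcovering}--\ref{propforLc:1} via an appropriate concentration argument applied to the independent random partitions in the set-up, followed by a union bound, very much in the spirit of the proof of Claim~\ref{clm:properties}. For each property I would first compute the expected value of the relevant random quantity (conditional on $G$), then apply either Chernoff's bound (Lemma~\ref{chernoff}) or McDiarmid's inequality (Lemma~\ref{lem:mcdiarmidchangingc}) to show concentration with probability $1-n^{-\omega(1)}$, and finally take a union bound over the at most $n^{O(1)}$ choices of indices appearing in each property. A number of the cleaner properties also use Lemma~\ref{lem:setsizesetc} and Claim~\ref{clm:properties} (both of which hold with high probability) to control the ambient set sizes used in the expectation computations.

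The routine bulk consists of \ref{prop:C:forvxcovering}--\ref{prop:C:verticesinE1}, \ref{prop:C:verticesalreadyquitebalanced}--\ref{prop:C:new}, \ref{prop:C:colsinfamilies}, \ref{prop:C:forcolcoveringNEW}, \ref{prop:forC3:maxdeg}, and the vertex-membership counts \ref{prop:C:uplentyofphi}--\ref{prop:partC:triplesofverticesinSphi}, \ref{prop:partC:Sphibound}, \ref{prop:forC3:1}. In each case, once $G$ is revealed, the relevant count is a sum of independent indicators coming from the partitions of $R_i, C_i$, and $E^{\bal}$, so the expectation matches the claimed value (using the product of the relevant placement probabilities like $p_R/4$, $p_\cov/p_\bal$, $p_\balcol/p_\bal$, etc.), and Chernoff's bound gives the stated deviation with probability $1-\exp(-\omega(\log n))$; this handily survives the polynomial-sized union bound because $1/n \llpoly p_\cov, p_\balcol, p_\balvx, p_\bal$ in \eqref{eq:hierarchy}. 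Properties \ref{prop:C:colsalreadyquitebalanced} and \ref{prop:C:newnew} involve compound events mixing edge-allocation and colour-allocation, but these random choices are independent, so after conditioning the estimate again reduces to Chernoff.

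The main obstacle, as expected, is \ref{prop:C:forvxbalancing}, which couples the random edge partition with the structure of $(x,y,L)$-links in $G$. First, by Theorem~\ref{thm:Llinks}, with probability $1-n^{-\omega(1)}$ the total and pinned link counts in $G$ satisfy \ref{prop:links:totalnumber}--\ref{prop:links:throughedgeandvertex}; condition on this. Then for fixed distinct $x,y,\phi,\phi'$, the random variable $|\mathcal{L}|$ is a sum of indicators for each $(x,y,L)$-link with internal vertices in $S_\phi \cap S_{\phi'}$ and odd/even edges in $E_{3,\phi}/E_{3,\phi'}$. Each link contributes with probability $\sim (p_S^2/p_\fa)^{61}\cdot (p_\balvx/p_\bal)^{62}$ since the vertex-to-family assignments and the edge-to-family assignments are all independent, producing the stated expectation. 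For concentration, McDiarmid applies with per-variable effect controlled by \ref{prop:links:throughedge}, \ref{prop:links:throughedgeandvertex}, \ref{prop:links:2edgesofdifferentcoloursanddisjoint}; the ratio of expectation squared to the squared-effect sum is polynomial in $n$, and so concentration holds with room to absorb the union bound of size $O(n^2 p_\fa^{-2})$. The bounds \ref{prop:C:linksb}--\ref{prop:C:linkse} on links through a fixed edge or vertex then follow directly from the same codegree estimates in Theorem~\ref{thm:Llinks} (without needing concentration, since we only need upper bounds and the expected counts already satisfy the claimed inequalities up to constants).

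Finally, \ref{propforLc:2}--\ref{propforLc:1} are expansion-type statements about the bipartite incidence between indices $i \in I_\phi$ and colour-$c$ edges of $G$ with both endpoints in $R_{i,2}$. For a fixed $\phi, c$ and a fixed pair $I \subset I_\phi$, $E \subset \{e: V(e) \subseteq S_\phi, c(e)=c\}$, the number of good $(i,e)$ pairs is a sum of independent Bernoullis with mean $q_0|I||E|$, since each $v \in R_{i,2}$ is chosen independently of the others. For \ref{propforLc:1}, when $|I|,|E| \geq n_0/2D_0$ are polynomially large, a direct Chernoff bound is enough. For \ref{propforLc:2} and \ref{propforLc:3}, we take the dual viewpoint: for a fixed bad set $I$ (respectively $E$) of size $s \leq n_0/2D_0$, the probability that fewer than $D_0 s$ good colour-$c$ edges exist in $\bigcup_{i \in I} R_{i,2}$ is at most $\exp(-\Omega(D_0 s))$ by Chernoff, and the number of such sets to union-bound is at most $\binom{n}{s}\leq (en/s)^s$; since $D_0$ is a polynomial in $n$ with exponent strictly exceeding $O(\log(n/s)/\log n)$ in the ranges considered (because $p_R, p_\mathrm{pt}, p_\tr, p_\fa$ are all above $1/n$ in the hierarchy), the product is $n^{-\omega(1)}$. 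This is the most delicate of the three, and the success of the union bound is exactly what forces the precise dependencies in the hierarchy \eqref{eq:hierarchy}. Combining all the above with a final union bound over all properties completes the proof of Claim~\ref{clm:forpartC}.
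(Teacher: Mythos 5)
Most of your proposal matches the paper's proof: Chernoff plus union bound for the bulk of the properties, Theorem~\ref{thm:Llinks} plus McDiarmid for \ref{prop:C:forvxbalancing}\ref{prop:C:linksa}, and a two-case Chernoff argument (small/large sets) for \ref{propforLc:2}--\ref{propforLc:1}. However, you have a genuine gap in the handling of \ref{prop:C:linksb}--\ref{prop:C:linkse}. You claim these ``follow directly from the same codegree estimates in Theorem~\ref{thm:Llinks} (without needing concentration)''. That cannot be right: \ref{prop:links:throughedge} and \ref{prop:links:throughvertex} give deterministic bounds like $(1+\eps)\Phi_0 n^{-2}=(1+\eps)n^{28}$ and $(1+\eps)\Phi_0 n^{-1}$ on the number of \emph{all} $(x,y,L)$-links through a given edge or vertex, whereas the claimed bounds in \ref{prop:C:linksb}--\ref{prop:C:linkse} carry extra factors of $(p_\tr p_\fa)^{61}$, $(p_\tr p_\fa)^{60}$, $(p_\tr p_\fa)^{62}$ or $(p_\tr p_\fa)^{61}$, which come from restricting to links whose vertices lie in $S_\phi\cap S_{\phi'}$ and whose odd/even edges fall into $E_{3,\phi}/E_{3,\phi'}$. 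These restrictions are random events; the fact that the expected count of such restricted links is small does not imply the realized count is small. One must apply a concentration inequality (the paper uses McDiarmid via Lemma~\ref{lem:mcdiarmidchangingc}, conditioning on an appropriate event $\mathcal{E}_{e,k}$ or $\mathcal{E}_v$, with per-variable effects again controlled by \ref{prop:links:cod:twovertices}, \ref{prop:links:2edgesofdifferentcoloursanddisjoint} and \ref{prop:links:throughedgeandvertex}) and then union-bound over the polynomially many choices of $e$, $v$ and $k$.

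There is also a subtlety you would need for \ref{prop:C:linksc} and \ref{prop:C:linkse} that your sketch misses: those bounds are a factor $(p_\tr p_\fa)^{-1}$ weaker than \ref{prop:C:linksb} and \ref{prop:C:linksd}. The reason is that when $e$ is the 2nd edge (or $v$ is a neighbour of $x$ or $y$), the 1st edge of the link is determined, and one must condition on that forced edge lying in the appropriate $E_{3,\phi}$, sacrificing one factor of $p_\balvx p_\tr p_\fa p_S^{-2}$ in the expectation. Getting this right is exactly what makes the later use of $F^{\mathrm{forb}}$ and $V^{\mathrm{forb}}$ in the proof of Claim~\ref{claim:partC33} go through, so the distinction between the two bounds is not cosmetic.
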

\begin{proof}[Proof of Claim~\ref{clm:forpartC}]
To see that \ref{prop:C:forvxcovering}--\ref{propforLc:1} hold with high probability, we first observe that \ref{prop:C:forvxcovering}--\ref{prop:forC3:maxdeg} hold with high probability, each by a simple application of Lemma~\ref{chernoff} and a union bound. This leaves us to show, in turn that \ref{prop:C:forvxbalancing}--\ref{propforLc:1} hold with high probability.

\smallskip
\noindent\ref{prop:C:forvxbalancing}: By Theorem~\ref{thm:Llinks}, with high probability, we can assume that, setting $\Phi_0=n^{30}$,  \ref{prop:links:totalnumber}--\ref{prop:links:throughedgeandvertex} hold.

\smallskip
\noindent\ref{prop:C:forvxbalancing}\ref{prop:C:linksa}: Let $\phi,\phi'\in \mathcal{F}$ be distinct, let $x,y\in S_\phi\cap S_{\phi'}$ be distinct with $x\sim_{A/B}y$,
and let $\mathcal{L}_0$ be the set of $(x,y,L)$-links in $G$ and $\mathcal{L}$ be the set of $(x,y,L)$-links
whose odd edges are in $E_{3,\phi}$, whose even edges are in $E_{3,\phi'}$, and whose vertices are in $S_\phi\cap S_{\phi'}$.
By \ref{prop:links:totalnumber} and \ref{prop:C:uvplentyofphi}, we have $\E|\mathcal{L}|\geq (1-\eps)((1-2\eps)p_\balvx p_{\tr}p_{\fa}p_S^{-2})^{62}p_S^{122}\Phi_0\geq 2p_\balvx(p_\balvx p_{\tr}p_{\fa})^{62}\Phi_0$.
Now, for each $v\in V(G)\setminus \{x,y\}$, we have by \ref{prop:links:throughvertex} that $|\{H\in \mathcal{L}_0:v\in V(H)\}|\leq 100\Phi_0\cdot n^{-1}$.
For each $e\in E(G-\{x,y\})$, we have by \ref{prop:links:throughedge} that $|\{H\in \mathcal{L}_0:e\in E(H)\}|\leq 100\Phi_0\cdot n^{-2}$. For each $e\in E(G)$ with $\{x,y\}\cap V(e)\neq\emptyset$, we have by \ref{prop:links:throughvertex} that $|\{H\in \mathcal{L}_0:e\in E(H)\}|\leq 4\Phi_0\cdot n^{-1}$.

Therefore, by Lemma~\ref{lem:mcdiarmidchangingc}, we have
\begin{align*}
\P(|\mathcal{L}|<p_\balvx(p_\balvx p_{\tr}p_{\fa})^{62}&\Phi_0)\\
&\leq 2\exp\left(-\frac{2(p_\balvx(p_\balvx p_{\tr}p_{\fa})^{62}\Phi_0)^2}{2n\cdot (100\Phi_0\cdot n^{-1})^2+n^2\cdot (100\Phi_0\cdot n^{-2})^2+2n\cdot (4\Phi_0\cdot n^{-1})^2}\right)\\
&\leq 2\exp\left(-\Omega\left({p_\balvx^2(p_\balvx p_{\tr}p_{\fa})^{124}}n\right)\right)=n^{-\omega(1)}.
\end{align*}
Thus, taking a union bound, we have that with high probability \ref{prop:C:forvxbalancing}\ref{prop:C:linksa} always holds.

\smallskip
\noindent\ref{prop:C:forvxbalancing}\ref{prop:C:linksb}: Fix $e\in E(G-\{x,y\})$ and $3\leq k\leq 60$. Let $\mathcal{L}_{e,k}=\{H\in \mathcal{L}:e\text{ is the }k\text{th edge of }H\}$, so that, by  \ref{prop:links:throughedge},
 $|\mathcal{L}_{e,k}|\leq (1+\eps)\Phi_0\cdot n^{-2}$. Let $\mathcal{L}'_{e,k}$ be the set of links in $\mathcal{L}_{e,k}$ whose odd edges are in $E_{3,\phi}$ and whose even edges are in $E_{3,\phi'}$, and whose vertices are in $S_\phi\cap S_{\phi'}$. Let $\mathcal{E}_{e,k}$ be the event that $V(e)\subset S_\phi\cap S_{\phi'}$ and $e\in E_{3,\phi}$ if $k$ is odd and $e\in E_{3,\phi'}$ if $k$ is even. Then,
\[
\E(|\mathcal{L}_{e,k}||\mathcal{E}_{e,k})\leq ((1+2\eps)p_\balvx p_{\tr}p_{\fa}p_S^{-2})^{61}\cdot (1+\eps)\Phi_0\cdot n^{-2}\leq (p_{\tr}p_{\fa})^{61}\Phi_0\cdot n^{-2}/2.
\]

Now, for each $w\in V(G)\setminus (\{x,y\}\cup V(e))$, we have by \ref{prop:links:throughedgeandvertex} that $|\{H\in \mathcal{L}_{e,k}:v\in V(H)\}|\leq 2\cdot 10^8\Phi_0\cdot n^{-3}$.
For each $e'\in E(G-(\{x,y\}\cup V(e))$, we have by \ref{prop:links:2edgesofdifferentcoloursanddisjoint} that $|\{H\in \mathcal{L}_{e,k}:e'\in E(H)\}|\leq 10^4\Phi_0\cdot n^{-4}$.
For each $e'\in E(G)$ with $(\{x,y\}\cup V(e))\cap V(e')\neq\emptyset$ and $V(e')\not\subset \{x,y\}\cup V(e)$, we have by \ref{prop:links:throughedgeandvertex} that $|\{H\in \mathcal{L}_{e,k}:e'\in E(H)\}|\leq 10^8\Phi_0\cdot n^{-3}$.

Therefore, by Lemma~\ref{lem:mcdiarmidchangingc}, we have
\begin{align*}
\P(|\mathcal{L}_{e,k}'|<(p_{\tr}p_{\fa})^{61}&\Phi_0\cdot n^{-2}|\mathcal{E}_{e,k})\\
&\leq 2\exp\left(-\frac{2((p_{\tr}p_{\fa})^{61}\Phi_0\cdot n^{-2}/2)^2}{2n\cdot (2\cdot 10^8\Phi_0\cdot n^{-3})^2+n^2\cdot (10^4\Phi_0\cdot n^{-4})^2+4n\cdot (10^8\Phi_0\cdot n^{-3})^2}\right)\\
&\leq 2\exp\left(-\Omega\left({(p_{\tr}p_{\fa})^{122}}n\right)\right)=n^{-\omega(1)}.
\end{align*}
Thus, taking a union bound over all $e\in E(G-\{x,y\})$ and $3\leq k\leq 60$, we have that with high probability \ref{prop:C:forvxbalancing}\ref{prop:C:linksb} always holds.

\smallskip
\noindent\ref{prop:C:forvxbalancing}\ref{prop:C:linksc}: Fix $e\in E(G-\{x,y\})$ and suppose $k=2$ (where the case where $k=61$) follows similarly. Let $\mathcal{L}_{e,k}=\{H\in \mathcal{L}:e\text{ is the }k\text{th edge of }H\}$, so that, by  \ref{prop:links:throughedge},
$|\mathcal{L}_{e,k}|\leq (1+\eps)\Phi_0\cdot n^{-2}$. Let $\mathcal{L}'_{e,k}$ be the set of links in $\mathcal{L}_{e,k}$ whose odd edges are in $E_{3,\phi}$ and whose even edges are in $E_{3,\phi'}$, and whose vertices are in $S_\phi\cap S_{\phi'}$. Let $f$ be the edge between $x$ and $V(e)$, and let $\mathcal{E}_{e,k}$ be the event that $V(e)\subset S_\phi\cap S_{\phi'}$ and $f\in E_{3,\phi}$ and $e\in E_{3,\phi'}$. (The difference here to \ref{prop:C:forvxbalancing}\ref{prop:C:linksb} is that when $f\notin E_{3,\phi}$ then $\mathcal{L}'_{e,k}$ is always empty.)
Then,
\[
\E(|\mathcal{L}_{e,k}'||\mathcal{E}_{e,k})\leq (p_\balvx p_{\tr}p_{\fa}p_S^{-2})^{60}\cdot (1+\eps)\Phi_0\cdot n^{-2}\leq (p_{\tr}p_{\fa})^{60}\cdot \Phi_0\cdot n^{-2}/2.
\]
Working very similarly to \ref{prop:C:forvxbalancing}\ref{prop:C:linksb} with only the difference of $p_\balvx p_{\tr}p_{\fa}p_S^{-2}$ in the upper bound on the expectation, we have, by Lemma~\ref{lem:mcdiarmidchangingc}, that
\begin{align*}
\P(|\mathcal{L}_{e,k}'|<& (p_{\tr}p_{\fa})^{60}\cdot \Phi_0\cdot n^{-2}|\mathcal{E}_{e,k})\\
&\leq 2\exp\left(-\frac{2( (p_{\tr}p_{\fa})^{60}\cdot \Phi_0\cdot n^{-2}/2)^2}{2n\cdot (2\cdot 10^8\Phi_0\cdot n^{-3})^2+n^2\cdot (10^4\Phi_0\cdot n^{-4})^2+4n\cdot (10^8\Phi_0\cdot n^{-3})^2}\right)\\
&\leq 2\exp\left(-\Omega\left({(p_{\tr}p_{\fa})^{120}}n\right)\right)=n^{-\omega(1)}.
\end{align*}
Thus, taking a union bound over all $e\in E(G-\{x,y\})$, and considering also the case $k=61$, we have that with high probability \ref{prop:C:forvxbalancing}\ref{prop:C:linksc} always holds.

\smallskip
\noindent\ref{prop:C:forvxbalancing}\ref{prop:C:linksd}: Fix $v\in V(G)\setminus \{x,y\}$ and $3\leq k\leq 61$. Let $\mathcal{L}_{v,k}=\{H\in \mathcal{L}:e\text{ is the }k\text{th vertex of }H\}$, so that, by  \ref{prop:links:throughvertex},
 $|\mathcal{L}_{v,k}|\leq (1+\eps)\Phi_0\cdot n^{-1}$. Let $\mathcal{L}'_{v,k}$ be the set of links in $\mathcal{L}_{v,k}$ whose odd edges are in $E_{3,\phi}$ and whose even edges are in $E_{3,\phi'}$, and whose vertices are in $S_\phi\cap S_{\phi'}$. Let $\mathcal{E}_{v}$ be the event that $v\in S_\phi\cap S_{\phi'}$. Then,
\begin{equation}\label{eqn:expupp}
\E(|\mathcal{L}_{e,k}||\mathcal{E}_{v})\leq (p_\balvx p_{\tr}p_{\fa}p_S^{-2})^{61}\cdot (1+\eps)\Phi_0\cdot n^{-1}\leq (p_{\tr}p_{\fa})^{61}\cdot \Phi_0\cdot n^{-1}/2.
\end{equation}

Now, for each $w\in V(G)\setminus (\{x,y,v\})$, we have by \ref{prop:links:cod:twovertices} that $|\{H\in \mathcal{L}_{v,k}:w\in V(H)\}|\leq 10^4\Phi_0\cdot n^{-2}$.
For each $e\in E(G-\{x,y,v\})$, we have by \ref{prop:links:throughedgeandvertex} that $|\{H\in \mathcal{L}_{v,k}:e\in E(H)\}|\leq 10^8\Phi_0\cdot n^{-3}$.
For each $e\in E(G)$ with $\{x,y,v\}\cap V(e)\neq\emptyset$, we have by \ref{prop:links:cod:twovertices} that $|\{H\in \mathcal{L}_{v,k}:e\in E(H)\}|\leq 10^4\Phi_0\cdot n^{-2}$.

Therefore, by Lemma~\ref{lem:mcdiarmidchangingc}, we have
\begin{align*}
\P(|\mathcal{L}_{v,k}'|<&(p_{\tr}p_{\fa})^{61}\cdot \Phi_0\cdot n^{-1}|\mathcal{E}_{v})\\
&\leq 2\exp\left(-\frac{2((p_{\tr}p_{\fa})^{61}\cdot \Phi_0\cdot n^{-1}/2)^2}{2n\cdot (10^4\Phi_0\cdot n^{-2})^2+n^2\cdot (10^8\Phi_0\cdot n^{-3})^2+3n\cdot (10^4\Phi_0\cdot n^{-2})^2}\right)\\
&\leq 2\exp\left(-\Omega\left({(p_{\tr}p_{\fa})^{122}}n\right)\right)=n^{-\omega(1)}.
\end{align*}
Thus, taking a union bound over all $v\in V(G)\setminus \{x,y\}$ and $3\leq k\leq 61$, we have that with high probability \ref{prop:C:forvxbalancing}\ref{prop:C:linksd} always holds.

\smallskip
\noindent\ref{prop:C:forvxbalancing}\ref{prop:C:linkse}: Here, \ref{prop:C:forvxbalancing}\ref{prop:C:linkse} follows very similarly to \ref{prop:C:forvxbalancing}\ref{prop:C:linksd} in the same way that \ref{prop:C:forvxbalancing}\ref{prop:C:linksc} follows similarly to \ref{prop:C:forvxbalancing}\ref{prop:C:linksb}. The difference to \ref{prop:C:forvxbalancing}\ref{prop:C:linksd} is that if $v$ is to be a neighbour of $x$ in the link, then, there are no such links if $xv\notin E_{3,\phi}$, so we condition on this, and therefore save a factor of $(p_\balvx p_{\tr}p_{\fa}p_S^{-2})$ in the corresponding version of \eqref{eqn:expupp}, which we use to make a saving of $p_{\tr}p_{\fa}$
 in the bound in \ref{prop:C:forvxbalancing}\ref{prop:C:linkse} compared to that in \ref{prop:C:forvxbalancing}\ref{prop:C:linksd}. Note that the difference in the two bounds in \ref{prop:C:forvxbalancing}\ref{prop:C:linksc} and \ref{prop:C:forvxbalancing}\ref{prop:C:linksb} is the same saving.

\smallskip
\noindent\ref{propforLc:2}: Let $\phi\in \mathcal{F}$, $c\in C$ and $I\subset I_\phi$ with $|I|\leq n_0/2D_0$. For each $e\in E_{4,\phi}$ with colour $c$ such that $V(e)\subset S_\phi$, the probability that $V(e)\subset R_{i,2}$ is $(p_R/2p_S)^2$ for each $i\in I$, and this is independent across $i\in I$ and such $e$. Therefore, as there is in expectation at least $p_{\mathrm{pt}}p_\tr p_\fa n/2$ edges  $e\in E_{4,\phi}$ with colour $c$ such that $V(e)\subset S_\phi$, and $D_0=p_R^2p_{\mathrm{pt}}p_\tr p_\fa n/8p_S^2$ we have by Lemma~\ref{chernoff} with probability at least $1-\exp(-\omega(|I|\log n))$ there are at least $D_0|I|$ edges $e\in E_{4,\phi}$ with colour $c$ such that $V(e)\subset R_{i,2}$ for some $i\in I$. Thus, by a union bound, \ref{propforLc:2} holds with high probability.

\smallskip
\noindent\ref{propforLc:3}: Let $\phi\in \mathcal{F}$, $c\in C$, and $E\subset \{e\in E(G):V(e)\subset S_\phi,c(e)=c\}$ with $|E|\leq n_0/2D_0$. Then, with high probability, we have $c\in C_{i,2}$ for at least $0.99p_{\mathrm{pt}}p_\tr p_\fa n$ values of $i\in I_\phi$. 
Then, for each $i\in I_\phi$, the probability that there is some $e\in E$ with $V(e)\subset R_{i,2}$, is $1-(1-(p_R/2p_S)^2)^{|E|}\geq 0.99|E|(p_R/2p_S)^2$. Thus, by Lemma~\ref{chernoff}, with probability at least $1-\exp(-\omega(|E|\log n))$
there are at least $D_0|E|$ values of $i\in I_\phi$ such that $c\in C_{i,2}$ and there is some $e\in E$ such that $V(e)\subset R_{i,2}$. Thus, by a union bound, \ref{propforLc:3} holds with high probability.

\smallskip
\noindent\ref{propforLc:1}: Let $\phi\in \mathcal{F}$, $c\in C$, $I\subset I_\phi$ and $E\subset \{e\in E(G):V(e)\subset S_\phi,c(e)=c\}$ with $|I|,|E|\geq n_0/4D_0$. Now, the events $\{V(e)\subset R_{i,2}\}$, $e\in E$ and $i\in I$, are independent, and each occur with probability $p_R^2/4p_S^2=2q_0$, so the property follows by a simple application of Lemma~\ref{chernoff} and a union bound.
\claimproofend


\subsection{Part \ref{partC1}: Vertex and colour covering}\label{sec:C1}
Assuming now the properties \ref{prop:C:forvxcovering}--\ref{propforLc:1}, we will show we have the required property in Lemma~\ref{keylemma:completion}. For this, suppose we have any edge set $\hat{E}\subset E(G)$, and any sets $\hat{V}_i\subset V(G)$ and $\hat{C}_i\subset C(G)$, $i\in [n]$, which satisfy \ref{key3:need0}--\ref{key3:need4}. By carrying out Parts~\ref{partC1}--\ref{partC4}, we will partition $\hat{E}$ into matchings $\tilde{M}_1,\ldots,\tilde{M}_n$ such that \ref{key3:outcome1}--\ref{key3:outcome3} hold

For Part~\ref{partC1}, we will use edges from $E_1$ to, edge-disjointly, find for each $i\in [n]$ a rainbow matching $\tilde{M}_{i,1}$ using colours in $C_{i,1}$ and vertices in $R_{i,1}$ as well as every vertex in $\hat{V}_i\setminus R_i$ and colour in $\hat{C}_i\setminus C_i$, as follows.

\begin{claim}\label{claim:partC1}
There are edge-disjoint rainbow matchings $\tilde{M}_{i,1}$, $i\in [n]$, in $E_1$ such that,
\stepcounter{propcounter}
\begin{enumerate}[label = {{\textbf{\Alph{propcounter}\arabic{enumi}}}}]
\item For each $i\in [n]$, $\hat{V}_i\setminus R_i\subset V(\tilde{M}_{i,1})\subset R_{i,1}\cup (\hat{V}_i\setminus R_i)$.\label{prop:partc1:1}
\item For each $i\in [n]$,  $\hat{C}_i\setminus C_i\subset C(\tilde{M}_{i,1})\subset C_{i,1}\cup (\hat{C}_i\setminus C_i)$.\label{prop:partc1:1b}
\item For each $\phi\in \mathcal{F}$ and $v\in S_\phi$, $|\{i\in I_\phi:v\in V(\tilde{M}_{i,1})\}|\leq p_{\balcol}p_\tr p_\fa n$.\label{prop:partc1:2}
\end{enumerate}
\end{claim}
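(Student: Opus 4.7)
The plan is to construct the matchings sequentially in the order $i = 1, 2, \ldots, n$, building each $\tilde{M}_{i,1}$ in two phases: first I cover the problem vertices $\hat{V}_i \setminus R_i$ greedily using edges in $E_1^A$ or $E_1^B$ (depending on whether the problem vertex lies in $A$ or $B$), and then I cover those colours in $\hat{C}_i \setminus C_i$ not yet touched with edges in $E_1^C$ whose both endpoints lie in $R_{i,1}$. At every greedy step I select a candidate edge that respects (i) the rainbow/matching conditions within $\tilde{M}_{i,1}$, (ii) edge-disjointness from all previously built matchings, and (iii) the cap in \ref{prop:partc1:2}, which I enforce explicitly by forbidding any $v \in R_{i,1}$ that already lies in $p_\balcol p_\tr p_\fa n$ matchings $\tilde{M}_{j,1}$ with $j \in I_{\phi(i)}$, $j<i$.

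For the vertex-covering phase at a problem vertex $u \in \hat{V}_i \setminus R_i$ with $u \in X \in \{A,B\}$, \ref{prop:C:forvxcovering} supplies a pool of at least $p_\cov^2 p_R n/8$ candidate edges $uv \in E_1^X$ with $v \in R_{i,1}$ and $c(uv) \in C_{i,1}$. The crucial observation is that $E_1^X$ is only used at an $X$-side endpoint when that endpoint is itself a problem vertex, so a pool edge at $u$ can lie in an earlier $\tilde{M}_{j,1}$ only for $j$ with $u \in \hat{V}_j \setminus R_j$; by \ref{prop:novertexmissingtoomuch} summed over the at most $|\mathcal{F}|= p_\tr^{-1} p_\fa^{-1}$ families, there are at most $4\beta n$ such $j$. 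Within-matching conflicts (from $|\hat{V}_i \setminus R_i| \leq 4\beta n$ and $|\hat{C}_i \setminus C_i| \leq 2\beta n$ via \ref{key3:need1},\ref{key3:need2}) forbid $O(\beta n)$ additional options, and the capped vertices number at most $12\beta n/p_\balcol$, since the total vertex usage across $I_{\phi(i)}$ up to this point is bounded by $|I_{\phi(i)}|\cdot 12\beta n \leq 15\beta p_\tr p_\fa n^2$, and each capped vertex absorbs $p_\balcol p_\tr p_\fa n$ of this. The hierarchy $\beta \llpoly p_\cov \llpoly p_\balcol$ makes all of these negligible compared to the pool, so a valid edge always exists.

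The colour-covering phase is entirely analogous: for an uncovered colour $c \in \hat{C}_i \setminus C_i$, \ref{prop:C:forcolcovering} gives a pool of $\geq p_\cov p_R^2 n/20$ candidate edges in $E_1^C$ of colour $c$ with both endpoints in $R_{i,1}$. An $E_1^C$ edge of colour $c$ is reused only in an earlier step $j$ with $c \in \hat{C}_j \setminus C_j$, and \ref{prop:nocolourmissingtoomuch} bounds this by $2\beta n$; the within-matching and cap restrictions are handled exactly as before.

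The properties \ref{prop:partc1:1} and \ref{prop:partc1:1b} then hold by construction (every edge chosen is either between $\hat{V}_i \setminus R_i$ and $R_{i,1}$ with colour in $C_{i,1}$, or between vertices of $R_{i,1}$ with colour in $\hat{C}_i \setminus C_i$), and \ref{prop:partc1:2} is maintained by the enforced cap. The main obstacle, such as it is, is that one must keep the cap invariant honest across the induction: the forbidding of capped vertices is what makes the recursion close, and the slack $\beta/p_\balcol \ll p_\cov^2 p_R$ guaranteed by the polynomial hierarchy is what makes it possible to forbid them without running out of pool edges.
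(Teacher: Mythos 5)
Your proof is correct and takes a genuinely different route from the paper's, though both are elementary. The paper does not build the matchings sequentially; instead it fixes the same local constraints you impose (vertex set in $R_{i,1}\cup(\hat{V}_i\setminus R_i)$, colour set in $C_{i,1}\cup(\hat{C}_i\setminus C_i)$, the $E_1^A/E_1^B/E_1^C$ role of each edge, and a hard per-family vertex-usage cap), takes a family of edge-disjoint matchings satisfying these constraints that \emph{maximises} $\sum_i|\tilde{M}_{i,1}|$, and then argues by contradiction that maximality forces every vertex of $\hat{V}_i\setminus R_i$ and every colour of $\hat{C}_i\setminus C_i$ to be covered: if not, the pools from \eref{prop:C:forvxcovering} and \eref{prop:C:forcolcovering} still contain a usable edge after discarding the edges forbidden by \eref{prop:novertexmissingtoomuch}, \eref{prop:nocolourmissingtoomuch}, \eref{key3:need1}, \eref{key3:need2} and the cap. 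Your greedy/inductive construction reads off identical numerics (pool size $\Omega(p_\cov^2 p_R n)$ or $\Omega(p_\cov p_R^2 n)$ versus $O(\beta n/p_\balcol)$ forbidden options, closed by $\beta \llpoly p_\cov \llpoly p_\balcol$); the extremal formulation buys you freedom from carrying an invariant through an induction, while yours is marginally more constructive. One constant-factor slip worth fixing: you forbid $v\in R_{i,1}$ once it lies in $p_\balcol p_\tr p_\fa n$ earlier matchings in the family, which caps only its \emph{reservoir-role} occurrences at $p_\balcol p_\tr p_\fa n$; the forced occurrences (where $v\in\hat{V}_j\setminus R_j$) can add up to another $4\beta p_\tr p_\fa n$ by \eref{prop:novertexmissingtoomuch}, so the claimed bound in \eref{prop:partc1:2} could be exceeded by a tiny margin. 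The paper's internal cap is $p_\balcol p_\tr p_\fa n/2$ for precisely this reason, with the forced usages absorbed into the remaining half; your threshold should be lowered similarly (this leaves all other estimates intact).
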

\begin{proof}[Proof of Claim~\ref{claim:partC1}]
Let $\tilde{M}_{i,1}$, $i\in [n]$, be a set of edge-disjoint rainbow matchings of edges of $E_1=E_1^A\cup E_1^B\cup E_1^C$ such that, for each $i\in [n]$, $V(\tilde{M}_{i,1})\subset R_{i,1}\cup (\hat{V}_i\setminus R_{i})$,
$C(\tilde{M}_{i,2})\subset C_{i,1}\cup (\hat{C}_i\setminus C_i)$, and each edge $e$ of $\tilde{M}_{i,1}$ either
\begin{itemize}
\item contains exactly one vertex in $\hat{V}_i\setminus R_i$, which is in $A$, and $e\in E_1^A$, or
\item contains exactly one vertex in $\hat{V}_i\setminus R_i$, which is in $B$, and $e\in E_1^B$, or
\item contains no vertices in $\hat{V}_i\setminus R_i$ and $e\in E_1^C$,
\end{itemize}
for each $\phi\in \mathcal{F}$ and $v\in S_\phi$, $|\{i\in I_\phi:v\in V(\tilde{M}_{i,1})\cap R_i\}|\leq p_{\balcol}p_\tr p_\fa n/2$, and, subject to all this, such that $\sum_{i\in [n]}|\tilde{M}_{i,1}|$ is maximised.

Suppose, first, for contradiction, that there is some $i\in [n]$ such that $\hat{V}_i\setminus R_i\not\subset V(\tilde{M}_{i,1})$. Let $v\in \hat{V}_i\setminus (R_i\cup V(\tilde{M}_{i,1}))$.
Assume that $v\in A$, where the case where $v\in B$ follows similarly.

Let $\phi\in \mathcal{F}$ be such that $i\in I_\phi$.
Let $Z$ be the set of vertices $w\in S_\phi$ for which $|\{i\in I_\phi:v\in V(\tilde{M}_{i,1})\cap R_i\}|> p_{\balcol}p_\tr p_\fa n/4$, and note that
\[
|Z|\leq \frac{\sum_{i\in I_\phi}2|\tilde{M}_{i,1}|}{p_{\balcol}p_\tr p_\fa n/4}
\overset{\ref{key3:need1},\ref{key3:need2}}{\leq}
\frac{2p_\tr p_\fa n\cdot 2\cdot 6\beta n}{p_{\balcol}p_\tr p_\fa n/4}\leq \sqrt{\beta} n.
\]

By \ref{prop:C:forvxcovering}, if $E^A_{1,v,i}$ is the set of edges in $E_1^A$ between $v$ and $R_{i,1}$ with colour in $C_{i,1}$, then $|E^A_{1,v,i}|\geq p_\cov^2p_Rn/8$.
By \ref{key3:need1} and \ref{key3:need2}, we have that $|\tilde{M}_{i,1}|\leq 6\beta n$, so at most $6\beta n$ of the edges in $E^A_{1,v,i}$ share their colour with $\tilde{M}_{i,1}$, and at most $6\beta n$ of the edges in $E^A_{1,v,i}$ share their vertex which is not in $\hat{V}_i\setminus R_i$ with any edge in $\tilde{M}_{i,1}$. Furthermore, if $vx\in E^A_{1,v,i}$ is in some $\tilde{M}_{j,1}$ with $j\neq i$, then, as $v\in A$ we must have that $v\in \hat{V}_j\setminus R_j$.
Thus, by \ref{prop:novertexmissingtoomuch}, there are at most $4\beta n$ $j\in [n]$ with $v\in \hat{V}_j\setminus R_j$, and hence at most $4\beta n$ edges in $E^A_{1,v,i}$ which are in some $\tilde{M}_{j,1}$ with $j\neq i$.

Therefore, as $\beta\llpoly p_\cov,p_R$, there is some edge $e\in E_{1,v,i}^A\setminus\bigcup_{j\in [n]}\tilde{M}_{j,1}$ whose colour is not used on $\tilde{M}_{i,1}$ and whose non-$v$ vertex is in $R_{i,1}\setminus (V(\tilde{M}_{i,1})\cup Z)$.
 Adding $e$ to $\tilde{M}_{i,1}$ would increase $\sum_{j\in [n]}|\tilde{M}_{j,1}|$, contradicting the choice of $\tilde{M}_{j,1}$, $j\in [n]$. Thus, there was no such $i\in [n]$ for which $\hat{V}_i\setminus R_i\not\subset V(\tilde{M}_{i,1})$.

Suppose, instead, again for contradiction, that there is some $i\in [n]$ for which $\hat{C}_i\setminus C_i\not \subset C(\tilde{M}_{i,1})$. Let $c\in \hat{C}_i\setminus (C_i\cup C(\tilde{M}_{i,1}))$. Let $\phi\in \mathcal{F}$ be such that $i\in I_\phi$ and, again, let $Z$ be the set of vertices $w\in S_\phi$ for which $|\{i\in I_\phi:v\in V(\tilde{M}_{i,1})\cap R_i\}|> p_{\balcol}p_\tr p_\fa n/4$, so that, as before, we have $|Z|\leq \sqrt{\beta} n$.

Again by \ref{key3:need1} and \ref{key3:need2}, we have that $|\tilde{M}_{i,1}|\leq 6\beta n$.
By \ref{prop:nocolourmissingtoomuch}, there are at most $2\beta n$ edges in $E^C_1\cap(\cap_{j\in [n]}\tilde{M}_{j,1})$ with colour $c$. However, by \ref{prop:C:forcolcovering}, there are at least $\frac{p_\cov p_R^2n}{20}$ edges in $E^C_1$ with colour $c$ and vertices in $R_{i,1}$.

Therefore, as $\beta\llpoly p_\cov,p_R$, there is some edge $e\in E_1^C\setminus\bigcup_{j\in [n]}\tilde{M}_{j,1}$ with colour $c$ and vertices in $R_{i,1}$ which has no vertices in $V(\tilde{M}_{i,1})$.
Adding $e$ to $\tilde{M}_{i,1}$ would increase $\sum_{j\in [n]}|\tilde{M}_{j,1}|$, contradicting the choice of $\tilde{M}_{j,1}$, $j\in [n]$. Thus, there was no such $i\in [n]$ for which $\hat{C}_i\setminus C_i\not \subset C(\tilde{M}_{i,1})$.

Thus, we have that \ref{prop:partc1:1} and \ref{prop:partc1:1b} hold. For each $\phi\in \mathcal{F}$ and $v\in S_\phi$,
\[
|\{i\in I_\phi:v\in V(\tilde{M}_{i,1})\}|\leq |\{i\in I_\phi:v\in V(\tilde{M}_{i,1}\cap R_i)\}|+|\{i\in I_\phi:v\in \hat{V}_i\setminus R_i\}|
\overset{\ref{key3:need1}}{\leq} p_{\balcol}p_\tr p_\fa n,
\]
and therefore \ref{prop:partc1:2} holds as well.
\claimproofend


\subsection{Part \ref{partC3}: Balancing colours between families}\label{sec:C2}

Taking the matchings $\tilde{M}_{i,1}$, $i\in [n]$, from Part~\ref{partC1}, we now partition the rest of the edges in $\hat{E}$ between the families, so that each family receives the right number of edges of each colour, as follows.

\begin{claim}\label{claim:partC3} $\hat{E}\setminus E(\bigcup_{i\in [n]}\tilde{M}_{i,1})$ can be partitioned into $\hat{E}_\phi$, $\phi\in \mathcal{F}$,
so that the following hold.
\stepcounter{propcounter}
\begin{enumerate}[label = {{\textbf{\Alph{propcounter}\arabic{enumi}}}}]
\item For each $\phi\in \mathcal{F}$, $E_{3,\phi}\cup E_{4,\phi}\subset \hat{E}_\phi$.\label{prop:fromcolbalance:edge}
\item For each $\phi\in \mathcal{F}$ and $c\in C$, $|\{e\in \hat{E}_\phi:c(e)=c\}|=|\{i\in I_\phi:c\in \hat{C}_i\setminus C(\tilde{M}_{i,1})\}|$.\label{prop:fromcolbalance:colour}
\item For each $\phi\in \mathcal{F}$, every edge in $\hat{E}_\phi$ is contained within $S_\phi$.\label{prop:fromcolbalance:stillinS}
\item For each $\phi\in \mathcal{F}$ and $v\in S_\phi$, \label{prop:fromcolbalance:vxdegree}
$|\{e\in \hat{E}_\phi:v\in V(e)\}|=(1\pm 3p_\balcol)p_{\bal}p_S^{-1}p_{\tr}p_{\fa}n$.
\item For each $\phi\in\mathcal{F}$, $i\in I_\phi$ and $v\in S_i$, \label{prop:frompartC2:roundvertex}
$|\{e\in \hat{E}_\phi:v\in V(e), c(e)\in C_i\}|\leq p_{\mathrm{pt}}^{3/2}p_\tr p_\fa n/2$.
\end{enumerate}
\end{claim}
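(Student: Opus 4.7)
Since the constraints on $\hat{E}_\phi$ only prohibit moving edges of $E_{3,\phi}\cup E_{4,\phi}$ out of $\hat{E}_\phi$, the plan is to keep those edges fixed, dispose of the ``extra'' edges $X:=\hat{E}\setminus(E^{\bal}\cup\bigcup_{i\in[n]}\tilde{M}_{i,1})$, and then correct colour counts using only transfers of edges of $E_{2,*}$ between families. For the extras, \ref{prop:novertexintoomanyedges} gives $|\{e\in X:v\in V(e)\}|\leq 2\beta n$ for each vertex $v$, and for each $e=uv\in X$ property \ref{prop:C:uvplentyofphi} provides $(1\pm\eps)p_S^2p_\tr^{-1}p_\fa^{-1}$ families $\phi$ with $\{u,v\}\subseteq S_\phi$; assigning each such $e$ to a uniformly random admissible $\phi$ yields sets $X_\phi$ in which, by Chernoff and a union bound, every vertex and every colour meets only $O(\beta p_\tr p_\fa n)$ edges. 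Setting $\hat{E}_\phi^0:=E_{2,\phi}\cup E_{3,\phi}\cup E_{4,\phi}\cup X_\phi$ then gives an initial partition of $\hat{E}\setminus\bigcup\tilde{M}_{i,1}$ for which \ref{prop:fromcolbalance:edge} and \ref{prop:fromcolbalance:stillinS} already hold.

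Combining \ref{prop:C:colsalreadyquitebalanced} with the above control of $X_\phi$, together with \ref{prop:C:colsinfamilies}, Claim~\ref{claim:partC1}, \ref{key3:need2}, and \ref{prop:nocolourmissingtoomuch}, the discrepancy
$\delta_{c,\phi}:=|\{e\in\hat{E}_\phi^0:c(e)=c\}|-|\{i\in I_\phi:c\in\hat{C}_i\setminus C(\tilde{M}_{i,1})\}|$
satisfies $|\delta_{c,\phi}|\leq 2\sqrt{p_\cov}\,p_\bal p_\tr p_\fa n$, with $\sum_\phi\delta_{c,\phi}=0$ by \ref{key3:need4}. For each colour $c$ I will realise this discrepancy by a \emph{spread-out} nonnegative integer flow $f_{c,\phi,\phi'}$ satisfying $\sum_{\phi'}(f_{c,\phi,\phi'}-f_{c,\phi',\phi})=\delta_{c,\phi}$ and $f_{c,\phi,\phi'}=O(\sqrt{p_\cov}\,p_\bal(p_\tr p_\fa)^2 n)$ for every pair -- this can be arranged by routing proportionally to $\delta^+_{c,\phi}\cdot|\delta^-_{c,\phi'}|$ and, if some pair is heavy, subdividing its flow through intermediate families of zero colour-$c$ discrepancy, whose supply of colour-$c$ edges in $E_{2,\cdot}$ is abundant by \ref{prop:C:colsalreadyquitebalanced} and \ref{prop:C:forcolcoveringNEW}. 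The transfer is executed by sampling, for each $(c,\phi,\phi')$, the prescribed number of colour-$c$ edges uniformly at random from the pool of $\geq p_\balcol p_S^2p_\tr p_\fa n/2$ colour-$c$ edges of $E_{2,\phi}$ with vertex set in $S_\phi\cap S_{\phi'}$ (available by \ref{prop:C:forcolcoveringNEW}, and comfortably larger than the demand since $\sqrt{p_\cov}\ll p_\balcol$), and moving them from $\hat{E}_\phi$ to $\hat{E}_{\phi'}$. Property \ref{prop:fromcolbalance:colour} then holds by construction.

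The main obstacle is controlling the per-vertex conditions \ref{prop:fromcolbalance:vxdegree} and \ref{prop:frompartC2:roundvertex}. For fixed $v\in S_\phi$, the degree change in $\hat{E}_\phi$ is a signed sum over at most $|\{e\in E_2:v\in V(e)\}|=O(p_\balcol n)$ independent Bernoullis -- one per colour $c$ at $v$ whose unique $E_2$-edge might interact with $\phi$ -- each of probability at most $f_{c,\phi_c,\phi}/(\text{pool size})=O(\sqrt{p_\cov}\,p_\bal p_\tr p_\fa/(p_\balcol p_S^2))$ thanks to the spread-out bound. The expected total change is therefore $O(\sqrt{p_\cov}\,p_\bal p_\tr p_\fa n/p_S^2)$, a factor of $\sqrt{p_\cov}/(p_\balcol p_S)\ll 1$ below the permitted slack $p_\balcol p_\bal p_S^{-1}p_\tr p_\fa n$ between the initial estimate from \ref{prop:C:verticesalreadyquitebalanced} and the target in \ref{prop:fromcolbalance:vxdegree}, and a Chernoff/McDiarmid concentration then yields \ref{prop:fromcolbalance:vxdegree}. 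The same computation restricted to colours $c\in C_i$ and combined with the starting estimate \ref{prop:C:newnew} gives \ref{prop:frompartC2:roundvertex}, for which the target $p_{\mathrm{pt}}^{3/2}p_\tr p_\fa n/2$ enjoys substantial slack over the initial value $\sim p_{\mathrm{pt}}^2p_S^{-1}p_\tr p_\fa n$ because $p_{\mathrm{pt}}\ll p_S^2$ in the hierarchy.
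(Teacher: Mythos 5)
Your approach is genuinely different from the paper's. The paper first takes an \emph{arbitrary} non-negative integer flow $\lambda_{\phi,\phi',c}$ realising the colour discrepancies, and then constructs the transferred edge sets \emph{greedily}: it chooses edge-disjoint matchings $M_{\phi,\phi',c}$ subject to a deterministic constraint (condition iv)) that caps the number of matching edges meeting any vertex at $p_{\mathrm{pt}}^{3/2}p_\tr p_\fa n/4$, and argues by maximality that all $\lambda_{\phi,\phi',c}$ demands can be met — the set $W$ of ``saturated'' vertices and the edges already used are both shown to be tiny relative to the pool size from \ref{prop:C:forcolcoveringNEW}. Conditions \ref{prop:fromcolbalance:vxdegree} and \ref{prop:frompartC2:roundvertex} then fall out deterministically. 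You instead sample the transfers randomly and try to get per-vertex control via concentration. That is a plausible alternative, but it has a gap.

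The gap is in your ``spread-out'' flow bound $f_{c,\phi,\phi'}=O(\sqrt{p_\cov}\,p_\bal(p_\tr p_\fa)^2 n)$, which you need: without the extra factor $p_\tr p_\fa$ the per-colour acceptance probability would be $O(\sqrt{p_\cov}\,p_\bal/(p_\balcol p_S^2))$, and multiplying by the $O(p_\balcol n)$ relevant colours would put the expected degree change at $\sim\sqrt{p_\cov}\,p_\bal n/p_S^2$, which is \emph{larger} than the permitted slack $p_\balcol p_\bal p_S^{-1}p_\tr p_\fa n$ by the huge factor $(p_\balcol p_\tr p_\fa)^{-1}$. But proportional routing gives only $f_{c,\phi,\phi'}\leq\min(\delta^+_{c,\phi},|\delta^-_{c,\phi'}|)$, which can be as large as $\sim\sqrt{p_\cov}\,p_\bal p_\tr p_\fa n$ — one factor of $p_\tr p_\fa$ short. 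You acknowledge this (``if some pair is heavy, subdividing its flow through intermediate families''), but the subdivision is not worked out: it sends \emph{new} transit edges through the intermediaries (a $\phi\to\phi''\to\phi'$ relay moves one batch of colour-$c$ edges out of $E_{2,\phi}$ and a \emph{different} batch out of $E_{2,\phi''}$), which changes the degree profile of every intermediary family and feeds back into the very quantity you are trying to concentrate. You would also need to verify that the collision issue — distinct triples $(c,\phi,\phi')$, $(c,\phi,\phi'')$ independently sampling from overlapping pools in $E_{2,\phi}$ — can be repaired without disturbing the Bernoulli independence you invoke for Chernoff, and that the residual randomness of Bernoulli sampling (so as to restore the \emph{exact} counts demanded by \ref{prop:fromcolbalance:colour}) can be corrected deterministically while preserving \ref{prop:fromcolbalance:vxdegree}--\ref{prop:frompartC2:roundvertex}. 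The paper's greedy-with-degree-cap argument is substantially cleaner precisely because it never needs a spread-out flow and never needs to account for transit, collisions, or rounding.
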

\begin{proof}[Proof of Claim~\ref{claim:partC3}]
Partition $\hat{E}\setminus ((\bigcup_{\phi\in \mathcal{F}}E_{2,\phi}\cup E_{3,\phi}\cup E_{4,\phi} )\cup E(\bigcup_{i\in [n]}\tilde{M}_{i,1}))$
into sets $E'_{\phi}$, $\phi\in \mathcal{F}$, by, for each $e\in \hat{E}\setminus ((\bigcup_{\phi\in \mathcal{F}}E_{2,\phi}\cup E_{3,\phi}\cup E_{4,\phi} )\cup E(\bigcup_{i\in [n]}\tilde{M}_{i,1}))$,
placing $e$ into a set $E'_{\phi}$, $\phi\in \mathcal{F}$, independently and uniformly at random subject to $V(e)\subset S_\phi$.

By \ref{prop:C:uvplentyofphi} and Lemma~\ref{chernoff}, and, respectively, \ref{prop:C:colsalreadyquitebalanced}, \ref{prop:C:colsinE1}, \ref{prop:nocolourmissingtoomuch}, \ref{key3:need4} and \ref{prop:C:colsinfamilies},
 \ref{prop:C:verticesinE1} and \ref{prop:novertexintoomanyedges},  and \ref{prop:C:new}, with high probability, we have the following properties.
\begin{enumerate}[label = {{\textbf{\Alph{propcounter}\arabic{enumi}}}}]\addtocounter{enumi}{5}
\item For each $c\in C$ and $\phi\in \mathcal{F}$, $|\{e\in E'_\phi:c(e)=c\}|\leq 2p_\cov p_S^{-2}p_\tr p_\fa n$. \label{prop:roughbalancecol}
\item For each $\phi\in \mathcal{F}$ and $v\in S_\phi$, $|\{e\in E'_\phi:v\in V(e)\}|\leq 2p_\cov p_S^{-1}p_\tr p_\fa n$. \label{prop:roughbalancevx}
\item \label{prop:roughbalancevxnew} For each $\phi\in \mathcal{F}$, $i\in I_\phi$ and $v\in S_i$, $|\{e\in E'_\phi:v\in V(e),c(e)\in C_i\}|\leq 2p_\cov p_\mathrm{pt}p_S^{-1}p_\tr p_\fa n$.
\end{enumerate}
Thus, we can assume that \ref{prop:roughbalancecol}--\ref{prop:roughbalancevxnew} hold.

\ifsecsevenout
\else
For each $\phi\in \mathcal{F}$, let $E_{\phi}^+=E'_\phi\cup E_{2,\phi}\cup E_{3,\phi}\cup E_{4,\phi}$.
For each $\phi\in \mathcal{F}$ and $c\in C$, let
\begin{equation}\label{eqn:lambdaphic}
\lambda_{\phi,c}=|\{e\in E_{\phi}^+:c(e)=c\}|-|\{i\in I_\phi:c\in \hat{C}_i\setminus C(\tilde{M}_{i,1})\}|,
\end{equation}
and note that, by \ref{prop:C:colsalreadyquitebalanced}, \ref{prop:C:colsinfamilies}, \ref{prop:nocolourmissingtoomuch}, and \ref{prop:roughbalancecol},
\begin{align}
|\lambda_{\phi,c}|&\leq \big||\{e\in E_{2,\phi}\cup E_{3,\phi}\cup E_{4,\phi}:c(e)=c\}|-|\{i\in I_\phi:c\in C_i\}|\big|\nonumber\\
&\hspace{3cm}+|\{i\in I_\phi:c\in \hat{C}_i\setminus C_i\}|+|\{i\in I_\phi:c\in C(\tilde{M}_{i,1})\}|\nonumber
\\
&
\leq 2\sqrt{p_\cov} p_\tr p_\fa n+2\beta p_\tr p_\fa n+2p_\cov p_S^{-2}p_\tr p_\fa n\leq 3\sqrt{p_\cov} p_\tr p_\fa n.\label{eqn:lambdabound}
\end{align}
Furthermore, we have, for each $c\in C$, that
\begin{align}
\sum_{\phi\in \mathcal{F}}&\lambda_{\phi,c}=\sum_{\phi\in \mathcal{F}}(|\{e\in E_{\phi}^+:c(e)=c\}|-|\{i\in I_\phi:c\in \hat{C}_i\setminus C(\tilde{M}_{i,1})\}|)
\nonumber\\
&=\Big|\Big\{e\in \hat{E}\setminus \Big(\bigcup_{i\in [n]}\tilde{M}_{i,1}\Big):c(e)=c\Big\}\Big|-|\{i\in [n]:c\in \hat{C}_i\setminus C(\tilde{M}_{i,1})\}|
\nonumber\\
&=|\{e\in \hat{E}:c(e)=c\}|-|\{i\in [n]:c\in \hat{C}_i\}|
\overset{\ref{key3:need4}}{=}0.\label{eqn:sumto0}
\end{align}

For each $c\in C$, let $\mathcal{F}^+_c=\{\phi\in \mathcal{F}:\lambda_{\phi,c}>0\}$ and $\mathcal{F}^-_c=\{\phi\in \mathcal{F}:\lambda_{\phi,c}<0\}$.
Using \eqref{eqn:sumto0}, take integers $\lambda_{\phi,\phi',c}\geq 0$, $\phi\in \mathcal{F}^+_c$ and $\phi'\in \mathcal{F}^-_c$, such that for each $\phi\in \mathcal{F}^+_c$ and $\phi'\in \mathcal{F}^-_c$ we have
\[
\sum_{\phi''\in \mathcal{F}^-_c}\lambda_{\phi,\phi'',c}=\lambda_{\phi,c}\;\text{ and }\;\;
\sum_{\phi''\in \mathcal{F}^+_c}\lambda_{\phi'',\phi',c}=-\lambda_{\phi',c}.
\]
For each distinct $\phi,\phi'\in \mathcal{F}$ and $c\in C$ for which $\lambda_{\phi,\phi',c}$ is not defined, that is, when at least one of $\lambda_{\phi, c}$ and $\lambda_{\phi', c}$ is equal to $0$, let $\lambda_{\phi,\phi',c}=0$, and note that, for each $\phi\in \mathcal{F}$ and $c\in C$,
\begin{equation}\label{eqn:lambdas}
\sum_{\phi'\in \mathcal{F}:\phi'\neq \phi}\lambda_{\phi,\phi',c}-\sum_{\phi'\in \mathcal{F}:\phi'\neq \phi}\lambda_{\phi',\phi,c}=\lambda_{\phi,c}.
\end{equation}

Let $\mathcal{E}=\{(\phi,\phi',c):\phi,\phi'\in \mathcal{F},\phi\neq\phi',c\in C\}$.
Take edge disjoint matchings $M_{\phi,\phi',c}$, $(\phi,\phi',c)\in \mathcal{E}$, such that, for each $(\phi,\phi',c)\in \mathcal{E}$,
\begin{enumerate}[label = \roman{enumi})]
\item $|M_{\phi,\phi',c}|\leq \lambda_{\phi,\phi',c}$,\label{prop:C3frommax:1}
\item $V(M_{\phi,\phi',c})\subset S_\phi\cap S_{\phi'}$,\label{prop:C3frommax:2}
\item every edge in $M_{\phi,\phi',c}$ is a colour-$c$ edge in $E_{2,\phi}$,\label{prop:C3frommax:3}
\item and, for each $\phi\in \mathcal{F}$ and $v\in S_\phi$, $v$ is in at most $p_{\mathrm{pt}}^{3/2}p_\tr p_\fa n/4$
edges in the matchings $M_{\phi',\phi,c}$, $\phi'\in \mathcal{F}\setminus\{\phi\}$ and $c\in C$,\label{prop:C3frommax:new}
\end{enumerate}
and, subject to all this, $\sum_{(\phi,\phi',c)\in \mathcal{E}}|M_{\phi,\phi',c}|$ is maximised.

Suppose, for contradiction, that there is some $(\phi,\phi',c)\in \mathcal{E}$ such that $|M_{\phi,\phi',c}|<\lambda_{\phi,\phi',c}$. Let $W$ be the set of vertices in at least $p_{\mathrm{pt}}^{3/2}p_\tr p_\fa n/8$
edges in the matchings $M_{\phi',\phi,c}$, $\phi'\in \mathcal{F}\setminus\{\phi\}$ and $c\in C$,
and note that
\begin{equation}\label{eqn:Wbound}
|W|\leq \frac{2\sum_{c\in C}|\lambda_{\phi,c}|}{p_{\mathrm{pt}}^{3/2}p_\tr p_\fa n/8}\overset{\eqref{eqn:lambdabound}}{\leq} \frac{2n\cdot 3\sqrt{p_\cov} p_\tr p_\fa n}{p_{\mathrm{pt}}^{3/2}p_\tr p_\fa n/8}\leq p_\balcol^2 p_S^2p_\fa p_\tr n.
\end{equation}
As $|M_{\phi,\phi',c}|<\lambda_{\phi,\phi',c}$, every edge in $E_{2,\phi}\setminus \bigcup_{\phi''\neq \phi}M_{\phi,\phi'',c}$ of colour $c$ with vertices in $S_\phi\cap S_\phi'$ has a vertex in $W$.
However, by \ref{prop:C:forcolcoveringNEW}, there are at least $p_\balcol p_S^2p_\fa p_\tr n/2$ such edges in $E_{2,\phi}$.
Thus, as
\[
\sum_{\phi''\in \mathcal{F}:\phi''\neq \phi}|M_{\phi,\phi'',c}|\leq \sum_{\phi''\in \mathcal{F}:\phi''\neq \phi}\lambda_{\phi,\phi'',c}\overset{\eqref{eqn:lambdas}}{\leq} |\lambda_{\phi,c}|\overset{\eqref{eqn:lambdabound}}{\leq} 3\sqrt{p_\cov}p_\tr p_\fa n,
\]
and \eqref{eqn:Wbound} holds, this is a contradiction as $p_\cov \llpoly p_\balcol,p_S$.
Therefore, we have $|M_{\phi,\phi',c}|=\lambda_{\phi,\phi',c}$ for each $(\phi,\phi',c)\in \mathcal{E}$.

Now, for each $\phi\in \mathcal{F}$,
let
\[
\hat{E}_\phi=\left(\left(\hat{E}'_{\phi}\cup E_{2,\phi}\cup E_{3,\phi}\cup E_{4,\phi}\right)\setminus \left(\bigcup_{\phi'\in \mathcal{F}\setminus\{\phi\}}\bigcup_{c\in C}M_{\phi,\phi',c}\right)\right)
\bigcup  \left(\bigcup_{\phi'\in \mathcal{F}\setminus \{\phi\}}\bigcup_{c\in C}M_{\phi',\phi,c}\right).
\]
We now show that \ref{prop:fromcolbalance:edge}--\ref{prop:frompartC2:roundvertex} hold.
That \ref{prop:fromcolbalance:edge} holds follows from \ref{prop:C3frommax:3}.
That \ref{prop:fromcolbalance:colour} holds follows from \ref{prop:C3frommax:3} and the fact that we have equality in \ref{prop:C3frommax:1}, so that, for each $\phi\in \mathcal{F}$ and $c\in C(G)$,
\begin{align*}
|\{e\in \hat{E}_\phi:c(e)=c\}|&=|\{e\in \hat{E}'_{\phi}\cup E_{2,\phi}\cup E_{3,\phi}\cup E_{4,\phi}:c(e)=c\}|-\sum_{\phi'\in \mathcal{F}\setminus \{\phi\}}\lambda_{\phi,\phi',c}+\sum_{\phi'\in \mathcal{F}\setminus \{\phi\}}\lambda_{\phi',\phi,c}
\\
&\overset{\eqref{eqn:lambdas}}{=}|\{e\in \hat{E}'_{\phi}\cup E_{2,\phi}\cup E_{3,\phi}\cup E_{4,\phi}:c(e)=c\}|-\lambda_{\phi,c}
\\
&\overset{\eqref{eqn:lambdaphic}}{=}|\{i\in I_\phi:c\in \hat{C}_i\setminus C(\tilde{M}_{i,1})\}|.
\end{align*}
That \ref{prop:fromcolbalance:stillinS} holds follows from \ref{prop:C3frommax:2}, as each edge in $\hat{E}_\phi$ which is not in $\hat{E}'_\phi\cup E_{2,\phi}\cup E_{3,\phi}\cup E_{4,\phi}$ is in some matching $M_{\phi,\phi',c}$.
Note that \ref{prop:fromcolbalance:vxdegree} follows from (both parts of) \ref{prop:C:verticesalreadyquitebalanced} and \ref{prop:roughbalancevx}.
Finally, for each $\phi\in \mathcal{F}$, $i\in I_\phi$ and $v\in S_i$, by \ref{prop:C:newnew}, \ref{prop:roughbalancevxnew} and \ref{prop:C3frommax:new},
\begin{align*}
|\{e\in \hat{E}_\phi:v\in V(e), c(e)\in C_i\}|&\leq 2p_{\mathrm{pt}}^2p_S^{-1}p_\tr p_\fa n+2p_\cov p_\mathrm{pt}p_S^{-1}p_\tr p_\fa n+p_{\mathrm{pt}}^{3/2}p_\tr p_\fa n/4\\
&\leq p_{\mathrm{pt}}^{3/2}p_\tr p_\fa n/2
\end{align*}
and thus \ref{prop:frompartC2:roundvertex} holds.
Thus,  \ref{prop:fromcolbalance:edge}--\ref{prop:frompartC2:roundvertex} hold, as required.
\fi
\end{proof}


\subsection{Part \ref{partC3b}: Balancing vertex degrees between families}\label{sec:C3}
\ifsecsevenout
\else
We now take the partition $\hat{E}_\phi$, $\phi\in \mathcal{F}$, from Claim~\ref{claim:partC3} and alter it slightly to achieve the right number of edges at each vertex in each part, as per the following claim. Having set up the changes we wish to make at each vertex in each part (see \eqref{eqn:lambdaphiv}), we confirm these changes are small (see \eqref{eqn:lambdaphivbound}) and are balanced within each family (see~\eqref{balancewithinfamily}) and at each vertex (see~\eqref{balanceatvertex}). The proof then takes two stages. In stage \textbf{I}, we decompose the changes we wish to make into pairs of changes we will be able to make together (in a similar approach to that in Section~\ref{sec:partA1}), before making these changes in stage \textbf{II}.
\fi
\begin{claim}\label{claim:partC33} $\hat{E}\setminus E(\bigcup_{i\in [n]}\tilde{M}_{i,1})$ can be partitioned into $\hat{E}_\phi^*$, $\phi\in \mathcal{F}$,
so that the following hold.
\stepcounter{propcounter}
\begin{enumerate}[label = {{\textbf{\Alph{propcounter}\arabic{enumi}}}}]
\item For each $\phi\in \mathcal{F}$, $E_{4,\phi}\subset \hat{E}^*_\phi$.\label{prop:frompartC3:edge}
\item For each $\phi\in \mathcal{F}$ and $c\in C$, $|\{e\in \hat{E}^*_\phi:c(e)=c\}|=|\{i\in I_\phi:c\in \hat{C}_i\setminus C(\tilde{M}_{i,1})\}|$.\label{prop:frompartC3:colour}
\item For each $\phi\in \mathcal{F}$, every edge in $\hat{E}^*_\phi$ is contained within $S_\phi$.\label{prop:fromcolbalance:stillstillinS}
\item For each $\phi\in \mathcal{F}$ and $v\in S_\phi$,\label{prop:frompartC3:vertex}
\[
|\{e\in \hat{E}_\phi^*:v\in V(e)\}|=|\{i\in I_\phi:v\in \hat{V}_i\setminus V(\tilde{M}_{i,1})\}|-|\{i\in I_\phi:v\in T_i\}|.
\]
\item For each $\phi\in\mathcal{F}$, $i\in I_\phi$ and $v\in S_i$, \label{prop:frompartC3:roundvertex}
$|\{e\in \hat{E}_\phi^*:v\in V(e), c(e)\in C_i\}|\leq p_{\mathrm{pt}}^{3/2}p_\tr p_\fa n$.
\end{enumerate}
\end{claim}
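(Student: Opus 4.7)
The plan is to modify the partition $\hat{E}_\phi$, $\phi \in \mathcal{F}$, from Claim~\ref{claim:partC3} into the desired $\hat{E}^*_\phi$, $\phi \in \mathcal{F}$, by performing a collection of $(u,v,L)$-link swaps of the type sketched in Section~\ref{sec:Coverview}. For such a swap, based on an $(u,v,L)$-link $H$ whose odd edges lie in $\hat{E}_\phi$ and whose even edges lie in $\hat{E}_{\phi'}$, we move the odd edges into $\hat{E}_{\phi'}$ and the even edges into $\hat{E}_\phi$. This preserves the colour multiset in each part (so \ref{prop:fromcolbalance:colour} carries over to \ref{prop:frompartC3:colour}); it reduces the degree of $u$ by $1$ and increases that of $v$ by $1$ in $\hat{E}_\phi$, with the opposite change in $\hat{E}_{\phi'}$, and affects no other vertex degrees; and since the edges involved lie in $E_{3,\phi} \cup E_{3,\phi'}$, both \ref{prop:frompartC3:edge} and \ref{prop:fromcolbalance:stillstillinS} are maintained automatically.

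First I would define the discrepancies
\[
\lambda_{\phi,v} := \bigl|\{e \in \hat{E}_\phi : v \in V(e)\}\bigr| - \bigl(|\{i \in I_\phi : v \in \hat{V}_i \setminus V(\tilde{M}_{i,1})\}| - |\{i \in I_\phi : v \in T_i\}|\bigr)
\]
for $\phi \in \mathcal{F}$ and $v \in S_\phi$. Using \ref{prop:fromcolbalance:vxdegree}, \ref{prop:partc1:2}, \ref{key3:need1}, \ref{key3:extra}, \ref{prop:forC3:1} and \ref{prop:partC:Sphibound}, a short estimate gives $|\lambda_{\phi,v}| \le \sqrt{p_\balcol}\, p_\bal p_S^{-1} p_\tr p_\fa n$. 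A double-count using \ref{prop:fromcolbalance:colour} and $|\hat{V}_i|=2|\hat{C}_i|+|T_i|$ shows $\sum_{v \in S_\phi} \lambda_{\phi,v} = 0$ for every $\phi$, and using \ref{key3:need3} together with $V(\tilde{M}_{i,1}) \subseteq \hat{V}_i$ (from \ref{prop:partc1:1} and \ref{key3:need1}) shows $\sum_{\phi \in \mathcal{F}} \lambda_{\phi,v} = 0$ for every $v$. Thus the discrepancies balance both within each family and at each vertex, exactly as needed to be corrected by pair-swaps.

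Next, in the spirit of Lemma~\ref{lem:partA1}, I would decompose the discrepancies into a list of \emph{swap instructions} $(\phi,\phi',u,v)$ with $u \neq v$, $u \sim_{A/B} v$, and $u,v \in S_\phi \cap S_{\phi'}$, each instruction asking for one $L$-link swap as above. For each vertex $v$, form a multi-digraph on $\{\phi : v \in S_\phi\}$ in which $\phi$ has $\max(\lambda_{\phi,v},0)$ out-stubs and $\max(-\lambda_{\phi,v},0)$ in-stubs (pairing stubs into arcs arbitrarily); label each arc by $v$. Superimposing these digraphs over $v$ gives a coloured multi-digraph $D$ on vertex set $\mathcal{F}$ with $d^+_D(\phi) = d^-_D(\phi)$ (from $\sum_v \lambda_{\phi,v}=0$), hence a directed cycle decomposition. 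Each monochromatic $2$-cycle is already a swap instruction; longer cycles and repeated colours are broken down into $2$-cycles by the same triangle/rerouting trick used in the proof of Lemma~\ref{lem:partA1}, choosing auxiliary families via \ref{prop:C:uvplentyofphi} and \ref{prop:partC:triplesofverticesinSphi}. A small bookkeeping argument, using that $|\lambda_{\phi,v}|$ is small, ensures that for any pair $(\phi,\phi')$ and any vertex $w$, the number of instructions involving $(\phi,\phi',w)$ is at most $\sqrt{p_\balcol}\, p_\bal p_S^{-1} p_\tr p_\fa n$, say.

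Finally, I would realise the instructions greedily: for each $(\phi,\phi',u,v)$ pick an $(u,v,L)$-link as in \ref{prop:C:forvxbalancing}\ref{prop:C:linksa}, with odd edges in $E_{3,\phi}$ and even edges in $E_{3,\phi'}$ and all vertices in $S_\phi \cap S_{\phi'}$, which is edge-disjoint from the links chosen so far. The total number of links needed is a tiny fraction of $|\mathcal{L}|$, and the codegree-style bounds \ref{prop:C:forvxbalancing}\ref{prop:C:linksb}--\ref{prop:C:linkse} together with the per-$(\phi,\phi',w)$ cap show that, at each step, previously used edges and vertices forbid only a vanishing proportion of the links in $\mathcal{L}$, so an available link always exists; performing all the (edge-disjoint) swaps then yields $\hat{E}^*_\phi$. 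Properties \ref{prop:frompartC3:edge}--\ref{prop:fromcolbalance:stillstillinS} survive by the swap mechanism, \ref{prop:frompartC3:vertex} holds by construction, and \ref{prop:frompartC3:roundvertex} follows from \ref{prop:frompartC2:roundvertex} with slack $62$ times the per-$\phi$ instruction count, which is well below $p_{\mathrm{pt}}^{3/2} p_\tr p_\fa n/2$. The main obstacle I anticipate is precisely this greedy step: controlling the conflict graph tightly enough that an available link exists at every stage, which is exactly what motivates the layered codegree statement in \ref{prop:C:forvxbalancing}.
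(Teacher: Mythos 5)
Your proposal follows the same high-level strategy as the paper: define the per-family, per-vertex discrepancies $\lambda_{\phi,v}$, verify they sum to zero both within each family and at each vertex (using \ref{key3:extra}, \ref{prop:fromcolbalance:colour}, \ref{key3:need3}), decompose the necessary corrections into pairwise swap instructions $(\phi,\phi',u,v)$ via an Eulerian-style cycle decomposition, and then realise each instruction edge-disjointly with an $(u,v,L)$-link supplied by \ref{prop:C:forvxbalancing}. Two points in your sketch deviate from the paper in ways worth noting.

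First, you encode the decomposition dually to the paper: your digraph $D$ lives on vertex set $\mathcal{F}$ with arcs coloured by vertices of $G$, whereas the paper's $\Psi$ lives on $V(G)$ with arcs carrying \emph{family} symbols. Both are legitimate encodings of the same matching of surpluses with deficits, but they call for different auxiliary counting lemmas at the triangulation/rerouting step. In your version, adding a chord between two families $\phi,\phi'$ requires choosing a vertex $w\in S_\phi\cap S_{\phi'}$ of bounded multiplicity; the lemmas you cite, \ref{prop:C:uvplentyofphi} and \ref{prop:partC:triplesofverticesinSphi}, count families containing fixed vertices, which is what the paper's (non-dual) construction needs, not what yours does. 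The corresponding estimate (lower bound on $|S_\phi\cap S_{\phi'}|$ and an upper bound on how often each vertex is reused) can be proved by the same Chernoff arguments, but it is not the same statement.

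Second, there is a genuine slip in \emph{which} $2$-cycles are usable. A monochromatic $2$-cycle in your $D$ has both arcs labelled by the same vertex $v$: it says ``move an edge at $v$ from $\hat{E}_\phi$ to $\hat{E}_{\phi'}$ and back,'' a no-op. The useful $2$-cycles are those whose two arcs carry \emph{distinct} labels $u\neq v$ with $u\simAB v$; monochromatic $2$-cycles should be discarded, and ``repeated colours'' in longer cycles should be rerouted away exactly as you say for the triangulation step. Finally, in your greedy realisation the ``per-$(\phi,\phi',w)$ cap'' on instruction endpoints does not by itself control the conflict graph: what drives the paper's maximality argument is a bound (\ref{prop:notoftenmiddle}) on how often $w$ appears as an \emph{internal} vertex of a chosen link for swaps touching a fixed family $\phi$, which is what controls the forbidden set $F^{\mathrm{forb}}$ for the 2nd/61st-edge position, where the codegree bound in \ref{prop:C:forvxbalancing}\ref{prop:C:linksc} is an order weaker. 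You would need to carry that degree-bound constraint along as a property of the partial realisation, not just of the instruction set.
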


\ifsecsevenout
\else
\begin{proof}[Proof of Claim~\ref{claim:partC33}]
For each $\phi\in \mathcal{F}$ and $v\in S_\phi$, let
\begin{align}\label{eqn:lambdaphiv}
\lambda_{\phi,v}&=|\{e\in \hat{E}_\phi:v\in V(e)\}|-|\{i\in I_\phi:v\in \hat{V}_i\setminus V(\tilde{M}_{i,1})\}|+|\{i\in I_\phi:v\in T_i\}|
\end{align}
which represents the change in degree of $v$ from $\hat{E}_\phi$ to $\hat{E}^*_\phi$ we wish to make in order for~\ref{prop:frompartC3:vertex} to hold.
Note that, by \ref{prop:partc1:2}, \ref{prop:fromcolbalance:vxdegree} and \ref{prop:forC3:1}, for each $\phi\in \mathcal{F}$ and $v\in S_\phi$,
\begin{align}\label{eqn:lambdaphivbound}
|\lambda_{\phi,v}|&\leq 2|\{i\in I_\phi:v\in V(\tilde{M}_{i,1})\}|+\big||\{e\in \hat{E}_\phi:v\in V(e)\}|-|\{i\in I_\phi:v\in R_i\}|+|\{i\in I_\phi:v\in T_i\}|\big|\nonumber\\
&\leq 2p_\balcol p_{\tr}p_{\fa}n+|(1\pm 3p_\balcol)p_{\bal}-(1\pm \eps)\alpha p_T|\cdot p_S^{-1}p_{\tr}p_\fa n
\overset{\eqref{eqn:ppt}}{\leq} 10p_{\balcol} p_{\tr}p_\fa n,
\end{align}
so that each of these adjustments is relatively small (compared to the degree of $v$ in $E_{3,\phi}$).

Now, note that, for each $\phi\in \mathcal{F}$ the adjustments $\lambda_{\phi,v}$ to be made at each vertex $v\in S_\phi$ sum to 0, as
\begin{align}
\sum_{v\in S_\phi}\lambda_{\phi,v}&=2|\hat{E}_{\phi}|
-\sum_{i\in I_\phi}(|\hat{V}_i\setminus V(\tilde{M}_{i,1})|-|T_i|)
\nonumber\\
&{=}2\sum_{c\in C}|\{e\in \hat{E}_\phi:c(e)=c\}|
-\sum_{i\in I_\phi}(|\hat{V}_i|-|V(\tilde{M}_{i,1})|-|T_i|)
\nonumber\\
&\overset{\ref{prop:fromcolbalance:colour}}{=}2\sum_{c\in C}|\{i\in I_\phi:c\in \hat{C}_i\setminus C(\tilde{M}_{i,1})\}|
-\sum_{i\in I_\phi}(|\hat{V}_i|-|V(\tilde{M}_{i,1})|-|T_i|)
\nonumber\\
&=2\sum_{i\in I_\phi}|\hat{C}_i\setminus C(\tilde{M}_{i,1})|
-\sum_{i\in I_\phi}(|\hat{V}_i|-|V(\tilde{M}_{i,1})|-|T_i|)
\nonumber\\
&=2\sum_{i\in I_\phi}|\hat{C}_i|
-\sum_{i\in I_\phi}(|\hat{V}_i|-|T_i|)
\overset{\ref{key3:extra}}{=}0.\label{balancewithinfamily}
\end{align}
Furthermore, for each $v\in V(G)$, the adjustments $\lambda_{\phi,v}$ to be made at $v$ for each $\phi\in \mathcal{F}$ sum to 0, as
\begin{align}
\sum_{\phi\in \mathcal{F}}\lambda_{\phi,v}&=\Big|\Big\{e\in \bigcup_{\phi\in \mathcal{F}}\hat{E}_\phi:v\in V(e)\Big\}\Big|
-\sum_{\phi\in \mathcal{F}}(|\{i\in I_\phi:v\in \hat{V}_i\setminus V(\tilde{M}_{i,1})\}|-|\{i\in I_\phi:v\in T_i\}|)\nonumber\\
&=\Big|\Big\{e\in \bigcup_{\phi\in \mathcal{F}}\hat{E}_\phi:v\in V(e)\Big\}\Big|
-|\{i\in [n]:v\in \hat{V}_i\setminus V(\tilde{M}_{i,1})\}|+|\{i\in [n]:v\in T_i\}|\nonumber\\
&=\Big|\Big\{e\in \Big(\bigcup_{\phi\in \mathcal{F}}\hat{E}_\phi\Big)\cup \Big(\bigcup_{i\in [n]}\tilde{M}_{i,1}\Big):v\in V(e)\Big\}\Big|
-|\{i\in [n]:v\in \hat{V}_i\}|+|\{i\in [n]:v\in T_i\}|\nonumber\\
&=\Big|\Big\{e\in \hat{E}:v\in V(e)\Big\}\Big|
-|\{i\in [n]:v\in \hat{V}_i\}|+|\{i\in [n]:v\in T_i\}|\overset{\ref{key3:need3}}{=}0.\label{balanceatvertex}
\end{align}
We will now use \eqref{balancewithinfamily} and \eqref{balanceatvertex} to show that we can decompose the changes we need to make into a collection of directed 2-cycles, similarly to the approach in Section~\ref{sec:partA1}.


\medskip

\noindent\textbf{I. Decomposing the necessary changes into directed 2-cycles.}
For each $\phi\in \mathcal{F}$, let $V_\phi^+=\{v\in S_\phi:\lambda_{\phi,v}>0\}$ and $V_\phi^-=\{v\in S_\phi:\lambda_{\phi,v}<0\}$.
Form an auxiliary directed multigraph $\Psi$ with vertex set $V(G)$ whose edges have symbols (chosen from $\mathcal{F}$) fixed to them (where between any two vertices we allow multiple edges with the same attached symbol), by adding edges under the following rule.
\begin{itemize}
\item For each $\phi\in \mathcal{F}$, using \eqref{balancewithinfamily}, take a matching $K_\phi$ between $\{(v,j):v\in V_\phi^+,j\in [\lambda_{\phi,v}]\}$ and $\{(v,j):v\in V_\phi^-,j\in [-\lambda_{\phi,v}]\}$, and, for each $(v,j)(v',j')\in K_\phi$, add an edge $e$ to $\Psi$ directed from $v$ to $v'$ which has $\phi$ attached as a symbol to it. We set $\mathrm{symb}(e)=\phi$.
\end{itemize}
Note that, for each $\phi\in \mathcal{F}$ and $v\in S_\phi$, we have
\begin{equation}\label{eqn:balanceroundvertex1}
|\{\vec{uv}\in \Psi:u\in S_\phi,\mathrm{symb}(\vec{uv})=\phi\}|-|\{\vec{vu}\in \Psi:u\in S_\phi,\mathrm{symb}(\vec{vu})=\phi\}|=\lambda_{\phi,u}.
\end{equation}
Furthermore, for each $v\in V(G)$,
\begin{align*}
d_\Psi^+(v)-d_\Psi^-(v)=\sum_{\phi\in \mathcal{F}:v\in V_\phi^+}\lambda_{\phi,v}-\sum_{\phi\in \mathcal{F}:v\in V_\phi^+}(-\lambda_{\phi,v})=\sum_{\phi\in \mathcal{F}}\lambda_{\phi,v}\overset{\eqref{balanceatvertex}}{=}0.
\end{align*}
Therefore, $E(\Psi)$ can be decomposed into a set of directed cycles, $\mathcal{C}$ say.

Subject to the stated restrictions on the choice of $K_{\phi}$, $\phi\in \mathcal{F}$, and $\mathcal{C}$, minimise the number of pairs of edges from $E(\Psi)$ which are in the same cycle in $\mathcal{C}$ and have the same symbol. Suppose, for contradiction, that there is some $S\in \mathcal{C}$ which contains two edges, $\vec{v_1v_2}$ and $\vec{v_3v_4}$ say, which have the same symbol, $\phi$ say. Note that, as the edges with the symbol $\phi$ are all directed from $V_\phi^+$ to $V_\phi^-$, these vertices must be distinct. Then, let $j_1$, $j_2$, $j_3$ and $j_4$ be such that
$\vec{v_1v_2}$ and $\vec{v_3v_4}$ were added to $\Psi$ because $(v_1,j_1)(v_2,j_2)$ and $(v_3,j_4)(v_4,j_4)$ belong to $K_\phi$. Note that removing $(v_1,j_1)(v_2,j_2)$ and $(v_3,j_4)(v_4,j_4)$ from $K_\phi$ and adding  $(v_1,j_1)(v_4,j_4)$ and $(v_3,j_4)(v_2,j_2)$
would have formed $\Psi$ so that it had a cycle decomposition $\mathcal{C}':=(\mathcal{C}\setminus S)\cup \{S_1,S_2\}$, where $S_1$ and $S_2$ are the two disjoint directed cycles formed from $S$ by removing $\vec{v_1v_2}$ and $\vec{v_3v_4}$ and adding $\vec{v_1v_4}$ and $\vec{v_2v_3}$, both with symbol $\phi$ attached. As $\mathcal{C}'$ has fewer pairs of edges in the same cycle with the same symbol, this is a contradiction. Thus, there is no cycle in $\mathcal{C}$ which has two edges with the same symbol.

Let $r=|\mathcal{C}|$, and enumerate $\mathcal{C}$ as $L_1,\ldots,L_r$. For each $j\in [r]$, let $\ell_j$ be the length of $L_j$ and, if $\ell_j\geq 4$, let $F_j$ be a set of undirected pairs from $V(L_j)$ so that the underlying graph of $L_j+F_j$ is a triangulation of $L_j$ with maximum degree at most 4 (cf.\ Figure~\ref{fig:reducecoloursurplus}), and otherwise let $F_j=\emptyset$.

Form $\Psi'$ by starting with $\Psi$ and, greedily in some arbitrary order, for each $j\in [r]$ and each $\{u,v\}\in F_j$, choosing $\phi\in \mathcal{F}$ with $u,v\in S_\phi$ and adding $\vec{uv}$ and $\vec{vu}$, both with the symbol $\phi$ attached so that the following holds.
\stepcounter{propcounter}
\begin{enumerate}[label = {{\textbf{\Alph{propcounter}\arabic{enumi}}}}]
\item For each $\phi\in \mathcal{F}$ and $v\in S_\phi$, there are $\leq \sqrt{p_{\balcol}}p_\tr p_\fa n$ edges in $\Psi'$ around $v$ with symbol $\phi$.\label{prop:upperboundinPhiprime}
\end{enumerate}
Note that this is possible, as, initially, for each $\phi\in \mathcal{F}$ and $u,v\in S_\phi$, the number of edges in $\Psi'$ around $u$ or $v$ with the symbol $\phi$ is at most $|\lambda_{\phi,v}|\leq 10p_\balcol p_{\tr}p_{\fa}n\leq \sqrt{p_{\balcol}}p_\tr p_\fa n$ by \eqref{eqn:lambdaphivbound}.
Then, when for $j\in [r]$ and $\{u,v\}\in F_j$ we look to select $\phi$, the number of $\phi\in \mathcal{F}$ with $u,v\in S_\phi$ is, by \ref{prop:C:uvplentyofphi}, at least $p_S^{2}p_\tr^{-1} p_\fa^{-1}/2$.
If none of these possibilities for $\phi$ can be added without violating \ref{prop:upperboundinPhiprime}, then the number of edges of $\Psi'$ containing $u$ or $v$ must be at least
\begin{equation}\label{eqn:uppboundedgepsiprime}
(p_S^{2}p_\tr^{-1} p_\fa^{-1} /2)\cdot \sqrt{p_{\balcol}}p_\tr p_\fa n\geq p_{\balcol}^{2/3}n.
\end{equation}
On the other hand, for each $w\in V(G)$, by \ref{prop:C:uplentyofphi} and \eqref{eqn:lambdaphivbound}, the number of cycles in $\mathcal{C}$ containing $w$ is at most
\[
\sum_{\phi\in \mathcal{F}:w\in S_\phi}|\lambda_{\phi,w}|\leq 2p_Sp_{\tr}^{-1}p_{\fa}^{-1} \cdot 10p_\balcol p_{\tr}p_{\fa}n= 20p_S p_\balcol n.
\]
Therefore, the number of edges in $\Psi'$ around $u$ and $v$ with any symbol is (as, by the choice of the $F_j$, $j\in [r]$, the underlying undirected graph of $L_j+F_j$ has degree at most 4) at most
\begin{equation}\label{eqn:uppboundsymbolroundvertex}
2\cdot 4\cdot 20p_S p_\balcol n\leq p_{\balcol}^{3/4}n,
\end{equation}
which contradicts \eqref{eqn:uppboundedgepsiprime}. Thus, we can take $\Psi'$ as claimed, so that \ref{prop:upperboundinPhiprime} holds.

Note that, similarly again to our work in Section~\ref{sec:partA1}, due to the choice of the $F_j$, $j\in [r]$, we can now let $\mathcal{C}'$ be a directed cycle decomposition of $\Psi'$ into directed 2-cycles and triangles. Form $\Psi''$ by starting with $\Psi'$ and, for each directed triangle $L\in \mathcal{C}$ with $V(L)=\{x,y,z\}$ and $\vec{xy},\vec{yz},\vec{zy}\in E(L)$, choose a symbol $\phi$ with $x,y,z\in S_\phi$ and add $\vec{yx},\vec{zy},\vec{xz}$, each with the symbol $\phi$, so that the following holds.
\begin{enumerate}[label = {{\textbf{\Alph{propcounter}\arabic{enumi}}}}]\addtocounter{enumi}{1}
\item For each $\phi$ and each $w\in S_\phi$, there are $\leq \sqrt{p_{\balcol}}p_\tr p_\fa n$ edges in $\Psi''$ around $w$ with symbol $\phi$.\label{prop:upperboundinPhiprimeprime}
\end{enumerate}
Note that this is possible, as when for $L\in \mathcal{C}'$  with $V(L)=\{x,y,z\}$ and $\vec{xy},\vec{yz},\vec{zy}\in E(L)$ we look to select $\phi$, the number of edges in $\Psi''$ around $x$, $y$ or $z$ with any symbol is (noting we are trying to add at most the same number of edges again around each vertex to get from $\Psi'$ to $\Psi''$),
by \eqref{eqn:uppboundsymbolroundvertex}, at most $2p_{\balcol}^{3/4}n$.
On the other hand, by \ref{prop:partC:triplesofverticesinSphi}, there are at least $p_S^3p_\tr^{-1} p_{\fa}^{-1}/2$ possibilities for $\phi\in \mathcal{F}$ with $x,y,z\in S_\phi$, so if no such $\phi\in \mathcal{F}$ is such that $\vec{yx},\vec{zy},\vec{xz}$ can be added to $\Phi''$, each with the symbol $\phi$, without violating \ref{prop:upperboundinPhiprimeprime}, then there must at least
$(p_S^3p_\tr^{-1} p_{\fa}^{-1}/2) \cdot (p_{\balcol}^{1/2}p_\tr p_\fa n)>2p_{\balcol}^{3/4}n$ edges in $\Psi''$ around $x$, $y$ or $z$ with any symbol, a contradiction. Therefore, we can find $\Psi''$ as claimed.

Note that $\Psi''$ has a decomposition into directed 2-cycles, $\mathcal{C}''$ say. Let $s=|\mathcal{C}''|$ and $\mathcal{C}''=\{L'_1,\ldots,L'_s\}$. For each $i\in [s]$, label vertices and symbols so that $V(L_i')=\{x_i,y_i\}$ and the edges of $L_i'$ are an edge from $x_i$ to $y_i$ with symbol $\phi_i$ and an edge from $y_i$ to $x_i$ with symbol $\phi'_i$.
For each $\phi\in \mathcal{F}$ and $v\in S_\phi$, we have
\begin{align}
|\{i\in [s]:&\phi_i=\phi,x_i=v\}|-|\{i\in [s]:\phi'_i=\phi,y_i=v\}|\nonumber\\
&=|\{\vec{uv}\in E(\Psi''):u\in S_\phi,\mathrm{symb}(\vec{uv})=\phi\}|-|\{\vec{vu}\in E(\Psi''):u\in S_\phi,\mathrm{symb}(\vec{vu})=\phi\}|\nonumber\\
&=|\{\vec{uv}\in E(\Psi'):u\in S_\phi,\mathrm{symb}(\vec{uv})=\phi\}|-|\{\vec{vu}\in E(\Psi'):u\in S_\phi,\mathrm{symb}(\vec{vu})=\phi\}|\nonumber\\
&=|\{\vec{uv}\in E(\Psi):u\in S_\phi,\mathrm{symb}(\vec{uv})=\phi\}|-|\{\vec{vu}\in E(\Psi):u\in S_\phi,\mathrm{symb}(\vec{vu})=\phi\}|\nonumber\\
&\overset{\eqref{eqn:balanceroundvertex1}}{=}\lambda_{\phi,v}.\label{eqn:balanceroundvertex4}
\end{align}

\medskip

\noindent\textbf{II. Making the necessary changes using $(u,v,L)$-links.}
Now, take a maximal set $I\subset [s]$ for which there are edge-disjoint paths $P_i$, $i\in I$, in $G$ satisfying the following conditions.
\stepcounter{propcounter}
\begin{enumerate}[label = {{\textbf{\Alph{propcounter}\arabic{enumi}}}}]
\item For each $i\in I$, $P_i$ is an $x_i,y_i$-path of length 62 with odd edges in $E_{3,\phi_i}$ and even edges in $E_{3,\phi_i'}$.\label{prop:oddeven}
\item For each $i\in I$, the odd edges of $P_i$ have the same colour (with multiplicity) as the even edges of $P_i$.\label{prop:samecols}
\item For each $\phi\in \mathcal{F}$ and $v\in S_\phi$, there are at most $\frac{1}{8}p_{\balcol}^{1/4}p_\tr p_\fa n$ values of
 $i\in I$ with $\phi\in \{\phi_i,\phi'_i\}$ for which $v$ is an internal vertex of $P_i$.\label{prop:notoftenmiddle}
\end{enumerate}

Suppose, for a contradiction, that $I\neq [s]$. Take some $i\in [s]\setminus I$. Let $P_i$, $i\in I$, be a set of edge-disjoint paths in $G$ satisfying \ref{prop:oddeven}--\ref{prop:notoftenmiddle}. We will show that there is an $(x_i,y_i,L)$-link (with $L$ as defined in Theorem~\ref{thm:Llinks}) whose edges alternate between $E_{3,\phi_i}$ and $E_{3,\phi_i'}$, which are edge-disjoint from the paths $P_j$, $j\in I$, (noting that we only need to actively avoid those paths with edges in $E_{3,\phi}\cup E_{3,\phi_i'}$), and whose interior vertices are not in many interior vertices of paths in $P_j$, $j\in I$.

Let then $E^\mathrm{forb}$ be the set of edges in $E_{3,\phi_i}\cup E_{3,\phi_i'}$ which are in some path $P_j$, $j\in I$, and which do not contain $x_i$ or $y_i$.
Let $V^\mathrm{forb}$ be the set of vertices which are neighbours of $x_i$ or $y_i$ in some edge in $E_{3,\phi_i}\cup E_{3,\phi_i'}$ which appears in some path $P_j$, $j\in I$. Let $W^{\mathrm{forb}}$ be the set of vertices $v$ for which there are at least $\frac{1}{16}p_{\balcol}^{1/4}p_\tr p_\fa n$ values of $j\in I$ with $\{\phi_i,\phi_i'\}\cap \{\phi_j,\phi'_j\}\neq \emptyset$ for which $v$ is an internal vertex of $P_j$.

From \ref{prop:C:forvxbalancing}\ref{prop:C:linksb} and \ref{prop:C:linksc}, we can see that we should take particular care in counting the number of edges we have used which could be the 2nd or 61st edge of the link, as the corresponding bound we have is larger by a factor of $(p_\tr p_\fa)^{-1}$. However, any such edge contains a neighbour of $\{x_i,y_i\}$ using an edge of  $E_{3,\phi_i}\cup E_{3,\phi_i'}$, so the number of edges here, as we will see, will be limited by \ref{prop:notoftenmiddle}.
 For this, let $F^{\mathrm{forb}}$ be the set of edges in $E_{3,\phi_i}\cup E_{3,\phi_i'}$ which are in some path $P_j$, $j\in I$, and which contain a neighbour of $x_i$ or $y_i$ in $E_{3,\phi_i}\cup E_{3,\phi_i'}$.

We will now find an $(x_i,y_i,L)$-link whose edges alternate between $E_{3,\phi_i}$ and $E_{3,\phi_i'}$ which do not use any vertices in $W^\mathrm{forb}$ as interior vertices, or a vertex in $V^\mathrm{forb}$ as a neighbour of $x_i$ or $y_i$, or any edge in $E^\mathrm{forb} \cup F^{\mathrm{forb}}$. First, note that
\begin{align}
|E^\mathrm{forb}|&\leq 62\cdot |\{\vec{uv}\in E(\Psi''):\mathrm{symb}(\vec{uv})\in \{\phi_i,\phi_i'\}\}|\overset{\ref{prop:partC:Sphibound},\ref{prop:upperboundinPhiprimeprime}}{\leq} 62 \cdot 3p_Sn\cdot \sqrt{p_{\balcol}}p_\tr p_\fa n\nonumber\\
&\leq p_{\balcol}^{1/5}p_\tr p_\fa n^2.\label{eqn:Eforbbound}
\end{align}
Now, by \ref{prop:forC3:maxdeg}, there are at most $8p_{\bal}p_{\tr}p_\fa n$ vertices in $S_\phi\cap S_{\phi'}$ which are a neighbour of $x_i$ or $y_i$ in $E_{3,\phi}$ or $E_{3,\phi'}$. Thus, using \ref{prop:notoftenmiddle} and\ref{prop:upperboundinPhiprimeprime},
\begin{equation}\label{eqnFforbbound}
|F^{\mathrm{forb}}|\leq 8p_{\tr}p_\fa n\cdot \left(4\cdot \frac{1}{8}p_{\balcol}^{1/4}p_\tr p_\fa n+2\cdot \sqrt{p_{\balcol}}p_\tr p_\fa n\right)\leq p_{\balcol}^{1/5}(p_\tr p_\fa n)^2.
\end{equation}
Then, note that
\begin{equation}\label{eqn:Vforbbound}
|V^{\mathrm{forb}}|\overset{\ref{prop:notoftenmiddle},\ref{prop:upperboundinPhiprimeprime}}{\leq}
4\cdot \frac{1}{8}p_{\balcol}^{1/4}p_\tr p_\fa n+2\cdot \sqrt{p_{\balcol}}p_\tr p_\fa n\leq p_{\balcol}^{1/5}p_\tr p_\fa n.
\end{equation}
Finally, note that
\begin{align}
|W^{\mathrm{forb}}|&\leq \frac{61\cdot |\{\vec{uv}\in E(\Psi''):\mathrm{symb}(\vec{uv})\in \{\phi_i,\phi_i'\}\}|}{\frac{1}{16}p_{\balcol}^{1/4}p_\tr p_\fa n}\nonumber\\
&\overset{\ref{prop:partC:Sphibound},\ref{prop:upperboundinPhiprimeprime}}{\leq} \frac{61 \cdot 3p_Sn\cdot \sqrt{p_{\balcol}}p_\tr p_\fa n}
{\frac{1}{16}p_{\balcol}^{1/4}p_\tr p_\fa n}\leq p_{\balcol}^{1/5}n.
\label{eqn:Wforbbound}
\end{align}

Let $\mathcal{L}$ be the set of $(x_i,y_i,L)$-links
with odd edges in $E_{3,\phi_i}$
and even edges in $E_{3,\phi'_i}$ and whose vertices are in $S_\phi\cap S_{\phi'}$
Then, using \ref{prop:C:forvxbalancing}\ref{prop:C:linksb}--\ref{prop:C:linkse}, the number of links in $\mathcal{L}$  which either
\begin{enumerate}[label = \roman{enumi})]
\item for some $3\leq k\leq 60$, contain an edge of $E^{\mathrm{forb}}$ as their $k$th edge, or
\item contain a vertex of $W^{\mathrm{forb}}$ as an internal vertex, or
\item use an edge from $F^{\mathrm{forb}}$ as its 2nd or 61st edge, or
\item use a vertex of $V^{\mathrm{forb}}$ as its 2nd or 62nd vertex,
\end{enumerate}
is at most
\begin{align*}
100(p_{\tr}p_{\fa})^{60}&n^{28}\cdot ((p_{\tr}p_{\fa})\cdot |E^{\mathrm{forb}}|+|F^{\mathrm{forb}}|+(p_{\tr}p_{\fa})^2n\cdot |W^{\mathrm{forb}}|+(p_{\tr}p_{\fa})n\cdot |V^{\mathrm{forb}}|)\\
&\overset{\eqref{eqn:Eforbbound}\text{--}\eqref{eqn:Wforbbound}}\le
400p_{\balcol}^{1/5}(p_{\tr}p_{\fa})^{62}n^{30}.
\end{align*}
However, by \ref{prop:C:forvxbalancing}\ref{prop:C:linksa}, $|\mathcal{L}|\geq p_{\balvx}^{63}(p_{\tr}p_{\fa})^{62}n^{30}$.
Thus, there is some $(x_i,y_i,L)$-link with odd edges in $F_{3,\phi_i}$ and even edges in $F_{3,\phi'_i}$ which uses no edge in $E^{\mathrm{forb}}$ or vertex in $W^{\mathrm{forb}}$ (noting that if it has an edge of $E^{\mathrm{forb}}$ as its 2nd or 61st edge, then this edge is in $F^{\mathrm{forb}}$, and if it has an edge of $E^{\mathrm{forb}}$ as its first or last edge then it contains a vertex of $V^{\mathrm{forb}}$ as its 2nd or 62nd vertex).

Let $P_i$ be the path of such a link. Then, we have that \ref{prop:oddeven} and \ref{prop:samecols} hold for $P_i$ from the properties of an $(x_i,y_i,L)$-link. Furthermore, as $P_i$ has no edge in $E^{\mathrm{forb}}$ or vertex in $V^{\mathrm{forb}}$ it has no edge in common with any $E(P_j)$
with $j\in I$ and $\{\phi_i,\phi_i'\}\cap \{\phi_j,\phi_j'\}\neq \emptyset$, and therefore, by \ref{prop:oddeven}, no edge in common with any $E(P_j)$, $j\in I$.  Finally, as $V(P_i)$ contains no vertex in $W^{\mathrm{forb}}$, we have that \ref{prop:notoftenmiddle} holds with $I$ replaced by $I\cup \{i\}$. Therefore, the set $I\cup \{i\}$ contradicts the choice of $I$, as shown by $P_j$, $j\in I\cup\{i\}$.

Thus, we have that $I=[s]$. Take  edge-disjoint paths $P_i$, $i\in [s]$, in $G$, then, satisfying \ref{prop:oddeven}--\ref{prop:notoftenmiddle}. For each $\phi\in \mathcal{F}$, let
\[
\hat{E}^*_\phi=\left(\hat{E}_\phi\setminus \left(\bigcup_{i\in [s]:\phi\in \{\phi_i,\phi_i'\}}E(P_i)\right)\right)\cup
\left(\bigcup_{i\in [s]:\phi\in \{\phi_i,\phi_i'\}}E(P_i)\setminus E_{3,\phi}\right).
\]
We will show that $\hat{E}^*_\phi$, $\phi\in \mathcal{F}$, satisfy \ref{prop:frompartC3:edge}--\ref{prop:frompartC3:roundvertex}. Firstly, as for any $\phi\in \mathcal{F}$ and $i\in [s]$
with $\phi\in \{\phi_i,\phi_i'\}$ we have $E(P_i)\cap \hat{E}_\phi' \subset E_{3,\phi}$, we have that \ref{prop:frompartC3:edge} holds due to \ref{prop:fromcolbalance:edge}.
Next, for each $\phi\in \mathcal{F}$, for each $i\in [s]$ with $\phi\in \{\phi_i,\phi_i'\}$, we have that $E(P_i)\setminus E_{3,\phi_i}$ has the same colours (with multiplicity) as $E(P_i)\cap E_{3,\phi_i}$, and therefore each colour appears the same number of times in $\hat{E}^*_\phi$ as in $\hat{E}_\phi$. Thus, \ref{prop:frompartC3:colour} follows from \ref{prop:fromcolbalance:colour}. Furthermore, we have that \ref{prop:fromcolbalance:stillstillinS} follows from \ref{prop:fromcolbalance:stillinS} as, for each $\phi\in \mathcal{F}$, for every $i\in [s]$, if $\phi\in \{\phi_i,\phi_i'\}$ then $P_i$ has all its vertices in $S_\phi$.

For each $\phi\in \mathcal{F}$ and $v\in V(G)$, we have
\begin{align*}
|\{e\in \hat{E}_\phi^*&:v\in V(e)\}|=|\{e\in \hat{E}_\phi:v\in V(e)\}|-|\{i\in [s]:\phi_i=\phi,x_i=v\}|+|\{i\in [s]:\phi'_i=\phi,y_i=v\}|\\
&\overset{\eqref{eqn:balanceroundvertex4}}{=}|\{e\in \hat{E}_\phi:v\in V(e)\}|-\lambda_{\phi,v}
\overset{\eqref{eqn:lambdaphiv}}{=}|\{i\in I_\phi:v\in \hat{V}_i\setminus V(\tilde{M}_{i,1})\}|-|\{i\in I_\phi:v\in T_i\}|
\end{align*}
and thus \ref{prop:frompartC3:vertex} holds.
Finally, for each $\phi\in \mathcal{F}$, $i\in I_\phi$ and $v\in S_i$, we have
\begin{align*}
|\{e\in \hat{E}_\phi^*:v\in V(e), c(e)\in C_i\}|&\leq |\{e\in \hat{E}_\phi:v\in V(e), c(e)\in C_i\}|+|\{i\in [s]:\phi\in \{\phi_i,\phi_i'\},v\in V(P_i)\}|\\
&\overset{\ref{prop:frompartC2:roundvertex},\ref{prop:notoftenmiddle}}{\leq} \frac{1}{2}p_{\mathrm{pt}}^{3/2}p_\tr p_\fa n+\frac{1}{8}p_{\balcol}^{1/4}p_\tr p_\fa n\leq p_{\mathrm{pt}}^{3/2}p_\tr p_\fa n,
\end{align*}
so therefore \ref{prop:frompartC3:roundvertex} holds.
\end{proof}
\fi


\subsection{Part \ref{partC4}: Partitioning the final edges}\label{sec:C4}
Consider the partition of $\hat{E}\setminus (\bigcup_{i\in [n]}\tilde{M}_{i,1})$ into $\hat{E}^*_\phi$, $\phi\in \mathcal{F}$, satisfying \ref{prop:frompartC3:edge}--\ref{prop:frompartC3:roundvertex}.
It is now sufficient to, for each $\phi\in \mathcal{F}$, partition $\hat{E}_\phi$ into matchings $\tilde{M}_{i,2}$, $i\in I_\phi$, such that, for each $i\in I_\phi$, $V(\tilde{M}_{i,2})\subset R_{i,2}$ and $C(\tilde{M}_{i,2})=\hat{C}_i\setminus C(\tilde{M}_{i,1})$.
Indeed, then, setting $\hat{M}_i=\tilde{M}_{i,1}\cup \tilde{M}_{i,2}$ for each $i\in [n]$, we have that \ref{key3:outcome1}--\ref{key3:outcome3} hold, where \ref{key3:outcome3} follows from \ref{prop:frompartC3:vertex}.
Thus, for the rest of the proof, to simplify the notation slightly, we will fix a family $\phi\in \mathcal{F}$ and omit $\phi$ from the subscripts.

Fix, then, $\phi\in \mathcal{F}$. For each $c\in C$, we wish to assign one each of the edges of colour $c$ in $\hat{E}_\phi^*$ to the $\tilde{M}_{i,2}$ for which $c\in \hat{C}_i\setminus C(\tilde{M}_{i,1})$, doing this in such a way that the assigned edge for $\tilde{M}_{i,2}$ has its vertices in $R_{i,2}$ and that $\tilde{M}_{i,2}$ is a matching.
If possible, this will use exactly all of the colour-$c$ edges in $\hat{E}_\phi^*$, due to \ref{prop:frompartC3:colour}.

For each $c\in C$, let $E_c$ be the edges in $\hat{E}_\phi^*$ with colour $c$ and let $I_{c}=\{i\in I_\phi:c\in \hat{C}_i\setminus C(\tilde{M}_{i,1})\}$. By \ref{prop:frompartC3:colour}, we have $|E_c|=|I_c|$.
 For each $c\in C$, let $L_{c}$ be the bipartite graph with vertex classes $E_{c}$ and $I_{c}$ and edge set $\{ei:e\in E_{c},i\in I_{c},V(e)\subset R_{i,2}\}$.
If we can find a perfect matching $M_c$ in $L_c$ for each $c\in C$, such that no two edges matched to the same $i\in I_\phi$ in any $L_c$ share any vertices, then we will be able to use $M_c$, $c\in C$, to assign the edges in $\hat{E}_\phi^*$. We will now first sparsify the edges of $L_c$ to get $L_c'$, for each $c\in C$, by deleting edges independently at random. (For each $i\in I_\phi$ this will reduce the overlap between the edges we are still considering for $\tilde{M}_{i,2}$.) Then, we will further sparsify $L_c'$ to get $L_c''$ in such a way that if any such perfect matchings $M_c$, $c\in C$, can be found in $L_c'$ then they will automatically have the additional property we wish to have.

Recalling from \eqref{eqn:nzeroetc} that  $n_0=1.01p_{\mathrm{pt}}p_{\tr}p_{\fa}n$, $D_0=p_R^2p_{\mathrm{pt}}p_\tr p_\fa n/8p_S^2$ and $q_0=p_R^2/8p_S^2$, we will use \ref{propforLc:2}--\ref{propforLc:1}. Note that $q_0n_0=1.01D_0$. 
By  \ref{prop:C:colsinfamilies}, \ref{prop:nocolourmissingtoomuch} and \ref{prop:partc1:1b}, we have
 $|I_c|=(1\pm 2p_{\balcol})p_{\mathrm{pt}}p_{\tr}p_{\fa}n$, so that $0.98n_0\leq |I_c|=|E_c|\leq n_0$.
Let $\Delta_0=p_{\mathrm{pt}}^{3/2}p_\tr p_\fa n$ and $\mu=10/(\sqrt{p_T}\Delta_0)$,
so that
\begin{equation}\label{eqn:mudelta}
\mu \Delta_0= \frac{10}{\sqrt{p_T}}=\omega(\log^3 n),\;\;\; \frac{D_0}{\Delta_0}\geq p_{\mathrm{pt}}^{-1/3}=\omega(p_T^{-10}\log^3 n),
\end{equation}
and
\begin{equation}\label{eqn:mudzero}
\mu D_0= \frac{10 D_0}{\Delta_0\sqrt{p_T}}= \frac{10p_R^2}{8p_S^2\sqrt{p_{\mathrm{pt}}p_T}}\geq \frac{1}{p_{\mathrm{pt}}^{1/3}}=\omega(p_T^{-2}\log^3 n),
\end{equation}
 For each $c\in C$, form $L_c'$ by taking $L_c$ and keeping each edge independently at random with probability $\mu$. The following then holds.


\begin{claim}\label{claim:sparsifyLprime}
With high probability, we have the following properties.
\stepcounter{propcounter}
\begin{enumerate}[label = {{\textbf{\Alph{propcounter}\arabic{enumi}}}}]
\item For each $c\in C$ and $I\subset I_c$ with $1\leq |I|\leq n_0(\log^2n)/\mu D_0$, $|N_{L_c'}(I)|\geq \mu D_0|I|/30\log^2n$.\label{prop:LCprime:expands}
\item For each $c\in C$ and $E\subset E_c$ with $1\leq |E|\leq n_0(\log^2n)/\mu D_0$, $|N_{L_c'}(E)|\geq \mu D_0|E|/30\log^2n$.\label{prop:LCprime:expands:2}
\item For each $c\in C$, and any sets $I\subset I_c$ and $E\subset E_c$ with $|I_c|,|E_c|\geq n_0\log^2n/4\mu D_0$, $e_{L'_c}(I,E)\geq \mu q_0|I||E|/2$.\label{prop:LCprime:joined}
\item For each $c\in C$, $i\in I_c$ and $w\in R_{i,2}$,\label{prop:LCprime:dep}
\[
|\{z\in R_{i,2}:wz\in \hat{E}^*_\phi,c(wz)\in {C}_{i},i(wz)\in E(L_{c(wz)}')\}|\leq 2\mu \Delta_0.
\]
\end{enumerate}
\end{claim}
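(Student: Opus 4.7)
My plan is to derive each of the four properties as a concentration statement for the random graph $L_c'$, using the deterministic lower bounds \ref{propforLc:2}, \ref{propforLc:3}, \ref{propforLc:1} as input and a Chernoff-type bound plus a union bound over $c\in C$ and the subsets $I, E$ involved. The key inputs from the hierarchy are $\mu\Delta_0=\omega(\log^3 n)$ and $\mu D_0=\omega(\log^3 n)$ from \eqref{eqn:mudelta} and \eqref{eqn:mudzero}, which will provide enough slack for the exponential failure bounds to beat the combinatorial multiplicity in the union bounds.

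Properties \ref{prop:LCprime:dep} and \ref{prop:LCprime:joined} are the most direct. For \ref{prop:LCprime:dep}, fix $c,i,w$; by \ref{prop:frompartC3:roundvertex} there are at most $\Delta_0$ candidate vertices $z\in R_{i,2}$ with $wz\in\hat{E}_\phi^*$ and $c(wz)\in C_i$, and for each such $z$ the event $i(wz)\in L_{c(wz)}'$ is an independent Bernoulli of parameter $\mu$, so the count in question is dominated by $\mathrm{Bin}(\Delta_0,\mu)$ and a standard Chernoff bound plus a polynomial union bound over $(c,i,w)$ yields the bound $\leq 2\mu\Delta_0$. For \ref{prop:LCprime:joined}, \ref{propforLc:1} gives $e_{L_c}(I,E)\geq q_0|I||E|$, and $e_{L_c'}(I,E)\sim\mathrm{Bin}(e_{L_c}(I,E),\mu)$, so Chernoff gives at least $\mu q_0|I||E|/2$ with failure probability $\exp(-\Omega(\mu q_0|I||E|))$; the union bound cost $\binom{n_0}{|I|}\binom{n_0}{|E|}$ is absorbed because $\mu q_0|I|\gtrsim\log^2 n$ on the allowed range of $|I|$ (using $q_0n_0\approx D_0$ and the hypothesis $|I|\geq n_0\log^2 n/4\mu D_0$).

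For the expansion statements \ref{prop:LCprime:expands} and \ref{prop:LCprime:expands:2}, which are symmetric under swapping the roles of $I_c$ and $E_c$ (with \ref{propforLc:2} for the first and \ref{propforLc:3} for the second), I split into two regimes. In the small-set regime $|I|\leq n_0/2D_0$, I apply \ref{propforLc:2} to get $|N_{L_c}(I)|\geq D_0|I|$, and to each vertex $e\in N_{L_c}(I)$ assign an arbitrary witness neighbour $i_e\in I$; the events $\{ei_e\in L_c'\}$ are independent Bernoullis of parameter $\mu$ because the chosen edges are pairwise distinct, so $|N_{L_c'}(I)|$ stochastically dominates $\mathrm{Bin}(D_0|I|,\mu)$ and Chernoff gives at least $\mu D_0|I|/2$ with failure $\exp(-\Omega(\mu D_0|I|))$, which beats $\binom{n_0}{|I|}\leq n^{|I|}$ since $\mu D_0\gg\log n$. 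In the larger regime $n_0/2D_0<|I|\leq n_0\log^2 n/\mu D_0$, I apply \ref{propforLc:1} with $E=E_c$ to obtain $e_{L_c}(I,E_c)\geq q_0|I||E_c|$, bucket $E_c$ by the degrees $d_e=|N_{L_c}(e)\cap I|$ into a low-degree part (where $\mu d_e<1$ and the per-vertex survival probability is approximately $\mu d_e$) and a high-degree part (where the survival probability is $\Omega(1)$), and use $1-(1-\mu)^d\geq\min(\mu d/2,1/2)$ to lower-bound $\mathbb{E}|N_{L_c'}(I)|$ in both cases; concentration follows from a second independent-witness Chernoff argument with the witness set chosen so that the number of independent trials matches the target.

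The main obstacle is making the large-$|I|$ regime of \ref{prop:LCprime:expands} and \ref{prop:LCprime:expands:2} quantitatively match the target $\mu D_0|I|/30\log^2 n$: the clean witness-edge argument that works in the small-set regime breaks down once $D_0|I|$ would exceed the total vertex count $|E_c|\approx n_0$, and one must combine \ref{propforLc:1} with a careful case split on the distribution of the degrees $d_e$. The logarithmic slack in the denominator $30\log^2 n$ is exactly what absorbs the various losses incurred at the crossover between the two regimes, and getting this bookkeeping right while keeping the union bound over $c$ and subsets tight will be the technical heart of the argument.
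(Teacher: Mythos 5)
Your approach to \ref{prop:LCprime:joined} and \ref{prop:LCprime:dep}, and your witness-edge argument for \ref{prop:LCprime:expands} in the small-set regime $|I|\leq n_0/2D_0$, all match the paper's proof. The issue lies entirely in the large-set regime of \ref{prop:LCprime:expands} (and symmetrically \ref{prop:LCprime:expands:2}).

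Your proposed bucketing argument, which uses only the global input $\sum_e d_e = e_{L_c}(I,E_c)\geq q_0|I||E_c|$ from \ref{propforLc:1} together with the per-vertex bound $1-(1-\mu)^{d_e}\geq\min(\mu d_e/2,1/2)$, does not in general yield the target $\mu D_0|I|/30\log^2 n$. The function $d\mapsto\min(\mu d/2,1/2)$ is concave, so the minimum of $\sum_e\min(\mu d_e/2,1/2)$ subject to $\sum_e d_e\geq q_0|I||E_c|$ and $d_e\leq|I|$ is achieved by concentrating all the degree on $\approx q_0|E_c|\approx D_0$ vertices with $d_e=|I|$ (the rest having $d_e=0$), in which case the lower bound degenerates to $\approx D_0/2$. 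For $|I|$ near the top of the allowed range $n_0\log^2n/\mu D_0$, the target is $\approx n_0/30$, and $D_0/2\approx q_0n_0/2\ll n_0/30$ because $q_0=p_R^2/8p_S^2$ is a small constant ($p_R\ll p_S$). So the bound fails by a large constant factor precisely where you need it.

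The missing step is a second application of \ref{propforLc:1}, not to $(I,E_c)$ for the edge count, but by contradiction to $(I, E_c\setminus E)$ where $E=\{e\in E_c: |N_{L_c}(e)\cap I|\geq\ell\}$ with $\ell=q_0|I|/2$. If $|E_c\setminus E|\geq n_0/2D_0$, then $e_{L_c}(I,E_c\setminus E)<\ell|E_c\setminus E|< q_0|I||E_c\setminus E|$ contradicts \ref{propforLc:1}; hence $|E_c\setminus E|<n_0/2D_0$ and so $|E|\geq 0.97n_0$. This rules out the degree-concentration scenario: almost every $e\in E_c$ has at least $\ell$ neighbours in $I$, so $\E|N_{L_c'}(I)|\geq 0.97n_0\cdot\min(\mu\ell/2,1/2)$, which one checks comfortably exceeds the target across the whole large-set range (the paper instead uses $\mu\ell'(1-\mu)^{\ell'}$ with $\ell'=\ell/\log^2 n$ for essentially the same purpose, taking a deliberate $\log^2 n$ loss in the per-vertex bound; your $\min(\mu\ell/2,1/2)$ is in fact tighter, so once you add the $|E|\geq 0.97n_0$ step your bound works at least as well). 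Concentration then follows exactly as you describe, since the indicator events $\{e\in N_{L_c'}(I)\}$ for $e\in E$ are independent.
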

\begin{proof}[Proof of Claim~\ref{claim:sparsifyLprime}]
\ref{prop:LCprime:expands}:
Let $c\in C$ and $I\subset I_c$ with $1\leq |I|\leq n_0/2D_0$. Then, we have $|N_{L_c}(I,E_{4,\phi})|\geq D_0|I|$ by \ref{propforLc:2}. Letting $X_{c,I}=|N_{L_c'}(I,E_{4,\phi})|$, we have $\E X_{c,I}\geq \mu D_0|I|$. Thus, by Lemma~\ref{chernoff}, we have that $\P(X_{c,I}< \mu D_0|I|/2)=\exp(-\omega(|I|\log n))$.
Therefore, by a union bound, with high probability, for every $c\in C$ and $I\subset I_c$ with $1\leq |I|\leq n_0/2D_0$, we have $|N_{L_c'}(I)|\geq \mu D_0|I|/2\geq \mu D_0|I|/30\log^2n$.

Suppose then $c\in C$ and $I\subset I_c$ with $n_0\log^2n/2D_0\leq |I|\leq n_0/\mu D_0$. Let $\ell=q_0|I|/2\leq q_0n_0/\mu D_0$, so that $\ell\leq 1.01\log^2n/\mu$. Let $E$ be the set of $e\in E_c$ with at least $\ell$ neighbours in $L_c$ in $I$. Then,
\[
e_{L_c}(I,E_c\setminus E)\leq \ell\cdot |E_c\setminus E|< q_0|I|\cdot |E_c\setminus E|
\]
so that, by \ref{propforLc:1}, we must have $|E_c\setminus E|<n_0/2D_0$, and, hence, $|E|\geq 0.98n_0-n_0/2D_0\geq 0.97n_0$.
For each $e\in E$, setting $\ell'=\ell/\log^{2}n$ and using that $\ell'\leq 1.01/\mu$,
\[
\P(e\in N_{L_c'}(I))\geq \mu\ell'(1-\mu)^{\ell'}\geq \frac{\mu \ell'}{2e}\geq \frac{\mu q_0|I|}{4e\log^2n}\geq \frac{\mu D_0|I|}{20n_0\log^2n}.
\]
so that, letting $X_{c,I}=|N_{L_c'}(I,E_{4,\phi})|$, $\E X_{c,I}\geq \mu D_0|I||E|/20n_0\geq \mu D_0|I|/25\log^2n$. Therefore, by Lemma~\ref{chernoff}, we have that $\P(|N_{L_c''}(I)|< \mu D_0|I|/30\log^2n)=\exp(-\omega(|I|\log n))$. Thus, using a union bound, this completes the proof that \ref{prop:LCprime:expands:2} holds with high probability.

\ref{prop:LCprime:expands:2} holds with high probability similarly to \ref{prop:LCprime:expands}, using \ref{propforLc:1}, and \ref{propforLc:3} in place of \ref{propforLc:2}, and $\{i\in I_\phi:c\in C_{i,2}\}$ in place of $E_{4,\phi}$.

\ref{prop:LCprime:joined}: Let $c\in C$, $I\subset I_c$ and $E\subset E_c$ satisfy $|I|,|E|\geq n_0(\log^2n)/4\mu D_0$. Then, by \ref{propforLc:1}, there are at least $q_0|I||E|$ edges between $I_c$ and $E_c$ in $L_c$. Letting $X_{I,E}=e_{L_c'}(I_c,E_c)$, we have that $\E X_{I,E}\geq \mu q_0|I||E|$, so that, by Lemma~\ref{chernoff}, there are fewer than $\mu q_0|I||E|/2$ edges between $I$ and $E$ in $L'_c$ with probability at most
\[
\exp\left(-\frac{\mu q_0|I||E|}{12}\right)\leq \exp\left(-\frac{\mu q_0\cdot (n_0(\log^2n)/4\mu D_0)\cdot \max\{|I|,|E|\}}{12}\right)=\exp(-\omega(\max\{|I|,|E|\}\log n)).
\]
Thus, by a union bound, with high probability \ref{prop:LCprime:joined} holds.

 \ref{prop:LCprime:dep}: Let $c\in C$, $i\in I_c$ and $w\in R_{i,2}$. By \ref{prop:frompartC3:roundvertex}, we have
\[
|\{z\in R_{i,2}:wz\in \hat{E}^*_\phi,c(wz)\in {C}_{i},i(wz)\in E(L_{c(wz)})\}|\leq p_{\mathrm{pt}}^{3/2}p_\tr p_\fa n=\Delta_0.
\]
Therefore, $\E|\{z\in R_{i,2}:wz\in \hat{E}^*_\phi,c(wz)\in {C}_{i},i(wz)\in E(L_c')\}|\leq \mu \Delta_0$. Thus,  \ref{prop:LCprime:dep} holds with high probability by Lemma~\ref{chernoff} and a union bound.
\claimproofend


Thus, by Claim~\ref{claim:sparsifyLprime}, we can assume that \ref{prop:LCprime:expands}--\ref{prop:LCprime:dep} hold. Now, let
\begin{equation}\label{eqn:etaD1}
\eta=1/4\mu \Delta_0\overset{\eqref{eqn:mudelta}}={\sqrt{p_T}}/{40}\;\;\text{ and }\;\;
D_1=\frac{\eta^3\mu D_0}{10^3\log^2n}=\frac{\eta^2D_0}{4\cdot 10^3\Delta_0\log^2n},
\end{equation}
so that, by \eqref{eqn:mudelta}, $D_1=\omega(\eta^{-2}p_T^{-1}\log^3 n)$.
For each $c\in C$, $i\in I_\phi$ and $e\in E_c$ with $ie\in E(L_c)$, let $x_{ie}$ be a Bernoulli random variable with probability $\eta$. For each $c\in C$, let $L''_c$ be the subgraph of $L'_c$ of edges $ie$ for which $x_{ie}=1$ and, for each $c'\in C\setminus \{c\}$ and $f\in E_{c'}$ with
$V(f)\cap V(e)\neq \emptyset$ and $if\in E(L_c')$, we have $x_{if}=0$.


\begin{claim}\label{claim:hallinLprime}
With high probability, we have the following properties.
\stepcounter{propcounter}
\begin{enumerate}[label = {{\textbf{\Alph{propcounter}\arabic{enumi}}}}]
\item For each $c\in C$ and $I\subset I_c$ with $1\leq |I|\leq n_0/2D_1$, $|N_{L_c''}(I)|\geq D_1|I|$.\label{prop:hall:expands}
\item For each $c\in C$ and $E\subset E_c$ with $1\leq |E|\leq n_0/2D_1$, $|N_{L_c''}(E)|\geq D_1|E|$.\label{prop:hall:expands:2}
\item For each $c\in C$, and any sets $I\subset I_c$ and $E\subset E_c$ with $|I|,|E|\geq n_0/4D_1$, there is an edge between $I$ and $E$ in $L''_c$.\label{prop:hall:joined}
\end{enumerate}
\end{claim}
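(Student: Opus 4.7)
The approach is to carry out, for each $c\in C$, a survival analysis of the sparsification $L_c'\to L_c''$, combined with independence across the $I$-side and suitable concentration. The essential probabilistic input is that for any fixed edge $ie\in E(L_c')$, the event $ie\in E(L_c'')$ requires both $x_{ie}=1$ and $x_{if}=0$ for every conflicting $if\in E(L_{c(f)}')$ with $c(f)\neq c$ and $V(f)\cap V(e)\neq\emptyset$. Applying \ref{prop:LCprime:dep} to each of the two vertices of $e$ shows there are at most $4\mu\Delta_0$ such conflicts, and since $\eta=1/(4\mu\Delta_0)$ by \eqref{eqn:etaD1}, the survival probability satisfies $\P(ie\in E(L_c''))\geq \eta(1-\eta)^{4\mu\Delta_0}\geq \eta/8$.

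For \ref{prop:hall:expands}, fix $c$ and $I\subseteq I_c$. When $|I|$ lies in the range covered by \ref{prop:LCprime:expands}, we have $|N_{L_c'}(I)|\geq \mu D_0|I|/30\log^2n$; choose a witness $i_e\in I$ for each $e\in N_{L_c'}(I)$, greedily balancing the sizes $b_i=|\{e:i_e=i\}|$, and set $Y_i=|\{e:i_e=i,\ ie\in E(L_c'')\}|$. Since $Y_i$ depends only on the block of variables $\{x_{i,\cdot}\}$, the $Y_i$'s are independent across $i\in I$; the expected sum is at least $\eta|N_{L_c'}(I)|/8$, which is of order $\eta^{-2}D_1|I|$. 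A Bernstein/Chernoff bound for independent bounded non-negative random variables then yields $\P(\sum Y_i<D_1|I|)\leq \exp(-\Omega(|I|\log n))$, using the polylogarithmic slack from $\mu D_0=\omega(p_T^{-2}\log^3n)$ in \eqref{eqn:mudzero}. For the complementary range $n_0(\log^2n)/\mu D_0\leq |I|\leq n_0/2D_1$, I would argue by contradiction: if $|N_{L_c''}(I)|<D_1|I|$, then $E:=E_c\setminus N_{L_c''}(I)$ has size at least $n_0/4$, so \ref{prop:LCprime:joined} gives $e_{L_c'}(I,E)\geq \mu q_0|I||E|/2$, an enormous number; the probability that none of these edges survives to $L_c''$ is superpolynomially small by a Chernoff-type estimate on the independent-across-$i$ contributions. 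A union bound over $c$ and over $I\subseteq I_c$ then completes the proof. The symmetric \ref{prop:hall:expands:2} is handled identically using \ref{prop:LCprime:expands:2}, with witnesses chosen on the $E_c$ side.

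For \ref{prop:hall:joined}, fix $c$ and sets $I\subseteq I_c$, $E\subseteq E_c$ with $|I|,|E|\geq n_0/4D_1$. By \ref{prop:LCprime:joined}, $e_{L_c'}(I,E)\geq \mu q_0|I||E|/2$. Grouping the survival indicators by $i\in I$ yields contributions independent across $i$, so a Chernoff bound gives that the expected number of surviving $L_c''$-edges between $I$ and $E$, which is at least $\eta\mu q_0|I||E|/16$, is well concentrated. A union bound over $c, I, E$ then suffices, since the number of relevant pairs is comfortably controlled by the very rapid tail decay.

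The main obstacle is the concentration step in \ref{prop:hall:expands}: a direct application of Lemma~\ref{lem:mcdiarmidchangingc} gives Lipschitz constants of order $\mu\Delta_0\sim p_T^{-1/2}$, which is far too large, because a single variable $x_{i',f}$ with $c(f)\neq c$ can simultaneously flip up to $4\mu\Delta_0$ of the indicators $\{i'e\in E(L_c'')\}$ through the conflict mechanism. The resolution is the witness partition into $|I|$ independent pieces together with careful balancing of $b_i$, which reduces the problem to one for which Bernstein's inequality delivers the required tail bound. In the intermediate regime, appealing directly to \ref{prop:LCprime:joined} elegantly bypasses delicate concentration, replacing it with an overwhelming first-moment argument.
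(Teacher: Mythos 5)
You correctly identify the survival probability $\P(ie\in E(L_c''))\geq\eta(1-\eta)^{4\mu\Delta_0}=\Theta(\eta)$, and your observation that the variables partition by $i$ into independent blocks is sound. However, your proof is built around a claimed obstacle that does not exist, and the workaround you propose in its place does not deliver the required tail bound.

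Your stated obstacle is that ``a direct application of Lemma~\ref{lem:mcdiarmidchangingc} gives Lipschitz constants of order $\mu\Delta_0$, because a single variable $x_{i',f}$ with $c(f)\neq c$ can simultaneously flip up to $4\mu\Delta_0$ of the indicators $\{i'e\in E(L_c'')\}$.'' This has the dependency backwards. For a \emph{fixed} pair $(i',e)$ with $e\in E_c$, the indicator $\{i'e\in E(L_c'')\}$ depends on at most $4\mu\Delta_0$ variables $x_{i'f}$ by \ref{prop:LCprime:dep}. But for a \emph{fixed} variable $x_{i'f}$ with $c(f)\neq c$, the edges $e\in E_c$ it can affect must satisfy $V(e)\cap V(f)\neq\emptyset$; since the colouring is proper, each of the two endpoints of $f$ lies on at most one colour-$c$ edge, so at most two such $e$ exist. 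Hence changing any single coordinate changes $X_{c,I}=|N_{L_c''}(I)|$ by at most $2$, i.e.\ $X_{c,I}$ is $2$-Lipschitz. This is exactly what the paper uses: a direct application of Lemma~\ref{lem:mcdiarmidchangingc} with $c_i\leq 2$ for the $O(|N_{L_c'}(I)|\cdot\mu\Delta_0)$ relevant coordinates yields $\P(X_{c,I}<D_1|I|)\leq\exp(-\Omega(\eta^2|N_{L_c'}(I)|/\mu\Delta_0))$, and then \ref{prop:LCprime:expands}, \eqref{eqn:etaD1} and \eqref{eqn:mudelta} give the exponent $\Omega(p_TD_0|I|/\Delta_0\log^2n)=\omega(|I|\log n)$, which is what the union bound over $I$ requires.

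Your witness-partition workaround, in contrast, does not recover this. The $Y_i$ are independent, but bounded only by $b_i\approx|N_{L_c'}(I)|/|I|\gtrsim\mu D_0/\log^2n$, which by \eqref{eqn:mudzero} is large. Hoeffding for bounded independent summands then gives a tail of the form $\exp(-\Omega(\eta^2|I|))$; Bernstein, after bounding the variance, still suffers from the additive $Mt/3$ term with $M\approx b_i$, leaving a tail no better than $\exp(-\Omega(\eta|I|))$. Since $\eta=\sqrt{p_T}/40$ is a small constant, neither is $\exp(-\omega(|I|\log n))$, so the union bound over the $\leq n^{|I|}$ choices of $I$ fails. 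The extra factor that McDiarmid extracts — essentially $|N_{L_c'}(I)|/(\mu\Delta_0|I|)\gtrsim D_0/(\Delta_0\log^2n)=\omega(\eta^{-2}\log n)$ — is precisely what is lost by only tracking the boundedness $Y_i\leq b_i$ rather than the coordinatewise $2$-Lipschitz structure. The same issue afflicts your sketch of \ref{prop:hall:joined}. So the ``main obstacle'' you diagnose is illusory and the paper's simpler route is both correct and necessary; to repair your argument you would have to apply a bounded-differences inequality (to $\sum_iY_i$ or directly to $X_{c,I}$), at which point you have reproduced the paper's proof.
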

\begin{proof}[Proof of Claim~\ref{claim:hallinLprime}] Let $c\in C$, $i\in I_c$ and $e\in E_c$ with $ie\in E(L_c)$. Let $F_{i,e}$ be the set of edges $f\neq e$ for which $V(f)\cap V(e)\neq \emptyset$ and $if\in E(L'_{c(f)})$. Then, we have
\begin{align}
|F_{i,e}|&\leq \sum_{w\in V(e)}|\{z\in R_{i,2}:wz\in \hat{E}^*_\phi,c(wz)\in {C}_{i},if\in E(L'_{c(f)})\}|\overset{\ref{prop:LCprime:dep}}{\leq} 4\mu \Delta_0,\label{eqn:Febound}
\end{align}
and, hence,
\begin{equation}\label{eqn:edgeinLprime}
\P(e\in E(L''_c))\geq \eta(1-\eta)^{|F_{i,e}|}\geq \eta (1-\eta)^{4\mu \Delta_0}\geq \eta/2e.
\end{equation}


\ref{prop:hall:expands} if $|I|\leq n_0(\log^2n)/\mu D_0$:
Let $c\in C$ and $I\subset I_c$ with $1\leq |I|\leq n_0(\log^2n)/\mu D_0$. Then, we have $|N_{L_c'}(I)|\geq \mu D_0|I|/30\log^2n$ by \ref{prop:LCprime:expands}. Letting $X_{c,I}=|N_{L_c''}(I)|$, we have that $X_{c,I}$ is 2-Lipschitz and affected by at most $|N_{L_c'}(I)|\cdot 4\mu \Delta_0$ random variables $x_{ie}$ by \ref{prop:LCprime:dep}, and
$\E X_{c,I}\geq |N_{L_c'}(I)|\cdot \eta/2e\geq \eta\mu D_0|I|/60e\log^2n\geq 2D_1|I|$. Thus, by Lemma~\ref{lem:mcdiarmidchangingc}, we have that
\begin{align}
\P(X_{c,I}< D_1|I|)&\leq 2\exp\left(-\frac{2(|N_{L_c'}(I)|\cdot \eta/4e)^2}{4\cdot |N_{L_c'}(I)|\cdot 4\mu \Delta_0}\right)=2\exp\left(-\Omega\left(\frac{\eta^2 |N_{L_c'}(I)|}{\mu\Delta_0}\right)\right)\nonumber\\
&=2\exp\left(-\Omega\left(\frac{\eta^2 \mu D_0|I|}{\mu \Delta_0\log^2n}\right)\right)
\overset{\eqref{eqn:etaD1}}{=}2\exp\left(-\Omega\left(\frac{p_T D_0|I|}{\Delta_0\log^2n}\right)\right)\nonumber \\
&\overset{\eqref{eqn:mudelta}}{=}\exp(-\omega(|I|\log n)).\label{eqn:applyingMcD}
\end{align}
Therefore, by a union bound, with high probability, for every $c\in C$ and $I\subset I_c$ with $1\leq |I|\leq n_0(\log^2n)/\mu D_0$, we have $|N_{L_c'}(I)|\geq D_1|I|$.


\ref{prop:hall:expands} if $|I|> n_0(\log^2n)/\mu D_0$: Let $c\in C$ and $I\subset I_c$ with $n_0(\log^2n)/\mu D_0<|I|\leq n_0/2D_1$. Let $\ell= \lfloor \mu q_0|I|/4\rfloor$ and note that
\begin{equation}\label{eqn:forlastbit}
\eta \ell\overset{\eqref{eqn:etaD1}}{\geq}\frac{\mu q_0|I|}{32\mu \Delta_0}=\Omega\left(\frac{|I|D_0}{n_0\Delta_0}\right)\overset{\eqref{eqn:etaD1}}=\omega\left(\frac{D_1|I|\log n}{n_0}\right).
\end{equation}
Let $E$ be a maximal set of $e\in E_c$ for which $|N_{L_c'}(e)\cap I|\geq \ell$, and let $L_{c,I}'\subset L_{c}'$ contain exactly $\ell$ edges from each vertex in $E$ into $I$. Suppose, for contradiction, that $|E_c\setminus E|>n_0/10$. Then,
$e_{L_c'}(I,E_c\setminus E)< \ell\cdot |E_c\setminus E|\leq \mu q_0|I||E_c\setminus E|/4$,
which contradicts \ref{prop:LCprime:joined}. Thus, $|E|\geq |E_c|-n_0/10\geq 0.8n_0$.

For each $e\in E$, using \eqref{eqn:forlastbit}, we have $\P(e\in N_{L_c''\cap L_{c,I}'}(I,E))\geq 1-(1-\eta/2e)^\ell\geq \min\{0.99,3D_1|I|/n_0\}$. Thus, letting $X_{I,E}=|N_{L_c''\cap L_{c,I}'}(I,E)|$, we then have that $\E X_{I,E}\geq \min\{0.75n_0,2D_1|I|\}=\Omega(D_1|I|)$.
As $X_{I,E}$ is 2-Lipschitz and affected by at most $|E|\cdot \ell \cdot 4\mu \Delta_0=O(n_0\cdot \mu q_0|I|\cdot \sqrt{p_T})$ random variables $x_{ie}$ by \ref{prop:LCprime:dep}, by
Lemma~\ref{lem:mcdiarmidchangingc},
we have that
\begin{align*}
\P(X_{I,E}<D_1|I|)&\leq \exp\left(-\Omega\left(\frac{(D_1|I|)^2}{n_0\cdot \mu q_0|I|\cdot \sqrt{p_T}}\right)\right)
=\exp\left(-\Omega\left(\frac{D_1^2|I|}{\mu D_0\cdot \sqrt{p_T}}\right)\right)\\
&=\exp\left(-\Omega\left(\frac{\eta^2D_1|I|}{\sqrt{p_T}\log^2n}\right)\right)=\exp(-\omega(|I|\log n)).
\end{align*}
Therefore, by a union bound, with high probability, \ref{prop:hall:expands} holds if $|I|> n_0(\log^2n)/\mu D_0$.

\ref{prop:hall:expands:2}: That \ref{prop:hall:expands:2} hold with high probability follows similarly, again splitting into cases depending on whether $|E|\leq n_0(\log^2n)/\mu D_0$ or not.

\ref{prop:hall:joined}: Let $c\in C$, $I\subset I_c$ and $E\subset E_c$ satisfy $|I|,|E|\geq n_0/4D_1$. Let $N=e_{L_c'}(I,E)$. As $\log^2n/4\mu D_0\leq n_0/4D_1$, by \ref{prop:LCprime:joined} we have $N\geq \mu q_0|I||E|/2$. By \eqref{eqn:edgeinLprime}, we have $\E X_{I,E}\geq \eta N/2e$.
As $X_{I,E}$ is 2-Lipschitz and affected by at most $N\cdot \cdot 4\mu \Delta_0=O(N/\eta)$ random variables $x_{ie}$ by \ref{prop:LCprime:dep} and \eqref{eqn:etaD1}, by Lemma~\ref{lem:mcdiarmidchangingc}, we have that
\begin{align*}
\P(X_{I,E}>0)&\leq \exp\left(-\Omega\left(\frac{(\eta N)^2}{N\cdot \eta^{-1}}\right)\right)
=\exp\left(-\Omega\left(\eta^3 N\right)\right)
=\exp\left(-\Omega\left(\eta^3  \mu q_0|I||E|\right)\right)\\
& =\exp\left(-\Omega\left(\eta^3 \mu q_0 \max\{|I|,|E|\}\cdot n_0/D_1\right)\right)
\overset{\eqref{eqn:etaD1}}{=}\exp\left(-\Omega\left(q_0\log^2n \max\{|I|,|E|\}\cdot n_0/D_0\right)\right)\\
&=\exp(-\omega(\max\{|I|,|E|\}\log n)).
\end{align*}
Therefore, by a union bound, with high probability, \ref{prop:hall:joined} holds.
\claimproofend

Thus, by Claim~\ref{claim:hallinLprime}, we can assume that \ref{prop:hall:expands}--\ref{prop:hall:joined} hold, using which we show the following claim.


\begin{claim}\label{claim:matchinginLprime}
For each $c\in C$, $L_c'$ has a perfect matching.
\end{claim}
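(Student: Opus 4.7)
My plan is to verify Hall's condition on $L''_c$ (which is evidently the intended graph, since $L''_c\subseteq L'_c$ and a perfect matching in $L''_c$ gives one in $L'_c$) using the three expansion properties \ref{prop:hall:expands}--\ref{prop:hall:joined}. Recall that $|I_c|=|E_c|$ by \ref{prop:frompartC3:colour}, and by construction both have size $(1\pm 2p_\balcol)p_\mathrm{pt}p_\tr p_\fa n$, so they lie in the interval $[0.98 n_0, n_0]$. Since $D_1=\omega(1)$, it suffices to show $|N_{L''_c}(I)|\geq |I|$ for every $I\subseteq I_c$.

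Fix such an $I$ and set $E = E_c\setminus N_{L''_c}(I)$, so no vertex of $I$ has a neighbour in $E$. I split into three cases according to the sizes of $I$ and $E$. \textbf{Case 1:} if $|I|\leq n_0/2D_1$, then \ref{prop:hall:expands} yields $|N_{L''_c}(I)|\geq D_1|I|\geq |I|$ because $D_1\gg 1$. \textbf{Case 2:} if instead $|E|\leq n_0/2D_1$, I apply \ref{prop:hall:expands:2} to $E$ to obtain $|N_{L''_c}(E)|\geq D_1|E|$; since every neighbour of $E$ must lie in $I_c\setminus I$, this gives $|I_c|-|I|\geq D_1|E|\geq |E|$, i.e.\ $|I|\leq |I_c|-|E|=|E_c|-|E|=|N_{L''_c}(I)|$. \textbf{Case 3:} if $|I|,|E| > n_0/2D_1 \geq n_0/4D_1$, then \ref{prop:hall:joined} produces an edge of $L''_c$ between $I$ and $E$, contradicting the definition of $E$.

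Therefore Hall's condition holds for $L''_c$, and, since $|I_c|=|E_c|$, K\"onig's/Hall's theorem delivers a perfect matching in $L''_c$, hence also in $L'_c$. I do not anticipate any obstacle beyond correctly identifying which of the three regimes $(I,E)$ falls into; the three properties \ref{prop:hall:expands}--\ref{prop:hall:joined} have been engineered precisely to cover a nested dyadic partition of the possibilities, and the middle range $n_0/2D_1 < |I|,|E|$ is handled by the ``mixing lemma'' \ref{prop:hall:joined} rather than by pure expansion, which is the only slightly delicate bookkeeping. The use of $L''_c$ rather than $L'_c$ will then pay off when we later combine the matchings $M_c$ across all $c\in C$: by construction of $L''_c$ (the indicator $x_{ie}$ forces $x_{if}=0$ for every competing $f$ meeting $V(e)$), any two edges $ie\in M_c$ and $if\in M_{c'}$ with $c\neq c'$ automatically satisfy $V(e)\cap V(f)=\emptyset$, so the $\tilde M_{i,2}$ are matchings as required.
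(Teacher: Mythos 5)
Your proof is correct and takes essentially the same approach as the paper: verify Hall's condition in $L_c''$ using \ref{prop:hall:expands}, \ref{prop:hall:expands:2}, and \ref{prop:hall:joined}, then observe that a perfect matching in $L_c''\subseteq L_c'$ is also one in $L_c'$. The only difference is organizational: the paper splits on whether $|I|$ is small, of middling size, or larger than $|I_c|/2$ (treating the last case via the symmetric expansion of the non-neighbourhood), whereas you split directly and symmetrically on the sizes of $I$ and of $E = E_c\setminus N_{L_c''}(I)$, which makes the exhaustiveness of the case analysis immediate and avoids the $|I_c|/2$ threshold entirely. Both derivations invoke the same three properties and are equally valid; yours is arguably the cleaner bookkeeping. (One pedantic point: when $E=\emptyset$ the appeal to \ref{prop:hall:expands:2} is vacuous, but then $N_{L_c''}(I)=E_c$ and there is nothing to prove.)
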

\begin{proof}[Proof of Claim~\ref{claim:matchinginLprime}] Fix $c\in C$. If $I\subset I_c$ with $|I|\leq  n_0/2D_1$, then, by \ref{prop:hall:expands}, $|N_{L_c''}(I)|\geq D_1|I|\geq |I|$.
Suppose, then, that $I\subset I_c$ with $n_0/4D_1\leq |I|\leq |I_c|/2$. Then, by \ref{prop:hall:joined}, we must have $|E_c\setminus N_{L_c''}(I)|<n_0/4D_1$, and, hence
\[
|N_{L_c''}(I)|\geq |E_c|-n_0/4D_1=|I_c|-n_0/4D_1\geq |I_c|/2\geq |I|.
\]

Thus, for any $I\subset I_c$ with $|I|\leq |I_c|/2$, we have $|N_{L_c''}(I)|\geq |I|$. Similarly, by \ref{prop:hall:expands:2} and \ref{prop:hall:joined}, we have that $|N_{L_c''}(E)|\geq |E|$ for any $E\subset E_c$ with $|E|\leq |E_c|/2=|I_c|/2$. Then, for any
$I\subset I_c$ with $|I|>|I_c|/2$, if $|N_{L_c''}(I)|<|I|$, then
we have that $|E_c\setminus N_{L_c''}(I)|\leq |I_c|/2$, and thus $|I_c\setminus I|\geq |N_{L_c''}(E_c\setminus N_{L_c''}(I))|\geq |E_c\setminus N_{L_c''}(I)|$, so that
\[
|N_{L_c''}(I)|= |E_c|-|E_c\setminus N_{L_c''}(I)|\geq |E_c|-|I_c\setminus I|=|I|.
\]
Therefore, Hall's matching condition holds, and thus there is a perfect matching in $L_c''$.
\claimproofend

By Claim~\ref{claim:matchinginLprime}, we can choose bijective functions $\psi_{c}:I_c\to E_c$, $c\in C$, such that, for each $c\in C$ and $i\in I_c$, $i\psi_c(i)\in E(L_c'')$.
For each $i\in I_\phi$, let $\tilde{M}_{i,2}=\{\psi_c(i):c\in \hat{C}_i\setminus C(\tilde{M}_{i,1})\}$. Observe that each of these subgraphs is a matching. Indeed, if $i\in I_\phi$ and $e,f\in E_c$ with $ie,if\in \tilde{M}_{i,2}\subset E(L_c'')$ and $e\neq f$, then we have $x_{ie}=x_{if}=1$.
For any edge $f'\in E_c$ intersecting $e$ with $if\in E(L_c')$, we have $x_{if'}=0$, and thus $e$ and $f$ do not intersect, so that $\tilde{M}_{i,2}$ is a matching. Moreover, by construction, $\tilde{M}_{i,2}$ is rainbow with colour set $(\hat{C}_i\setminus C(\tilde{M}_{i,1}))$.

For each $i\in [n]$, let $\tilde{M}_{i}=\tilde{M}_{i,1}\cup \tilde{M}_{i,2}$. We will show that \ref{key3:outcome1}--\ref{key3:outcome3} hold, thus completing the proof of Lemma~\ref{keylemma:completion}.
First, for each $i\in [n]$, as $\tilde{M}_{i,1}$ and $\tilde{M}_{i,2}$ are rainbow matchings with vertex sets in $(\hat{V}_i\setminus R_i)\cup R_{i,1}$ and $R_{i,2}$
 and colour sets $C(\tilde{M}_{i,1})$ and $\hat{C}_i\setminus C(\tilde{M}_{i,1})$ respectively, we have that $\tilde{M}_i$ is a rainbow matching with colour set $\hat{C}_i$ (and thus \ref{key3:outcome1} holds) and $V(\tilde{M}_i)\subset \hat{V}_i$. Thus, by the property from \ref{prop:partc1:1}, we have that \ref{key3:outcome2} holds.

 Finally, for \ref{key3:outcome3}, letting $\phi\in \mathcal{F}$ and $v\in S_\phi$, we wish to show that
 \begin{equation}\label{lastbalance}
|\{i\in I_\phi:v\in R_i\setminus V(\tilde{M}_i)\}|=|\{i\in I_\phi:v\in T_i\}|.
 \end{equation}
Let, then, $\phi\in \mathcal{F}$ and $v\in S_\phi$. First, note that, by \ref{prop:partc1:1},
\begin{equation}\label{lasteq}
|\{i\in I_\phi:v\in R_i\setminus V(\tilde{M}_i)\}|=|\{i\in I_\phi:v\in \hat{V}_i\}|-|\{i\in I_\phi:v\in V(\tilde{M}_i)\}|.
\end{equation}
Now, for each $\phi\in \mathcal{F}$ and $v\in S_\phi$,
\begin{align*}
|\{i\in I_\phi:v\in V(\tilde{M}_i)\}|&=\Big|\Big\{e\in \bigcup_{i\in I_\phi}\tilde{M}_i:v\in V(e)\Big\}\Big|=\Big|\Big\{e\in \Big(\hat{E}_\phi^*\cup \Big(\bigcup_{i\in I_\phi}\tilde{M}_{i,1}\Big)\Big):v\in V(e)\Big\}\Big|\\
&\overset{\ref{prop:frompartC3:vertex}}{=}|\{i\in I_\phi:v\in \hat{V}_i\}|-|\{i\in I_\phi:v\in T_i\}|.
\end{align*}
In combination with \eqref{lasteq}, this implies that \eqref{lastbalance} holds, as required.
This completes the proof of Lemma~\ref{keylemma:completion}.

\fi


\bibliographystyle{abbrv}
\bibliography{LSdecomp}

\end{document}